\DeclareMathAlphabet{\mathpzc}{OT1}{pzc}{m}{it}
\newcommand{\into}{\hookrightarrow}
\newcommand{\Z}{\mathbb{Z}}
\newcommand{\Q}{\mathbb{Q}}
\newcommand{\R}{\mathbb{R}}
\newcommand{\F}{\mathbb{F}}
\newcommand{\sI}{\mathcal{I}}
\newcommand{\sJ}{\mathcal{J}}
\newcommand{\sF}{\mathcal{F}}
\newcommand{\sR}{\mathcal{R}}
\newcommand{\sRhat}{\widehat{\mathcal{R}}}
\newcommand{\sRinfty}{\mathcal{R}^\infty}
\newcommand{\sRminus}{\mathcal{R}^-}
\newcommand{\generators}{\mathcal{G}}
\newcommand{\sH}{\mathcal{H}}
\DeclareMathOperator{\Mor}{Mor}
\DeclareMathOperator{\rot}{rot}
\DeclareMathOperator{\Id}{Id}
\newcommand{\bchain}{{\bf b}}
\newcommand{\bchainhat}{{\bf \widehat{b}}}
\newcommand{\spinc}{\operatorname{Spin}^c}
\newcommand{\Spinc}{\operatorname{Spin}^c}
\newcommand{\Ztwo}{\Z/2\Z}
\newcommand{\gr}{\operatorname{gr}}
\newcommand{\cylinder}{\mathcal{Z}}
\newcommand{\strip}{\mathcal{S}}
\newcommand{\pcylinder}{\mathcal{Z}^*}
\newcommand{\pstrip}{\mathcal{S}^*}
\newcommand{\dmcylinder}{\mathcal{Z}^{z,w}}
\newcommand{\dmstrip}{\mathcal{S}^{z,w}}
\newcommand{\tracks}{{\boldsymbol\vartheta}}
\newcommand{\HFhat}{\widehat{\mathit{HF}}}
\newcommand{\HFminus}{\mathit{HF}^{-}}
\newcommand{\CFminus}{\mathit{CF}^-}
\newcommand{\CFinfty}{\mathit{CF}^\infty}
\newcommand{\CFKminus}{\mathit{CFK}^-}
\newcommand{\CFKinfty}{\mathit{CFK}^\infty}
\newcommand{\CFD}{\widehat{\mathit{CFD}}}
\newcommand{\Alg}{\mathcal{A}}
\newcommand{\Ainfty}{\mathcal{A}_\infty}
\newcommand{\spin}{\mathfrak{s}}
\newcommand{\x}{{\bf x}}
\newcommand{\y}{{\bf y}}
\newtheorem{theorem}{Theorem}[section]
\newtheorem{corollary}[theorem]{Corollary}
\newtheorem{proposition}[theorem]{Proposition}
\newtheorem{lemma}[theorem]{Lemma}
\newtheorem{conjecture}[theorem]{Conjecture}
\newtheorem*{namedtheorem}{\theoremname}
\newcommand{\theoremname}{testing}
\theoremstyle{definition}
\newtheorem{definition}[theorem]{Definition}
\newtheorem{question}[theorem]{Question}
\newtheorem{remark}[theorem]{Remark}
\newtheorem{example}[theorem]{Example} 
\title[Knot Floer homology as immersed curves]{Knot Floer homology as immersed curves}
\author[Jonathan Hanselman]{Jonathan Hanselman}
\address {Department of Mathematics, Princeton University.\newline \it{E-mail address:} \tt{jh66@princeton.edu}}
\thanks{The author was partially supported by NSF grant DMS-2105501 }
\def\l@subsection{\@tocline{2}{0pt}{2.5pc}{5pc}{}}
\begin{document}
\maketitle

\begin{abstract} To a nullhomologous knot $K$ in a 3-manifold $Y$, knot Floer homology associates a bigraded chain complex over $\F[U,V]$ as well as a collection of flip maps; we show that this data can be interpretted as a collection of decorated immersed curves in the marked torus. This is inspired by earlier work of the author with Rasmussen and Watson, showing that bordered Heegaard Floer invariants $\CFD$ of manifolds with torus boundary can be interpreted in a similar way \cite{HRW, HRW:companion}. Indeed, if we restrict the construction in this paper to the $UV = 0$ truncation of the knot Floer complex for knots in $S^3$ with $\Z/2\Z$ coefficients, which is equivalent to $\CFD$ of the knot complement, we get precisely the curves in \cite{HRW}; this paper then provides an entirely bordered-free treatment of those curves in the case of knot complements, which may appeal to readers unfamiliar with bordered Floer homology. On the other hand, the knot Floer complex is a stronger invariant than $\CFD$ of the complement, capturing ``minus" information while $\CFD$ is only a ``hat" flavor invariant. We show that this extra information is realized by adding an additional decoration, a bounding chain, to the immersed multicurves. We also give geometric surgery formulas, showing that $\HFminus$ of rational surgeries on nullhomologous knots and the knot Floer complex of dual knots in integer surgeries can be computed by taking Floer homology of the appropriate decorated curves in the marked torus. A section of the paper is devoted to a giving a combinatorial construction of Floer homology of Lagrangians with bounding chains in marked surfaces, which may be of independent interest. 
\end{abstract}

\renewcommand{\baselinestretch}{0}\normalsize
\tableofcontents
\renewcommand{\baselinestretch}{1.0}\normalsize

\section{Introduction}\label{sec:intro}

Knot Floer homology, defined by Ozsv{\'a}th and Szab{\'o} \cite{OzSz:knots} and Rasmussen \cite{Ras:knot-floer}, is an invariant of a knot $K$ in a closed 3-manifold $Y$. In the decades since its introduction, knot Floer homology has proved to be a tremendously useful invariant, with numerous applications in the study of knots as well as three and four dimensional manifolds.  In its usual formulation it associates an algebraic object to the pair $(Y,K)$, namely a bigraded chain complex $CFK_{\sRminus}(Y,K)$ over the ring $\sRminus = \F[U,V]$ for some coefficients $\F$; we will assume $\F$ is a field throughout, though we briefly remark on the case of $\Z$ coefficients in Section \ref{sec:Z-coefficients}. The complex $CFK_{\sRminus}(Y,K)$ is an invariant of the pair $(Y,K)$ up to graded chain homotopy equivalence. This complex splits over spin$^c$ structures of $Y$:
$$CFK_{\sRminus}(Y,K) = \bigoplus_{\spin\in\Spinc(Y)} CFK_{\sRminus}(Y,K;\spin).$$
In addition to this bigraded complex the knot Floer package also comes with a collection of chain maps
$$\Psi_\spin: CFK_{\sRminus}(Y,K;\spin) \to CFK_{\sRminus}(Y,K;\spin + PD[K]),$$
defined up to chain homotopy, called \emph{flip maps}. 
The goal of this paper is to show that this algebraic data, the knot Floer complex $CFK_{\sRminus}(Y,K)$ together with the collection of flip maps $\{\Psi_\spin\}_{\spin\in\Spinc(Y)}$, admits a geometric representation as an element of the immersed Fukaya category of the marked torus, that is, as a decorated immersed curve in the marked torus. We will also show that this geometric description allows for simplified computations of the Heegaard Floer homology $HF^-$ of Dehn surgeries on $K$.

For simplicity we will restrict our attention to nullhomologous knots $K$. We remark that most of the results can be extended to knots that are only rationally nullhomologous, and in fact the core arguments are unchanged, but there is an added layer of complexity describing the spin$^c$ decomposition and the gradings in this more general setting. To avoid obscuring the main constructions with these details, the case of rationally nullhomologous knots will be addressed in a subsequent paper. 

\subsection{Immersed curve invariants for knots}
Given a nullhomologous knot $K$ in $Y$, let $M$ denote the complement $Y \setminus \nu(K)$. Let $\lambda \in H_1(\partial M; \Z)$ be the homology class of the Seifert longitude. We consider the marked torus $T_M = H_1(\partial M; \R) / H_1(\partial M; \Z)$ with a marked point at $0$; note that $T_M$ can be identified with $\partial_M$ with a marked point $w$. We will also consider the universal cover $\widetilde{T}_M = H_1(\partial M; \R)$ with a set of marked points given by $H_1(\partial M; \Z)$, as well as the intermediate covering space $\overline{T}_M = \widetilde{T}_M / \langle \lambda \rangle$. 
For each spin$^c$ structure $\spin$ in $\spinc(M)$ (which can be identified with $\spinc(Y)$, since $K$ is nullhomologous), we define a decorated immersed multicurve $\HFminus(Y,K;\spin)$ in $\overline{T}_M$;  this is a pair $(\Gamma, \bchain)$ where $\Gamma$ is an oriented, weighted, graded immersed multicurve in $T_M$ and $\bchain$ is a bounding chain, which may be thought of as a linear combination of self-intersection points of $\Gamma$ satisfying certain conditions.

The decorated curve $\HFminus(Y,K; \spin)$ encodes the knot Floer complex $CFK_{\sRminus}(Y,K;\spin)$, which can be recovered by adding additional marked points to $\overline{T}_M$ and taking the Lagrangian Floer complex with (a lift of) a meridian of $K$. It also encodes the flip map
$$\Psi_\spin: CFK_{\sRminus}(Y,K;\spin) \to CFK_{\sRminus}(Y,K;\spin),$$
which can be recovered from the Lagrangian Floer complex of $\HFminus(Y,K; \spin)$ with another particular curve in $\overline{T}_M$. Conversely, the decorated curve is uniquely determined by $CFK_{\sRminus}(Y,K; \spin)$ and the flip map, up to equivalence in the immersed Fukaya category $\overline{T}_M$. Here two decorated curves in the Fukaya category are considered to be equivalent if they have the same Floer homology with any other decorated curve.

Uniqueness up to equivalence in the Fukaya category is a slightly unsatisfying notion: there are many decorated curves representing the same equivalence class, and it is not always apparent when two decorated curves are equivalent. A much stronger claim is that we can always choose the decorated curve $\HFminus(Y,K; \spin)$ to have a particularly nice form, and that with this assumption the underlying immersed multicurve $\Gamma$ is well defined up to homotopy in the marked surface $\overline{T}_M$. The first condition for this nice representative is that the immersed multicurve $\Gamma$ is in almost simple position (see Definition \ref{def:almost-simple-position}), which essentially means it is in minimal position subject to the constraint that it bounds no immersed annuli. The second condition concerns the bounding chain $\bchain$. The self-intersection points of $\Gamma$ all have a degree, and $\bchain$ is a linear combination of the self-intersection points with non-positive degree. Let $\bchainhat$ denote the restriction of this linear combination to self-intersection points of degree zero. We say $\bchainhat$ is of local system type if it contains only a very special subset of degree zero intersection points (see Definition \ref{def:local-system-type}); an immersed curve decorated with such a $\bchainhat$ is equivalent to an immersed curve decorated with local systems.

\begin{theorem}\label{thm:curve-invariant}
For any nullhomologous knot $K$ in a 3-manfiold $Y$ and for any $\spin \in \spinc(Y)$, there is a decorated immersed curve $\HFminus(Y,K; \spin)$ in $\overline{T}_M$ representing the knot Floer complex $CFK_{\sRminus}(Y,K;\spin)$ and the flip map $\Psi_\spin$, such that the underlying curve $\Gamma$ is in simple position and the restriction $\bchainhat$ of the bounding chain $\bchain$ to degree zero intersection points is of local system type. This decorated curve is a well-defined invariant of $(Y,K;\spin)$ up to equivalence in the Fukaya category of $\overline{T}_M$; moreover, the underlying immersed curve is unique up to homotopy in the marked cylinder $\overline{T}_M$  and $\bchainhat$ is unique as a subset of self-intersection points of $\Gamma$ .
\end{theorem}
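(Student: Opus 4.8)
The plan is to translate the statement into an algebraic one about modules over a ``minus'' version $\sA$ of the torus algebra, prove existence and uniqueness there, and transport the result back through the dictionary between such modules and decorated immersed curves set up in the earlier sections. First I would recast the pair $\bigl(CFK_{\sRminus}(Y,K;\spin),\Psi_\spin\bigr)$ as a single algebraic object: a (curved) $A_\infty$-module, or type~D structure, over $\sA$, arranged so that $\overline{T}_M$---rather than a noncompact strip---is its natural home, with the flip map $\Psi_\spin$ supplying precisely the extra data that closes the strip into the cylinder. The formal property to check here is that this assignment is functorial, sending chain homotopy equivalences of the Floer data to homotopy equivalences of the algebraic object; combined with the invariance of $CFK_{\sRminus}$ and of $\Psi_\spin$ up to homotopy, this is what will eventually give invariance of the decorated curve.

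For existence of the nice representative I would start from an arbitrary such module and apply the cancellation lemma (Gaussian elimination) to remove all acyclic summands, producing a reduced model. Its generators, together with the ``lengths'' of the differential arrows, assemble into an immersed multicurve $\Gamma$ in $\overline{T}_M$ exactly as in the HRW picture, while the residual higher structure---the part carrying genuine minus information, together with the flip map---gets recorded as a linear combination of self-intersection points of $\Gamma$; that this combination satisfies the bounding-chain (Maurer--Cartan) relation is forced by the fact that the original object was an honest curved complex. I would then homotope $\Gamma$ into minimal position: any immersed annulus it bounds corresponds to a further cancelling pair in the model, so pushing through those cancellations places $\Gamma$ in almost simple position (Definition \ref{def:almost-simple-position}). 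Finally, a change of basis diagonalizing the degree-zero ``monodromy''-type maps along $\Gamma$ moves $\bchainhat$ onto the distinguished degree-zero intersection points, i.e.\ makes it of local system type (Definition \ref{def:local-system-type}); this is the passage to honest local systems.

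For uniqueness, functoriality already gives that the Fukaya-equivalence class of $\HFminus(Y,K;\spin)$ is an invariant. To upgrade this to uniqueness of $\Gamma$ up to homotopy in $\overline{T}_M$ and of $\bchainhat$ as a subset of self-intersection points, I would prove a rigidity result: two decorated curves in almost simple position with local-system-type $\bchainhat$ that pair the same way with every test object must have homotopic underlying curves and identical $\bchainhat$. The mechanism is to pair with an explicit family of test curves---lifts of the meridian recover $CFK_{\sRminus}$ and the flip-map test curve recovers $\Psi_\spin$---and argue that the ranks of these Floer complexes together with the combinatorics of minimal position reconstruct $\Gamma$; on the algebra side this is the statement that a reduced module over $\sA$ is determined up to homotopy by its homotopy type, which at the $UV=0$/hat level is the HRW classification for the string algebra $\sA\UVzero$, extended to the minus setting by a filtration argument propagating the higher-order terms. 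The remaining freedom in $\bchain$ is exactly gauge equivalence of bounding chains, and one must show that each gauge class contains a unique minimal, local-system-type $\bchainhat$.

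I expect the uniqueness, and within it this last canonical-form statement for the bounding chain, to be the main obstacle. The hat-level classification of the underlying curves is essentially known, but controlling how the minus information interacts with both changes of basis and with the homotopy realizing minimal position is delicate; I anticipate needing a careful obstruction-theoretic induction---ordering self-intersection points by degree and by enclosed area and inductively cancelling gauge-trivial contributions---to show that $\bchainhat$ is well defined as a subset. This is likely the technical heart of the argument.
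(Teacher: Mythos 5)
Your plan has the right shape but diverges from the paper in ways that leave substantive gaps. You propose to route through a ``minus'' torus algebra $\sA$ and work with (curved) modules over it. The paper deliberately avoids this: it works directly with the pair $\bigl(CFK_{\sRminus}(Y,K;\spin),\Psi_\spin\bigr)$, defining $C(\Gamma,\bchain)$ and the flip map from Floer homology of decorated curves in the marked strip and cylinder, precisely because no minus bordered theory over a suitable algebra is established here; the introduction flags the identification with modules over the weighted $A_\infty$ algebra of Lipshitz--Ozsv\'ath--Thurston as conjectural future work. So your opening step is itself a theorem you would have to prove, not a shortcut. More seriously, the existence step is not ``cancel acyclic summands, then assemble exactly as in HRW.'' The HRW/arrow-sliding story produces an immersed curve with local systems only over $\sRhat$, i.e.\ modulo $UV$. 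Recovering the full differential over $\sRminus$ \emph{without changing} $\Gamma$ is the content of an entire section (the inductive enhancement, Proposition~\ref{prop:inductive-step}), which adds strictly-negative-degree self-intersection points to $\bchain$ one matrix entry at a time, case-analyzing nineteen configurations and relying on vanishing of specific terms of $\partial^2$ for curves in almost simple position (Lemmas~\ref{lem:some-d-squared-zero-horizontal} and~\ref{lem:some-d-squared-zero-vertical}). Your assertion that the higher-order structure is ``forced by the Maurer--Cartan relation'' presupposes exactly the conclusion of this argument. (Also, your claim that an immersed annulus ``corresponds to a further cancelling pair in the model'' has the direction backwards: almost simple position requires \emph{adding} a pair of intersection points to break annuli between parallel closed curves; an annulus is a finiteness/admissibility obstruction, not evidence of algebraic reducibility.)

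For uniqueness, the mechanism you describe --- pair with lifts of the meridian and a flip-map test curve to recover $CFK_{\sRminus}$ and $\Psi_\spin$, then reconstruct $\Gamma$ from ranks --- is circular: $(CFK_{\sRminus},\Psi_\spin)$ up to equivalence is the hypothesis, not a derived invariant from which the curve is being deduced. The paper's rigidity argument (Propositions~\ref{prop:uniqueness} and~\ref{prop:uniqueness-strip}) instead pairs with closed test curves parallel to components of $\Gamma$ carrying arbitrary local systems, and uses the fact that the dimension of Floer homology equals the minimal geometric intersection number for non-parallel curves and is controlled by $\dim\ker(A_1\otimes A_2^{-1}-\Id)$ for parallel ones. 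This determines $\Gamma$ up to homotopy and the local-system data $\bchainhat$ directly, with no obstruction-theoretic induction on degree or enclosed area. You are correct that the negative-degree part of $\bchain$ has no canonical representative; the paper acknowledges precisely this and only claims uniqueness of $\Gamma$ and $\bchainhat$, for which the dimension-counting argument suffices.
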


Note that in claiming $\bchainhat$ is unique as a subset of self-intersection points of $\Gamma$, when $\Gamma$ is only defined up to homotopy, we use the fact that there is an obvious identification of the relevant self-intersection points between any two homotopic curves $\Gamma$ and $\Gamma'$ that are both in simple position.

We remark that we do not have a unique representative for the portion of $\bchain$ coming from strictly negative degree self-intersection points. Thus, while $\Gamma$ is well-defined and the degree zero part $\bchainhat$ of $\bchain$ is well-defined, there may be different choices of $\bchain$ on $\Gamma$ that are equivalent in the Fukaya category and satisfy our conditions for a nice representative. It may be possible to define a normal form for $\bchain$, imposing additional constraints on $\bchain$ so that we can always find a representative satisfying these restraints and so that such a representative is unique as a subset of the self-intersection points of $\Gamma$, and we hope to explore this in future work. However, in practice, once $\Gamma$ and $\bchainhat$ are fixed there are very few valid choices for $\bchain$ and it is generally not difficult to tell which choices are equivalent to each other. On a case by case basis, we can often find a representative that is clearly the simplest possible (see Section \ref{sec:trivial-bounding-chains}).

Theorem \ref{thm:curve-invariant} follows from a structure theorem relating bigraded complexes and flip maps to immersed curves in the infinite marked strip $\strip = [-\tfrac 1 2, \tfrac 1 2]\times \R$ and the infinite marked cylinder $\cylinder = (\R/\Z)\times \R$, each with marked points at points $(0,n+\tfrac 1 2)$ for $n\in\Z$:

\begin{theorem}\label{thm:curve-invariant-for-complex}
Any bigraded complex $C$ over $\sRminus$ can be represented by a decorated immersed curve $(\Gamma, \bchain)$ in the infinite marked strip $\strip$, and a bigraded complex $C$ equipped with a flip map $\Psi$ can be represented by a decorated immersed curve $(\Gamma, \bchain)$ in the infinite marked cylinder $\cylinder$, where in each case $\Gamma$ is in almost simple position and the restriction $\bchainhat$ of $\bchain$ to degree zero intersection points is of local system type. The decorated curves are well-defined as elements of the relevant Fukaya category, and moreover $\Gamma$ is well defined up to homotopy and $\bchainhat$ is well-defined as a subset of self-intersection points of $\Gamma$.
\end{theorem}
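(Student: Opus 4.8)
The plan is to build the immersed curve directly from the algebra of the bigraded complex $C$ over $\sRminus = \F[U,V]$ by first reducing $C$ to a convenient chain-homotopy-equivalent model and then reading off a curve from the resulting differential. I would begin with a normal-form reduction: using the standard cancellation/change-of-basis lemma for complexes over $\F[U,V]$, put $C$ into a reduced form in which the differential has no components that are units, so every arrow is divisible by $U$ or $V$ (or, after further simplification, is a pure power of $U$ or of $V$ times a generator, up to the bounding chain absorbing the remaining ``mixed'' terms). This is where the bounding chain enters: the terms of $\partial$ that cannot be straightened into segments of an embedded-up-to-homotopy curve — precisely the pieces that would force an immersed annulus, i.e.\ the obstruction to almost simple position — get recorded instead in $\bchain$, and the degree-zero part of this obstruction is what one arranges to be of local system type (Definition \ref{def:local-system-type}) by a further basis change among generators of equal bigrading. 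Each generator becomes a point on the curve lying in a specified height determined by its Maslov/Alexander bigrading (so it sits in the strip $\strip$ at the appropriate vertical level, avoiding the marked points at $(0,n+\tfrac12)$), and each $U$- or $V$-power arrow becomes a segment of the curve winding the corresponding number of times around the marked points in the $V$- or $U$-direction; concatenating these segments assembles a collection of closed (in $\cylinder$) or properly embedded (in $\strip$) immersed multicurves $\Gamma$, and the leftover data is $\bchain$. For the $\cylinder$ case the flip map $\Psi$ supplies exactly the gluing data telling us how to close up the strip into the cylinder — i.e.\ $\Psi$ is realized as the monodromy/identification that turns the properly embedded curve in $\strip$ into a closed curve in $\cylinder$ — so the construction for $\cylinder$ is the $\strip$ construction plus a choice of identification of the two ends dictated by $\Psi$.

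The second half of the proof is the well-definedness. That the decorated curve represents the right object in the Fukaya category is almost tautological from the construction once one checks that the Lagrangian Floer complex of $(\Gamma,\bchain)$ with the distinguished meridian-type curve (resp.\ the flip-detecting curve) recovers $C$ (resp.\ $C$ together with $\Psi$) — this uses the combinatorial model for Floer homology with bounding chains developed in the relevant section of the paper, where the $A_\infty$ operations counting immersed bigons with corners on $\bchain$ reproduce exactly the matrix entries of $\partial$ and $\Psi$. For the stronger uniqueness statement — $\Gamma$ unique up to homotopy and $\bchainhat$ unique as a subset of self-intersection points — I would argue as follows: any two decorated curves in almost simple position with $\bchainhat$ of local system type that are Fukaya-equivalent must have the same Floer homology against all test curves, and one extracts from these pairings enough invariants (intersection numbers with a spanning family of curves, together with grading data) to pin down the homotopy class of $\Gamma$; this is the marked-surface analogue of the classification of curves by their geometric intersection numbers, combined with the fact that almost simple position is a minimal-position condition, so the minimal-position representative of a homotopy class is essentially unique. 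Uniqueness of $\bchainhat$ then follows because, once $\Gamma$ is fixed, the degree-zero part of the bounding chain is detected by specific rank jumps in the Floer homology against appropriately chosen curves, and the local-system-type constraint cuts down the possibilities to a single admissible subset.

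The main obstacle I expect is the normal-form step together with controlling the bounding chain: reducing $C$ to a model whose differential is literally a sum of pure $U$- and $V$-power arrows is not always possible (that would be the statement that $C$ is homotopy equivalent to a direct sum of ``boxes and staircases'' with a trivial local system), so the real work is showing that whatever genuinely cannot be removed is exactly captured by a bounding chain whose degree-zero truncation can be normalized to local-system type, and that this normalization is canonical. Proving that the resulting $\Gamma$ is in almost simple position — i.e.\ that one has genuinely eliminated all removable immersed annuli — requires a careful bookkeeping argument relating immersed annuli bounded by $\Gamma$ to cancellable pieces of $\partial$ not absorbed into $\bchain$, and relating homotopies of $\Gamma$ in the marked surface to the allowed moves (isotopy, change of basis, handleslide-type moves) on the reduced complex. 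I would therefore spend the bulk of the argument on a dictionary lemma: homotopies of decorated curves in almost simple position with $\bchainhat$ of local system type correspond bijectively to the equivalence relation on reduced bigraded complexes (with flip maps) generated by the elementary algebraic moves, and then deduce both existence and uniqueness from this dictionary.
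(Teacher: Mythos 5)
Your high-level picture — reduce $C$ algebraically, translate to a curve whose intersections with $\mu$ are the generators and whose bigons with $\mu$ are the arrows, absorb what cannot be geometrized into $\bchain$, then extract uniqueness from Floer-homological pairings — is the right one, and your closing observation that the hard part is controlling what lands in the bounding chain is the correct place to worry. But there are two substantive gaps, one in existence and one in uniqueness, and in both cases the paper's argument rests on machinery you do not produce.

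For existence, the paper does \emph{not} reach almost simple position via a basis-change reduction of the differential. Proposition \ref{prop:naive-curves} gives an immersed curve for an arbitrary basis essentially for free, but it is far from simple position and its collection of turning points need not even be a bounding chain. The nontrivial step is the train-track arrow-sliding algorithm of Section \ref{sec:simple-curves}: generators are horizontal strands, each arrow becomes a crossover arrow, and a carefully ordered induction (Lemma \ref{lem:sort-arrows} together with the depth-coloring complexity) shows that every crossover arrow can be slid until it is either removable or parked at a local-system intersection point. This is the content behind ``$\bchainhat$ can be normalized to local-system type,'' and your proposal gestures at it as the main obstacle without providing a mechanism. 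The same omission affects the cylinder case: the flip map is not just gluing data. After one inserts arcs in $\sF$ encoding $\Psi_*$, one must run the sliding algorithm a second time (with a more delicate four-step version, since paths can now re-enter the same bundle after one excursion), and the basis changes this induces propagate into $\strip$; your ``monodromy identification'' picture misses this feedback. Finally, for the minus enhancement the paper works in the opposite order from what you propose: it first locks in $\Gamma$ and $\bchainhat$ over $\sRhat$, then runs an induction over a carefully ordered set of matrix entries (Proposition \ref{prop:inductive-step}, using Lemmas \ref{lem:some-d-squared-zero-horizontal}--\ref{lem:some-d-squared-zero-vertical} on $\partial^2$) adding \emph{only} negative-degree self-intersection points to $\bchain$ without changing $\Gamma$. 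Your proposal has $\bchain$ emerging ``during'' normal-form reduction, which would risk changing $\bchainhat$ and hence the underlying geometry; the paper's two-stage order is what keeps $\Gamma$ and $\bchainhat$ rigid.

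For uniqueness, your ``dictionary lemma'' — a bijection between curve homotopies and elementary algebraic moves on reduced complexes — is a genuinely different route, but it is much stronger than what the paper proves and would be hard to establish directly: the correspondence between moves and homotopies is many-to-many, and the minus-side bounding chain is precisely where it breaks down (the paper explicitly declines to give a normal form for the negative-degree part of $\bchain$). The paper instead goes through the pairing theorem (Propositions \ref{prop:pairing-in-strip} and \ref{prop:unshifted-pairing} identify Floer complexes of curves with morphism complexes and mapping cones), deducing Fukaya-category well-definedness purely algebraically, and then proves the stronger homotopy uniqueness of $\Gamma$ and $\bchainhat$ separately (Propositions \ref{prop:uniqueness} and \ref{prop:uniqueness-strip}), using the fact that for curves in minimal position the rank of Floer homology with a connected test curve is the geometric intersection number, plus a controlled correction term for parallel components depending only on the local system. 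Your ``rank jumps detect $\bchainhat$'' is the right shape of argument, but it needs the explicit formula for the parallel-component contribution to single out the local system, and it does not address why the test-curve data determines $\Gamma$ up to homotopy (the paper uses a doubling trick in Proposition \ref{prop:uniqueness-strip} to reduce the strip case to the cylinder case, where all components are closed).
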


There is a simplified curve invariant $\HFhat(Y,K;\spin)$ obtained from $\HFminus(Y,K;\spin)$ by replacing $\bchain$ with $\bchainhat$. The underlying immersed curve $\Gamma$ is unchanged, but we now view the decorated curve $(\Gamma, \bchainhat)$ as an element of the Fukaya category of the punctured cylinder $\overline{T}_M^*$ obtained from $\overline{T}_M$ by removing the marked points. This curve represents the simplified knot Floer complex $CFK_{\sRhat}(Y,K;\spin)$ over the ring $\sRhat = \F[U,V]/(UV=0)$ along with the corresponding flip map. As noted above, since $\bchainhat$ is of local system type the decorated curve $(\Gamma, \bchainhat)$ may also be interpreted as a collection of immersed curves decorated with local systems. It is much easier to work in the $UV = 0$ setting; our strategy for constructing $\HFminus(Y,K;\spin)$ will be to first construct $\HFhat(Y,K;\spin)$ and then systematically modify $\bchain$, starting from $\bchainhat$, to capture any information lost in the $UV = 0$ quotient. 

While the invariants $\HFminus(Y,K;\spin)$ are curves in the marked cylinder $\overline{T}_M$, we will sometimes think of these curves as living in the marked torus $T_M$ by applying the covering map $p: \overline{T}_M \to T_M$. Some information may be lost under this projection, so the curves $p(\HFminus(Y,K;\spin))$ in $T_M$ should be thought of as decorated with additional grading information that amounts to specifying a lift to $\overline{T}_M$. If we do not care about the spin$^c$ decomposition, we can consider the combined curve invariant
$$\HFminus(Y,K) = \bigcup_{\spin\in\Spinc(Y)} p\left( \HFminus(Y,K;\spin) \right).$$
It may seem more natural to define $\HFminus(Y,K)$ as a curve in $\overline{T}_M$, but in the case of rationally nullhomologous knots it may not be possible to combine the curves $\HFminus(Y,K;\spin)$ in a single copy of $\overline{T}_M$ for grading reasons (see, for example, \cite[Example 57]{HRW:companion}). This is not a problem for nullhomologous knots, but with this issue in mind,  and following the convention of \cite{HRW:companion}, we instead combine the projections and define $\HFminus(Y,K)$ to be a collection of decorated curves in the marked torus $T_M$. The simplified invariant $\HFhat(Y,K)$ is defined similarly as the union of the projections of the simplified curves $\HFhat(Y,K;\spin)$.

\subsection{Surgery formulas}
One of the reasons knot Floer homology has been such a valuable tool is its close connection with Heegaard Floer invariants for closed 3-manifolds. Given a closed 3-manifold $Y$, the Heegaard Floer homology $\HFminus(Y)$ is a graded module over $\F[W]$. Note that our convention throughout the paper will be to use $W$ as the formal variable for Heegaard Floer invariants of closed 3-manifolds and for Floer homology in singly marked surfaces, while $U$ and $V$ are used when two marked points are involved; we also use $W$ in the doubly marked setting to represent the product $UV$. Given a nullhomologous knot $K \subset Y$, Ozsv{\'a}th and Szab{\'o} gave a surgery formula describing $\HFminus(Y_{p/q})$ in terms of the knot Floer complex (and flip maps) associated with $K$. This surgery formula admits a particularly nice description in terms of immersed curves: it amounts to taking the Floer homology with a curve of slope $p/q$ in the marked torus.

\begin{theorem}[Surgery Formula]\label{thm:surgery-formula}
For a nullhomologous knot $K \subset Y$ and rational slope $p/q$, there is an isomorphism of relatively graded $\F[W]$-modules
$$\HFminus(Y_{p/q}(K)) \cong \mathcal{HF}(  \HFminus(Y, K), \ell_{p/q} ),$$
where $\mathcal{HF}$ on the right side denotes Lagrangian Floer homology in the marked torus $T_M$ and $\ell_{p/q}$ is a simple closed curve homotopic to $p\mu + q\lambda$. 
\end{theorem}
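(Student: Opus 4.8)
The plan is to identify the Lagrangian Floer complex $\mathcal{HF}(\HFminus(Y,K),\ell_{p/q})$ (taken in $T_M$) with the \os\ mapping cone $\mathbb{X}^-(p/q)$ whose homology is $\HFminus(Y_{p/q}(K))$, in the spirit of the hat surgery formula of \cite{HRW}. Recall that $\mathbb{X}^-(p/q)$ is assembled from copies of the large--surgery complexes $A^-_s$ (each quasi-isomorphic to a subquotient complex of $CFK_{\sRminus}(Y,K)$) together with copies of $B^-=\CFminus(Y)$, glued by vertical maps $v_s$ and horizontal maps $h_s$, where the $h_s$ incorporate the flip map $\Psi_\spin$. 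By Theorem~\ref{thm:curve-invariant}, together with the pairing descriptions of $CFK_{\sRminus}$ and $\Psi_\spin$ that precede it, every one of these ingredients is already carried by the decorated curve $\HFminus(Y,K)$: the complex $A^-_s$ is recovered by pairing $\Gamma$ with a meridian placed at ``height $s$'' (with the appropriate marked points adjoined), $B^-$ and $v_s$ from pairings with a nearby meridian, and $h_s$ from the pairing that computes $\Psi_\spin$. Since both sides split over the spin$^c$ structures of $Y_{p/q}(K)$, one may fix a single surgery spin$^c$ class throughout.

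The heart of the argument is a geometric reorganization of $\ell_{p/q}$. First I would lift $\ell_{p/q}$ to $\overline{T}_M$, or to a suitable finite cover when $|q|>1$, where it becomes a disjoint union of properly embedded lines, and put these lines into a standard staircase position relative to the marked points, so that each alternates between long arcs running parallel to a meridian at consecutive integer heights $s$ and short transition arcs that cross between consecutive heights past a marked point. In the Floer complex $CF(\Gamma,\ell_{p/q})$ --- generated by intersection points, with differential counting holomorphic bigons weighted by powers of $W$ for marked points covered and corrected by the bounding chain $\bchain$ --- the generators lying on the height-$s$ meridional arc span a subcomplex isomorphic to the pairing of $\Gamma$ with a meridian, hence to $A^-_s$, while the transition arcs contribute the $B^-$ summands; bigons straddling a transition reproduce precisely $v_s$ and $h_s$, the latter recording $\Psi_\spin$ via its pairing characterization. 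Because $CFK_{\sRminus}(Y,K)$ is finitely generated the curve $\Gamma$ is supported in a bounded band in the longitude direction, so only finitely many heights meet $\Gamma$ and $CF(\Gamma,\ell_{p/q})$ is automatically finite; the claim is then that, filtered by height, it is exactly the truncated form of $\mathbb{X}^-(p/q)$.

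A convenient way to organize this is to establish it first for ``large'' slopes $|p/q|\gg 0$, where $\ell_{p/q}$ is so steep that its meridional arcs are honestly isotopic to meridians and the statement collapses to the large--surgery formula $\HFminus(Y_{p/q}(K))\cong\bigoplus_s H_*(A^-_s)$, and then to descend to arbitrary $p/q$ by an induction on $|q|$ modeled on the surgery exact triangle: resolving a self-crossing of $\ell_{p/q}$ in $T_M$ realizes the triangle geometrically, matching the usual splicing of mapping cones. Along the way one must match the relative Maslov grading --- the grading data on $\Gamma$ was set up so that the grading of an intersection of $\Gamma$ with a meridian equals the Maslov grading of the corresponding generator of $CFK_{\sRminus}$, and the grading of $\ell_{p/q}$ then supplies exactly the shifts appearing in $\mathbb{X}^-(p/q)$, up to an overall shift --- and the $\F[W]$-module structure, which agrees because multiplication by $W$ on $\HFminus(Y_{p/q}(K))$ corresponds to the $W=UV$ action, realized on the Floer side by pushing a bigon across a marked point of $T_M$.

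The main obstacle is the middle step: verifying that the Floer differential on $CF(\Gamma,\ell_{p/q})$ genuinely respects the height filtration and reproduces precisely the maps $v_s$ and $h_s$, with the bounding-chain corrections contributing exactly the terms needed and no others --- and that this persists when $|q|>1$, where $\ell_{p/q}$ wraps multiply and its staircase lift is more intricate, as well as in the degenerate cases $p/q\in\{0,\infty\}$. Keeping careful track of the gradings through the cover and the staircase isotopy, so that the identification with $\mathbb{X}^-(p/q)$ is grading-preserving on the nose (up to a global shift), is the other delicate point.
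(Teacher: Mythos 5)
Your first two paragraphs are in the spirit of the paper's actual argument: the key step is indeed to lift $\ell_{p/q}$ to the marked cylinder $\overline{T}_M$, perturb it into a staircase whose meridional segments sit at consecutive heights, and observe that the intersection points on the height-$s$ piece generate a copy of $A^-_s$, that intersections near the transition arcs give copies of $B^-$, and that bigons across a transition recover $v_s$ or $h_s$ (the latter via the pairing characterization of $\Psi_\spin$). This is exactly what the paper proves, but organized differently: the paper proves an abstract shifted-pairing result first (Proposition~\ref{prop:shifted-pairing}) identifying $CF(\tracks_2, \tracks_1[i;p/q])$ with an algebraic mapping cone $\mathbb{X}^N_{i;p/q}(C_1,\Psi_1,C_2,\Psi_2)$ for \emph{arbitrary} decorated curves, and then specializes $\tracks_1$ to the trivial curve $\tracks_{triv}$ representing the unknot complex, observing that $\ell_{i;p/q}\simeq\tracks_{triv}[i;p/q]$, so that the mapping cone collapses to the Ozsv\'ath--Szab\'o $\mathbb{X}^-_{i;p/q}$ (Proposition~\ref{prop:surgery-formula}). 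Working through the morphism complex $\Mor(C_{triv},C)$ makes the bookkeeping of $A_s$, $B^v$, $v_s$, $h_s^\Psi$ cleaner and handles the $\F[W]$-module structure and gradings automatically.

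Where your proposal diverges is the third paragraph, which substitutes a large-slope base case plus an induction on $|q|$ via the surgery exact triangle. The paper does not do this and you do not need it: once the staircase identification is established (for all slopes at once, uniformly, via the shifted-pairing argument), you read off the mapping cone directly. The exact-triangle route is a legitimate alternative in principle, but as sketched it has gaps. First, $\ell_{p/q}$ is embedded in $T_M$, so there is no ``self-crossing of $\ell_{p/q}$'' to resolve; the geometric avatar of the exact triangle is a crossing between $\ell_{p/q}$ and a second slope, and you would need to explain which triple of slopes you use at each inductive step and why the bigon counts are compatible with the maps in the skein triangle. Second, the exact triangle alone determines $\HFminus(Y_{p/q}(K))$ only up to extension; to pin down the isomorphism as a graded $\F[W]$-module you would still need the kind of explicit matching of generators and differentials that the direct staircase argument already provides, so the induction buys you nothing.

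Two smaller points. The clause ``or to a suitable finite cover when $|q|>1$'' is not quite right: the relevant lift is always to $\overline{T}_M$ (the $\Z$-cover in the $\mu$-direction), where $\ell_{p/q}$ becomes a union of $p$ properly embedded lines indexed by $i\in\Z/p\Z$, one per spin$^c$ class; no finite cover is involved, and the shift pattern $\lfloor (i+np)/q\rfloor$ encodes the $|q|>1$ wrapping. And the case $p/q=0$ requires the completion/truncation discussion precisely because the morphism complex is then infinitely generated over $\F[W]$ --- the paper sidesteps this by working with $\mathbb{X}^N_{i;p/q}$ and observing that $v_s$ (resp.\ $h_s$) is an isomorphism for $s\gg 0$ (resp.\ $s\ll 0$); you should flag this rather than treat $0$ and $\infty$ as mere ``degenerate cases.''
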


For integer slopes, the surgery formula was enhanced in \cite{HeddenLevine:surgery} to compute the knot Floer complex of the dual knot $K^*$ in the surgery $Y_n(K)$. This enhancement can also be described in terms of Floer homology of curves by adding an additional marked point. Let $T_M^{z,w}$ denote the doubly marked torus obtained from $T_M$ by adding a second marked point $z$ just next to the existing marked point $w$ and let $\ell^*_{p/q}$ be a simple closed curve of slope $p/q$ that crosses the short arc connecting $z$ and $w$ exactly once. The decorated curve $\HFminus(Y,K)$ in the singly marked torus $T_M$ can also be viewed as a curve in the doubly marked torus $T_M^{z,w}$ that is disjoint from the short arc connecting $z$ to $w$. Floer homology of (decorated) curves in the doubly marked surface $T_M^{z,w}$ gives a bigraded complex over $\sRminus$, and we have the following:

\begin{theorem}[Surgery Formula for Dual Knots]\label{thm:surgery-formula-dual-knots}
For a nullhomologous knot $K \subset Y$ and an integer slope $n$, $\mathcal{HF}(  \HFminus(Y, K), \ell^*_{p/q} )$ is chain homotopy equivalent to $CFK_{\sRminus}(Y_n(K), K^*)$ as relatively bigraded complexes over $\sRminus$.
\end{theorem}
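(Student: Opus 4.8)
The plan is to deduce this from the Hedden--Levine enhancement \cite{HeddenLevine:surgery} of the \os{} integer surgery formula, by identifying the Lagrangian Floer complex $\mathcal{HF}(\HFminus(Y,K),\ell^*_{p/q})$ (with $p/q=n$) with the mapping cone Hedden and Levine use to compute $CFK_{\sRminus}(Y_n(K),K^*)$. Conceptually this is natural: the complement of the dual knot $K^*\subset Y_n(K)$ is again $M=Y\setminus\nu(K)$, now carrying the surgery framing, and since the marked torus $T_M=H_1(\partial M;\R)/H_1(\partial M;\Z)$ does not depend on a choice of framing, the decorated curve $\HFminus(Y,K)$ \emph{is} the curve invariant of this knot complement; one then expects $CFK_{\sRminus}(Y_n(K),K^*)$ to be computed by pairing it with a doubly-pointed meridian of $K^*$, namely a curve of slope $n$, which is exactly $\ell^*_{p/q}$. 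This parallels Theorem \ref{thm:surgery-formula}, where pairing $\HFminus(Y,K)$ with a slope-$p/q$ curve produces $\HFminus(Y_{p/q}(K))$.

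First I would recall the algebraic target: Hedden and Levine realize $CFK_{\sRminus}(Y_n(K),K^*)$ as (a completion of) a mapping cone built from the large-surgery complexes $A_s$ and copies $B_s$ of $CFK_{\sRminus}(Y,K;\spin)$, joined by vertical projection maps and by horizontal maps incorporating the flip maps $\Psi_\spin$, and carrying an additional grading recording the $K^*$-filtration, which the closed-manifold cone behind Theorem \ref{thm:surgery-formula} does not retain. Then I would run the pairing computation in the cover $\overline{T}_M$ (or a further cover adapted to $\ell^*_{p/q}$): the intersection points of the lift of $\ell^*_{p/q}$ with the curve invariant are indexed by the generators of the $A_s$ and $B_s$, the segments of the slope-$n$ line that run along the staircase part of the curve produce the $A_s$ summands, the segments crossing a fundamental domain produce the $B_s$ summands, and the bigons between consecutive segments produce the cone's vertical and horizontal maps. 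The role of the second marked point $z$ — with $\ell^*_{p/q}$ crossing the $z$--$w$ arc exactly once per period — is to split the count of intersections of a bigon with the single point $w$, which records the power of $W=UV$, into separate powers of $U$ and $V$; arranging the bookkeeping so that the specialization $UV=W$ recovers the pairing behind Theorem \ref{thm:surgery-formula} both settles the relative Maslov grading and determines the second relative grading, up to an overall shift fixed by a local model near $z$ and $w$ (or by the unknot).

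A key issue is the bounding chain. I would first prove the statement modulo $UV=0$, pairing the simplified invariant $\HFhat(Y,K)$ with $\ell^*_{p/q}$ in the punctured doubly-marked torus to recover $CFK_{\sRhat}(Y_n(K),K^*)$ — this is the doubly-marked refinement of the hat surgery formula and agrees with the Hedden--Levine cone reduced mod $UV$. Then I would upgrade: the disks in the Floer differential of $\HFminus(Y,K)$ that pass through the bounding chain $\bchain$ contribute terms divisible by positive powers of both $U$ and $V$, and one must check these reproduce exactly the part of the dual-knot differential invisible at $UV=0$. Since $\HFminus(Y,K)$ is constructed (Theorems \ref{thm:curve-invariant} and \ref{thm:curve-invariant-for-complex}) precisely to encode the full complex $CFK_{\sRminus}(Y,K;\spin)$ together with $\Psi_\spin$, this should follow by matching the bounding-chain contributions term by term against the $U,V$-corrections in the Hedden--Levine cone, invoking the uniqueness clauses of those theorems to make the identification of the two decorated-curve/complex pairs precise. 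Finally, a truncation argument — the standard device from the integer surgery formula, here replacing the infinitely many intersection points of $\ell^*_{p/q}$ with the non-compact lift of the curve by a finite equivalent complex — reduces the infinite cone to an honest finite chain complex, with the only extra care being to carry it out compatibly with both gradings.

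The step I expect to be the main obstacle is this bounding-chain upgrade: establishing that the combinatorial Floer differential in the doubly-marked surface, with bounding-chain disks included, genuinely computes the relatively bigraded $\sRminus$-complex $CFK_{\sRminus}(Y_n(K),K^*)$ and not merely its $UV=0$ reduction, and that the residual ambiguity in $\bchain$ noted after Theorem \ref{thm:curve-invariant} does not affect the chain homotopy type of the pairing. Everything else — identifying generators, matching vertical and horizontal maps with bigons, the gradings, and the truncation — is either a direct transcription of the proof of Theorem \ref{thm:surgery-formula} or a routine, if lengthy, combinatorial verification.
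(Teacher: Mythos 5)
Your overall plan — reduce to the Hedden--Levine formula, identify the Lagrangian Floer complex of $\HFminus(Y,K)$ with $\ell^*_n$ as a mapping cone, and use the second marked point to split the power of $W$ into separate powers of $U$ and $V$ — is the right one, and your observation that $\ell^*_{p/q}$ crosses the $z$--$w$ arc once per period is exactly the device the paper uses to realize the Hedden--Levine filtration $\mathcal{J}$ geometrically.

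However, your ``hat-then-upgrade'' decomposition has a genuine gap, and it is not how the paper structures the argument. You claim that the disks contributing terms divisible by $UV$ in the dual-knot differential are precisely those that pass through the bounding chain $\bchain$. This is false: there are also honest immersed bigons, involving no turning points at all, that geometrically enclose both a $z$ and a $w$ marked point (for instance bigons wrapping around the cylinder). Example~\ref{ex:flip-maps-from-curve} exhibits exactly this: the curve has trivial bounding chain yet the resulting complex has $\partial(c') = UVb' - UVd'$. So after recovering the $UV=0$ version by your first step, the ``upgrade'' must account for both kinds of $UV$-divisible contributions; attributing them solely to the bounding chain would leave some of the dual-knot differential unexplained. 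The paper avoids this issue entirely by never decomposing along $UV=0$: Proposition~\ref{prop:shifted-pairing} already identifies the Floer complex of $\tracks$ with $\ell^*_{i;n}$ in the \emph{singly} marked cylinder with the truncated minus mapping cone $\mathbb{X}^N_{i;n}$ as an $\F[W]$-complex — bounding chain and all — so the only remaining content of Proposition~\ref{prop:dual-surgery-formula} is to show that adding the second marked point records the Hedden--Levine filtration $\mathcal{J}$ (equivalently, the dual knot's Alexander grading). This is done by observing that each time the $\ell^*_{i;n}$ part of a bigon's boundary crosses $\mu$ rightward or leftward it picks up one extra $z$ or one extra $w$, and then pulling $\ell^*_{i;n}$ tight. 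You should reorganize your argument to lean on the already-proved Proposition~\ref{prop:shifted-pairing} (or the analogous Proposition~\ref{prop:surgery-formula}) for the minus-level $\F[W]$-identification, leaving only the filtration bookkeeping with the second marked point — this sidesteps the gap and matches the paper's route.
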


Though it is suppressed from the statements above, the equivalences in Theorem \ref{thm:surgery-formula} and \ref{thm:surgery-formula-dual-knots} also recover the spin$^c$ decomposition of $\HFminus(Y_{p/q}(K))$ and $CFK_{\sRminus}(Y_n(K), K^*)$, respectively, where the spin$^c$ decomposition on the corresponding Floer homology of curves can be defined by considering the spin$^c$ decomposition on $\HFminus(Y,K)$ as well as appropriate lifts to $\overline{T}_M$; for more details, see Section \ref{sec:surgery}.

Each theorem has a weaker form obtained by setting $W = 0$ in Theorem \ref{thm:surgery-formula} and setting $UV=0$ in  Theorem \ref{thm:surgery-formula-dual-knots}. In both cases we can replace the curve invariants $\HFminus(Y,K)$ with the simplified invariants $\HFhat(Y,K)$. In Theorem \ref{thm:surgery-formula} we take Floer homology in the punctured torus $T_M^*$ rather than in the marked torus $T_M$, thus ignoring disks that cover the marked point, and we recover the $\F$-vector space $\HFhat$ rather than the $\F[W]$-vector space $\HFminus$. In Theorem \ref{thm:surgery-formula-dual-knots} we still take Floer homology in the doubly marked torus but we can ignore disks that cover both marked points, and we recover the $UV=0$ knot Floer complex $CFK_{\sRhat}(Y_n(K), K^*)$.

\subsection{Relationship to Bordered Floer homology and related work}\label{sec:intro-bordered}

This paper is inspired by earlier work of the author with Rasmussen and Watson realizing bordered Heegaard Floer invariants as decorated immersed curves \cite{HRW, HRW:companion}. Given a 3-manifold $M$ with torus boundary and a parametrization $(\alpha, \beta)$ of the boundary, bordered Floer homology associates a type D structure $\CFD(M,\alpha, \beta)$ over a particular algebra $\Alg$. In \cite{HRW} a structure theorem was given for these algebraic objects, showing that $\CFD(M,\alpha,\beta)$ is equivalent to a collection of immersed curves decorated with local systems in the punctured torus $\partial M \setminus z$ for some basepoint $z$; this decorated immersed multicurve is denoted $\HFhat(M)$. A pairing theorem also shows that if $M_1$ and $M_2$ are two manifolds with torus boundary, $\HFhat(M_1 \cup M_2)$ can be obtained from the Floer homology of the corresponding curves in the gluing torus $-\partial M_1= \partial M_2$ (with a puncture at $z_1 = z_2$).

When $Y = S^3$ and $\F = \Z/2\Z$, it is known that $\CFD(M, \alpha, \beta)$ is equivalent to the $UV=0$ knot Floer complex $CFK_{\sRhat}(S^3, K)$ by an algorithm of Lipshitz, Ozsv{\'a}th, and Thurston \cite[Chapter 11]{LOT:bordered}. In this case, it was observed in \cite[Section 4]{HRW:companion} that the immersed curve invariants $\HFhat(M)$ representing bordered Floer homology also recover the associated graded of the knot Floer complex (i.e. the $V = 0$ quotient of $CFK_{\sRminus}$) by taking Floer homology with the meridian in a doubly pointed torus, and conversely that the curves can be constructed directly from the knot Floer complex given a suitably nice basis. The results in this paper generalize those observations to more general knots. Using the relationship between $CFK_{\sRhat}(S^3, K)$ and the bordered invariant of the knot complement $M$, it is straightforward to check that the curves $\HFhat(S^3, K)$ defined in this paper are precisely the same as the decorated curves $\HFhat(M)$ constructed in \cite{HRW}. We expect that this is true more generally:

\begin{conjecture}\label{conj:same-as-CFD}
For any knot $K \subset Y$ with complement $M = Y \setminus \nu(K)$, the decorated curves $\HFhat(Y, K)$ in the punctured torus $T_M^*$ agree with the curves $\HFhat(M)$ defined in \cite{HRW}; in particular, they are invariants of the knot complement $M$.
\end{conjecture}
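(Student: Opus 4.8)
The plan is to reduce Conjecture~\ref{conj:same-as-CFD} to a purely algebraic comparison and then bootstrap from the known $S^3$ case. Both decorated multicurves in question are \emph{complete} invariants of algebraic data: on one side, $\HFhat(M)$ is, by the structure theorem of \cite{HRW}, determined by and determines the bordered type~$D$ structure $\CFD(M,\mu,\lambda)$ over the torus algebra $\Alg$; on the other side, by Theorem~\ref{thm:curve-invariant} (together with Theorem~\ref{thm:curve-invariant-for-complex}), the collection $\{\HFhat(Y,K;\spin)\}$ is determined by and determines the $UV=0$ knot Floer complexes $CFK_{\sRhat}(Y,K;\spin)$ together with the flip maps $\Psi_\spin$, each up to the appropriate notion of equivalence. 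So it suffices to produce a dictionary between $\CFD(M)$ and the data $\bigl(\{CFK_{\sRhat}(Y,K;\spin)\},\{\Psi_\spin\}\bigr)$, and then to check that this dictionary intertwines the two curve-extraction procedures. The first task is exactly the content of \cite[Chapter 11]{LOT:bordered} and \cite[Section 4]{HRW:companion} when $Y=S^3$ and $\F=\Z/2\Z$; the general conjecture requires removing both of these restrictions.

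For the second step the main new ingredient would be a bordered-algebraic description of $\CFD$ of a knot complement valid for arbitrary ambient $Y$. The natural route is through surgery: the complement $M$ is simultaneously $Y\setminus\nu(K)$ and $Y_n(K)\setminus\nu(K^*)$, so $\CFD(M)$ (for a suitable framing) is computed both from $(Y,K)$ and from $(Y_n(K),K^*)$. Combining this observation with the $UV=0$ form of Theorem~\ref{thm:surgery-formula-dual-knots} and the bordered reinterpretation of the Hedden--Levine surgery formula \cite{HeddenLevine:surgery}, one expects to express $\CFD(M)$ as a mapping-cone-type object built from the $UV=0$ knot Floer complex and flip maps of $(Y,K)$, and then to verify directly that applying the \cite{HRW} structure theorem to this mapping cone yields the curve constructed here from the same input. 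A cleaner but more ambitious alternative is to prove outright a generalization of the \cite[Chapter 11]{LOT:bordered} algorithm, identifying $\CFD(M)$ with a suitably ``reparametrized'' knot Floer complex for knots in arbitrary $Y$ and over arbitrary coefficient fields.

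A complementary, and possibly easier, line of attack bypasses the algebra and argues via rigidity of immersed curves. Both invariants pair with simple closed curves in the punctured torus to recover the same topological quantity: $\mathcal{HF}\bigl(\HFhat(Y,K),\ell_{p/q}\bigr)\cong\HFhat(Y_{p/q}(K))$ by the $W=0$ form of Theorem~\ref{thm:surgery-formula}, while $\mathcal{HF}\bigl(\HFhat(M),\ell_{p/q}\bigr)\cong\HFhat(Y_{p/q}(K))$ by the pairing theorem of \cite{HRW}. If one knows that a homotopy class of immersed multicurve-with-local-system in the punctured torus is determined by its Floer homology with all slopes, as relatively graded vector spaces, this forces $\HFhat(Y,K)$ and $\HFhat(M)$ to be homotopic. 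Establishing such a rigidity statement --- especially in the presence of nontrivial local systems, where pairing ranks alone need not separate curves --- is itself nontrivial; one would likely have to pair with more general test curves, or track the full $\F[W]$-module structure rather than just vector-space dimensions.

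The hard part, on either route, is the step away from $S^3$. The identification of $\CFD(M)$ with knot Floer data is currently available only there, and generalizing it requires controlling the spin$^c$ decomposition of $\CFD$ for knot complements in an arbitrary $Y$, as well as working over an arbitrary coefficient field rather than $\Z/2\Z$, to which both \cite{HRW} and \cite{LOT:bordered} are restricted. Along the way one must also reconcile conventions between the two constructions --- the placement of markings and punctures, the orientation of the boundary torus, the identification $\partial M\cong T_M$, and the normalization of gradings --- so that the two curves agree on the nose and not merely up to an ambiguous homeomorphism of the punctured torus.
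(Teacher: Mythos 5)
This statement is a \emph{conjecture} in the paper, not a theorem: the paper offers no proof of Conjecture~\ref{conj:same-as-CFD}, and only remarks that the special case $Y=S^3$, $\F=\Z/2\Z$ is ``straightforward to check'' using \cite[Chapter~11]{LOT:bordered}. There is therefore no paper argument for your proposal to be compared against, and your write-up should not be evaluated as a proof; to your credit, you are explicit that you are outlining a strategy and not closing the argument. A few concrete comments on the strategy itself.

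Your reduction of the problem to ``produce a dictionary between $\CFD(M)$ and $\bigl(\{CFK_{\sRhat}(Y,K;\spin)\},\{\Psi_\spin\}\bigr)$'' is the natural move and matches what the paper's discussion suggests, but it is close to circular: extending the LOT Chapter~11 algorithm to arbitrary $Y$ and arbitrary $\F$ is not a step on the way to the conjecture, it essentially \emph{is} the conjecture restated in algebraic form, since both structure theorems are faithful. Worse, for $\F\neq\Z/2\Z$ the right-hand side $\HFhat(M)$ is not even defined: \cite{HRW} builds curves from $\CFD(M)$, and $\CFD$ is defined only over $\Z/2\Z$. So for general coefficients the conjecture already presupposes a coefficient extension of bordered Floer homology that does not yet exist; this is a genuine foundational obstacle rather than a technical one.

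Your second route via rigidity has a real gap that you flag but underestimate. Even restricting to $\Z/2\Z$, the dimensions $\dim_\F\,\HFhat\bigl(\mathcal{HF}(\gamma,\ell_{p/q})\bigr)$ over all slopes do not determine a multicurve-with-local-system in the punctured torus: parallel components carrying distinct local systems of the same dimension pair identically with every slope, and even for embedded curves the ranks only see the unsigned intersection numbers, which are far from injective on homotopy classes. You partially address this by proposing to track gradings and the $\F[W]$-module structure, but that is not a small refinement; it amounts to rebuilding the full structure theorem of \cite{HRW}, at which point you are back to the algebraic comparison. The separation statement you would need is not in the literature and is not plausibly easier than the conjecture itself.

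Finally, note that the paper's own surrounding discussion (around Conjecture~\ref{conj:invariant-of-complement} and Theorem~\ref{thm:surgery-formula-dual-knots}) sketches a related surgery-based strategy --- namely, prove a dual-knot formula for the flip maps and check it matches what the curves predict --- which is in the same spirit as your first route but phrased intrinsically to this paper's invariants rather than passing through $\CFD$. That version at least avoids the $\F\neq\Z/2\Z$ definitional issue, since it never mentions $\CFD$. If you pursue any of these directions, I would start there.
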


If this conjecture holds, the simplified $W = 0$ version of Theorem \ref{thm:surgery-formula} can be viewed as a special case of the immersed curve pairing theorem for bordered invariants.

Immersed curve invariants for knots in $S^3$ were instrumental in the authors previous work on the cosmetic surgery conjecture \cite{Hanselman:cosmetic}. That work used the invariants defined in \cite{HRW}, making use of the equivalence between bordered Floer homology and knot Floer homology in this case; in particular, the surgery formula used was a consequence of the bordered pairing theorem in \cite{HRW}. However, \cite{Hanselman:cosmetic} required slightly more than the bordered approach to immersed curves could offer, since it was important to understand the $d$-invariants of Dehn surgeries but the bordered pairing theorem does not see either minus information or absolute gradings. In fact, the results in \cite{Hanselman:cosmetic} rely in a small way on the proof of Theorem \ref{thm:surgery-formula} presented in this paper, which relates the Floer homology of the relevant curves to the mapping cone formula rather than invoking the bordered pairing theorem. The minus version of Theorem \ref{thm:surgery-formula} was not needed, but the identification with the mapping cone formula was used to identify the distinguished generator determining the $d$-invariant. A sketch of this proof, in the $UV=0$ setting, was given but the detail have not appeared until now.

The construction of immersed curve invariants for bordered Floer homology in \cite{HRW} was based on an algebraic structure theorem for type D structures over the torus algebra. This is a special case of a more general structure theorem due to Haiden, Katzarkov, and Kontsevich \cite{HKK}. This result for other surfaces can also be recovered using the more constructive proof method from \cite{HRW}; this is worked out for arbitrary surfaces in \cite{KWZ}. We use the same core proof to obtain the $UV=0$ simplifications of the results in this paper. We repeat this main argument, adapted to the setting of bigraded complexes, in an effort to make the paper more self-contained and also to set up the argument with a view toward generalizing to the minus setting. But we point out that the structure theorem for complexes over $\sRhat$ also follows from the more general case. We especially wish to point out a close connection between the constructions in this paper and the work of Kotelskiy, Watson, and Zibrowius in \cite{KWZ:mnemonic}, which specifically applies the structure theorem to type D structures over the algebra $\sRhat$ (such type D structures are equivalent to complexes over $\sRhat$). It is shown that these structures are equivalent to immersed curves with local systems in the doubly marked disk. The $UV = 0$ version of Theorem \ref{thm:curve-invariant-for-complex} for bigraded complexes (ignoring flip maps) is equivalent to Theorem 1 in \cite{KWZ:mnemonic}. To relate these results, we remove the marked points from the infinite strip and project our decorated curves to the quotient by the vector $(0,1)$. This punctured cylinder plays the role of the doubly marked disk, where we have interchanged the roles of boundary components and marked points (note that in \cite{KWZ:mnemonic}, curves avoid the boundary and non-compact curves approach the punctures).

While the hat-type curve invariants for knots defined in this paper are parallel to the curve invariants defined in \cite{HRW} and \cite{KWZ:mnemonic}, the minus-type curve invariants are fundamentally new. This is because the curve invariants in \cite{HRW} are constructed using bordered Heegaard Floer homology, and until recently this was defined only as a hat-theory. When this project first began one motivation was to use knot Floer homology to construct immersed curve invariants for manifolds with torus boundary in order to bypass the reliance on bordered Floer homology and access minus information. This allowed us to determine what extra decorations would be needed to enhance the immersed curves from \cite{HRW} without working from a minus bordered invariant. Very recently Lipshitz, Ozsv{\'a}th, and Thurston have extended their construction of bordered Floer homology to a minus-type invariant for manifolds with torus boundary \cite{LOT:minus}; this invariant takes the form of a module over a particular weighted $A_\infty$ algebra. The results of this paper suggest that the algebraic objects defined in \cite{LOT:minus} can also be represented by immersed curves decorated with bounding chains in the marked torus. We hope to explore this and the connection between the curves constructed in this paper and those arising from minus bordered invariants in the future.

Before the minus extension of $\CFD$ appeared in \cite{LOT:minus}, another approach to defining minus Heegaard Floer invariants for manifolds with torus boundary was given by Zemke \cite{Zemke:bordered-minus}. This approach also avoids relying on bordered Floer invariants by using knot (or link) Floer homology along with auxiliary data (the link surgery formula, which contains information about flip maps) to construct an invariant. In this way, the immersed curve invariants in this paper should be closely related to the invariants defined in \cite{Zemke:bordered-minus} (in the case of a knot rather than a link), but those invariants are defined algebraically as a type D module over some algebra. This paper was developed independently of Zemke's work, but it should be the case that the decorated immersed curves described in this paper provide a geometric interpretation for the algebraic invariants defined in \cite{Zemke:bordered-minus}; exploring this connection concretely is another goal for future work.

The curves $\HFminus(Y,K)$ constructed in this paper are invariants of knots, but they provide a possible path to defining minus type bordered Floer invariants for manifolds with torus boundary. Generalizing Conjecture \ref{conj:same-as-CFD}, we expect that the immersed curves for a knot $K$ are in fact an invariant of the knot complement:

\begin{conjecture}\label{conj:invariant-of-complement}
For any knot $K \subset Y$ with complement $M = Y \setminus \nu(K)$, the decorated curves $\HFminus(Y, K)$ in the marked torus $T_M$ are an invariant of $M$.
\end{conjecture}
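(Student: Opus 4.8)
\emph{Proposed approach.}
The goal is to upgrade Theorem~\ref{thm:curve-invariant} from an invariance statement for the pair $(Y,K)$ to one for the complement $M$. Since the marked torus $T_M$, the cover $\overline{T}_M$, the Seifert longitude $\lambda$, and the lattice of marked points are all built from $M$ alone, it suffices to show the following: if $K'\subset Y'$ is a second nullhomologous knot with complement $M$, then $\HFminus(Y',K';\spin')$ and $\HFminus(Y,K;\spin)$ agree in the Fukaya category of $\overline{T}_M$ under the identifications of $\overline{T}_M$, of marked points, and of $\spin^c$ structures induced by an orientation-preserving homeomorphism of complements. Granting the expected generalization of Conjecture~\ref{conj:same-as-CFD}, the underlying curve $\Gamma$ and the degree-zero part $\bchainhat$ of the bounding chain are already invariants of $M$, so the genuinely new content is that the strictly-negative-degree part of $\bchain$ --- the part recording information beyond the $UV=0$ truncation --- is also determined by $M$.

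For a second nullhomologous knot $K'$ with complement $M$, the new meridian $\mu'$ satisfies $\mu'\cdot\lambda=\pm1$, hence $\mu'=\pm\mu+k\lambda$ for some $k\in\Z$; equivalently $(Y',K')=(Y_{1/k}(K),K^\ast)$, the $1/k$-surgery together with its dual knot, and one checks that $K^\ast$ remains nullhomologous since $[K^\ast]=[\lambda]=0$. So it is enough to compare $\HFminus(Y_{1/k}(K),K^\ast)$ with the image of $\HFminus(Y,K)$ under the shear of $\overline{T}_M$ induced by $\mu\mapsto\mu+k\lambda$. To do this I would use the dual-knot surgery formula: its rational-slope version --- Theorem~\ref{thm:surgery-formula-dual-knots} is stated for integer slopes, but one expects the same proof to give, for $\HFminus(Y,K)$ paired with the curve $\ell^\ast_{1/k}$ in the doubly marked torus, the complex $CFK_{\sRminus}(Y_{1/k}(K),K^\ast)$ together with its flip map --- computes exactly the data that, fed into the construction of Theorem~\ref{thm:curve-invariant-for-complex}, produces $\HFminus(Y_{1/k}(K),K^\ast)$. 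Using equivariance of the Floer-theoretic pairing under the mapping class group of $\partial M$, verifying the conjecture then amounts to a self-consistency statement: pairing a decorated curve in $\overline{T}_M$ with $\ell^\ast$ and re-extracting a decorated curve, then correcting by the corresponding shear, returns the original decorated curve.

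I expect this self-consistency step to be the main obstacle. It is a ``pairing theorem with an inverse'' for decorated curves, and making it precise requires a combinatorial model of Floer homology of curves-with-bounding-chains in the doubly marked torus that is explicit enough to exhibit the necessary cancellation; in the hat setting the analogous statement is essentially a special case of the immersed-curve pairing theorem of \cite{HRW} with a solid torus on one side, so the new difficulty is entirely in tracking the bounding chain through the pairing, for which the combinatorial Floer theory developed in the body of the paper is the right tool. A complementary and possibly cleaner route --- the one gestured at in the introduction --- is to construct a dictionary identifying $\HFminus(Y,K)$ with a minus-type bordered invariant of $M$ (the weighted $A_\infty$ module of \cite{LOT:minus} or the type D module of \cite{Zemke:bordered-minus}), whereupon invariance under homeomorphism of $M$ is inherited from the bordered theory and the strengthened form of Conjecture~\ref{conj:same-as-CFD} follows as well; the drawback is that building this dictionary is itself substantial and is plausibly no easier than the conjecture.
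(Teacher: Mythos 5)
This statement is labelled as a \emph{conjecture} in the paper: it is left open, and no proof is given, so there is nothing in the paper against which your attempt can be checked point-for-point. What the paper \emph{does} give, in the discussion immediately following the conjecture, is essentially the same reduction you outline, so your framing is on target: every other meridian for $M$ has the form $\pm\mu + k\lambda$, so $(Y',K') = (Y_{\pm 1/k}(K), K^*)$ with $K^*$ again nullhomologous; by the uniqueness half of Theorem~\ref{thm:curve-invariant} the conjecture would follow once one knew that the sheared $\HFminus(Y,K)$ represents both $CFK_{\sRminus}(Y',K')$ \emph{and} its flip map; and iterating the $k = \pm 1$ case covers all $k$, so the restriction of Theorem~\ref{thm:surgery-formula-dual-knots} to integer slopes is not the real obstacle (and in any case you have correctly observed that $K^*$ stays nullhomologous, so the construction of $\HFminus(Y',K')$ is defined).

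The genuine gap, which your write-up does not quite bring to the surface, concerns the flip map. Theorem~\ref{thm:surgery-formula-dual-knots} (and the Hedden--Levine formula it rests on) computes only the bigraded complex $CFK_{\sRminus}(Y_n(K),K^*)$; it says nothing about the flip map of $K^*$, and the paper is explicit that ``to prove the conjecture, we would also need to find a surgery formula for the flip maps associated to a dual knot and check that it agrees with the one predicted by immersed curves.'' Your parenthetical ``one expects the same proof to give \ldots the complex $CFK_{\sRminus}(Y_{1/k}(K),K^\ast)$ together with its flip map'' smuggles in exactly this missing ingredient as though it were a formal consequence of what is proven; it is not, and deriving it is the mathematical content of the conjecture. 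Once you isolate it, you will see that it coincides with what you later call the ``self-consistency step,'' and that step is not merely a technical extension of the doubly-marked pairing but an as-yet-unproven surgery formula for flip maps. Your closing remark about a detour through bordered or link-surgery invariants matches what the paper suggests; stating more plainly that the flip-map surgery formula is the open problem, rather than an expected corollary of Theorem~\ref{thm:surgery-formula-dual-knots}, would make the proposal honest about where the difficulty actually lies.
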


For any manifold $M$ with torus boundary, we can choose some meridian $\mu$ and view $M$ as the complement of $K_\mu \subset Y_\mu$, where $Y_\mu$ is the Dehn filling of $M$ along $\mu$ and $K_\mu$ is the core of the filling torus. We can then construct the decorated curve $\HFminus(Y_\mu, K_\mu)$, and Conjecture \ref{conj:invariant-of-complement} asserts that the result does not depend on the choice of $\mu$. If this is true, we could denote this curve $\HFminus(M)$.

We note that if Conjecture \ref{conj:invariant-of-complement} is true, it provides a simple way to recover the knot Floer complex and flip maps associated to the dual knot for any Dehn surgery on a knot $K \subset Y$ from the knot Floer complex and flip maps associated with $K$: we simply use the knot Floer data to construct the curve $\HFminus(M) = \HFminus(Y, K)$ and then read off a complex with flip maps from this in the usual way but with the Dehn filling slope in place of the meridian. Conversely, if we knew the knot Floer complex and flip map associated to a dual knot agreed with that predicted by this procedure, then the decorated immersed curve representing the dual knot would be precisely the decorated curve representing the original knot, proving Conjecture \ref{conj:invariant-of-complement}. Theorem \ref{thm:surgery-formula-dual-knots} can be interpreted as saying that for integer surgery, the surgery formula for the dual knot Floer complex predicted by Conjecture \ref{conj:invariant-of-complement} is correct, giving evidence for Conjecture \ref{conj:invariant-of-complement}. To prove the conjecture, we would also need to find a surgery formula for the flip maps associated to a dual knot and check that it agrees with the one predicted by immersed curves.

If the immersed curves defined using knot Floer homology are in fact bordered invariants, we would expect to have a general pairing theorem:
\begin{conjecture}\label{conj:minus-pairing}
If $M_1$ and $M_2$ are manifolds with torus boundary and $\phi:\partial M_1 \to \partial M_2$ is an orientation reversing gluing map, then
$$\HFminus(M_1 \cup_\phi M_2) \cong \mathcal{HF}( \phi(\HFminus(M_1)), \HFminus(M_2)). $$ 
\end{conjecture}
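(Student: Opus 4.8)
The plan is to deduce the conjecture from an algebraic pairing theorem for a minus-type bordered invariant of manifolds with torus boundary, using the structure theory of this paper to identify the decorated curves $\HFminus(Y,K)$ with such an invariant. I would organize the argument in three stages. First, observe that for a knot complement $M = Y\setminus\nu(K)$ the minus-type bordered module of Lipshitz--Ozsv{\'a}th--Thurston \cite{LOT:minus} (or, equivalently, the type-D module over the surgery algebra of Zemke \cite{Zemke:bordered-minus}) is assembled from exactly the data $CFK_{\sRminus}(Y,K)$ together with the flip maps $\{\Psi_\spin\}$; since that data is a complete invariant of the decorated curve $\HFminus(Y,K)$ by Theorem \ref{thm:curve-invariant}, the bordered module and the decorated curve determine one another. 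As this bordered module is manifestly an invariant of $M$, this already proves Conjecture \ref{conj:invariant-of-complement}, so that $\HFminus(M)$ is well-defined and the right-hand side of the pairing makes sense. Second, promote this correspondence to a canonical equivalence by proving a structure theorem---a minus analogue of the theorem of Haiden--Katzarkov--Kontsevich \cite{HKK} and of the constructive theorem underlying \cite{HRW}---asserting that modules over the relevant weighted $A_\infty$ algebra, taken up to the equivalence that preserves all box tensor products, are classified by decorated immersed curves with bounding chains in the marked torus. Third, translate the algebraic pairing theorem for these invariants through the resulting dictionary, checking that the box tensor product of weighted $A_\infty$ modules is computed by the combinatorial Lagrangian Floer homology with bounding chains constructed in this paper.

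I expect the second stage to be the main obstacle. The constructive proof of Theorem \ref{thm:curve-invariant-for-complex} given here---reduce the bigraded complex to a normal form, read the underlying curve off arc by arc from the $UV=0$ truncation, then correct the bounding chain for what is lost modulo $UV$---already supplies most of the needed mechanism. What is missing is a proof that the passage from a weighted $A_\infty$ bordered module to the pair $(CFK_{\sRminus},\Psi)$ discards nothing relevant to pairing: one must show that the higher weighted operations (those encoding the flip/reduction data beyond the differential) are either recoverable from, or else immaterial to every box tensor product in the presence of, the bounding chain $\bchain$. I anticipate this needs the same style of bookkeeping as the present analysis of which strictly-negative-degree self-intersection points are genuinely required in $\bchain$ versus which are forced by the $A_\infty$ relations, together with a careful comparison of the two ambient notions of equivalence (gauge equivalence of $A_\infty$ modules and equivalence in the immersed Fukaya category). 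It is also at this point that the relative $\F[W]$-module grading and the $\Spinc$ decomposition on $\HFminus(M_1\cup_\phi M_2)$ must be matched with the grading package on curves (lifts to $\overline{T}_{M_i}$) and the intersection data; given Theorems \ref{thm:surgery-formula} and \ref{thm:surgery-formula-dual-knots} this bookkeeping should be routine but lengthy.

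An alternative, bordered-free route bootstraps from the surgery formulas already established. One first upgrades Theorem \ref{thm:surgery-formula-dual-knots} to also compute the flip map of the dual knot $K^*$ of an integer surgery, which, as noted in the introduction, immediately yields Conjecture \ref{conj:invariant-of-complement}. One then tries to build a general gluing $M_1\cup_\phi M_2$ from operations for which a formula is available: writing $M_2 = Y_2\setminus\nu(K_2)$, the manifold $M_1\cup_\phi M_2$ is obtained from $Y_2$ by the ``generalized surgery'' that removes $\nu(K_2)$ and fills with $M_1$ in place of a solid torus. The difficulty is that a mapping-cone description of $\HFminus$ for this operation is presently available only when $M_1$ is a solid torus---which is precisely Theorem \ref{thm:surgery-formula}---so carrying this out amounts to proving a relative mapping-cone formula for splicing that computes $\HFminus(M_1\cup_\phi M_2)$ from $CFK_{\sRminus}(Y_2,K_2)$, its flip map, and the decorated curve of $M_1$. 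That is, in effect, a minus bordered pairing theorem re-derived by hand, so the two routes meet at the same hard core.

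Finally, the statement admits two partial verifications that should be recorded along the way and that guide the general argument. When $M_1$ is a solid torus the conjecture reduces to Theorem \ref{thm:surgery-formula} (the curve of a solid torus being a simple closed curve of the appropriate slope). And the hat specialization of the conjecture---replacing $\HFminus$ by $\HFhat$ and pairing in the punctured tori---follows from the immersed-curve pairing theorem for bordered invariants in \cite{HRW} once Conjecture \ref{conj:same-as-CFD} is known, since then $\HFhat(Y,K) = \HFhat(M)$ in the sense of \cite{HRW}. A complete proof of Conjecture \ref{conj:minus-pairing} is thus the problem of lifting that hat statement to the minus setting, the extra content being exactly the behavior of bounding chains under the gluing operation.
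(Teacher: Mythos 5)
The paper does not prove this statement---it appears as a conjecture with a short proof sketch in the surrounding text---so the comparison is between your plan and that sketch. Your Route 1 (via a minus bordered pairing theorem and a structure theorem for weighted $A_\infty$ modules) is acknowledged by the paper as likely amounting to a geometric reinterpretation of Zemke's algebraic pairing theorem, but it is not the intended path. Your Route 2 is closer in spirit, yet it diverges at a decisive point: you treat the gluing asymmetrically---write $M_2 = Y_2\setminus\nu(K_2)$ and fill its torus hole with $M_1$ in place of a solid torus---and conclude that a new ``relative mapping-cone formula for splicing'' is required, which you correctly observe is in effect a bordered pairing theorem re-derived by hand.

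The paper's sketch avoids this obstacle with a connected-sum reduction: choose any slope $\alpha$ on the gluing torus, declare $\alpha$ to be a meridian on both sides so that $M_i = Y_i\setminus\nu(K_i)$ for $i=1,2$, and use standard topology to identify $M_1\cup_\phi M_2$ with an integer surgery on $K_1\# K_2 \subset Y_1 \# Y_2$. Proposition \ref{prop:shifted-pairing} identifies $\mathcal{HF}(\phi(\HFminus(M_1)),\HFminus(M_2))$ with the shifted pairing of $(C_1,\Psi_1)$ and $(C_2,\Psi_2)$, and the claim is that this shifted pairing coincides with the Ozsv{\'a}th--Szab{\'o} mapping cone for an integer surgery on the tensor-product complex $C_1\otimes C_2$ with the tensor-product flip map. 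The K\"unneth formula for knot Floer homology of connected sums then substitutes for the new mapping-cone formula you were missing; the genuinely new ingredients the paper identifies are a K\"unneth formula for flip maps under connected sum, an extension of the curve invariants to rationally nullhomologous knots (the reparametrized $K_i$ are generally only rationally nullhomologous once the meridian is changed), and Conjecture \ref{conj:invariant-of-complement}. You correctly flagged the last of these, and correctly identified the solid-torus case and the hat specialization as the available checks, but the connected-sum trick that lets the existing Ozsv{\'a}th--Szab{\'o} surgery formula do the work is the step your Route 2 does not find.
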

Theorem \ref{thm:surgery-formula} is a special case of this where $M_2$ is a solid torus. To prove this more generally, assuming the curves $\HFminus(Y,K)$ are defined for all rationally nullhomologous knots and assuming Conjecture \ref{conj:invariant-of-complement}, we could choose any slope on the gluing torus in $M_1 \cup_\phi M_2$ and use it as a meridian on either side to view the gluing as a splice of two knot complements. The Floer homology $\mathcal{HF}( \phi(\HFminus(M_1)), \HFminus(M_2))$ can then be identified with the shifted pairing (defined in Section \ref{sec:shifted-pairing}) of the two knot Floer complexes, and this in turn agrees with the mapping cone complex for some integer surgery on the tensor product of the two knot Floer complexes equipped with the tensor product of the two flip maps. Provided flip maps behave in the obvious under connected sums, this is the mapping cone complex for the integer surgery on the connected sum of the two knots, which standard arguments show is equivalent to the splice of the not complement. We aim to carry out this strategy in future work. We remark again that this most likely amounts to a geometric reinterpretation of the pairing theorem for Zemke's algebraic invariants, but it would be enlightening to have a curve based proof of this.

\subsection{An example}\label{sec:intro-example}

We will demonstrate our key results with an example. Let $K \subset S^3$ be the left handed trefoil with meridian $\mu$ and Seifert longitude $\lambda$, and let $M$ denote the complement.

The knot Floer complex $CFK_{\sRminus}(S^3, K)$ has three generators $a$, $b$, and $c$, and differential
$$\partial(a) = -Vb, \quad \partial(b) = 0, \quad \text{ and } \quad \partial(c) = Ub.$$
By the construction in Section \ref{sec:simple-curves} and Section \ref{sec:enhanced-curves} this bigraded complex is represented by the immersed arc in the marked strip $\strip$ shown in Figure \ref{fig:intro-example}(a); the bounding chain is trivial (as it must be since the arc has no self-intersection points). Note that the complex is recovered from this curve by taking Floer homology with the vertical line $\mu$ in the doubly marked strip in which we replace each marked point by a $z$ marked point just to the left of $\mu$ and a $w$ marked point just to the right of $\mu$. In particular there is a bigon on the right side of $\mu$ from $c$ to $b$ covering the right side of a marked point once, contributing $Ub$ to $\partial c$, and there is a bigon from $a$ to $b$ covering the left side of a marked point once and contributing $-Vb$ to $\partial a$ (the sign convention in this case records that the orientation on $\HFminus(S^3, K)$ opposes the boundary orientation of the latter bigon).

The horizontal and vertical homology of $CFK_{\sRminus}(S^3, K)$ are both one dimensional (generated by $a$ and $c$, respectively) and the flip isomorphism associated to $K$ simply takes $a$ to $c$. The decorated immersed curve $\Gamma$ in the cylinder $\cylinder$ representing $CFK_{\sRminus}(S^3, K)$ with this flip map is obtained by gluing the opposite sides of $\strip$ and identifying the endpoints of the immersed arc; the bounding chain is still trivial. After identifying the cylinder $\cylinder$ with $\overline{T}_M$, taking the horizontal direction to $\lambda$ and the vertical direction to $\mu$, the immersed curve $\Gamma$ is the invariant $\HFminus(S^3, K; \spin)$, where $\spin$ is the unique spin$^c$ structure on $S^3$; the projection to the marked torus $T_M$ is denoted $\HFminus(S^3, K)$. The hat version of the curve, which only represents the complex and flip map modulo $UV$, is obtained by restricting the bounding chain to degree zero intersection points; since the bounding chain is trivial, in this case $\HFminus(S^3, K)$ and $\HFhat(S^3, K)$ are the same. Note that $\HFhat(S^3, K)$ agrees with the curve $\HFhat(M)$ defined in \cite{HRW}.

\begin{figure}
\labellist
  \footnotesize
  \pinlabel {{\color{blue} $\mu$}} at 36 120
  \pinlabel {$a$} at 27 113
  \pinlabel {$b$} at 27 61
  \pinlabel {$c$} at 27 15

  \pinlabel {$a$} at 151 86
  \pinlabel {$b$} at 135 71
  \pinlabel {$c$} at 119 42
  
  \pinlabel {$a$} at 364 106
  \pinlabel {$b$} at 339 80
  \pinlabel {$c$} at 321 64
  \pinlabel {$d$} at 304 48
  \pinlabel {$e$} at 274 17
  
  \normalsize
  \pinlabel {$(a)$} at 31 -10
  \pinlabel {$(b)$} at 135 -10
  \pinlabel {$(c)$} at 317 -10
\endlabellist
\includegraphics[scale = .9]{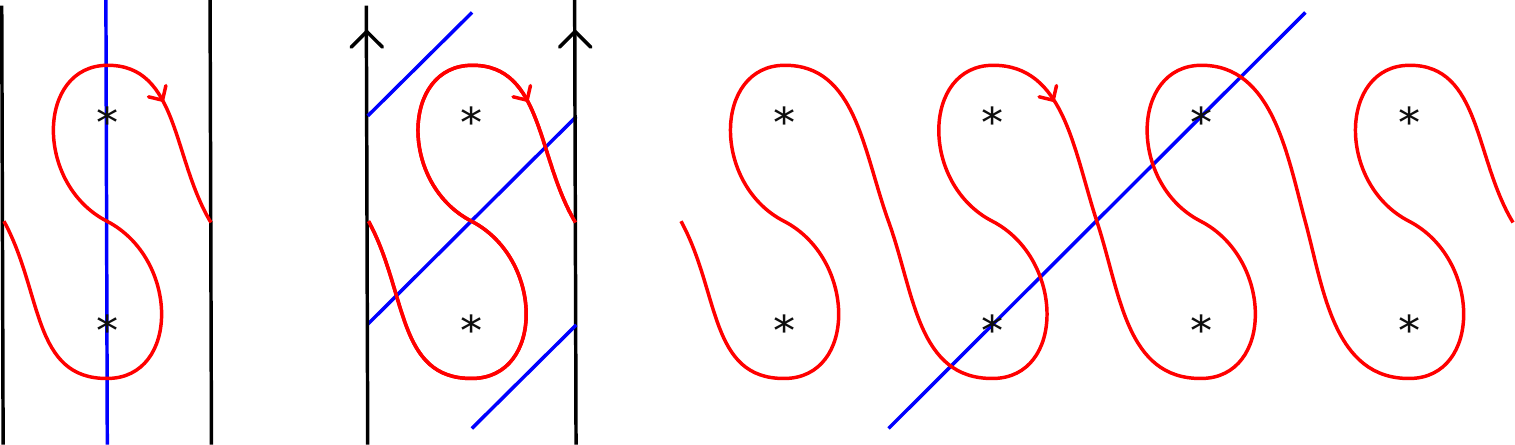}
\vspace{3 mm}
\caption{(a) A curve in $\strip$ representing the knot Floer complex of the left handed trefoil---identifying the edges of the strip also gives a curve in the cylinder $\cylinder$ representing the knot Floer complex and the flip isomorphism; (b) the Floer complex with a curve of slope 1 recovers $HF^-(Y_1(K))$; (c) the Floer complex with a line of slope 1 through the marked points, where we interpret each marked point as a pair marked points $z$ and $w$ on the left and right, respectively, gives the knot Floer complex of the dual knot.}
\label{fig:intro-example}
\end{figure}

\begin{figure}
\labellist
  \footnotesize
  \pinlabel {{\color{blue} $\mu$}} at 41 118
  \pinlabel {$a$} at 50 110
  \pinlabel {$b$} at 50 80
  \pinlabel {$c$} at 50 65
  \pinlabel {$d$} at 50 50
  \pinlabel {$e$} at 50 20
 
 {\color{red}   
  \pinlabel {$W$} at 77 77
  \pinlabel {$-W$} at 15 63
  \pinlabel {$W$} at 212 77
  \pinlabel {$-W$} at 150 63
  }

  \normalsize
  \pinlabel {$(a)$} at 45 -10
  \pinlabel {$(b)$} at 204 -10
  \pinlabel {$(c)$} at 360 -10
\endlabellist
\includegraphics[scale = 1]{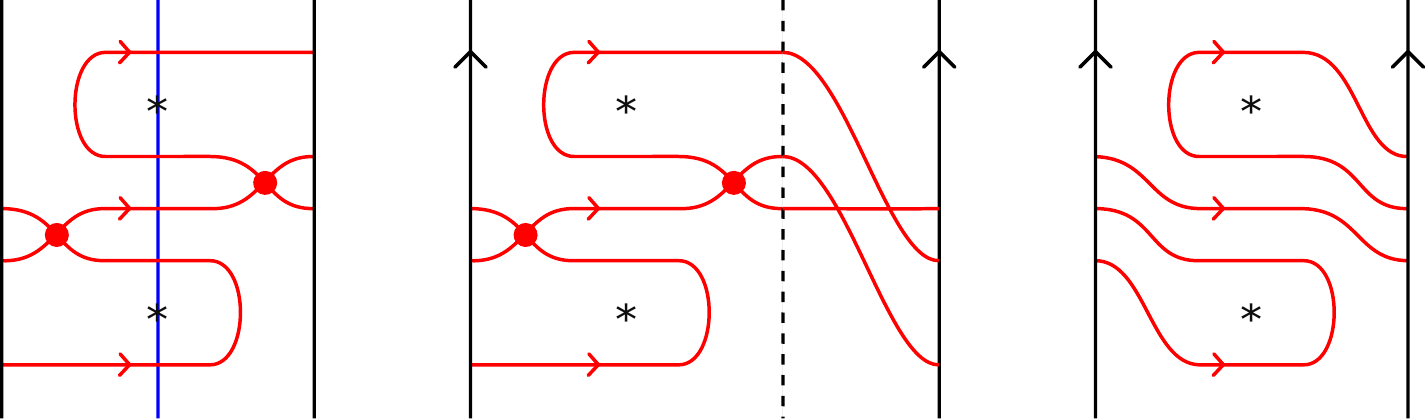}
\vspace{3 mm}
\caption{(a) A decorated immersed curve in $\strip$ representing the knot Floer complex of $K_1 \subset Y_1$; (b) adding a thin strip of arcs encoding the flip isomrophism and then identifying opposite edges of the strip produces a decorated curve in the cylinder $\cylinder$ representing the complex and the flip isomorphism; (c) the curve in $\cylinder$ after a homotopy.}
\label{fig:intro-example2}
\end{figure}

We next consider the manifold $Y = S^3_{1}(K)$ and the dual knot $K^*$ in this surgery. By Theorem \ref{thm:surgery-formula}, $\HFminus(Y)$ is the Floer homology of $\HFminus(S^3, K)$ with a curve of slope 1 in the marked torus $T_M$. This is shown (in the covering space $\overline{T}_M$) in Figure \ref{fig:intro-example}(b). There are 3 generators, $a$, $b$, and $c$, with differential
$$\partial(a) = Wb, \quad \partial(b) = 0, \quad \text{ and } \quad \partial(c) = -Wb,$$
so the homology is isomorphic to $\F[W] \oplus \F$.  
By the refinement of the surgery formula, Theorem \ref{thm:surgery-formula-dual-knots}, the complex $CFK_{\sRminus}(Y, K^*)$ is given by the Floer homology with a line of slope 1 in $T_M$ that passes through the marked point, after we replace the marked point with a $z$ marked point just to the left and a $w$ marked point just to the right. These curves are shown in the covering space $\widetilde{T}_M$ in Figure \ref{fig:intro-example}(c). There are 5 generators, $a$, $b$, $c$, $d$, and $e$, with differential
$$\partial(a) = Ub, \quad \partial(b) = 0, \quad \partial(c) = -UVb + UVd, \quad \partial(d) = 0, \quad \text{ and } \quad \partial(e) = -Vd.$$
Following the construction in Sections \ref{sec:simple-curves} and \ref{sec:enhanced-curves}, we can represent this complex by the immersed multicurve in the strip $\strip$ shown in Figure \ref{fig:intro-example2}(a), decorated with a bounding chain $\bchain$, where $\bchain$ is the linear combination of the two self-interesection points with coefficients as shown in the figure. Note that to recover the complex we count generalized bigons where, which are allowed to make left turns at self-intersection points with nonzero coefficient in $\bchain$ and which are counted according to the weights associated with all such left-turns. For example, there is a generalized bigon from $c$ to $b$ that contributes $Wb$ to $\partial c$.

To turn the immersed curve in the strip $\strip$ from Figure \ref{fig:intro-example2}(a) into an immersed curve in the cylinder $\cylinder$, we need to use the flip isomorphism associated with $K^* \subset Y$, which now carries interesting information because the horizontal and vertical homology both have rank 3. We do not have a surgery formula for the flip isomorphism associated with the dual knot in a surgery, and in this case the flip isomorphism is not uniquely determined by the complex, but we can deduce the correct flip isomorphism using gradings and a surgery argument (for details see Example \ref{ex:1-surgery-on-LHT}). The horizontal complex is generated by $\{a,b,c\}$ while the vertical complex is generated by $\{c,d,e\}$, and (ignoring powers of $U$ and $V$, which are determined by gradings) the flip isomorphism takes $a$ to $c$, $b$ to $d$, and $c$ to $e$. Gluing the sides of the strip $\strip$ after inserting arcs to identify the endpoints according to the flip isomorphism produces the decorated curve in $\cylinder$ in Figure \ref{fig:intro-example2}(b). This can be simplified slightly by a homotopy to give the curve in Figure \ref{fig:intro-example2}(c). We define what we mean by homotopy of immersed curves decorated with bounding chains in Section \ref{sec:invariance-of-Floer-homology}. Note that here we homotope the underlying curve to remove two pairs of intersection points; this is allowed in this case, even though in each pair one intersection point has nontrivial coefficient in $\bchain$ following move $(j)$ in Figure \ref{fig:invariance-moves}. Identifying $\cylinder$ with $\overline{T}_M$, this decorated curve is $\HFminus(Y, K^*;\spin)$, where $\spin$ is the unique spin$^c$ structure on $Y$, and the projection of this curve to $T_M$ is $\HFminus(Y, K^*)$.

We remark that, as curves in $\overline{T}_M$, the decorated curves $\HFminus(S^3, K)$ and $\HFminus(Y, K^*)$ actually agree. They appear differently in the cylinder $\cylinder$ only because we use different parametrizations to identify $\cylinder$ with $\overline{T}_M$. Indeed, starting with the curve in Figure \ref{fig:intro-example}(c) we can apply the lift to $\overline{T}_M$ of a Dehn twist about $\lambda$ in $T_M$ that takes the meridian $\mu^* = \lambda + \mu$ of the dual knot to the vertical direction, and the resulting curve is the one in Figure \ref{fig:intro-example2}(c). This is consistent with Conjecture \ref{conj:invariant-of-complement}.

\subsection{Organization}

We begin by briefly reviewing knot Floer homology and algebraic preliminaries for bigraded complexes over $\sRminus$ in Section \ref{sec:knot-floer}. In Section \ref{sec:floer-theory} we define Floer homology of decorated immersed curves in marked surfaces, and in Section \ref{sec:train-tracks} we discuss an alternate interpretation of these decorated curves in terms of immersed train tracks. The construction of Floer homology is completely combinatorial, and Sections \ref{sec:floer-theory} and \ref{sec:train-tracks} may be of independent interest since this construction is more accessible than other treatments of immersed Lagrangian Floer theory.  In Section \ref{sec:curves-in-strip} we show that a decorated curve in the marked strip $\strip$ encodes a bigraded complex, and a decorated curve in the marked cylinder $\cylinder$ encodes a bigraded complex equipped with a flip map. We also observe that any bigraded complex or any bigraded complex with flip map can be represented by some (not necessarily nice) decorated multicurve in $\strip$ or $\cylinder$. In Section \ref{sec:properties-of-simple-curves} we discuss what it would mean for such a representative to be nice, and prove some properties of the complexes coming from curves in a suitably nice position. In Section \ref{sec:simple-curves} and \ref{sec:flip-maps} we restrict to the $UV = 0$ setting and construct, in Section \ref{sec:simple-curves}, a nice representative in $\strip$ for any bigraded complex over $\sRhat$ and, in Section \ref{sec:flip-maps}, a nice representative in $\cylinder$ for any such complex equipped with a flip map. This proves the existence part of a $UV=0$ version of Theorem \ref{thm:curve-invariant-for-complex}. In Section \ref{sec:enhanced-curves} we show that for a complex over $\sRminus$ and a flip map, the representative of the $UV=0$ quotient can be enhanced, without changing the underlying curve, by modifying the bounding chain decoration on the curve. This completes the existence part of Theorem \ref{thm:curve-invariant-for-complex}. In Section \ref{sec:morphisms} we turn our at attention to the Floer homology of two decorated curves in the marked strip or cylinder, relating this geometric pairing to morphisms of complexes (for curves in $\strip$) or to a more complicated algebraic pairing we define for complexes with flip maps (for curves in $\cylinder$). Using the invariance of this algebraic pairing under homotopy equivalence of the complexes, we prove the uniqueness claim in Theorem \ref{thm:curve-invariant-for-complex}. In Section \ref{sec:surgery} we prove Theorems \ref{thm:surgery-formula} and \ref{thm:surgery-formula-dual-knots} by relating the algebraic pairings from Section \ref{sec:morphisms} to mapping cone formulas known to recover $\HFminus$ of surgeries on knots and $\CFKminus$ of dual knots in surgeries. We end with examples and some discussion about simplifying the bounding chain decoration in Section \ref{sec:examples}.

\subsection*{Acknowledgements} This project has lasted several years and has benefitted from many fruitful conversations over that time. In addition to many others, the author is especially grateful to Liam Watson, Adam Levine, Robert Lipshitz, Artem Kotelskiy, and Wenzhao Chen for helpful conversations, answering questions, and comments on earlier versions of this work.

\section{Knot Floer homology}\label{sec:knot-floer}

\subsection{Bigraded complexes over $\sRminus$} The knot Floer complex takes the form of a bigraded chain complex over $\sRminus$ (or, more precisely, a collection of such complexes). We begin by reviewing these algebraic structures and their properties. We will also define a certain notion of filtered maps between bigraded complexes.

Throughout the paper we work with coefficients in an arbitrary field $\F$ and $\sRminus$ denotes the ring $\F[U,V]$. We define a bigrading $\gr = (\gr_w, \gr_z)$ on $\sRminus$ where $\gr(U) = (-2,0)$ and $\gr(V) = (0,-2)$. The two components of the grading are called the \emph{$U$-grading} and the \emph{$V$-grading}, respectively. While the most general knot Floer invariants are defined over $\sRminus$, we can simplify the invariant by passing to certain quotients of $\sRminus$. The most common, which we denote $\sRhat$, is obtained by setting $UV = 0$. More generally, we will consider
$$\sR_n \vcentcolon= \F[U,V] / (UV)^n.$$
Note that in this notation, $\widehat\sR = \sR_1$. Finally, let $\sR^\infty$ denote $\F[U, U^{-1}, V, V^{-1}]$. Since the product $UV$ will appear frequently we will set $W = UV$ throughout the paper.

At times we will need to discuss an object that is nearly a bigraded chain complex but for which $\partial^2$ is not zero; we refer to this as a precomplex.

\begin{definition}
A \emph{bigraded precomplex over $\sR$} is a finitely generated module over $\sRminus$  with an integer bigrading $(\gr_w, \gr_z)$ such that $\gr_w$ and $\gr_z$ agree mod 2 and multiplication by $U$ and $V$ have degree $(-2,0)$ and $(0,-2)$, respectively, equipped with a linear map $\partial$ of degree $(-1,-1)$. A \emph{bigraded complex over $\sRminus$} is a bigraded precomplex over $\sRminus$ which satisfies $\partial^2 = 0$. The grading $\gr_w$ is called the \emph{Maslov grading} and will also be denoted $M$. The \emph{Alexander grading} $A$ is given by $\tfrac{1}{2}(\gr_w - \gr_z)$. Because we assume that $\gr_w$ and $\gr_z$ have the same parity, $A$ is also integral.
\end{definition}

Let $C^-$ be a bigraded complex over $\sRminus$ as described above with differential $\partial$. Let $C^\infty$ denote $C^- \otimes_{\sRminus} \sR^\infty$, the result of localizing both $U$ and $V$; the differential $\partial$ extends to $C^\infty$. We will use the term \emph{bigraded complex over $\sR^\infty$} to mean a complex $C^\infty$ obtained in this way from some $C^-$ over $\sRminus$. Let $\widehat C$ denote the complex over $\sRhat$ obtained from $C^-$ by setting $UV = 0$. For any $s \in \Z$, let $C^-|_{A=s}$ denote the subcomplex of $C^-$ with Alexander grading $s$, so that $C^- = \bigoplus_{s\in\Z} C^-|_{A=s}$, and similarly for $C^\infty|_{A=s}$ and $\widehat C|_{A=s}$. We say a basis $\{x_1, \ldots, x_n\}$ for $C^-$ over $\sRminus$ is \emph{homegeneous} if each $x_i$ lies in an Alexander graded summand $C^-|_{A=A(x_i)}$. Note that given such a basis and any $s \in \Z$, $\{V^{s-A(x_1)}x_1, \ldots, V^{s-A(x_n)}x_n\}$ is a basis for $C^\infty_s$ over $\F[W, W^{-1}]$. Any two homogenous bases $\{x_1, \ldots, x_n\}$ and $\{x'_1, \ldots, x'_n\}$  are related by a homogenous change of basis, where $x'_i = \sum_{j=1}^n c_{i,j} x_j$ for some coefficients $c_{i,j}$ in $\sRminus$ such that each nonzero term $c_{i,j} x_j$ in the sum has the same Alexander grading.  A basis for $C^-$ is \emph{reduced} if $\partial$ is trivial when $U$ and $V$ are both set to zero; it is a standard argument that any bigraded complex $C^-$ is homotopy equivalent to one which admits a reduced basis. Unless otherwise stated, all bases for bigraded complexes will be assumed to be homogeneous and reduced.

Given a basis for $C^-$, we can record the differential $\partial$ with an $n \times n$ matrix with coefficients in $\sR$, with the $(i,j)$ entry specifying the coefficient of $x_j$ in $\partial x_j$. In fact, the powers of $U$ and $V$ in each entry are determined by the bigrading change from $x_i$ to $x_j$, so if the gradings on the generators are specified then $\partial$ can be encoded by a matrix $\{d_{i,j}\}_{1\le i,j \le n}$ with coefficinets in $\F$. More precisely, this means that
$$\partial(x_i) = \sum_{j = 1}^n d_{i,j} U^{a_{i,j}} V^{b_{i,j}} x_j$$
where $a_{i,j}$ and $b_{i,j}$ are defined by 
\begin{equation}\label{eq:UV-exponents}
a_{i,j} = \frac{ \gr_w(x_j) - \gr_w(x_i) + 1 }{2}, \quad \text{ and } \quad b_{i,j} =  \frac{ \gr_z(x_j) - \gr_z(x_i) + 1 }{2}.
\end{equation}
Note that $d_{i,j}$ must be zero if $\gr_w(x_i) - \gr_w(x_j)$ is even or if $a_{i,j}$ or $b_{i,j}$ are negative. If the coefficient $d_{i,j}$ is nonzero, we say that there is an arrow from $x_i$ to $x_j$; we will say that this arrow is \emph{vertical} if $a_{i,j} = 0$ and that it is \emph{horizontal} if $b_{i,j} = 0$. This terminology comes from the fact that it is common to represent $C^-$ or $C^\infty$ in the plane with $U^a V^b x_i$ represented by a point at coordinates $(-a,-b)$ and the differential represented by arrows. We will often use the notion of arrows to refer to nonzero terms in the differential; arrows are labeled by the coefficient $d_{i,j} U^{a_{i,j}} V^{b_{i,j}}$ (or just by $d_{i,j}$ when the relevant powers of $U$ and $V$ are understood).

There are several quotient complexes of $C^-$ that will be relevant to us, which we now describe. Assume we have fixed a reduced homogeneous basis $\{x_1, \ldots, x_n\}$ for $C^-$. Let $(C^v, \partial^v)$ denote the complex obtained from $(C^-, \partial)$ by setting $V=1$; we call this the \emph{vertical complex} of $C^-$. The vertical complex is a chain complex over $\F[W]$ (note that setting $V = 1$ also means that $W = U$). It is a singly graded complex: the grading $\gr_w$ on $C^-$ descends to a grading on $C^v$, but the grading $\gr_z$ does not. The \emph{vertical homology} of $C^-$ will refer to the homology of the vertical complex, $H_* C^v$; this is a graded module over $\F[W]$. If we further set $U = 0$, the resulting graded chain complex $(\widehat C^v, \widehat\partial^v)$ is called the \emph{hat vertical complex}. Its homology $H_* \widehat C^v$, a graded vector space over $\F$, is the \emph{hat vertical homology} of $C^-$. Similarly, the \emph{horizontal complex} $(C^h, \partial^h)$ is the complex over $\F[W]$ obtained from $C^-$ by setting $U=1$ and $W=V$, with a grading inherited from $\gr_z$. The \emph{hat horizontal complex} comes from setting $V=1$ and also $U=0$. The \emph{horizontal homology} and \emph{hat horizontal homology} refer to the homologies of the respective complexes.

We are interested in choosing bases for $C^-$ which are well behaved with respect to the horizontal or vertical complexes. We say that a basis $\{x_1, \ldots, x_n\}$ of $C^-$ is \emph{vertically simplified} if for each basis element $x_i$ either $\partial(x_i) \equiv V^{b_i} x_{i+1} \pmod U$ for some $b_i\in\Z$ or $\partial(x_i) \equiv 0 \pmod U$. That is, every generator is an end of at most one vertical arrow; equivalently, every generator in the hat vertical complex has at most one arrow in or out. The generators of the vertical homology are exactly the generators with no vertical arrow in or out. Similarly, a basis for $C^-$ is \emph{horizontally simplified} if for each basis element $x_i$ either $\partial(x_i) \equiv U^{a_i} x_{i+1} \pmod V$ for some $a_i \in \Z$ or $\partial(x_i) \equiv 0 \pmod V$; that is, if each generator is an end of at most one horizontal arrow.

\begin{proposition}\label{prop:reduced-simplified-basis}
Let $C$ be a bigraded chain complex over $\sR = \F[U,V]$, where $\F$ is a field. $C$ is chain homotopy equivalent to a complex $C'$ which is reduced. Moreover, $C'$ admits a homogeneous basis which is vertically simplified. It also admits a (possibly different) homogeneous basis which is horizontally simplified.
\end{proposition}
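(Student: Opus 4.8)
The plan is to establish the three assertions in turn: reduce $C$, then build a vertically simplified homogeneous basis for the reduced model $C'$, and finally obtain the horizontally simplified basis from the $U\leftrightarrow V$ symmetry.

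\textit{Reduction.} Start from any homogeneous basis of $C$. Whenever $\partial$ contains a \emph{unit arrow} --- a term $d_{i,j}x_j$ of $\partial x_i$ with $d_{i,j}\in\F^\times$ and $a_{i,j}=b_{i,j}=0$ --- cancel it by Gaussian elimination: $C$ is chain homotopy equivalent to the complex on the remaining generators whose differential sends $x_k$ to $\partial x_k - d_{i,j}^{-1}\,c_k\,(\partial x_i - d_{i,j}x_j)$, where $c_k$ is the coefficient of $x_j$ in $\partial x_k$. Since the cancelled arrow has trivial $U,V$-exponents we have $\gr_w(x_i)=\gr_w(x_j)+1$ and $\gr_z(x_i)=\gr_z(x_j)+1$, so the correction terms carry the right Alexander degree and the retained basis is again homogeneous. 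Each cancellation drops the number of generators, so after finitely many we reach a complex $C'$ with a reduced homogeneous basis, chain homotopy equivalent to $C$. This is the only step that alters the chain homotopy type; everything below is a change of basis of $C'$.

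\textit{Vertical simplification.} Fix the reduced homogeneous basis of $C'$. A vertical arrow $x_i\to x_j$ (one with $a_{i,j}=0$) has coefficient $d_{i,j}V^{b_{i,j}}$ with $b_{i,j}=A(x_i)-A(x_j)\ge 0$, so vertical arrows weakly decrease the Alexander grading, and this is exactly what keeps the required basis changes Alexander-homogeneous. If $x_i\to x_j$ and $x_k\to x_j$ are vertical arrows with the same head and $A(x_k)\ge A(x_i)$, then $x_k':=x_k-(d_{k,j}/d_{i,j})\,V^{A(x_k)-A(x_i)}x_i$ lies in $C'|_{A=A(x_k)}$, and after replacing $x_k$ by $x_k'$ the vertical arrow $x_k\to x_j$ has disappeared; only $\partial x_k$ changes, and the exponent bookkeeping from the reduction step shows the basis stays reduced. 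Dually, if $x_i\to x_j$ and $x_i\to x_k$ are vertical arrows with the same tail and $A(x_j)\ge A(x_k)$, then $x_j':=x_j+(d_{i,k}/d_{i,j})\,V^{A(x_j)-A(x_k)}x_k$ is homogeneous and using it kills the vertical arrow $x_i\to x_k$. Iterating these two moves --- at each step eliminating an offending arrow against the endpoint that is extremal in Alexander grading --- leads to a basis in which every generator is the endpoint of at most one vertical arrow; reindexing so that paired generators are consecutive gives the vertically simplified (still reduced, still homogeneous) basis. One may view this as a normal form for the hat vertical complex $\widehat C^v$ over $\F$, chosen compatibly with its increasing filtration by Alexander grading.

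\textit{Horizontal simplification, and the main obstacle.} Interchanging the gradings $\gr_w\leftrightarrow\gr_z$, equivalently the variables $U\leftrightarrow V$, turns $C'$ into a reduced bigraded complex over $\sR$ in which the words ``vertical'' and ``horizontal'' are swapped; applying the previous paragraph to it produces a homogeneous basis of $C'$ that is vertically simplified there, hence horizontally simplified for $C'$. The one genuinely delicate point is termination of the iteration in the vertical step: a single move can create new vertical arrows (out of $x_k'$, resp.\ into $x_j'$), because $\partial x_i$ may contain further vertical terms, so one must commit to an order of eliminations --- for instance processing generators by decreasing Alexander grading, or tracking a lexicographic weight on the multiset of vertical arrows --- under which a fixed complexity measure strictly decreases at each move. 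Equivalently, in the filtered-complex language, one must show that a finite filtered chain complex over a field has a filtered basis putting its differential in the form $\bigoplus(\F\xrightarrow{\sim}\F)\oplus\bigoplus\F$; this requires an induction on the length of the filtration, and the grading identities above are precisely what guarantee that lifting such a basis back to $C'$ introduces no obstruction.
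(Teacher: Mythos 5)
Your approach is exactly the one the paper has in mind: the paper's own ``proof'' is a citation to Proposition~11.52 of Lipshitz--Ozsv\'ath--Thurston, whose argument is Gaussian elimination to reduce, followed by a further elimination to vertically (or, by the $U\leftrightarrow V$ symmetry, horizontally) simplify. Your reduction step, the homogeneous basis changes that kill excess vertical arrows into or out of a generator, the observation that $b_{i,j}=A(x_i)-A(x_j)\ge 0$ for vertical arrows (which is what keeps the substitutions in $\sR$ rather than its localization), and the $U\leftrightarrow V$ trick are all as in LOT. The one place your writeup stops short of a proof is the termination of the elimination, which you flag honestly: a single move can create new vertical arrows, so one needs a decreasing complexity. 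The standard way to close this (and what LOT's filtered-complex formulation does) is to work one Alexander grading at a time: pass to the hat vertical complex $\widehat C^v$, which is a finite chain complex over the field $\F$ filtered by $A$, and build a splitting compatible with the filtration by induction on filtration length---split off the top filtration step, choose a basis there adapted to the image of and the kernel of $\widehat\partial^v$, lift, and recurse. Each step strictly decreases the number of unprocessed generators, so termination is automatic, and your grading identities show that lifting the resulting filtered basis of $\widehat C^v$ back to $C'$ yields a homogeneous, reduced, vertically simplified basis. With that induction spelled out, the argument is complete.
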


\begin{proof}
This is essentially Proposition 11.52 of \cite{LOT:bordered}. That result assumes $\Z/2\Z$ coefficients rather than an arbitrary field $\F$ and is stated in terms of filtered complexes over $\F[U]$ rather than complexes over $\F[U,V]$ (see the notational remarks in  Section \ref{sec:old-way} below), but the proof is completely analogous.
\end{proof}

Note that while we can always pick a basis which is either horizontally or vertically simplified, there exist complexes which do not admit a single basis that is both horizontally and vertically simplified (see Example \ref{ex:nontrivial-local-system}).


We now consider maps between two bigraded complexes $C_1$ and $C_2$, or more precisely between the localized versions $C^\infty_1$ and $C^\infty_2$. We will be interested in chain maps which interchange the roles of $U$ and $V$. We say that a chain map $\Psi:C^\infty_1 \to C^\infty_2$ is a \emph{skew $\sRminus$-module homomorphism} if $\Psi$ becomes a homomorphism of $\sRminus$-modules if the roles of $U$ and $V$ are exchanged in the action of $\sRminus$ on $C_2$; in particular, $\Psi(Ux) = V\Psi(x)$ and $\Psi(Vx) = U\Psi(x)$ for any $x$ in $C$. We say that such a map has \emph{skew degree} $(a,b)$ if it it interchanges the two gradings and then raises $\gr_w$ by $a$ and $\gr_z$ by $b$, so that $\gr_w(\Psi(x)) = \gr_z(x) + a$ and $\gr_z(\Psi(x)) = \gr_w(x) + b$.

Note that a bigraded complex over $\sR^\infty$ carries two natural filtrations given by the negative exponent of $U$ or of $V$. More precisely, the \emph{$U$-filtration} is defined so that for any generator $x$ of $C_1$ the element $U^a V^b x$ is at filtration level $i = -a$, and the \emph{$V$-filtration} is defined so that $U^a V^b x$ is at filtration level $j = -b$. We say that a skew $\sRminus$-module homomorphism $\Psi:C^\infty_1 \to C^\infty_2$ is \emph{flip-filtered} if it is filtered with respect to the $V$-filtration on $C_1$ and the $U$-filtration on $C_2$; equivalently, $\Psi$ takes each generator $x$ of $C_1$ to a sum of terms of the form $cU^aV^b y$ where $y$ is a generator of $C_2$, $c$ is a nonzero element of $\F$, and $a \ge 0$. Similarly, we say $\Psi$ is \emph{reverse flip-filtered} if it is filtered with respect to the $U$-filtration on $C_1$ and the $V$-filtration on $C_2$. We say that $\Psi$ is a flip-filtered chain homotopy equivalence if it is flip-filtered and there exists a reverse flip-filtered map $\bar\Psi:C_2 \to C_1$ such that $\bar\Psi\circ\Psi$ and $\Psi\circ\bar\Psi$ are both filtered chain homotopic to the respective identity maps (with respect to the $V$-filtration on $C_1$ and the $U$-filtration on $C_2$). Given two flip-filtered maps $\Psi_1$ and $\Psi_2$ from $C_1$ to $C_2$, a \emph{flip-filtered chain homotopy} is a skew-$\sRminus$-module homomorphism $H:C\to C'$ such that $\Psi_1 -\Psi_2 = H\circ\partial + \partial \circ H$ and $H$ is filtered with respect to the $V$-filtration on $C_1$ and the $U$-filtration on $C_2$.

The flip-filtered maps we will consider exchange the gradings; that is, they will have skew degree $(0,0)$. 
In general for each bigraded complex $C_i$ we could fix an Alexander grading shift $s_i$ in $\Z$ and then define a flip map $\Psi_s$ of skew degree $(0,-2s)$ where $s = s_1 + s_2$. However, it is enough to consider one (arbitrary) shift on each complex, since multiplying a flip-filtered map of skew degree $(0,-2s)$ by $V$ gives a flip-filtered map of skew degree $(0, -2s-2)$ and so the maps associated with different choices of shifts carry equivalent information. Thus we will set $s_1 = s_2 = 0$.

Given bases $\{x_1, \ldots, x_m\}$ for $C_1$ and $\{y_1, \ldots, y_n\}$ for $C_2$, a skew $\sRminus$-module homomorphism $\Psi: C_1 \to C_2$ of skew degree $(0,0)$ is specified by a collection of coefficients $c_{i,j}$ for each $1\le i\le m$ and $1\le j\le n$ such that $\gr_w(y_j) - \gr_z(x_i)$ is even (we take $c_{i,j}$ to be 0 for other pairs). In particular,
$$\Psi(x_i) = \sum_j c_{i,j} U^{\frac{\gr_w(y_j) - \gr_z(x_i)}{2}} V^{\frac{\gr_z(y_j)-\gr_w(x_i)}{2}} y_j.$$
If we further assume $\Psi$ is flip-filtered then $c_{i,j}$ is only nonzero if $\gr_w(y_j) \ge \gr_z(x_i)$, since the exponent on $U$ must be nonnegative. If we have a nice basis for $C$, then up to homotopy we can assume $\Psi$ has an even simpler form:
\begin{proposition}\label{prop:flip-map-horiz-basis}
Let $\{x_1, \ldots, x_{2k}, x_{2k+1}, \ldots, x_m\}$ be a horizontally simplified basis for $C_1$ such that $\widehat\partial^h(x_{2i-1}) = x_{2i}$ for $1 \le i \le k$ and $\widehat\partial^h$ is zero on all other generators, where $\widehat\partial^h$ is the differential on the hat horizontal complex. 
If $\Psi:C_1\to C_2$ is a flip-filtered chain map, then $\Psi$ is flip-filtered chain homotopic to another such map $\Psi'$ for which $\Psi'(x_{2i-1}) = 0$ for $1\le i \le k$. Moreover, for each $1\le i\le k$, $\Psi'(x_{2i})$ is trivial mod $U$ and is determined by the values of $\Psi'(x_\ell)$ for $2k+1 \le \ell \le m$.
\end{proposition}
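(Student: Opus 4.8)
The plan is to eliminate the ``bad'' generators $x_{2i-1}$ from the image of $\Psi$ one at a time by a sequence of flip-filtered chain homotopies, using the horizontal arrow $x_{2i-1}\to x_{2i}$ in $C_1$ to absorb the offending term. First I would fix a horizontally simplified basis as in the hypothesis and write $\partial(x_{2i-1}) = U^{a_i} x_{2i} + (\text{terms divisible by } V)$, where $a_i \ge 0$ is determined by the gradings. Since $\Psi$ is a flip-filtered chain map, $\Psi\circ\partial = \partial\circ\Psi$, and applying this to $x_{2i-1}$ relates $\Psi(x_{2i-1})$ (via multiplication by a power of $V$, coming from the skew-module structure applied to $U^{a_i}$) to $\Psi(x_{2i})$ and to $\Psi$ of the $V$-divisible correction terms. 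The key point is that the leading (lowest $U$-power, equivalently $\partial^h$-level) part of $\Psi(x_{2i-1})$ is forced to be $\widehat\partial$-closed in the appropriate sense, so we can cancel it.

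Concretely, I would proceed by induction on $i$ from $k$ down to $1$ (or up from $1$, ordering so that later corrections do not reintroduce earlier problems — the direction should be chosen so that the horizontal arrows point ``consistently''). To remove the term in $\Psi(x_{2i-1})$, define a skew-$\sRminus$-module homotopy $H$ supported on $x_{2i}$: set $H(x_{2i})$ equal to an appropriate $V$-power multiple of (a lift of) $\Psi(x_{2i-1})$, and $H$ zero on all other basis elements. Then $\Psi' = \Psi - (H\partial + \partial H)$ is flip-filtered chain homotopic to $\Psi$, and a direct computation using $\partial(x_{2i-1}) \equiv U^{a_i} x_{2i} \pmod V$ shows $\Psi'(x_{2i-1}) = 0$ while the change to $\Psi$ on the other generators is controlled. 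One must check at each stage that $H$ is genuinely flip-filtered (the $U$-power $a_i$ being nonnegative is exactly what makes this work, since it becomes a nonnegative $V$-power after the skew exchange) and that it has the correct skew degree $(0,0)$; both are grading bookkeeping. Iterating over all $i$ from $1$ to $k$ produces the desired $\Psi'$ with $\Psi'(x_{2i-1}) = 0$ for all $i$.

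For the second claim, once $\Psi'(x_{2i-1}) = 0$ for all $i$, the chain map equation $\partial \circ \Psi' = \Psi'\circ\partial$ applied to $x_{2i-1}$ reads $\partial(\Psi'(x_{2i-1})) = 0 = \Psi'(\partial x_{2i-1})$. Expanding $\partial x_{2i-1} = U^{a_i} x_{2i} + \sum_{\ell} (\text{terms in } x_\ell, \ell > 2k, \text{ with positive } V\text{-power})$ and using skew-module-linearity, this gives $V^{a_i}\Psi'(x_{2i}) = -\Psi'\big(\sum_\ell (\cdots)\big)$, expressing $V^{a_i}\Psi'(x_{2i})$ — hence $\Psi'(x_{2i})$ itself, since $C_2^\infty$ is $V$-torsion-free after localizing or since the relevant coefficients are determined — in terms of $\Psi'(x_\ell)$ for $\ell \ge 2k+1$. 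That $\Psi'(x_{2i})$ is trivial mod $U$ follows because any surviving term would have to have $U$-power zero, forcing (via the grading formula and flip-filteredness) a contradiction with $\Psi'(x_{2i-1}) = 0$ and horizontal simplification — more precisely, a mod-$U$ term in $\Psi'(x_{2i})$ would pair back through the horizontal arrow to give a nonzero mod-$U$ term in $\Psi'(x_{2i-1})$.

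The main obstacle I anticipate is the bookkeeping in the inductive step: ensuring that the homotopy $H$ used to kill $\Psi'(x_{2i-1})$ does not disturb the already-achieved vanishing $\Psi'(x_{2j-1}) = 0$ for the $j$ handled earlier, and that $H$ remains flip-filtered throughout. This requires choosing the order of elimination carefully (exploiting that in a horizontally simplified basis each generator is the endpoint of at most one horizontal arrow, so the ``absorbing'' generators $x_{2i}$ are distinct and the supports of the successive homotopies are essentially disjoint) and tracking $U$- and $V$-powers via the exponent formulas \eqref{eq:UV-exponents}. Everything else is a routine but careful manipulation of the skew-module chain-map identity.
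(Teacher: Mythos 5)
Your approach is exactly the paper's: modify $\Psi$ one generator at a time by a skew chain homotopy $H_i$ supported on the absorbing generator $x_{2i}$. Your insistence on a $V$-power multiple is in fact the arithmetically correct choice: since $H_i$ must be a \emph{skew} module map, $H_i(U^{a_i}x_{2i}) = V^{a_i}H_i(x_{2i})$, so one needs $H_i(x_{2i}) = -V^{-a_i}\Psi_{i-1}(x_{2i-1})$ to close the computation (the paper's displayed formula has $U^{-a_i}$, which only works if one forgets the skew exchange). Flip-filteredness of $H_i$ then follows because multiplying by $V^{-a_i}$ leaves $U$-powers unchanged.

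The ordering issue you flag is real, but your proposed resolution does not actually address it. Disjointness of the supports of the $H_i$ controls compositions $H_i\circ H_j$, which never appear here; the dangerous term is $H_i\circ\partial$ evaluated on an already-cleared $x_{2j-1}$. If $\partial(x_{2j-1})$ contains a $V$-divisible term $cU^aV^bx_{2i}$ (a diagonal arrow into $x_{2i}$), then $\Psi_i(x_{2j-1}) = H_i(\partial x_{2j-1}) = -cV^{a-a_i}U^b\Psi_{i-1}(x_{2i-1})$ is generically nonzero, so step $i$ reintroduces what step $j$ cleared. The fix is a grading constraint: such an arrow forces $\gr_z(x_{2i}) \ge \gr_z(x_{2j-1}) + 1$, and since the horizontal arrow $x_{2i-1}\to x_{2i}$ drops $\gr_z$ by exactly $1$, we get $\gr_z(x_{2i-1}) \ge \gr_z(x_{2j-1}) + 2$. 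Processing the $x_{2i-1}$ in non-increasing order of $\gr_z$ therefore guarantees that no previously cleared $j$ is ever disturbed (alternatively, sum the $H_i$ and iterate, noting that each pass strictly raises the minimal $U$-power of every residual $\Psi'(x_{2j-1})$, which is bounded above for grading reasons). The same caution applies to your argument for the final clause: the $V$-divisible terms of $\partial x_{2i-1}$ are not confined to $x_\ell$ with $\ell > 2k$ — they may hit other $x_{2j}$ as well — so ``determined by the $\Psi'(x_\ell)$'' is again a recursion resolved by the same ordering, not a single application of the chain-map equation.
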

\begin{proof}
We will modify $\Psi = \Psi_0$ to be zero on the generators $x_{2i-1}$ one at a time, defining a sequence of flip-filtered chain homotopies $H_i$ from $\Psi_{i-1}$ to $\Psi_i$ such that $\Psi_i(x_{2i-1}) = 0$ and $\Psi_i(x_{2j-1}) = \Psi_{i-1}(x_{2j-1}) = 0$ for all $j<i$. Then $\Psi_k = \Psi'$ is the desired map. Letting $a_i$ be the length of the horizontal arrow from $x_{2i-1}$ to $x_{2i}$, we define the homotopy $H_i: C_1 \to C_2$ by setting $H(x_{2i}) = -U^{-a_i} \Psi_{i-1}(x_{2i-1})$ and $H_i = 0$ on all other generators. Then
$$(\Psi_i - \Psi_{i-1})(x_{2i-1}) = H_i \circ \partial(x_{2i-1}) + \partial \circ H_i(x_{2i-1}) = -U^{a_i} U^{-a_i} \Psi_{i-1}(x_{2i-1})$$
and so $\Psi_{i}(x_{2i-1}) = 0.$ The final claim follows from the fact that $\Psi$ is a chain map.
\end{proof}

When $\Psi$ is a flip-filtered chain homotopy equivalence it induces a homotopy equivalence on each filtration level, using the $V$-filtration on $C^\infty_1$ and the $U$-filtration on $C^\infty_2$. In particular, considering the 0 filtration levels, it gives a chain homotopy equivalence between $C_1 \otimes_{\sRminus} \F[U,U^{-1},V]$ and $C_2 \otimes_{\sRminus} \F[U,V,V^{-1}]$. Setting $U=1$ in $C_1 \otimes_{\sRminus} \F[U,U^{-1},V]$ and $V=1$ in $C_2 \otimes_{\sRminus} \F[U,V,V^{-1}]$ gives a chain homotopy equivalence from the horizontal complex of $C_1$ to the vertical complex of $C_2$, and this induces an isomorphism $\Psi_*$ from the horizontal homology of $C_1$ to the vertical homology of $C_2$. We call such an isomorphism a \emph{flip isomorphism}. When $\Psi$ also has skew-degree $(0,0)$, the induced isomorphism $\Psi^*$ is grading preserving with respect to $\gr_z$ on $C_1$ and $\gr_w$ on $C_2$.  By setting $V=0$ in $C^\infty_1$ and $U=0$ in $C^\infty_2$, we also have a chain homotpy equivalence from the hat horizontal complex of $C_1$ to the hat vertical complex of $C_2$, it induces a grading preserving isomorphism $\widehat\Psi_*$ from the hat horizontal homology of $C_1$ to the hat vertical homology of $C_2$. 

A flip-filtered chain homotopy equivalence $\Psi$ is determined up to chain homotopy by the flip isomorphism $\Psi_*$ it induces on homoloogy. To see this, choose a horizontally simplified basis for $C_1$ and a vertically simplified basis for $C_2$, so that the generators which are not on a horizontal or vertical arrow, respectively, form bases for the horizontal homology of $C_1$ and the vertical homology of $C_2$. By Proposition \ref{prop:flip-map-horiz-basis}, $\Psi$ is determined by its image on the basis for horizontal homology of $C_1$. A similar argument shows that the image is determined by the projection to the basis for vertical homology of $C_2$.

%
%
%
%
%

\subsection{The knot Floer chain complex}

We will now describe the knot Floer complex associated to a nullhomologous knot $K$ in a 3-manifold $Y$. We assume the reader is familiar with this invariant as defined by Ozsv{\'a}th and Szab{\'o} \cite{OzSz:knots} and Rasmussen \cite{Ras:knot-floer}; surveys of this material can be found in \cite{Manolescu:HFK} and \cite{Hom:survey}. However, we adopt slightly different conventions; in particular, while the original formulation defines the knot Floer complex as a filtered chain complex over $\F[U]$, we introduce a second formal variable $V$ to keep track of the Alexander filtration and view the invariant as a collection of bigraded chain complexes over $\sRminus$. This notation is becoming more common in the literature; we largely follow \cite[Section 1.5]{Zemke} (see also \cite[Section 2]{DHST}). For the reader's convenience, the relationship between these two notational conventions is explained in Section \ref{sec:old-way}.

Recall that knot Floer homology can be defined in terms of a \emph{doubly pointed Heegaard diagram} for the pair $(Y,K)$, that is, a tuple $\sH = (\Sigma, \boldsymbol{\alpha}, \boldsymbol{\beta}, w, z)$ where $(\Sigma, \boldsymbol{\alpha}, \boldsymbol{\beta})$ is a Heegaard diagram for $Y$ and $z$ and $w$ are points in $\Sigma$ in the complement of $\boldsymbol{\alpha}$ and $\boldsymbol{\beta}$. The pair of basepoints $z$ and $w$ determine the oriented knot $K$ by connecting $z$ to $w$ through the $\boldsymbol{\alpha}$-handlebody, avoiding the $\boldsymbol{\alpha}$ disks, and connecting $w$ to $z$ through the $\boldsymbol{\beta}$-handlebody, avoiding the $\boldsymbol{\beta}$ disks. Given a doubly pointed Heegaard diagram $\sH$, the set $\generators(\sH)$ consists of unordered tuples of points in $\boldsymbol{\alpha} \cap \boldsymbol{\beta}$ such that each alpha curve and each beta curve is occupied exactly once. We construct a chain complex $CFK_{\sRminus}(\sH)$ generated over $\sRminus$ by $\generators(\sH)$ whose differential counts holomorphic disks in an appropriate symmetric product of $\sH$. The differential is given by
$$\partial(\x) = \sum_{\y\in\generators(\sH)}  \sum_{  \substack{ \phi\in\pi_2(\x,\y) \\ \mu(\phi)=1 }} \# \left( \frac{\mathcal{M}(\phi)}{\R}\right) U^{n_w(\phi)} V^{n_z(\phi)} \y,$$
where $\pi_2(\x, \y)$ is the set of homotopy classes of disks connecting $\x$ to $\y$, $\mu$ is the Maslov index of such a class, $\mathcal{M}(\phi)$ is the moduli space of all pseudoholomorphic disks in the homotopy class $\phi$, and $n_w(\phi)$ and $n_z(\phi)$ count the multiplicity with which $\phi$ covers the basepoint $w$ and $z$, respectively. Note that if $\mathcal{M}(\phi)$ is nonempty than $n_w(\phi)$ and $n_z(\phi)$ are both nonnegative. 

To each generator $\x$ in $\generators(\sH)$ we can associate a spin$^c$ structure $\spin_w(\x)$ of $Y$, and generators $\x$ and $\y$ determine the same spin$^c$ structure if and only if $\pi_2(\x, \y)$ is nonempty. It follows that $CFK_{\sRminus}(\sH)$ splits as a direct summand over $\Spinc(Y)$, the set of spin$^c$ structures on $Y$:
$$CFK_{\sRminus}(\sH) = \bigoplus_{\spin \in \Spinc(Y)} CFK_{\sRminus}(\sH;\spin).$$
If $\spin$ is a torsion spin$^c$ structure then $CFK_{\sRminus}(\sH;\spin)$ can be equipped with an absolute bigrading $(\gr_w, \gr_z)$. For generators $\x$ and $\y$ and any $\phi \in \pi_2(\x,\y)$, the grading difference between $\x$ and $\y$ is given by
\begin{equation}\label{eq:relative-gr-w}
\gr_w(\x) - \gr_w(\y) = \mu(\phi) - 2n_w(\phi),
\end{equation}
\begin{equation}\label{eq:relative-gr-z}
\gr_z(\x) - \gr_z(\y) = \mu(\phi) - 2n_z(\phi).
\end{equation}
In particular, the differential has degree $(-1, -1)$. $\gr_w$ and $\gr_z$ have the same parity so $A = \frac{\gr_w-\gr_z}{2}$ is also a $\Z$-grading. Thus $CFK_{\sRminus}(\sH;\spin)$ is a bigraded complex as introduced in the previous section. If $\spin$ is not a torsion spin$^c$ structure then we have only a relative bigrading $(\gr_w, \gr_z)$ defined by Equations \eqref{eq:relative-gr-w} and \eqref{eq:relative-gr-z}.


It turns out that the bigraded complexes $CFK_{\sRminus}(\sH;\spin)$ are invariants, up to filtered chain homotopy equivalence, of the triple $(Y,K; \spin)$ and do not depend on the choice of doubly pointed Heegaard diagram $\sH$. We denote this filtered chain homotopy equivalence class of complexes $CFK_{\sRminus}(Y,K;\spin)$, or $CFK_{\sRminus}(Y,K)$ for the sum over all spin$^c$ structures. In the case that $Y = S^3$ we omit it from the notation and write $CFK_{\sRminus}(K)$. At times it is convenient to allow negative powers of $U$ and $V$; for this we define $\CFKinfty_{\sRminus}(Y,K;\spin)$ to be $CFK_{\sRminus}(Y,K;\spin) \otimes_{\sRminus} \sR^\infty$ (similarly $\CFKinfty_{\sRminus}(Y,K) = CFK_{\sRminus}(Y,K) \otimes_{\sRminus} \sRinfty$). $\CFKinfty_{\sRminus}(Y, K)$ is called the \emph{full knot Floer complex} of the knot $K$. Simpler versions of the invariant can be defined analogously by replacing $\sRminus$ with one of its quotients $\sR_{n}$ defined above. In particular, a frequently used version is $CFK_{\widehat\sR}(Y,K)$, the $UV = 0$ quotient of the knot Floer complex. This complex is considerably easier to compute, since holomorphic discs that cover both basepoints can be ignored. See, for example, \cite{OzSz:bordered-CFK} which gives an effective method for computing the $UV = 0$ knot Floer complex.

In addition to the bigraded complexes above, for each spin$^c$ structure $\spin$ in $\Spinc(Y)$ the knot Floer package defines a flip-filtered chain homotopy equivalence
$$\Psi_\spin: \CFKinfty_{\sRminus}(Y,K;\spin) \to \CFKinfty_{\sRminus}(Y,K;\spin+ \text{PD}[K])$$
known as a \emph{flip map}. Note that since we are restricting to nullhomologous, $\text{PD}[K] = 0$ so $\Psi_\spin$ takes $\CFKinfty_{\sRminus}(Y,K;\spin)$ to itself. The flip map is well-defined up to flip-filtered chain homotopy. 

The flip maps are defined as the composition of three maps. Fix a doubly pointed Heegaard diagram $\sH$ representing $(Y,K)$ and let $\sH_z$ and $\sH_w$ denote the singly pointed Heegaard diagrams for $Y$ obtained by ignoring the $w$ basepoint or the $z$ basepoint, respectively. Both $\sH_z$ and $\sH_w$ are singly pointed Heegaard diagrams for the ambient 3-manifold $Y$, so they can be used to compute $\CFminus(Y;\spin)$. Ignoring the $w$ basepoint corresponds to setting $U = 1$ and $V = W$; this gives a map
$$\Omega_z: \CFKinfty_{\sRminus}(\sH; \spin) \to \CFinfty(\sH_z; \spin).$$
In other words, $\CFinfty(\sH_z; \spin)$ is (the $\infty$ version of) the horizontal complex of $\CFKinfty_{\sRminus}(\sH; \spin)$. Restricting to the Alexander grading zero summand gives an isomorphism
$$\Omega_{z}: \CFKinfty_{\sRminus}(\sH; \spin) |_{A=0} \to \CFinfty(\sH_z; \spin),$$
This map takes $\gr_z$ to the Maslov grading of $\CFinfty(\sH_z; \spin)$. Similarly, ignoring the $z$ basepoint corresponds to setting $V=1$ and $U = W$ and gives an isomorphism
$$\Omega_{w}: \CFKinfty_{\sRminus}(\sH; \spin) |_{A=0} \to \CFinfty(\sH_w; \spin)$$
taking $\gr_w$ to the Maslov grading. Finally, let 
$$\Gamma: \CFinfty(\sH_z;\spin) \to \CFinfty(\sH_w;\spin)$$
be a filtered chain homotopy equivalence arising from a sequence of Heegaard moves taking $z$ to $w$ in $\sH$. We define
$$\Psi_{\spin}: \CFKinfty_{\sRminus}(Y,K;\spin) |_{A=0} \to \CFKinfty_{\sRminus}(Y,K;\spin) |_{A=0}$$
to be the composition $\Omega_{w}^{-1}\circ \Gamma \circ \Omega_{z}$. We can uniquely extend this map to a skew $\sRminus$-module homomorphism $\Psi_\spin$ from $\CFKinfty_{\sRminus}(Y,K;\spin)$ to itself.

\begin{proposition}
The flip map $\Psi_{\spin}$ is flip-filtered and has skew degree $(0,0)$.
\end{proposition}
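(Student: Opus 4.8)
The plan is to trace the definition $\Psi_\spin = \Omega_w^{-1} \circ \Gamma \circ \Omega_z$ and check that each piece respects the relevant filtrations and gradings, so that the composition is flip-filtered of skew degree $(0,0)$. First I would record the grading behavior of the three constituent maps. The map $\Omega_z$ restricted to Alexander grading zero is an isomorphism onto $\CFinfty(\sH_z;\spin)$ carrying $\gr_z$ to the Maslov grading; similarly $\Omega_w$ restricted to Alexander grading zero carries $\gr_w$ to the Maslov grading. The map $\Gamma$ is a filtered chain homotopy equivalence of $\CFinfty$ that preserves the Maslov grading (it arises from Heegaard moves taking $z$ to $w$). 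Composing, the map $\Psi_\spin$ on the Alexander grading zero summand takes $\gr_z$ on the source to $\gr_w$ on the target, i.e. it exchanges the roles of the two gradings with no shift; this is exactly skew degree $(0,0)$ on the $A=0$ part, and the extension to a skew $\sRminus$-module homomorphism forces skew degree $(0,0)$ on all of $\CFKinfty_{\sRminus}(Y,K;\spin)$, since multiplying by powers of $U$ and $V$ changes both gradings compatibly with the skew-module structure.

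Next I would check the flip-filtered property, which is the claim that $\Psi_\spin$ is filtered with respect to the $V$-filtration on the source and the $U$-filtration on the target. The key observation is that $\Omega_z$ identifies $\CFKinfty_{\sRminus}(\sH;\spin)|_{A=0}$ with the $\infty$-version of the horizontal complex of $\CFKinfty_{\sRminus}(\sH;\spin)$: setting $U=1$ sends $U^a V^b \x$ to $V^b \x$ with $b$ determined by the Alexander grading, so the $V$-filtration of the source corresponds to the natural (Maslov-grading-induced) filtration on $\CFinfty(\sH_z;\spin)$ under which $\Gamma$ is filtered. Symmetrically, $\Omega_w$ identifies the $A=0$ part with the vertical complex, and the $U$-filtration on the target corresponds to the natural filtration on $\CFinfty(\sH_w;\spin)$. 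Since $\Gamma$ is a \emph{filtered} chain homotopy equivalence, its composition with the two identifications is filtered in the required sense. I would then need to promote this statement from the $A=0$ summand to all of $\CFKinfty_{\sRminus}$ by noting that $\Psi_\spin$ was defined on the full complex as the unique skew $\sRminus$-module extension, and that multiplication by $V$ on the source shifts the $V$-filtration by one while the skew-module rule converts it to multiplication by $U$ on the target, shifting the $U$-filtration by one — so filteredness is preserved under the extension.

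The one point requiring genuine care — and the part I expect to be the main obstacle — is pinning down exactly what "filtered" means for $\Gamma$ and verifying it is compatible with the filtration conventions here. The map $\Gamma$ arises from a chain of Heegaard moves (isotopies, handleslides, stabilizations) carrying the basepoint $z$ to $w$, and one must know that the associated chain homotopy equivalence on $\CFinfty$ is filtered with respect to the Maslov-grading filtration (equivalently, that it does not decrease the relevant filtration level). This is where one invokes the standard machinery: each elementary Heegaard move induces a filtered map (this is implicit in the original Ozsváth–Szabó construction of the flip map, cf.\ the filtered invariance of $\CFKinfty$), and filtered maps compose to filtered maps. Once this input is granted, the rest is a bookkeeping exercise in translating the $U$- and $V$-filtrations through the isomorphisms $\Omega_z$ and $\Omega_w$ as above. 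I would also double-check the direction of the filtration inequality (the sign conventions $i = -a$, $j = -b$ for the $U$- and $V$-filtration levels) so that "flip-filtered" — meaning $\Psi$ takes a generator $x$ to a sum of terms $c U^a V^b y$ with $a \ge 0$ — matches the filtered statement for $\Gamma$; this is the kind of off-by-a-sign issue that is easy to get wrong but not conceptually deep.
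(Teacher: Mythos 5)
Your argument is correct and follows essentially the same route as the paper: decompose $\Psi_\spin = \Omega_w^{-1}\circ\Gamma\circ\Omega_z$, use that $\Omega_z$ (resp.\ $\Omega_w$) carries $\gr_z$ (resp.\ $\gr_w$) to the Maslov grading and carries the $V$- (resp.\ $U$-)filtration on the $A=0$ summand to the algebraic $W$-filtration on $\CFinfty(\sH_z;\spin)$ (resp.\ $\CFinfty(\sH_w;\spin)$), invoke that $\Gamma$ is a graded, filtered chain homotopy equivalence, and then extend by the skew $\sRminus$-module property. The only imprecision is calling the filtration on $\CFinfty$ ``Maslov-grading-induced''---it is the algebraic filtration by nonnegative powers of $W$ (i.e.\ the $\CF^-\subset\CF^\infty$ filtration), which is not induced by the Maslov grading but is the one $\Gamma$ respects; with that relabeling the proof matches the paper's, which just phrases the same observation by tracking the image of a generic element $U^{A(x)}x$.
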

\begin{proof}
This follows from the fact that $\Gamma$ is filtered and grading preserving. It is enough to check this on $\CFKinfty_{\sRminus}(Y,K;\spin) |_{A=0}$, since both properties remain true when we extend the map as a skew $\sRminus$-module homomorphism to all of $\CFKinfty_{\sRminus}(Y,K;\spin)$. For the second claim, note that $\Omega_{z}$ takes $\gr_z$ to $M$, $\Omega_{w}$ takes $\gr_{w}$ to $M$, and $\Gamma$ preserves $M$, so $\Psi_{\spin}$ takes $\gr_z$ to $\gr_w$. Since $\gr_z = \gr_w$ on both the target and source of $\Psi_{\spin}$ (both being summands with Alexander grading zero), $\Psi_{\spin}$ also takes $\gr_w$ to $\gr_z$. For the first claim, consider an element $U^{A(x)} x$ of $\CFKinfty_{\sRminus}(\sH;\spin) |_{A=0}$. $\Omega_{z}$ takes this to $x$, and $\Gamma$ takes $x$ to a sum of the form $\sum_i c_i U^{a_i} y_i$ where $c_i$ is a constant in $\F$, $y_i$ is a generator and $a_i \ge 0$. It follows that
$$\Psi_{\spin}(U^{A(x)} x) = \sum_i c_i U^{a_i} V^{-A(y_i)+a_i} y_i.$$
and thus the $U$-filtration level of $\Psi_{\spin}(U^{A(x)} x)$ is at most as large as the $V$-filtration level of $U^{A(x)} x$. This relationship is preserved when the input is multiplied by $U$ and $V$, so $\Psi_{\spin}$ is flip filtered.
\end{proof}

\subsection{Notational remarks}\label{sec:old-way}

Though it is becoming more common, some readers may be unfamiliar with the $\F[U,V]$ notation used here for knot Floer complexes. In its original formulation, the knot Floer complex $\CFKinfty(Y,K)$ is defined as a chain complex over $\F[U,U^{-1}]$ equipped with an additional Alexander filtration; we find it convenient to encode this filtration with the second variable $V$. We use the subscript $\sRminus$ in our notation to highlight our different conventions, but the two complexes carry the same information: $\CFKinfty_{\sRminus}(Y,K)$ is isomorphic to infinitely many copies of $\CFKinfty(Y,K)$. More precisely, $\CFKinfty(Y,K)$ is isomorphic to $\CFKinfty_{\sRminus}(Y,K)|_{A=s}$ for any $s\in \Z$. We can view $\CFKinfty_{\sRminus}(Y,K)_{A=s}$ as generated over $\F[W, W^{-1}]$ by generators $\{ V^{-A(\x)}\x \}_{\x \in \generators}$; setting $V =1$ and $U=W$ recovers the familiar complex over $\F[U,U^{-1}]$, and the Alexander filtration is given by negative powers of $V$. For any $s$, multiplication by $V^s$ gives an isomorphism from $\CFKinfty_{\sRminus}(Y,K)|_{A=0}$ to $\CFKinfty_{\sRminus}(Y,K)|_{A=s}$.

In \cite{OzSz:rational-surgeries}, Ozsv{\'a}th and Szab{\'o} in fact define a different copy of $\CFKinfty(Y,K;\spin)$ for each relative spin$^c$ structure $\xi$ in $G_{Y,K}^{-1}(\spin)$, where $G_{Y,K}$ is a map from the set of relative spin$^c$ structures for $(Y,K)$ to the set of spin$^c$ structures for $Y$(for nullhomologous knots $G_{Y,K}^{-1}(\spin)$ is indexed by $s$ in $\Z$). These complexes are described as generated over $\F$ by triples $[x, i, j]$ where $x$ is a generator and $i$ and $j$ are integers satisfying $j-i = A(x)-s$. We identify the triple $[x, i, j]$ with $U^{-i} V^{-j} x$ and note that the Ozsv{\'a}th-Szab{\'o} complex associated to $s$ is precisely the Alexander grading $s$ summand of $\CFKinfty_{\sRminus}(Y,K;\spin)$. In \cite{HeddenLevine:surgery}, which we rely on substantially for the background on flip maps, slightly different notation is used. There a single complex is given for each $\spin$, generated by triples $[x, i, j]$ with $j-i = A(x)$. However, the dependence on a choice of $s$ in $\Z$ arises when defining filtered maps; the relevant filtration on the sources is given by the integer $j - s$ rather than by $j$. For us the $V$ filtration is always the negative power, so $[x, i, j]$ in the notation of \cite{HeddenLevine:surgery} corresponds to $U^{-i} V^{s-j} x$. This distinction is not relevant in the present setting, since we only define the flip maps corresponding to $s=0$, though it is relevant for rationally nullhomologous knots. In general, for any spin$^c$ structure $\spin$ in $\spinc(Y)$ we can define a family of flip maps by choosing relative spin$^c$-structures; these maps are equivalent to each other, differing only by multiplication by a power of $V$, so it suffices to compute any one. For arbitrary knots $G_{Y,K}^{-1}(\spin)$ is indexed by $s$ that is not in $\Z$ but in $\Z + A(\spin)$ for some rational $A(\spin)$. Since our knots are nullhomologous $A(\spin) = 0$, and it makes sense to choose $s=0$.

We remark that $\CFKminus(Y,K; \spin)$ can be identified with the subcomblex of $\CFKinfty_{\sRminus}(Y,K) |_{A=s}$ (for any integer $s$) with nonnegative power of $V$.  This carries the same information as $CFK_{\sRminus}(Y,K) |_{A=s}$ but the two are not quite the same, at least under the identification above, since $CFK_{\sRminus}(Y,K)_s$ also requires nonnegative powers of $U$. The subcomblex of $\CFKinfty_{\sRminus}(Y,K) |_{A=s}$ with nonnegative powers of $V$ can also be described as the Alexander grading $s$ summand of $CFK_{\sRminus}(Y,K) \otimes_{\sRminus} \F[U, U^{-1}, V]$. In fact, the complex $CFK_{\sRminus}(Y,K) |_{A=s}$ is more directly related to the complex $A^-_s$ appearing in minus version of the surgery formulas of Ozsv{\'a}th and Szab{\'o}; this is the subcomplex of $\CFKminus(Y,K)$ consisting of triples $[x, i, j]$ with $\max(i, j-s) \le 0$. Under the identification given above, this corresponds to the subcomplex of $CFK_{\sRminus}(Y,K)|_{A=0}$ generated by terms of the form $U^{-i} V^{-j} x$ with $j - i = A(x)$ and $\max(i, j-s) \le 0$. Multiplying by $V^s$ gives an isomorphism between this and $CFK_{\sRminus}(Y,K)|_{A=s}$.


\subsection{Examples}

To clarify conventions, particularly regarding flip maps, we will describe two examples in detail. We will return to these examples later when we represent bigraded complexes and flip maps in terms of immersed curves.

\begin{example}\label{ex:1-surgery-on-fig8}
Let $Y$ be $+1$-surgery on the figure eight knot, and let $K$ be the dual knot, i.e. the core of the filling torus. $Y$ has a single spin$^c$-structure, which we denote $\spin$. The knot Floer complex of $CFK_{\sRminus}(Y,K)$ can be computed using the surgery formula of Hedden and Levine \cite{HeddenLevine:surgery} (the easiest way to do this is using immersed curves, using Theorem \ref{thm:surgery-formula-dual-knots}). For a particular choice of basis, the resulting complex has five generators, which we denote $a$, $b$, $c$, $d$, and $e$, and the only nonzero differentials are
$$\partial(a) = V b, \quad \text{ and } \quad \partial(e) = Ud.$$
The bigrading and Alexander grading are given in the table below:
$$\def\arraystretch{1.5} \begin{array}{c|ccccc}
  & a & b & c & d &e \\ \hline
(\gr_w, \gr_z) & (1,-1) & (0,0) & (0,0) & (0,0) & (-1,1) \\
A & 1 & 0 & 0 & 0 & -1
\end{array} $$
Although it would be difficult to compute directly from a Heegaard diagram, the map $\Psi_{\spin}$ is uniquely determined by the bigradings up to homotopy and multiplication by a unit in $\F$. Since $\Psi_{\spin}$ interchanges the gradings it must take $a$ to a multiple of $e$, $e$ to a multiple of $a$, and $b$, $c$, and $d$ to linear combinations of $b$, $c$, and $d$. By Proposition \ref{prop:flip-map-horiz-basis} we can assume after applying an appropriate flip-filtered chain homotopy that $\Psi_{\spin}(e) = 0$, since there is a horizontal arrow starting at $e$. Then we must have $\Psi_{\spin}(d) = 0$ since $\Psi_{\spin}$ is a chain map. By applying flip-filtered chain homotopies $H$ that take $b$ or $c$ to appropriate multiples of $a$, we can assume that the coefficients of  $b$ in $\Psi_{\spin}(b)$ and $\Psi_{\spin}(c)$ are zero. We thus have that
\begin{align*}
\Psi_{\spin}(a) &= c_1 e, \\
\Psi_{\spin}(b) &= c_2 c + c_3 d, \\
\Psi_{\spin}(c) &= c_4 c + c_5 d,
\end{align*}
where the $c_i$'s are constants in $\F$. Note that we have reduced the problem to finding the induced map from horizontal homology to vertical homology, which are generated by $\{a,b,c\}$ and $\{c,d,e\}$, respectively. The constant $c_1$ must be nonzero so that the induced map from horizontal homology to vertical homology is an isomorphism; after a change of basis rescaling $e$ by a constant, we can take this multiple to be 1. When we rescale $e$, we will also rescale $d$ by the same amount so that the differential is unchanged. We then must have $c_2 = 0$ and $c_3 = 1$ for $\Psi_{\spin}$ to be a chain map. Up to a change of basis adding a multiple of $b$ to $c$, we can assume $c_5 = 0$. The flip map is then determined, up to homotopy, by the constant $c_4$ (which must be nonzero to have an isomorphism on homology). In particular, in the case that $\F = \Z/2\Z$ the flip map is uniquely determined from the complex. We note that when $\F$ is not $\Z/2\Z$ different choices of $c_4$ give non-equivalent flip maps. In this case we can indirectly deduce that the correct flip map is given by $c_4 = 1$ as follows: given the flip map, the surgery formula allows us to compute $\HFminus$ of rational surgeries on $K$. Changing the value of $c_4$ does not affect the answer for non-zero slopes, but considering the 0-surgery on $K$ (which is the same as 0-surgery on the figure eight knot), we see that only $c_4 = 1$ gives the correct answer.
\end{example}

In the example above, the collection of flip maps is uniquely determined by the bigraded complexes up to a unit in $\F$. This is not always the case, as the next example demonstrates.

\begin{example}\label{ex:1-surgery-on-LHT}
Let $Y$ be $+1$-surgery on the left handed trefoil, and let $K$ be the core of the surgery. Once again $Y$ has a single spin$^c$-structure, denoted $\spin$, and the complex $\CFKinfty_\sR(Y,K;\spin)$ can be computed using the surgery formula. This complex is identical as an ungraded complex to the one in the previous example: the generators are  $a$, $b$, $c$, $d$, and $e$, and the nonzero differentials are
$$\partial(a) = V b, \quad \text{ and } \quad \partial(e) = Ud.$$
The bigradings, however, are different and are given in the table below:
$$\def\arraystretch{1.5} \begin{array}{c|ccccc}
  & a & b & c & d &e \\ \hline
(\gr_w, \gr_z) & (2,0) & (1,1) & (0,0) & (1,1) & (0,2) \\
A & 1 & 0 & 0 & 0 & -1
\end{array} $$
We describe the possible flip maps up to homotopy. As in the previous example, it is enough to describe the induced map from horizontal to vertical homology: gradings force $\Psi_{\spin}(e)$ to be zero, the chain map property implies that $\Psi_{\spin}(d) = 0$, and by by appropriate chain homotopies we can assume that the coefficients of $a$ and $b$ in $\Psi_{\spin}(a)$, $\Psi_{\spin}(b)$, and $\Psi_{\spin}(c)$ are zero. The bigradings now tell us that
\begin{align*}
\Psi_{\spin}(a) &= c_1 V^{-1} c + c_2 e, \\
\Psi_{\spin}(b) &= c_3 d, \\
\Psi_{\spin}(c) &= c_4 c + c_5 V e .
\end{align*}
The constant $c_3$ must be non-zero and can be made to be 1 after a change of basis rescaling $d$ (and also rescaling $e$, so that the differential is unchanged). The fact that $\Psi_{\spin}$ is a chain map implies that $c_5 = 0$ and $c_2 = 1$. The constant $c_4$ must then be nonzero. If $c_1$ is nonzero, we may assume it is $1$ by a change of basis rescaling $c$. We are left with two fundamentally different cases: $c_1 = 0$ or $c_1 = 1$, along with a choice of nonzero constant $c_4$ in each case. None of these remaining choices are equivalent. In particular, even when $\F = \Z/2\Z$ we have two nonequivalent flip maps that could occur for the given bigraded complex.

The surgery formula does not give the flip map on the dual surgery, and computing the flip map directly would be quite difficult, but as in the previous example we can use known surgeries on $K$ to deduce that the correct choice is $c_1 = c_4 = 1$. We check that $c_4 = 1$ by considering the zero surgery on $K$, as before. To see that $c_1 = 1$, we use both possible flip maps in the mapping cone formula to compute $\HFhat(Y_{-4/3}(K))$; using $c_1 = 1$ gives rank 1 while using $c_1 = 0$ gives rank 3. $Y_{-4/3}(K)$ is the same as $-1$-surgery on the left handed trefoil in $S^3$, so the correct rank is 1.
\end{example}

\begin{remark}
There is no algebraic obstruction to some other knot $K' \subset Y'$ giving the exact same complexes as in Example \ref{ex:1-surgery-on-LHT} and a different choice of flip map. However, this does not happen because such a knot complement would still be genus one and fibered. This implies that the complement of $K'$ is either the figure-eight complement or a trefoil complement and no framing on one of these gives the complex above with a different flip map.
\end{remark}

\section{Immersed Floer theory in marked surfaces}\label{sec:floer-theory}

The algebraic objects described in the previous section, bigraded complexes and flip maps, can be given a geometric interpretation using Floer homology of immersed curves in certain marked surfaces. In this section we define Floer theory for immersed Lagrangians in these surfaces. Immersed Floer theory is defined more generally in \cite{AkahoJoyce}, but in our two-dimensional setting Floer theory can be defined combinatorially. We will also prove, in our setting, a stronger notion of homotopy invariance than is shown in \cite{AkahoJoyce}. Our construction is inspired by but not identical to the combinatorial treatment of Floer cohomology and the Fukaya category for curves in surfaces found in \cite{Abouzaid:surfaces}. One difference is that we use homological conventions rather than cohomological conventions, since we are ultimately interested in representing Heegaard Floer homology. Another difference is that we avoid the use of Novikov coefficients and ignore the area of disks we count; in particular, we do not need to choose a symplectic form on the surface. This is possible because we restrict to non-compact surfaces (in fact, for our purposes it is sufficient to work only in the infinite strip and the infinite cylinder) and impose an assumption that immersed curves are in an admissible configuration (see Definition \ref{def:admissible} below). In this respect, our construction is more similar to the combinatorial treatment of Floer homology of curves in \cite{DRS:combinatorial-floer}, though that work restricts to embedded curves and does not define higher product operations. Another difference is that, unlike in \cite{Abouzaid:surfaces}, we allow curves which bound immersed monogons. This adds significant technical difficulties and requires curves to carry a special decoration, a \emph{bounding chain}, before Floer homology can be defined.

A final difference is that we will consider curves in marked surfaces. In this case we introduce a formal variable $W$ to record interactions of immersed disks with the marked points, and the Floer complex will be a module over $\F[W]$ rather than a vector space over $\F$. In fact, we will ultimately be interested in doubly marked surfaces in which there are two types of marked points; in this setting the relevant coefficient ring is $\sRminus = \F[U,V]$, where the formal variables $U$ and $V$ are associated with the two types of marked points. To simplify notation we will stick to the singly marked setting for most of this section, and we explain how the definitions extend to doubly marked surfaces in Section \ref{sec:doubly-marked}.

\subsection{The space $CF(L_0, L_1)$}

Consider a non-compact surface $\Sigma$ with a collection $\{w_i\}_{i\in I}$ of marked points (where $I$ is any index set---generally we take $I$ to be $\Z$, but we occasionally consider finite collections of marked points); we allow $\Sigma$ to have boundary, but we will require that any compact boundary component is decorated with a basepoint called a \emph{stop}. We mention them for completeness, but we will not need the case of compact boundary components with stops; the two key examples of surfaces for the purposes of this paper are the infinite strip $\strip = [-\tfrac 1 2, \tfrac  1 2] \times \R]$, with marked points $w_i$ at $(0, i - \tfrac 1 2)$ for integers $i$, and the infinite cylinder $\cylinder = \strip / (-\tfrac 1 2, y) \sim (\tfrac 1 2 , y)$.

Let $L$ be an immersed Lagrangian in $\Sigma$ that is disjoint from all of the marked points; more precisely, $L$ is a disjoint union of copies of $S^1$, $[0,1]$ and $\R$ along with an immersion $\iota:  L \to \Sigma$ whose image is disjoint from the marked points. We let $\iota_i$ denote the restriction of $\iota$ to a component $L_i$ of $L$. The image of a component $L_i$ will be referred to as an immersed circle, immersed arc, or immersed line when $L_i$ is $S^1$, $[0,1]$ or $\R$, respectively. We require that the endpoints of an immersed arc lie on the boundary of $\Sigma$. We also require that an immersed line eventually leaves any compact subsurface of $\Sigma$ on both ends. For example, when $\Sigma$ is the infinite strip or cylinder, this means the ends of immersed lines must escape to the infinite ends of the strip or cylinder. Such a collection of immersions $\iota_i: L_i \to \Sigma$ will be called collectively an \emph{immersed multicurve}. By slight abuse of notation, we will sometimes conflate the immersion with its image. The immersed multicurves we consider will be weighted in the following sense: there will be a collection of basepoints on $L$ and a nonzero weight in $\F$ will be associated to each basepoint. We can usually assume that there is one basepoint on each $S^1$ component of $L$ and no basepoints on other components of $L$, but at times it is convenient to allow additional basepoints.

In addition to weights our immersed multicurves will be equipped with grading information, which we now define. We first review how gradings are defined on immersed curves in unmarked surfaces, and then describe a modification of this definition for marked surfaces. To define a grading on an immersed multicurve $\iota: L \to \Sigma$ in an unmarked surface, we must first fix a trivialization of the tangent bundle of $\Sigma$; in the case of the strip $[0,1]\times \R$ or the cylinder $S^1 \times \R$ we will use the obvious trivialization coming from viewing the strip as a subset of $\R^2$ and from cutting the cylinder open to give the strip. Having fixed a trivialization, the tangent slope defines a map $\tau$ from $L$ to $\mathbb{RP}^1$, which we identify with $\R/\Z$. An orientation on $L$ allows us to lift this to a map from $L$ to $S^1 = \R/2\Z$. More generally, a \emph{$\Z/N\Z$-grading} on $L$ is a lift of this map to $\R/N\Z$, and a \emph{$\Z$-grading} on $L$ is a lift $\tilde\tau$ of this map to $\R$. Note that each $S^1$ component of $L$ presents a potential obstruction to the existence of a $\Z$-grading. For such a component, the tangent slope map defines a loop in $\mathbb{RP}^1$, and the winding number of this loop must be zero for this loop to lift to $\R$. The \emph{Maslov class} of $L$, denoted $\mu_L$, is the element of $\text{Hom}(\pi_1(L), \Z)$ that records this obstruction for all closed components. If $\mu_L$ vanishes, we say that $L$ is \emph{$\Z$-gradable}.

We now describe a modified notion of gradings for immersed Lagrangians in marked surfaces, which allows the gradings to capture the interaction of the curves with marked points. For each marked point, we choose a half-open oriented arc starting at that marked point and converging to a puncture or infinite end of $\Sigma$. We will refer to these arcs as \emph{grading arcs}. The grading arcs should be chosen so that any compact curve in $\Sigma$ intersects finitely many arcs. Given such a choice of arcs, we now define a grading on $L$ to be a piecewise continuous map $\tilde\tau: L \to \R$ that lifts the tangent slope map and is continuous except at intersections of $L$ with the arcs from the marked points, at which it has jump discontinuities of magnitude 2. More precisely, any time $L$ crosses a grading arc passing from the left side to the right side of the arc, $\tilde\tau$ increases by 2. The obstruction to such a grading is the \emph{adjusted Maslov class} of $L$, an element of $\text{Hom}(\pi_1(L), \Z)$ which records the change in tangent slope around each $S^1$ component, taking into account the jump discontinuities described above. When the adjusted Maslov class vanishes then $L$ is $\Z$-gradable; otherwise $L$ only admits a $\Z/N\Z$ grading for some $N$. Note that the marked points have no affect on the grading modulo 2, and as before a $\Z/2\Z$-grading is equivalent to an orientation on $L$. All of our immersed multicurves will be oriented, and unless otherwise noted they will all be $\Z$-graded.

Given immersed multicurves $L_0$ and $L_1$ in $\Sigma$ that intersect transversally, we define $CF(L_0, L_1)$ to be the module over $\F[W]$ generated by the intersections of $L_0$ and $L_1$. If $L_0$ and $L_1$ both contain immersed arcs, we also make the requirement that on any given boundary component all endpoints of arcs in $L_1$ occur after all endpoints of arcs in $L_0$, with the order coming from the boundary orientation (in the case of compact boundary components, this is the reason for marking a stop; we interpret the order of endpoints by following the boundary orientation starting and ending at the stop). We remark that the requirement on ordering the endpoints of arcs is necessary given that hope aim to promote $CF(L_0, L_1)$ to a chain complex whose homology is invariant under reasonable homotopies of the curves, since sliding an endpoint of an arc in $L_0$ past an endpoint of an arc $L_1$ would change the parity of the intersection number and thus could not preserve homology.

If $L_0$ and $L_1$ are oriented, then $CF(L_0, L_1)$ has a $\Z/2\Z$ grading given by the sign of intersection points as in Figure \ref{fig:mod2-grading}. This grading can be enhanced if $L_0$ and $L_1$ are $\Z$-graded. Given $\Z$-gradings $\tilde\tau_0: L_0 \to \R$ on $L_0$ and $\tilde\tau_1: L_1\to \R$ on $L_1$, we can define a $\Z$ grading on $CF(L_0, L_1)$ as follows:
\begin{definition}\label{def:grading}
For each intersection point $p$ in $L_0 \cap L_1$, we define the grading $\gr(p)$ to be the greatest integer less than $\tilde\tau_1(p) - \tilde\tau_0(p)$. Equivalently, the grading is $\tilde\tau_1(p) - \tilde\tau_0(p) - \theta_{01}(p)$, where $\theta_{01}(p)$ is $\tfrac 1 \pi$ times the angle covered when turning counterclockwise from $L_0$ to $L_1$.
\end{definition}
Note that if $L_0$ and $L_1$ only carry $\Z/N\Z$ gradings, then the definition above defines a $\Z/N\Z$ grading on $CF(L_0, L_1)$. In particular, given orientations on $L_0$ and $L_1$ this definition determines the $\Z/2\Z$ grading described in Figure \ref{fig:mod2-grading}. We remark that the grading on $CF(L_0, L_1)$ only depends on the homotopy classes of the grading arcs used to define gradings on $L_0$ and $L_1$; if we apply a homotopy to the arc in $\Sigma$, each time the arc passes an intersection point the gradings of $L_0$ and $L_1$ at that point both jump by two in the same direction, so their difference is unchanged. 

\begin{figure}
\labellist

  \pinlabel {$L_0$} at 43 28
  \pinlabel {$L_1$} at 29 45
  
  \pinlabel {$L_0$} at 118 28
  \pinlabel {$L_1$} at 104 45
  
  \pinlabel {$\gr(p) \equiv 0 \mod 2$} at 20 -10
  \pinlabel {$\gr(p) \equiv 1 \mod 2$} at 105 -10

\endlabellist
\includegraphics[scale = 1.2]{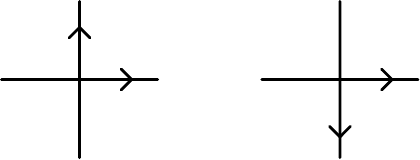}
\vspace{3mm}
\caption{The mod 2 grading on an intersection point $p \in L_0 \cap L_1$}
\label{fig:mod2-grading}
\end{figure}

\begin{remark}
Readers may notice that the grading in Definition \ref{def:grading} differs from the usual definition of the grading by one. For instance, in \cite{Auroux:beginner} the degree of an intersection point corresponds to $\tilde\tau_0(p) - \tilde\tau_1(p)$ plus the angle of clockwise rotation from $L_0$ to $L_1$. Likewise, the mod 2 degree defined in \cite{Abouzaid:surfaces} differs from ours by one. This is due to the fact that we are using homological rather than cohomological conventions.
\end{remark}

There is a canonical identification between $CF(L_0, L_1)$ and $CF(L_1, L_0)$, as they are generated by the same intersection points, but the gradings are different. That is, the grading of an intersection point depends on whether it is viewed as a point in $L_0 \cap L_1$ or $L_1 \cap L_0$. It is straightforward to check from Definition \ref{def:grading} that these two gradings for a given intersection point sum to $-1$, so the identification between $CF(L_0, L_1)$ and $CF(L_1, L_0)$ takes the grading $\gr$ to $-1 - \gr$.

\begin{figure}
\labellist

  \pinlabel {$L$} at -9 27
  \pinlabel {$L'$} at -9 12
  
  \pinlabel {$\epsilon$} at 163 18

\endlabellist
\includegraphics[scale = 1.2]{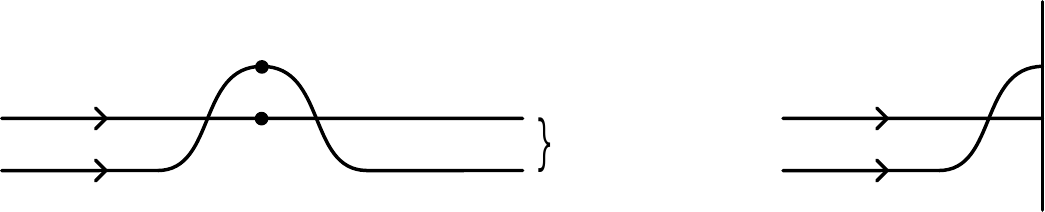}
\caption{$L'$ is obtained by translating $L$ slightly to the right, except in a neighborhood of the basepoints on $L$ or the terminal endpoint of any arc component of $L$. Near each basepoint, $L'$ lies to the left of $L$ instead; this results in two intersection points between $L$ and $L'$ near each basepoint. Near the endpoint of an arc in $L$ at which the arc is oriented toward the boundary $L'$ also lies to the left of $L$, requiring one additional intersection point.}
\label{fig:pushoff}
\end{figure}

Defining $CF(L_0, L_1)$ requires $L_0$ and $L_1$ to intersect transversally. To allow for arbitrary Lagrangians, we can apply a small perturbation to one of them. In particular, for an immersed multicurve $L$ let $L'$ denote an immersed multicurve which agrees with the pushoff of $L$ by some small amount $\epsilon$ to the right (with respect to the orientation on $L$) except in a small neighborhood of the basepoints on $L$ or of the terminal endpoint of an arc component of $L$, near which $L'$ lies to to the left of $L$ as  in Figure \ref{fig:pushoff}. We then define $CF(L_0, L_1)$ to be $CF(L_0, L'_1)$ for a sufficiently small perturbation. Note that if $L_0$ and $L_1$ were already transverse then the perturbation can be chosen small enough to not affect the intersection points. We remark that when perturbing a curve it is important to perturb in the way described above rather than simply pushing the curve to the same side everywhere; this is a combinatorial realization of the usual requirement for Floer homology that isotopies are Hamiltonian.

As a special case, we can define $CF(L)$ to be $CF(L,L)$, which means $CF(L,L')$ for a suitable small perturbation $L'$ of $L$. Note that there are two points in $L \cap L'$ for each self-intersection point of $L$, and two additional points in $L \cap L'$ near each basepoint on $L$ and one additional intersection point for each arc component of $L$. A $\Z$-grading on an immersed multicurve $L$ gives rise to a $\Z$-grading on the pushoff $L'$ and thus defines a $\Z$-grading on $CF(L)$. It is easy to check that the pair of intersection points in $L\cap L'$ associated to any basepoint of $L$ have gradings $0$ and $-1$. Similarly, the pair of intersection points associated to any self intersection point of $L$ have gradings that sum to $-1$. Because of this relationship, it is convenient to encode the grading information for both points associated to a self-intersection of $L$ by keeping track of only the even grading.

\begin{definition}
The \emph{degree} of a self intersection point $p$ of $L$ is the even integer $\deg(p)$ such that the two intersection points of $L\cap L'$ corresponding to $p$ have gradings $\deg(p)$ and $-1-\deg(p)$. 
\end{definition}

\subsection{Polygon counting maps and $\Ainfty$ relations}

For an immersed Lagrangian $L$ in a marked surface, the module $CF(L)$ can be equipped with operations giving it an $\Ainfty$ structure. These operations count immersed polygons bounded by various perturbations of $L$. More generally, given a collection of $k+1$ pairwise-transverse immersed Lagrangians $L_0, \ldots, L_k$ in a marked surface $\Sigma$, we will define a polygon counting operation
$$m_k: CF(L_0, L_1) \otimes \cdots \otimes CF(L_{k-1}, L_k) \to CF(L_0, L_k).$$
We note that if $\Sigma$ has boundary and the $L_i$ have arc components, we will require that all endpoints of arcs in $L_j$ occur after all endpoints in $L_i$ with respect to the boundary orientation for any $j > i$.

\begin{definition} Given Lagrangians $L_0, \ldots, L_k$ in $\Sigma$, intersection points $p_i$ in $L_{i-1} \cap L_{i}$ for $1 \le i \le k$ and an intersection point $q$ in $L_0 \cap L_k$, an \emph{immersed $(k+1)$-gon with corners $p_1, \ldots, p_k$, and $q$} is an orientation preserving map
$$u: (D^2, \partial D^2) \to (\Sigma, L_0 \cup \cdots \cup L_k)$$
with the following properties:
\begin{itemize}
\item $u(\bar q) = q$ and $u(\bar p_{i}) = p_i$ for $1 \le i \le k$, where $\bar q, \bar p_1, \ldots, \bar p_k$ are fixed points on $\partial D^2$ appearing in clockwise order;
\item $u$ maps the segment of $\partial D^2$ between $\bar p_{i}$ and $\bar p_{i+1}$ to $L_i$ for $1 \le i < k$, and the segments between $\bar q$ and $\bar p_1$ and between $\bar p_k$ and $\bar q$ are mapped to $L_0$ and $L_k$, respectively;
\item $u$ is an immersion away from the points $\bar q$ and $\bar p_i$; and
\item the points $q$ and $p_i$ are convex corners of the image of $u$ (that is, that the image of a neighborhood of $\bar q$ or $\bar p_i$ covers only of the four quadrants near the intersection point $q$ or $p_i$).
\end{itemize}
\end{definition}

We can make sense of this definition even when $k = 0$, in which case it describes an \emph{immersed monogon}. These shapes have also been referred to as \emph{teardrops} or \emph{fishtails}.

\begin{definition} Given a self-intersection point $q$ of $L_0$, an \emph{immersed mongon with corner $q$} is an orientation preserving map
$$u: (D^2, \partial D^2) \to (\Sigma, L_0)$$
with the following properties:
\begin{itemize}
\item $u(\bar q) = q$, where $\bar q$ is a fixed point on $\partial D^2$.
\item $u$ is an immersion away from $\bar q$; and
\item the point $q$ is a convex corner of the image of $u$.
\end{itemize}
\end{definition}

\begin{figure}
\labellist

  \pinlabel {$\bar q$} at 43 -2
  \pinlabel {$\bar p_1$} at -2 30
  \pinlabel {$\bar p_2$} at 12 91
  \pinlabel {$\bar p_3$} at 77 91
  \pinlabel {$\bar p_k$} at 91 30
  
  \pinlabel {$ q$} at 205 -2
  \pinlabel {$ p_1$} at 157 35
  \pinlabel {$ p_2$} at 175 93
  \pinlabel {$ p_3$} at 239 92
  \pinlabel {$ p_k$} at 257 33
  
  \pinlabel {$L_0$} at 182 17
  \pinlabel {$L_1$} at 168 63
  \pinlabel {$L_2$} at 206 90
  \pinlabel {$L_3$} at 241 77
  \pinlabel {$L_{k-1}$} at 255 47
  \pinlabel {$L_k$} at 231 17
  
  \Large  
  \pinlabel {$D^2$} at 44 50

\endlabellist
\includegraphics[scale = 1]{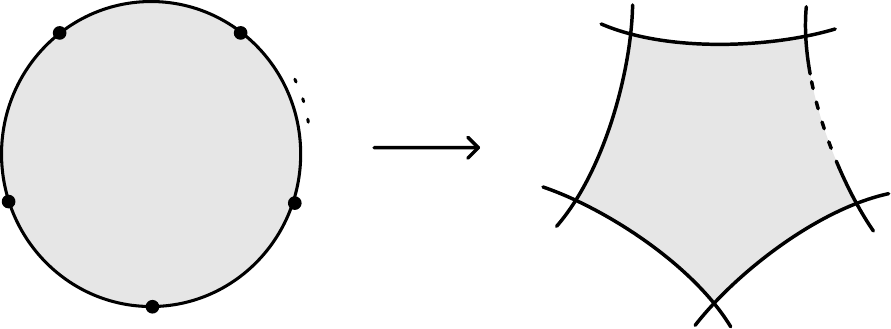}
\caption{An immersed $(k+1)$-gon.}
\label{fig:immersed-polygon}
\end{figure}

We will consider immersed polygons up to smooth reparametrization of the disk $D^2$, and we define $\mathcal{M}(p_1, \ldots, p_k, q)$ to be the set of equivalence classes of immersed $(k+1)$-gons with corners $p_1, \ldots, p_k$, and $q$. In order to ensure that the set $\mathcal{M}(p_1, \ldots, p_k, q)$ is finite, we will need to impose an admissibility condition on the immersed Lagrangians. As usual, we require the surface $\Sigma$ to be noncompact. We will restrict mainly to compact Lagrangians, but we do allow at most one of $L_0, \ldots, L_k$ to be noncompact provided it has finitely many self intersection points (in most cases the non-compact curves we consider will be embedded).

\begin{definition}\label{def:admissible}
A collection of immersed Lagrangians $L_0, \ldots, L_k$ in a non-compact surface $\Sigma$ is \emph{admissible} if they are pairwise transverse, no Lagrangian bounds an immersed disk in $\Sigma$, and no two Lagrangians bound an immersed annulus in $\Sigma$. We furthermore assume that at most one of $L_0, \ldots, L_k$ is non-compact and that any non-compact Lagrangian has finitely many self-intersection points.
\end{definition}

\begin{proposition}\label{prop:admissible}
If $L_0, \ldots, L_k$ are admissible, then for any intersection points $p_{1}, \ldots, p_{k}$, and $q$ as above the set $\mathcal{M}(p_1, \ldots, p_k, q)$ is finite.
\end{proposition}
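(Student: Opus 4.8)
The plan is to bound the complexity of an immersed polygon $u \in \mathcal{M}(p_1,\dots,p_k,q)$ and then show that only finitely many homotopy classes of such polygons exist, each containing at most one member up to reparametrization. The key constraint is the admissibility hypothesis, which rules out immersed disks bounded by a single Lagrangian and immersed annuli bounded by a pair; these are exactly the degenerations that would allow a family (or infinitely many) such polygons.

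First I would set up the bookkeeping: given $u: (D^2, \partial D^2) \to (\Sigma, L_0 \cup \dots \cup L_k)$, the relevant invariant is the local multiplicity function on $\Sigma \setminus (L_0 \cup \dots \cup L_k)$, i.e.\ the "domain" of $u$ in the usual Heegaard Floer sense — a $\Z$-valued (in fact $\Z_{\ge 0}$-valued, since $u$ is an orientation-preserving immersion) function $n_u$ on the complementary regions, whose local changes across each arc of $L_i$ are controlled. Since $u$ is an orientation-preserving immersion away from the corners, $n_u \ge 0$ everywhere and $n_u$ jumps by exactly $\pm 1$ across each Lagrangian arc. The corners $p_i, q$ are each convex, so the multiplicities in the four local quadrants at a corner are of the form $(m, m+1, m, m-1)$ around the point. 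Because $\Sigma$ is noncompact and (as in the two running examples) the complementary region touching the noncompact end is unbounded, $n_u$ must be $0$ on that region; hence the total area/multiplicity is finite and the boundary $\partial D^2$ has finite length in a combinatorial sense (it crosses finitely many complementary regions). This already shows each individual $u$ has bounded complexity; the content is a uniform bound over the whole moduli space.

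Next I would argue that the set of possible domains is finite. The domain of $u$ is determined by its boundary behavior: a periodic-domain-type argument shows that the difference of two domains with the same corner data $p_1,\dots,p_k,q$ is a $2$-chain with boundary supported on $L_0 \cup \dots \cup L_k$ and no corners — i.e.\ a "periodic domain". A nontrivial non-negative periodic domain with connected support and boundary on the $L_i$ would, by standard surgery/cut-and-paste arguments on its boundary, produce either an immersed disk bounded by one of the $L_i$ (if the boundary lies on a single Lagrangian) or an immersed annulus bounded by two of them (if it lies on two) — both forbidden by Definition~\ref{def:admissible}. Here the hypothesis that at most one $L_i$ is noncompact with finitely many self-intersections is used to keep this combinatorial argument finite: the noncompact curve contributes only finitely many self-intersection points and its ends escape to infinity where all multiplicities vanish, so it cannot support a periodic domain. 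Therefore the domain of $u$ is uniquely determined by the corner data, and in particular the total multiplicity $\sum_R n_u(R)$ is bounded.

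Finally, having fixed the domain, I would show there are only finitely many polygons realizing it. A polygon realizing a given domain is a branched-cover-like object: the map $u$ is an immersion on the interior, so it has no branch points, and an orientation-preserving immersion of a disk realizing a fixed non-negative domain with prescribed convex corners is rigid — it is determined up to reparametrization once we know which region each sheet lies over and how the boundary threads through the Lagrangians, and the convexity of corners plus the immersion condition pins this down (this is where one invokes the Riemann mapping theorem / Blaschke-product uniqueness for the conformal structure, or more simply the combinatorial fact that an immersed polygon with a fixed domain is unique). I expect the main obstacle to be this last rigidity step stated cleanly in the combinatorial (non-holomorphic) setting — making precise that "fixed domain $+$ immersion $+$ convex corners $\implies$ unique up to reparametrization" without appealing to a complex structure. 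The cleanest route is probably to note that the domain determines $u$ as a "dessin"-type combinatorial datum (the cell structure on $D^2$ pulled back from the arrangement $L_0 \cup \dots \cup L_k$), and that reconstructing $u$ from this datum has a finite (in fact unique, given the corners) number of solutions because each cell of $D^2$ maps homeomorphically to its image cell. I would also need to handle the $k=0$ monogon case separately but identically, where the same periodic-domain argument applies verbatim.
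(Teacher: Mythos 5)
Your high-level plan (reduce to domains, then argue finitely many domains, then finitely many polygons per domain) is the right skeleton and matches the paper's opening remarks. But the central step --- that a nontrivial non-negative periodic domain with boundary on the Lagrangians ``would, by standard surgery/cut-and-paste arguments on its boundary, produce either an immersed disk bounded by one of the $L_i$ or an immersed annulus bounded by two of them'' --- is exactly the heart of the proposition, and you do not supply it. As stated, the claim is not true: the support of a non-negative periodic domain is in general a compact subsurface of $\Sigma$ with boundary distributed over several $L_i$, and nothing forces it to be a disk or annulus. It could have higher genus, or boundary on three or more of the Lagrangians (think of an immersed pair of pants bounded by $L_0, L_1, L_2$), and Definition~\ref{def:admissible} forbids neither of those configurations. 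The paper's Figure~\ref{fig:finitely-many-polygons2} (right) is a direct counterexample to your claim in the closed case: three admissible curves on $T^2$ bound infinitely many triangles with fixed corners, so the differences of their domains are non-negative periodic domains, yet no immersed disk or annulus is present. You gesture at non-compactness of $\Sigma$ only via the ``finitely many self-intersections of the noncompact $L_i$'' remark, which is not where it is used; so your argument as written would seem to apply equally to the torus example and is therefore broken.

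The paper's actual proof is structurally different and avoids periodic domains. It supposes infinitely many polygons exist, deduces that some boundary piece on some $L_\ell$ becomes arbitrarily long, and chooses an \emph{arc} $\gamma\subset\Sigma$ (here non-compactness is used: $\gamma$ has endpoints on $\partial\Sigma$ or approaching the ends, not a closed curve) that $L_\ell$ intersects essentially, or past which $L_\ell$ has negative winding number if $L_\ell$ is nullhomotopic. Looking at $u_N^{-1}(\gamma)$ inside the disk, one finds for large $N$ many arcs with one end on $\bar L_\ell$ and one end on some fixed $\bar L_m$; by pigeonhole two of them, $\bar\gamma^i$ and $\bar\gamma^j$, have identical endpoint images in $\Sigma$. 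Because $\gamma$ is an arc, their images in $\gamma$ coincide, so the strip of $D^2$ between $\bar\gamma^i$ and $\bar\gamma^j$ --- a rectangle with two sides on $L_\ell$ and $L_m$ and two sides mapping to the same path in $\gamma$ --- closes up to an immersed annulus bounded by exactly $L_\ell$ and $L_m$, contradicting admissibility. This pins down that only two Lagrangians are involved, which is what admissibility actually controls, and it is precisely the step where your periodic-domain reduction is silent. A secondary issue: your conclusion that ``the domain of $u$ is uniquely determined by the corner data'' is stronger than needed and also unjustified --- the difference $D_1-D_2$ of two candidate domains need not be non-negative, so even a correct version of your periodic-domain lemma would only bound the set of domains, not force uniqueness. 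The paper correspondingly only claims the set of boundary $1$-cycles is finite, then allows finitely many gluing data per domain.
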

\begin{proof}
The collection of Lagrangians define a cell structure on the surface $\Sigma$, where $0$-cells are intersections between Lagrangians, $1$-cells are segments of Lagrangians, and $2$ cells are the connected components of $\Sigma \setminus (L_0 \cup \cdots \cup L_k)$. The image of any immersed polygon determines a 2-chain (with nonnegative coefficient for each region) called the \emph{domain} of the polygon. In addition to the domain, an immersed polygon determines combinatorial gluing data specifying which edges of which regions should be indentified to form the disk. Standard arguments show that to study immersed polygons it is enough to work with this combinatorial data. In particular, a polygon is uniquely determined up to equivalence by its domain and gluing data. Moreover, there are finitely many choices of combinatorial gluing data for each domain, and a domain is uniquely determined by its boundary, a $1$-cycle. Thus we need to show that there are finitely many $1$-cycles which could bound an immersed polygon with the given corners.

The boundary of a polygon with the given corners consists of paths from $p_1$ to $q$ in $L_0$, from $q$ to $p_k$ in $L_k$, and from $p_{i+1}$ to $p_i$ in $L_i$ for each $1 \le i < k$. Although each $L_i$ may have multiple components, only one component will be involved in the boundary of a polygon so we will assume without loss of generality that each $L_i$ has a single component. If $L_i$ is an immersed arc or an immersed line, then there is a unique path up to reparametrization connecting the given corners, but if $L_i$ is an immersed circle there are infinitely many such paths obtained from each other by adding full multiples of the closed curve $L_i$. Thus the difference between any two potential boundaries of an immersed polygon, as 1-cycles, is a collection of full copies of the $L_i$'s (specifically of those that are immersed circles).

We will say that the length of a path in $L_i$ is the number of times the path passes its starting point before reaching its ending point (this is the number of full copies of $L_i$ that the path covers). Suppose there are infinitely many immersed polygons with the desired corners, and thus infinitely many distinct 1-cycles representing their boundaries. Then there must be boundaries which contain arbitrarily long paths on at least one Lagrangian, say $L_\ell$. We will consider a polygon $u_N$ for which the $L_\ell$ part of the boundary has length at least $N$ for some very large $N$. The finiteness of $\mathcal{M}(p_1, \ldots, p_k, q)$ does not depend on the orientation of the Lagrangians, so we will assume without loss of generality that all Lagrangians are oriented such that their orientation agrees with the boundary orientation induced by $u_N$.

We will choose an arc $\gamma$ in $\Sigma$ and consider the preimage of $\gamma$ under $u_N$, noting that $u_N^{-1}(\gamma)$ is a collection of disjoint arcs in $D^2$. For each $i$ let $\bar L_i$ denote the portion of $\partial D^2$ mapping to $L_\ell$ under $u_N$. We will choose $\gamma$ with the property that for large enough $N$ there are arbitrarily many arcs in $u_N^{-1}(\gamma)$ with one endpoint on $\bar L_\ell$ and one endpoint on $\bar L_m$ for some $m \neq \ell$. To see that this is possible, we first consider the case that $L_\ell$ is homotopically nontrivial in $\Sigma$. In this case we can let $\gamma$ be arc in $\Sigma$ (with either closed ends on $\partial \sigma$ or open ends approaching punctures of $\Sigma$) that intersects $L_\ell$ in a homotopically essential way, meaning that any curve homotopic to $L_\ell$ intersects $\gamma$ at least once. In particular, given a decomposition of $\Sigma$ into 0 and 1-handles, we can take $\gamma$ to be the cocore of a 1-handle that $L_\ell$ runs over. Since the path $u_N(\bar L_\ell)$ runs over $L_\ell$ at least $N$ times, $u_N^{-1}(\gamma) \cap \bar L_\ell$ has at least $N$ points and we would like to show that a large number of the arcs in $u_N^{-1}(\gamma)$ starting at these points do not end on $\bar L_\ell$ (since there are finitely many Lagrangians, it will follow that there is some $m$ such that a large number of these arcs end on $\bar L_m$). Consider the path in $\bar L'_\ell$ in $D^2$ from $\bar p_{\ell+1}$ to $\bar p_{\ell}$ that follows $\bar L_\ell$ except that just before hitting any arc in $u_N^{-1}(\gamma)$ that has both endpoints on $\bar L_\ell$ it turns left and follows (a push off of) the arc before continuing along $\bar L_\ell$. This path is clearly homotopic to $\bar L_\ell$ and $u_N(\bar L'_\ell)$, being homotopic to $u_N(\bar L_\ell)$, must intersect $\gamma$ at least $N$ times. For each intersection of $u_N(\bar L'_\ell)$ with $\gamma$ there is an arc in  $u_N^{-1}(\gamma)$ with exactly one end on $\bar L_\ell$, so there are at least $N$ of these as desired. In the case that $L_\ell$ is nullhomotopic we need a different way of choosing $\gamma$. In this case there must be a point $p$ in $\Sigma$ about which $\L_\ell$ has negative winding number, since otherwise it bounds an immersed disk violating admissibility; we choose $\gamma$ to be any arc passing through $p$.  We interpret the winding number as the signed intersection number of an arc going from this region to some fixed boundary or puncture of $\Sigma$, and assume that the ends of $\gamma$ approach this same boundary or puncture. The point $p$ divides $\gamma$ into two halves, and on either half there are more negative intersection points with $L_\ell$ than positive intersection points. The negative intersection points are the ones at which the image of an arc in $u_N^{-1}(\gamma)$ points away from $p$ along $\gamma$, and the positive intersection points are the ones at which the image of such an arc moves toward $p$. It follows that not at least one of the arcs from a negative intersection point moving away from $p$ do not stop at intersection point with $L_\ell$ and must end on some other $L_i$; this gives rise to an arc in $u_N^{-1}(\gamma)$ from $\bar L_\ell$ to $\bar L_i$. Since $u_N(\bar L_\ell)$ runs over $L_\ell$ at least $N$ times there are at least $N$ such arcs.

Assume we have an arc $\gamma$ as described above. Let $\bar \gamma^1, \bar \gamma^2, \ldots, \bar \gamma^N$ denote a set of arcs in $u_N^{-1}(\gamma)$ that have one endpoint on $\bar L_\ell$ and one endpoint on $\bar L_m$, and let $\bar x^i_\ell$ and $\bar x^i_m$ denote the endpoints of $\bar \gamma^i$ on $\bar L_\ell$ and $\bar L_m$, respectively (see Figure \ref{fig:finitely-many-polygons}). Since there are finitely many intersections of $\gamma$ with both $L_\ell$ and $L_m$, for $N$ large enough we can find distinct arcs $\bar \gamma^i$ and $\bar \gamma^j$ so that $u_N(\bar x_\ell^i) = u_N(\bar x_\ell^j)$ and $u_N(\bar x_m^i) = u_N(\bar x_m^j) $. Note that $u_N(\bar \gamma^i)$ and $u_N(\bar \gamma^j)$ are homotopic, since they both follow the unique path in $\gamma$ connecting $u_N(\bar x_\ell^i)$ to $u_N(\bar x_m^i)$.  Restricting $u_N$ to the part of $D^2$ between the two arcs $\bar \gamma^i$ and $\bar \gamma^j$ defines an immersed rectangle with sides on $L_\ell$, $\gamma$, $L_m$, and two identical opposite sides on $\gamma$; identifying the two $\gamma$ sides of this rectangle forms an immersed annulus bounded by $L_\ell$ and $L_m$. This contradicts the assumption that the curves are in admissible position.
\end{proof}

\begin{figure}
\labellist

  \pinlabel {$\bar p_{m+1}$} at 48 22
  \pinlabel {$\bar x^i_m$} at 92 22
  \pinlabel {$\bar x^3_m$} at 152 22
  \pinlabel {$\bar x^2_m$} at 198 22
  \pinlabel {$\bar x^1_m$} at 220 20
  \pinlabel {$\bar p_{m}$} at 241 22
  
  \pinlabel {$\bar p_{\ell}$} at 48 113
  \pinlabel {$\bar x^i_\ell$} at 122 113
  \pinlabel {$\bar x^3_\ell$} at 164 115
  \pinlabel {$\bar x^2_\ell$} at 190 115
  \pinlabel {$\bar x^1_\ell$} at 220 113
  \pinlabel {$\bar p_{\ell + 1}$} at 241 113

  \pinlabel {$\bar L_\ell =u_N^{-1}(L_m)$} at 142 -7
  \pinlabel {$\bar L_m = u_N^{-1}(L_\ell)$} at 142 143
  
  \color{ForestGreen}
  \pinlabel {$\bar \gamma^1$} at 225 68
  \pinlabel {$\bar \gamma^2$} at 195 68
  \pinlabel {$\bar \gamma^3$} at 173 68

  \pinlabel {$\cdots$} at 130 68
  \pinlabel {$\bar\gamma^i$} at 89 68

\endlabellist
\vspace{3mm}
 \includegraphics[scale=1]{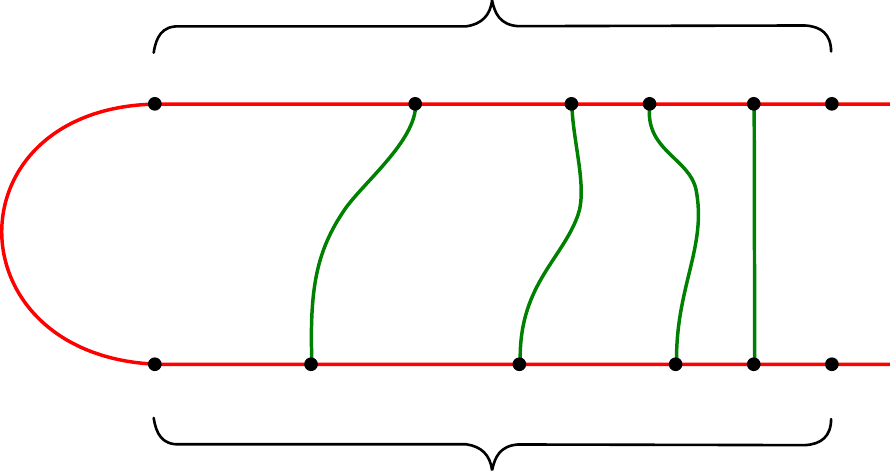}
 \vspace{3mm}
 \caption{A portion of the disk $D^2$ including the parts of $\partial D^2$ mapping to $L_\ell$ and $L_m$ and the arcs $\bar\gamma^i$ from $\bar x^i_\ell$ to $\bar x^j_m$.}
 \label{fig:finitely-many-polygons}
\end{figure}

Two (non-admissible) collections of curves in the infinite cylinder are shown in Figure \ref{fig:finitely-many-polygons2} that each bound an infinite number of 4-gons; local multiplicities of a particular 4-gon $u_N$ are indicated in the Figure. In each example $L_1$ plays the role of $L_\ell$ in the proof of Proposition \ref{prop:admissible}. The boundary of $u_N$ runs over both $L_1$ and $L_3$ $N$ times, forming a long strip; cutting this strip twice along the arc $\gamma$ and regluing shows that there is an immersed annulus bounded by $L_1$ and $L_3$. We remark that $\Sigma$ being non-closed is essential in the proof of Proposition \ref{prop:admissible}; on the right of Figure \ref{fig:finitely-many-polygons2} is a collection of three embedded curves which bound infinitely many triangles but which are admissible as no immersed annuli are present. It is still true that two Lagrangians are covered arbitrarily many times as the triangles grow and we can choose a curve $\gamma$ that intersects both such that for a large triangle there are arbitrarily many arcs in $u_N^{-1}(\gamma)$ connecting the preimages of $L_1$ and $L_0$; the difference is that here $\gamma$ is a closed curve so there is not a unique path between two points and the images of different arcs in $u_N^{-1}(\gamma)$ can not be identified.

\begin{figure}
 \includegraphics[scale=1.3]{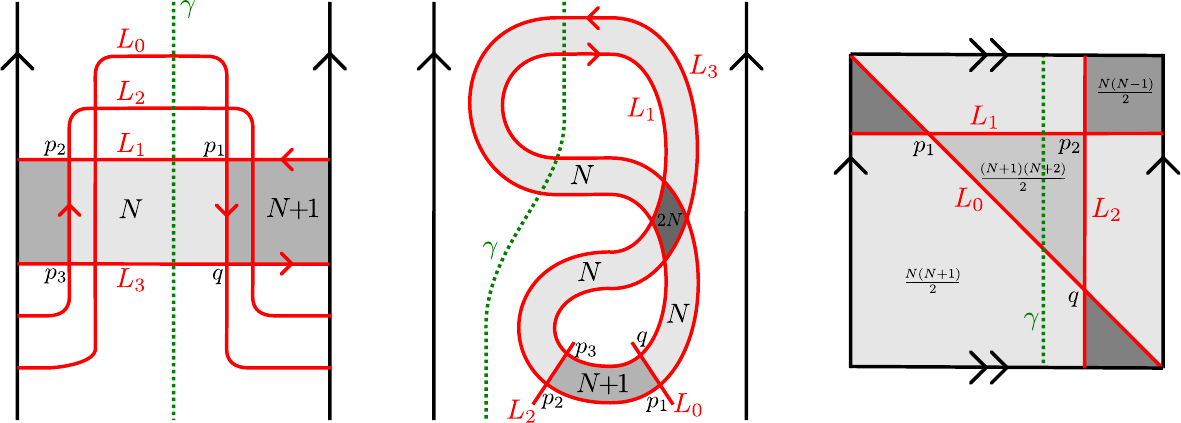}
 \vspace{3mm}
 \caption{(Left and middle) Arrangements of Lagrangians in the infinite strip for which there are infinitely many polygons (the local multiplicities of each region in the $N$th such polygon are indicated). In each case there is an immersed annulus, so the curves are not admissible. (Right) Three Lagrangians in the closed torus bounding infinitely many triangles, even though there are no immersed annuli.}
 \label{fig:finitely-many-polygons2}
\end{figure}

\begin{remark}
This combinatorial proof of Proposition \ref{prop:admissible} is new, to the author's knowledge. But in the symplectic setting there is a simpler argument relying on the fact that Lagrangians in a non-closed surface are exact. Alternatively, one can define Floer homology with coefficients in a Novikov ring and keep track of the symplectic area of polygons; this approach works for closed surfaces as well (see \cite{Abouzaid:surfaces}).
\end{remark}

Given an immersed $(k+1)$-gon $u$ in $\mathcal{M}(p_1, \ldots, p_k, q)$, we define a sign $(-1)^{s(u)}$ by multiplying contributions from each corner. The contribution of a corner $p$ in $L_i \cap L_j$ with $i < j$ is $-1$ if and only if the corner has odd grading and the orientation of $L_i$ opposes the boundary orientation of the polygon. In particular, the corner $p_i$ in $L_i \cap L_j$ contributes $-1$ if and only if the orientations of $L_i$ and $L_j$ both oppose the boundary orientation of the polygon, while the corner $q$ contributes $-1$ if and only if the orientation on $L_0$ opposes the boundary orientation and the orientation on $L_k$ agrees with the boundary orientation.

An immersed polygon is allowed to cover the marked points in $\Sigma$, but we will keep track of how many times this happens. Let $n_w(u)$ denote the total multiplicity with which the polygon $u$ covers the marked points in $\Sigma$. Finally, each immered polygon $u$ has weight $c(u)$ given by the product of the weight or the inverse of the weight associated with any basepoints of $L$ occurring on the boundary, where the inverse of the weight is used if the boundary orientation opposes the orientation of $L$ at the given basepoint. We now define:
\begin{equation}\label{eq:m_k}
m_k(p_1, \ldots, p_k) = \sum_{q \in L_0 \cap L_k} \sum_{u \in \mathcal{M}(p_1, \ldots, p_k, q)} (-1)^{s(u)} c(u) W^{n_w(u)} q.
\end{equation}
If the collection of Lagrangians is admissible then Proposition \ref{prop:admissible} ensures that the second sum above is finite for any $q$ in $L_0\cap L_k$, and the assumption that at most one of the Lagrangians is non-compact ensures that there are finitely many intersection points between any two Lagrangians, and in particular the first sum is finite.

Strictly speaking the $p_i$ and $q$ appearing in \eqref{eq:m_k} are intersection points in $L_{i-1}\cap L_i$ and $L_0 \cap L_k$, respectively, but these are identified with generators of $CF(L_{i-1}, L_i)$ and $CF(L_0, L_k)$, so the same formula defines the desired map
$$m_k: CF(L_0, L_1) \otimes \cdots \otimes CF(L_{k-1}, L_k) \to CF(L_0, L_k).$$
This definition makes sense for any $k > 0$. When $k = 0$ we also get a monogon counting map 
$$m_0: \{1\} \to CF(L_0, L_0) = CF(L_0).$$
We think of $m_0$ as a function with no inputs, so that $m_0()$ is an element of $CF(L_0)$. The map $m_0$ is defined by \eqref{eq:m_k} just like $m_k$, except that we need to be slightly more careful because in this case $q$ is a self-intersection point of $L_0$ and there are two generators of $CF(L_0)$ corresponding to each self-intersection point of $L_0$. These two generators have gradings of opposite parity, and we will declare that in this case the relevant generator is the one with even grading. That is, if we let $q^+$ denote the even grading generator of $CF(L_0)$ associated with the self-intersection point $q$, we define
$$m_0() = \sum_{q} \sum_{u \in \mathcal{M}(q)} c(u) W^{n_w(u)} q^+,$$
where the first sum is over self-intersection points $q$ of $L_0$. Recall that by admissibility there are finitely many self-intersection points. Note that the sign term $(-1)^{s(u)}$ from \eqref{eq:m_k} is always positive since the orientation on $L_0$ can not both agree with and oppose the boundary orientation of the monogon.

\begin{proposition}\label{prop:degree-of-m_k}
For $k \ge 0$, the degree of the map $m_k$ is $k-2$.
\end{proposition}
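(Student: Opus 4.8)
The plan is to compute the degree of $m_k$ directly from Definition \ref{def:grading} by analyzing the boundary angles of an immersed $(k+1)$-gon and relating them to the grading formula for the corners. Recall that for an intersection point $p$ in $L_i \cap L_j$, the grading is $\tilde\tau_j(p) - \tilde\tau_i(p) - \theta_{ij}(p)$, where $\pi\theta_{ij}(p)$ is the counterclockwise angle from $L_i$ to $L_j$ at $p$. The key input is the classical fact that for an immersed $(k+1)$-gon (a disk), the sum of the exterior angles, suitably interpreted using the chosen gradings/trivialization, is controlled: after choosing a $\Z$-grading, the ``total turning'' along the boundary is fixed by the Euler characteristic of the disk together with the jumps of $\tilde\tau$ across grading arcs. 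Since an immersed polygon $u$ covers the grading arcs $n_w(u)$ times along its interior but the grading on the output generator $q$ is measured with the grading arcs in place, the marked-point contributions cancel out of the final count and do not affect the degree (they only affect the power of $W$). So without loss of generality one can argue as if $\Sigma$ were unmarked.

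First I would set up notation: given $u \in \mathcal{M}(p_1,\dots,p_k,q)$ with corners $p_i \in L_{i-1}\cap L_i$ and $q \in L_0 \cap L_k$, write $\alpha_i \in (0,1)$ for the interior angle of the polygon at the corner $p_i$ (as a fraction of $\pi$), and $\alpha_q \in (0,1)$ for the interior angle at $q$. Since the polygon is an immersed disk with $k+1$ convex corners, the sum of exterior turning along the boundary, measured against the fixed trivialization and accounting for the lifts $\tilde\tau_i$, equals a fixed value: concretely, traversing $\partial D^2$ once, the tangent direction lift changes by $+2$ (this is the Gauss–Bonnet/turning-number statement for an immersed disk: one full counterclockwise turn), distributed among the $k+1$ exterior-angle contributions at the corners and the changes in $\tilde\tau_i$ along the smooth edges. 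Along the edge lying on $L_i$ between $\bar p_i$ and $\bar p_{i+1}$, the net change of $\tilde\tau_i$ is $\tilde\tau_i(p_{i+1}) - \tilde\tau_i(p_i)$ (indices read appropriately, with $p_{k+1}=q$ and $L_k$ running to $q$, and the edge on $L_0$ running from $q$ to $p_1$).

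The heart of the computation is bookkeeping: write $\gr(p_i) = \tilde\tau_i(p_i) - \tilde\tau_{i-1}(p_i) - \theta_{i-1,i}(p_i)$ and similarly $\gr(q) = \tilde\tau_0(q) - \tilde\tau_k(q) - \theta_{0k}(q)$. When I sum $\sum_i \gr(p_i) - \gr(q)$, the $\tilde\tau$ terms telescope exactly along the boundary of the disk, leaving a combination of the angular defects $\theta$. The relation between the $\theta$'s at the corners and the interior angles $\alpha$ (namely that at a convex corner traversed by the polygon, $\theta$ differs from $1-\alpha$ or $\alpha$ by an integer depending on orientation conventions, which is precisely the $\lfloor\cdot\rfloor$ in Definition \ref{def:grading}), together with the turning-number identity $\sum(\text{exterior angles}) + (\text{total }\tilde\tau\text{-change along edges}) = 2$, pins down $\sum_{i=1}^k \gr(p_i) - \gr(q)$ to be exactly $k-2$. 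I would verify the base cases $k=0$ (monogon: one convex corner, $m_0$ has degree $-2$, matching the declared even-grading generator) and $k=1$ (bigon: the differential $m_1$ has degree $-1$) directly as sanity checks, since these anchor the sign/normalization conventions.

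The main obstacle will be getting the conventions exactly right: the paper's grading differs from the standard one by a shift (see the Remark after Definition \ref{def:grading}), the corner at $q$ is an output rather than an input so its angle enters with the opposite sign, and the $\Z/2\Z$ reduction must be consistent with the sign rule $(-1)^{s(u)}$ used to define $m_k$. I expect the cleanest route is to first prove the statement mod $2$ — which is just a parity count of intersection signs around the polygon and follows from the $\Z/2\Z$-grading picture in Figure \ref{fig:mod2-grading} — and then upgrade to the integral statement using the turning-number identity above, checking that the integer ambiguities in each $\theta$ are precisely absorbed by the floor functions in the definition of $\gr$. The marked-point independence should be dispatched first with the one-line observation that $n_w(u)$ contributes only to the $W$-power, not the grading, since $W$ has degree $0$ in this setting (consistent with $\gr(U)=\gr(V)=(-2,0),(0,-2)$ summing appropriately only when both variables appear — here in the singly-marked case $W$ is a degree-zero formal variable tracking marked points).
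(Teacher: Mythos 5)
Your overall strategy matches the paper's: compute $\sum_{i=1}^k \gr(p_i) - \gr(q)$ by telescoping the lifted-tangent-slope contributions along $\partial u$ and feeding them into the turning-number identity (total rotation around an immersed disk equals $2$, corners contributing angular defects). That bookkeeping is exactly what the paper does.

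However, there is a genuine error that would block the proof as written: you claim that ``$W$ has degree $0$ in this setting'' and that ``one can argue as if $\Sigma$ were unmarked.'' Both are wrong, and they are wrong in a way that matters. In the paper's conventions $\gr(W) = -2$ (this is stated explicitly in the paper's proof, and it is consistent with $\gr_w(U) = -2$, $\gr_z(V) = -2$, $W = UV$). This is not a cosmetic point. When you telescope the $\tilde\tau_i$ terms along the boundary, you don't simply get rotations; because the gradings $\tilde\tau_i$ have jump discontinuities of size $2$ at grading arcs, what you get is $\rot_i(u) - 2n_{w;i}(u)$, so the final identity reads
\[
\sum_{i=1}^k \gr(p_i) - \gr(q) = 2 - k - 2n_w(u),
\]
which manifestly depends on the polygon $u$ through $n_w(u)$. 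The output of $m_k$ is not $q$ but $W^{n_w(u)} q$, and only because $\gr\bigl(W^{n_w(u)}\bigr) = -2n_w(u)$ do the two $n_w(u)$-dependent terms cancel to give a degree that is $k-2$ independent of $u$. If $W$ really had degree $0$, the map $m_k$ would not be grading-homogeneous at all in the marked setting, and the Proposition would be false as stated; your ``dispatch the marked points first'' step is precisely the verification you cannot skip. To repair the argument, replace the degree-$0$ claim with the observation that the $-2$ jumps in $\tilde\tau$ across grading arcs (which appear in the telescoped sum) are exactly offset by the $-2$ degree carried by each power of $W$ in the coefficient, and the rest of your outline goes through.
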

\begin{proof}
Fix an immersed polygon $u$ contributing a multiple of $q$ to $m_k(p_1, \ldots, p_k)$. Let $\tilde\tau_i: L_i \to \R$ denote the grading on $L_i$ and for $p$ in $L_i\cap L_j$, let $\theta(p_i)$ denote $\tfrac 1 \pi$ times the angle covered when turning counterclockwise from $L_{i-1}$ to $L_i$, and let $\theta(q)$ denote $\tfrac 1 \pi$ times the angle covered when turning counterclockwise from $L_0$ to $L_k$. Recall that $\gr(p_i) = \tilde\tau_{i}(p_i) - \tilde\tau_{i-1}(p_i) - \theta(p_i)$, and $\gr(q) = \tilde\tau_k(q) - \tilde\tau_0(q) - \theta(q)$, so that
$$\sum_{i=1}^k \gr(p_i) - \gr(q) = \sum_{i=0}^k \left[ \tilde\tau_i(p_i) - \tilde\tau_i(p_{i+1}) \right] + \theta(q) - \sum_{i=1}^k \theta(p_i), $$
where in the first sum on the right hand side we adopt the convention that $p_0 = p_{k+1} = q$. Let $\rot_i(u)$ denote the net rotation when traversing the $L_i$ portion of $\partial u$ (in radians, divided by $\pi$), and note that $\tilde\tau_i(p_i) - \tilde\tau_i(p_{i+1})$ is given by $\rot_i(u) - 2n_{w;i}(u)$ where $n_{w;i}(u)$ is the signed number of times the $L_i$ portion of $\partial u$ crosses a grading arc from a $w$ marked point. The sum $\sum_{i=0}^k n_{w;i} (u)$ is simply $n_w(u)$, the number of $w$ marked points enclosed by $u$. The total rotation when traversing $\partial u$, including the corners, which must be 2 because $u$ is a polygon, is given by
$$2 = \sum_{i=0}^k \rot_i(u) + \theta(q) + \sum_{i=1}^k (1 - \theta(p_i)).$$
It follows that
$$\sum_{i=1}^k \gr(p_i) - \gr(q) = \sum_{i=0}^k \left[ \rot_i(u) -2n_{w;i}(u)\right] + \theta(q) +  \sum_{i=1}^k \left[ 1- \theta(p_i) \right]- k = 2 - k - 2n_w(u). $$
Since the coefficient of $q$ in $m_k(p_1, \ldots, p_k)$ contains $W^{n_w(u)}$ and $\gr(W) = -2$, it follows that
$$\gr( m_k(p_1, \ldots, p_k)) - \sum_{i=1}^k \gr(p_i) = k - 2.$$
\end{proof}

An important property of the maps $m_k$ is that they satisfy the \emph{$\Ainfty$-relations}:
\begin{proposition}\label{prop:Ainfty-relations}
For any admissible collection of immersed curves $L_0, \ldots, L_k$ and any collection of intersection points $p_1,\ldots,p_k$ with $p_i$ in $L_{i-1}\cap L_i$.
$$\sum_{\ell =1}^k \sum_{j = 0}^{k-\ell} (-1)^\ast m_{k+1-\ell}(p_1, \ldots, p_j, m_\ell(p_{j+1}, \ldots, p_{j+\ell}), p_{j+\ell+1}, \ldots, p_k) = 0 $$
where $\ast = \sum_{i=j+\ell+1}^k 1 + \gr(p_i)$.
\end{proposition}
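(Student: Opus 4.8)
The plan is to prove this by the standard ``broken polygon'' cancellation argument, carried out combinatorially so that nothing leaves the surface $\Sigma$. Note first that, the collection being admissible, there are no immersed monogons, so $m_0=0$ and only the terms written in the statement occur. Expanding the inner count $m_\ell$ and the outer count $m_{k+1-\ell}$, the summand indexed by $(\ell,j)$ is a $W$-weighted, signed count of \emph{broken configurations}: ordered pairs $(u',u'')$ in which $u'\in\mathcal{M}(p_{j+1},\dots,p_{j+\ell},r)$ for some intersection point $r$ of the two Lagrangians flanking the block $p_{j+1},\dots,p_{j+\ell}$, and $u''\in\mathcal{M}(p_1,\dots,p_j,r,p_{j+\ell+1},\dots,p_k,q)$ for some $q\in L_0\cap L_k$, with $u'$ and $u''$ meeting at their shared corner over $r$ (when $\ell=1$ the inner factor is $m_1$ and $u'$ is a bigon, and when $k+1-\ell=1$ the outer factor is $m_1$ and $u''$ is a bigon). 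By Proposition~\ref{prop:admissible} only finitely many broken configurations contribute to the coefficient of a fixed $q$, and by Proposition~\ref{prop:degree-of-m_k} all summands land in a single grading. It therefore suffices to exhibit a fixed-point-free, sign-reversing involution $\iota$ on the set of all broken configurations (ranging over $\ell,j,r,q$) that preserves $q$, the total marked-point multiplicity $n_w$, and the product $c$ of basepoint weights read along the boundary.

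I would construct $\iota$ by \emph{re-cutting}. Glue $u'$ to $u''$ along the corner over $r$; the result is an immersion of a disk with a \emph{slit} emanating from the boundary point over $r$, whose underlying $2$-chain is $D(u')+D(u'')$ and whose combinatorial gluing data is assembled from those of $u'$ and $u''$. (As in the proof of Proposition~\ref{prop:admissible}, a polygon is determined by its domain together with this data, and there are finitely many choices of data for a fixed domain.) Tracing the slit inward from $r$, its far end is a point of the slit disk at which the disk can be re-cut into two immersed polygons joined at a corner. Admissibility is what controls this step: a degenerate resolution --- a slit closing into a loop, or a slit enclosing nothing --- would produce an immersed annulus bounded by two of the $L_i$ or an immersed disk (in particular a monogon), both forbidden by Definition~\ref{def:admissible}. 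Hence there is exactly one resolution other than the original, and it defines a second broken configuration $\iota(u',u'')$, in general with a different shared corner and a different pair $(\ell,j)$. That $\iota^2=\mathrm{id}$ is immediate from the symmetry of re-cutting, and $\iota$ preserves $q$, $n_w$, and $c$ because the total domain and the multiset of basepoint crossings of its boundary are unchanged.

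It then remains to check that $\iota$ negates the sign with which a configuration appears in the sum. All corners away from the re-cutting locus contribute identically before and after, so the net sign change comes from a local comparison at the two pairs of corners involved together with a Koszul reordering of the inputs transferred between the inner and outer factor --- this is exactly the role of $\ast=\sum_{i=j+\ell+1}^{k}\bigl(1+\gr(p_i)\bigr)$, combined with the parity of $\gr(r)$, which by Proposition~\ref{prop:degree-of-m_k} equals $\ell+\sum_{i=1}^{\ell}\gr(p_{j+i})$ modulo $2$. I would fix the sign conventions on the low-complexity cases --- $k=1$, which says $m_1\circ m_1=0$, i.e.\ $\partial^2=0$, and $k=2$, the signed Leibniz rule for $m_2$ over $m_1=\partial$ --- and then note that the general sign identity follows formally, since re-cutting only ever slides the slit past inputs lying strictly between the flanking Lagrangians.

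The step I expect to be the main obstacle is establishing that the re-cutting involution is genuinely total and fixed-point-free: one must verify that for every broken configuration the alternative resolution exists, is unique, is never the original configuration, and never requires a forbidden immersed disk, monogon, or annulus --- and one must do so in the presence of boundary, arc components with the prescribed ordering of endpoints, a (single) non-compact Lagrangian, and the asymmetric left/right pushoff conventions used to define $CF(L_0,L_1)$. The accompanying sign bookkeeping, while mechanical, is also delicate because of the homological (rather than cohomological) grading convention of Definition~\ref{def:grading}; I would pin it down on the base cases and propagate it through re-cutting.
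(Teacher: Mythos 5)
Your opening claim --- that admissibility rules out immersed monogons, so $m_0=0$ and only the displayed terms occur --- is a genuine error, and it erases the very subtlety this proposition is meant to carry. Definition~\ref{def:admissible} forbids a Lagrangian bounding an immersed \emph{disk}, meaning a map $(D^2,\partial D^2)\to(\Sigma,L)$ that is an immersion everywhere with no boundary corner; a monogon has a convex corner at a self-intersection point and is not such a disk. The paper draws this distinction deliberately: the remarks opening Section~\ref{sec:floer-theory} state that, unlike in Abouzaid's treatment, curves bounding immersed monogons \emph{are} allowed, and the nonvanishing of $m_0$ is the reason bounding chains are introduced at all. Accordingly the relation should be read as the full $\Ainfty$ relation with $\ell$ starting at $0$; the text immediately following this proposition records $m_1(m_1(p_1))=(-1)^{\gr(p_1)}m_2(m_0(),p_1)-m_2(p_1,m_0())$, which is precisely the one-input case once the $\ell=0$ terms are restored. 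Your proposed base case ``$m_1\circ m_1=0$'' is therefore false in general, and the same mistaken appeal to the nonexistence of monogons reappears when you invoke Definition~\ref{def:admissible} to forbid ``degenerate resolutions.''

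The concrete consequence is that your re-cutting involution is not closed on the set of broken configurations you enumerate. Tracing one boundary arm inward from the excess point over $r$, the alternate resolution can pinch off a piece with no input corners at all --- a monogon --- and the resulting pair is an $\ell=0$ term you have excluded; these unmatched configurations are exactly the obstruction that makes $m_1^2\neq 0$. Apart from this, the skeleton of your argument --- glue the two pieces along their shared corner over $r$, observe that the resulting domain with its excess boundary self-intersection can be resolved into a convex pair in exactly two ways, and check that $q$, $n_w(u)$, and the boundary weight $c(u)$ are preserved while the sign flips --- is the same cut-and-glue argument the paper uses (the paper phrases the glued object as a polygon with one obtuse corner; your slit-disk picture describes the same excess self-intersection at $r$). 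Reinstating the $\ell=0$ terms and verifying that re-cutting pairs them correctly is the needed repair.
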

\begin{proof}
This is a standard argument (see, for instance, \cite[Lemma 3.6]{Abouzaid:surfaces}). We consider immersed $(k+1)$-gons with one obtuse corner; each of these can be cut in two different ways at the obtuse corner, and each resulting pair of polygons contributes to a term in the relation. Conversely, each pair of polygons contributing to a term in the relation arises in this way, as combining the polygons gives a polygon with one obtuse corner. The most subtle part of the argument is checking that signs and weights work out correctly.

The main difference here compared to \cite[Lemma 3.6]{Abouzaid:surfaces} is that we allow monogons contributing to $m_0$ operations, but this does not substantially affect the argument. Another difference is the presence of marked points, but it is clear that the two pairs of polygons arising from cutting a given polygon with an obtuse corner cover the same collection of marked points, and so the corresponding terms in the relation have the same power of $W$.
\end{proof}

Ultimately, we would like to view the  bigon counting map $m_1: CF(L_0,L_1) \to CF(L_0, L_1)$ as a differential and consider homology of the resulting complex. However, the $\Ainfty$-relations above show that this may not be possible. The relation with $k = 2$ is
$$ m_1(m_1(p_1)) = (-1)^{\gr(p_1)} m_2(m_0(), p_1) - m_2(p_1, m_0()), $$
so $m_0$ gives an obstruction to $(m_1)^2$ being zero. For this reason it is common to simply restrict to Lagrangians which do not bound immersed monogons, so that the map $m_0$ is always zero; this is the approach taken in \cite{Abouzaid:surfaces}. However, this turns out to be too restrictive an assumption for our purposes. Instead, we will see that a broader class of Lagrangians can be decorated, and the polygon counting maps modified, in order to eliminate this obstruction and define a differential.

\subsection{Turning points and bounding chains}

In order to define Floer homology, we will need to add decorations to our immersed Lagrangians; these decorations will take the form of linear combinations of self-intersection points. Given an immersed multicurve $L$, let $\sI$ denote the set of self-intersection points of $L$. We will only need to include self-intersection points with non-positive degree, and at times we will restrict to only points with degree zero; let $\sI_{\le 0}$ and $\sI_0$ denote the subsets of $\sI$ consisting of points with non-positive degree or degree zero, respectively.

\begin{definition}
A \emph{collection of turning points} on $L$ is a linear combination, over $\F$, of points in $\sI_{\le 0}$.
\end{definition}

The objects we will consider are now pairs $(L, \bchain)$ where $L$ is a weighted and graded immersed Lagrangian in $\Sigma$ and $\bchain$ is a collection of turning points. A key observation is that a collection of turning points $\bchain$ may be viewed as an element of $CF(L)$.

\begin{definition}
A collection of turning points $\bchain = \sum_{p\in \sI_{\le 0}} c_p p$ determines an element
$$\overline\bchain = \sum_{p\in \sI_{\le 0}} c_p W^{-\deg(p)/2} p^-$$
 of $CF(L)$, where $p^-$ is the odd grading generator of $CF(L)$ associated with the intersection point $p$.
\end{definition}

The powers of $W$ here are chosen so that $\overline\bchain$ is a homogeneous element of $CF(L)$ with grading $-1$. Conversely, any homogeneous grading $-1$ element of $CF(L)$ determines a collection of turning points by taking the coefficients of all $p^-$ for $p$ in $\sI_{\le 0}$ and ignoring the power of $W$. 

\begin{remark}
For any homogeneous grading $-1$ element of $CF(L)$, the coefficient of $p^{-}$ must be zero for any $p$ in $\sI$ with positive degree; this is because $\gr(p^-) = -1 - \deg(p) < -1$ and we do not allow negative powers of $W$. However, such an element of $CF(L)$ may include the odd grading generators associated with a basepoint on $L$. A homogeneous grading $-1$ element of $CF(L)$ is therefore equivalent to a linear combination (over $\F$) of points in $\sI_{\le 0} \cup \sJ$, where $\sJ$ denotes the set of basepoints on $L$. A linear combination of basepoints in $\sJ$ with nonzero coefficients is the same as a weighting of $L$. Thus a collection of turning points on $L$ along with the weights on $L$ determines a homogeneous grading $-1$ element of $CF(L)$. In view of this, it might make sense to work with unweighted curves and include the weighting along with the collection of turning points as one decoration by an element of $CF(L)$. However, since the generators of $CF(L)$ coming from basepoints behave differently from the other generators in a few places, we have chosen to keep these decorations separate.
\end{remark}

Given a collection of immersed multicurves with collections of turning points $(L_0, \bchain_0), \ldots, (L_k, \bchain_k)$, we can define a modified polygon counting operation
$$m_k^\bchain: CF(L_0, L_1) \otimes \cdots \otimes CF(L_{k-1}, L_k) \to CF(L_0, L_k)$$
by allowing arbitrarily many copies of the turning points to be inserted between the corners. That is, for points $p_1, \ldots, p_k$ with $p_i$ a generator of $CF(L_{i-1}, L_k)$ we seek to define

$$m_k^\bchain(p_1, \ldots, p_k) = \sum_{n_0, \ldots, n_k \ge 0} m_{k+n_0+\cdots +n_k}( \underbrace{\overline\bchain_0, \ldots, \overline\bchain_0}_{n_0}, p_1, \underbrace{\overline\bchain_1, \ldots, \overline\bchain_1}_{n_1}, p_2, \ldots, p_k, \underbrace{\overline\bchain_k, \ldots, \overline\bchain_k}_{n_k}).$$

We can interpret $m_k^\bchain(p_1, \ldots, p_k)$ as counting polygons with at least $k+1$ corners, where $k$ of the corners are the intersection points corresponding to $p_1, \ldots, p_k$, one is an intersection point in ${L_0 \cap L_k}$ corresponding to some output generator $q$ of $CF(L_0,L_k)$, and the remaining corners are self-intersection points of $L_i$ for some $i$ whose coefficient is nonzero in $\bchain_i$. We will refer to corners of the third type as \emph{false corners}. Strictly speaking, a false corner at a self-intersection point $p$ in $L_i$ should be viewed as the odd grading generator $p^-$ of $CF(L_i, L_i')$ associated with $p$. Each such polygon contributes to $m_k^\bchain(p_1, \ldots, p_k)$ with a weight in $\F[W]$ given by the product of the coefficient of $p^-$ in $\overline\bchain_i$ for each false corner at an intersection point $p$. Since each false corner is an odd grading element of $CF(L_i)$, the orientation on $L_i$ is consistent at the corner in the sense that it either agrees with the boundary orientation of the polygon before and after the corner or it opposes the boundary orientation before and after the corner; note that in the latter case the false corner contributes $-1$ to the sign of the polygon.

To formalize the view of counting polygons with false corners, let a \emph{polygonal path} in $L$ be a piecewise smooth path in $L$ in which adjacent smooth sections are connected by a left turn at a self-intersection point of $L$ and for which the path either always follows or always opposes the orientation on $L$. Given a collection of turning points $\bchain$ for $L$, we say that a polygonal path is \emph{consistent with $\bchain$} if it only has left turns at self-intersection points that have nonzero coefficient in $\bchain$. A polygon counted in $m^\bchain_k(p_1, \ldots, p_k)$ can be thought of as the image of a $(k+1)$-gon whose corners map to $p_1, \ldots, p_k$ and $q$ and whose $i$th side maps to a polygonal path in $L_i$ consistent with $\bchain_i$. Keeping with the terminology introduced above, we will refer to the left turns along the polygonal paths as \emph{false corners} of the polygon.

\begin{definition} Given intersection points $p_i$ in $L_{i-1} \cap L_{i}$ for $1 \le i \le k$ and an intersection point $q$ in $L_0 \cap L_k$, a \emph{generalized immersed $(k+1)$-gon with corners $p_1, \ldots, p_k$, and $q$} is an orientation preserving map
$$u: (D^2, \partial D^2) \to (\Sigma, L_0 \cup \cdots \cup L_k)$$
with the following properties:
\begin{itemize}
\item $u(\bar q) = q$ and $u(\bar p_{i}) = p_i$ for $1 \le i \le k$, where $\bar q, \bar p_1, \ldots, \bar p_k$ are fixed points on $\partial D^2$ appearing in clockwise order.
\item $u$ maps the segment of $\partial D^2$ from $\bar p_{i+1}$ to $\bar p_{i}$ to a polygonal path in $L_i$ consistent with $\bchain_i$ from $p_{i+1}$ to $p_i$, for $1 \le i < k$, and the segments from $\bar p_1$ to $\bar q$ and from $\bar q$ to $\bar p_k$ are mapped to polygonal paths in $L_0$ and $L_k$ consistent with $\bchain_0$ and $\bchain_k$, respectively;
\item $u$ is an immersion away from the points $\bar q$, $\bar p_i$, and the preimages of any left turns on the polygonal paths in the boundary; and
\item the points $q$ and $p_i$ are convex corners of the image of $u$.
\end{itemize}
\end{definition}

We define $\mathcal{M}^{\bchain}(p_1, \ldots, p_k, q)$ to be the set of equivalence classes of generalized immersed polygons with corners $p_1, \ldots, p_k$ and $q$. A polygonal path in $L_i$ consistent with $\bchain_i$ has a weight in $\F[W]$ that is the product of a contribution for each left turn along the path and each basepont of $L$ passed. The contribution of a left turn at an intersection point $p$ is the coefficient of $p^-$ in $\overline\bchain_i$ (recall that this is the coefficient of $p$ in $\bchain_i$ times $W^{-\deg(p)/2}$) if the polygonal path follows the orientation on $L_i$ and $-1$ times the coefficient of $p^-$ in $\overline\bchain$ if the polygonal path opposes the orientation on $L_i$. As usual, the contribution of a basepoint is the weight associated to that basepoint or its inverse, depending on whether the path is following or opposing the orientation on $L_i$. We then define a weight $c(u)$ for a polygon $u$ in $\mathcal{M}^{\bchain}(p_1, \ldots, p_k, q)$ to be the product of weights of the $k+1$ polygonal paths making up the boundary of the polygon. With this notation, $m^{\bchain}_k$ can equivalently be defined by
$$m^{\bchain}_k(p_1, \ldots, p_k) = \sum_{q \in L_0 \cap L_k} \sum_{u \in \mathcal{M}^{\bchain}(p_1, \ldots, p_k, q)} (-1)^{s(u)} c(u) U^{n_w(u)} q,$$
where $(-1)^{s(u)}$ is the usual product of signs associated with the corners $p_1, \ldots, p_k$, and $q$.

We still need to enhance our notion of admissibility to ensure that only finitely many polygons in $\mathcal{M}^{\bchain}(p_1, \ldots, p_k, q)$ contribute; this entails prohibiting certain generalized immersed disks and annuli. Just as $m_k^{\bchain}$ counts polygons which may have any number of false corners inserted on each side, we will consider immersed disks and annuli in which false corners may appear on each boundary component. More precisely, a \emph{generalized immersed disk} bounded by $(L_i, \bchain_i)$ is a map $u: D^2 \to \Sigma$ taking $\partial D^2$ to a closed polygonal path in $L_i$ consistent with $\bchain_i$ that is an immersion away from the preimages of the false corners on the boundary. If $A$ is the annulus $1\le z \le 2$ in the complex plane, a \emph{generalized immersed annulus} bounded by $(L_i, \bchain_i)$ and $(L_j, \bchain_j)$ is a map $u:A \to \Sigma$ that takes the outer boundary to a closed polygonal path in $L_i$ consistent with $\bchain_i$ and the inner boundary to a closed polygonal path in $L_j$ consistent with $\bchain_j$, such that $u$ is an immersion except at the preimages of the false corners.

\begin{definition}\label{def:admissible-enhanced}
A collection of immersed Lagrangians with turning points $(L_0, \bchain_0), \ldots, (L_k, \bchain_k)$ in $\Sigma$ is \emph{admissible} if the immersed multicurves $L_i$ are pairwise transverse, no decorated multicurve bounds a generalized immersed disk, and no two decorated multicurves bound a generalized immersed annulus. We also assume all but at most one of $L_0, \ldots, L_k$ are compact, and any non-compact Lagrangians have finitely many self-intersection points.
\end{definition}

\begin{proposition}\label{prop:admissible-enhanced}
If $(L_0, \bchain_0), \ldots, (L_k, \bchain_k)$ are admissible, then for any intersection points $p_{1}, \ldots, p_{k}$, and $q$ as above the set $\mathcal{M}^\bchain(p_1, \ldots, p_k, q)$ is finite. Thus the operation $m^\bchain_k$ is well-defined.
\end{proposition}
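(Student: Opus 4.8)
The plan is to run the proof of Proposition~\ref{prop:admissible} almost verbatim, replacing ``immersed polygon'' by ``generalized immersed polygon'' and ``immersed annulus'' by ``generalized immersed annulus'' throughout, and then to isolate the two or three places where the false corners force an extra observation. First I would set up, exactly as there, the cell structure on $\Sigma$ cut out by $L_0\cup\cdots\cup L_k$: $0$-cells are (self-)intersection points, $1$-cells are segments of the $L_i$, $2$-cells are complementary regions. A generalized immersed polygon still determines a domain (a compactly supported $2$-chain with non-negative multiplicities) together with finitely many choices of combinatorial gluing data, and the domain is still determined by its boundary $1$-cycle, so it suffices to bound the number of $1$-cycles arising as boundaries of generalized polygons with the given corners $p_1,\dots,p_k,q$. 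The boundary of such a polygon is a concatenation of polygonal paths, one in each $L_i$, and the key reduction is that if the multiplicity with which $\partial u$ traverses every $1$-cell were uniformly bounded, then each of these polygonal paths would run over only finitely many edge-sequences---a polygonal path is determined by its sequence of traversed $1$-cells, since at a self-intersection ``going straight'' and ``turning left'' use different outgoing edges---and so there would be only finitely many polygons. As in Proposition~\ref{prop:admissible}, the one allowed non-compact Lagrangian needs the usual separate remark (its image meets infinitely many cells, so one argues a polygon cannot escape to infinity); this part is unchanged by the turning points.

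So suppose for contradiction that $\mathcal{M}^\bchain(p_1,\dots,p_k,q)$ is infinite. Then some sequence $u_N$ of generalized polygons has boundaries traversing a single $1$-cell, lying on some $L_\ell$, at least $N$ times, so the polygonal path $u_N(\bar L_\ell)$ has length (number of $1$-cell traversals, including those created by false corners) tending to infinity. I would then choose the arc $\gamma$ exactly as before: the cocore of a $1$-handle that $L_\ell$ runs over essentially when $L_\ell$ is homotopically nontrivial, and an arc through a point of negative winding number of $L_\ell$ when $L_\ell$ is nullhomotopic---such a point still exists, because a curve that bounds no generalized immersed disk in particular bounds no honest immersed disk, and the winding-number argument uses only honest disks. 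The same reasoning then produces arbitrarily many disjoint arcs $\bar\gamma^1,\dots,\bar\gamma^N$ in $u_N^{-1}(\gamma)$, each from $\bar L_\ell$ to $\bar L_m$ for a fixed $m\neq\ell$, whose endpoint images $u_N(\bar x_\ell^i),u_N(\bar x_m^i)$ take only finitely many values; for $N$ large, two of them, say $\bar\gamma^i$ and $\bar\gamma^j$, have matching endpoint images, and cutting $D^2$ along $\bar\gamma^i,\bar\gamma^j$ and regluing the two $\gamma$-sides produces an annulus immersed in $\Sigma$.

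The genuinely new point---and the only place where the \emph{enhanced} admissibility of Definition~\ref{def:admissible-enhanced} is used rather than Definition~\ref{def:admissible}---is that this annulus is a \emph{generalized} immersed annulus bounded by $(L_\ell,\bchain_\ell)$ and $(L_m,\bchain_m)$: its two boundary components are the restrictions of the polygonal paths $u_N(\bar L_\ell)$ and $u_N(\bar L_m)$, so they may contain false corners, i.e.\ left turns at self-intersection points with nonzero coefficient in $\bchain_\ell$ or $\bchain_m$. I would check that these boundary loops are closed (the matched endpoint images make both into closed loops after the regluing), that they are consistent with $\bchain_\ell$ and $\bchain_m$ (automatic: every false corner appearing already appeared on $\partial u_N$, hence has nonzero coefficient), and that the immersion condition away from the false-corner preimages is inherited from $u_N$. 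This contradicts admissibility, which proves finiteness of $\mathcal{M}^\bchain(p_1,\dots,p_k,q)$; together with the observation that at most one $L_i$ is non-compact (so $CF(L_0,L_k)$ is finitely generated), this gives well-definedness of $m_k^\bchain$.

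The step I expect to be the main obstacle is the reduction at the end of the first paragraph: in the embedded-corner setting, two candidate boundary paths in $L_i$ with the same endpoints differ by full copies of $L_i$, which is what made ``infinitely many polygons $\Rightarrow$ arbitrarily long path on some $L_\ell$'' immediate. With turning points this is no longer true---a polygonal path from $p_{i+1}$ to $p_i$ consistent with $\bchain_i$ can meander through false corners in genuinely different ways---so the argument has to be recast purely in terms of unbounded $1$-cell multiplicities and the fact that a bounded multiplicity vector admits only finitely many realizing edge-sequences, as sketched above. Everything downstream of ``some $L_\ell$-path has unbounded length'' then goes through as in Proposition~\ref{prop:admissible}, with the cosmetic change that the resulting annulus carries false corners and is therefore excluded by the enhanced admissibility hypothesis.
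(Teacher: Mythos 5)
Your overall strategy matches the paper's: run the proof of Proposition~\ref{prop:admissible} with ``generalized'' in front of ``immersed polygon'' and ``immersed annulus,'' and invoke the enhanced admissibility hypothesis to rule out the generalized annulus produced at the end. You also correctly identify the one place the verbatim argument fails, and your first-paragraph reduction (infinitely many generalized polygons force some $1$-cell to be traversed arbitrarily often, since a polygonal path is determined by its $1$-cell sequence) is a clean way to reach the paper's own observation that infinitely many paths on $L_\ell$ force some point of $L_\ell$ to be covered arbitrarily often.

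The gap is in your second paragraph, at ``I would then choose the arc $\gamma$ exactly as before\dots{}the same reasoning then produces arbitrarily many disjoint arcs.'' In Proposition~\ref{prop:admissible} the arcs in $u_N^{-1}(\gamma)$ are produced because $u_N(\bar L_\ell)$ runs over \emph{all} of $L_\ell$ at least $N$ times, so it crosses any $\gamma$ that meets $L_\ell$ essentially. That is precisely what can fail for a polygonal path: a path covering one particular $1$-cell $N$ times need not wind around $L_\ell$ at all --- it can, for instance, loop through a false corner around a lobe of $L_\ell$ that stays far from the cocore $\gamma$ or from the negative-winding point --- so a $\gamma$ chosen ``exactly as before,'' independently of which $1$-cell is heavily covered, may be crossed only a bounded number of times. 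So your concluding sentence, that ``everything downstream of `some $L_\ell$-path has unbounded length' then goes through,'' is exactly the claim that needs repair. The paper's (admittedly terse) fix is to localize: it chooses to work at a preimage of the heavily-covered point rather than keeping the same $\gamma$. You should make the analogous move explicit --- e.g.\ take $\gamma$ to be a properly embedded arc passing transversally through the interior of the heavily-covered $1$-cell, so that the $N$ traversals of that cell produce $N$ points of $u_N^{-1}(\gamma)\cap\bar L_\ell$ directly --- before asserting that the pigeonhole-on-endpoints and regluing-to-an-annulus steps go through.
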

\begin{proof}
The proof is essentially the same as Proposition \ref{prop:admissible}. The main difference is that there may be more than one closed path in $L$ and an arbitrarily long path in $L_\ell$ may not cover every point of $L_\ell$ many times. But if we consider infinitely many paths in $L_\ell$ then there must be a path covering some point arbitrarily many times; we choose $\bar p_\ell$ to be a preimage of this point, and the proof is otherwise the same.
\end{proof}

For any choice of turning points $\bchain_i$ on each immersed multicurve $L_i$, the modified polygon maps $m^\bchain_k$ still satisfy the $\Ainfty$ relations.

\begin{proposition}\label{prop:Ainfty-relations-enhanced}
For any admissible collection of immersed multicurves with turning points \\ $(L_0, \bchain_0), \ldots, (L_k, \bchain_k)$ and any collection of intersection points $p_1,\ldots,p_k$ with $p_i$ in $L_{i-1}\cap L_i$.
$$\sum_{\ell =1}^k \sum_{j = 0}^{k-\ell} (-1)^\ast m^\bchain_{k+1-\ell}(p_1, \ldots, p_j, m^\bchain_\ell(p_{j+1}, \ldots, p_{j+\ell}), p_{j+\ell+1}, \ldots, p_k) = 0 $$
where $\ast = \sum_{i=j+\ell+1}^k 1 + \gr(p_i)$.
\end{proposition}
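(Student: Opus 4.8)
The plan is to deduce this from the undeformed $\Ainfty$ relations of Proposition \ref{prop:Ainfty-relations}, using the standard fact that the operations $m_k^\bchain$ are obtained from the $m_k$ by the ``deformation by a degree $-1$ cochain'' recipe, and that this recipe preserves the (possibly curved) $\Ainfty$ relations for \emph{any} choice of the cochains $\overline\bchain_i$ — no Maurer--Cartan hypothesis is needed, which is why the statement holds for an arbitrary collection of turning points. Concretely, Proposition \ref{prop:Ainfty-relations} holds by multilinearity for arbitrary homogeneous elements of the spaces $CF(L_{i-1},L_i)$, not just for intersection points, so I would apply it to each sequence obtained by interspersing $p_1,\dots,p_k$ with blocks of copies of the elements $\overline\bchain_i\in CF(L_i)$, that is, to $(\overline\bchain_0^{\,n_0},p_1,\overline\bchain_1^{\,n_1},p_2,\dots,p_k,\overline\bchain_k^{\,n_k})$ for every tuple $(n_0,\dots,n_k)$ of nonnegative integers, and then sum over all such tuples. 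Admissibility in the enhanced sense (Proposition \ref{prop:admissible-enhanced}) guarantees that each $m_k^\bchain$ is a finite sum, so this doubly-indexed sum is a finite and legitimate rearrangement.

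The main step is then the bookkeeping that identifies the resulting sum with the left-hand side of the claimed relation for $m^\bchain$. Expanding each composite $m^\bchain_{k+1-\ell}(\ldots,m^\bchain_\ell(\ldots),\ldots)$ via the definition of $m^\bchain$ turns it into a sum of terms $m_{a}(\ldots,m_{b}(\ldots),\ldots)$ in which every slot is filled either by some external $p_i$ or by a copy of some $\overline\bchain_j$, and conversely every term of the undeformed relation, for every such ``long'' input sequence, arises exactly once, with the inner operation $m_b$ consuming a consecutive block of the long sequence. Terms whose inner block contains at least one external $p_i$ account for the $\ell\ge 1$ summands; terms whose inner block consists entirely of copies of a single $\overline\bchain_j$ account for the curvature summand $m^\bchain_{k+1}(\ldots,m^\bchain_0(),\ldots)$, which, following the convention of Proposition \ref{prop:Ainfty-relations} and the remark after it, is understood to be included among the terms of the $\Ainfty$ relation (this is the term that is generally nonzero and whose vanishing is the bounding-chain condition). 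The one arithmetic point to verify is the signs: since $\overline\bchain_i$ is homogeneous of degree $-1$, each inserted copy contributes $1+\gr(\overline\bchain_i)\equiv 0\pmod 2$, so the exponent $\ast=\sum_{i=j+\ell+1}^{k}(1+\gr(p_i))$ computed from the external inputs only agrees with the exponent of the corresponding undeformed term computed from all of its inputs. The $\F[W]$-weights attached to false corners are just the coefficients appearing in the $\overline\bchain_i$, so they are carried along automatically by multilinearity, and the powers of $W$ recording marked points match because, when a generalized polygon with an obtuse corner is cut, the two pieces together cover the same marked points.

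I expect the main obstacle to be making the combinatorial bijection of the previous paragraph completely precise — in particular, checking that each term of each undeformed relation is hit exactly once (with no double counting arising from the freedom to split a run of consecutive copies of $\overline\bchain_j$ between the inner and outer operation, and no omissions) and organizing the indices $n_0,\dots,n_k,\ell,j,a,b$ so that the identity is manifest. An alternative, more geometric route would be to repeat the cutting argument sketched for Proposition \ref{prop:Ainfty-relations} directly for generalized polygons: consider generalized immersed $(k+1)$-gons (false corners permitted) that have exactly one obtuse \emph{true} corner, cut each such polygon at that corner in the two possible ways, and observe that the false corners, their weights, and the enclosed marked points are simply distributed between the two resulting pieces; the sign analysis is identical to the undeformed case for the same reason as above. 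Either way the content is that the ``standard argument'' of Proposition \ref{prop:Ainfty-relations} is insensitive to the presence of false corners, and I would present the deduction from Proposition \ref{prop:Ainfty-relations} as the main line, remarking that the geometric proof goes through unchanged.
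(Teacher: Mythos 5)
Your primary route is genuinely different from the paper's. The paper's proof runs the geometric cutting argument directly on \emph{generalized} immersed polygons: it takes a generalized $(k{+}1)$-gon with one obtuse corner (which may be a true corner at some $q\in L_0\cap L_k$, some $p_i$, or a false corner), cuts it two ways, and observes that false corners, their weights, and enclosed marked points distribute intact between the two pieces — exactly your ``alternative, more geometric route.'' Your main line instead derives the enhanced relation formally from Proposition~\ref{prop:Ainfty-relations} by summing the undeformed relation over all interspersed sequences $(\overline\bchain_0^{\,n_0},p_1,\dots,p_k,\overline\bchain_k^{\,n_k})$. That deduction is sound: multilinearity lets you feed arbitrary homogeneous $CF$-elements into the $m_k$'s (at the cost of working with the collection $L_0,\dots,L_0,L_1,\dots$ with repeats, which is within the paper's framework since $CF(L_i,L_i)$ is already handled by perturbation), admissibility makes the rearrangement legitimate, the position of the inner block in the long sequence reconstructs $(\ell,j)$ together with all the $m^\bchain$-expansion parameters uniquely so there is no double counting, and the clean observation that each inserted $\overline\bchain_i$ contributes $1+\gr(\overline\bchain_i)\equiv 0\pmod 2$ to the sign exponent gives the claimed $\ast$ directly. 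You are also right to note that the $\ell=0$ (curvature) terms are implicitly part of the relation: the paper's display in both Propositions writes $\sum_{\ell=1}^k$, but the discussion immediately following Proposition~\ref{prop:Ainfty-relations} uses the $m_0$-terms, so the sum is meant to include $\ell=0$, and your argument produces exactly those $m^\bchain_{k+1}(\dots,m^\bchain_0(),\dots)$ contributions from inner blocks lying inside a single $\overline\bchain_j$-run. What the algebraic route buys is that it makes the enhanced relation a purely formal corollary of the undeformed one (the standard ``twisting by a degree $-1$ element preserves curved $\Ainfty$'' fact), so no geometry needs to be revisited; what the paper's geometric route buys is that it stays closer to the polygon-counting definitions and sidesteps the bookkeeping of indexing long sequences.
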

\begin{proof}
This follows from the usual proof of the $\Ainfty$ relations. The sum in the relation counts pairs of polygons such that the first contributes some $q$ to
$$m_{\ell+n}( \vec\bchain_j, p_{j+1}, \vec\bchain_{j+1}, p_{j+2}, \ldots, p_{j+\ell}, \vec\bchain_{j+\ell}),$$
where here $\vec\bchain_i$ represents a sequence of any number of false corners in $\bchain_i$ and $n$ is the total number of such false corners, and the second contributes $q'$ to
$$m_{k+1-\ell+n'}( \vec\bchain_0, p_1, \vec\bchain_1, p_2, \ldots, p_j, \vec\bchain_j, q, \vec\bchain_{j+\ell}, p_{j+\ell}, \ldots, p_k, \vec\bchain_k ),$$
where again the $\vec\bchain_i$'s stand in for a total of $n'$ false corners. The union of these two polygons forms a polygon with one obtuse corner. This polygon has corners at $q$ and $p_1$, \ldots, $p_k$, as well as false corners, and the obtuse corner may be of any type. Cutting in two ways at the obtuse corner gives two polygons contributing to the sum on the left side of the relation, and it can be checked as before that these contribute with opposite weight.
\end{proof}

Although we will consider more general collections of turning points as intermediate objects in our proof, we will ultimately be interested in collections of turning points satisfying an additional constraint.

\begin{definition}\label{def:bounding-chain}
Given an immersed multicurve $L$, a \emph{bounding chain} for $L$ is is a collection of turning points $\bchain$ (or by slight abuse of notation its corresponding element $\overline\bchain \in CF(L)$) such that
$$ m^\bchain_0 = \sum_{k\ge 0} m_k(\underbrace{\overline\bchain, \ldots, \overline\bchain}_{k}) = 0.$$
\end{definition}
This constraint on $\bchain$ is known as the \emph{Maurer-Cartan equation}. The motivation for this constraint should now be clear: the $\Ainfty$ relations imply that $m_0$ is an obstruction to $m_1$ being a differential---since the same relations hold for the modified operations $m^\bchain_k$, $m^\bchain_0$ is an obstruction to $m^\bchain_1$ being a differential.

\begin{proposition}
If $(L_0, \bchain_0)$ and $(L_1, \bchain_1)$ are immersed multicurves decorated with bounding chains, then
$$m^\bchain_1: CF(L_0, L_1) \to CF(L_0, L_1)$$
is a differential.
\end{proposition}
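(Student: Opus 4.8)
The plan is to obtain this directly from the $A_\infty$ relations for the modified operations, Proposition~\ref{prop:Ainfty-relations-enhanced}, in precisely the way one deduces $m_1^2=0$ from the $A_\infty$ relations in the uncurved case. I would apply the relation to the admissible collection consisting of the two decorated multicurves $(L_0,\bchain_0)$ and $(L_1,\bchain_1)$ with a single input $p\in CF(L_0,L_1)$. As in the relation with $k=2$ and one input displayed just after Proposition~\ref{prop:Ainfty-relations}, but now with every $m_j$ replaced by $m^\bchain_j$, this reads
$$m^\bchain_1\bigl(m^\bchain_1(p)\bigr) = (-1)^{\gr(p)}\, m^\bchain_2\bigl(m^{\bchain_0}_0(),p\bigr) - m^\bchain_2\bigl(p,m^{\bchain_1}_0()\bigr),$$
where $m^{\bchain_0}_0()\in CF(L_0,L_0)=CF(L_0)$ and $m^{\bchain_1}_0()\in CF(L_1,L_1)=CF(L_1)$ are the curvature elements of the two decorated curves, i.e.\ the values of $m^\bchain_0$ on the empty input sequence.

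Now I would invoke the hypothesis: since $\bchain_0$ and $\bchain_1$ are bounding chains, each satisfies the Maurer--Cartan equation of Definition~\ref{def:bounding-chain}, so $m^{\bchain_0}_0()=0$ and $m^{\bchain_1}_0()=0$. Substituting these into the displayed relation makes both right-hand terms vanish, leaving $m^\bchain_1(m^\bchain_1(p))=0$ for every generator $p$ and hence $(m^\bchain_1)^2=0$ on all of $CF(L_0,L_1)$. I would also record that $m^\bchain_1$ lowers grading by $1$: a generalized bigon contributing to it has the form $m_{1+n}(\overline\bchain_0,\ldots,\overline\bchain_0,p,\overline\bchain_1,\ldots,\overline\bchain_1)$ with a total of $n$ false-corner inputs, each of grading $-1$, so by Proposition~\ref{prop:degree-of-m_k} (degree $n-1$) its output has grading $(n-1)+\gr(p)-n=\gr(p)-1$. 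Thus $m^\bchain_1$ is genuinely a differential.

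There is no deep obstacle here once Propositions~\ref{prop:Ainfty-relations-enhanced} and~\ref{prop:admissible-enhanced} are in hand; the main thing to get right is the bookkeeping. First, one must identify the two $\ell=0$ insertions in the $A_\infty$ relation correctly: the insertion before $p$ involves only polygonal paths in $L_0$ weighted by $\bchain_0$, so it is exactly $m^{\bchain_0}_0()$, and symmetrically the insertion after $p$ is $m^{\bchain_1}_0()$ --- it is essential that these live in the \emph{self}-Floer complexes $CF(L_i)$, which is where Definition~\ref{def:bounding-chain} applies. Second, one checks via the sign conventions preceding~\eqref{eq:m_k} and the exponent $\ast$ in Proposition~\ref{prop:Ainfty-relations-enhanced} that $m^\bchain_1(m^\bchain_1(p))$ enters the relation with a unit coefficient (it is the unique term with $\ell=k=1$), so that its vanishing is not lost to a cancellation. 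Finally, admissibility of the pair $(L_0,\bchain_0),(L_1,\bchain_1)$ is used throughout so that, by Proposition~\ref{prop:admissible-enhanced}, all the operations appearing are finite sums and hence well-defined.
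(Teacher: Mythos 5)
Your proof is correct and is exactly the paper's argument: the paper's own proof is the one-liner ``This is immediate from the $\Ainfty$ relations and the fact that $m^\bchain_0 = 0$,'' and you have simply unwound that remark, identifying the two curvature insertions with $m^{\bchain_0}_0()$ and $m^{\bchain_1}_0()$ and checking the grading via Proposition~\ref{prop:degree-of-m_k}. The extra bookkeeping you supply (where the $\ell=0$ insertions live, why $m^\bchain_1\circ m^\bchain_1$ appears with unit coefficient, the role of admissibility) is all accurate and is implicit in the paper's treatment.
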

\begin{proof}
This is immediate from the $\Ainfty$ relations and the fact that $m^\bchain_0 = 0$.
\end{proof}

\begin{definition}
Given two immersed multicurves $L_0$ and $L_1$ decorated with bounding chains $\bchain_0$ and $\bchain_1$, the \emph{Floer chain complex $CF\left( (L_0, \bchain_0), (L_1, \bchain_1) \right)$} is defined to be the graded complex $CF(L_0, L_1)$  equipped with the differential $\partial = m^\bchain_1$.
\end{definition}

If we ignore marked points, $CF\left( (L_0, \bchain_0), (L_1, \bchain_1) \right)$ becomes a chain complex over $\F$. In this case it makes sense to take its homology, which we denote $HF\left( (L_0, \bchain_0), (L_1, \bchain_1) \right)$, but in general we will view $CF\left( (L_0, \bchain_0), (L_1, \bchain_1) \right)$ as a chain homotopy equivalence class of graded chain complexes over $\F[W]$.

\subsection{Invariance of Floer homology}\label{sec:invariance-of-Floer-homology}

We will now investigate the effect of applying a homotopy to an immersed curve with bounding chain $(L, \bchain)$. In particular, we will show that the Floer chain complex is invariant, up to chain homotopy equivalence, under such homotopies. We will first explain what we mean by a homotopy of a pair $(L, \bchain)$; in short, this is simply a regular homotopy of the immersed multicurve $L$ with a corresponding effect on the bounding chain $\bchain$.

We consider regular homotopies of $L$ that do not pass through any marked points. We allow any regular homotopy of $L$ that does not remove a self intersection point with nonzero coefficient in $\bchain$. In some cases homotopies which remove self intersection points with nonzero coefficients in $\bchain$ are also allowed, as in Figure \ref{fig:invariance-moves}(j). Note that in this situation admissibility implies that at most one of the two intersection points has nonzero coefficient in $\bchain$, since otherwise the bigon in question would form a generalized immersed disk. We emphasize that removing two intersections as in the reverse of move (i) is only possible when neither intersection point has nonzero coefficient in $\bchain$.

Any regular homotopy of $L$ can be decomposed as the composition of smaller homotopies, each of which can be realized either by an ambient isotopy of $\Sigma$ (fixing the marked points), by a local move sliding a piece of $L$ past a self intersection point or basepoint, as in (f) and (g) of Figure \ref{fig:invariance-moves}, or by a local move adding or removing two intersection points as in (i) or (j) of Figure \ref{fig:invariance-moves}. By a regular homotopy of a decorated immersed multicurve $(L, \bchain)$ we mean the composition of a sequence of these small homotopies of $L$ along with the corresponding change to $\bchain$ as indicated in Figure \ref{fig:invariance-moves} (for ambient isotopies of $\Sigma$ the bounding chain $\bchain$ is unchanged).

Consider the Floer complex associated with a pair of decorated immersed multicurves $(L_0, \bchain_0)$ and $(L_1, \bchain_1)$. We will show invariance under homotopy of $(L_0, \bchain_0)$ while keeping $(L_1, \bchain_1)$ fixed; homotopies of $(L_1, \bchain_1)$ follow similarly. As discussed above, any homotopy of $(L_0, \bchain_0)$ can be broken into small steps which either can be realized by an ambient isotopy of the surface (it is clear that these steps preserve the complex exactly) or have one of the forms shown in Figure \ref{fig:invariance-moves}.

\begin{figure}
\labellist
  \pinlabel {$(a)$} at 132 470
  \pinlabel {$(b)$} at 387 470
  \pinlabel {$(c)$} at 132 350
  \pinlabel {$(d)$} at 387 350
  \pinlabel {$(e)$} at 132 230
  \pinlabel {$(f)$} at 387 230
  \pinlabel {$(g)$} at 132 110
  \pinlabel {$(h)$} at 387 110
  \pinlabel {$(i)$} at 102 -10
  \pinlabel {$(j)$} at 400 -10

\endlabellist
\includegraphics[scale=.75]{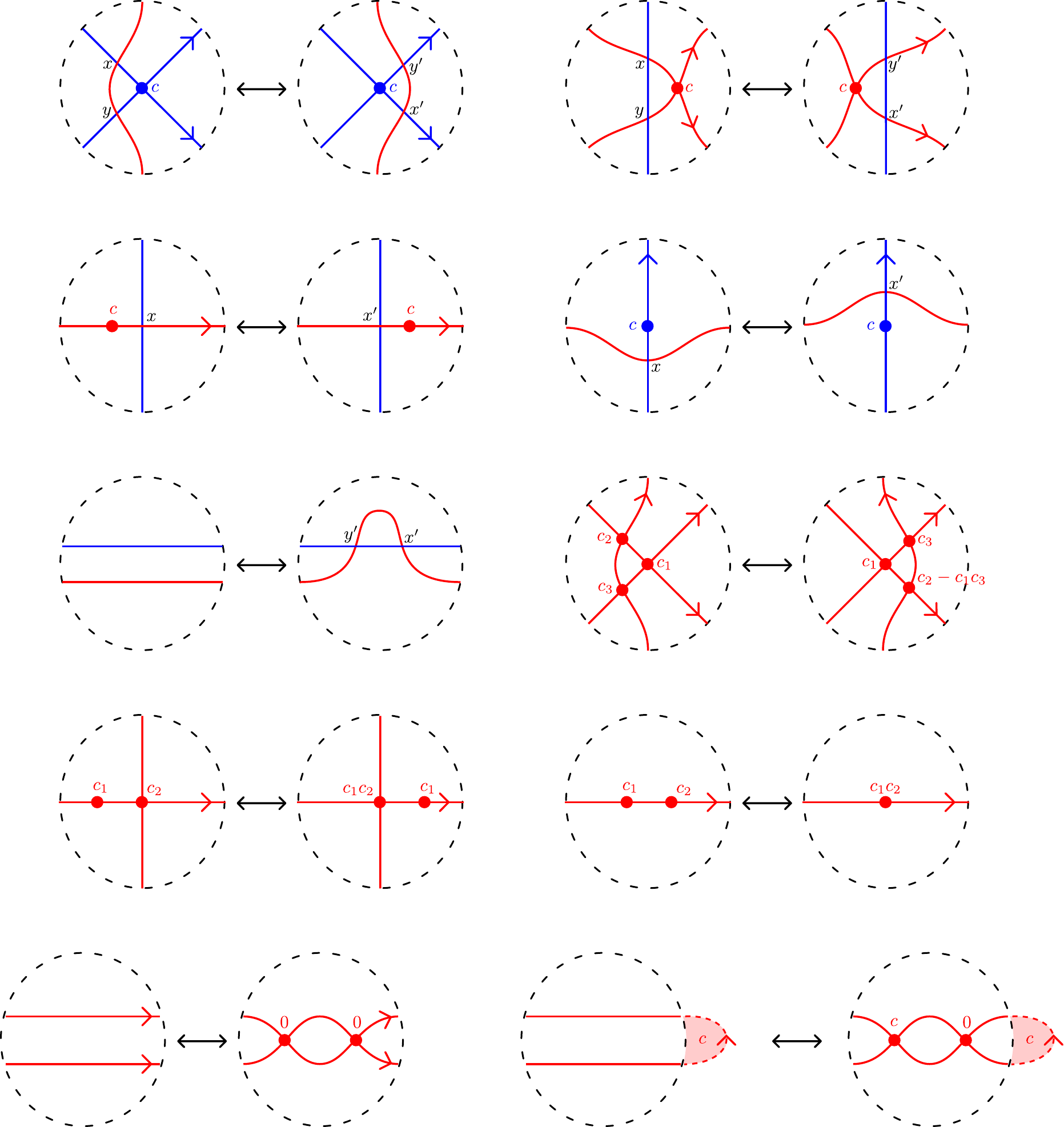}
\vspace{3 mm}
\caption{Given pair of decorated immersed multicurves $(L_0, \bchain_0)$ and $(L_1, \bchain_1)$, a homotopy of $(L_0, \bchain_0)$ can be broken into smaller homotopies such that each is either realized by an ambient isotopy of $\Sigma$ fixing $L_1$ or by one of the local modifications shown here. $L_0$ is red and $L_1$ is blue. Each self-intersection point on $L_0$ or $L_1$ is labeled by the coefficient of the corresponding generator of $\bchain_0$ or $\bchain_1$, and basepoints on $L_i$ are labeled with the corresponding weight. Orientations on arcs are included unless the orientation does not affect the argument for the given move. Move $(j)$ requires non-local information about the immersed curves outside of the specified disk; the red shaded bigon in the figure indicates the weighted sum of all paths in $L_0$ outside the disk that form a bigon with the given portion of the boundary of the disk has weight $c$.}
\label{fig:invariance-moves}
\end{figure}

\begin{proposition}
If $(L_0, \bchain_0)$ and $(L_1, \bchain_1)$ are modified by one of the local changes in Figure \ref{fig:invariance-moves} and the configuration is admissible before and after the change then the complex $CF\left( (L_0, \bchain_0), (L_1, \bchain_1) \right)$ is unchanged up to chain homotopy equivalence.
\end{proposition}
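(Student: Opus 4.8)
The plan is to treat each of the local moves in Figure~\ref{fig:invariance-moves} separately, producing in each case an explicit chain homotopy equivalence between the Floer complexes before and after the move; since any homotopy of $(L_0,\bchain_0)$ decomposes into these moves and ambient isotopies (and ambient isotopies preserve the complex on the nose), this suffices. The moves fall naturally into groups, and I would handle them in increasing order of difficulty. The easiest are the moves that do not change the set of intersection points of $L_0$ with $L_1$ at all, only reshuffling the curve $L_0$ (or its bounding chain) in a region away from $L_1$, or passing a strand of $L_0$ over one of its own self-intersection points or basepoints: here the generators of $CF(L_0,L_1)$ are literally unchanged, and one checks that the polygon counts defining $m_1^{\bchain}$ are unchanged by setting up a bijection between generalized bigons before and after the move that preserves sign, weight, and $W$-power. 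The key bookkeeping is that when a strand slides past a self-intersection point $p$ of $L_0$, generalized bigons that used to turn at $p$ get replaced by bigons that pass through the new configuration, and the compensation in $\bchain_0$ recorded in the figure (the relabeling of coefficients near $p$) is exactly what makes the weighted counts agree.

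The substantive moves are the bigon birth/death moves (i) and (j), which change the generator set of $CF(L_0,L_1)$ by a cancelling pair. For move (i), where we introduce two new intersection points $x,y$ of $L_0$ with $L_1$ forming an innermost empty bigon, the standard approach is to exhibit a homotopy equivalence by cancelling an acyclic summand: one shows that $m_1^{\bchain}$ has a component which is an isomorphism from $x$ to $y$ (coming from the small empty bigon, counted with an invertible coefficient), and then invokes the usual ``cancellation lemma'' / homological perturbation to remove $x$ and $y$, checking that the induced differential on the remaining generators is precisely $m_1^{\bchain}$ for the pre-move configuration. The nontrivial input is that the new empty bigon genuinely contributes an invertible term and that no other bigon into $x$ or out of $y$ can interfere, which follows from admissibility (an innermost bigon) together with the grading constraint from Proposition~\ref{prop:degree-of-m_k}. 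The real difficulty is move (j): here one of the two intersection points being removed has a \emph{nonzero} coefficient in $\bchain_0$, so a naive bigon-cancellation is not available, and the figure explicitly warns that the move uses non-local data --- the coefficient $c$ recording the weighted sum of all $L_0$-paths outside the disk that close up the bigon. The plan for (j) is to first apply the already-established moves (and possibly a homotopy of the bounding chain among Maurer--Cartan solutions, using the $A_\infty$-relations of Proposition~\ref{prop:Ainfty-relations-enhanced}) to normalize the bounding chain near the disk, reducing the configuration to one where the intersection point to be removed carries zero $\bchain_0$-coefficient, so that move (i) in reverse applies; the coefficient $c$ in the figure is exactly the quantity that must be absorbed into $\bchain_0$ during this normalization, and the fact that $\bchain_0$ remains a bounding chain after the absorption is where the non-local information enters.

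Throughout, the recurring technical point --- and the step I expect to be the main obstacle --- is the sign and weight accounting in the bijections of generalized polygons, together with verifying in move (j) that the modified $\bchain_0$ still satisfies the Maurer--Cartan equation $m_0^{\bchain}=0$. The signs are governed by the rule that a corner in $L_i\cap L_j$ ($i<j$) contributes $-1$ exactly when it has odd grading and the orientation of $L_i$ opposes the boundary orientation; since each move is local, one can fix orientations conveniently in the disk and track how the finitely many affected corners change. For the Maurer--Cartan verification in (j), the idea is that the $A_\infty$-relation with all inputs equal to $\overline{\bchain}_0$ (the relation underlying $m_0^{\bchain}=0$) relates the old and new bounding chains: the homotopy of $L_0$ across the bigon, together with the coefficient $c$, is precisely an instance of a gauge transformation of Maurer--Cartan elements, so $m_0^{\bchain}=0$ is preserved. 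I would present the easy moves quickly with one representative sign check, give move (i) via the cancellation lemma in full, and spend the bulk of the argument on move (j), isolating the claim about $c$ and the Maurer--Cartan equation as a separate lemma.
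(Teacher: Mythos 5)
Your overall plan — a case-by-case analysis of the local moves, handling the easy ones by an explicit bijection of generalized bigons and the hard ones by a cancellation argument — is the right architecture and matches the paper's. But two of your specific claims about which moves do what are incorrect, and one of them leads to an approach for move (j) that would not go through.

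First, you describe moves (i) and (j) as ``bigon birth/death moves \ldots which change the generator set of $CF(L_0,L_1)$ by a cancelling pair,'' and you propose the cancellation lemma for move (i). In fact, neither (i) nor (j) changes the generator set of $CF(L_0,L_1)$: both introduce or remove a pair of \emph{self-intersection points of $L_0$} (away from $L_1$), not intersections $L_0\cap L_1$. The move that creates a cancelling pair of generators of $CF(L_0,L_1)$, and which is handled by the Gaussian elimination / cancellation argument you describe, is move (e). Move (i) is trivial — both new self-intersection points carry coefficient $0$ in $\bchain_0$, so no generalized bigon may turn at them and the polygon counts are unchanged by a literal bijection. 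Crucially, the reverse of move (i) (deleting two self-intersections) is \emph{only} allowed when both points have coefficient zero; the case where one point has a nonzero coefficient is exactly move (j) and cannot be reduced to (i).

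Second, your strategy for (j) — use the already-established moves and a ``gauge transformation of Maurer--Cartan elements'' to first normalize the coefficient at the marked intersection point to zero, then apply move (i) in reverse — does not work. The coefficient $c$ at the new leftward self-intersection point is not a gauge-moveable artifact; it is \emph{forced} by the Maurer--Cartan equation, because the finger move creates a new immersed monogon whose corner is at that specific point and whose boundary passes through the exterior path of weight $c$, and the only polygon that can cancel its contribution to $m_0^{\bchain}$ is the small bigon in $D$, which contributes with weight given by the coefficient at that point. You cannot push $c$ elsewhere and keep $\bchain_0$ a bounding chain. The paper instead handles (j) directly: perform the finger move, add the leftward intersection point to $\bchain_0$ with coefficient $c$ (where $c$ is the weighted count of exterior paths closing up the bigon), verify that Maurer--Cartan still holds by the monogon-cancellation observation just described, and then check that the differential on $CF(L_0,L_1)$ is unchanged by matching up generalized bigons: a bigon on the pre-move side that passes through the rectangular strip and traverses the exterior $c$-weighted path corresponds on the post-move side to a truncated bigon with a false corner at the new $c$-weighted self-intersection, with the same weight. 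Your separate-lemma instinct about where the Maurer--Cartan verification lives is sound; the content of that lemma should be the monogon-cancellation observation, not a normalization argument.
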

\begin{proof}
For each move there is a clear identification of generators of $CF\left( (L_0, \bchain_0), (L_1, \bchain_1) \right)$ before and after the move, with the exception of move (e) where the complex on the right has two additional generators. Let $D$ denote the pictured disk in which the local modification occurs. To understand how the moves affect the differential $m_1^\bchain$, we must consider immersed polygons which intersect $D$.

For move (a), it is clear that any polygon on the left that does not have $x$ or $y$ as a corner will be effectively unchanged by this move. It is also clear that the differential is unaffected by this move if the coefficient $c$ in $\bchain_1$ of the given self-intersection point of $L_1$ is zero. If the coefficient $c$ is nonzero, we must consider generalized immersed bigons which have a false corner at this self intersection point of $L_1$. Suppose there is a polygon contributing $z$ to $m_1^\bchain(x)$ on the left; this polygon must contain one of the triangles $u_1$ or $u_2$ shown in Figure \ref{fig:isotopy-invariance-proof}. If it contains $u_1$ then replacing $u_1$ with $v_1$ gives a polygon contributing $z$ to $m_1^\bchain(x')$ on the right, and replacing $u_1$ with $v_2$ gives another polygon contributing $c \cdot z$ to $m_1^\bchain(y')$ on the right. If th polygon contains $u_2$, then replacing $u_2$ with $v_3$ yields a polygon contributing $z$ to $m_1^\bchain(x')$ on the right. In this case, there is an additional polygon on the left contributing $-c\cdot z$ to $m_1^\bchain(y)$ obtained by replacing $u_2$ with $u_3$, and this has no analogous polygon on the right. All other polygons with initial corner $x$ or $y$ on the left or $x'$ or $y'$ on the right are in clear one-to-one correspondence. We see that (a) has the effect of adding $c$ times $m_1^\bchain(x)$ to $m_1^\bchain(y)$; that is, $m_1^\bchain(y')$ on the right is identified with $m_1^\bchain(y) + c \cdot m_1^\bchain(x)$ on the left and $m_1^b$ acts the same on all other generators. A similar argument shows that for each polygon contributing a multiple of $y$ to $m_1^\bchain(z)$ for some $z$, there is a corresponding contribution of $y' - cx'$ to $m_1^\bchain(z')$ on the right. It follows that move (a) has the effect of a change of basis; the two complexes are isomorphic, where the isomorphism identifies $y'$ with $y + cx$, $x'$ with $x$, and is the identity on all generators outside of $D$.

\begin{figure}
\labellist

  \scriptsize
  \pinlabel {$x$} at 23 55  
  \pinlabel {$y$} at 23 32  
  \pinlabel {$x$} at 121 55  
  \pinlabel {$y$} at 121 32  
  \pinlabel {$x$} at 218 55  
  \pinlabel {$y$} at 218 32  
  \pinlabel {$y'$} at 392 55  
  \pinlabel {$x'$} at 393 35  
  \pinlabel {$y'$} at 490 55  
  \pinlabel {$x'$} at 490 35  
  \pinlabel {$y'$} at 587 55  
  \pinlabel {$x'$} at 587 35  

  \color{blue}
  \pinlabel {$c$} at 50 45  
  \pinlabel {$c$} at 148 45  
  \pinlabel {$c$} at 245 45  
  \pinlabel {$c$} at 362 45
  \pinlabel {$c$} at 460 45
  \pinlabel {$c$} at 557 45

\endlabellist

\includegraphics[scale=.70]{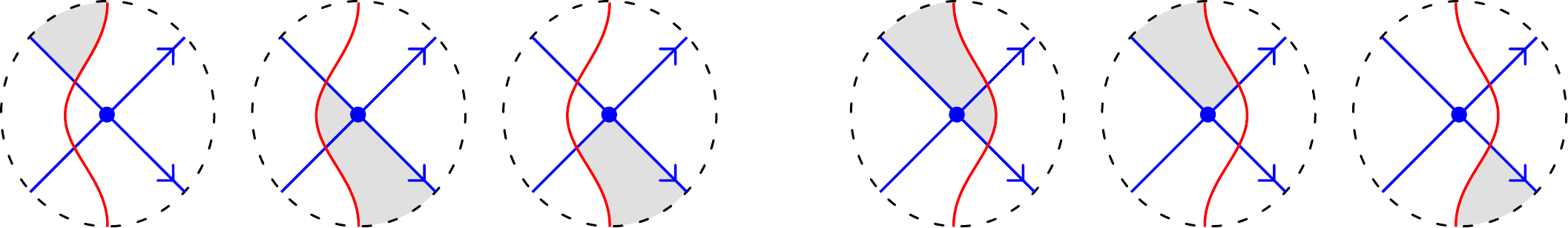}
$u_1$ \hspace{18.5 mm} $u_2$ \hspace{18.5 mm} $u_3$ \hspace{25mm} $v_1$ \hspace{18.5 mm} $v_2$ \hspace{18.5 mm} $v_3$
\caption{Portions of polygons relevant for relating $m_1^\bchain(y')$ to $m_1^\bchain(y)$ and $m_1^\bchain(x)$.}
\label{fig:isotopy-invariance-proof}
\end{figure}

Move (b) is similar to move (a) and we omit the details. Once again the move produces an isomorphic complex with the isomorphism corresponding to a change of basis identifying $x'$ with $x - cy$ and $y'$ with $y$.

In moves (c) and (d) there is a one-to-one correspondence not only of generators but also of polygons contributing to the differential, but the weights of polygons involving the intersection point in $D$ are affected. As a result of the change, any generalized bigon with initial generator $x$ either gains the basepoint on its boundary with positive orientation or loses the marked point with negative orientation; in either case, the weight of the polygon is multiplied by $c$. Any polygon with $x$ as the final generator either gains the basepoint with positive orientation or looses it with negative basepoint, so the weight is multiplied by $c^{-1}$. For either move, the Floer complexes are isomorphic and the isomorphism identifies $cx$ with $x'$.

Move (e) introduces two new intersection points, but there is a clear bigon on the right contributing a differential from $x'$ to $y'$. We claim that the complex on the left is precisely the result of canceling this differential in the complex on the right, and thus the complexes are chain homotopy equivalent. First note that there can be no other polygon contributing $y'$ to $m_1^\bchain(x')$, since the portion of such a polygon outside of $D$ combined with the strip between $L_0$ and $L_1$ in $D$ on the left would produce a generalized immersed annulus, implying that the curves were not in admissible position on the left side. To cancel the differential from $x'$ to $y'$, for each pair of polygons contributing $ay'$ to $m_1^\bchain(w)$ and $bz$ to $m_1^\bchain(x')$ we must add $abz$ to $m_1^\bchain(w)$. Such polygons would approach $y'$ from the left and leave $x'$ on the right, and it is clear that when the the fingermove in $D$ is reversed these two polygons form a single polygon connecting $w$ to $z$ whose weight is the product of the weights of the two polygons. Since all other bigons are unchanged, the claim follows.

Move (f) is similar to moves (a) and (b) and move (g) is similar to moves (c) and (d), but in each case the complex is unchanged as a result of the changes to the bounding chain pictured. Combining two weighted base points into one as in move (h) clearly has no effect on the complex. Move (i) has no effect on the complex; since the two new self intersection points on the right are added to the bounding chain with coefficient 0, we do not consider polygons with corners at these intersection points and there is a one-to-one correspondence of polygons, which preserves weights, before and after the move.

Move (j) is the only move which requires knowledge of how $(L_0, \bchain_0)$ behaves outside of $D$. The move performs a finger move that introduces two new self-intersection points of $L_0$ in $D$, but in some cases we need to add one of these to $\bchain_0$ with nonzero coefficient. In particular, if the two right endpoints are connected by a polygonal path in $(L_0, \bchain_0)$ that forms an immersed polygon with the relevant segment of the boundary of $D$ and if this polygonal path (or, more precisely, the sum of all such paths) has weight $c$, then we must add the leftmost of the two new self-intersection points to $\bchain_0$ with coefficient $c$. This is clearly necessary for $\bchain_0$ to remain a bounding chain, since the path outside of $D$ contributes $c$ times the other new intersection point to $m_0^\bchain$ and this can only be canceled by the newly formed bigon in $D$. 
Having made this change, it is straightforward to check that the complex is unchanged. For instance, if a polygon on the left intersects $D$ in the rectangular strip between the two strands of $L_0$ and also contains the $c$ weighted path outside $C$ to the right, then the finger move destroys this polygon but creates a new truncated one with a false corner at the $c$ weighted self-intersection point in $D$, and the contribution of this new polygon to $m_1^\bchain$ is precisely the same.
\end{proof}

\subsection{Doubly marked surfaces}\label{sec:doubly-marked}

So far we have assumed that all marked points on a surface are treated the same. In fact, we are primarily interested in surfaces with two families of basepoints, a collection $\{w_i\}_{i\in I}$ of \emph{$w$-marked points} and a collection $\{z_i\}_{i\in I}$ of \emph{$z$-marked points}. We call such a surface \emph{doubly marked} (even if there are infinitely many marked points) because there are two distinct types of marked points.

It is straightforward to extend the definitions in the rest of this section to the doubly marked setting. Instead of $W$ we have two formal variables $U$ and $V$, with the variable $U$ associated with the $w$-marked points and the variable $V$ associated with the $z$-marked points. The relevant coefficients are $\sRminus = \F[U, V]$ rather than $\F[W]$. An immersed Lagrangian $L$ will be equipped with a bigrading $(\tilde\tau_w, \tilde\tau_z)$, where $\tilde\tau_w: L \to \R$ is a grading in the usual sense with respect to the $w$-marked points only (that is, it has discontinuities at intersections of $L$ with grading arcs emanating from the $w$-marked points but not at grading arcs coming from $z$-marked points) and $\tilde\tau_z: L \to \R$ is a grading with respect to only the $z$-marked points. Each self-intersection point of $L$ has a bidegree $(\deg_w, \deg_z)$, where each degree is defined as usual with respect to the relevant grading.

Given two Lagrangians $L_0$ and $L_1$, $CF(L_0, L_1)$ is generated over $\sRminus$ by intersections of $L_0$ with (a suitable perturbation of) $L_1$. Bigradings on $L_0$ and $L_1$ induce a bigrading $(\gr_w, \gr_z)$ on the $\sRminus$-module $CF(L_0, L_1)$. The operations $m_k$ count immersed $(k+1)$-gons, where the coefficient in $\sRminus$ records the number of times each type of marked point is covered. More precisely, given Lagrangians $L_0, \ldots, L_k$ and intersection points $p_1, \ldots, p_k$ with $p_i$ in $L_{i-1}\cap L_{i}$, we have
\begin{equation}\label{eq:m_k_doubly_marked}
m_k(p_1, \ldots, p_k) = \sum_{q \in L_0 \cap L_k} \sum_{u \in \mathcal{M}(p_1, \ldots, p_k, q)} (-1)^{s(u)} U^{n_w(u)} V^{n_z(u)} q,
\end{equation}
where $n_w(u)$ and $n_z(u)$ are the numbers of times $u$ covers $w$-marked points and $z$-marked points, respectively. Since ignoring either family of marked points returns us to the singly marked setting, Proposition \ref{prop:degree-of-m_k} implies that the map $m_k$ has bidegree $(k-2, k-2)$. 

A collection of turning points $\bchain$ on an immersed Lagrangian $L$ in a doubly marked surface is once again a linear combination (over $\F$) of self-intersection points in $\sI_{\le 0}$, where $\sI_{\le 0}$ is the subset of self-intersection points for which the degree is non-positive with respect to both gradings. This determines a homogenous element $\overline\bchain$ of bigrading $(-1, -1)$ which can be viewed as a linear combination over $\sRminus$ of points in $\sI_{\le 0}$ whose coefficient for a given self-intersection point $p$ is given by multiplying the corresponding coefficient of $\bchain$ by $U^{-\deg_w(p)/2} V^{-\deg_z(p)/2}$. We can define the modified map $m_k^\bchain$ just as before, except that coefficients include powers of $V$ as well as powers of $U$. The proof of the $\Ainfty$-relations (Proposition \ref{prop:Ainfty-relations}) is unchanged---for pairs of terms that are meant to cancel, the powers of $V$ agree for the same reason the powers of $U$ agree. As usual, $\bchain$ is a bounding chain if $m^\bchain_0 = 0$. In this case, $m^\bchain_1$ is a differential on $CF((L_0, \bchain_0), CF(L_1, \bchain_1))$, making this space into a bigraded chain complex over $\sRminus$.

\section{Train tracks and arrow slides}\label{sec:train-tracks}

\subsection{Immersed curves with turning points as train tracks}

There is an alternative perspective on immersed curves with collections of turning points that we find convenient to work with: we can think of such a decorated immersed multicurve as an immersed train track (this is the perspective taken in \cite{HRW}). We will now describe these objects and their relation to the curves with turning points discussed so far. As in the previous section we will first describe these objects in the setting of a singly marked surface, but the extension to doubly marked surfaces is straightforward.

A \emph{train track} refers to a graph with the property that at each vertex all incident edges are mutually tangent; vertices of a train track are often called \emph{switches}, in reference to the junctions in railroads. At each switch there are two (opposite) directions from which an edge may approach, and a smooth path in the train track is one which approaches and leaves each switch on opposite sides. In the train tracks we consider, all vertices will have valence at least two and will have at least one incident edge on each side; in other words, no switch will be a dead end for smooth paths (an exception will be switches on the boundary of the surface). We will consider partially directed train tracks in which some edges have a specified direction; when considering smooth paths in a train track, we will only consider paths that follow directed edges in the specified direction. Our train tracks will also be oriented, meaning that every edge has an orientation; the orientations are required to be consistent at each switch, in the sense that all edges on one side are oriented toward the switch and all edges on the other are oriented away from the switch; it follows that a smooth path must always follow the orientation or always oppose the orientation. The orientation should not be confused with the direction on directed edges; these need not agree, and we do not require smooth paths to follow the orientation. Finally, edges in our train tracks will be weighted by (nonzero) homogenous elements of $\F[W]$. For undirected edges the power of $W$ is zero, so the weight is just an element of the field $\F$. For directed edges the power of $W$ is determined by a grading on the train track as described below. In practice, the weights on most undirected edges will be 1, and these weights can be ignored; thus the weights on directed edges will be recorded by marking some number of basepoints on the train track (on the undirected edges with weight other than 1) and labeling these with a weight.

A grading on an immersed train track in a marked surface is nearly the same as a grading on an immersed Lagrangian. Having fixed a trivialization of the tangent bundle of the surface to identify tangent slopes with $\mathbb{RP}^1$, a grading is a map from the train track to $\R$ that lifts the tangent slope map and is piecewise smooth, possibly with jump discontinuities of even magnitude. These discontinuities are of two types: they occur whenever the train track crosses a chosen grading arc emanating from a marked point, as usual, and there is also a discontinuity at the midpoint of each directed edge. At the discontinuity along a directed edge the grading decreases by $2k$ for some integer $k$; we associate $W^k$ to this edge, so that the weight on this directed edge is $c W^k$ for some $c$ in $\F$.

An immersed multicurve $L$ with a collection of turning points $\bchain$ determines an immersed train track of the form described above as follows: First, we view the immersed multicurve as an immersed train track with only undirected edges by placing at least one valence two switch anywhere on each component. We place enough switches so that the weighted basepoints on the curves each lie on their own directed edge; these directed edges inherit the weight associated with the basepoint, and all other directed edges have weight 1. To this we add a pair of directed edges near each self intersection point $p$ that has nonzero coefficient in $\bchain$, as pictured in Figure \ref{fig:turning-point-to-train-track}. These directed edges make the two left turns at $p$ that are consistent with the orientation on $L$. The orientation on each directed edge is chosen to be consistent with the orientation on $L$ at either end; note that for one edge in the pair the orientation agrees with the direction of the edge and for the other edge the orientation opposes the direction. The new directed edges are weighted by $\pm c W^{-\deg(P)/2}$, where $c$ is the coefficient of $p$ in $\bchain$ and the minus sign appears on the edge for which the direction opposes the orientation. These directed edges are added precisely so that the false corners which may appear in generalized bigons contributing to $m_1^\bchain$ are smoothed in the train track. In other words, a polygonal path in $L$ consistent with $\bchain$ corresponds to a smooth path in the train track, and vice versa. Moreover, the weight associated with a polygonal path agrees with the product of the weights of all edges on the corresponding smooth path in the train track (where the weight of an undirected edge is inverted if the edge is traversed in the opposite direction of the orientation).

We will define a Floer complex associated to two immersed train tracks in much the same way we defined it for immersed curves: the complex is generated by intersection points and the differential counts immersed bigons whose sides map to smooth paths in the train track. With this in mind, we can view the Floer complex of two immersed curves with bounding chains as the Floer complex of the corresponding train tracks. Indeed, the additional edges in the train track precisely allow left turns to be made at intersection points in the bounding chains, and the bigons counted between two train tracks are precisely the generalized bigons counted between the two corresponding decorated curves with the false corners smoothed.

\subsection{The Floer complex of train tracks}

We now more precisely define what we mean by the Floer complex of two immersed train tracks. We caution that this notion is not well-defined for arbitrary train tracks, and we will not make an attempt to describe the biggest family of train tracks for which our definitions make sense. Instead, we will restrict our attention to a class of train tracks which can be directly related to immersed curves decorated with bounding chains and observe that in this case the two notions of Floer homology agree. The fact that Floer homology of these train tracks is well defined then follows from the corresponding fact for Floer homology of decorated curves. This means that, strictly speaking, it is not necessary to use train tracks at all since every train track we will consider really represents an immersed curve with a bounding chain; however, we find that train tracks are a very convenient framework to use when manipulating these objects.

Given a pair of train tracks $\tracks_i$ and $\tracks_j$ in a marked surface that intersect transversally, we define $CF(\tracks_i, \tracks_j)$ to be the $\F[U]$-module generated by $\tracks_i \cap \tracks_j$. Given $k+1$ immersed train tracks $\tracks_0, \ldots, \tracks_k$, we define operations
$$m_k^{\tracks}: CF(\tracks_0, \tracks_1) \otimes \cdots \otimes CF(\tracks_{k-1}, \tracks_k) \to CF(\tracks_0, \tracks_k)$$
by counting immersed $(k+1)$-gons with appropriate corners whose sides are smooth paths in the appropriate train tracks. The weight with which each $(k+1)$-gon contributes is the product of the weights of all edges in the boundary (where for undirected edges the inverse of the weight is used if the boundary orientation of the polygon opposes the orientation on the train track), a sign contribution from each corner defined in the usual way, and a factor of $W$ for each time the marked point is covered by the polygon.  The operation $m_0^\tracks$ counts immersed monogons with boundary on a given train track. We say that a train track $\tracks$ is unobstructed if $m_0^\tracks = 0$. The operations $m^\tracks_k$ satisfy $\Ainfty$-relations, and if $\tracks_0$ and $\tracks_1$ are both unobstructed then $\partial^\tracks = m^\tracks_1$ is a differential on $CF(\tracks_0, \tracks_1)$.

We say that a collection of train tracks are admissible if no train tracks bounds an immersed disk and no two train tracks bound an immersed polygon. We expect that the counts of immersed polygons used to define $m^\tracks_k$ are finite as long as the train tracks are admissible, though we will not prove this fact. Instead, as already mentioned, we will restrict our attention to particularly nice train tracks for which the finiteness of these polygon counts can be established indirectly. For instance, suppose for each $i$ that $\tracks_i$ is the train track constructed from an immersed multicurve with bounding chain $(\Gamma_i, \bchain_i)$ as described in the previous section, so that all directed edges appear in pairs as in the middle of Figure \ref{fig:turning-point-to-train-track}. Generically we may assume that $\Gamma_i \cap \Gamma_j$ is disjoint from the self intersection points of $\Gamma_i$ and $\Gamma_j$, and thus that $\tracks_i$ and $\tracks_j$ only intersect on their undirected edges. In this case there is an obvious identification between $\Gamma_i \cap \Gamma_j$ and $\tracks_i \cap \tracks_j$. For any collection of such curves and train tracks there is also a clear bijection between the $(k+1)$-gons contributing to $m^\tracks_k$ and the generalized $(k+1)$-gons contributing to $m^\bchain_k$. It follows that in this case the operations $m^\tracks_k$ are well defined and are in fact identical to the operations $m^\bchain_k$.

\begin{figure}
\labellist
  \pinlabel {$c$} at 51 51
  
  \scriptsize
  \pinlabel {$-cW^m$} at 199 30
  \pinlabel {$cW^m$} at 165 60
  \pinlabel {$-cW^m$} at 344 72
  \pinlabel {$cW^m$} at 327 57
  
  \pinlabel {degree} at 75 21
  \pinlabel {$-2m$} at 75 13

\endlabellist
\includegraphics[scale=1]{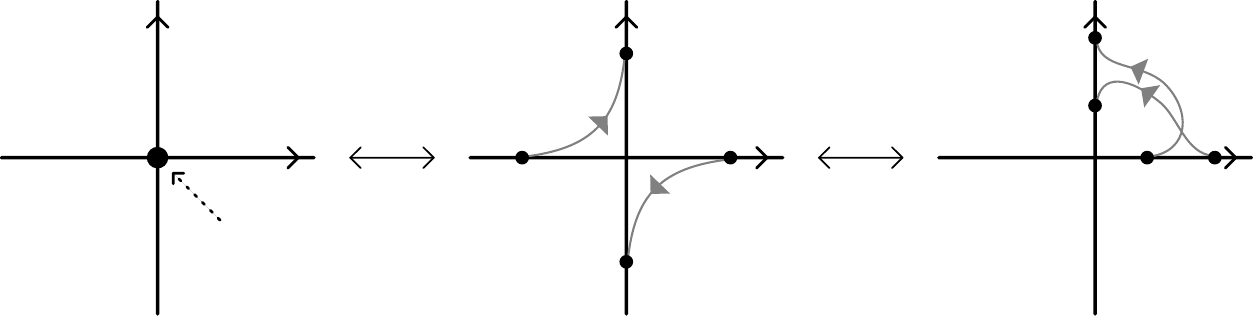}
\caption{Converting an immersed curve with a collection of turning points into an immersed train track. For each intersection point with nonzero coefficient $c$ and degree $-2m$ in the collection of turning points (left) we add two directed edges to the train track (middle) with the weights $\pm c W^m$. These new edges realize the false corners which may occur at the given intersection as smooth paths in the train track. We could equivalently place both directed edges in the train track in the same quadrant near the self-intersection point (right).}
\label{fig:turning-point-to-train-track}
\end{figure}

\begin{figure}
\labellist
  \scriptsize
  \pinlabel {$1$} at 38 53
  \pinlabel {$1$} at 38 -2

  \pinlabel {$1$} at 233 53
  \pinlabel {$1$} at 233 -2
  
  \color{gray}
  \pinlabel {$cW^m$} at 51 31
  \pinlabel {$-cW^m$} at 21 31
  \pinlabel {$cW^m$} at 135 22
  
  \pinlabel {$-cW^m$} at 249 31
  \pinlabel {$cW^m$} at 219 31
  \pinlabel {$cW^m$} at 330 22
 
\endlabellist
\includegraphics[scale=1]{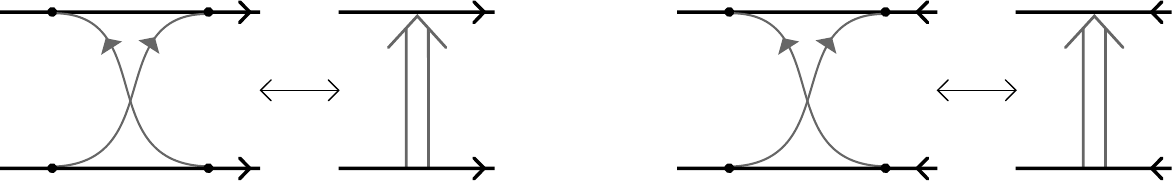}
\caption{Shorthand for crossover arrows. Note that the sign convention for crossover arrow labels is that the minus sign is on the weight of the directed edge whose direction opposes the orientation on the train track.}
\label{fig:crossover-arrow}
\end{figure}

We will be interested in a slightly larger class of train tracks than those immediately constructed from immersed curves with turning points. In particular, we would like to be able to apply homotopies to train tracks of this form that slide the directed edges away from the self-intersection points. Invariance of the Floer complex quickly fails if we only slide one directed edge, but it turns out we can make more sense of homotopies that slide a pair of directed edges as a unit. In a train track representing an immersed curve with turning points, if we move both edges near a self-intersection point to one side of the self-intersection point, as on the right of Figure \ref{fig:turning-point-to-train-track}, they cross each other in an X-shaped pattern. We will refer to a pair of edges in this arrangement as a \emph{crossover arrow}, and as a diagrammatic shorthand we denote these by a bold arrow as in Figure \ref{fig:crossover-arrow}. The train tracks we will consider will have the property that the collection of undirected edges forms an immersed multicurve and the directed edges come in pairs forming crossover arrows. We will assume, as usual, that all self-intersection points of the train track are transverse; we will also assume that crossover arrows do not cross each other.

If a crossover arrow lies in a neighborhood of an intersection point of two undirected edges in a train track and moves counter-clockwise in one quadrant, as in the rightmost train track in Figure \ref{fig:turning-point-to-train-track}, we say that it is a \emph{left turn crossover arrow}. It is clear that an immersed curve with a collection of left turn crossover arrows is equivalent to the same immersed curves with a bounding chain. Train tracks which allow arbitrary crossover arrows are seemingly more general objects, but in fact any unobstructed train track that consists of an immersed multicurve with crossover arrows attached can be modified in a neighborhood of each crossover arrow to produce an equivalent train track which has only left turn crossover arrows. To accomplish this, we apply a finger move homotopy pushing the section of immersed curve at the tail of the the arrow forward along the crossover arrow through the section of curve at the head of the arrow. The crossover arrow then becomes a left turn crossover arrow at one of the new intersection points as in Figure \ref{fig:any-crossover-arrow-left-turn}.

Figure \ref{fig:any-crossover-arrow-left-turn} shows the simplest case in which the crossover arrow has no intersections with the rest of the train track apart form its endpoints. In general, a neighborhood of the crossover arrow will consist of the arrow, the segments it connects, and some number of other immersed curve segments transverse to the crossover arrow between the initial and final segments. For concreteness, suppose the crossover arrow moves upward vertically connecting two rightward oriented horizontal segments and all immersed curve segments through a rectangular neighborhood of the arrow are horizontal. In this case we must be conscious of paths in the train track outside the rectangular neighborhood of the crossover arrow that bound immersed bigons with the left or right side of the neighborhood. We first observe that for an unobstructed train track, there can be no such paths on the left side of the crossover arrow for which one end is the top segment or bottom segment (or, more precisely, the weighted count of such paths for any pair of endpoints is zero). If such a path did exist, there would be an immersed monogon with a corner on the crossover arrow, as shown in Figure \ref{fig:obstructed-arrows}, and there is no way for this to cancel with any other monogons. We say that a crossover arrow is \emph{unobstructed} if there are no bigons on the left side of the arrow as in Figure \ref{fig:obstructed-arrows}, and we need not consider train tracks with obstructed arrows. Similar paths on the right side of the arrow that bound bigons with the right side of the rectangular neighborhood are less problematic. However, any such paths starting at the bottom segment must be taken into account when performing the finger move. Following the train track analogue of move $(j)$ from Figure \ref{fig:invariance-moves}, pushing the bottom segment past the middle segments may require adding left turn crossover arrows at some of the new intersection points. The finger move transformation in this more general case is shown in Figure \ref{fig:unobstructed-arrow}.

\begin{figure}
\includegraphics[scale=1]{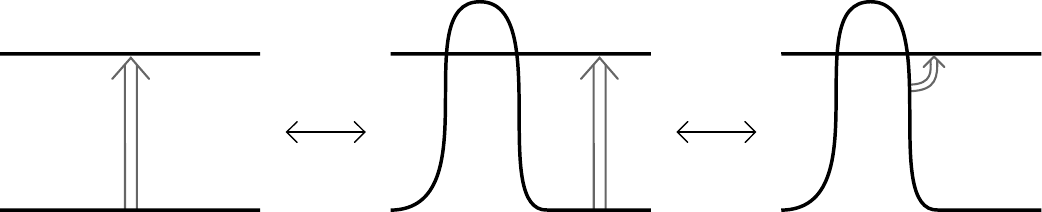}
\caption{}
\label{fig:any-crossover-arrow-left-turn}
\end{figure}

\begin{figure}
\includegraphics[scale=1]{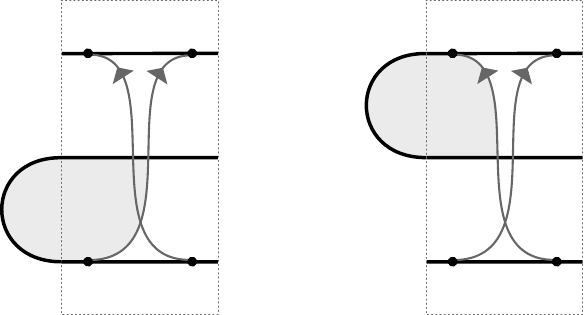}
\caption{Obstructed crossover arrows.}
\label{fig:obstructed-arrows}
\end{figure}

\begin{figure}
\labellist
  \scriptsize
  \pinlabel {$a_k$} at  55 70
  \pinlabel {$a_2$} at  53 48
  \pinlabel {$a_1$} at  51 33
  
  \pinlabel {$a_k$} at  190 70
  \pinlabel {$a_2$} at  188 48
  \pinlabel {$a_1$} at  186 33
  
  \pinlabel {$a_k$} at  325 70
  \pinlabel {$a_2$} at  323 48
  \pinlabel {$a_1$} at  321 33
      
  \color{gray}
  \pinlabel {$c$} at 28 55
  
  \pinlabel {$c$} at 168 55
  \pinlabel {$a_k$} at  140 77
  \pinlabel {$a_2$} at  140 54
  \pinlabel {$a_1$} at  140 39

  \pinlabel {$c$} at 297 76
  \pinlabel {$a_k$} at  275 77
  \pinlabel {$a_2$} at  275 54
  \pinlabel {$a_1$} at  275 39
 
\endlabellist
\includegraphics[scale=1.1]{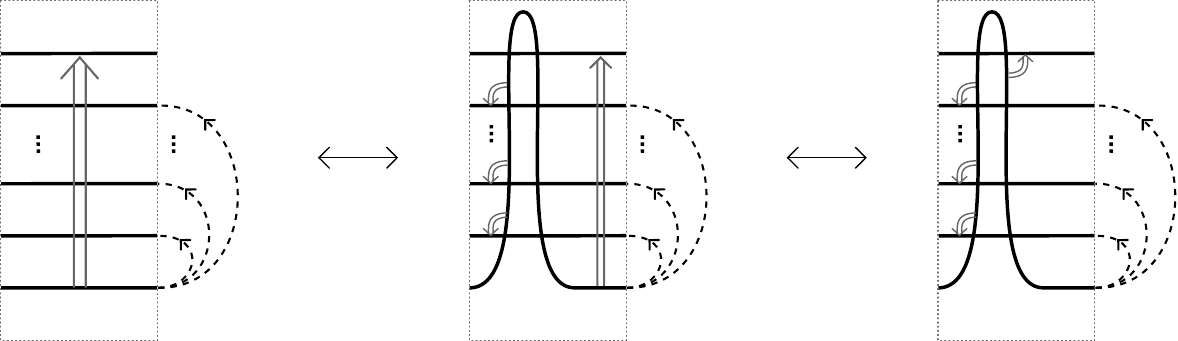}
\caption{An unobstructed arrow can be transformed to a left turn crossover arrow by applying a finger move just as in Figure \ref{fig:any-crossover-arrow-left-turn}, but when the arrow crosses other segments this may require adding additional left turn crossover arrows as indicated. A rectangular neighborhood of the original crossover arrow is shown. The coefficients $a_i \in \F[W]$ on the dotted arcs represent the weighted count of smooth paths in the train track with the given ends that bound bigons with the right side of the rectangular neighborhood of the arrow; note that some or all of these coefficients may be zero.}
\label{fig:unobstructed-arrow}
\end{figure}

\begin{proposition}
Transforming a train track in the neighborhood of a crossover arrow as in Figure \ref{fig:unobstructed-arrow} results in an equivalent train track.
\end{proposition}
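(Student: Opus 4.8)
The transformation depicted in Figure~\ref{fig:unobstructed-arrow} is, in essence, move~$(j)$ of Figure~\ref{fig:invariance-moves} — a finger move of the underlying immersed multicurve together with bookkeeping on the bounding chain — carried out in a local picture that contains several parallel strands and a crossover arrow rather than a single bigon. The plan is therefore to prove it by the same method used to establish invariance under move~$(j)$ in Section~\ref{sec:invariance-of-Floer-homology}: a direct comparison of the polygon counts defining the operations $m^\tracks_k$ before and after the transformation. I would work inside the rectangular neighborhood $D$ drawn in the figure, fix an arbitrary admissible partner train track, observe that nothing outside $D$ is changed, and note at the outset that the transformed (``after'') train track is in left turn crossover form, hence is an immersed multicurve with a bounding chain; thus its operations $m^\tracks_k = m^\bchain_k$ are finite and well defined by Proposition~\ref{prop:admissible-enhanced}, and finiteness of the counts for the ``before'' train track will come out of the bijection of polygons established below.

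First I would describe the generator and polygon correspondence. Pushing the strand at the tail of the crossover arrow forward along the arrow, through each of the $k$ intermediate strands and then through the strand at its head, creates $2(k+1)$ new self-intersection points of the multicurve inside $D$; at the end the two directed edges of the original arrow lie in a small neighborhood of one of the new intersection points and form a left turn crossover arrow, while move~$(j)$ forces left turn crossover arrows of the weights shown in Figure~\ref{fig:unobstructed-arrow} (namely $\pm c\,a_i$ at the intermediate crossings, where $c$ is the original arrow weight, and $\pm cW^{m}$ at the head, with $-2m$ the degree of the resulting self-intersection) at the remaining new intersection points. The $2(k+1)$ new intersection points pair up into bigons whose $m^\tracks_1$-coefficient is a unit, and after canceling all of these the generators match those before the move. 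For the differential and higher operations, I would classify each polygon of the ``after'' train track meeting $D$ according to whether it passes straight through the new strands — in which case it corresponds to a polygon of the ``before'' train track that does not use the crossover arrow — or makes a left turn at one of the new crossover arrows and then runs along a smooth path outside $D$ of weight $a_i$; upon performing the cancellations, the latter recombine into precisely the ``before'' polygons that ran along the original crossover arrow, with total weight $c\,a_i$ matching the weight $\pm cW^m\cdot(\text{rest})$ of the old polygon. Finally I would check, as in Section~\ref{sec:invariance-of-Floer-homology}, that the sign contributions of the new corners and the factors $W^{n_w}$ agree on the two sides — the $W$-powers because matched polygons enclose the same marked points, and the signs because the minus sign in $\pm cW^m$ sits, by the stated convention, on the directed edge whose direction opposes the orientation, which is exactly the sign of the new left turn corner.

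The hard part will be the weight and sign bookkeeping across all $k$ intermediate strands, and in particular verifying that the recombination in the previous paragraph reproduces not just the differential $m^\tracks_1$ but the full collection $\{m^\tracks_k\}$ of the ``before'' train track. A closely related subtlety is that the outside paths counted at successive stages of the finger move must genuinely assemble into the \emph{fixed} elements $a_i \in \F[W]$ of the figure; one must rule out that such a path, once it leaves $D$, re-enters $D$ in a way that would make the relevant weighted count depend on the stage of the finger move, and this is exactly where admissibility and the hypothesis that the arrow is \emph{unobstructed} (no bigon on its left side, Figure~\ref{fig:obstructed-arrows}) enter, by the same non-reentrancy argument as in the proof of Proposition~\ref{prop:admissible}. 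One must also confirm that the transformed train track is again unobstructed and that no extra left turn crossover arrows are forced on the left side of the arrow; both are consequences of the unobstructedness hypothesis. Once this bookkeeping is carried out, the invariance statement is formally identical to that for move~$(j)$, now applied $k+1$ times in succession.
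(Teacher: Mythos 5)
Your proposal rests on the same key observation as the paper --- the transformation is governed by move $(j)$ of Figure~\ref{fig:invariance-moves} --- but the paper's proof organizes the argument via a two-step decomposition that sidesteps essentially all of the bookkeeping you flag as the ``hard part.'' The paper first carries out the finger move only in the left third of the rectangular neighborhood, treating it as a modification of the \emph{immersed-curve} part of the train track with left-turn crossover arrows playing the role of a bounding chain; this is literally repeated application of move~$(j)$, which was already shown in Section~\ref{sec:invariance-of-Floer-homology} to preserve all of the operations, so no new polygon-level verification is needed. It then observes that the remaining change is just a translation of the crossover arrow along parallel strands, which clearly does not alter any polygon count. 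By contrast, you propose pushing the tail of the arrow through all $k+1$ strands including the head in one finger move, creating $2(k+1)$ new intersections, cancelling the resulting bigon pairs, and then recombining the polygons --- and you rightly note that this forces you to check the full $\Ainfty$ structure, the sign and $W$-power bookkeeping, and the independence of the weighted counts $a_i$ from the stage of the finger move. None of these checks are wrong, but they amount to reproving the invariance of move~$(j)$ inside a more crowded local picture. What the paper's decomposition buys is that it isolates the already-proven content (move~$(j)$ is an equivalence) from a trivial isotopy, so the proposition follows with essentially no further calculation; it also means that unobstructedness is only needed to justify that the move lands where Figure~\ref{fig:unobstructed-arrow} says it does, not to patch an argument about re-entrant polygons.
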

\begin{proof}
To show that the train tracks are equivalent, we need to check that any counts of immersed polygons involving the train tracks agree before and after the transformation. We consider the two steps of the transformation shown in Figure \ref{fig:unobstructed-arrow} separately. The first can be viewed as a transformation in the left third of the neighborhood of the crossover arrow, which does not involve the arrow. This is just repeated application of move (j) from Figure \ref{fig:invariance-moves}; we have left-turn crossover arrows in place of a bounding chain, but we have already established that these are equivalent and the same proof applies. The second part of the transformation is simply translating the crossover arrow along the curve. It is clear that any polygon passing through this neighborhood is not meaningfully altered by this homotopy.
\end{proof}

By applying this transformation for each arrow, we see that any unobstructed train track that has the form of an immersed multicurve with crossover arrows is equivalent to an immersed multicurve with left-turn crossover arrows. In particular:

\begin{proposition}
For unobstructed train tracks that consist of immersed curves with crossover arrows, the map $\partial^\tracks$ and the Floer complex are well defined. \qed
\end{proposition}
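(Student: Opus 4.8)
The plan is to deduce this from the preceding proposition, reducing everything to the already-settled case of immersed multicurves decorated with bounding chains, where finiteness of the relevant polygon counts and the differential property are Propositions~\ref{prop:admissible-enhanced} and~\ref{prop:Ainfty-relations-enhanced}.

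First I would take a pair of unobstructed (admissible) train tracks $\tracks_0,\tracks_1$ of the stated form, each an immersed multicurve with finitely many pairwise non-crossing crossover arrows attached. Since the arrows on a given train track are disjoint and non-crossing, I can choose disjoint rectangular neighborhoods for them and apply the transformation of Figure~\ref{fig:unobstructed-arrow} to one arrow at a time, independently. By the preceding proposition each such step produces an equivalent train track, so after finitely many steps I obtain train tracks $\tracks'_0,\tracks'_1$ having the same polygon counts as $\tracks_0,\tracks_1$ and in which every crossover arrow is a left turn crossover arrow. Here I would note that the transformation can introduce \emph{new} left turn crossover arrows, but these still have the required form, and since ``equivalent'' means all polygon counts agree, $\tracks'_0$ and $\tracks'_1$ remain unobstructed and admissible.

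Next I would invoke the correspondence established after Figure~\ref{fig:turning-point-to-train-track}: an immersed multicurve with left turn crossover arrows is literally the train track built from a decorated immersed multicurve $(\Gamma'_i,\bchain'_i)$, with the $(k+1)$-gons contributing to $m^{\tracks'}_k$ in weight-preserving bijection with the generalized $(k+1)$-gons contributing to $m^{\bchain'}_k$. Admissibility of the train tracks $\tracks'_i$ translates directly into admissibility of the decorated curves in the sense of Definition~\ref{def:admissible-enhanced} (an immersed disk or polygon bounded by the train tracks is the same datum as a generalized immersed disk or annulus bounded by the decorated curves), so Proposition~\ref{prop:admissible-enhanced} applies and shows that each $m^{\tracks'}_k = m^{\bchain'}_k$ is well defined; in particular $m^{\tracks'}_0 = m^{\bchain'}_0$. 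Since $\tracks'_i$ is unobstructed this vanishes, so $\bchain'_i$ satisfies the Maurer--Cartan equation and is a bounding chain. The $\Ainfty$-relations of Proposition~\ref{prop:Ainfty-relations-enhanced} then give $(m^{\bchain'}_1)^2 = 0$, so $\partial^{\tracks'} = m^{\bchain'}_1$ is a differential and $CF(\tracks'_0,\tracks'_1)$ is a well-defined graded complex over $\F[W]$ (over $\sRminus$ in the doubly marked case). Finally, since $\tracks_i$ and $\tracks'_i$ have identical polygon counts we conclude $\partial^{\tracks} = \partial^{\tracks'}$, so $\partial^\tracks$ is a well-defined differential and the Floer complex of $\tracks_0,\tracks_1$ agrees with that of $\tracks'_0,\tracks'_1$.

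The main obstacle I anticipate is the bookkeeping in the middle step: checking that the arrow-by-arrow reductions are genuinely independent (performing the transformation near one arrow does not disturb the neighborhood of another, using only that arrows are non-crossing), and that the admissibility hypotheses and the polygon counts transport cleanly back and forth between the train-track picture and the decorated-curve picture at each stage of the reduction. Everything else is a direct appeal to results already in hand.
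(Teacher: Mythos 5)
Your argument is correct and follows exactly the route the paper takes: the paper leaves the proof as immediate because the preceding paragraph observes that iterating the transformation of Figure~\ref{fig:unobstructed-arrow} reduces any unobstructed train track of the stated form to one with only left-turn crossover arrows, which is literally the train track of an admissible decorated multicurve, and then Propositions~\ref{prop:admissible-enhanced} and~\ref{prop:Ainfty-relations-enhanced} apply. Your extra bookkeeping concerns (disjoint neighborhoods, new left-turn arrows created during the reduction, transport of admissibility) are valid things to check, but the paper regards them as implicit in the preceding proposition and discussion, so you have simply made explicit what the paper leaves tacit.
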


We will always require that crossover arrows are unobstructed, but if we work over a quotient of $\F[W]$ obtained by setting $W^m = 0$ for some $m$ then we can weaken the unobstructed assumption accordingly. We will say that a crossover arrow is \emph{unobstructed modulo $W^m$} if the weight on the arrow is a multiple of $W^k$ for some $k$ and if any bigon bounded by $\tracks$ and the left side of the rectangular neighborhood of the crossover arrow has a weight which is a multiple of $W^{m-k}$. In this case we use the transformation in Figure \ref{fig:unobstructed-arrow} but note that additional left turn crossover arrows may need to be added corresponding to bigons on the left side of the crossover arrow ending at the bottom segment. Figure \ref{fig:unobstructed-mod-Um} shows an example of a crossover arrow with weight $W^{m-1}$ which is unobstructed only modulo $W^m$. Note that the finger move transformation does not produce an equivalent train track working over $\F[W]$, since the shaded bigon on the right does not have an analogous bigon on the left. However, this problematic bigon involves two crossover arrows whose weights multiply to $W^m$, so this bigon does not contribute modulo $W^m$.

\begin{figure}
\labellist
  \scriptsize
  \pinlabel {$W$} at  0 45
  \pinlabel {$\pmod{W^m}$} at  100 37
  \pinlabel {$W$} at  136 45
  
  \tiny
  \color{gray}
  \pinlabel {$W^{m-1}$} at  55 60
  \pinlabel {$W^{m-1}$} at  198 70
  \pinlabel {$W^{m}$} at  194 52

\endlabellist
\includegraphics[scale=1]{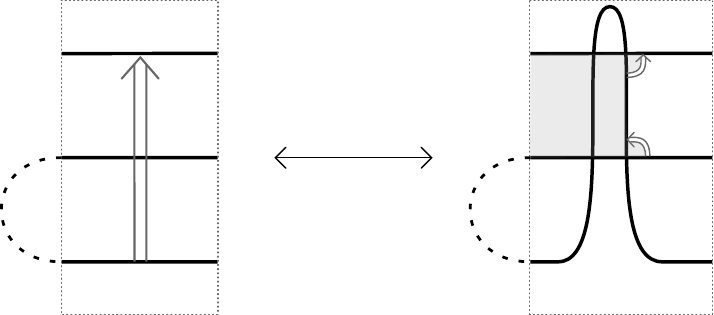}
\caption{A crossover arrow that is obstructed, but unobstructed modulo $W^m$. Applying the finger move as in Figure \ref{fig:unobstructed-arrow} and sliding the crossover arrow to an intersection point produces a train track that is equivalent modulo $W^m$. There is an unwanted bigon on the right involving both crossover arrows, but this does not contribute modulo $W^m$.}
\label{fig:unobstructed-mod-Um}
\end{figure}

\subsection{Sliding crossover arrows}

The main advantage of working with train tracks of the form described above is that in many cases sliding crossover arrows along the immersed curves produces an equivalent train track. Since any (unobstructed) crossover arrow can be replaced with a left turn crossover arrow after an isotopy of the immersed curves, the arrow slide moves described in this section could alternatively be described as modifications of immersed curves with turning points involving isotopies of the immersed curve as well as modification to the collection of turning points. However, we find arrow sliding more convenient because it allows us to (for the most part) keep the underlying immersed curves fixed throughout the process.

By arrow sliding we mean translating the points at which a crossover arrow meets the immersed curve part of the train track along the relevant curves. The simplest example of such a move is sliding a crossover arrow between two parallel sections of curve, as in Figure \ref{fig:arrow-slide-simple}. Note that in this simple move the arrow does not interact with any other features of the train track (such as self-intersection points, weighted basepoints, or other crossover arrows). We do allow there to be other parallel segments of immersed curve between the two segments connected by the curve. To see that applying this move in a train track $\tracks_0$ results in an equivalent train track, we need to show that the complex $C(\tracks_0, \tracks_1)$ is unchanged for any other train track $\tracks_1$. This is immediately clear if $\tracks_1$ is disjoint from the strip through which the crossover arrow slides. The slide is more interesting if it passes some portion of $\tracks_1$. By breaking an arrow slide into pieces, it is sufficient to consider the case of sliding past one segment of $\tracks_1$ as in Figure \ref{fig:arrow-slide-basis-change}.

\begin{proposition}\label{prop:arrow-slide-basis-change}
If $\tracks_0$ and $\tracks_1$ are unobstructed train tracks (or unobstructed modulo $W^m$), the effect on the $\F[W]$ (or $\F[W]/W^m$) complex $C(\tracks_0, \tracks_1)$ of sliding a crossover arrow on $\tracks_0$ through a segment of $\tracks_1$ as in Figure \ref{fig:arrow-slide-basis-change} is that of a change of basis replacing $x$ with $x' = x + cy$.
\end{proposition}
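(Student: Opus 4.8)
The plan is to analyze the intersection points and immersed bigons passing through a rectangular neighborhood $D$ of the crossover arrow, exactly as in the proof of homotopy invariance (the proposition following Figure \ref{fig:invariance-moves}). Let $\tracks_1$ enter $D$ as a single segment crossing both strands of $\tracks_0$ that the crossover arrow connects, and let $x$ and $y$ denote the two generators of $C(\tracks_0,\tracks_1)$ arising from these two intersection points inside $D$ (with $x$ on the strand at the tail of the arrow and $y$ on the strand at the head, say). Before the slide, the crossover arrow lies on one side of the segment of $\tracks_1$; after the slide it lies on the other side. All intersection points of $\tracks_0$ with $\tracks_1$ outside $D$ are untouched, so there is a tautological identification of generators, and it remains to compare the differential $\partial^\tracks = m_1^\tracks$ before and after.

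First I would observe that, since $\tracks_0$ and $\tracks_1$ are unobstructed (or unobstructed modulo $W^m$), the counts $m_1^\tracks$ are finite and well-defined before and after by the propositions established above, so it suffices to match contributions bigon-by-bigon. Next I would enumerate the bigons meeting $D$: a bigon either passes straight through the strip between the two strands of $\tracks_0$ without touching the crossover arrow, in which case the slide has no effect on it; or it has a corner at $x$ or $y$, or uses the directed edges of the crossover arrow as (part of) a smooth path in its boundary. For the bigons of the second type one checks that moving the crossover arrow across the $\tracks_1$-segment has the effect of splicing in or removing the little triangle cut off between the arrow and the segment, precisely as in the analysis of moves (a) and (b) and Figure \ref{fig:isotopy-invariance-proof}: a bigon leaving $x$ on $\tracks_0$ acquires (or loses) a detour along the crossover arrow which reaches $y$, contributing $c$ times the corresponding bigon from $y$, where $c\in\F[W]$ is the weight on the crossover arrow. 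Collecting these contributions shows that $\partial^\tracks y$ is unchanged while $\partial^\tracks x$ on the new train track equals $\partial^\tracks x + c\,\partial^\tracks y$ on the old one; dually, any bigon ending at $y$ acquires a partner ending at $x$ with weight $c$, so that $y$ as a target is replaced by $y$ while $x$ as a target is replaced by $x + cy$. Hence the two complexes are identified by the change of basis $x \mapsto x' = x + cy$ fixing all other generators, which is what we want. I would also remark that the same bookkeeping of powers of $W$ that appears in the proof of Proposition \ref{prop:Ainfty-relations-enhanced} shows the spliced triangle contributes the weight $c$ with the correct power of $W$, so the argument goes through verbatim modulo $W^m$ when the arrow is only unobstructed modulo $W^m$.

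The main obstacle I expect is the sign and weight bookkeeping: one must verify that the spliced-in triangle contributes exactly the coefficient $c$ (not $-c$, and with no extra sign coming from the corner at $x$ or $y$ or from the orientation of $\tracks_0$ relative to the boundary orientation of the bigon), using the sign conventions for corners and the convention that the minus sign in a crossover arrow label sits on the directed edge whose direction opposes the orientation. This is the same subtlety flagged as ``the most subtle part of the argument'' in the proof of Proposition \ref{prop:Ainfty-relations}, and handling it carefully for each of the possible local orientation configurations of $\tracks_0$, $\tracks_1$, and the crossover arrow is the crux; everything else is a direct adaptation of the invariance-move analysis.
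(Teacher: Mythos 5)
Your approach is correct in outline but differs from the paper's. The paper notes that one could check the statement directly by the bigon-by-bigon analysis you sketch (pointing to \cite{HRW} for that version), but instead gives a much shorter argument: interpret the crossover arrow as a pair of turning points (equivalently, a bounding-chain decoration) and observe that the slide of the crossover arrow through the $\tracks_1$-segment is literally the composition of two applications of move (b) from Figure~\ref{fig:invariance-moves}, as depicted in the bottom row of Figure~\ref{fig:arrow-slide-basis-change}; since move (b) was already shown to effect exactly the basis change $x \mapsto x + cy$ (with all the sign and weight bookkeeping done once and for all there), the result follows with no new bigon analysis. Your direct route works, but it forces you to redo the corner-sign and $W$-power bookkeeping you rightly flag as the crux and leave unfinished, and it is also a bit delicate to state precisely which bigons change versus which are merely re-identified (your ``dually'' sentence is plausible but not obviously consistent with the $x \mapsto x + cy$ substitution on targets without pinning down orientations). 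The paper's reduction avoids all of this: the cost is having already proven invariance under move (b), the benefit is that the slide proposition becomes essentially a corollary.
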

\begin{proof}
While we could check this directly by considering the effect on immersed bigons that involve the crossover arrow (c.f. \cite[Theorem 5]{HRW}), this also follows from the invariance of the Floer complex for curves with turning points once we realize the curves with crossover arrows as representing curves with bounding cochains and apply the move (b) from Figure \ref{fig:invariance-moves} twice, as in the bottom of Figure \ref{fig:arrow-slide-basis-change}.
\end{proof}

\begin{figure}
\includegraphics[scale=.8]{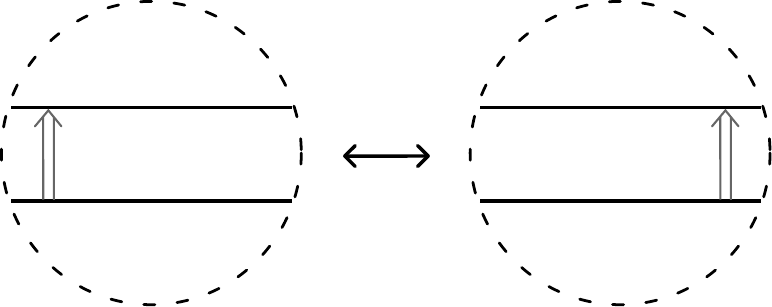}
\caption{Sliding a crossover arrow between two parallel curve segments.}
\label{fig:arrow-slide-simple}
\end{figure}

\begin{figure}
\labellist
  \scriptsize
  \pinlabel {$x$} at  48 145
  \pinlabel {$y$} at  48 182
  \pinlabel {$x$} at  48 25
  \pinlabel {$y$} at  48 62
  \pinlabel {$x'$} at  184 73
  \pinlabel {$y$} at  174 52
  \pinlabel {$x'$} at  308 36
  \pinlabel {$y$} at  310 62
  \pinlabel {$x'$} at  308 156
  \pinlabel {$y$} at  310 182
  
  \color{red}
  \pinlabel {$c$} at  20 163
  \pinlabel {$c$} at  31 62
  \pinlabel {$c$} at  199 62
  \pinlabel {$c$} at  344 62
  \pinlabel {$c$} at  338 163

\endlabellist
\includegraphics[scale=.8]{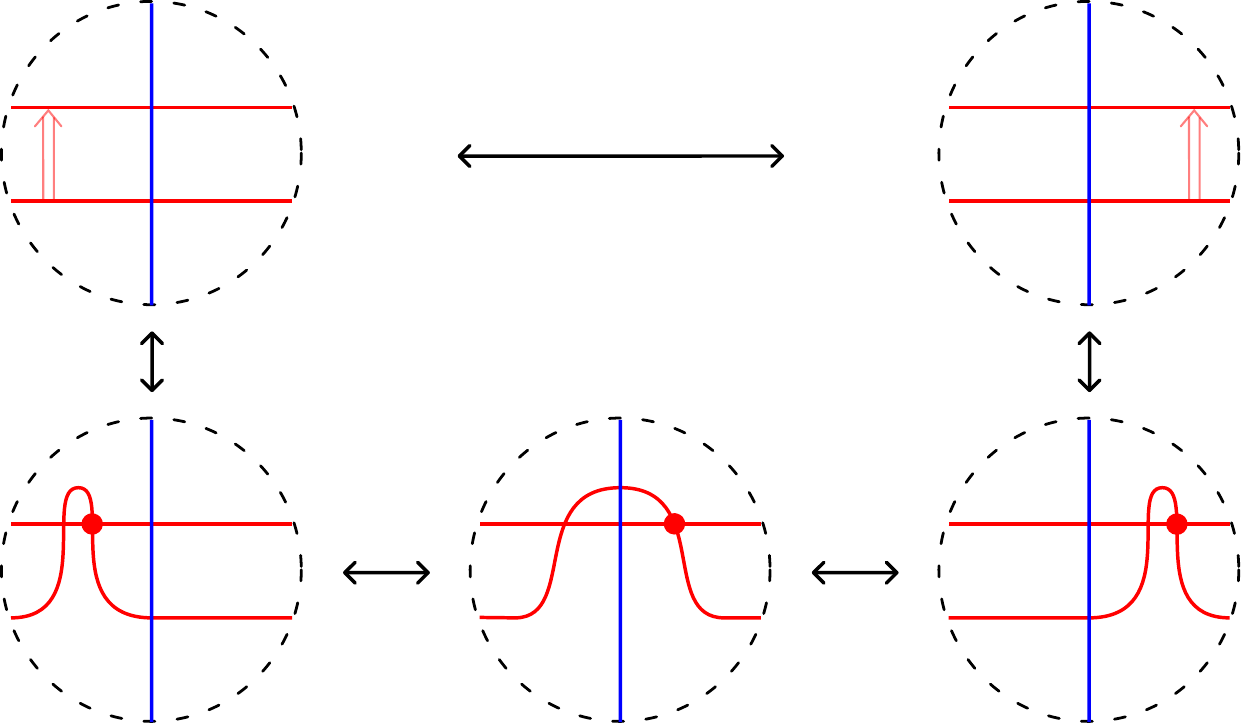}
\caption{Sliding a crossover arrow between parallel segments of $\tracks_0$ past a crossing with $\tracks_1$ has the effect of a change of basis on the complex $C(\tracks_0, \tracks_1)$ that replaces $x$ with $x'  = x + cy$. This can also be interpreted as applying the move $(b)$ from Figure \ref{fig:invariance-moves} twice.}
\label{fig:arrow-slide-basis-change}
\end{figure}

Sliding a crossover arrow can have interesting side-effects if the arrow interacts with self-intersection points or other crossover arrows. 
We will describe a number of local arrow sliding moves as replacements of certain arrow configurations. Consider a rectangular region in $\Sigma$ in which $\tracks_0$ consists of $n$ immersed curve segments (or strands) moving from one side of the rectangle to the opposite side, oriented in the same way, with some number of weighted basepoints on strands and crossover arrows between strands. We will call a portion of train track of this from an \emph{$n$-strand arrow configuration}. The side of the rectangle on which the oriented strands start will be called the initial side and the side on which the strands end will be called the terminal side. If we fix an ordering of the $n$ strand endpoints on the initial side and of the $n$ endpoints on the terminal side, any $n$-strand arrow configuration determines an $n \times n$ matrix over $\F[W]$ where the $(i,j)$ entry is the weighted count of paths in the train track from the $i$th endpoint on the initial side to the $j$th endpoints on the terminal sides. Since the powers of $W$ are forced by the gradings, we will omit them and view this matrix as having coefficients in $\F$. We remark that the matrix corresponding to $n$ parallel strands with a crossing between the $i$th and $j$th strands is a transposition matrix $T_{ij}$ and the matrix for $n$ parallel strands with a single crossover arrow of weight $cW^m$ from the $i$th strand to the $j$th strand is the elementary $A_{ij}^c$ with 1's on the diagonal, $c$ in the $(i,j)$ entry, and 0's elsewhere.

The matrix determined by an $n$-strand arrow configuration is invertible, with the inverse matrix given by counting paths from the terminal side to the initial side. To see this, first observe that if two $n$-strand arrow configurations are concatenated, identifying the terminal end of one with the initial end of the other, the matrix counting paths in the combined configuration is the product of the matrices for the two original configurations. Next, observe that any $n$-strand arrow configuration can be realized as a collection of parallel strands with a sequence of crossover arrows and crossing inserted; this corresponds to decomposing a matrix as a product of elementary matrices. Finally, we observe that reading the configuration backwards corresponds to the same sequence of elementary matrices in the opposite order, with a minus sign on the non-diagonal entry of each $A^c_{ij}$, which produces the inverse matrix.

We will say that two $n$-strand arrow configurations are equivalent if they determine the same matrix. It turns out that the matrix contains all the information needed from this region to construct the Floer complex with any other train track, so we can freely replace $n$-strand arrow configurations with equivalent ones.
\begin{proposition}
If $\tracks_0$ is an unobstructed train track (modulo $U^m$) and $\tracks_0'$ is obtained from $\tracks_0$ by replacing an $n$-strand arrow configuration with another $n$-strand arrow configuration determining the same matrix, then $\tracks'_0$ is equivalent (modulo $U^m$) to $\tracks_0$.
\end{proposition}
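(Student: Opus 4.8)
The plan is to deduce this from the homotopy invariance of the Floer complex for immersed curves decorated with bounding chains proved in Section~\ref{sec:invariance-of-Floer-homology}. First, using the transformations of the previous subsection (Figures~\ref{fig:turning-point-to-train-track} and~\ref{fig:unobstructed-arrow}), realize both $\tracks_0$ and $\tracks'_0$ as immersed multicurves with bounding chains, i.e.\ with only left-turn crossover arrows. Since the two train tracks agree outside the rectangular region $R$ containing the arrow configuration, we may arrange that the underlying immersed multicurves literally coincide outside $R$ and that the $n$ strands enter and leave $R$ in the same way. It then suffices to connect the two decorated curves by a finite sequence of the local moves of Figure~\ref{fig:invariance-moves}, each supported inside $R$, and to invoke the invariance proposition of Section~\ref{sec:invariance-of-Floer-homology} (which applies to a modification of either of the two curves, and to the reversed pairing and the higher operations $m^\tracks_k$ by the same argument) along this sequence. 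One must also check that each intermediate train track remains unobstructed (modulo $W^m$, in the truncated setting) so that its Floer complex is defined; but $m^\tracks_0$ counts closed smooth paths, and the contribution of the portion of such a path inside $R$ depends on the configuration only through its matrix $A$, so unobstructedness is automatically preserved. This last observation---that the weighted count of any family of boundary arcs of an immersed polygon crossing $R$ depends on the configuration only through $A$---is what ultimately makes the reduction legitimate.

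The heart of the argument is the combinatorial claim that \emph{two $n$-strand arrow configurations in $R$ determining the same matrix $A \in \mathrm{GL}_n$ can be connected by a finite sequence of the moves of Figure~\ref{fig:invariance-moves} performed inside $R$}. To prove it, fix once and for all a factorization of $A$ into a product of elementary matrices---adjacent transpositions, realized by a single strand crossing; elementary transvections $A^c_{i,i+1}$, realized by a single left-turn crossover arrow between adjacent strands (whose $W$-power is forced by the gradings); and diagonal matrices, realized by weighted basepoints---obtained from a fixed Gaussian-elimination procedure, and let $\net_A$ be the ``staircase'' configuration obtained by concatenating the corresponding elementary sub-configurations. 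Since $\net_A$ depends only on $A$, it is enough to show that any configuration with matrix $A$ can be brought to $\net_A$ by the moves above; then $\tracks_0$ and $\tracks'_0$ are both connected to $\net_A$, and equivalence of train tracks is transitive.

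Bringing a general configuration to $\net_A$ uses three kinds of move, each a composition of moves from Figure~\ref{fig:invariance-moves}: (1) sliding a crossover arrow along strands it does not connect, which affects no immersed polygon (cf.\ Figure~\ref{fig:arrow-slide-simple}) and realizes the fact that disjointly supported elementary matrices commute; (2) sliding a crossover arrow past a strand crossing or past another crossover arrow, possibly with the finger-move corrections of Figure~\ref{fig:unobstructed-arrow}---these corrections are themselves moves from Figure~\ref{fig:invariance-moves}, and in the truncated setting each involves a product of two arrow-weights divisible by $W^m$ and so vanishes modulo $W^m$---which realizes conjugation of a transvection by a transposition and the remaining elimination relations (here the change-of-basis slide of Proposition~\ref{prop:arrow-slide-basis-change}, which is move (b) of Figure~\ref{fig:invariance-moves} applied twice, supplies the interaction with strands); and (3) inserting or deleting a canceling pair of strand crossings---a Reidemeister~II move, which is precisely move (i) of Figure~\ref{fig:invariance-moves} since the two new self-intersections enter the bounding chain with coefficient $0$---and merging two parallel crossover arrows between the same pair of strands into one by adding their weights. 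Standard facts about Gaussian elimination and the relations in $\mathrm{GL}_n$ then show that (1)--(3) suffice to connect any two geometric realizations of the fixed matrix $A$.

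The main obstacle is the combinatorial bookkeeping in this last step: one must check that each of the moves (1)--(3) genuinely preserves the matrix and is realized by a composition of Figure~\ref{fig:invariance-moves} moves with all signs correct (the minus signs attached to crossover-arrow weights whose direction opposes the orientation), all powers of $W$ consistently tracked (these are grading-determined, hence consistent, but must be carried along), and all mod-$W^m$ corrections accounted for in the truncated setting; and one must either verify that the normal form $\net_A$ is independent of the elimination order or, preferably, argue directly that (1)--(3) connect any two realizations of $A$ without passing through a fixed normal form. No new analytic input is required beyond the invariance machinery of Section~\ref{sec:invariance-of-Floer-homology}; the content is exactly this careful accounting.
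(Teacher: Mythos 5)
Your proposal takes a genuinely different---and substantially harder---route than the paper. The paper's proof is short and direct: for any test track $\tracks_1$, one first uses the invariance moves of Section~\ref{sec:invariance-of-Floer-homology} together with Proposition~\ref{prop:arrow-slide-basis-change} to push $\tracks_1$ out of the rectangle $R$ containing the arrow configuration (the chain homotopy equivalences realizing this isotopy may differ depending on whether one is pairing with $\tracks_0$ or $\tracks_0'$, since the strands and arrows inside $R$ differ, but the resulting ambient curve $\tracks_1''$ is the same). Once $\tracks_1''$ is disjoint from $R$, the complexes $CF(\tracks_0,\tracks_1'')$ and $CF(\tracks_0',\tracks_1'')$ are \emph{literally identical}: a bigon whose boundary enters $R$ at one endpoint of a strand and leaves at another contributes according to the weighted count of smooth paths through $R$ between those two endpoints, and this count is by definition an entry of the matrix $A$, hence the same for both configurations. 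No normal form, no connecting sequence of moves, no presentation theorem.

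The irony is that you already state the key fact yourself: ``the weighted count of any family of boundary arcs of an immersed polygon crossing $R$ depends on the configuration only through $A$.'' But you invoke it only to guarantee unobstructedness of intermediate train tracks, when in fact---after pushing the test curve off $R$---it is the entire proof, applied to all immersed polygons rather than just monogons. Applying it as you did would have short-circuited everything that follows.

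The genuine gap is in the step you defer to ``standard facts about Gaussian elimination and the relations in $\mathrm{GL}_n$.'' Your reduction rests on the claim that any two $n$-strand configurations with the same matrix are connected by moves from Figure~\ref{fig:invariance-moves} supported in $R$. This is a presentation-type statement---that the geometric moves realize a complete set of relations among elementary generators of $\mathrm{GL}_n(\F)$---and it is not a consequence of Gaussian elimination alone, which produces \emph{a} factorization but says nothing about connecting two different factorizations of the same matrix through the given relations. You also need to verify independence of $\net_A$ from the elimination order, carry the $W$-powers and signs through each translation between the train-track picture and the curve-plus-bounding-chain picture (on which the Figure~\ref{fig:invariance-moves} moves are actually defined), and check that the mod-$W^m$ corrections from Figure~\ref{fig:unobstructed-arrow} never accumulate into something visible. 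None of this is done, and it is precisely where the work would be; as written, the proposal replaces a two-paragraph argument with an unverified combinatorial lemma that is considerably harder than the proposition it is meant to prove.
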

\begin{proof}
We must show that the complexes $CF(\tracks_0, \tracks_1)$ and $CF(\tracks_0', \tracks_1)$ are homotopy equivalent for any other train track $\tracks_1$. It suffices to consider the case that $\tracks_1$ is disjoint from the rectangle containing the $n$-strand arrow configuration, since $\tracks_1$ can be isotoped off of the rectangle, possibly using a combination of the basis change move in Proposition \ref{prop:arrow-slide-basis-change} and moves from Figure \ref{fig:invariance-moves}.

In the remaining case, it is easy to see that the complexes $CF(\tracks_0, \tracks_1)$ and $CF(\tracks_0', \tracks_1)$ are identical. If there is a bigon contributing to the first complex whose boundary passes through the region containing the $n$-strand arrow configuration, it does not matter what path the boundary takes from the point it enters the region to the point it leaves the region, only that a path with the given weight exists. Replacing the path through the $n$-strand arrow configuration of $\tracks_0$ with a path with the same endpoints through the new $n$-strand arrow configuration in $\tracks_0'$ produces a bigon contributing to the second complex connecting the same two generators. Because the weighted count of paths between endpoints in the arrow configurations agree, the counts of bigons defining the differential in the complexes agree.
\end{proof}

Some useful replacements of $n$-strand configurations are shown in Figure \ref{fig:local-moves}. For example, crossover arrows may slide past each other, with the provision that if the head of one arrow passes the tail of another, a new arrow corresponding to the composition of the two must be added (see Figure \ref{fig:local-moves}, third row). A crossing between strands can also be replaced by a sequence of crossover arrows. These moves can be interpreted as pictorial representations of familiar relations on elementary matrices.

\begin{remark}
Note that the last move in Figure \ref{fig:local-moves} requires introducing new weights that are the inverse of the weight on the crossover arrow. This is the main issue with extending the techniques described in this paper to non-field coefficients. All the other arrow slides make sense with $\Z$-coefficients, but this move requires weights to be invertible.
\end{remark}

\begin{figure}
\labellist

  \pinlabel {$\left( \begin{matrix}1 & 0 \\ a+b & 1\end{matrix} \right)$} at -70 323 
  \pinlabel {$\left( \begin{matrix}1 & 0 \\ 0 & 1\end{matrix} \right)$} at 510 323 
  \pinlabel {$\left( \begin{matrix}1 & b & 0 \\  0 & 1 & 0 \\ 0 & a & 1\end{matrix} \right)$} at -70 228 
  \pinlabel {$\left( \begin{matrix}1 & 0 & 0 \\  0 & 1 & 0 \\ b & a & 1\end{matrix} \right)$} at 510 228 
  \pinlabel {$\left( \begin{matrix}1 & 0 & 0 \\  b & 1 & 0 \\ ab & a & 1\end{matrix} \right)$} at -70 118 
  \pinlabel {$\left( \begin{matrix}1 & b & 0 \\  0 & 1 & 0 \\ a & ab & 1\end{matrix} \right)$} at 510 118 
  \pinlabel {$\left( \begin{matrix}a & 1 \\  1 & 0 \end{matrix} \right)$} at 0 23 

 \Large
  \pinlabel {$\sim$} at 80 323
  \pinlabel {$\sim$} at 368 323
  \pinlabel {$\sim$} at 80 228
  \pinlabel {$\sim$} at 368 228
  \pinlabel {$\sim$} at 80 118
  \pinlabel {$\sim$} at 368 118
  \pinlabel {$\sim$} at 140 21
  \pinlabel {$\sim$} at 295 21

\small
  \color{gray}
  \pinlabel {$a$} at 5 323
  \pinlabel {$b$} at 58 323
  \pinlabel {$a+b$} at 150 323
  \pinlabel {$0$} at 310 326
  
  \pinlabel {$a$} at 12 211
  \pinlabel {$b$} at 51 245 
  \pinlabel {$a$} at 110 245
  \pinlabel {$b$} at 148 211 
  
  \pinlabel {$a$} at 300 211
  \pinlabel {$b$} at 339 245 
  \pinlabel {$a$} at 398 245
  \pinlabel {$b$} at 436 211 
  
  \pinlabel {$a$} at 12 99
  \pinlabel {$b$} at 51 133 
  \pinlabel {$b$} at 102 133
  \pinlabel {$a$} at 117 99
  \pinlabel {$ab$} at 157 128
  
  \pinlabel {$a$} at 300 99
  \pinlabel {$b$} at 339 133 
  \pinlabel {$b$} at 390 133
  \pinlabel {$ab$} at 445 99
  \pinlabel {$a$} at 406 99
  
  \pinlabel {$a$} at 55 21
  \pinlabel {$a^{-1}$} at 177 18
  \pinlabel {$a^{-1}$} at 260 22
  \pinlabel {$a$} at 377 21
  
  \pinlabel {$a$} at 215 43
  \pinlabel {$-a^{-1}$} at 215 -7

         \endlabellist
\includegraphics[scale=.6]{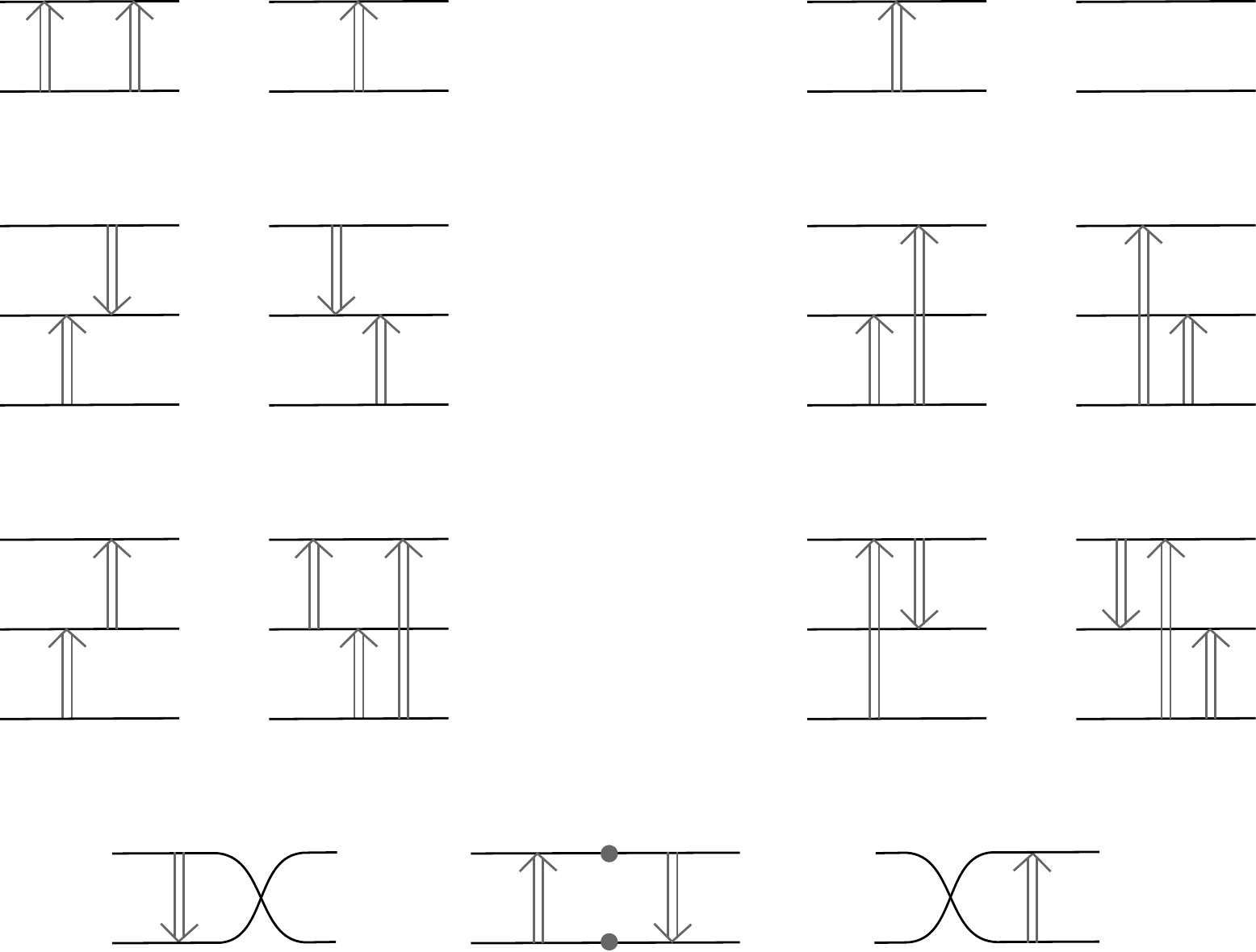}
\caption{ Some common pairs of exchangeable $n$-strand arrow configurations, with the corresponding matrices. All horizontal strands are oriented rightward. These exchanges can be thought of as local moves for crossover arrows. Two parallel crossover arrows next to each other can be combined into one, and 0 weighted arrows can be deleted (first row). The endpoints of crossover arrows can slide along the immersed curve track until they meet another crossover arrow. Arrows commute if their endpoints do not meet or if only their tails or only their heads meet (second row). If the head of one arrow passes the tail of another, a new arrow coming from the composition of the two must be added (third row). Finally, a pair of opposing arrows can be replaced with an arrow and a crossing (fourth row). In the last row, weighted basepoints are added to the train tracks between the two crossover arrows on each strand.}
\label{fig:local-moves}
\end{figure}
 
In Section \ref{sec:simple-curves} we will give an algorithm for simplifying train tracks representing complexes over $\sRhat$ using arrow slide moves. The following $n$-strand arrow configuration replacement will be used repeatedly in this arrow sliding algorithm; this lemma is essentially \cite[Lemma 31]{HRW}, but with the matrix language introduced above we give a much simpler proof.
\begin{lemma}\label{lem:sort-arrows}
Given an $n$-strand arrow configuration, fix any orderings $<_L$ and $<_R$ on the left endpoints and right endpoints, respectively. The $n$-strand configuration is equivalent to an $n$-strand configuration which can be divided into three regions, with all crossings between strands and all weighted basepoints in the middle region, and all crossover arrows in the left or right region moving in increasing order with respect to the relevant ordering on endpoints of strands $<_L$ or $<_R$.
\end{lemma}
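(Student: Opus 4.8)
The plan is to translate everything into the language of matrices over $\F$ (or $\F[W]$ with powers suppressed by gradings) as set up just above, and argue by an explicit factorization of the matrix associated to the given $n$-strand arrow configuration. Recall that to the configuration we associate an invertible matrix $N \in GL_n(\F)$, and two configurations are equivalent whenever they determine the same matrix; moreover, as established above, any $n$-strand configuration realizing a prescribed matrix can be built by concatenating elementary pieces (single crossover arrows, realizing elementary matrices $A_{ij}^c$; single crossings, realizing transpositions $T_{ij}$; and weighted basepoints, realizing diagonal matrices), and the matrix of a concatenation is the product of the matrices. So it suffices to show that $N$ admits a factorization $N = L \cdot P \cdot R$ where $L$ is a product of elementary matrices $A_{ij}^c$ that, read left to right, are ``increasing'' with respect to $<_L$; $R$ is a similar product increasing with respect to $<_R$; and $P$ is a product of a permutation matrix (built from crossings $T_{ij}$) and a diagonal matrix (built from weighted basepoints). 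Each factor is then realized by a region of train track, and the three regions concatenated give the desired configuration; any leftover diagonal factors can be absorbed into weighted basepoints in the middle region.

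The key steps, in order: (1) Fix coordinates so that $<_L$ and $<_R$ are the standard order $1 < 2 < \cdots < n$ on left and right endpoints respectively (relabeling strands on each side independently is harmless — it only conjugates by permutation matrices which we fold into $P$). (2) Run a Bruhat-type decomposition of $N \in GL_n(\F)$: write $N = L_0 \, w \, D \, R_0$ with $L_0$ lower unitriangular, $R_0$ upper unitriangular, $D$ diagonal, and $w$ a permutation matrix, and check that one can further arrange $L_0$ and $R_0$ to have the ``disjoint support'' needed so the corresponding crossover arrows can genuinely be drawn increasing without collisions. Concretely, $L_0$ being lower unitriangular means it factors as a product of $A_{ij}^c$ with $i > j$, and these can be ordered so that the sequence of $(i,j)$'s is increasing — this is exactly the statement that the arrows in the left region move in increasing order with respect to $<_L$. (3) Realize each of the three algebraic factors as a train-track region: $L_0$ by crossover arrows in the left region, $w$ by a braid of crossings and $D$ by weighted basepoints in the middle region, $R_0$ by crossover arrows in the right region. (4) Invoke the $n$-strand replacement proposition (the one immediately preceding this lemma) to conclude that swapping the original configuration for this factored one yields an equivalent (modulo $W^m$, if working over a quotient) train track.

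The main obstacle I expect is step (2): getting from an abstract factorization of $N$ into elementary matrices to one in which the crossover arrows can be drawn in a fixed increasing order without the ordering forcing unwanted interactions. Two subtleties arise. First, a generic invertible $N$ is not unitriangular on either side, so one genuinely needs the permutation $w$ in the middle, and one must be careful that the arrows on the left are placed \emph{before} the crossings (so they act on the ``input'' indexing) while the arrows on the right come \emph{after} — the orderings $<_L$ and $<_R$ refer to the endpoint orderings on the two sides of the whole configuration, not to a common indexing, and reconciling this is where the permutation bookkeeping lives. Second, when several elementary matrices $A_{ij}^c$ with the same $j$ (or same $i$) appear, the corresponding arrows share an endpoint on a strand and the commutation relations from Figure~\ref{fig:local-moves} (third row) must be used to sort them into increasing order, possibly introducing composite arrows; one must verify this sorting terminates and does not disturb the already-sorted part. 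Once the combinatorics of this sorting is pinned down, the rest is a direct application of the replacement proposition and is routine. (A shortcut worth checking: because we only need \emph{some} equivalent configuration of the stated form, we have complete freedom in choosing the factorization, so a clean inductive argument — peel off one strand at a time, clearing its row and column with arrows on the appropriate side — may sidestep most of the sorting bookkeeping entirely, and this is likely the cleanest route to write up.)
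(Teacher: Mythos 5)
Your proposal is correct and takes essentially the same route as the paper: encode the configuration as a matrix and appeal to the $LDPU$ (Bruhat-type) factorization into lower-unitriangular, diagonal, permutation, and upper-unitriangular parts, realizing each factor as a region of the train track and invoking the replacement proposition. The two subtleties you flag both dissolve. For the first (reconciling $<_L$ and $<_R$ with a common matrix indexing), the paper simply indexes the right endpoints by $<_R$ and the left endpoints by the \emph{reverse} of $<_L$; with this convention, a lower-triangular elementary factor $A^c_{ij}$ with $j<i$ automatically corresponds to an arrow from strand $i$ to $j$ with $i<_L j$, i.e.\ already increasing, and similarly upper-triangular factors are $<_R$-increasing on the right. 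For the second, there is no sorting to do: the lemma only asserts that each individual arrow in the left (resp.\ right) region points in the $<_L$-increasing (resp.\ $<_R$-increasing) direction, not that the arrows appear along the strands in any particular sequence, so decomposing $L$ and $U$ as products of elementary matrices in any order suffices and the commutation moves from Figure~\ref{fig:local-moves} are not needed.
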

\begin{proof}
When encoding the $n$-strand arrow configuration by a matrix, we will index the right endpoints with respect to the ordering $<_R$ and the left endpoints with respect to the opposite of the ordering $<_L$. That is, we number the right endpoints $1, \ldots, n$ such that $1 <_R \cdots <_R n$ and we number the left endpoints $1, \ldots, n$ so that $n <_L \cdots <_L 1$. Then the desired $n$-strand configuration replacement corresponds to the $LDPU$ decomposition of the matrix. It is a standard exercise in linear algebra that an invertible matrix $M$ with coefficients in a field can be decomposed as the product $LDPU$, where $L$ is lower triangular with 1's on the diagonal, $D$ is diagonal, $P$ is a permutation matrix, and $U$ is upper triangular with 1's on the diagonal. $L$ can be decomposed as a product of lower triangular elementary matrices, each of which correspond to an $n$-strand arrow configuration with a single arrow from a strand $i$ to a strand $j$ with $j < i$ and thus $i <_L j$ . Similarly, $U$ is a product of upper triangular elementary matrices which each correspond to a single crossover arrow from a strand $i$ to a strand $j$ with $i < j$, and thus $i <_R j$. $D$ corresponds to $n$ parallel strands, possibly containing weighted basepoints, and $P$ corresponds to a region of strands with crossings but no crossover arrows.
\end{proof}

In addition to replacing $n$-strand arrow configurations, which lie in neighborhoods disjoint from the marked points, another important arrow move is to slide a crossover arrow past a marked point. An arrow can always be moved over a marked point on its left side at the expense of multiplying the weight of the arrow by $W$. It is clear that the resulting train track is equivalent to the initial one: there is a one-to-one correspondence between bigons, and bigons involving the crossover arrow cover one less marked point but have an additional factor of $W$ from the weight of the crossover arrow, so the overall weight is unchanged.

Sliding over a marked point is particularly valuable when working over a quotient ring $F[W]/W^m$, since if the weight on a crossover arrow increases enough we can delete the arrow entirely. In particular, we say that a crossover arrow in a train track $\tracks$ with weight $cW^{m-1}$ is \emph{removable modulo $W^m$} if there is an arc in the complement of $\tracks$ from the left side of the crossover arrow to a marked point. Such an arrow can be deleted with no effect modulo $\tracks$ since sliding the arrow over the marked point gives an arrow with weight $cW^m$. On a similar note, we say any crossover arrow is \emph{removable} if there is an arc in the complement of $\tracks$ from the left side of the crossover arrow to the boundary of $\Sigma$ or to a puncture or infinite end of $\Sigma$. To see that deleting a removable crossover arrow preserves the equivalence type of the train track, note that any other train track being paired with may be homotoped off of the arc and then any bigon involving the crossover arrow would have to contain the entire arc, which is not possible.

Similar to sliding crossover arrows, we may also slide basepoints along the immersed curves in $\tracks$. If the the basepoint slides past an end of a crossover arrow, we need to modify the weight on the crossover arrow as in Figure \ref{fig:slide-weights} to ensure that weighted counts of polygons are unchanged. To see that this produces an equivalent train track, note that if a basepoint in $\tracks_0$ slides through a segment of $\tracks_1$, the complex $CF(\tracks_0, \tracks_1)$ changes by a change of basis as in move (c) from Figure \ref{fig:invariance-moves}. Moreover, two basepoints can be combined, multiplying their weights, and a basepoint with weight 1 can be deleted. Note that by applying these moves we can alway arrange that there is at most one basepoint on each $S^1$ component of the immersed multicurve in $\tracks$. We can eliminate all basepoints on immersed arcs or lines by sliding them to the ends until they cannot lie on any immersed polygons.

\begin{figure}
\labellist

  \huge
  \pinlabel {$\sim$} at 80 63 
  \pinlabel {$\sim$} at 80 15 
  \pinlabel {$\sim$} at 320 63 
  \pinlabel {$\sim$} at 320 15

  \normalsize
  \color{gray}
  \pinlabel {$1$} at  32 72
  \pinlabel {$c$} at  54 17
  \pinlabel {$a$} at  17 11
  \pinlabel {$ac$} at  122 17
  \pinlabel {$a$} at  144 11
  
  \pinlabel {$c$} at  294 17
  \pinlabel {$a$} at  257 41
  \pinlabel {$a^{-1}c$} at  368 17
  \pinlabel {$a$} at  384 41

\endlabellist
\includegraphics[scale=1]{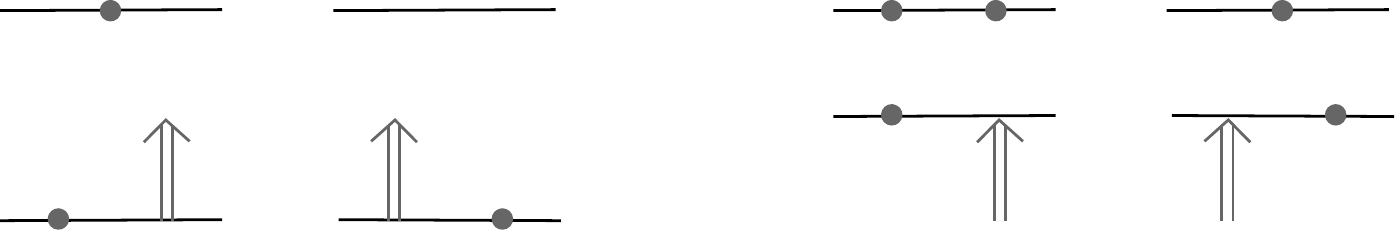}
\vspace{1 mm}
\caption{Moves used to slide weighted basepoints along immersed curves in $\tracks$. All horizontal segments are oriented rightward. Basepoints weighted by 1 can be freely added or removed, nearby basepoints can be replaced with a single basepoint weighted by the product of the weights, and basepoints can slide past the end of the arrows with an appropriate change to the arrow weights.}
\label{fig:slide-weights}
\end{figure}

\begin{remark}[Train tracks in doubly marked surfaces] Just as with immersed curves with bounding chains, for simplicity we have introduced train tracks in the setting of singly marked surfaces. However, these constructions all generalize immediately to doubly marked surfaces. In the doubly marked setting undirected edges still have weights in $\F$, but directed edges are weighted by elements of $\sRminus$. Train tracks can be equipped with a bigrading, and the powers of $U$ and $V$ associated with an undirected edge are determined by these gradings. In the polygon counting maps, we use the variables $U$ and $V$ to record how a polygon covers the $w$ and $z$ marked points, respectively. Setting either $U=1$ or $V=1$, and ignoring the corresponding grading, corresponds to ignoring one type of marked point.
\end{remark}

\section{Complexes from curves in $\strip$ or $\cylinder$}\label{sec:curves-in-strip}

In the Section \ref{sec:floer-theory} we defined Floer theory for immersed curves with bounding chains in arbitrary non-compact (doubly) marked surfaces. We now focus our attention on curves in particular marked surfaces, namely the infinite strip and infinite cylinder. By taking the Floer complex with a certain fixed curve, we will see that any decorated immersed curve in the marked strip determines a bigraded complex over $\sRminus$ and a decorated immersed curve in the infinite cylinder determines a bigraded complex with a flip map.

\subsection{Infinite marked strips and cylinders}

We first define and set notation for the relevant surfaces.  By the infinite marked strip, we mean the surface $\strip = \left[-\tfrac 1 2, \tfrac 1 2\right]\times\R$ equipped with infinitely many marked points occurring at $\left( 0, n + \tfrac 1 2 \right)$ for each integer $n$. Note that $\strip$ has two boundary components, $\partial_L \strip = \{-\tfrac 1 2 \}\times \R$ and $\partial_R \strip = \{\tfrac 1 2 \}\times \R$. At times we will consider the punctured surface $\pstrip$ obtained by removing the marked points in $\strip$. Another important variation we will consider is the doubly marked surface $\dmstrip$ obtained from $\strip$ by replacing each marked point with a pair of marked points: a $w$-marked point just to the right and a $z$-marked point just to the left of each marked point of $\strip$. Thus $\dmstrip$ has a family of $z$-marked points at $\left( -\epsilon, n+\tfrac 1 2 \right)$ and a second family of $w$-marked points at $\left( +\epsilon, n+\tfrac 1 2 \right)$ for integers $n$ and some very small $\epsilon$. The vertical line $\{0\}\times\R$ will play an important role; this line (in either $\strip$ or $\dmstrip$) will be denoted $\mu$. Note that $\mu$ passes through the marked points on $\strip$ and passes in between the $z$ and $w$ marked points for each pair of nearby marked points on $\dmstrip$. More generally, we will consider the vertical lines $\mu_{a} = \{a\}\times \R$, which are translations of $\mu$. In particular we will consider the lines $\mu_{2\epsilon} = \{2\epsilon\} \times \R$ and $\mu_{-2\epsilon} = \{-2\epsilon\} \times \R$, which pass just to the right or left, respectively, of all marked points in either $\strip$ or $\dmstrip$. The infinite cylinder $\cylinder$ is obtained by identifying the opposite edges of $\strip$; that is, $\cylinder$ is $(\R/\Z)\times \R$ with marked points at $(0, n+\tfrac 1 2)$ for $n$ in $\Z$. As with the strip, the doubly marked cylinder $\dmcylinder$ is obtained from $\cylinder$ by replacing each marked point with a $w$-marked points $\epsilon$ units to the right and a $z$-marked points $\epsilon$ units to the left. Removing each marked point in $\cylinder$ results in a punctured cylinder $\pcylinder$.

We will assume that the grading arcs starting from each marked point in $\dmstrip$ are disjoint from the line segment $[-\tfrac 1 2, \tfrac 1 2] \times \{0\}$, so that grading arcs approach the positive or negative end of the strip depending on whether the marked point has positive or negative height. We also assume the grading arcs are disjoint from $\mu$, $\mu_{2\epsilon}$, $\mu_{-2\epsilon}$; that is they lie in a neighborhood of $\mu$ and lie to the right of $\mu$ for arcs coming from $w$ marked points and to the left of $\mu$ for arcs coming from $z$ marked points. The grading arcs from marked points in the singly marked strip $\strip$ are the same as the arcs from $w$-marked points in $\dmstrip$ (preceded by the length $\epsilon$ horizontal arc from the marked point to the corresponding $w$ marked point). The grading arcs on $\cylinder$ and $\dmcylinder$ come from those in $\strip$ or $\dmstrip$ after identifying opposite edges.

\subsection{Bigraded complexes from curves in $\strip$} 
Consider a compact decorated immersed multicurve $(\Gamma, \bchain)$ in $\strip$, where $\Gamma$ is a weighted and graded immersed multicurve in $\strip$ disjoint from the marked points and $\bchain$ is a bounding chain. We will assume every component of $\Gamma$ intersects $\mu$. Given such a decorated curve, we will define a bigraded complex $C(\Gamma, \bchain)$ over $\sRminus$. To do so we first observe that, since $\Gamma$ avoids a sufficiently small neighborhood around every marked point, $(\Gamma, \bchain)$  can also be viewed as a decorated multicurve in the doubly marked surface $\dmstrip$. The bounding chain $\bchain$, when viewed as a linear combination of points in $\sI_{\le 0}$, is unchanged; note, however, that in the corresponding element $\overline\bchain$ of $CF(L)$ we must add appropriate powers of $V$ when passing to the doubly marked surface. By slight abuse of notation we will let $(\Gamma, \bchain)$ denote both the decorated curve in $\strip$ and the corresponding decorated curve in $\dmstrip$.

The curve $(\Gamma, \bchain)$ in $\strip$ comes equipped with a grading function $\tilde\tau: \Gamma \to \R$. This will give rise to two grading functions $\tilde\tau_w$ and $\tilde\tau_z$ on the curve in $\dmstrip$. We define $\tilde\tau_w$ to be identically equal to $\tilde\tau$; note that this is possible since the relevant sets of grading arcs agree. Recall that since the mod 2 grading is determined by the orientation on $\Gamma$, $\tilde\tau_z$ and $\tilde\tau_w$ must differ by an even integer at any point. We will choose the grading function $\tilde\tau_z$ so that at any point $p$ in $\Gamma \cap \mu$ that falls between the marked points at $\left(0, n - \tfrac 1 2 \right)$ and $\left(0, n + \tfrac 1 2 \right)$, $\tilde\tau_z(p) = \tilde\tau(p) + 2n$. This rule defines the grading function $\tilde\tau_z$ on all of $\Gamma$, since by assumption every component of $\Gamma$ intersects $\mu$. To see that this rule is consistent note that if two points $p_1$ and $p_2$ in $\Gamma \cap \mu$ are connected by a path in $\Gamma$ lying on the right side of $\mu$, at heights $n_1$ and $n_2 > n_1$, the gradings $\tilde\tau_w$ and $\tilde\tau_z$ change in the same way along the path from $p_1$ to $p_2$ except that $\tilde\tau_w$ jumps down by an additional $2(n_2-n_1)$ since the path crosses $n_2-n_1$ grading arcs from $w$-marked points. Similarly, if there is a path from $p_1$ to $p_2$ on the left side of $\mu$ the signed number of times this path crosses grading arcs from $z$-marked points is $n_1 - n_2$, and so $\tilde\tau_z$ increases $2(n_2-n_1)$ more than $\tilde\tau_w$ does along this path. In either case, we have that
$$\tilde\tau_z(p_2) = \tilde\tau_z(p_1) + (\tilde\tau_w(p_2) - \tilde\tau_w(p_1)) + 2(n_2 - n_1) = (\tilde\tau_z(p_1) - \tilde\tau_w(p_1) - 2n_1) + \tilde\tau_w(p_2) + 2n_2 = \tilde\tau_w(p_2) + 2n_2.$$

Each self-intersection point $p$ of $\Gamma$ has a degree $\deg(p)$ which is the even one of the gradings of the two intersection points of $\Gamma \cap \Gamma'$ associated with $p$. When viewed as a curve in $\dmstrip$, each intersection point $p$ has a bidegree $(\deg_w(p), \deg_z(p))$. It is clear that $\deg_w(p) = \deg(p)$, since $\tilde\tau_w$ agrees with $\tilde\tau$ by definition. It can also be checked that $\deg_z(p) = \deg(p)$ for any self-intersection point $p$; this is because the curve never passes in between a pair of $z$ and $w$ marked points. Thus we will simply speak of the degree of $p$ rather than the bidegree, even when working in the doubly marked surface.

We now define $C(\Gamma, \bchain)$ to be the Floer complex of $(\Gamma, \bchain)$ with $\mu$ in $\dmstrip$. We understand $\mu$ to be equipped with the trivial bounding chain (as $\mu$ has no self-intersection points) and exclude this from the notation. We also equip $\mu$ with the constant bigrading $\left(\tfrac 1 2, \tfrac 1 2\right)$; taken mod 2, this corresponds to orienting $\mu$ upwards. Recall that the Floer complex is defined when at most one of the input curves is non-compact; here $\mu$ is non-compact, but $\Gamma$ is compact. We will always assume that $\Gamma$ has transverse self-intersection with no triple points and that $\Gamma$ is transverse to $\mu$; in fact, we will assume that $\Gamma$ is perpendicular to $\mu$ at every intersection point.  We will also assume $\Gamma$ bounds no immersed disks. Since $\mu$ does not contain an immersed circle there can be no (generalized) immersed annuli, so $(\Gamma, \bchain)$ and $\mu$ are in admissible position.

We will sometimes weaken the assumption that $\bchain$ is a bounding chain and consider arbitrary collections of turning points. Recall that the Floer complex can be constructed in the same way except that $m_1^\bchain$ is no longer guaranteed to be a differential, so we obtain a precomplex rather than a complex.

\begin{definition}
For an immersed multicurve $\Gamma$ with a collection of turning points $\bchain$ in $\strip$, which we also view as a decorated curve in $\dmstrip$, $C(\Gamma, \bchain)$ is the bigraded precomplex $CF((\Gamma,\bchain), \mu)$ over $\sRminus$. If $\bchain$ is a bounding chain then $C(\Gamma, \bchain)$ is a bigraded complex.
\end{definition}

We will adopt a similar definition when using the language of train tracks rather than decorated curves. That is, for an immersed train track $\tracks$ that consists of an immersed curve with crossover arrows, $C(\tracks)$ will denote the bigraded precomplex $CF(\tracks, \mu)$ over $\sRminus$ coming from the Floer complex of train tracks in $\dmstrip$, and if $\tracks$ is unobstructed then this is a complex. The differential on $C(\tracks)$ will be denoted $\partial^\tracks$.

The construction of $C(\Gamma, \bchain)$ is a special case of the more general definition of Floer complexes from Section \ref{sec:floer-theory}, but we recall the key features here. There is one generator of $C(\Gamma, \bchain)$ for each intersection of $\Gamma$ with the vertical line $\mu$. Each of these generators has a bigrading $(\gr_w, \gr_z)$; it follows from Definiton \ref{def:grading} and the fact that the grading on $\mu$ is $\left(\tfrac 1 2, \tfrac 1 2\right)$ that $\gr_w(x) = -\tilde\tau_w(p_x)$ and $\gr_z(x) = -\tilde\tau_z(p_x)$, where $p_x$ is the intersection point of $\Gamma \cap \mu$ corresponding to the generator $x$.

Each generator $x$ has an Alexander grading
$$A(x) = \frac{ \gr_w(x) - \gr_z(x) }{2} = \frac{ \tilde\tau_z(p_x) - \tilde\tau_w(p_x) }{2}.$$
By the definition of $\tilde\tau_z$, we have that $A(x) = n$ if $p_x$ falls on the segment of $\mu$ between $(0, n - \tfrac 1 2)$ and $(0, n + \tfrac 1 2)$, so the Alexander grading records the discrete height of the corresponding intersection point. Note that the Alexander grading defines a partial ordering on the generators of $C(\Gamma, \bchain)$. The actual height of intersection points refines this to a total ordering on generators that will be useful in future arguments; we will say that $x_1 < x_2$ if $p_{x_1}$ occurs below $p_{x_2}$. Thus $(\Gamma, \bchain)$ defines not just the precomplex $C(\Gamma, \bchain)$ but also a preferred choice of ordered basis for that precomplex.

The map $m_1^\bchain$ counts generalized immersed bigons with left boundary on $\mu$ and right boundary a polygonal path in $\Gamma$ consistent with $\bchain$. Each generalized bigon $u$ contributes with a coefficient given by the product of the following;
\begin{itemize}
\item $U^{n_w(u)} V^{n_z(u)}$ where $n_w(u)$ and $n_z(u)$ are the multiplicities with which $u$ covers the $w$ and $z$ marked points;
\item the coefficient in $\overline\bchain$ for each false corner in the boundary, or the opposite of the coefficient if the boundary orientation on $u$ opposes the orientation on $\Gamma$;
\item the weight $c$ of each basepoint of $\Gamma$ passed along $\partial u$, or $c^{-1}$ if the boundary orientation of $u$ opposes the orientation on $\Gamma$; and 
\item an additional factor of $(-1)$ if the the orientation on $\partial u$ opposes the orientation on $\Gamma$.
\end{itemize}

\begin{figure}
\labellist
  \scriptsize
  \pinlabel {$a$} at  184 139
  \pinlabel {$b$} at  184 124
  \pinlabel {$c$} at  184 94
  \pinlabel {$d$} at  184 79
  \pinlabel {$e$} at  184 64
  \pinlabel {$f$} at  184 34
  \pinlabel {$g$} at  184 19
  
  \color{blue}
  \pinlabel {$\mu$} at  176 5
  
  \color{black}
  \scriptsize
  \pinlabel {$z$} at 173 109
  \pinlabel {$w$} at 189 109
  \pinlabel {$z$} at 173 49
  \pinlabel {$w$} at 189 49

  \color{red}
  \pinlabel {$-1$} at  11 81
  \pinlabel {$1$} at  69 85
  \pinlabel {$-1$} at  146 81
  \pinlabel {$1$} at  204 85
  
  \color{black}
  \pinlabel {$(0,2)$} at  46 140
  \pinlabel {$(0,2)$} at  46 125
  \pinlabel {$(1,1)$} at  46 95
  \pinlabel {$(1,1)$} at  46 80
  \pinlabel {$(1,1)$} at  46 65
  \pinlabel {$(2,0)$} at  46 35
  \pinlabel {$(2,0)$} at  46 20
  
\endlabellist
 \includegraphics[scale=1]{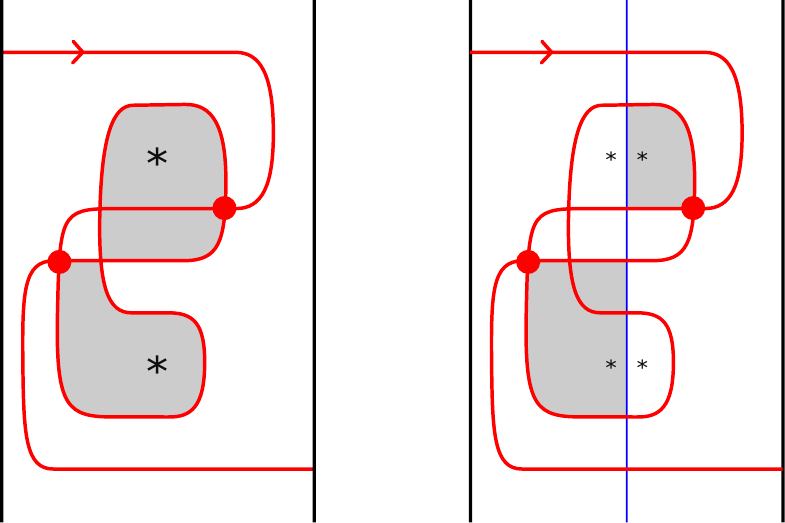}
\hspace{10 mm}
\raisebox{6mm}{\begin{tikzpicture}[scale = 1.5]
\node (a) at (0,2.2) {$a$};
\node (b) at (0,1.8) {$b$};
\node (c) at (2.2,2.2) {$c$};
\node (d) at (1.8,1.8) {$d$};
\node (e) at (0,0) {$e$};
\node (f) at (1.8,0) {$f$};
\node (g) at (2.2,0) {$g$};

\scriptsize
\draw[->] (c) to node[above] {$-U$} (a);
\draw[->] (c) to node[above left, pos = .7] {$U$} (b);
\draw[->] (d) to node[below] {$-U$} (b);
\draw[->] (f) to node[below] {$U$} (e);

\draw[->] (d) to node[left] {$-V$}( f);
\draw[->] (d) to node[right, pos = .7] {$V$} (g);
\draw[->] (c) to node[right, pos = .3] {$V$} (f);
\draw[->] (b) to node[left] {$-V$} (e);

\end{tikzpicture}}
 \caption{Left: A decorated immersed curve $(\Gamma, \bchain)$ in $\strip$. $\Gamma$ has a single component and $\bchain$ is the sum of the two self-intersection points marked, with coefficients $1$ and $-1$ as indicated. To see that $\bchain$ is a bounding chain, note that the two shaded monogons (and another similar pair of monogons) make canceling contributions to $m^\bchain_0$. The value of the bigrading function $(\tilde\tau_w, \tilde\tau_z)$ is $(0,2)$ at the endpoint on the left boundary of $\strip$; this determines the bigrading function on all of $\Gamma$, but for convenience the value of $(\tilde\tau_w, \tilde\tau_z)$ is indicated near each intersection of $\Gamma$ with the vertical line through the marked points. Center: $(\Gamma,\bchain)$ and $\mu$ in the doubly marked strip $\dmstrip$, with two contributions to the Floer complex shaded. Right: The complex $C(\Gamma, \bchain)$.}
 \label{fig:complex-from-curve-example}
\end{figure}

\begin{example}\label{ex:complex-from-curves}
Consider the decorated immersed curve in Figure \ref{fig:complex-from-curve-example}. The complex $C(\Gamma, \mu)$ has  seven generators $a, b, c, d, e, f$, and $g$. Generators $a$ and $b$ both have bigrading $(0,-2)$ and Alexander grading 1, generators $c$, $d$, and $e$ have bigrading $(-1,-1)$ and Alexander grading 0, and generators $f$ and $g$ have bigrading $(-2,0)$ and Alexander grading $-1$. There are eight generalized bigons contributing to $\partial = m^\bchain_1$; six of these are true bigons and two are triangles with one false corner (the two triangles are shaded in Figure \ref{fig:complex-from-curve-example}). This gives
$$\partial a = \partial e = \partial g = 0, \quad \partial b = -Ve, \quad \partial c = -Ua + Ub + Vf, \quad \partial d = -Ub - Vf + Vg, \quad \text{and} \quad \partial f = Ue.$$
\end{example}

Just as $C = C(\Gamma, \bchain)$ is the Floer complex of $(\Gamma, \bchain)$ with $\mu$, we will define $C_{\pm 2\epsilon} = C_{\pm 2\epsilon}(\Gamma, \bchain)$ to be the Floer complex $CF( (\Gamma, \bchain), \mu_{\pm 2\epsilon})$. These complexes are closely related to $C$: the localized versions $C^\infty_{\pm 2\epsilon}$ are isomorphic to $C^\infty$ but have a different natural choice of basis. In particular, if $\{x_i\}_{i=1}^n$ is the basis for $C$ (and thus for $C^\infty$) then the generators of $C_{2\epsilon}$ can be identified with $\{ U^{A(x_i)} x_i \}_{i=1}^n$ and the generators of $C_{-2\epsilon}$ can be identified with $\{ V^{-A(x_i)} x_i \}_{i=1}^n$. To see this, observe that essentially the same bigons contribute to the differential in all three complexes, but a bigon from $x_i$ to $x_j$ covers $A(x_j) - A(x_i)$ fewer $w$ marked points when considered in $C_{2\epsilon}$ than in $C$ and covers $A(x_i) - A(x_j)$ fewer $z$ marked points when considered in $C_{-2\epsilon}$ than in $C$. Note that all generators for $C_{\pm 2\epsilon}$ have Alexander grading zero, and the given basis generates the Alexander grading zero summand of $C_{\pm 2\epsilon}$ as an $\F[W]$-module. When viewed as a complex over $\F[W]$, $C_{\pm 2\epsilon} |_{A=0}$ is the same as the Floer complex $CF((\Gamma, \bchain), \mu_{\pm 2\epsilon})$ taken in the singly marked strip $\strip$. 

Note that $C_{2\epsilon}|_{A=0}$, viewed as a complex over $\F[W]$, agrees with is the horizontal complex $C^h$ of $C$. Similarly $C_{-2\epsilon}|_{A=0}$ is the vertical complex of $C^v$ of $C$. By the invariance of the Floer complex, we can slide line $\mu_{2\epsilon}$ rightward or we can slide the line $\mu_{-2\epsilon}$ leftward without changing the Floer complex with $(\Gamma,\bchain)$ up to homotopy equivalence. In particular the Floer complex in $\strip$ with the right boundary of $\strip$, $CF((\Gamma,\bchain), \mu_{\tfrac 1 2})$, is homotopy equivalent to the horizontal complex of $C$ and $CF((\Gamma,\bchain), \mu_{-\tfrac 1 2})$ is homotopy equivalent to the vertical complex. In many cases the intersection number with $\Gamma$ decreases as $\mu$ is slid to the boundaries of the strip to the point that $CF((\Gamma,\bchain), \mu_{\tfrac 1 2})$ and $CF((\Gamma,\bchain), \mu_{-\tfrac 1 2})$ are reduced. In this case the generators of these complexes are the generators of the horizontal and vertical homology of $C$. For instance, in Example \ref{ex:complex-from-curves} the single intersection of $\Gamma$ with the right or left boundary of $\strip$ correspond $g$ or $a$, respectively, the generators of horizontal or vertical homology of $C$. We caution that in some cases the process of sliding $\mu_{\pm 2\epsilon}$ out to $\partial\strip$ involves basis changes to the corresponding complex, so in general the generators remaining at the boundary may not be a subset of the original generators.

\subsection{Complexes and flip maps from curves in $\cylinder$}\label{sec:flip-maps-from-curves}

A decorated immersed curve $(\Gamma, \bchain)$ in the marked cylinder $\cylinder$ determines a bigraded complex $C$ as well as a flip map $\Psi: C^\infty \to C^\infty$. The complex $C$ is defined to be the Floer complex in $\dmcylinder$ of $(\Gamma, \bchain)$ with $\mu$. The flip map is defined using a curve $\mu_{\Psi}$ constructed from $\mu_{2\epsilon}$ and $-\mu_{-2\epsilon}$; fixing a sufficiently large height that the compact curve $\Gamma$ does not reach, we truncate $\mu_{2\epsilon}$ and $-\mu_{-2\epsilon}$ by cutting off the portion above that height and then connect the two loose ends by a cap. We now define $C_{\Psi}$ to be the Floer complex of $(\Gamma, \bchain)$ and $\mu_{\Psi}$ in the singly marked cylinder $\cylinder$. As an $\F[W]$-module, we have
$$C_{\Psi} \cong C_{2\epsilon} |_{A=0} \oplus C_{-2\epsilon} |_{A=0} [-1].$$
Bigons contributing to the differential on $C_{\Psi}$ are of three types: the $\mu_{\Psi}$ boundary of the bigon either is contained in the $\mu_{2\epsilon}$ portion of $\mu_{\Psi}$, is contained in the $-\mu_{-2\epsilon}$ portion of $\mu_{\Psi}$, or involves the cap in $\mu_{\Psi}$. Bigons of the first two types recover the differential on $C_{2\epsilon} |_{A=0}$ and $C_{-2\epsilon} |_{A=0} [-1]$, so $C_{\Psi}$ is the mapping cone of a map defined by counting bigons of the third type; we define $\Psi: C_{2\epsilon}|_{A=0} \to C_{-2\epsilon}|_{A=0}$ to be this map. More precisely, counting bigons from intersection points on the $\mu_{2\epsilon}$ part of $\mu_{\Psi}$ to intersection points on the $\mu_{-2\epsilon}$ part of $\mu_{\Psi}$ defines a degree $-1$ map $\widetilde\Psi$ from $C_{2\epsilon} |_{A=0}$ to $C^-_{-2\epsilon} |_{A=0} [-1]$, which may also view as a degree 0 map from $\widetilde\Psi: C_{2\epsilon} |_{A=0} \to C^-_{-2\epsilon} |_{A=0}$. We define $\Psi$ in terms of $\widetilde\Psi$ so that
$$\widetilde\Psi(x)  = (-1)^{\gr_w(x)} \Psi(x).$$
That is, $\Psi$ counts bigons with the usual conventions except with an extra minus sign for bigons whose boundary orientation opposes the orientation of $\Gamma$. We remark that the distinction between $\Psi$ and $\widetilde\Psi$ can be ignored when working with $\Z/2\Z$ coefficients; in general the extra signs are needed so that the differential on the Floer complex of $(\Gamma, \bchain)$ and $\mu_{\Psi}$ is given by
$$\left( \begin{array}{cc}
\partial_{C_{2\epsilon}|_{A=0}} & 0 \\
\widetilde\Psi & \partial_{C_{-2\epsilon}|_{A=0}} \end{array} \right)$$
which after a change of basis replacing $x$ with $-x$ for $x$ in $C_{-2\epsilon}|_{A=0}$ with odd grading becomes
$$\left( \begin{array}{cc}
\partial_{C_{2\epsilon}|_{A=0}} & 0 \\
\Psi & -\partial_{C_{-2\epsilon}|_{A=0}} \end{array} \right),$$
the differential on $\text{Cone}(\Psi)$.  The map $\Psi$ defined in this way on $C_{2\epsilon} |_{A=0}$ can be uniquely extended as a skew $\sRminus$-module homomorphism to a map $\Psi: C^\infty_{2\epsilon} \to C^\infty_{-2\epsilon}$. Finally, using the identification above we can view $\Psi$ as a map from $C^\infty$ to $C^\infty$.

\begin{proposition}
$\Psi$ is a flip-filtered chain homotopy equivalence and has skew-degree $(0,0)$.
\end{proposition}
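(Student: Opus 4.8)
The plan is to verify the two claimed properties — skew-degree $(0,0)$ and being a flip-filtered chain homotopy equivalence — separately, in each case reducing to facts already established about the Floer complexes of curves in the strip.

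\emph{Skew-degree.} First I would compute the bidegree of the bigon-counting map $\widetilde\Psi: C_{2\epsilon}|_{A=0} \to C_{-2\epsilon}|_{A=0}[-1]$. Bigons with one boundary on the cap portion of $\mu_\Psi$ and corners at an intersection point $x$ of $\Gamma$ with the $\mu_{2\epsilon}$ part and an intersection point $y$ with the $\mu_{-2\epsilon}$ part are, from the point of view of the singly marked cylinder, just bigons between $\Gamma$ and $\mu_\Psi$; since $\mu_\Psi$ has constant bigrading and the overall grading behaves as in Proposition \ref{prop:degree-of-m_k} (with $k=1$, so $m_1$ has degree $-1$), the map $\widetilde\Psi$ has degree $-1$ with respect to the single grading on $C_\Psi$. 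Passing back to the doubly marked bigrading, I would observe that on $C_{2\epsilon}|_{A=0}$ the relevant grading is $\gr_w$ (the Alexander-graded summand makes $\gr_w = \gr_z$ there, but the natural identification with $C^h$ uses $\gr_w$, since $C_{2\epsilon}|_{A=0}$ is the horizontal complex $C^h$), while on $C_{-2\epsilon}|_{A=0}$ the relevant grading is $\gr_z$ (it is the vertical complex $C^v$). Since $\mu_{2\epsilon}$ passes to the right of all marked points and $\mu_{-2\epsilon}$ to the left, a bigon over the cap covers $z$ marked points on the $\mu_{2\epsilon}$ side and $w$ marked points on the $\mu_{-2\epsilon}$ side in a way that forces the $U$- and $V$-exponents to be exactly those required for a skew-degree $(0,0)$ map; this is the same bookkeeping as in Equation \eqref{eq:UV-exponents} and the definition of a flip map from curves in Section \ref{sec:flip-maps-from-curves}. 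I would then note the extension to a skew $\sRminus$-module homomorphism preserves skew-degree $(0,0)$, as observed in the analogous algebraic statement near Proposition \ref{prop:flip-map-horiz-basis}.

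\emph{Flip-filtered.} Next I would show $\Psi$ is flip-filtered, i.e. filtered with respect to the $V$-filtration on the source and the $U$-filtration on the target. This is where the positions of $\mu_{2\epsilon}$ and $\mu_{-2\epsilon}$ do the work: a bigon contributing to $\widetilde\Psi$ from a generator of $C_{2\epsilon}|_{A=0}$ to a generator of $C_{-2\epsilon}|_{A=0}$ has nonnegative multiplicity $n_w \ge 0$ at the $w$ marked points, hence the $U$-exponent in its coefficient is nonnegative; translating through the identifications with $C^h$ and $C^v$, this is exactly the statement that $\Psi$ takes a generator to a sum of terms $c\,U^a V^b y$ with $a \ge 0$, which is the definition of flip-filtered from Section \ref{sec:knot-floer}. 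That $\Psi$ is a chain map follows because $C_\Psi$ is, by construction, a chain complex (the differential on the Floer complex of $(\Gamma,\bchain)$ with $\mu_\Psi$ squares to zero since both curves carry bounding chains), and the block form displayed just above the proposition identifies it with $\mathrm{Cone}(\Psi)$; $d^2 = 0$ on a mapping cone forces $\Psi$ to be a chain map.

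\emph{Homotopy equivalence.} The remaining point is that $\Psi$ is a chain homotopy equivalence, with a reverse flip-filtered homotopy inverse $\bar\Psi$. The natural candidate is the map built the same way from the reflected curve $-\mu_\Psi$, obtained by capping $\mu_{-2\epsilon}$ to $-\mu_{2\epsilon}$; I would argue that $\bar\Psi$ counts bigons over this cap and is reverse flip-filtered by the symmetric version of the filtration argument. To see $\bar\Psi \circ \Psi \simeq \id$ and $\Psi \circ \bar\Psi \simeq \id$ (filtered chain homotopic), I would invoke the invariance of the Floer complex under homotopies of curves in the cylinder (Section \ref{sec:invariance-of-Floer-homology}): sliding $\mu_{2\epsilon}$ rightward and $\mu_{-2\epsilon}$ leftward, the two capped lines can be homotoped so that the composition of the two bigon-counting maps is realized as a single triangle/polygon count, which the $\Ainfty$-relations (Proposition \ref{prop:Ainfty-relations-enhanced}) organize into $\id + (\text{homotopy terms})$. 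Alternatively — and this is probably the cleaner route — I would note that by the same sliding argument $C_{2\epsilon}$ and $C_{-2\epsilon}$ become reduced at the boundary with generators the horizontal and vertical homology of $C$, so $\widetilde\Psi$ between reduced complexes is a quasi-isomorphism exactly when it induces an isomorphism on homology; and the mapping cone $C_\Psi$ is the Floer complex of $\Gamma$ with a \emph{nullhomotopic} curve $\mu_\Psi$ (it bounds, since it is a cap that can be pushed to the high end of the cylinder past the compact curve $\Gamma$), hence acyclic, which forces $\Psi$ to be a quasi-isomorphism, and a filtered quasi-isomorphism of filtered complexes that are finitely generated and bounded is a filtered homotopy equivalence.

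\emph{Main obstacle.} The routine parts are the degree and filtration bookkeeping, which mirror the algebraic definitions verbatim. The genuinely delicate step is the homotopy-equivalence claim: one must be careful that $\mu_\Psi$ is really nullhomotopic in $\cylinder$ (the cap can be pushed off to infinity, so $C_\Psi = CF((\Gamma,\bchain),\mu_\Psi)$ is contractible), and that contractibility of a mapping cone of \emph{filtered} complexes upgrades to a \emph{filtered} — indeed flip-filtered — homotopy equivalence rather than merely a quasi-isomorphism. Establishing the filtered statement, i.e. producing the reverse flip-filtered $\bar\Psi$ and the flip-filtered homotopies witnessing $\bar\Psi\Psi\simeq\id$ and $\Psi\bar\Psi\simeq\id$, is the part that will require the most care, and I expect to handle it via the invariance moves of Section \ref{sec:invariance-of-Floer-homology} applied to homotopies of $\mu_\Psi$ that respect the left/right positioning relative to the marked points (so that the $U$- and $V$-filtrations are preserved throughout).
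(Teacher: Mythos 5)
Your plan matches the paper's argument closely, and your two main verification steps (flip-filtered: bigons over the cap cover $n_w\ge 0$ marked points, hence nonnegative $U$-exponent; chain map: $\partial^2=0$ on $C_\Psi=\mathrm{Cone}(\Psi)$) are exactly the paper's. Two points of comparison are worth flagging.

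First, your skew-degree argument is more roundabout than the paper's and contains a factual slip: you assert that $\mu_\Psi$ has ``constant bigrading,'' but by construction it carries bigrading $(\tfrac 1 2,\tfrac 1 2)$ on the $\mu_{2\epsilon}$ portion and $(-\tfrac 1 2,-\tfrac 1 2)$ on the $-\mu_{-2\epsilon}$ portion (these interpolate across the cap). The paper reads the skew-degree directly off these two values: $\Psi$ on $C_{2\epsilon}|_{A=0}$ preserves the bigrading, and since Alexander grading there is zero, swapping the components is a no-op. Your route via Proposition \ref{prop:degree-of-m_k} and the exponent bookkeeping of Equation \eqref{eq:UV-exponents} can be made to work, but it is more machinery than needed and the ``constant bigrading'' premise has to be corrected before the argument is sound.

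Second, on the homotopy-equivalence claim you have, if anything, been more scrupulous than the paper. The paper's proof observes that $\mu_\Psi$ can be homotoped off the compact curve $\Gamma$, so $C_\Psi$ is acyclic, and concludes ``it follows that $\Psi$ is a chain homotopy equivalence'' — but says nothing more about producing a \emph{reverse flip-filtered} inverse $\bar\Psi$ and \emph{filtered} homotopies, which the definition of a flip-filtered chain homotopy equivalence requires. You correctly single this out as the delicate step. Your proposed remedies — either explicitly build $\bar\Psi$ by capping $\mu_{-2\epsilon}$ to $-\mu_{2\epsilon}$ and assemble the composite via the $\Ainfty$-relations, or else invoke the general fact that a filtered quasi-isomorphism between filtered, finitely generated, bounded free complexes is a filtered homotopy equivalence — are both reasonable ways to close a gap the paper leaves implicit, with the latter probably being the cleaner of the two.
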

\begin{proof}
$\Psi$ is a skew $\sRminus$-module homomorphism by construction and it is a chain map because $\partial^2 = 0$ on $C_{\Psi} = \text{Cone}(\Psi)$. It is clear that $\Psi$ is flip-flip filtered since it takes each generator of $C_{+2\epsilon}$ (which has $V$-filtration level zero as an element of $C^\infty$) to a sum of terms that are each a nonnegative power of $W$ times a generator $C_{-2\epsilon}$ (which has $U$-filtration level zero). The complex $C_{\Psi}$ clearly has trivial homology, since $\mu_{\Psi}$ can be homotoped off of $\Gamma$; it follows that $\Psi$ is a chain homotopy equivalence. Since $\mu_{\Psi}$ has bigrading $(\tfrac 1 2, \tfrac 1 2)$ on the portion coming from $\mu_{2\epsilon}$ and bigrading $(-\tfrac 1 2, -\tfrac 1 2)$ on the portion coming from $-\mu_{-2\epsilon}$, the map $\Psi$ on $C_{+2\epsilon} |_{A=0}$ preserves the bi-grading. Since the Alexander grading is zero interchanging the gradings has no effect and $\Psi$ has skew-degree $(0,0)$. This property is preserved when $\Psi$ is extended as a skew-module homomorphism.
\end{proof}

\begin{example}\label{ex:flip-map-Z1}
Consider the curve $(\Gamma, \bchain)$ in $\cylinder$ shown in Figure \ref{fig:flip-map-Z1}; this is obtained from the decorated immersed arc in $\strip$ in Figure \ref{fig:complex-from-curve-example} by identifying the sides of $\strip$ such that the endpoints of the immersed arc are identified. The complex $C = C(\Gamma, \bchain)$ is the complex $C$ computed in Example \ref{ex:complex-from-curves} (it is easy to see that, in this case, identifying the sides of the strip does not affect the differential). The flip map $\Psi: C^\infty \to C^\infty$ counts generalized bigons between $(\Gamma, \bchain)$ and $\mu_{\Psi}$ involving the cap portion of $\mu_{\Psi}$; there are three such bigons, shown in Figure \ref{fig:flip-map-Z1}. The first contributes $V^{-1}a$ to $\Psi(U^{-1}g)$, the second contributes $W(V^{-1}a) = Ua$ to $\Psi(Ub)$, and the third contributes $W(V^{-1}a) = Ua$ to $\Psi(Ua)$. Using the skew-module homomorphism property of $\Psi$, we have $\Psi(a) = \Psi(b) = UV^{-1} a$, $\Psi(g) = a$, and $\Psi(c) = \Psi(d) = \Psi(e) = \Psi(f) = 0$. 

We remark that we can more easily find the induced isomorphism $\Psi_*$ from the horizontal homology of $C$ to the vertical homology of $C$ by perturbing $\mu_{\Psi}$ so that the vertical portions are closer to $\mu_{\tfrac 1 2}$. When we do this, the single intersection of the left vertical piece of $\mu_{\Psi}$ with $\Gamma$ corresponds to $g$ (the generator of horizontal homology of $C$), the single intersection of the right vertical piece with $\Gamma$ corresponds to $a$ (the generator of vertical homology), and the single bigon indicates that $\Psi_*(g) = a$.
\end{example}

\begin{figure}
\labellist
  \tiny
  \pinlabel {$V^{\scalebox{.75}{-1} } \! a$} at  51 140
  \pinlabel {$V^{\scalebox{.75}{-1} } \! a$} at  186 140
  \pinlabel {$V^{\scalebox{.75}{-1} } \! a$} at  321 140
  
  \pinlabel {$U^{\scalebox{.75}{-1} } \! g$} at  76 11
  \pinlabel {$U b$} at  209 125
  \pinlabel {$U a$} at  345 131

  \scriptsize
  \color{blue}
  \pinlabel {$\mu_\Psi$} at  70 148
  \pinlabel {$\mu_\Psi$} at  205 148
  \pinlabel {$\mu_\Psi$} at  340 148

  \color{red}
  \pinlabel {$-1$} at  17 81
  \pinlabel {$1$} at  85 85
  \pinlabel {$-1$} at  152 81
  \pinlabel {$1$} at  220 85
  \pinlabel {$-1$} at  287 81
  \pinlabel {$1$} at  355 85

\endlabellist
\includegraphics[scale = 1]{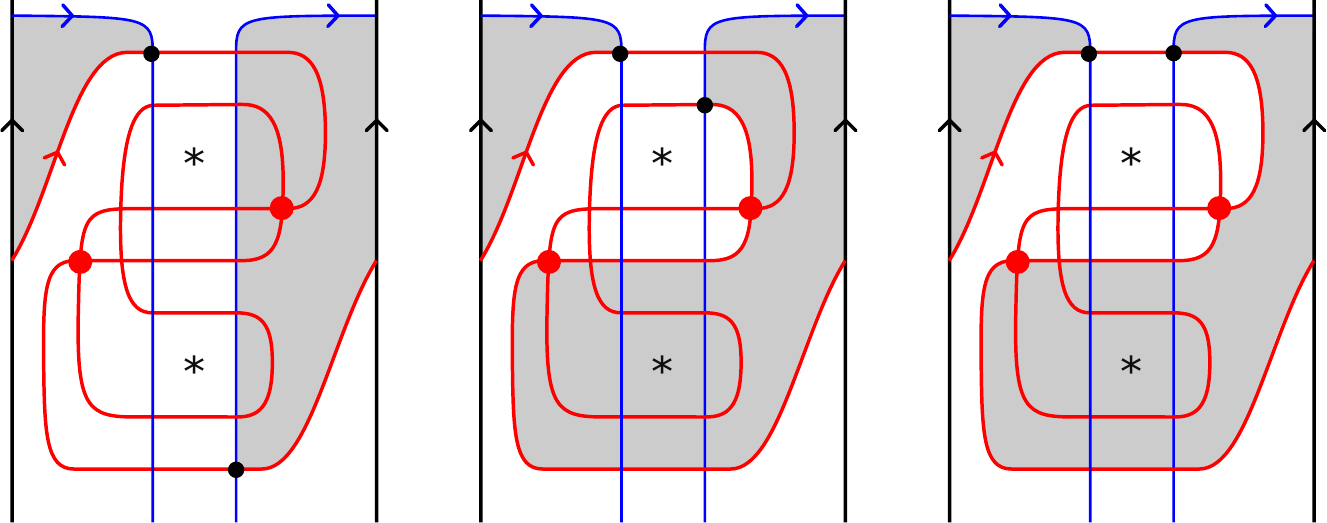}
\caption{The three bigons contributing to the flip map $\Psi$ associated to the pictured curve $(\Gamma, \bchain)$ in $\cylinder$.}
\label{fig:flip-map-Z1}
\end{figure}

\begin{example}\label{ex:flip-maps-from-curve}
Consider the curve $\Gamma$ in $\cylinder$ shown in Figure \ref{fig:flip-maps-from-curve}. The values of the bigrading function on $\Gamma$ at intersections with $\mu$ are indicated in the figure. We equip this curve with the trivial bounding chain $\bchain$ and omit it from the notation. The complex $C = C(\Gamma)$ is the Floer complex (in $\dmcylinder$) of $\Gamma$ with the vertical line $\mu$. There are five generators, $a'$, $b'$, $c'$, $d'$, and $e'$, with bigradings $(2,0)$, $(1,1)$, $(0,0)$, $(1,1)$, and $(0,2)$, respectively. There are four bigons giving the differential
$$\partial(a') = -Vb', \quad \partial(b') = 0, \quad \partial(c') = UVb' - UVd', \quad \partial(d') = 0, \quad \text{ and } \quad \partial(e') = Ud'.$$
To determine the flip map we consider the Floer complex with $\mu_{\Psi}$, specifically the terms coming from bigons involving the cap portion of $\mu_{\Psi}$. There are three obvious bigons arising from the three arcs from the left side of $\mu_{\Psi}$ to the right side of $\mu_{\Psi}$. Note that because of how $\Psi$ is defined from a curve, all three of these bigons contribute with positive sign even though one of the arcs is oriented leftward. There is also a fourth bigon contributing to $\Psi$, which is shown in the figure. From these four bigons we see that
$$\Psi(a') = UV^{-1} a' + V^{-1}c', \quad \Psi(b') = d', \quad \Psi(c') = V e', \quad \Psi(d') = 0, \quad \text{ and } \quad \Psi(e') = 0.$$
We observe that this complex and flip map agrees with the one in Example \ref{ex:1-surgery-on-LHT} coming from the dual knot of $+1$-surgery on the left handed trefoil after the change of basis given by
$$a' = -a, \quad b' = b, \quad c' = -c + Ua - Ve, \quad d' = d, \quad \text{ and } \quad e' = e.$$
\end{example}

\begin{figure}
\labellist
  \scriptsize
  \pinlabel {$a'$} at  173 110
  \pinlabel {$b'$} at  173 80
  \pinlabel {$c'$} at  173 65
  \pinlabel {$d'$} at  173 50
  \pinlabel {$e'$} at  173 20
  
  \pinlabel {$z$} at 161 93
  \pinlabel {$w$} at 177 93
  \pinlabel {$z$} at 161 33
  \pinlabel {$w$} at 177 33
  
  \tiny
  \pinlabel {$V^{\scalebox{.75}{-1} } \! a'$} at  285 110
  \pinlabel {$U a'$} at  309 109
  
  \scriptsize
  \color{blue}
  \pinlabel {$\mu$} at  173 3
  \pinlabel {$\mu_{\Psi}$} at  283 3

  \color{black}

  \pinlabel {$(-2,0)$} at  46 110
  \pinlabel {$(-1,-1)$} at  46 80
  \pinlabel {$(-1,-1)$} at  46 65
  \pinlabel {$(-1,-1)$} at  46 50
  \pinlabel {$(0,-2)$} at  46 20
  
\endlabellist
\includegraphics[scale = 1.1]{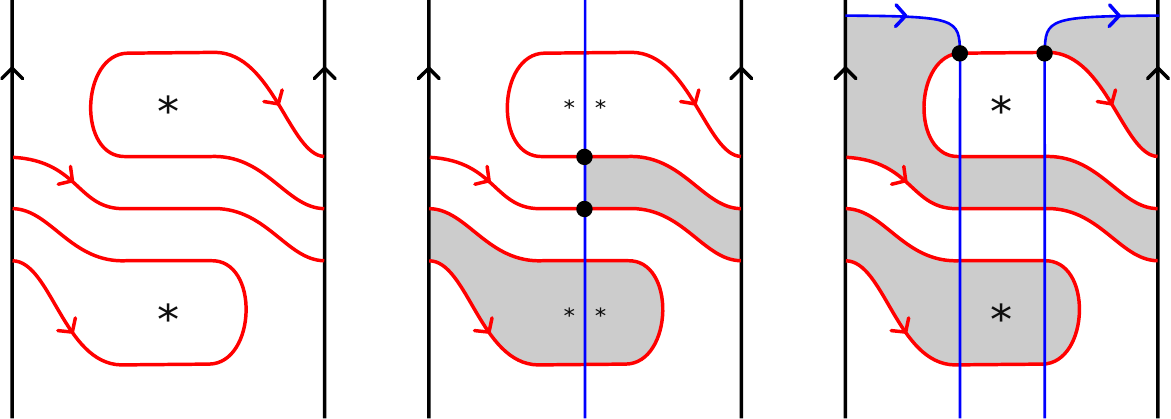}
\caption{Left: A curve $\Gamma$ in $\cylinder$; the value of the bigrading function $(\tilde\tau_w, \tilde\tau_z)$ at intersections with $\mu$ is indicated. Middle: The curve in $\dmcylinder$ paired with $\mu$ to compute the complex $C(\Gamma)$; the shaded bigon gives the term $UVb'$ in $\partial(c')$. Right: The curve paired with $\mu_{\Psi}$ to compute the flip map; the shaded bigon contributes $W(V^{-1} a') = Ua'$ to $\Psi(Ua')$.}
\label{fig:flip-maps-from-curve}
\end{figure}

\begin{figure}
\labellist
  \pinlabel {$\dmstrip$} at 53 -5
  \pinlabel {$\sF$} at 115 -5
  
  \scriptsize
  \pinlabel {$a'$} at 53 126
  \pinlabel {$b'$} at  53 96
  \pinlabel {$c'$} at  53 81
  \pinlabel {$d'$} at  53 66
  \pinlabel {$e'$} at  53 36
  
  \pinlabel {$z$} at 41 109
  \pinlabel {$w$} at 57 109
  \pinlabel {$z$} at 41 49
  \pinlabel {$w$} at 57 49
  
  \tiny
  \pinlabel {$U a'$} at  284 96
  \pinlabel {$c'$} at  281 81
  \pinlabel {$b'$} at  281 60
  \pinlabel {$d'$} at  324 96
  \pinlabel {$c'$} at  324 81
  \pinlabel {$V e'$} at  327 60

  \scriptsize
  \color{blue}
  \pinlabel {$\mu$} at  53 17
  \pinlabel {$\mu_{\Psi}$} at  285 17
  
  \color{red}
  \pinlabel {$-W$} at  23 82
  \pinlabel {$W$} at  85 82

\endlabellist
\includegraphics[scale = 1.1]{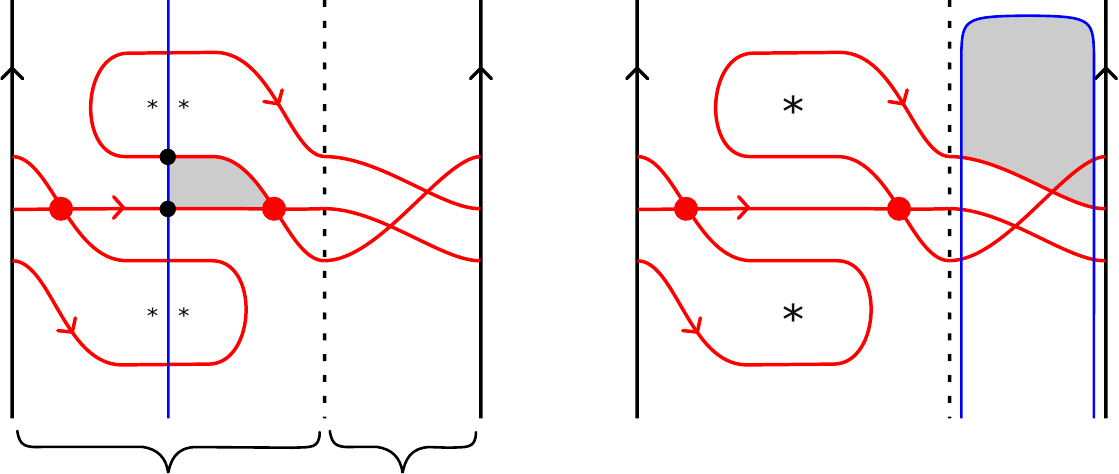}
\vspace{3mm}
\caption{A decorated curve $(\Gamma, \bchain)$ in $\dmcylinder = \dmstrip \cup \sF$ obtained from the curve in \ref{fig:flip-maps-from-curve} by a homotopy, with the property that bigons contributing to the differential on $C(\Gamma, \bchain)$ are contained in $\strip$. The bounding chain is now nontrivial. The shaded generalized bigon on the left contributes the term $UVb$ in $\partial(c)$. Perturbing $\mu_{\Psi}$ to lie in $\sF$ as on the right and taking Floer homology determines the flip isomorphism $\Psi_*$.}
\label{fig:flip-maps-from-curve-perturbed}
\end{figure}

When we computed $C$ in the last example, some of the bigons wrapped completely around the cylinder. As a result, if we cut $\cylinder$ open along $\mu_{\tfrac 1 2}$ to get $\strip$ and computed the complex associated with the resulting curve, we would not get the same result. However, it is always possible to perturb a decorated curve $(\Gamma, \bchain)$ in $\cylinder$ so that bigons contributing to $C(\Gamma, \bchain)$ do not intersect $\mu_{\tfrac 1 2}$. Such a perturbation for Example \ref{ex:flip-maps-from-curve} is shown in Figure \ref{fig:flip-maps-from-curve-perturbed}. Note that the homotopy performed to the curve introduced immersed monogons and therefore requires including some of the new self-intersection points in the bounding chain, following move $(j)$ in Figure \ref{fig:invariance-moves}.

When curves in $\cylinder$ are perturbed as above, we can cut the the cylinder into two strips, a marked strip $\strip$ and an unmarked strip $\sF$, so that the $C$ is determined by the restriction of $(\Gamma, \bchain)$ to $\strip$ and the flip isomorphism $\Psi_*$ is determined by the restriction to $\sF$. We simply let $\sF$ be a neighborhood of $\mu_{\tfrac 1 2}$ and let $\strip$ be the complement of $\sF$ in $\cylinder$. We choose this neighborhood small enough so that bigons contributing to the differential in $C$ (which are assumed to be disjoint from $\mu_{\tfrac 1 2}$) are disjoint from $\sF$. We also choose $\sF$ to be small enough so that the restriction of $\Gamma$ to $\sF$ consists of arcs that move from one side of $\sF$ to the other (i.e. $\Gamma$ has no vertical tangencies in $\sF$). Under this assumption, we can perturb the curve $\mu_{\Psi}$ by sliding the vertical portions from $\mu_{2\epsilon}$ to $\partial_L \sF$ and from $\mu_{-2\epsilon}$ to $\partial_R \sF$ and all bigons contributing to $\Psi_*$ are containing in $\sF$. Counting bigons contributing to $\Psi_*$ is then the same as counting paths from $\partial_L \sF$ to $\partial_R \sF$ (these may be polygonal paths, if the bounding chain includes self-intersection points in $\sF$). We caution that the intersections of $\Gamma$ with $\partial_L \sF$ and $\partial_R \sF$ correspond to a basis of the horizontal and vertical complexes of $C$, respectively, but these may not be the same as the bases coming from intersecting $\Gamma$ with $\mu_{2\epsilon}$ and $\mu_{-2\epsilon}$.

\subsection{Naive curve representatives of bigraded complexes}\label{sec:naive-curves} We have seen that any immersed multicurve $\Gamma$ with a bounding chain $\bchain$ in $\strip$ determines a bigraded complex over $\sRminus$ along with a choice of (ordered) basis corresponding to the intersection points of $\Gamma$ with $\mu$. In this case we say that $(\Gamma, \bchain)$ represents the complex with respect to this basis. We will now observe that any bigraded complex over $\sRminus$ can be represented in this way.

\begin{proposition}\label{prop:naive-curves}
For any bigraded complex $C$ over $\sRminus$ and any choice of basis $\{x_1, \ldots, x_n\}$ for $C$, there exists an immersed multicurve $\Gamma$ in $\strip$ with a collection of turning points $\bchain$ such that $C(\Gamma, \bchain)$ is isomorphic to $C$, with the isomorphism taking the preferred basis of $C(\Gamma, \bchain)$ to $\{x_1, \ldots, x_n\}$. The same is true for complexes over any quotient of $\sRminus$.
\end{proposition}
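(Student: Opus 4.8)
The plan is to construct $(\Gamma,\bchain)$ one generator at a time and then glue in bigons for each arrow in the differential. First I would place the combinatorial skeleton: for each basis element $x_i$, put a single point $p_i$ on the vertical line $\mu$ at height equal to its Alexander grading $A(x_i)$ (in the segment of $\mu$ between the marked points at heights $A(x_i)\pm\tfrac12$), ordering coincident-height points consistently with the chosen total order on the basis. Through each $p_i$ I would run a short horizontal arc transverse to $\mu$. The grading function $\tilde\tau$ on this arc is pinned down by the requirement $\gr_w(x_i)=-\tilde\tau_w(p_{x_i})$ together with the rule relating $\tilde\tau_w$ and $\tilde\tau_z$ via the Alexander height, so there is a unique compatible choice; in particular the mod-$2$ grading forces the orientation of the arc at $p_i$. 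To close these arcs into a compact multicurve I would join their ends off to the right, far from $\mu$, pairing up endpoints arbitrarily (say all of them strung together, or into one long component), keeping everything disjoint from the marked points and from $\mu$ except at the $p_i$.

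Next I would encode the differential. For each nonzero entry $d_{i,j}U^{a_{i,j}}V^{b_{i,j}}$ of $\partial$ I want a bigon between $\mu$ and $\Gamma$ running from $p_i$ to $p_j$ that covers $w$-marked points $a_{i,j}$ times and $z$-marked points $b_{i,j}$ times and carries weight $d_{i,j}$ (up to the sign conventions of Section~\ref{sec:floer-theory}). The cleanest way to do this, following the train-track language of Section~\ref{sec:train-tracks}, is to build $\Gamma$ near $\mu$ as a braid-like stack of parallel strands carrying crossover arrows: each arrow from strand $i$ to strand $j$, weighted appropriately and pushed across the requisite marked points, contributes exactly one such bigon, and the strand heights handle the $U$/$V$ exponents since those are determined by the bigrading. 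I would realize $\bchain$ as the collection of left-turn crossover arrows attached to these crossings (equivalently, a collection of turning points at the corresponding self-intersection points). Since the crossover arrows may be attached far enough out that they introduce no immersed disks or annuli between $\Gamma$ and $\mu$ — and $\mu$ has no self-intersections so no generalized annuli arise — admissibility in the sense of Definition~\ref{def:admissible-enhanced} holds, and Proposition~\ref{prop:admissible-enhanced} makes $m_1^\bchain$ well-defined. Then I would check that the resulting $C(\Gamma,\bchain)=CF((\Gamma,\bchain),\mu)$ has the prescribed generators, bigradings, and differential, reading off $\partial^\tracks$ from the arrow configuration; this is a bookkeeping verification using the explicit coefficient formula \eqref{eq:m_k_doubly_marked} and the weight/sign rules listed after the definition of $C(\Gamma,\bchain)$. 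Finally, for a quotient $\sR_n$ of $\sRminus$, one runs the identical construction but interprets everything modulo $(UV)^n=W^n$, using the "unobstructed modulo $W^m$" framework of Section~\ref{sec:train-tracks}; the only change is that some crossover arrows with high $W$-power may be deleted.

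The main obstacle I anticipate is signs and the sign-dependence of crossover-arrow weights, not the topology: the construction must reproduce not just the matrix $\{d_{i,j}\}$ but $\partial$ with the exact sign $(-1)^{s(u)}$ attached to each bigon, and the false-corner sign contributions from $\bchain$ interact with the orientation of $\Gamma$ along each bigon's boundary. I would handle this by orienting all the near-$\mu$ strands coherently (upward, say, matching $\mu$'s orientation), so that every bigon I build has boundary orientation agreeing with $\Gamma$ and hence positive corner-sign contributions, and then absorb any residual sign into the choice of weight $d_{i,j}$ on the corresponding crossover arrow — exactly the freedom the construction allows. A secondary subtlety is that $\partial^2=0$ need \emph{not} be assumed in this proposition: I am only producing a precomplex-matching collection of turning points, so I do \emph{not} need $\bchain$ to satisfy the Maurer--Cartan equation, which removes what would otherwise be the hard constraint. (Upgrading to an actual bounding chain, i.e. the genuinely hard existence statement, is the content of later sections and is not needed here.) Modulo these sign checks, every step is a routine realization argument, so the proof should be short.
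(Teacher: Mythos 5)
Your plan has the right flavor (short transverse arcs at $\mu$ at the Alexander-prescribed heights, turning points to encode the differential), but it departs from the paper's construction in two places where the departure actually matters, and it elides the one step that is not bookkeeping.

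First, the paper does \emph{not} close the near-$\mu$ segments into closed curves or a ``long component.'' It takes $n$ immersed arcs running from $\partial_L\strip$ to $\partial_R\strip$, one per generator, each crossing $\mu$ exactly once, and then rearranges the endpoints on each boundary component so that \emph{every pair of arcs crosses exactly once on each side of $\mu$}. This seemingly small choice does two jobs at once: (i) it guarantees there is a self-intersection point of $\Gamma$ at which to place the turning point for every conceivable arrow $x_i\to x_j$, on the correct side of $\mu$ so that the resulting generalized bigon covers the right $U$/$V$ powers; and (ii) it keeps every arc horizontally monotone. Your ``braid-like stack of parallel strands carrying crossover arrows'' does not obviously provide crossings at all the needed locations (parallel strands don't cross), and closing the arcs off to one side destroys horizontal monotonicity. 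You also casually assert admissibility, but closed components produced by ``stringing arcs together'' could bound immersed disks; the paper's open arcs cannot.

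Second — and this is the real gap — you dismiss the verification that $C(\Gamma,\bchain)\cong C$ as ``a bookkeeping verification,'' but the paper has to argue that \emph{no unintended generalized bigons} contribute. The argument is a rotation count: the net rotation along any polygonal path in $\Gamma$ between two points of $\mu$ is $\pi$ times the number of left turns, precisely because the arcs are horizontally monotone, so a generalized bigon (net rotation $\pi$) has exactly one false corner and is therefore one of the prescribed triangles $x_i p_{ij} x_j$. Without the monotonicity your construction would have to deal with polygonal paths that reverse horizontal direction at the ``closure'' region and could bound extra bigons; you'd need a replacement for this argument. The sign and quotient remarks in your proposal are fine (the paper handles signs by the identical device of orienting the contribution $\pm c$ according to whether the polygonal path follows the orientation), and you're right that $\bchain$ need not satisfy Maurer--Cartan here. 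But as written the proposal leaves both the geometry of $\Gamma$ and the ``no extra bigons'' step unjustified, and those are exactly the two things the paper's proof is built around.
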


\begin{proof}
The curve $\Gamma$ consists of a collection of $n$ immersed arcs, one for each generator of $C$, connecting the left and right boundaries of $\strip$. These arcs intersect $\mu$ at appropriate heights determined by the Alexander grading of the corresponding generator, but the endpoints on each side are rearranged so that every arc crosses every other arc once on each side of $\mu$. The grading function $\tilde\tau$ on $\Gamma$ is defined to agree with $-\gr_w $ at intersection points corresponding to generators; in particular, the arc corresponding to $x_i$ is oriented rightward if $\gr_w(x_i)$ is even and leftward if $\gr_w(x_i)$ is odd.

To define the collection of turning points $\bchain$, for each arrow from $x_i$ to $x_j$ in the complex we include the intersection point $p_{ij}$ between the arcs corresponding to $x_i$ and $x_j$ that is left of $\mu$ if $x_i$ is above $x_j$ or right of $\mu$ if $x_i$ is below $x_j$. If the arrow from $x_i$ to $x_j$ has weight $cU^a V^b$, $p_{ij}$ appears in $\bchain$ with coefficient $c$ if the polygonal path in $\Gamma$ from $x_i$ to $p_{ij}$ to $x_j$ follows the orientation on $\Gamma$, and with coefficient $-c$ otherwise. To compute $\partial$ on $C(\Gamma, \bchain)$, we count generalized immersed bigons whose boundary is a polygonal path in $\Gamma$ consistent with $\bchain$ along with a segment of $\mu$. For any such bigon, the net rotation along the polygonal path in $\Gamma$ must be $\pi$. On the other hand, the net rotation along any polygonal path in $\Gamma$ consistent with $\bchain$ between any two points on $\mu$ is given by $\pi$ times the number of left turns in the path. This is because the immersed arcs move monotonically rightward or leftward, while at left turns the polygonal path must go from rightward moving to leftward moving or from leftward moving to rightward moving (since arrows in $C$ connect generators with gradings of opposite parity, so only intersections between oppositely oriented arcs appear in $\bchain$). Thus to compute $C(\Gamma, \bchain)$ we need only consider the triangles with corners $x_i$, $x_j$, and $p_{ij}$ for each pair $1 \le i, j \le n$. The coefficients on each $p_{ij}$ in $\bchain$ were chosen so that each triangle precisely recovers an arrow in $C$.

Note that the collection of turning points $\bchain$ constructed here is not necessarily a bounding chain. However, by construction the precomplex $C(\Gamma, \bchain)$ is a complex so it still makes sense to say that $(\Gamma, \bchain)$ represents $C$.
\end{proof}

\begin{figure}
\raisebox{10 mm}{\begin{tikzpicture}[scale = 1.5]
\node (a) at (0,2.2) {$a$};
\node (b) at (0,1.8) {$b$};
\node (c) at (2.2,2.2) {$c$};
\node (d) at (1.8,1.8) {$d$};
\node (e) at (0,0) {$e$};
\node (f) at (1.8,0) {$f$};
\node (g) at (2.2,0) {$g$};

\scriptsize
\draw[->] (c) to node[above] {$-U$} (a);
\draw[->] (c) to node[above left, pos = .7] {$U$} (b);
\draw[->] (d) to node[below] {$-U$} (b);
\draw[->] (f) to node[below] {$U$} (e);

\draw[->] (d) to node[left] {$-V$}( f);
\draw[->] (d) to node[right, pos = .7] {$V$} (g);
\draw[->] (c) to node[right, pos = .3] {$V$} (f);
\draw[->] (b) to node[left] {$-V$} (e);

\end{tikzpicture}}
\hspace{10 mm}
 \includegraphics[scale=1.2]{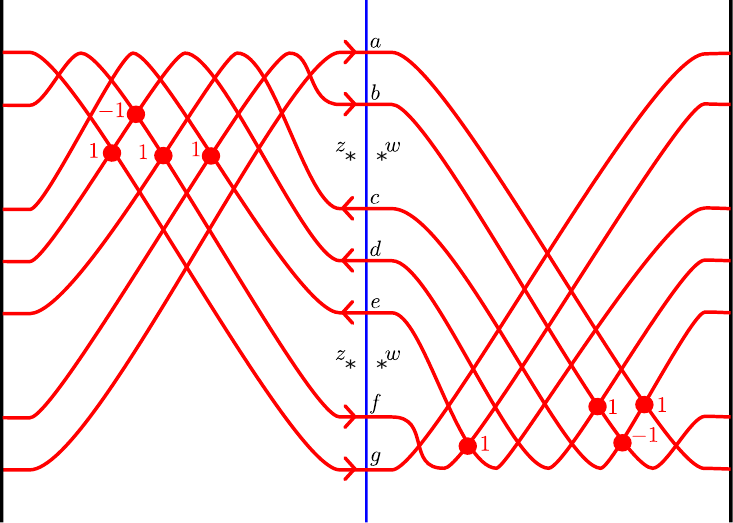}
 \caption{A naive immersed curve representative for a complex.}
 \label{fig:naive-curves-example}
\end{figure}

A pair $(\Gamma, \bchain)$ constructed as above will be called a \emph{naive immersed curve representative} for $C$. As an example, Figure \ref{fig:naive-curves-example} shows the naive immersed curve representative for the complex in Example \ref{ex:complex-from-curves}.

By similar reasoning, we could construct a naive representative for a complex $C$ with a flip map $\Psi$ in the cylinder $\cylinder$. Viewing $\cylinder$ as the union of two strips $\strip$ and $\sF$, the decorated curve restricted to $\strip$ is the naive representative of $C$. In $\sF$ we place another copy of the decorated curve in $\strip$ representing $C$ (or the image of this under the obvious map $\strip \to \sF$ that ignores the marked points), and then add additional appropriately weighted turning points in $\sF$ from the segment corresponding to $x_i$ to the segment corresponding to $x_j$ for each $x_j$ term in $\Psi(x_i)$. One can check that the resulting curve $(\Gamma, \bchain)$ represents $C$ and $\Psi$, and that $\bchain$ is a bounding chain.

While it is good to know that any complex can be represented by \emph{some} decorated curve, the naive immersed curve representative is not particularly easy to work with. Moreover, it is highly dependent on the representative of the complex $C$ used to construct it, so a homotopy equivalence class of complexes gives rise to many different decorated curves. Our challenge moving forward will be to find a simpler decorated curve $(\Gamma, \bchain)$ that represents $C$ and that is uniquely determined (up to an appropriate sense of equivalence) for any chain homotopy equivalence class of complexes.

\section{Simple immersed curve representatives of bigraded complexes}\label{sec:properties-of-simple-curves}

\subsection{Simple position for curves} We have established that any complex can be represented by a decorated curve in $\strip$, but these representatives are not particularly nice. With the goal in mind of defining better immersed curve representatives for a complex, we now discuss some constraints we wish to impose on our decorated immersed curves $(\Gamma, \bchain)$. Some assumptions have already been mentioned: we will assume transverse self-intersection with no triple points, and we assume that $\Gamma$ is perpendicular to $\mu$. We will also assume that $\Gamma$ has minimal intersection with $\mu$ (among curves in its homotopy class, where homotopies do not cross the marked points). We assume that no component of $\Gamma$ is disjoint from $\mu$, since such a component would not contribute to $C(\Gamma, \bchain)$ and can be safely ignored. We will often assume that $\Gamma$ has minimial self-intersection in its homotopy class, though this assumption will need to be relaxed at times.

\begin{figure}
\includegraphics[scale=.9]{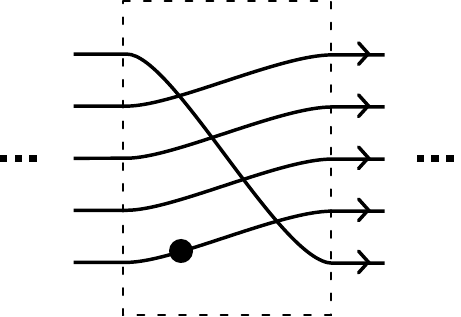}
\vspace{3 mm}
\caption{The crossing region for a non-primitive component of $\Gamma$. The basepoint for the component lies within the crossing region on the strand which crosses all other strands, as shown, and the strands run parallel outside this region.}
\label{fig:crossing-region}
\end{figure}

In addition to requiring curves to be in minimal position, we will also assume that non-primitive components of $\Gamma$ have a particular form. A closed component $\gamma$ of $\Gamma$ is non-primitive if it is homotopic to $k$ copies of some simpler closed curve. If $\gamma'$ is primitive and $[\gamma] = k[\gamma']$, we say $\gamma$ has multiplicity $k$. In this case, we will assume that $\gamma$ lies in a neighborhood of $\gamma'$ and that $\gamma$ looks like $k$ parallel copies of $\gamma'$ outside of a small crossing region of the form depicted in Figure \ref{fig:crossing-region}. There are $k-1$ self intersection points of $\gamma$ in this region, which all have degree 0. We will also assume that there is a single basepoint on $\gamma$ carrying the weight of the curve and that this basepoint lies in the crossing region as shown. Note that for each intersection of $\gamma'$ with $\mu$, there are $k$ intersections of $\gamma$ with $\mu$. We will refer to this set of intersection points, and the corresponding generators of $C(\Gamma, \bchain)$, as a \emph{grouping} of generators. We will assume that $\gamma$ lies in a small enough neighborhood of $\gamma'$ that no other intersections of $\Gamma$ with $\mu$ fall between those in the same grouping.

\begin{definition}\label{def:simple-position}
An immersed multicurve $\Gamma$ in $\strip$ is in \emph{simple position} if 
\begin{itemize}
\item $\Gamma$ has transverse self-intersection points and no triple points;
\item $\Gamma$ is horizontal at intersections with the vertical line $\mu$ as well as intersections with $\partial \strip$;
\item Every component of $\Gamma$ intersects $\mu$, and does so minimally;
\item $\Gamma$ has minimal self-intersection number; and
\item each non-primitive closed component of $\Gamma$ has the form of parallel strands outside of a crossing region, as described above.
\end{itemize}
\end{definition}

At times, we will need to relax the minimal self-intersection condition in two ways. First, we will require that no two components of $\Gamma$ bound an immersed annulus; starting from minimal position, this requires adding a pair of intersection points between any two parallel closed components. Second, we will impose a certain ordering on the endpoints of immersed arc components appearing on $\partial_L \strip$ and $\partial_R \strip$; from minimal position this can be achieved by sliding endpoints of immersed arcs along the relevant boundary of $\strip$. The ordering is specified using the grading functions $\tilde\tau_w$ and $\tilde\tau_z$ on $\Gamma$: endpoints on $\partial_L \strip$ are ordered so that $\tilde\tau_w$ is non-decreasing moving downward along $\partial_L \strip$, and endpoints on $\partial_R \strip$ are ordered so $\tilde\tau_z$ is non-decreasing moving upward along $\partial_R \strip$.

Note that each endpoint $x$ of $\Gamma$ on $\partial \strip$ has a corresponding intersection $x'$ of $\Gamma$ with $\mu$, the first such intersection point reached when traveling along $\Gamma$ from $x$. Moreover, if $x$ is on $\partial_L \strip$ then $\tilde\tau_w(x) = \tilde\tau_w(x')$ since $\Gamma$ is horizontal at both $x$ and $x'$ and there are no $w$-marked points or corresponding grading arcs on the left side of $\mu$ in $\dmstrip$. Similarly, if $x$ is on $\partial_R \strip$ then $\tilde\tau_z(x) = \tilde\tau_z(x')$. Since the bigrading $(\gr_w, \gr_z)$ of the generator of $C(\Gamma, \bchain)$ is given by $(-\tilde\tau_w(x), -\tilde\tau_z(x))$, we can equivalently understand the ordering of endpoints as requiring that endpoints on $\partial_L \strip$ with higher $\gr_w$ appear higher and endpoints on  $\partial_R \strip$ with higher $\gr_z$ appear lower, where here $\gr_w$ and $\gr_z$ refer to the gradings on the corresponding generator of $C(\Gamma, \bchain)$.

A \emph{segment} of $\Gamma$ will refer to a connected component of $\Gamma\setminus(\Gamma\cap\mu)$. Although our curves may not be in minimal position, we will require that any two segments of $\Gamma$ intersect at most once. To ensure this holds, when we add a pair of extra intersection points between parallel curves we should add them on opposite sides of $\mu$, as in Figure \ref{fig:almost-simple-position}.

\begin{figure}
\includegraphics[scale=1.2]{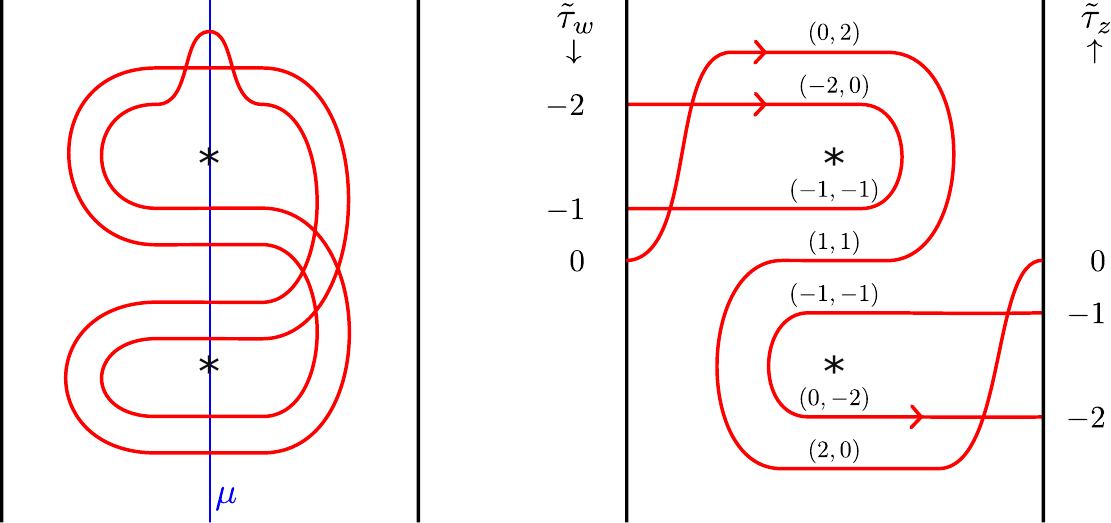}
\vspace{3 mm}
\caption{Multicurves in almost simple position. On the left, starting from minimal position two intersection points were added (on opposite sides of $\mu$) to break up an immersed annulus. On the right, the value of the bigrading $(\tilde\tau_w, \tilde\tau_z)$ is shown for each intersection of with $\mu$, and the endpoints of immersed arcs are arranged in the required order.}
\label{fig:almost-simple-position}
\end{figure}

\begin{definition}\label{def:almost-simple-position}
An immersed multicurve $\Gamma$ in $\strip$ is in \emph{almost simple position} if 
\begin{itemize}
\item $\Gamma$ has transverse self-intersection points and no triple points;
\item $\Gamma$ is horizontal at intersections with $\mu$ or $\partial \strip$;
\item Every component of $\Gamma$ intersects $\mu$, and does so minimally;
\item each non-primitive closed component has the form of parallel strands outside of a crossing region, as described above;
\item The endpoints of $\Gamma$ on $\partial_L \strip$ (resp. $\partial_R \strip$) appear in order of increasing $\tilde\tau_w$ (resp. $\tilde\tau_z$) moving upward (resp. downward); 
\item $\Gamma$ does not bound an immersed annulus; and
\item $\Gamma$ has minimal self-intersection number subject to the above constraints, and any two segments of $\Gamma$ intersect at most once.
\end{itemize}
\end{definition}

We can also define simple position and almost simple position for immersed multicurves in the marked cylinder $\cylinder$: the definitions are the same except that we can ignore the conditions concerning endpoints of immersed arcs in $\partial \strip$.

\subsection{Local systems}\label{sec:local-systems}
The $k-1$ self-intersection points appearing in the crossing region for a non-primitive curve $\gamma$ play a special role, we will call these \emph{local system intersection points}. These self-intersection points always have degree zero. Given an immersed multicurve $\Gamma$ in simple or almost simple position, recall that $\sI$ is the collection of self-intersection points and $\sI_0$ is the subset of these points with degree zero. Let $\sI_0^* \subset \sI_0$ denote the set of local system intersection points for all non-primitive components of $\Gamma$. For a collection of turning points $\bchain$, let $\widehat\bchain$ denote the corresponding linear combination of $\sI_0$ obtained by forgetting intersection points with negative degree, and similarly let $\widehat\bchain^*$ denote the corresponding linear combination of points in $\sI^*_0$. For each non-primitive component $\gamma$ of $\Gamma$, let $\widehat\bchain_\gamma^*$ denote the corresponding linear combination of any local system intersection points on $\gamma$.

A local system is an automorphism of $\F^k$, recorded as a similarity class of invertible $k\times k$ matrices. If $\gamma$ has multiplicity $k$ and the underlying primitive curve is $\gamma'$, then $\widehat\bchain_\gamma^*$ along with the weight associated with the basepoint on $\gamma$ determines a $k$-dimensional local system on $\gamma'$. We think of each generator of $\F^k$ as indexing a parallel copy of $\gamma'$ and the matrix as recording how paths jump between these copies of $\gamma'$ when making one full loop, where the $(i,j)$-entry is a weighted count of paths through the crossing region from the $i$th copy of $\gamma'$ on the left side of the crossing region to the $j$th copy on the right side of the crossing region. We will index the copies $1, \ldots, k$ moving upward on each side, and we let $c_i$ be the coefficient in $\widehat\bchain_\gamma$ of the intersection between the strand starting at 1 and the strand starting at $i+1$ and let $c_0$ be the weight associated with the basepoint on $\gamma$. Then the matrix defining the local system has the following form:

$$\left[ \begin{matrix}
0 & 0 & \cdots & 0 & c_0 \\
1 & 0 & \cdots & 0 & c_1 \\
0 & 1 & \cdots & 0 & c_2 \\
\vdots & \vdots & \ddots & \ddots & \vdots \\
0 & 0 & \cdots & 1 & c_{k-1}
\end{matrix} \right ].$$
This matrix is invertible since we require $c_0$ to be nonzero. Note that this matrix is in rational canonical form. Conversely, a $k$ dimensional local system on $\gamma'$ (that is not a direct sum of smaller local systems) uniquely determines a $k\times k$ matrix in rational canonical form, and the coefficients of this matrix determine the coefficients of the relevant intersection points in $\widehat\bchain_\gamma^*$ and the weight on the basepoint of $\gamma$. In this way the linear combination $\widehat\bchain_\gamma^*$ along with the weight on $\gamma$ is equivalent to a choice of $k$-dimensional local system on the primitive curve underlying $k$. For primitive curves $\gamma$ there are no local system intersection points so $\widehat\bchain_\gamma^*$ is trivial, leaving only the (nonzero) weight $c_0$ of the curve; this can also be interpreted as a $1$-dimensional local system represented by the invertible $1\times 1$ matrix $\left[ c_0 \right]$. Considering all components of the weighted curve $\Gamma$, $\widehat\bchain^*$ and the weights on $\Gamma$ encodes a choice of local system on the primitive curves underlying each component.

\begin{definition}\label{def:local-system-type}
We will say that a linear combination $\widehat\bchain$ of points in $\sI_0$ has \emph{local system type} if the coefficient of any point in $\sI_0 \setminus \sI_0^*$ is zero, so that $\widehat\bchain^* = \widehat\bchain$.
\end{definition}

In Section \ref{sec:simple-curves} we will show that any bigraded complex over $\sRhat$ can be represented up to homotopy by a decorated immersed multicurve $(\Gamma, \widehat\bchain)$ such that $\widehat\bchain$ is of local system type. This is equivalent to saying that every complex over $\sRhat$ can be be represented by an immersed multicurve decorated with local systems (compare \cite[Theorem 1]{HRW} and \cite[Theorem 1]{KWZ:mnemonic}).

\subsection{Useful properties of precomplexes from curves in simple position}

In Section \ref{sec:enhanced-curves}, we will consider immersed multicurves $\Gamma$ with collections of turning points $\bchain$ with the additional hypotheses that $\Gamma$ is in almost simple position and that the restriction $\bchainhat$ of $\bchain$ to degree 0 self-intersection points is of local system type. Decorated curves of this form are relatively well behaved, and we now collect some properties of the corresponding precomplexes that will be useful in Section \ref{sec:enhanced-curves}. In particular, although $\partial^2$ need not be zero on this precomplex, we can show that certain terms of $\partial^2$ must be zero. We will also see that if the precomplex is in fact a complex (i.e. if $\partial^2$ is zero), then the collection of turning points must be a bounding chain.

Some of the properties below depend on a more general observation about monogons bounded by arcs on the boundary of an immersed disk. The following proposition is a restating of Proposition A.2 in \cite{HHHK}.

\begin{proposition}\label{lem:HHHK-prop}
Let $f: D^2 \to S^2$ be an immersion of a disk into $S^2$, and suppose there is an arc $A \subset \partial D^2$ such that $f(A)$ is the counterclockwise oriented boundary of an immersed monogon in $S^2$ (where the orientation on $f(A)$ comes from the boundary orientation of $D^2$). Then there is another arc $B \subset \partial D^2$ such that $f(B)$ is the clockwise oriented boundary of an immersed monogon in $S^2$, and $f(B)$ lies in the interior of the monogon bounded by $f(A)$.
\qed
\end{proposition}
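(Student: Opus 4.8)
The plan is to deduce this from Proposition \ref{lem:HHHK-prop}, which is essentially the statement itself with the roles of the arcs $A$ and $B$ specified. The only work is to set up the hypotheses of that proposition correctly and to observe that its conclusion is exactly what we want. Concretely, suppose $f\colon D^2 \to S^2$ is an immersion and $A \subset \partial D^2$ is an arc such that $f(A)$, with the boundary orientation inherited from $D^2$, is the counterclockwise boundary of an immersed monogon $\Delta_A \subset S^2$. I would apply Proposition \ref{lem:HHHK-prop} verbatim to obtain an arc $B \subset \partial D^2$ with $f(B)$ the clockwise boundary of an immersed monogon $\Delta_B$ contained in the interior of $\Delta_A$. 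The statement then follows, so in effect the ``proof'' is the invocation of the cited result, possibly with a sentence recalling why the orientation conventions match: the boundary orientation of $D^2$ induces an orientation on $f(A)$, and a monogon bounded by an oriented loop is counterclockwise precisely when the loop winds positively around the enclosed region.

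The main point requiring care is the translation between \emph{our} conventions and those of \cite{HHHK}: I would check that ``immersed monogon'' in that reference agrees with ``immersed monogon with a single convex corner'' as used here (Proposition A.2 of \cite{HHHK} is stated for immersed disks in $S^2$ with a corner on the boundary, which is exactly our $m_0$-type teardrop), and that the clockwise/counterclockwise dichotomy in their statement corresponds to the orientation-reversing versus orientation-preserving distinction that is relevant when such a disk is glued into a larger polygon. No new geometric argument is needed; the content of Proposition \ref{lem:HHHK-prop} is quoted, not reproved. If a self-contained argument were desired instead, one would run the standard innermost-monogon argument: among all subarcs of $f(A)$ bounding immersed monogons inside $\Delta_A$, take one that is innermost (minimal area, or minimal in the cell structure cut out by $f(\partial D^2)$ on $\Delta_A$); such an innermost monogon must have the opposite orientation, since otherwise one could push off its boundary to find a strictly smaller monogon, contradicting minimality.

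The hard part, to the extent there is one, is purely bookkeeping: making sure the orientation sign that distinguishes $f(A)$ from $f(B)$ is the one that will later control the $(-1)$ contributions of false corners and monogons in the $A_\infty$-relations and the invariance moves. Since the statement is quoted directly from \cite{HHHK}, I expect the writeup here to be a single short paragraph citing Proposition A.2 of \cite{HHHK} and noting the conventions agree, with the innermost-monogon sketch relegated to a remark at most. Thus I do not anticipate a genuine obstacle, only the need to phrase the orientation hypothesis in language consistent with the rest of Section \ref{sec:floer-theory}.
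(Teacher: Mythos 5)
Your proposal correctly lands on what the paper actually does: it simply quotes Proposition A.2 of \cite{HHHK} (the paper states the proposition with a \qed and the preceding sentence explicitly flags it as a restating of that result), so the ``proof'' is a citation plus a check that the conventions match. Your opening sentence is circular as written --- Proposition \ref{lem:HHHK-prop} \emph{is} the statement being proved, not a separate prior result to invoke --- but you recover and identify the correct source, so this is a labelling slip rather than a substantive gap.
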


Note that this proposition applies to immersed disks in $S^2$, but it also applies to immersed disks in the strip $\strip$, since $\strip$ is a subset of $\R^2$ which itself is equivalent to $S^2 \setminus \{\infty\}$. It also applies in the infinite cylinder, which is topologically a twice punctured sphere.

Recall that a polygonal path in $\Gamma$ is a piecewise smooth path in $\Gamma$ in which adjacent smooth sections are connected by left-turns at self-intersection points of $\Gamma$. Such paths and immersed polygons they bound can be related to the immersed disk in Lemma \ref{lem:HHHK-prop} by smoothing the corners. When $\Gamma$ is in simple or almost simple position, we have the following restrictions on immersed polygons bounded by polygonal paths in $\Gamma$:

\begin{lemma}\label{lem:clockwise-monogons}
For an immersed multicurve $\Gamma$ in simple or almost simple position, let $P$ be a polygonal path in $\Gamma$ starting and ending at a self-intersection point $q$ of $\Gamma$. If $P$ does not intersect $\mu$, then the smoothing of $P$ is not the clockwise oriented boundary of an immersed monogon.
\end{lemma}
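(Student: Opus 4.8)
The plan is to argue by contradiction using Proposition \ref{lem:HHHK-prop}. Suppose $P$ is a polygonal path in $\Gamma$ starting and ending at a self-intersection point $q$, disjoint from $\mu$, whose smoothing bounds an immersed monogon $u$ in $\strip$ (or $\cylinder$) with \emph{clockwise} boundary orientation. Since $P$ avoids $\mu$, the interior of $u$ lies entirely on one side of $\mu$ --- say, without loss of generality, the right side --- or more precisely $u$ is disjoint from $\mu$. First I would observe that $u$ covers no marked points either, since the marked points of $\dmstrip$ all lie in a neighborhood of $\mu$ which $\Gamma$ and hence $P$ avoids, and a monogon disjoint from $\mu$ cannot wrap around to enclose them (here I would invoke the fact that $\Gamma$ lies in the complement of small neighborhoods of the marked points, together with the topology of $\strip$).

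Next I would set up the application of Proposition \ref{lem:HHHK-prop}. The polygonal path $P$ has left turns at some subcollection of self-intersection points of $\Gamma$; smoothing these left turns and smoothing the corner at $q$ turns the monogon $u$ into an honest immersion $f: D^2 \to \strip \subset S^2$, and the arc $A = \partial D^2$ is mapped to the clockwise boundary. Proposition \ref{lem:HHHK-prop} (applied with orientations reversed --- the statement is about counterclockwise monogons, so I would apply it to $f$ precomposed with an orientation-reversing homeomorphism of $D^2$, or equivalently use the mirror version) then produces a subarc $B \subset \partial D^2$ whose image is the boundary of an immersed monogon contained in the interior of $u$, with the \emph{opposite} (counterclockwise, i.e. ``correctly oriented'') circulation. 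Unwinding the smoothing, $f(B)$ corresponds to a closed polygonal sub-path $P'$ of $P$ in $\Gamma$ (its turns are among the turns of $P$, possibly together with the corner at $q$ if $B$ contains $q$), whose smoothing bounds an immersed monogon in the interior of $u$.

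The key point is then that this monogon is a \emph{generalized immersed disk} bounded by $\Gamma$ in the sense used for admissibility. If $P'$ has no left turns at all, it is an ordinary smooth closed path in $\Gamma$ bounding an immersed disk, contradicting the simple/almost simple position hypothesis that $\Gamma$ bounds no immersed disk (in simple position this is the minimal self-intersection condition; in almost simple position $\Gamma$ bounds no immersed annulus and, being minimal subject to the constraints, bounds no immersed disk either --- I would spell out that a curve in (almost) simple position with a nullhomotopic immersed-disk-bounding subloop could be reduced, contradicting minimality). If $P'$ does have left turns, the monogon it bounds is still an immersed disk bounded by a polygonal path in $\Gamma$; since its corners are genuine geometric left turns (not false corners weighted by a bounding chain), smoothing them shows $\Gamma$ bounds an honest immersed disk, which is again forbidden. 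Either way we reach a contradiction, so no such clockwise monogon exists.

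\textbf{Expected main obstacle.} The routine parts --- reversing orientation to match the hypothesis of Proposition \ref{lem:HHHK-prop}, and ruling out marked-point coverage --- are straightforward. The delicate step will be correctly translating between ``polygonal path whose smoothing bounds a monogon'' and ``immersion $f: D^2 \to S^2$'' so that the conclusion of Proposition \ref{lem:HHHK-prop} genuinely yields an \emph{immersed disk bounded by $\Gamma$} (as opposed to something that only becomes a disk after further identifications, or a configuration that could involve two distinct components of $\Gamma$). In particular one must check that the sub-path $P'$ returned lies on a single component of $\Gamma$ --- which it does, since $P$ itself is a path in $\Gamma$ and $P'$ is a portion of it --- and that the left turns along $P'$ are honest self-intersections of $\Gamma$, so that the smoothed monogon really is an immersed disk obstructing (almost) simple position. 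I would handle this by carefully tracking the boundary parametrization through the smoothing operation, exactly as corners are smoothed to relate $m_k$-polygons to the immersed disks of Proposition \ref{prop:admissible}.
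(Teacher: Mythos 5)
Your approach via Proposition~\ref{lem:HHHK-prop} is genuinely different from the paper's, which proves this lemma by a direct combinatorial induction: at a reflex corner of $P$, one of the two segments of $\Gamma$ through the corner extends into the interior of the clockwise monogon, must exit somewhere on $\partial P$, and this exit point gives a strictly smaller clockwise monogon with fewer crossings; the induction terminates because segments of $\Gamma$ in (almost) simple position do not self-intersect and any two segments meet at most once. The paper reserves the HHHK argument for Lemma~\ref{lem:bigons-have-no-monogons}, where HHHK converts a \emph{counterclockwise} monogon into an inner \emph{clockwise} one and then invokes the present lemma as a base case. Your proposal tries to use HHHK for the base case itself, and this is where it breaks down.

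The genuine gap is your final step. After applying HHHK you obtain an arc $B\subset\partial D^2$ bounding a smaller monogon, corresponding to a polygonal subpath $P'$ of $P$ with corners at some of the original left turns. You assert that since these ``are genuine geometric left turns \ldots smoothing them shows $\Gamma$ bounds an honest immersed disk.'' That is false: smoothing a corner moves the boundary \emph{off} of $\Gamma$ --- the smoothed curve takes a shortcut near each corner and is no longer contained in $\Gamma$, so you do not obtain an immersed disk bounded by $\Gamma$ in the sense of Definition~\ref{def:admissible}. A polygonal path with honest left turns can perfectly well bound an immersed polygon when $\Gamma$ is in (almost) simple position; indeed, this is precisely what makes $m_0$ nontrivial and the bounding-chain machinery of Section~\ref{sec:floer-theory} necessary. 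Only the turn-free case gives a contradiction (a single segment would self-intersect), and HHHK gives no control over whether $P'$ has turns. Note also that HHHK toggles orientation --- a clockwise monogon yields a counterclockwise one inside, and vice versa --- so even a naive recursion needs a separate termination argument; HHHK alone does not guarantee fewer corners. A smaller point: no orientation reversal is needed to apply HHHK --- since $u$ is orientation-preserving, $\partial D^2$ with its boundary orientation already maps counterclockwise around $u$; the ``clockwise'' in the hypothesis refers to the intrinsic direction of the polygonal path $P$, which is simply the reverse parametrization of $\partial D^2$, so $A=\partial D^2$ satisfies the hypothesis of Proposition~\ref{lem:HHHK-prop} directly.
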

\begin{figure}
 \includegraphics[scale=1]{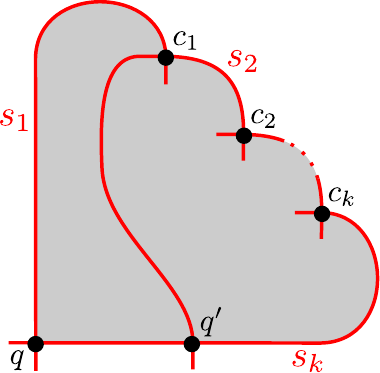}
 \caption{Monogons with corner at $q$ and clockwise oriented boundary containing $k$ train track edges which make left turns at intersection points $c_i$. There is another monogon involving at most $k-1$ of these train track edges with corner at $q'$).}
 \label{fig:clockwise-monogon}
\end{figure}
\begin{proof}
Suppose $P$ is disjoint from $\mu$. The path $P$ consists of some number of arcs $P_0 \ldots P_k$, with each $P_i$ contained in a segment $s_i$ of $\Gamma$, connected by left turns at intersection points $c_1, \ldots, c_k$, where $c_i \in s_{i-1} \cap s_i$ and $q \in s_0 \cap s_k$. If the smoothing of $P$ is the clockwise boundary of an immersed monogon then $P$ is the boundary an immersed polygon with a convex corner at $q$ and all other corners non-convex.

It is clear that there must be at least one obtuse corner $c_i$, since the segments $s_i$ do not intersect themselves. On the other hand, we will show that if such a polygon exists we can construct one with strictly fewer corners; repeating until there are no corners left gives a contradiction. At the corner $c_1$, the segment $s_2$ extends into the interior of the polygon and must leave the polygon again at some point $q' \in s_2 \cap P$. Since any two segments of $\Gamma$ intersect at most once, $q'$ does not lie on $s_1$. We construct a path $P'$ by concatenating the segment of $s_2$ from $q'$ to $c_1$ with the portion of $P$ from $c_1$ to $q'$. This new path $P'$ is the clockwise oriented boundary of an immersed polygon with convex corner at $q'$. This polygon is contained in the one bounded by $P$ and so is still disjoint from $\mu$, and it has strictly fewer crossings.
\end{proof}

\begin{lemma}\label{lem:bigons-have-no-monogons}
For an immersed multicurve $\Gamma$ in simple or almost simple position, let $P$ be a polygonal path in $\Gamma$ from $x$ to $y$ in $\Gamma \cap \mu$ such that $P$ along with the segment of $\mu$ from $y$ to $x$ bounds an immersed polygon. If $P'$ is a portion of $P$ starting and ending at a self-intersection point $q$ and $P'$ does not intersect $\mu$, then $P$ is not the boundary of an immersed polygon.
\end{lemma}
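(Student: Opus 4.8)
The plan is to reduce this statement to Lemma~\ref{lem:clockwise-monogons}. Suppose for contradiction that $P$ is the boundary of an immersed polygon $u$ together with the segment of $\mu$ from $y$ to $x$, and that some subpath $P'$ of $P$ starts and ends at a self-intersection point $q$ with $P'$ disjoint from $\mu$. The key observation is that $P'$, being a closed polygonal loop in $\Gamma$ (it starts and ends at $q$, where two segments of $\Gamma$ meet), bounds a sub-polygon of $u$ after smoothing. Concretely, the part of $\partial D^2$ mapping to $P'$ under the parametrization of $u$ cuts off a sub-disk of $D^2$, and restricting $u$ to this sub-disk gives an immersed polygon whose entire boundary is the smoothing of $P'$, with a single convex corner at the smoothing point $q$ (all the left-turn false corners along $P'$ are non-convex). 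Thus the smoothing of $P'$ is either the clockwise or the counterclockwise oriented boundary of an immersed monogon, depending on the orientation conventions; in either case we will derive a contradiction with Lemma~\ref{lem:clockwise-monogons}.

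First I would make precise the claim that $P'$ cuts off a sub-disk of $u$. Since $P'$ is a contiguous subpath of the $\Gamma$-boundary of $u$, its preimage under $u$ is a connected arc $A \subset \partial D^2$ whose two endpoints both map to $q$. Because $u$ is an immersion away from corners and false corners and the two endpoints of $A$ map to the same point $q$ (an honest transverse self-intersection of $\Gamma$), the arc $A$ together with a short arc in the interior of $D^2$ connecting its endpoints bounds a sub-disk $D' \subset D^2$; we can reparametrize so that $u|_{D'}$ is an immersed polygon with $\partial D'$ mapping to the smoothing of $P'$. Here I would need to be slightly careful: a priori $A$'s endpoints could be the same point of $\partial D^2$ (if $P'$ is all of $P$ and $x=y=q$, which cannot happen since $x,y \in \mu$ and $q \notin \mu$), or the complement of $A$ in $\partial D^2$ could be what bounds the monogon — but this is a finite case-check using that $q \notin \mu$ while the endpoints $x, y$ of $P$ lie on $\mu$, so $A$ is a proper subarc strictly in the interior of the $\Gamma$-part of $\partial D^2$.

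Next I would determine the orientation of the monogon bounded by the smoothing of $P'$. The polygon $u$ has a boundary orientation, and traversing $\partial u$ we go along $P$ (hence along $P'$) in a definite direction, then back along $\mu$. Since $P'$ bounds a sub-disk $D'$ sitting inside $D^2$ with the induced orientation, the smoothing of $P'$ is the \emph{counterclockwise} oriented boundary of the immersed monogon $u|_{D'}$. Now Lemma~\ref{lem:clockwise-monogons} only rules out clockwise-oriented monogons directly, so I would invoke Proposition~\ref{lem:HHHK-prop}: applied to the immersion $u|_{D'}: D' \to \strip$ (or $\cylinder$) with $A = \partial D'$, since $f(A)$ is the counterclockwise boundary of an immersed monogon, there is another arc $B \subset \partial D' = A$ such that $f(B)$ is the \emph{clockwise} oriented boundary of an immersed monogon lying inside. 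But $f(B)$ is then (the smoothing of) a subpath $P''$ of $P'$, hence a subpath of $P$, starting and ending at a single self-intersection point of $\Gamma$, and $P''$ is disjoint from $\mu$ because $P'$ is. This contradicts Lemma~\ref{lem:clockwise-monogons}, completing the proof.

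The main obstacle I anticipate is the topological bookkeeping in the second paragraph: verifying rigorously that a contiguous subpath of the boundary of an immersed polygon, whose endpoints coincide at a transverse double point of $\Gamma$, genuinely cuts off an immersed sub-polygon after smoothing — and pinning down the orientation so that Proposition~\ref{lem:HHHK-prop} applies in the correct direction. The false corners (left turns consistent with the bounding chain) along $P'$ add a mild complication, but since they are all non-convex corners of $u$ they behave exactly like the non-convex corners already handled in the proof of Lemma~\ref{lem:clockwise-monogons}, and smoothing them is harmless. Everything else is a direct appeal to the two cited results.
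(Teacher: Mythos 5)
Your plan correctly identifies the two tools (Proposition~\ref{lem:HHHK-prop} and Lemma~\ref{lem:clockwise-monogons}) and the right final contradiction, but there is a genuine gap in the ``key observation'' that $P'$ bounds a sub-polygon of $u$. This claim is false in general, and the argument does not need it. The preimage $A$ of $P'$ in $\partial D^2$ is an arc whose two endpoints $p_1,p_2$ are distinct points both mapping to $q$; joining them by a chord $c$ in the interior of $D^2$ does cut off a sub-disk $D'$, but $u|_{D'}$ is an immersed region whose boundary is $u(A)\cup u(c) = P' \cup u(c)$. Since $u$ is an immersion, $u(c)$ is a non-constant immersed path from $q$ to $q$, and there is no reason for it to lie along $P'$ or to be ignorable, so $u|_{D'}$ is \emph{not} ``an immersed polygon whose entire boundary is the smoothing of $P'$.'' More fundamentally, if this observation were true then the lemma's hypotheses alone would force a contradiction, which cannot be the intent: the lemma is invoked in the proof of Lemma~\ref{lem:some-d-squared-zero-horizontal} in a situation where such a $P'$ genuinely does exist, and what is ruled out is only that $P'$ bounds an immersed polygon.

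The fix is to note that the conclusion of the lemma (the final ``$P$'' in the statement is evidently a typo for ``$P'$'', as the paper's proof confirms) is that $P'$ does \emph{not} bound an immersed polygon, so one should \emph{assume for contradiction that it does}. That assumption is exactly what supplies the hypothesis of Proposition~\ref{lem:HHHK-prop}: apply the proposition to the full smoothed disk $D$ (not a putative sub-disk) with $A \subset \partial D$ the preimage of $P'$ --- a proper subarc --- and note that the assumption says $f(A)$ is the counterclockwise boundary of an immersed monogon, the clockwise case being disposed of immediately by Lemma~\ref{lem:clockwise-monogons}. The proposition then produces an arc $B\subset\partial D$ whose image is a clockwise monogon lying \emph{inside} the one bounded by $P'$, hence disjoint from $\mu$; its preimage smoothing comes from some polygonal path $P'''\subset P$ from a self-intersection point to itself, contradicting Lemma~\ref{lem:clockwise-monogons}. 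No sub-polygon of $u$ is ever constructed, and the application of HHHK is to the original disk, not to $u|_{D'}$ with $A=\partial D'$ (which would not even satisfy the proposition's requirement of a proper subarc).
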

\begin{proof} If we smooth the corners of the immersed polygon bounded by $P$ and part of $\mu$, we get an immersed disk $D$. If $P'$ is the boundary of an immersed polygon, then after smoothing the corners $P'$ becomes the counterclockwise boundary of an immersed monogon with corner at $q$. By Lemma \ref{lem:HHHK-prop}, there must be a portion $P''$ of the smoothing of $P$ which is contained in the interior of this monogon and which is the clockwise boundary of an immersed monogon. This smooth path $P''$ is the smoothing of some polygonal path $P''' \subset P$. This $P'''$ is the clockwise boundary of an immersed polygon with an acute corner at $q$. Since the monogon bounded by the smoothing of $P'$ lies entirely on one side of $\mu$, the path $P''$ and thus $P'''$ is also disjoint from $\mu$, but this is impossible by Lemma \ref{lem:clockwise-monogons}.
\end{proof}

We now consider a multicurve $\Gamma$ in almost simple position equipped with any collection of turning points $\bchain$. We do not assume that $\bchain$ is a bounding chain.

\begin{lemma}\label{lem:some-d-squared-zero-horizontal}
Let $x$ and $x' > x$ be generators of $C(\Gamma, \bchain)$ that are connected by a segment $s_{x,x'}$ of $\Gamma$ on the right side of $\mu$.
\begin{itemize}
\item[(i)] If $y > x'$, then the coefficient of $y$ in $\partial^2 (x)$ is zero.
\item[(ii)] If $y < x$ then the coefficient of $x'$ in $\partial^2 (y)$ is zero.
\end{itemize}
\end{lemma}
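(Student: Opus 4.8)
The plan is to use the geometric interpretation of $\partial^2$ via broken polygons together with the monogon restrictions established in Lemma~\ref{lem:bigons-have-no-monogons}. Recall that the coefficient of $y$ in $\partial^2(x)$ is computed by counting pairs of generalized immersed bigons: one from $x$ to some intermediate generator $w$ (contributing to $\partial x$) and one from $w$ to $y$ (contributing to $\partial y$), with the appropriate signs and weights. Each such pair, glued along the shared corner at $w$, produces an immersed region whose boundary consists of two arcs on $\mu$ (broken at $w$) and a polygonal path in $\Gamma$ running from the intersection point $p_x$ to $p_y$. The strategy is to analyze the heights of the intersection points involved and show that in cases (i) and (ii) the geometry is obstructed: the polygonal path is forced to contain a sub-path $P'$ returning to one of its own self-intersection points without crossing $\mu$, which contradicts Lemma~\ref{lem:bigons-have-no-monogons}.

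First I would set up coordinates: the segment $s_{x,x'}$ lies to the right of $\mu$ and connects $p_x$ (at height $A(x)$) to $p_{x'}$ (at height $A(x') \ge A(x)$), with $x' > x$ meaning $p_{x'}$ is higher. A bigon contributing to $\partial x$ must have its $\Gamma$-boundary emanating from $p_x$; since the bigon lies on the right side of $\mu$ near $p_x$ (a bigon from $x$ to something goes out the right side of $\mu$ following $\Gamma$ in one of the two directions—here I will need to pin down which direction, using orientations and the convexity of the corner at $x$), its boundary begins by running along $s_{x,x'}$. For case (i), consider a hypothetical contribution to the coefficient of $y$ in $\partial^2(x)$ with $y > x'$. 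The composite broken path starts at $p_x$, runs up the segment $s_{x,x'}$ to near $p_{x'}$, and must ultimately reach $p_y$ which lies above $p_{x'}$. Since the first bigon has a corner at $p_{x'}$ or passes it, and the second bigon starts at the intermediate generator, I would track where the polygonal path first meets $\mu$ again: I claim it cannot do so until after it has passed $p_{x'}$, and this forces the path to close up on itself in a monogon-like configuration disjoint from $\mu$, contradicting Lemma~\ref{lem:bigons-have-no-monogons} (applied, as noted after that lemma, in the strip viewed inside $S^2$). Case (ii) is the mirror-image argument: a bigon contributing to $\partial y$ with $y < x$ must reach up to $p_{x'}$ as an intermediate step, and then the second bigon from $x'$ onward, combined with the segment $s_{x,x'}$, produces the same kind of obstructed configuration.

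The main obstacle I anticipate is the bookkeeping of \emph{which side of $\mu$} and \emph{which direction along $\Gamma$} each arc of the broken polygon travels, since the statement is specifically about segments on the right side of $\mu$ and the conclusion is asymmetric between $y > x'$ and $y < x$. This requires carefully using the convexity conditions at the corners $x$, $x'$, $y$, and $w$ together with the grading/orientation data (the parity of $\tilde\tau_w$ at each intersection point determines the local orientation of $\Gamma$), and then identifying the precise sub-path $P'$ that returns to a self-intersection point without crossing $\mu$. A secondary subtlety: the polygonal path may pass through false corners (turning points in $\bchain$), so I must make sure Lemma~\ref{lem:bigons-have-no-monogons}, which is stated for polygonal paths in $\Gamma$, covers exactly these paths—which it does, since that lemma is phrased for arbitrary polygonal paths consistent with left turns. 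Once the configuration is correctly identified, the contradiction with Lemma~\ref{lem:bigons-have-no-monogons} is immediate, so the real work is entirely in the geometric case analysis rather than in any computation.
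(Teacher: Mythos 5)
Your starting point is off: you propose to analyze $\partial^2(x)$ as a count of \emph{pairs} of generalized bigons glued at an intermediate generator $w$. This is the $m_1\circ m_1$ picture, and those broken polygons genuinely exist in abundance even when $\partial^2 = 0$; the $A_\infty$ relation says they cancel in pairs, not that they are individually absent. So a direct ``no such glued-bigon configuration exists'' argument is not available, and Lemma~\ref{lem:bigons-have-no-monogons} cannot be applied to a broken path that touches $\mu$ in the middle at $w$. The paper instead invokes Proposition~\ref{prop:Ainfty-relations} with $k=2$ to rewrite $\partial^2(x)$ as $\pm m^\bchain_2(m^\bchain_0(),x)$ (the other term vanishes because $\mu$ has no self-intersections), so the $y$-coefficient is a count of \emph{generalized triangles} with corners at $x$, $y$, and a self-intersection point $p$ of $\Gamma$ — a single immersed polygon with a single $\mu$-arc. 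This is the object you actually need to rule out.

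A second, independent problem: your key geometric claim, that the $\Gamma$-portion of the boundary ``cannot meet $\mu$ again until after it has passed $p_{x'}$,'' is the opposite of what is needed and of what is actually true. The heart of the argument is an induction on the partial paths $P_0, P_1, \ldots, P_n$ (each following $P$ through one more false corner, then running straight back to $\mu$) showing $x < z_n < z_{n-1} < \cdots < z_0 = x'$, i.e.\ the first return $z$ satisfies $x < z \le x'$, hence $z < y$. One then applies Proposition~\ref{lem:HHHK-prop} to the smoothed disk $D$ to find a clockwise sub-monogon inside $D_n$ and contradicts Lemma~\ref{lem:clockwise-monogons} (not Lemma~\ref{lem:bigons-have-no-monogons}, though the latter has the same flavor). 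Your proposal omits this induction entirely and the bookkeeping that makes it work, and the inequality it aims at is reversed, so as written the proof has a genuine gap rather than just missing details.
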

\begin{proof}
We will prove (i); (ii) is similar. The $\Ainfty$-relations (Proposition \ref{prop:Ainfty-relations}) imply that $\partial^2(x) = -m^\bchain_2(m^\bchain_0(), x )$. In particular, the $y$ term in $\partial^2(x)$ counts generalized triangles with corners at $x$, $y$, and $p$ for any self-intersection point $p$ of $\Gamma$, weighted with an appropriate coefficient depending on $p$. We will show that there are no such triangles, implying that the $y$ term in $\partial^2(x)$ is zero.

Suppose there is such a generalized immersed triangle $D$. Let $P$ be the polygonal path in $\Gamma$ from $x$ to $y$ that along with the segment from $y$ to $x$ in $\mu$ makes up the boundary of $D$. Note that $P$ consists of a polygonal path consistent with $\bchain$ from $x$ to $p$ along with a polygonal path consistent with $\bchain$ from $p$ to $y$. 
Let $z$ denote the first intersection of $P$ with $\mu$ (not counting the initial point $x$), and let $P'$ denote the portion of $P$ from $x$ until $z$. We first argue, using an induction on paths which agree with $P'$ up to a point, that $x < z \le x'$. Suppose the path $P'$ makes $n$ left turns before reaching $z$, and that the $i$th left turn is from a segment $s_{i-1}$ of $\Gamma$ to a segment $s_i$ of $\Gamma$. For $0\le i \le n$, let $P_i$ denote the piecewise smooth path in $\Gamma$ that starts at $x$ and agrees with $P'$ through the first $i$ corners but then continues along $s_i$ without making any more left-turns until reaching $\mu$ at some point $z_i$. In particular, $P_0$ is just the segment $s_0 = s_{x,x'}$ so that $z_0 = x'$, and $P_n$ agrees with the path $P'$ so that $z_n = z$. Note that $P_0$ and the segment in $\mu$ from $z_0$ to $x$ bounds a bigon $D_0$. We will now show that $x < z_i < z_{i-1}$ and that $P_i$ along with the segment in $\mu$ from $z_i$ to $x$ bounds an immersed polygon $D_i$, for each $0 < i \le n$.

The path $P_i$ diverges from the path $P_{i-1}$ at some intersection point $p_i$. Since $P_i$ makes a left turn at $p_i$ it turns into the interior of $D_{i-1}$ along the segment $s_i$ of $\Gamma$, while the path $P_{i-1}$ continues along the segment $s_{i-1}$ to until it reaches $z_{i-1}$. We consider the point $q$ at which the path $P_i$ first leaves the polygon $D_{i-1}$. This point can not be on the segment of $s_{i-1}$ between $p_i$ and $z_{i-1}$, because $p_i$ is the only intersection between the segments $s_{i-1}$ and $s_i$. It also can not be on the part of $P_i$ before the turn at $p_i$. If it were, the portion of $P_i$ from $q$ to $q$ would be the counterclockwise oriented boundary of an immersed polygon lying entirely on the right side of $\mu$. Moreover, since the path $P$ can only deviate from $P_i$ by turning leftward (into the immersed polygon) and would then need to leave the immersed polygon somewhere, it is clear in this case that a portion of $P$ also bounds a counterclockwise immersed polygon disjoint from $\mu$. But this is impossible by Lemma \ref{lem:bigons-have-no-monogons}. Thus $P_i$ must leave the polygon $D_{i-1}$ at some point $z_i$ on the segment of $\mu$ between $x$ and $z_{i-1}$. It follows that $x < z_i < z_{i-1}$. Removing the triangle formed by $\mu$, $s_i$, and $s_{i-1}$ from the polygon $D_{i-1}$ gives a new polygon $D_i$ whose boundary is the polygonal path $P_i$ followed by the segment of $\mu$ from $z_i$ to $x$.

Since $z \le x'$, we have in particular that $z < y$. We now consider the portion of the boundary of $D$ given by the segment of $\mu$ from $z$ to $x$ followed by the polygonal path $P'$ from $x$ to $z$. This is the counterclockwise boundary of an immersed polygon with a convex corner at $z$, namely the polygon $D_n$. However, this is not possible by the same reasoning used in the proof of Lemma \ref{lem:bigons-have-no-monogons}: if we smooth the boundary of $D$ we get an immersed disk and smoothing the boundary of $D_n$ except the corner at $z$ gives an arc in the boundary of this disc bounding a counterclockwise monogon. By Lemma \ref{lem:HHHK-prop}, there must be a portion of the boundary of the smoothed disk lying strictly inside $D_n$ that bounds a clockwise monogon. This must come from smoothing a portion of the polygonal path $P$ that is the clockwise oriented boundary of an immersed polygon in the interior of $D_n$; since the interior of $D_n$ is disjoint from $\mu$, such a path does not exist by Lemma \ref{lem:clockwise-monogons}.

\end{proof}

\begin{figure}
\includegraphics[scale=1]{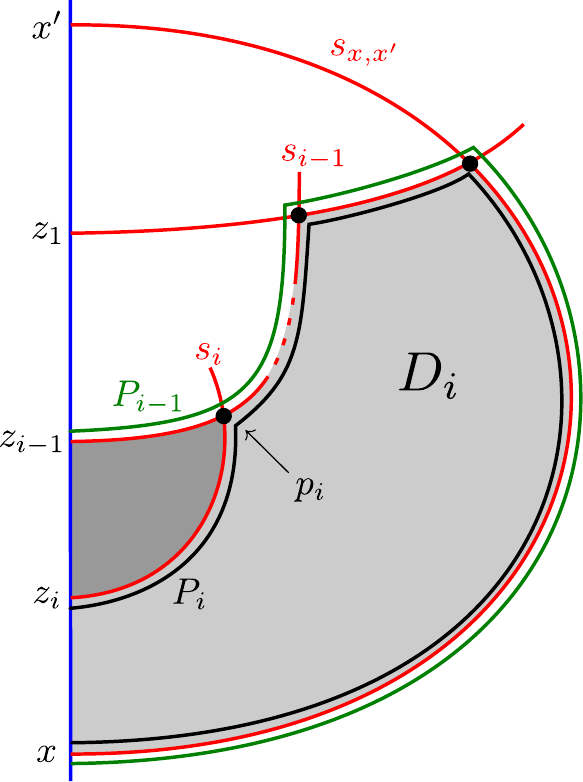}
 \caption{The path $P_i$ and bigon $D_i$ described in the proof of Lemma \ref{lem:some-d-squared-zero-horizontal}. The path $P_i$ differs from $P_{i-1}$ by making one additional turn, and $D_i$ is obtained from $D_{i-1}$ by removing the darkly shaded triangular region.}
 \label{fig:left-turn-bounds}
\end{figure}

The following Lemma is analogous to Lemma \ref{lem:some-d-squared-zero-horizontal} considering segments and bigons on the left side of $\mu$. The proof is identical, with all pictures rotated by $\pi$.

\begin{lemma}\label{lem:some-d-squared-zero-vertical}
Consider an immersed multicurve $\Gamma$ in $\strip$ with a collection of turning points $\bchain$. Let $x$ and $x' < x$ be generators of the precomplex $C(\Gamma, \bchain)$ that are connected by a segment $s_{x,x'}$ of $\Gamma$ on the left side of $\mu$.
\begin{itemize}
\item[(i)] If $y < x'$, then the coefficient of $y$ in $\partial^2 (x)$ is zero;
\item[(ii)] If $y > x$ then the coefficient of $x'$ in $\partial^2 (y)$ is zero. \qed
\end{itemize}
\end{lemma}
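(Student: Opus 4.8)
The plan is to follow exactly the same argument as in Lemma \ref{lem:some-d-squared-zero-horizontal}, but reflected through the origin so that "right side of $\mu$" becomes "left side of $\mu$" and the ordering on generators is reversed. Concretely, I would observe that applying the rotation $(x,y) \mapsto (-x,-y)$ to $\strip$ carries the line $\mu$ to itself, interchanges $\partial_L\strip$ with $\partial_R\strip$, interchanges the $w$- and $z$-marked points (and their grading arcs), reverses the vertical ordering on intersection points of $\Gamma$ with $\mu$, and sends a multicurve in almost simple position to one in almost simple position. Under this symmetry, a segment $s_{x,x'}$ on the left side of $\mu$ with $x' < x$ becomes a segment on the right side of $\mu$ joining the images of $x$ and $x'$, with the image of $x'$ now \emph{above} the image of $x$; so statement (i) here (with $y < x'$) is exactly statement (i) of Lemma \ref{lem:some-d-squared-zero-horizontal} applied to the rotated data (with the rotated $y$ lying above the rotated $x'$), and likewise (ii) here corresponds to (ii) there.

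The only point requiring care is that the combinatorial ingredients used in the proof of Lemma \ref{lem:some-d-squared-zero-horizontal} are themselves symmetric under this rotation, so that nothing is lost in the translation. First, the $\Ainfty$-relation $\partial^2(x) = -m^\bchain_2(m^\bchain_0(),x)$ (Proposition \ref{prop:Ainfty-relations}) is insensitive to the symmetry. Second, Lemma \ref{lem:clockwise-monogons} and Lemma \ref{lem:bigons-have-no-monogons} are stated for $\Gamma$ in simple or almost simple position and for polygonal paths disjoint from $\mu$; both hypotheses are preserved by the rotation (and Proposition \ref{lem:HHHK-prop}, on which they rest, applies equally to immersed disks in $\strip$ or the cylinder regardless of orientation, as already noted in the text). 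Third, the induction in the proof of Lemma \ref{lem:some-d-squared-zero-horizontal} tracks the first return point $z$ of the polygonal path to $\mu$ and the nested bigons $D_i$; under the rotation the inequalities $x < z_i < z_{i-1} \le x'$ become the reversed inequalities appropriate to segments on the left of $\mu$, and the final contradiction via Lemma \ref{lem:HHHK-prop} and Lemma \ref{lem:clockwise-monogons} goes through verbatim.

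So the proof I would write is essentially one sentence: "The proof is identical to that of Lemma \ref{lem:some-d-squared-zero-horizontal} after rotating all figures by $\pi$, which interchanges the two sides of $\mu$, the two boundary components of $\strip$, and the two families of marked points, and reverses the ordering on generators." I would perhaps add a clause reminding the reader that in the rotated picture statements (i) and (ii) of the two lemmas are swapped or not according to whether one rotates $y$ as well, but this is bookkeeping. I do not anticipate a genuine obstacle; the one thing to double-check is that the required auxiliary lemmas (\ref{lem:clockwise-monogons}, \ref{lem:bigons-have-no-monogons}, and \ref{lem:HHHK-prop}) were stated with enough generality to be invariant under the reflection, and inspection shows they were (they are stated for simple or almost simple position with no reference to orientation of $\mu$ or to which boundary is which). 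Hence the "proof is identical, with all pictures rotated by $\pi$" remark in the excerpt is fully justified and nothing further is needed.
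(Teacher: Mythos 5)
Your proposal is correct and matches the paper's own treatment, which disposes of Lemma \ref{lem:some-d-squared-zero-vertical} with the single remark that the proof is identical to Lemma \ref{lem:some-d-squared-zero-horizontal} after rotating all pictures by $\pi$. Your checks that the auxiliary ingredients (Lemmas \ref{lem:clockwise-monogons}, \ref{lem:bigons-have-no-monogons}, and Proposition \ref{lem:HHHK-prop}) are invariant under the rotation, and that the rotation by $\pi$ carries (i) to (i) and (ii) to (ii) since it reverses the vertical ordering while swapping the two sides of $\mu$, are exactly the right things to verify.
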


Our last observation about $\partial^2 = 0$ in the precomplex $C(\Gamma, \bchain)$ concerns when it is identically zero. As noted earlier, if the collection of turning points $\bchain$ is a bounding chain then $\partial^2 = 0$. The converse to this does not hold in general, but it does when $\Gamma$ in in almost simple position and the restriction $\bchainhat$ of $\bchain$ to degree zero intersection points is of local system type.

\begin{proposition}
Consider an immersed multicurve $\Gamma$ in $\strip$ in almost simple position decorated with a collection of turning points $\bchain$ such that $\bchainhat$ is of local system type. In the precomplex $C(\Gamma, \bchain)$, $\partial^2 = 0$ if and only if $\bchain$ is a bounding chain.
\end{proposition}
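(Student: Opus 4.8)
One direction is immediate: if $\bchain$ is a bounding chain then $m_0^\bchain = 0$, so by the $\Ainfty$-relations $\partial^2 = m_1^\bchain \circ m_1^\bchain$ is an obstruction that vanishes, exactly as in the Proposition preceding Definition~\ref{def:bounding-chain}. The substantive direction is the converse: assuming $\partial^2 = 0$ on $C(\Gamma,\bchain)$, we must show $m_0^\bchain = \sum_k m_k(\overline\bchain,\dots,\overline\bchain) = 0$ as an element of $CF(\Gamma)$. The plan is to translate the vanishing of $m_0^\bchain$ into a statement about generalized monogons bounded by polygonal paths in $\Gamma$ consistent with $\bchain$, and then use the combinatorial restrictions on such polygons coming from almost simple position together with the local-system-type hypothesis on $\bchainhat$.

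First I would recall that $m_0^\bchain$ is a homogeneous degree $-1$ element of $CF(\Gamma)$, hence a linear combination (over $\sRminus$) of the odd generators $p^-$ for $p \in \sI_{\le 0}$ together with possibly the odd generators attached to basepoints; concretely $m_0^\bchain$ counts generalized immersed monogons in $\strip$ (equivalently $\dmstrip$) whose single corner $q$ is a self-intersection point of $\Gamma$ and whose boundary is a closed polygonal path consistent with $\bchain$. So the goal is: $\partial^2 = 0 \implies$ every such weighted count is zero. The key reduction is to split monogons according to whether their boundary polygonal path crosses $\mu$. A monogon whose boundary polygonal path is disjoint from $\mu$ is, after smoothing, the counterclockwise boundary of an immersed monogon contained entirely on one side of $\mu$; by Lemma~\ref{lem:clockwise-monogons} (applied to the sub-paths produced by Proposition~\ref{lem:HHHK-prop}) no such configuration can exist — more precisely, a minimal such monogon would have to contain a clockwise sub-monogon disjoint from $\mu$, which is forbidden. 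Hence every monogon contributing to $m_0^\bchain$ must have boundary crossing $\mu$, and I can pick the first and last crossings to cut it open into a generalized \emph{bigon} with corners at two points $x, y \in \Gamma \cap \mu$ — that is, a term contributing to $m_1^\bchain(x)$ landing on $y$, possibly with a false corner at $q$ inserted along the way. This is the same bookkeeping that appears in the proof of Lemma~\ref{lem:some-d-squared-zero-horizontal}, where $\partial^2(x) = -m_2^\bchain(m_0^\bchain(),x)$: a nonzero monogon in $m_0^\bchain$ at $q$, together with a bigon from $x$ to $y$ through $q$, assembles to a term in $\partial^2$.

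The heart of the argument is then to show that $\partial^2 = 0$ forces the individual monogon counts to cancel. I would set this up by ordering the generators of $C(\Gamma,\bchain)$ by height as in Section~\ref{sec:curves-in-strip} and arguing by descending (or ascending) induction. Suppose $m_0^\bchain \ne 0$ and let $q$ be a self-intersection point whose coefficient in $m_0^\bchain$ is nonzero, chosen extremal in an appropriate sense (e.g.\ the monogon's domain is innermost, so that among all the monogons at $q$ the nontrivial total involves the "lowest'' available configuration). Using the decomposition above, pair $q$ with the adjacent generators $x, x'$ connected to $q$'s quadrant via a short segment of $\Gamma$ meeting $\mu$; the relation $\partial^2(x) = -m_2^\bchain(m_0^\bchain(),x)$ then says that the $y$-coefficient of $m_2^\bchain(m_0^\bchain(),x)$ vanishes for all $y$. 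Now Lemmas~\ref{lem:some-d-squared-zero-horizontal} and~\ref{lem:some-d-squared-zero-vertical} already guarantee that certain entries of $\partial^2$ vanish \emph{automatically}, regardless of whether $\bchain$ is a bounding chain; the point is that the remaining entries of $\partial^2$ — the ones not covered by those lemmas — are precisely the ones that detect $m_0^\bchain$, and for those the only triangles contributing to $m_2^\bchain(m_0^\bchain(),x)$ are the obvious ones pairing the segment $s_{x,x'}$ with a monogon at $q$. Thus $\partial^2 = 0$ forces the $q$-coefficient of $m_0^\bchain$ to be zero, contradicting the choice of $q$. Here the local-system-type hypothesis on $\bchainhat$ is essential: it ensures that the only degree-zero self-intersection points carrying nonzero weight are the local-system intersection points in the crossing regions, which lie in small neighborhoods where the monogon combinatorics are completely controlled (the strands are parallel outside the crossing region), so no unexpected triangles sneak in when we apply Lemmas~\ref{lem:some-d-squared-zero-horizontal}–\ref{lem:some-d-squared-zero-vertical}.

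\textbf{Main obstacle.} The delicate point is the matching between monogon-configurations at $q$ and the "off-diagonal'' entries of $\partial^2$: I need to verify that \emph{every} nontrivial contribution to $m_0^\bchain$ can be detected by some entry of $\partial^2$ that is not already forced to vanish by Lemmas~\ref{lem:some-d-squared-zero-horizontal} and~\ref{lem:some-d-squared-zero-vertical}, and that when I reconstitute the triangle $m_2^\bchain(m_0^\bchain(),x)$ there are no other triangles contributing to the same coefficient that could cancel the one I want — i.e.\ the cancellation in $\partial^2$ is really internal to $m_0^\bchain$ and not between $m_0^\bchain$-terms and genuine $\partial$-bigons. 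Controlling this requires carefully choosing $q$ and the adjacent generators $x,x'$ using the height ordering and an innermost-domain argument, exactly parallel to the induction on the paths $P_i$ in the proof of Lemma~\ref{lem:some-d-squared-zero-horizontal}, and checking that the local-system-type condition removes all ambiguous degree-zero corners outside the crossing regions. Once that matching is pinned down, the rest is the bookkeeping of signs and powers of $U,V$, which works out because the assembled polygon covers the same marked points as its two pieces (as in Proposition~\ref{prop:Ainfty-relations}).
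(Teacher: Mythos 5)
Your overall strategy is correct in broad outline: use the $\Ainfty$ relation $\partial^2(x) = \pm m_2^\bchain(x, m_0^\bchain())$ to deduce $m_0^\bchain = 0$ from $\partial^2 = 0$, by picking an extremal self-intersection point $q$ with $c_q \neq 0$ and finding a single entry of $\partial^2$ that isolates the contribution of $q$. You also correctly flag the main difficulty (showing no other triangle contributes to the same entry) and correctly sense that the local-system-type hypothesis is what controls degree-zero corners. But there are several genuine gaps in how you propose to carry this out, and they diverge from what the paper actually does.

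First, your reduction step --- ruling out generalized monogons whose boundary is disjoint from $\mu$ via Lemma~\ref{lem:clockwise-monogons} together with Proposition~\ref{lem:HHHK-prop} --- is not correct as stated, and is not what is needed. A generalized monogon with a false corner at a self-intersection point of \emph{strictly negative} degree can perfectly well have boundary disjoint from $\mu$; nothing forbids it. What the paper actually establishes is a weaker but more useful claim: in almost simple position, with $\bchainhat$ of local system type, there is \emph{no} generalized monogon that neither encloses a marked point nor has a false corner of strictly negative degree. (Honest monogons with no false corners are ruled out by almost simple position; false corners at local system intersection points don't change the homotopy class of the boundary because they lie in small crossing regions, so they can't create a monogon either.) The upshot is the key numerical fact that your plan never derives: every monogon contributing to $m_0^\bchain()$ carries a factor of $W$, and since the power of $W$ in the coefficient of $q^+$ is forced to be $1 + \deg(q)/2$, this shows $c_q \neq 0$ \emph{implies} $\deg(q) \geq 0$. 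That inequality is the organizing principle of the proof; without it you have no control over which self-intersection points can appear.

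Second, your extremal choice --- ``height ordering'' and ``innermost domain'' --- is not the right filtration, and is not obviously enough to close the argument. The paper chooses $p$ to have \emph{maximal degree} among self-intersection points with $c_p \neq 0$ (and, as a secondary tiebreak, minimal with respect to a containment partial order among local-system intersection points in the same crossing region). With $x,y$ the endpoints of the two segments $s_x, s_y$ through $p$, chosen consistently with the orientation, the triangle bounded by $s_x, s_y, \mu$ contributes $c_p$ to the $y$-coefficient of $\partial^2(x)$. The reason no other triangle $m_2^\bchain$-contribution at $(x,y)$ cancels this one is precisely the degree count: any other generalized triangle with corners $x,y,q$ is forced to stay inside the triangle bounded by $s_x, s_y, \mu$ (the left-turn confinement argument as in Figure~\ref{fig:left-turn-bounds}), and because the total degree of corners along a polygonal path from $x$ to $y$ is fixed by the gradings, a false corner of negative degree would force $\deg(q) > \deg(p)$, contradicting maximality; a false corner of degree zero must be a local system intersection point, where the secondary minimality takes over. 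Height or domain size do not see any of this.

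Third, Lemmas~\ref{lem:some-d-squared-zero-horizontal}--\ref{lem:some-d-squared-zero-vertical} are not used in the paper's proof of this proposition --- they are inputs to Proposition~\ref{prop:inductive-step}, the main inductive step. What \emph{is} reused from those lemmas is only the confinement mechanism (polygonal paths with left turns starting along $s_x$ and ending along $s_y$ cannot leave the triangle). Invoking the lemmas themselves to classify which entries of $\partial^2$ are ``automatic'' is not part of the argument and would not give you the needed entry.

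To summarize: the skeleton is right, but the key quantitative ingredient --- that $c_q \neq 0$ forces $\deg(q) \geq 0$, derived from the structure of local-system crossing regions and almost simple position, and the subsequent maximality-in-degree argument --- is missing, and your substitutes (crossing-$\mu$ dichotomy, height/innermost ordering) do not work.
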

\begin{proof}
By the $\Ainfty$-relations, we have that
$$\partial^2(x) = m_1^\bchain( m_1^\bchain (x))  = (-1)^{\gr_z(x)} m_2^\bchain(m_0^\bchain(), x ) - m_2^\bchain (x, m_0^\bchain()). $$
The first term on the right is zero, since $m_0^\bchain$ is zero on $\mu$. Clearly if $\bchain$ is a bounding chain on $\Gamma$, meaning that $m_0^\bchain = 0$, then $\partial^2 = 0$. We need to show the converse, that if $m_0^\bchain$ is not zero then $\partial^2 \neq 0$.

We have that $m_0^\bchain()$ is a linear combination over $\sRminus$ of self-intersection points of $\Gamma$. In fact, since $\deg_w = \deg_z$, the powers of $U$ and $V$ agree in every coefficient of $m_0^\bchain()$. That is, we can think of the coefficients as being in $\F[W]$, where $W$ denotes the product $UV$. For any $p$ in $\sI$ the power of $W$ appearing in the coefficient of $p$ must be $1 + \deg(p)/2$. Thus we have
$$m_0^\bchain() = \sum_{p \in \sI} c_p W^{1+\deg(p)/2} p.$$
Because $\Gamma$ is in almost simple position, it is clear that there can be no generalized immersed monogons that do not either enclose a marked point of $\strip$ or have a false corner at an intersection point in $\bchain$. Moreover, a false corner at a local system intersection point alone does not allow a path to bound a monogon, since the path lies in a neighborhood of a similar path with no false corners. Thus, since $\widehat\bchain$ is of local system type, any monogon either encloses a marked point or contains a false corner at an intersection point with strictly negative degree; in either case, the weight of the monogon is a multiple of $W$. It follows that if the coefficient $c_p$ in $m_0^\bchain()$ is nonzero then $p$ has non-negative degree.

Suppose that $m_0^\bchain()$ is nonzero. We choose an intersection point $p$ with maximal degree such that $c_p$ is nonzero. Let $s_x$ and $s_y$ be the two segments of $\Gamma$ that intersect at $p$. Note that at most one of $s_x$ and $s_y$ has an endpoint on $\partial \strip$; to see this, observe that the ordering on endpoints of immersed arcs assumed when $\Gamma$ is in almost simple position implies that any crossing between two segments approaching the boundary has strictly negative degree. Since either $s_x$ or $s_y$ has two endpoints on $\mu$, we may choose an endpoint $x$ of $s_x$ and an endpoint $y$ of $s_y$ such that the segments are either oriented from $x$ and $y$ to $p$ or they are oriented from $p$ to $x$ and $y$. Up to relabelling $s_x$ and $s_y$, we can assume that $x < y$ if $p$ lies on the right side of $\mu$ and $x > y$ if $p$ lies on the left side of $\mu$.

We will show that the $y$ term in $\partial^2(x)$ in nonzero. By the $\Ainfty$-relations $\partial^2(x) = -m^\bchain_2(x, m^\bchain_0() )$, and the coefficient of $y$ in this is $\sum_{q \subset \sI} c_q n_q$, where $c_q$ is the coefficient of $q$ in $m^\bchain_0()$ and $n_q$ is the weighted count of generalized immersed triangles with corners at $x$, $y$, and $q$. The triangle bounded by $\mu$, $s_x$, and $s_y$ has corners at $x$, $y$, and $p$; since $c_p$ is nonzero by assumption, this gives a nontrivial contribution to the $y$ coefficient in $\partial^2(x)$. Consider any other triangle with corners at $x$, $y$, and $q$. The $\Gamma$ part of the boundary of this triangle is a polygonal path from $x$ to $y$, which must start by moving along $s_x$ and end by moving along $s_y$. Because this polygonal path can only make left turns, similar reasoning to that in Lemma \ref{lem:some-d-squared-zero-horizontal} (see Figure \ref{fig:left-turn-bounds}) implies that the path can not leave the triangle bounded by $s_x$, $s_y$, and $\mu$. The polygonal path must contain at least one false corner at a point in $\bchain$. If the degree of this false corner is negative then $\deg(q) > \deg(p)$, since the sum of the degrees of all corners on a polygonal path from $x$ to $y$ is fixed, being determined by the gradings on $x$ and $y$. But this implies that $c_q$ is zero, since $p$ was chosen to have maximal degree with nonzero $c_p$, and so the triangle does not contribute to the $y$ coefficient of $\partial^2(x)$.

If the false corner has degree zero, then it must be at a local system intersection point since $\widehat\bchain$ is of local system type. It follows that at least one of $s_x$ or $s_y$ lie on non-primitive components of $\Gamma$ (say these have multiplicities $k_x$ and $k_y$, respectively, and that $q$ is the intersection of $s_{x'}$ and $s_{y'}$ where $s_{x'}$ is in the bundle of $k_x$ nearby segments containing $s_x$ and $s_{y'}$ is in the bundle of $k_y$ nearby segments containing $s_{y'}$. Note that there is a partial order on the $k_x k_y$ intersection points between these two bundles of segments where $q < p$ means that $q$ is contained in the triangle formed by the segments intersecting at $p$. In addition to choosing $p$ to have maximal degree among intersection points with $c_p = 0$, we should also choose it to be minimal with respect to this partial ordering. It then follows that if there is another triangle with corner at $q$, $c_q = 0$ and this triangle does not contribute to the $y$ term in $\partial^2(x)$.\end{proof}

\section{Immersed curves representing $UV = 0$ complexes}\label{sec:simple-curves}

We have seen that an immersed curve $\Gamma$ in the strip $\strip$ decorated with a bounding chain determines a bigraded complex $C(\Gamma, \bchain)$ over $\sRminus$, and that any complex can be represented in this way by some decorated immersed curve in $\strip$. Our next goal is to show that any complex can be represented by a decorated immersed curve of a particularly nice form. In this section, we do this in the substantially easier setting of complexes over $\sRhat$. That is, we will prove the following:

\begin{proposition} \label{prop:simple-curves-existence}
For any bigraded complex $C$ over $\sRhat$, there is a pair $(\Gamma, \bchainhat)$ where $\Gamma$ is an immersed multicurve in $\strip$ in simple position and $\bchainhat$ is a bounding chain of local system type so that $C(\Gamma, \bchainhat)$ is homotopy equivalent to $C$. 
\end{proposition}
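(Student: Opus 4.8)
The plan is to reduce the general statement to a manageable normal form for the complex $C$ and then build the curve and bounding chain by induction. First I would replace $C$ by a chain homotopy equivalent complex admitting a reduced, homogeneous, and (by Proposition~\ref{prop:reduced-simplified-basis}) vertically simplified basis $\{x_1,\dots,x_n\}$; since we work over $\sRhat$, every nonzero term in $\partial$ is a pure power of $U$ or a pure power of $V$ (no mixed $U^aV^b$ with $a,b>0$), so the differential decomposes into a collection of ``vertical arrows'' $x_i \to U^{a}x_j$ and ``horizontal arrows'' $x_i \to V^{b}x_j$. The key structural fact I would exploit is that, because the basis is vertically simplified, the vertical arrows partition a subset of the generators into disjoint length-one segments (``dominoes''), and the remaining generators are exactly the generators of the vertical homology. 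I expect the natural inductive scaffolding to be: build a preliminary decorated curve from the naive representative (Proposition~\ref{prop:naive-curves}), then apply a sequence of homotopies and arrow-slide/$n$-strand configuration moves (Section~\ref{sec:train-tracks}, especially Lemma~\ref{lem:sort-arrows} and Proposition~\ref{prop:arrow-slide-basis-change}) that successively cancel intersection points of $\Gamma$ with $\mu$ that do not correspond to vertical-homology generators, pushing the corresponding algebraic data either into the shape of the underlying immersed arcs/circles or into the crossover arrows of local-system type.

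The heart of the argument, carried out in steps, is as follows. (1) Start with the naive representative $(\Gamma_0,\bchain_0)$ for the chosen basis; its underlying multicurve is a collection of arcs each crossing $\mu$ once for each unit of Alexander grading, with crossover arrows realizing every arrow of $\partial$. (2) Use arrow-slide moves to sort the crossover arrows (Lemma~\ref{lem:sort-arrows}) so that, near each ``vertical domino'' $x_i \to U^a x_{i+1}$, the corresponding crossover arrow sits adjacent to the two strands and can be absorbed by a finger-move homotopy into an honest turnback of the immersed arc: geometrically, this joins the two arcs for $x_i$ and $x_{i+1}$ into a single arc with a ``U-turn'' between heights $A(x_i)$ and $A(x_{i+1})$, and after this isotopy the two intersection points with $\mu$ have been traded for a differential carried entirely by the curve's shape. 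One does the analogous thing with horizontal dominoes, turning back on the other side of $\mu$. (3) After all dominoes have been absorbed, the remaining crossover arrows connect only vertical-homology generators to each other; using Lemma~\ref{lem:sort-arrows} and the crossover moves of Figure~\ref{fig:local-moves}, arrange these into the rational-canonical-form pattern in a crossing region, which by the discussion of Section~\ref{sec:local-systems} is precisely a bounding chain of local system type. (4) Check that the resulting multicurve is in simple position: minimal intersection with $\mu$ because we only removed cancelling pairs, minimal self-intersection and no immersed annuli by the admissibility/minimality arguments, and the non-primitive components have the required crossing-region form by construction. Throughout, invariance of the Floer complex under these moves (the invariance moves of Figure~\ref{fig:invariance-moves}, Proposition~\ref{prop:arrow-slide-basis-change}, and the $n$-strand replacement propositions) guarantees $C(\Gamma,\bchainhat)\simeq C$; and once $\partial^2=0$ holds (which it does, since $C$ is a genuine complex), the proposition at the end of Section~\ref{sec:properties-of-simple-curves} ensures $\bchainhat$ is genuinely a bounding chain.

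The main obstacle I anticipate is showing that the homotopies in step (2) can be performed consistently \emph{simultaneously} for all dominoes without the finger moves interfering with one another, and that the resulting curve has \emph{minimal} self-intersection (rather than merely being admissible). Concretely, a vertical arrow of length $a>1$ forces the joined arc to wrap around $a-1$ marked points, and when several such arcs coexist one must verify that the U-turns can be nested or interleaved so that no superfluous crossings are created; this is exactly the kind of bookkeeping that in \cite{HRW} was handled by the ``arrow sliding algorithm,'' and I would follow that template, using Lemma~\ref{lem:sort-arrows} to put arrows in a canonical order before absorbing them one at a time from, say, the innermost Alexander grading outward. A secondary subtlety is making sure that when two generators of vertical homology lie on curves that are homotopic as closed curves (forcing a non-primitive component), the leftover crossover arrows assemble into the rational canonical form with \emph{nonzero} bottom-corner weight $c_0$; this requires observing that the induced automorphism of the local system is invertible, which follows because the flip-type data (or here, simply the absence of further cancellation) forces the relevant change-of-basis matrix to be invertible. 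Once these combinatorial points are pinned down, the remaining verifications are routine applications of the invariance results already established.
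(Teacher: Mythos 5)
Your high-level plan (start from some curve-with-arrows representative and systematically remove crossover arrows until only curve-shape and local-system data remain) is on the right track, but there are two genuine gaps, and the first is fatal as stated.

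The principal gap is in step (2): you choose a single vertically simplified basis $\{x_1,\dots,x_n\}$ and then plan to absorb both ``vertical dominoes'' and ``horizontal dominoes.'' A vertically simplified basis partitions the vertical arrows into disjoint dominoes, as you say, but it gives you no control over the horizontal arrows: a vertically simplified basis need not be horizontally simplified (the paper explicitly notes that some complexes admit no basis which is simultaneously both, cf.\ Example~\ref{ex:nontrivial-local-system}), so a single generator may be the source or target of several horizontal arrows, and there is no ``horizontal domino'' to absorb on the other side of $\mu$. This is exactly the obstruction the paper's Lemma~\ref{lem:curves-with-arrows} is engineered to circumvent: it works with \emph{two} bases, a horizontally simplified one giving nice arcs $\tracks^h_R$ on the right of $\mu$ and a vertically simplified one giving nice arcs $\tracks^v_L$ on the left, and it records the change of basis between them as a block of crossover arrows confined to a thin central region. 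After that lemma, the underlying curve shape is already present on both sides (no finger-move absorption needed), and all the remaining work is removing the crossover arrows in the middle. Your reliance on the naive representative of Proposition~\ref{prop:naive-curves}, where every arc crosses every other arc once on each side of $\mu$, bypasses this structure and forces you to rebuild it via uncontrolled homotopies, which is precisely the ``interference'' problem you flag at the end without a resolution.

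The secondary gap is termination. You gesture at ``following the template of \cite{HRW}'' and cite Lemma~\ref{lem:sort-arrows}, but the actual engine that makes the removal terminate is the coloring and complexity machinery of Section~\ref{sec:removing-crossover-arrows}: each crossover arrow is assigned a left/right complexity by tracking how many times it must be pushed out of the middle region before the strands it connects diverge, and the induction increases the minimal complexity one unit at a time while controlling what new arrows can be created. Lemma~\ref{lem:sort-arrows} alone (the $LDPU$ decomposition of an $n$-strand configuration) sorts arrows within one Alexander grading but says nothing about why arrows created while pushing one arrow out do not proliferate indefinitely or re-enter at lower complexity. Without the complexity ordering, ``remove dominoes from the innermost grading outward'' is not a well-founded induction. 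Your step (3) also asserts that after absorbing all dominoes ``the remaining crossover arrows connect only vertical-homology generators to each other''; even in the two-basis setup this is not true at intermediate stages of the algorithm, and in your single-basis setup it fails from the start because the unabsorbed horizontal arrows connect arbitrary generators.

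Your step (4) (simple position) and the local-system normalization in step (3) are fine in spirit and essentially match the paper's closing paragraph, but they only become accessible once the two issues above are repaired.
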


Note that since we are working over $\sRhat$, $\bchainhat$ is a linear combination of points in $\sI_0$ rather than $\sI_{\le 0}$; any point with strictly negative degree would contribute a positive power of $UV$ any time it appeared in an immersed polygon contributing to $m^\bchain_k$ and these points can be ignored. Moreover, we only require $\bchainhat$ to be a bounding chain over $\sRhat$; that is, $m^\bchain_0$ vanishes modulo $UV$.

In light of Proposition \ref{prop:naive-curves}, the interesting part of Proposition \ref{prop:simple-curves-existence} is the fact that $\Gamma$ is in simple position and $\bchainhat$ is of local system type. In Section \ref{sec:uniqueness} we will see that for a homotopy equivalence class of complexes the representative $(\Gamma, \bchainhat)$ of this form is unique in the sense that $\Gamma$ is well defined up to homotopy in the punctured strip $\strip^*$ and $\bchainhat$ is well defined as a linear combination of self-intersection points (up to the obvious identification of local system self-intersection points between two homotopic curves in simple position).

\begin{remark}
Proposition \ref{prop:simple-curves-existence} (and the corresponding uniqueness statement) is equivalent to Theorem 5.14 in \cite{KWZ}, and when $\F = \Z/2$ it also follows from the proof of Theorem 5 in \cite{HRW}. We include a proof in order to make this paper self-contained, and since we will build on this crucial construction in later sections when we remove the $UV=0$ simplification. The proof here, while ultimately equivalent to the proofs in \cite{KWZ} and \cite{HRW}, avoids the language of type D structures and uses only the language of bigraded complexes over $\sRhat$, which may be more comfortable for some readers.
\end{remark}

\subsection{Naive train tracks}\label{sec:curves-with-crossover-arrows}

We fix a complex $C$ over $\sRhat$ that we wish to represent by a decorated immersed curve. By Proposition \ref{prop:reduced-simplified-basis}, we may assume that $C$ is reduced. Proposition \ref{prop:naive-curves} ensures that there is some decorated curve representing $C$ over $\sRhat$, so our strategy will be to systematically simplify this representative into the form predicted by Proposition \ref{prop:simple-curves-existence}. To describe this simplification, we will work with immersed train tracks so that we can use the arrow sliding moves developed in Section \ref{sec:train-tracks}.

A naive immersed curve representative for $C$ determines a train track representing $C$, which we call a \emph{naive train track} representing $C$. One approach would be to start applying arrow slide moves to the crossover arrows in this train track (this would involve resolving crossings and would likely result in splitting off immersed arcs with both boundaries on $\partial \strip$, which could then be deleted). However, we can get a significant head start to simplifying the train track if we pick nice bases for the complex $C$. 

\begin{lemma}\label{lem:curves-with-arrows}
Any bigraded complex $C$ over $\sRhat$ is represented by an immersed train track $\tracks$ in $\strip$ of the following form:
\begin{itemize}
\item The restriction of $\tracks$ to the regions $[-\tfrac 1 2, -\tfrac 1 4]\times \R$ and $[0, \tfrac 1 2]\times \R$ consists of a collection of arcs;
\item The restriction of $\tracks$ to the region $[-\tfrac 1 4, 0]\times \R$ consists of a collection of horizontal segments (one for each generator of $C$) connected by crossover arrows; and
\item the crossover arrows either point downward or connect segments corresponding to generators of the same Alexander grading;
\end{itemize}
\end{lemma}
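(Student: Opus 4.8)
The plan is to build the train track $\tracks$ from a carefully chosen basis of $C$, mirroring the strategy already used in the proof of Proposition~\ref{prop:naive-curves} but exploiting a horizontally simplified basis to control the directions of the crossover arrows. By Proposition~\ref{prop:reduced-simplified-basis} we may assume $C$ is reduced and comes equipped with a horizontally simplified homogeneous basis $\{x_1, \ldots, x_n\}$; recall this means each generator is the endpoint of at most one horizontal arrow, i.e. arrows of the form $\partial x_i \equiv U^{a_i} x_{i+1} \pmod V$. The naive construction of Proposition~\ref{prop:naive-curves} places $n$ immersed arcs connecting $\partial_L \strip$ to $\partial_R \strip$, intersecting $\mu$ at heights recording the Alexander gradings, and records the differential by turning points (equivalently, crossover arrows in the associated train track). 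I would first arrange these arcs so that outside the middle region $[-\tfrac 14, 0]\times\R$ they run in parallel horizontal families, with all crossings between arcs (which in the naive picture occur on both sides of $\mu$ to make each arc cross each other arc) pushed into the middle strip; this is possible by an ambient isotopy supported near $\partial \strip$, and gives the first bullet point immediately since on $[-\tfrac12,-\tfrac14]\times\R$ and $[0,\tfrac12]\times\R$ the arcs are just parallel horizontal segments.

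Next I would analyze the arrows. An arrow in $C$ from $x_i$ to $x_j$ with coefficient $cU^aV^b$ becomes, via the dictionary of Figure~\ref{fig:turning-point-to-train-track}, a crossover arrow between the segments for $x_i$ and $x_j$; whether it "points downward" in the $[-\tfrac14,0]\times\R$ region depends on the relative heights of $x_i$ and $x_j$ and on which side of $\mu$ the associated turning point sits. The key computation is that the heights are governed by the Alexander grading $A = \tfrac12(\gr_w - \gr_z)$, and for a nonzero arrow $cU^aV^b$ from $x_i$ to $x_j$ we have $A(x_j) - A(x_i) = b - a$. Thus an arrow with $b > a$ (in particular a "vertical-ish" arrow, $a=0$, $b>0$) raises the Alexander grading and its crossover arrow can be drawn pointing downward in the middle region, an arrow with $b < a$ (in particular a horizontal arrow, $b = 0$, $a > 0$) lowers it and points upward, and an arrow with $a = b$ connects segments of equal Alexander grading. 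So the content of the third bullet is precisely to handle the arrows with $a > b$: these are the ones whose naive crossover arrow would point upward while also changing Alexander grading, and these are exactly the nonzero horizontal arrows together with their $V$-multiples. Here I would invoke the horizontally simplified hypothesis: since each generator is an endpoint of at most one horizontal arrow, the upward-pointing arrows form a very controlled pattern (disjoint pairs $x_{2i-1}\to x_{2i}$), and by choosing the heights and the left/right placement of the corresponding turning points appropriately — placing the turning point for a horizontal arrow on the right of $\mu$ and ordering the pair so the arrow points downward, using the freedom in how the naive representative arranges arcs of a fixed Alexander grading — I can force every such arrow to be drawn as a downward crossover arrow within $[-\tfrac14,0]\times\R$ as well. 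Arrows with $a = b$ (where $x_i$ and $x_j$ lie at the same height) are left as horizontal crossover arrows, matching the second case of the third bullet.

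Finally I would verify that the resulting train track $\tracks$ still represents $C$: by construction the generalized bigons counted by $\partial^\tracks$ are in bijection with the triangles recovering the arrows of $C$, exactly as in the proof of Proposition~\ref{prop:naive-curves}, and the isotopies and rearrangements performed are all equivalences of train tracks by the invariance results of Section~\ref{sec:train-tracks} (or simply ambient isotopies of $\strip$ fixing the marked points). The main obstacle I anticipate is the bookkeeping in the third bullet: one has to simultaneously arrange the heights of all generators within each Alexander grading and the side-of-$\mu$ placement of all turning points so that every Alexander-grading-changing arrow points downward in the middle region, and this requires genuinely using — not just citing — the structure of a horizontally simplified basis, since for a general basis an arrow $cU^aV^b$ with $a > b > 0$ could be forced to point upward. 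The horizontally simplified condition is what guarantees the upward-candidate arrows (the horizontal ones, up to $V$-powers) are sparse enough to be combed into downward position without collision; checking that the required ordering of endpoints and heights is simultaneously realizable is the part that needs care.
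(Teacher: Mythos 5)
There is a genuine gap in your proposal, and it is located exactly where you anticipated trouble. The heights of the horizontal segments in $[-\tfrac14,0]\times\R$ must record the Alexander gradings of the generators (this is what makes the Floer pairing with $\mu$ reproduce the Alexander grading on $C(\tracks)$), so you have no freedom to ``order the pair so the arrow points downward.'' A horizontal arrow $x_i \to cU^a x_j$ with $a>0$ has $A(x_j) - A(x_i) = a - b = a > 0$, so the segment for $x_j$ is strictly above the one for $x_i$; any crossover arrow in the middle region realizing this arrow necessarily points upward, and the horizontally simplified hypothesis does nothing to fix this. (You also have a sign error, $A(x_j)-A(x_i)=b-a$ should be $a-b$, though your downward/upward conclusions happen to come out right because two sign errors cancel.) The deeper issue is a misreading of the target form: in the lemma the right region $[0,\tfrac12]\times\R$ is a collection of \emph{arcs}, and these arcs are not parallel horizontal segments. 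They are permitted to join two $\mu$-points, and it is precisely these ``cap'' arcs that encode the horizontal differential. The horizontally simplified basis is what guarantees those cap arcs are pairwise disjoint. Horizontal arrows are never supposed to become crossover arrows in the middle region at all.

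Once you see that the horizontal differential lives in the arcs on the right and the vertical differential lives in the arcs on the left, a second missing ingredient becomes visible: making the left region consist of arcs requires a \emph{vertically} simplified basis, and as the paper notes (Example~\ref{ex:nontrivial-local-system}) you cannot in general choose a single basis that is simultaneously horizontally and vertically simplified. The paper's proof resolves this by building two train tracks, one ($\tracks^v$) clean on the left using a vertically simplified basis and one ($\tracks^h$) clean on the right using a horizontally simplified basis, and then inserting crossover arrows in the middle region that realize the elementary change of basis from $\{x^v_i\}$ to $\{x^h_i\}$. Those are the only crossover arrows in the final $\tracks = \tracks'_L \cup \tracks^h_R$. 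The key computation, which your proposal does not contain, is that the change-of-basis arrows that survive on the left of $\mu$ come from elementary moves $x_i \mapsto x_i + cV^\ell x_j$ (with $\ell \ge 0$, so $A(x_j) \le A(x_i)$) and hence point downward or connect same-grading segments; the moves $x_i \mapsto x_i + cU^k x_j$ with $k>0$ give upward arrows, but those are placed on the right of $\mu$ and are discarded when $\tracks'_R$ is replaced by $\tracks^h_R$. That two-basis bookkeeping is the content of the lemma and is absent from your outline.
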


A train track of the form described in Lemma \ref{lem:curves-with-arrows} will be called a \emph{curve-with-arrows train track} representing $C$.

Before proving Lemma \ref{lem:curves-with-arrows}, we first need to review how arrow sliding in a train track relates to changes of basis in the corresponding complex.

\begin{proposition}\label{prop:basis-change}
Suppose $\tracks$ is a train track in $\strip$ representing a reduced complex $C$ over $\sRhat$ with respect to some basis $\{x_1, \ldots, x_n\}$. Let $\tracks'$ be the train track obtained from $\tracks$ by adding crossover arrows in a neighborhood of $\mu$ from $x_i$ to $x_j$, as in Figure \ref{fig:basis-change} (note that we add oppositely weighted arrows on either side of $\mu$ if $A(x_i) = A(x_j)$, an arrow to the right of $\mu$ if $A(x_i) < A(x_j)$, or an arrow to the left of $\mu$ if $A(x_i)>A(x_j)$). Then $\tracks'$ represents $C$ with respect to a different basis in which $x_i$ is replaced with $x_i + c x_j$  if $A(x_i) = A(x_j)$, with $x_i + c U^k x_j$ if $A(x_j) - A(x_i) = k > 0$, or with $x_i + cV^\ell x_j$ if $A(x_i) - A(x_j) = \ell > 0$.
\end{proposition}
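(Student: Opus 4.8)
The plan is to compute directly the effect on the Floer complex $C(\tracks', \bchain') = CF(\tracks', \mu)$ of inserting the crossover arrow, comparing it bigon-by-bigon with $C(\tracks, \bchain)$. Recall from Section~\ref{sec:train-tracks} that a crossover arrow near $\mu$ represents a left-turn crossover arrow, hence a bounding-chain term, so this is precisely an instance of move $(b)$ from Figure~\ref{fig:invariance-moves} applied to the Floer complex with $\mu$. Concretely, I would first set up the local picture: near $\mu$ the strands for $x_i$ and $x_j$ are horizontal, and adding the crossover arrow (with the stated placement and weight dictated by the Alexander gradings, so that the power of $U$ or $V$ is forced) creates new generalized bigons whose $\tracks'$-boundary uses a false corner at the new crossover arrow. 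The key computation is that a generalized bigon contributing a term $c'\,q$ to $\partial x_k$ in $C(\tracks, \bchain)$ which passes the strand $x_i$ immediately gives rise, via the new false corner, to a bigon contributing $c\,c'\,U^aV^b\,q$ (with $(a,b)$ forced by the gradings) to the coefficient of $q$ in the differential of the new generator; and conversely every new bigon arises this way.

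The main steps, in order: (1) record the new generator dictionary — there is an obvious bijection of generators of $CF(\tracks,\mu)$ and $CF(\tracks',\mu)$, since the crossover arrow is in a small neighborhood of $\mu$ and can be taken disjoint from the intersection points with $\mu$; call the new generator sitting at the location of $x_i$ by the name $x_i'$; (2) analyze how bigons with $x_i$ as an incoming corner change: such a bigon now also has a ``shortcut'' companion that turns left along the crossover arrow onto the $x_j$ strand, so its contribution to $\partial(x_i')$ reproduces the $\partial x_i$ term plus $c\,U^aV^b$ times the corresponding $\partial x_j$ term; (3) analyze bigons with $x_i$ as the outgoing (terminal) corner: these contribute to $\partial(\text{other generator})$, and the crossover arrow forces the $x_i$ coefficient to be accompanied by a $-c\,U^aV^b\,x_j$ (the sign coming from the false corner having boundary orientation opposing $\Gamma$, per the conventions in Section~\ref{sec:train-tracks}); (4) conclude that the net effect is exactly the change of basis $x_i \mapsto x_i + cU^aV^b x_j$ (and dually on the cochain side), with $U^aV^b$ equal to $1$, $U^k$, or $V^\ell$ according to the three cases on Alexander gradings. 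Since $C$ is reduced, there are no ``diagonal'' terms to complicate the bookkeeping, and the grading forces the monomial so no separate grading check is needed beyond noting the arrow placement.

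Alternatively — and this is the cleaner route, which I would present as the actual proof — I would invoke the already-established invariance results rather than recounting bigons: the crossover arrow near $\mu$ is (after the finger-move of Figure~\ref{fig:any-crossover-arrow-left-turn}) a left-turn crossover arrow, i.e.\ a bounding-chain entry, and Proposition~\ref{prop:arrow-slide-basis-change} together with move $(b)$ of Figure~\ref{fig:invariance-moves} already tells us that sliding/inserting such an arrow past the line $\mu$ enacts a change of basis $x \mapsto x + cy$ on $CF(\tracks, \mu)$. The only thing to check by hand is the power of $U$ or $V$: since $\tracks$ lives in the doubly marked strip $\dmstrip$ and the arrow is placed to the left of $\mu$, to the right of $\mu$, or straddling $\mu$ according to the sign of $A(x_j) - A(x_i)$, any new bigon using the false corner covers exactly $\max(A(x_j)-A(x_i),0)$ additional $w$-marked points and $\max(A(x_i)-A(x_j),0)$ additional $z$-marked points, which forces the coefficient to be $cU^{A(x_j)-A(x_i)}x_j$, $cV^{A(x_i)-A(x_j)}x_j$, or $cx_j$ respectively.

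The main obstacle I anticipate is purely bookkeeping: getting the signs exactly right (the false corner contributes $-1$ when the boundary orientation opposes $\Gamma$, and there is the separate question of whether $x_i$ appears as an incoming or terminal corner of a bigon, which determines whether the induced basis change is $x_i \mapsto x_i + cx_j$ or the dual statement on the cochain complex). But since an identical sign analysis was already carried out in the proof of the invariance of the Floer complex under move $(b)$ — and Proposition~\ref{prop:arrow-slide-basis-change} explicitly reduces arrow-slide-past-a-strand to two applications of move $(b)$ — I expect this to go through without genuine difficulty, and the write-up can largely cite those arguments rather than repeat them. A secondary small point is to confirm that the inserted arrow is unobstructed (so that the train track still represents a genuine complex): this is automatic here because the arrow sits in a tiny neighborhood of $\mu$, and any bigon on its left side bounded by $\tracks$ would, combined with the strip along $\mu$, produce a forbidden configuration for the reduced complex $C$.
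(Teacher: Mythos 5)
Your ``cleaner route'' shares the paper's overall strategy---reduce to Proposition~\ref{prop:arrow-slide-basis-change} and account for the powers of $U$ and $V$ via marked-point crossings---but it skips the step that makes this reduction legitimate. You write that Proposition~\ref{prop:arrow-slide-basis-change} together with move~$(b)$ of Figure~\ref{fig:invariance-moves} ``already tells us that sliding/inserting such an arrow past the line $\mu$ enacts a change of basis.'' Neither result says anything about \emph{inserting} a new arrow: Proposition~\ref{prop:arrow-slide-basis-change} concerns an arrow that is already present being slid past an existing crossing, and move~$(b)$ concerns a homotopy of $L_0$ past an already-decorated self-intersection of $L_1$ (and here $L_1=\mu$ has no self-intersections at all). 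Adding a new crossover arrow out of thin air can genuinely change the quasi-isomorphism type of the complex, not merely its basis, so the passage from $\tracks$ to $\tracks'$ needs its own argument. The paper supplies it via a cancelling-pair trick that your proposal does not contain: first introduce two adjacent, oppositely weighted arrows on one side of $\mu$, which has no effect on the complex at all by the top-row replacement in Figure~\ref{fig:local-moves}; then slide one of the pair across $\mu$, which is a change of basis by Proposition~\ref{prop:arrow-slide-basis-change}. When $A(x_i)\ne A(x_j)$ that slid arrow also picks up factors of $U$ and $V$ as it passes marked points on either side of $\mu$, so after the slide its weight is a multiple of $UV$ and it can be deleted, leaving a single arrow, which is exactly $\tracks'$. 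This same mechanism explains why the $A(x_i)=A(x_j)$ case ends with two arrows (one on each side) while the other cases end with only one---a structural feature of the statement that your route leaves unexplained.

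Two smaller remarks. First, your unobstructedness observation is in the right direction but overstated: a bigon bounded by $\tracks$ and the left side of the arrow may well exist; the actual argument is that the corresponding bigon with $\mu$ has weight divisible by $U$ or $V$ (since $C$ is reduced), and if by only one of them, the Alexander-grading mismatch at its corners forces the bigon with the arrow's left side to pick up the other variable as well, so the obstruction vanishes modulo $UV$ rather than being ``forbidden.'' Second, your Route~1 (direct bigon bookkeeping) is a legitimate alternative in principle, but as written it is only a sketch, and the case analysis and sign conventions it calls for are precisely what the cancelling-pair argument packages by reducing to moves already proven.
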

\begin{proof}
We first observe that any of the crossover arrows being added are unobstructed modulo $UV$. To see this, note that for any bigon bounded by $\tracks$ and the left side (that is, left when facing in the direction of the arrow) of a rectangular neighborhood of the crossover arrow there is a corresponding bigon bounded by $\tracks$ and $\mu$. Since $C$ is reduced, the weight of any such bigon should include either $U$ or $V$. If it contains both $U$ and $V$, then this bigon may be ignored. If not, then the bigon formed with $\mu$ must have corners at different Alexander gradings. The corresponding bigon with the left side of a neighborhood of the crossover arrow then cover at least one additional marked point of the type not covered by the bigon with $\mu$, so again this bigon has weight a multiple of $UV$.

In the case that $A(x_i) = A(x_j)$ we can introduce two oppositely weighted arrows on the left side of $\mu$ without affecting the corresponding complex (c.f. the $n$-strand arrow replacements in the top row of Figure \ref{fig:local-moves}). Then sliding one of these arrows across $\mu$ has the effect of the given change of basis, by Proposition \ref{prop:arrow-slide-basis-change}. In the case that $A(x_j) - A(x_i) = k > 0$, we can similarly add two oppositely weighted crossover arrows on the right side of $\mu$ and then slide the left arrow (with weight $-c$) leftward. The arrow first slides past $k$ different $w$ marked points, changing its weight to $-cV^k$. It then crosses $\mu$, which has the effect of the given change of basis, by Proposition \ref{prop:arrow-slide-basis-change}. Finally, it slides past $k$ different $z$ marked points, adding a factor of $U^k$ to its weight. At this point the arrow can be deleted since we are working modulo $UV$. The case that $A(x_i) - A(x_j) = \ell > 0$ is similar.
\end{proof}

\begin{figure}
\includegraphics[scale=1]{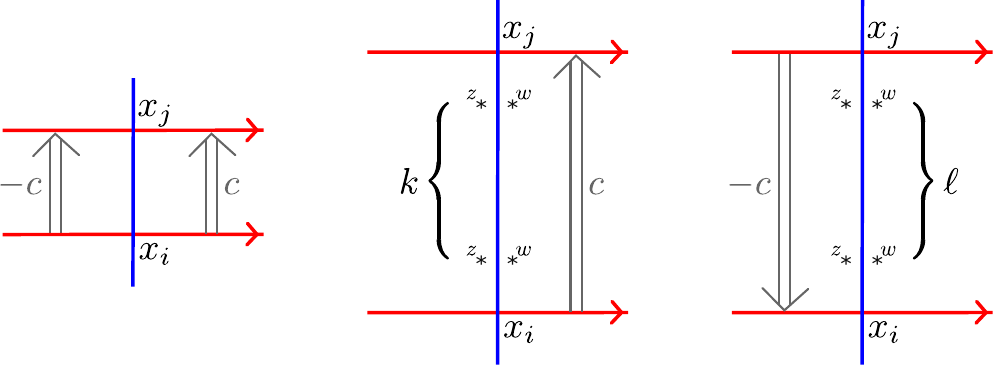}
\caption{Adding the crossover arrows pictured corresponds to a change of basis replacing $x_i$ with $x_i + cx_j$, $x_i + cU^k x_j$, or $x_i + cV^\ell x_j$, respectively. If the horizontal segments are oriented leftward, we multiply the weights on the crossover arrows by $-1$.}
\label{fig:basis-change}
\end{figure}

Another key observation in the proof of Lemma \ref{lem:curves-with-arrows} is that, for any train track $\tracks$ representing $C$ over $\sRhat$, every bigon contributing to $\partial^\tracks$ lies on one side of the $\mu$ or the other (or, more precisely, for any pair of generators the weighted sum of bigons that have portions on both sides of $\mu$ is zero). For any bigon with portions on both sides of $\mu$, consider a connected component of the bigon minus $\mu$ that is a bigon. If this bigon encloses a marked point, then the bigger bigon encloses a marked point of each type and does not contribute modulo $UV$. If the smaller bigpon has a positive power of $W$ in the weight of its boundary, then so does the larger bigon which again does not contribute $UV$. If neither of these two things happen, then the small bigon contributes to $\partial^\tracks$ with no power of $U$ or $V$. Since we assume that $C$ is reduced and $C^\tracks$ agrees with $C$, this bigon must cancel with another bigon and the same is true for the original larger bigon. Because we only need to consider bigons that lie on one side of $\mu$, we can split $\tracks$ into two pieces $\tracks_L$ and $\tracks_R$ on the left and right side of $\mu$, respectivey, and work with these two sides independently. The train track $\tracks_L$ in the left half of $\strip$ represents the hat vertical complex of $C$ (that is, the $U=0$ complex) and the train track $\tracks_R$ in the right half of $\strip$ represents the hat horizontal complex of $C$ (that is, the $V=0$ complex).

\begin{proof}[Proof of Lemma \ref{lem:curves-with-arrows}]
Consider a horizontally simplified basis $\{x^h_1, \ldots, x^h_n\}$ for $C$. There is a particularly nice train track $\tracks^h_R$ on the right side of $\mu$ that represents the horizontal complex with respect to this basis; $\tracks^h_R$ has a point on $\mu$ for each generator $x^h_i$ and an appropriately weighted immersed curve segment connecting the points corresponding to $x^h_i$ to $x^h_j$ for each horizontal arrow from $x^h_i$ to $x^h_j$. Because the basis is horizontally simplified, there is at most one such segment attached  to each point. For any point $x^h_i$ with no segment attached, we attach a horizontal segment connecting $x^h_i$ to $\partial_R \strip$. We let $\tracks^h$ be the union of $\tracks^h_R$ with any train track $\tracks^h_L$ on the left side of $\mu$ representing the vertical complex of $C$ with respect to the basis $\{x^h_1, \ldots, x^h_n\}$ (for example, this could be the left side of a naive train track). Thus $\tracks^h$ is simple on the right side of $\mu$ but (potentially) complicated on the left side of $\mu$. We can similarly define a train track $\tracks^v = \tracks^v_L \cup \tracks^v_R$ representing $C$ with respect to a vertically simplified basis $\{x^v_1, \ldots, x^v_n\}$ such that $\tracks^v_L$ is a collection of arcs and $\tracks^v_R$ is potentially complicated.

We will modify the train track $\tracks^v$ by compressing $\tracks^v_L$ into the strip $[-\tfrac 1 2, -\tfrac 1 4]\times \R$, compressing $\tracks^v_R$ into the strip $[\tfrac 1 4, \tfrac 1 2]\times \R$, and replacing each intersection with $\mu$ with a horizontal line segment across the strip $[-\tfrac 1 4, \tfrac 1 4]\times \R$; it is clear that this homotopy does not affect the associated complex. We then modify the train track further by adding crossover arrows in the strip $[-\tfrac 1 4, \tfrac 1 4]\times \R$, which realizes a change of basis in the corresponding complex. In particular, consider an elementary change of basis that replaces $x_i$ with $x_i + c U^k V^\ell x_j$, where $A(U^k V^\ell x_j) \le A(x_i)$. If $k$ and $\ell$ are both positive, then the corresponding basis change has no effect modulo $UV$ and we do not modify the train track. If $k = l = 0$ then $x_i$ and $x_j$ have the same Alexander grading; we realize a basis change of this form by inserting a pair of crossover arrows on either side of $\mu$ connecting the $i$th horizontal segment to the $j$th horizontal segment. These arrows have weights $c$ and $-c$ as in Figure \ref{fig:basis-change}. If $k = 0$ and $\ell > 0$ we add a crossover arrow with weight $c$ from the $i$th segment to the $j$th segment on the left side of $\mu$ (this arrow moves downward since $A(x_j) < A(x_i)$ in this case). If $\ell = 0$ and $k \ge 0$ we add an upward moving crossover arrow from the $i$th segment to the $j$th segment on the right side of $\mu$. An elementary basis change that replaces $x_i$ with $c x_i$ can be realized by adding basepoints of weights $c$ and $c^{-1}$ on the $i$th segment on either side of $\mu$, and a change of (ordered) basis switching two generators can be realized by introducing a crossing between the corresponding horizontal strands on either side of $\mu$.

The horizontally simplified basis $\{x^h_i\}$ can be obtained from the vertically simplified basis $\{x^v_i\}$ by some sequence of elementary basis changes. Adding arrows to $\tracks^v$ as above to realize this sequence of elementary basis changes results in a train track $\tracks' = \tracks'_L \cup \tracks'_R$ that represents $C$ with respect to the horizontally simplified basis. We now have two train tracks, $\tracks^h$ and $\tracks'$, that represent $C$ with respect to the basis $\{x^h_i\}$. The first is simple on the right side of $\mu$ and the second is simple on the left side of $\mu$. We define $\tracks$ to be $\tracks'_L \cup \tracks^h_R$ and observe that it has the desired form.
\end{proof}

For example, consider the complex from Example \ref{ex:complex-from-curves}. Figure \ref{fig:train-track-example2} shows the construction of a curve-with-arrows train track representing this complex, using the vertically simplified basis $\{a, b, d, c, e, f, g-f\}$ and the horizontally simplified basis $\{a-b,b,d,c,e,f,g\}$. This horizontally simplified basis is obtained from the vertically simplified basis by two elementary basis changes, with one replacing $a$ with $a-b$ and the other replacing $g-f$ with $(g-f) + f = g$.

\begin{figure}
\labellist
  \pinlabel {$\tracks^v$} at 45 -10
  \pinlabel {$\tracks'$} at 210 -10
  \pinlabel {$\tracks^h$} at 385 -10
  \pinlabel {$\tracks'_L \cup \tracks^h_R$} at 535 -10
\tiny
  \pinlabel {$1$} at 194 119
  \pinlabel {$-1$} at 232 119
  \pinlabel {$-1$} at 190 30
  \pinlabel {$1$} at 230 30
  \pinlabel {$1$} at 530 119
  \pinlabel {$-1$} at 526 30

         \endlabellist
\includegraphics[scale=.7]{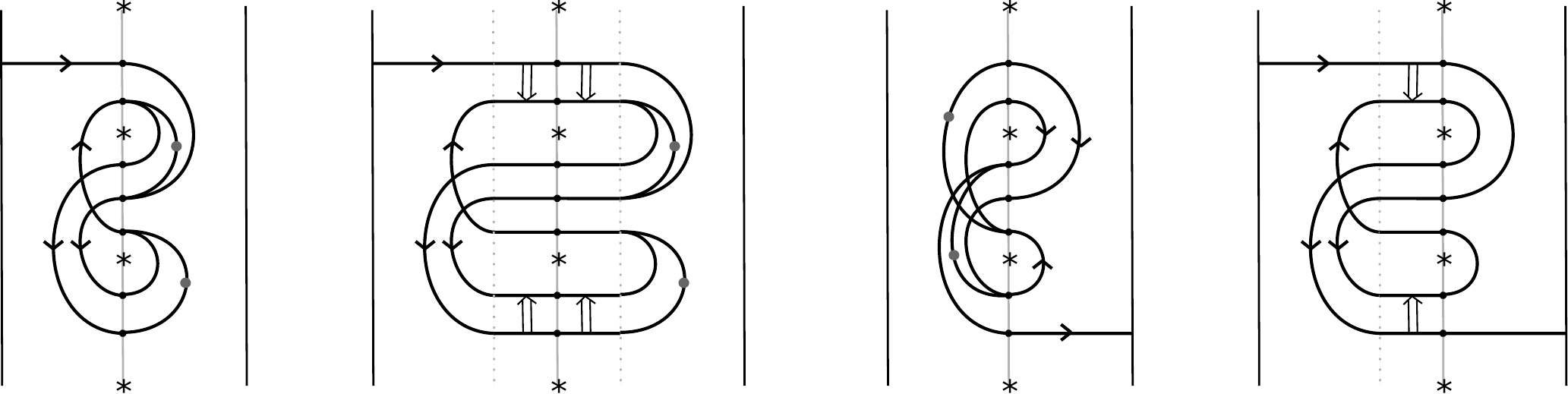} \hspace{2 cm}
 
\caption{The train tracks described in the proof of Lemma \ref{lem:curves-with-arrows} for the example complex shown in Example \ref{ex:complex-from-curves}. The gray dots represent basepoints with weight $-1$, all others edges have weight 1.}

\label{fig:train-track-example2}
\end{figure}

\subsection{Removing crossover arrows}\label{sec:removing-crossover-arrows}

To get from Lemma \ref{lem:curves-with-arrows} to Proposition \ref{prop:simple-curves-existence}, we need to show that essentially all crossover arrows can be removed without changing the complex determined by the train track up to homotopy. This is done using an arrow sliding algorithm that was first described in \cite[Section 3.7]{HRW}. We will now describe the arrow sliding algorithm needed for the train tracks in $\strip$ relevant to bifiltered complexes. The proof presented here is self contained, but some proofs that also appear in \cite{HRW} are repeated more tersely. The reader may wish to compare to the more detailed explanation in \cite{HRW}, which is adapted to train tracks in arbitrary surfaces. Keep in mind that \cite{HRW} assumes $\Ztwo$ coefficents, while we work with an arbitrary field $\F$ (see also \cite{KWZ} for a general treatment of the algorithm with field coefficients). We remark that for train tracks in $\strip$, the algorithm simplifies slightly from the general case.

Let $\tracks$ be a curve-with-arrows train track of the form predicted by Lemma \ref{lem:curves-with-arrows}. $\tracks$ consists of three regions: the middle region, whose right side is the line $\mu$, consists of a horizontal segment for each generator of $C(\tracks)$ and crossover arrows connecting these segments, while the left and right regions consists of arcs connecting pairs of the endpoints of these horizontal segments. We will refer to the Alexander grading of a horizontal segment, by which we mean the Alexander grading of the corresponding generator; this is determined by the height of the segment. We say that the crossover arrows in the middle region are \emph{short} if they connect horizontal segments with the same Alexander grading, and \emph{long} otherwise. By the definition of curve-with-arrows train tracks, all long arrows point downwards. The first step of the simplification is to remove all long arrows by sliding them rightward; once they pass all other arrows and reach the line $\mu$ at the right edge of the middle region, they can be removed by Proposition \ref{prop:basis-change} (this corresponds to an elementary basis change in the corresponding complex $C(\tracks)$). In the process of sliding the long arrow rightward past some number of short arrows, we apply the local moves pictured in Figure \ref{fig:local-moves}. This may at some point introduce a new long arrow, the composition of the long arrow being moved and a short arrow it passes, but this new arrow can be removed in the same way. We can use induction to ensure this process terminates. Suppose the rightmost long arrow has $k$ short arrows to its right. If $k = 0$, we can remove the arrow, and if $k >0$, we slide it past the short arrow immediately to its right. If this short arrow commutes with the long one, the result is a long arrow with $k-1$ short arrows to its right, and if not the result is two long arrows with $k-1$ short arrows to their right. In either case, by induction on $k$, these arrows can be removed, and thus all long arrows can be removed in finite time.

We now turn to removing short arrows. If there is a single crossover arrow, this is easy. The idea is to push the crossover arrow along parallel strands as far as possible until those strands diverge; note that this may involve pushing the arrow through $\mu$, applying a basis change according to Proposition \ref{prop:basis-change}. We say that a crossover arrow is \emph{removable} if there is a path, disjoint from $\tracks$ or $\mu$, from the left side of the arrow either to a puncture or to the boundary of the strip $\strip$ (see Figure \ref{fig:removable-arrows}). In the latter case, there are clearly no bigons whose boundary involves the crossover arrow so it can be deleted with no effect on the complex $C^\tracks$. In the former case, any bigon involving the crossover arrow must meet a puncture. If the crossover arrow is pushed as far as possible it becomes a long arrow, either pointing down on the left side of $\mu$ or up on the right side of $\mu$, and can be removed by Proposition \ref{prop:basis-change}. When two strands diverge, there is a left strand and a right strand; a crossover arrow connecting the two strands will be removable, once pushed to where the strands diverge, if it moves from the left strand to the right strand. Thus to remove a single crossover arrow connecting two strands which eventually diverge, we push the arrow one direction until the strands diverge and remove the arrow if it moves left-to-right. If it moves right-to-left, we slide it the other direction and again remove it if it moves left-to-right. If the arrow is not removable on either end then the strands must have crossed, and we can apply the local move in the last row of Figure \ref{fig:local-moves} to resolve the crossing at the expense of adding a second arrow. The two resulting crossover arrows will be removable when pushed in opposite directions. In this way, any single crossover arrow can be removed unless it connects closed immersed curves that never diverge.

\begin{figure}
\labellist

         \endlabellist
\includegraphics[scale=.7]{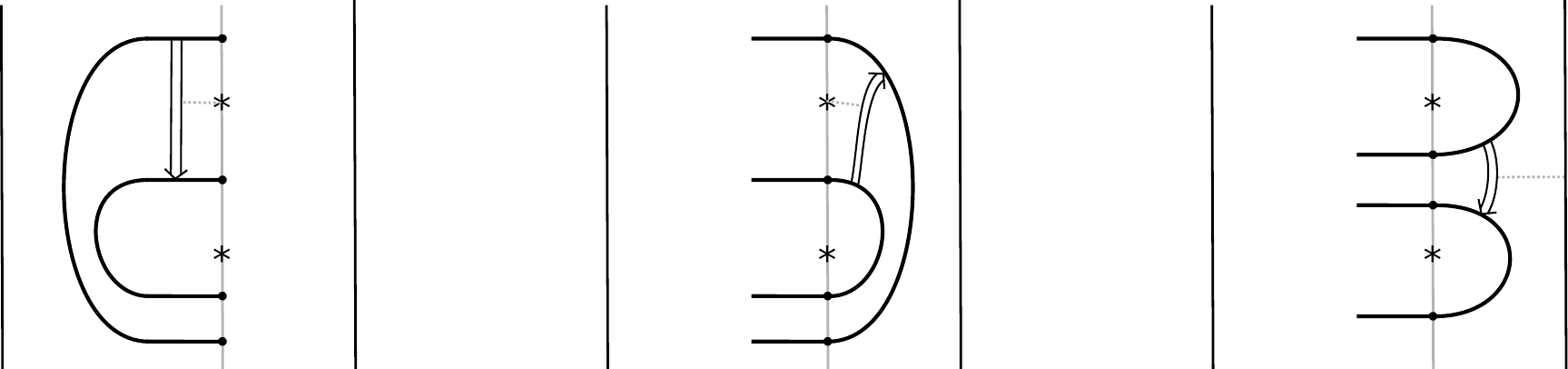}
 
\caption{Examples of removable crossover arrows. If an arrow is parallel to $\mu$ and has a puncture to its left, then it can be removed by a change of basis. If the arrow has an unbounded region of $\strip \setminus (\tracks\cup\mu)$ to its left, then it can not contribute to any bigons and thus can be deleted.}
\label{fig:removable-arrows}
\end{figure}

We need to extend the basic strategy to train tracks $\tracks$ starting with any number of short crossover arrows, showing the $\tracks$ can be reduced to a new train track which consists only of immersed curves and crossover arrows that connect parallel closed curves. The obvious strategy is to remove one arrow at a time as above. The only potential problem is that sliding an arrow past other arrows often introduces new arrows; we need to ensure that the original arrows and any new ones created can all be removed in finite time. To do this, we introduce a notion of complexity for arrows and remove them in order of increasing complexity. We show that arrows can be removed while only creating new arrows of higher complexity. There is an upper bound for complexity of arrows that do not connect parallel closed curves, so eventually these are the only arrows left.

\noindent \emph{Colors and complexity}. Recall that if crossover arrows are ignored, $\tracks$ consists of a collection of immersed curves and arcs. We will label each horizontal segment in the middle region with a \emph{left depth 1 color} and a \emph{right depth 1 color}, which describe how the path in $\tracks$ starting from the segment moving in the given direction behaves. Suppose the horizontal segment has Alexander grading $k$. If the path leaving the segment to the left goes to the left boundary of $\strip$ without returning to the middle region, then the left depth 1 color of the segment is $s$ (this stands for ``straight"). Otherwise, the path returns to the middle region at a horizontal segment with some different Alexander grading $k'$. If the path turned rightward (that is, if $k' > k$), then the left depth 1 color of the starting segment is $r_{k' - k}$. If the path turned leftward (that is, if $k > k'$), then the depth 1 left color of the starting segment is $\ell_{k - k'}$. The depth 1 right color is defined similarly with paths leaving the segment to the right: if the path returns to the middle region at Alexander grading $k'$ the right depth 1 color is $r_{k-k'}$ if $k > k'$ and $\ell_{k'-k}$ if $k' > k$, otherwise the path ends on the right boundary of $\strip$ and the right depth 1 color is $s$. Right and left depth $n$ colors can be be defined inductively to give more information about the paths starting at a horizontal segment $x$. These are length at most $n$ sequences of the letters $\ell_i$, $r_i$ and $s$. If the right (resp. left) depth $1$ color of $x$ is $s$, then the right (resp. left) depth $n$ color of $x$ is also $s$; otherwise, the path leaving $x$ on the right (resp. left) returns to the middle region at a horizontal segment $y$, and the right (resp. left) depth $n$ color of $x$ is the concatenation of the right (resp. left) depth 1 color of $x$ and the left (resp. right) depth $n-1$ color of $y$. See Figure \ref{fig:three-regions} for the depth 2 colorings in an example train track. 

The depth $n$ coloring on the left or right side of a horizontal segment is a sequence of letters in $\{\ell_i\}_{i=1}^\infty \cup \{s\} \cup \{r_i\}_{i=1}^\infty$. These letters are ordered so that $\ell_i < s < r_j$ for any $i,j$, $\ell_i < \ell_j$ if $i < j$, and $r_i < r_j$ if $i > j$. In other words, if paths representing each letter are drawn from a segment without crossing, they are ordered from sharpest left turn to sharpest right turn. The depth $n$ colors among horizontal segments of the same Alexander grading are ordered lexicographically. Colors on endpoints of horizontal segments give rise to labels on crossover arrows, which we call the \emph{left and right complexity} of the arrow. Suppose a crossover arrow connects a horizontal segment $x$ to a horizontal segment $y$, where $x$ and $y$ have the same Alexander grading. The left complexity $w_\ell$ of the crossover arrow is defined as follows. 
\begin{itemize}
\item If $x$ and $y$ have the same left depth $n-1$ color and different depth $n$ color, then $w_\ell$ is $\pm n$, where the sign is positive if the left depth $n$ color of $x$ is less than the left depth $n$ color of $y$ and negative otherwise.
\item If $x$ and $y$ have the same left depth $n$ color for all $n$, and this color never ends in an $s$, then $w_\ell$ is $\infty$.
\item If $x$ and $y$ have the same left color, which ends with an $s$ for sufficiently large depth, and if $n$ is the first depth for which the left depth $n$ color ends in $s$, then $w_\ell$ is $n$. 
\end{itemize}
In simpler terms, if we push the crossover arrow to the left, the left complexity $w_\ell$ counts how many times the arrow leaves the middle region before the strands it connects diverge or go to $\partial \strip$, and the sign is positive if the arrow is removable when pushed to this point. The right complexity $w_r$ of a crossover arrow is defined analogously. The complexity of the arrow is then defined to be the minimum of $|w_r|$ and $|w_\ell |$. Note that if one complexity is $\infty$ then both are; this can only occur when the horizontal segments $x$ and $y$ lie on the same non-primitive immersed curve or they lie on two closed immersed curves that are multiples of the same primitive curve.

\begin{remark}
The coloring used here by the letters $\ell_i$, $r_i$ and $s$ are analogous to the colorings by $\{n, e, s, w\}$ used in \cite{HRW}. The left and right complexity are analogous to the weights $\hat w$ and $\check w$ used in \cite{HRW}; we use the term ``complexity" here to avoid confusion with the weights on crossover arrows and train track edges, which were not present in \cite{HRW} since only $\Ztwo$ coefficients were considered.
\end{remark}

\noindent \emph{Removing lowest complexity arrows.} Let $m \ge 1$ denote the minimum complexity of all crossover arrows in $\tracks$. We will now show that $\tracks$ can be modified so that $C(\tracks)$ is unchanged modulo $UV$, up to basis changes, and all crossover arrows in the resulting train track have complexity at least $m + 1$. This simplification will be done one Alexander grading at a time. For each Alexander grading $k$, the simplification is done in three steps:

\noindent \emph{Step 1: Sort crossover arrows.} The horizontal segments in the middle region of $\tracks$ with Alexander grading $k$ and the crossover arrows between them form an $n$-strand arrow configuration. We apply Lemma \ref{lem:sort-arrows} to this configuration with respect to any ordering of the left (resp. right) endpoints which is consistent with the partial ordering determined by their left (resp. right) depth $m+1$ colors. This replaces the $n$-strand arrow configuration with a new configuration which has crossings in the middle and crossover arrows on either side which are non-decreasing with respect to the depth $m+1$ coloring on that side. It follows that the arrows on the left have $w_\ell = m$, $w_\ell = m + 1$, or $|w_\ell | \ge m + 2$, and that the arrows on the right have $w_r = m$, $w_r = m + 1$, or $|w_r | \ge m + 2$.

Note that applying Lemma \ref{lem:sort-arrows} to modify the arrow configuration at grading $k$ may introduce crossings and change the immersed curves in $\tracks$, which potentially alters the left and right coloring of horizontal segments. However, this can only affect colors of depth greater than $m$. This is because crossings are only added between horizontal segments which have the same depth $m-1$ coloring on each side; to see this, we could have grouped the horizontal strands of Alexander grading $k$ in bundles having the same left depth $m-1$ coloring and the same right depth $m-1$ coloring, and applied Lemma \ref{lem:sort-arrows} to each of these bundles instead of the whole configuration. Note that all crossover arrows are contained in one of these bundles, since there are no arrows with complexity less than $m$. Since a path in $\tracks$ that is modified by a crossing change is unchanged for the first $m-1$ turns after the crossing, and since the path from any endpoint of any horizontal segment must leave the middle region at least once before reaching a modified crossing, the depth $m$ coloring of the given endpoint is unchanged. Moreover, for any endpoint of a segment with Alexander grading $k$, a path from this endpoint must leave the middle region at least twice before returning to a segment at Alexander grading $k$. It follows that the depth $m+1$ colors are unaffected for segments with Alexander grading $k$.

\noindent \emph{Step 2: Remove arrows with outer complexity $m$ or $m+1$.} Once the arrow configuration at Alexander grading $k$ is sorted, it is straightforward to remove all crossover arrows on the left with $w_\ell \in \{m, m+1\}$ and all crossover arrows on the right with $w_r \in \{m, m+1\}$. The following Lemma will be useful.

\begin{lemma}\label{lem:remove-m-1-arrow}
Suppose $\tracks$ consists of immersed curves with crossover arrows. If a given crossover arrow has either $w_\ell$ or $w_r$ equal to $m-1$, while all other crossover arrows have complexity at least $m$, then removing the arrow does not change $C(\tracks)$ modulo $UV$, up to change of basis.
\end{lemma}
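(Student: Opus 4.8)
The plan is to show that a crossover arrow with outer complexity $m-1$ (say $w_\ell = m-1$, the case $w_r = m-1$ being symmetric) can be pushed to the left until it becomes removable, with the only side effects being basis changes and the temporary creation of arrows that are themselves immediately removable. First I would push the arrow leftward along the two parallel strands it connects. By the definition of left complexity, $w_\ell = m-1$ means that pushing the arrow to the left, it leaves the middle region exactly $m-1$ times before the two strands either diverge or reach $\partial\strip$; since $w_\ell$ is positive, the arrow is removable when pushed all the way. So the strategy is to slide it along these strands, across $\mu$ if necessary (applying the basis change of Proposition \ref{prop:basis-change}) and over marked points (modifying the weight, which is harmless modulo $UV$ as in Section \ref{sec:removing-crossover-arrows}), until it reaches the point where the strands diverge, at which point it is a long arrow pointing in the removable direction and can be deleted by Proposition \ref{prop:basis-change}.

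The subtle point, and what I expect to be the main obstacle, is controlling the new arrows created when the sliding arrow passes other crossover arrows. Each time the head of our arrow passes the tail of another crossover arrow, the third-row move of Figure \ref{fig:local-moves} forces us to insert a composite arrow. I would argue that any such composite arrow has strictly higher complexity than $m-1$, hence at least $m$ — in fact I would show each composite arrow is itself removable in the same direction, so it can be disposed of immediately without ever interacting with the rest of the configuration. The key is that the arrow we are pushing only ever travels along a single pair of strands (until they diverge), and any arrow it meets there connects one of those strands to a third strand; tracing the colorings, the composite arrow inherits a left color that agrees to depth $m-1$ with a removable pattern, so it can be pushed off immediately. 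Care is needed because sliding the arrow along the strands involves applying the train-track analogue of move $(j)$ from Figure \ref{fig:invariance-moves} when the strands pass other segments, which may itself require inserting left-turn crossover arrows; but these too will be removable or of complexity $\ge m$ by the same coloring argument.

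Once all the side arrows are handled, the original arrow reaches the divergence point (or $\partial\strip$) and is removed via Proposition \ref{prop:basis-change}, which realizes an elementary change of basis in $C(\tracks)$. Combining, the net effect on $C(\tracks)$ is a sequence of basis changes together with deletions of removable arrows, all of which preserve the homotopy type of $C(\tracks)$ modulo $UV$ by the invariance results of Section \ref{sec:train-tracks}. I would organize the writeup as: (1) reduce to $w_\ell = m-1$ by symmetry; (2) describe the sliding procedure and invoke Propositions \ref{prop:basis-change} and \ref{prop:arrow-slide-basis-change} for the effect of crossing $\mu$; (3) the coloring lemma showing every arrow created en route has complexity $\ge m$ and is removable; (4) conclude. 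The bookkeeping in step (3) — verifying the colors of composite arrows against the definition of complexity — is the part that requires genuine care rather than routine checking, since it is precisely the inductive hook that makes the overall arrow-removal algorithm terminate.
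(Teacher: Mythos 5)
Your overall strategy --- slide the $w_\ell = m-1$ arrow along the two parallel strands it connects until they diverge, then delete it, while controlling composite arrows created along the way --- is the same as the paper's, but your key claim about the composites is wrong. You assert that ``any such composite arrow has strictly higher complexity than $m-1$, hence at least $m$.'' That cannot be: the relevant fact (which you gesture at with ``inherits a left color that agrees to depth $m-1$'') is that the composite has $w_\ell = m-1$, the \emph{same} outer complexity as the arrow being pushed. Since the complexity of an arrow is $\min(|w_\ell|, |w_r|)$, this forces the composite's complexity to be at most $m-1$, not at least $m$. Your fallback --- that the composite is ``removable in the same direction, so it can be disposed of immediately'' --- gets the sign of $w_\ell$ right but still overstates the situation: a composite with $w_\ell = m-1$ is removable when pushed leftward, but not \emph{immediately}; it still has to traverse the middle region and pass further complexity-$\ge m$ arrows, just as the original did.

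The correct inductive hook, which the paper states explicitly, is a composition rule for outer complexities: when two arrows with right (resp.\ left) complexities $w$ and $w'$ pass one another, the resulting composite arrow has right (resp.\ left) complexity equal to whichever of $w$, $w'$ has smaller absolute value, unless $w = -w'$ (ruled out here since $m-1 \neq |w'_\ell|$ when $|w'_\ell| \ge m$). Hence a $w_\ell = m-1$ arrow sliding past a $|w_\ell| \ge m$ arrow produces only composites that again have $w_\ell = m-1$, and every such arrow is now one step closer to the point where the strands it connects diverge. One then inducts on the number of arrows a $w_\ell = m-1$ arrow must slide past before becoming removable --- a quantity that strictly decreases with each slide. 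You flagged this bookkeeping as ``the part that requires genuine care,'' which is the right instinct, but the specific statement you propose to prove in your step (3) is false, and without the correct composition rule and the decreasing-count induction, your organization does not close the argument.
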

\begin{proof}
Suppose without loss of generality that $w_r = m-1$. Since $w_r$ is positive and finite, the arrow will be removable if it is pushed to the right as far as possible. The only potential concern is new arrows that are formed along the way. But note that if two arrows have right complexity $w_r$ and $w'_r$ and these arrows passing each other forms a new composition arrow, then the right complexity of this new arrow is either $w_r$ or $w'_r$ (whichever has the smaller absolute value) unless $w_r = -w'_r$, in which case the new right complexity can be anything with absolute value at least $|w_r|$. In particular, if an arrow with $w_r = m-1$ slides past an arrow with $|w_r| \ge m$ and a new arrow is introduced, this new arrow also has $w_r = m-1$, and both $w_r = m-1$ arrows are now one step closer to being removed. By induction on the number of arrows a $w_r = m-1$ must slide past before it is removable, all $w_r = m-1$ arrows can be removed by sliding them to the point where the strands they connect diverge. The argument is the same for left complexities $w_\ell$.
\end{proof}

Consider the crossover arrows on to the left of the crossings in the new $n$-strand arrow configuration at Alexander grading $k$. We can take the leftmost of these arrows for which $w_\ell = m$ and slide it leftward to the end of the configuration. Note that this may introduce new arrows, but these arrows will also have $w_\ell = m$ and we will push these leftward as well. In this way, we can arrange that all crossover arrows with $w_\ell = m$ are at the far left of the middle region. We then take the leftmost arrow and continue pushing leftward until it leaves the middle region of $\tracks$ and returns at a new Alexander grading. This changes the complexity labels on the crossover arrow: $w_\ell - 1$ becomes the new right complexity, while $w_r + 1$ becomes the new left complexity. In particular, the arrow now has $w_r = m-1$ and thus can be removed by by Lemme \ref{lem:remove-m-1-arrow}. This can be repeated until all the $w_\ell = m$ arrows on the left side of the configuration at Alexander grading $k$ are removed. We now do the same thing for arrows on the left side of the configuration at grading $k$ which have $w_\ell = m+1$. We push them all, and any new arrows formed in the process, to the far left and then one by one push them out of the middle region and back to a configuration at a new Alexander grading. This last slide results in an arrow with $w_r = m$. If this happens at an Alexander grading for which complexity $m$ arrows have not yet been removed, we may stop. If we have pushed the arrow to a grading where complexity $m$ arrows have already been removed, then we can continue pushing the arrow all the way across the middle region, which can only introduce new $w_r = m$ arrows which can be pushed along too, until the arrow leaves the middle region again. When the arrow is pushed back into the middle region once more it will have $w_\ell = m-1$ and can be removed.

The same procedure can be done to arrows with $w_r \in \{m, m+1\}$ on the right side of the arrow configuration. In this way, we arrive at new configuration at Alexander grading $k$ for which all arrows on the left side have $| w_\ell | \ge m+2$ and all arrows on the right side have $|w_r| \ge m+2$, and we have not introduced any new arrows of complexity less than $m$, or of complexity $m$ at gradings for which complexity $m$ arrows have already been removed.

\noindent \emph{Step 3: Remove remaining complexity $m$ arrows.} Consider the crossover arrows on to the left side of the $n$-strand arrow configuration at Alexander grading $k$. These now have $|w_\ell | \ge m+2$ and $|w_r| \ge m$. We now wish to deal with arrows with $|w_r| = m$. Take the leftmost of these arrows and push it to the leftmost end of the configuration, and then further until it returns to the middle region at a different Alexander grading. Since $w_\ell - 1$ becomes the new right complexity and $w_r + 1$ becomes the new left complexity, the arrow now has overall complexity of $m+1$. Again, sliding the arrow to the left of the configuration may produce new arrows, but these new arrows can be pushed leftward in the same way to produce a complexity $m+1$ arrow at a different Alexander grading. The same can be done to arrows on the right side with $|w_\ell | = m$, pushing the arrow rightward until it becomes a complexity $m+1$ arrow at a different Alexander grading.

At the conclusion of these three steps, the minimum complexity of crossover arrows between segments at Alexander grading $k$ is $m+1$. Repeating this for all Alexander gradings, we arrive at a new train track with no crossover arrows of complexity less than $m+1$. We can now prove Proposition \ref{prop:simple-curves-existence} using induction on $m$.

\begin{proof}[Proof of Proposition \ref{prop:simple-curves-existence}]
The argument above shows that any train track with minimum arrow complexity $m$ can be replaced by a train track with minimum arrow complexity $m+1$. By induction, it is clear that we can make the minimum complexity of crossover arrows arbitrarily high. Finally, we observe that since there are finitely many horizontal segments, any path must eventually stop or repeat. It follows that there exists an integer $N$ such that if two horizontal segments have the same left depth $N$ color or the same right depth $N$ color then they have the same color for arbitrary depth, and thus any arrow of complexity at least $N$ actually has $w_\ell = w_r = \infty$.

We have constructed a weighted immersed multicurve $\Gamma$ in strip along with a collection of infinite weight crossover arrows which represents a given bigraded complex $C$.  It only remains to interpret these infinite weight crossover arrows as a collection of left turn crossover arrows at local system intersection points.  For a given homotopy class of primitive curve $\gamma$ in $\strip$, consider all the components of $\Gamma$ which are homologous to some multiple of $\gamma$, along with any crossover arrows with endpoints on these curves. By sliding crossings and crossover arrows, we can realize this as some number $n$ of parallel copies of $\gamma$ with an $n$-strand arrow configuration inserted in one place. We may assume the matrix associated to this $n$-strand configuration is in rational canonical form, since we conjugate the matrix by sliding crossings or crossover arrows around the curve. Suppose the matrix decomposes into $k$ blocks with the $i$th block of dimension $n_i$. We replace the collection of curves in question with $k$ immersed curves, where the $i$th curve is homologous to $k_i$ times $\gamma$, and we assume this curve is in simple position. The coefficients of each block in the rational canonical form then specify coefficients for left turn crossover arrows at the local system intersection points on these curves, as discussed in Section \ref{sec:local-systems}, giving rise to an equivalent train track of the desired form.
\end{proof}

We end this section by demonstrating the above construction in an example. The left side of Figure \ref{fig:three-regions} shows the curve-with-arrows train track from Figure \ref{fig:train-track-example2} that represents the complex, with depth 2 colors labeled; both crossover arrows have complexity 1. Applying one step of the inductive process requires sliding both arrows toward the middle from the bundles of horizontal segments at height $1$ and $-1$ to the bundle of segments at height 0, as shown in the middle of the figure. These two arrows have complexity 2. The next step is to sort the arrows in the middle bundle of horizontal strands with respect to depth 3 colors, which requires replacing the two crossover arrows with one crossover arrow and a crossing, along with a basepoint weighted by $-1$, as shown on the right side of the figure. The resulting arrow is removable when pushed in either direction, so the result is an immersed multicurve with two components (one closed component and one arc component) and trivial bounding chain.

\begin{figure}
\labellist
  \pinlabel {$r_1 \ell_1$} at 49 28
  \pinlabel {$s$} at 86 28
  \pinlabel {$r_1 \ell_1$} at 49 43
  \pinlabel {$\ell_1 r_1$} at 86 43
  \pinlabel {$r_1 r_1$} at 49 67
  \pinlabel {$r_1 r_1$} at 86 67
  \pinlabel {$\ell_1 \ell_1$} at 49 80
  \pinlabel {$\ell_1 s$} at 86 80
  \pinlabel {$ \ell_1 s$} at 49 93
  \pinlabel {$\ell_1 \ell_1$} at 86 93
  \pinlabel {$ \ell_1 r_1$} at 49 116
  \pinlabel {$r_1 \ell_1$} at 86 116
  \pinlabel {$ s$} at 49 131
  \pinlabel {$r_1 \ell_1$} at 86 131
    
 \pinlabel {\tiny $-1$} at 76 30
  \pinlabel {\tiny $1$} at 64 120

         \endlabellist
\includegraphics[scale=1]{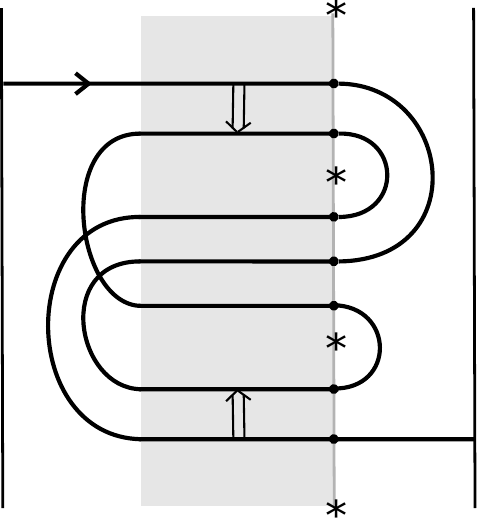} 
\labellist
 \pinlabel {\tiny $-1$} at 52 80
 \pinlabel {\tiny $1$} at 82 80
\endlabellist
\includegraphics[scale=1]{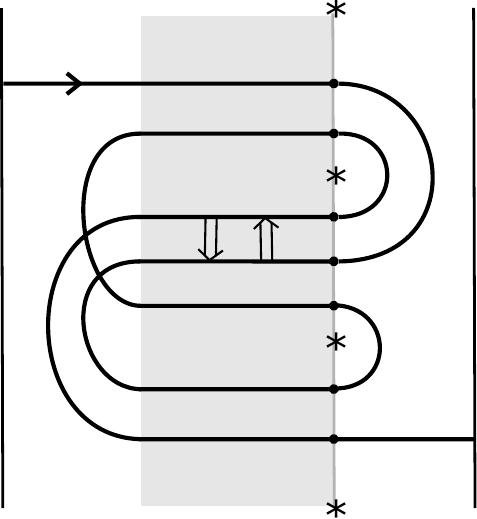} 
\labellist
 \pinlabel {\tiny $-1$} at 44 80
\endlabellist
\includegraphics[scale=1]{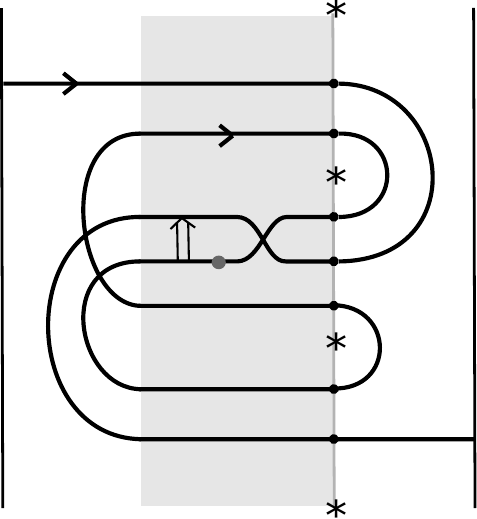} 
 
\caption{Left: The train track from Figure \ref{fig:train-track-example2}, an example of the form generated by Lemma \ref{lem:curves-with-arrows}. The middle region is shaded, and the depth 2 colors of the left and right endpoints of each horizontal segment are shown. The top crossover arrow has $w_\ell = -1$ and $w_r = 3$, while the bottom arrow has $w_\ell = 3$ and $w_r = -1$. Middle: The result of applying one step of the inductive simplification, to increase the minimum arrow complexity from 1 to 2; the left arrow now has $w_\ell = -2, w_r = 2$ and the right arrow has $w_\ell = 2, w_r = -2$. Right: The result of replacing the 3-strand arrow configuration at the middle Alexander grading, as in Lemma \ref{lem:sort-arrows}. The gray dot represents a basepoint with weight $-1$. The remaining crossover arrow has $w_\ell = 2$ and can be removed by pushing it leftward.}
\label{fig:three-regions}
\end{figure}

\section{Immersed curves for $UV=0$ complexes with flip maps}\label{sec:flip-maps}

\subsection{Nice curves in $\cylinder$}

Having established that any bigraded complex over $\sRhat$ can be represented by a (suitably nice) decorated immersed multicurve in the infinite strip $\strip$, we now wish to extend this construction to include flip maps. Our aim is to show the following:

\begin{proposition}\label{prop:curves-from-flip-maps-UVzero}
A bigraded complex $C$ over $\sRhat$ equipped with a flip isomorphism $\widehat{\Psi}_*: H_* \widehat C^h \to H_* \widehat C^v$ can be represented by a decorated curve $(\Gamma, \bchainhat)$ in the marked cylinder $\cylinder$, where $\Gamma$ is in simple position and $\bchainhat$ is a bounding chain consisting of only local system self-intersection points. 
\end{proposition}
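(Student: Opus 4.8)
The plan is to reduce Proposition \ref{prop:curves-from-flip-maps-UVzero} to Proposition \ref{prop:simple-curves-existence} (the analogous statement for the strip) by the same cut-and-glue philosophy used throughout Section \ref{sec:flip-maps-from-curves}: a curve in $\cylinder$ is a curve in $\strip$ together with a prescription for identifying the two boundary components, and that prescription is exactly the data of a flip isomorphism. First I would apply Proposition \ref{prop:reduced-simplified-basis} to arrange that $C$ is reduced and carry along a horizontally simplified basis $\{x^h_i\}$ and a vertically simplified basis $\{x^v_i\}$; by Proposition \ref{prop:flip-map-horiz-basis} the flip map is determined by the induced isomorphism $\widehat\Psi_*: H_*\widehat C^h \to H_* \widehat C^v$, and with respect to these simplified bases $\widehat\Psi_*$ is recorded by a single invertible matrix sending the basis of $H_*\widehat C^h$ (the $x^h_i$ not on a horizontal arrow) to the basis of $H_*\widehat C^v$ (the $x^v_i$ not on a vertical arrow).

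Next I would build a curve-with-arrows train track in $\cylinder$, realized concretely as $\cylinder = \strip \cup \sF$ where $\strip$ is the marked strip and $\sF$ is an unmarked strip (a neighborhood of $\mu_{1/2}$). Into $\strip$ I put the train track $\tracks$ constructed in Lemma \ref{lem:curves-with-arrows}, which represents $C$ and has the property that it is a bundle of arcs on the left and right quarter-strips and a collection of horizontal segments joined by crossover arrows in the middle; crucially, by the construction in that lemma the endpoints on $\partial_L\strip$ correspond to the vertically simplified basis and those on $\partial_R\strip$ to the horizontally simplified basis (I may need to track this a little more carefully, or apply Proposition \ref{prop:basis-change} a few times, to make sure the two ends carry exactly $\{x^v_i\}$ and $\{x^h_i\}$). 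Inside $\sF$ I place a collection of arcs joining the endpoint corresponding to $x^h_i$ on $\partial_R\strip$ to the endpoint corresponding to the appropriate $x^v_j$ on $\partial_L\strip$, inserting weighted crossover arrows in $\sF$ to encode the matrix of $\widehat\Psi_*$ exactly as in the "naive representative with a flip map" discussion at the end of Section \ref{sec:naive-curves}. The count of polygonal paths through $\sF$ then reproduces $\widehat\Psi_*$ by the description of $\Psi_*$ via paths in $\sF$ given just before Section \ref{sec:naive-curves}, so the resulting train track in $\cylinder$ represents $C$ together with a flip map inducing $\widehat\Psi_*$; since the flip map is determined up to homotopy by $\widehat\Psi_*$, it represents the given flip isomorphism.

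Finally I would run the arrow-sliding algorithm of Section \ref{sec:removing-crossover-arrows} verbatim, now in $\cylinder$ rather than $\strip$. The algorithm only ever manipulates $n$-strand arrow configurations, slides arrows past the marked points, and applies basis-change moves (Propositions \ref{prop:arrow-slide-basis-change}, \ref{prop:basis-change}, and Lemma \ref{lem:sort-arrows}), all of which are local and make equally good sense in the cylinder; the colors $\ell_i, r_i, s$ still make sense, with paths now allowed to wind around, and the same induction on minimum arrow complexity removes every crossover arrow except those of infinite complexity connecting parallel closed curves, which are reinterpreted as local-system intersection points via the rational-canonical-form argument. The output is a decorated curve $(\Gamma,\bchainhat)$ in $\cylinder$ with $\Gamma$ in simple position (the cylinder version of Definition \ref{def:simple-position}, with the boundary conditions dropped) and $\bchainhat$ of local system type, still representing $C$ and the flip map. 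I expect the main obstacle to be bookkeeping rather than a genuinely new idea: one must check that the termination argument for the arrow-sliding algorithm is not broken by the presence of nontrivial $\pi_1(\cylinder)$ — a path in the middle region of a cylindrical train track can return to its starting segment after winding around, so I need to confirm that the bound "if two segments share a depth-$N$ color then they share all deeper colors" still holds (it does, since there are still finitely many horizontal segments, so paths still eventually repeat) and that no configuration of finite-complexity arrows can persist by exploiting the extra topology. A secondary point requiring care is verifying that the flip map encoded by the arcs-plus-arrows in $\sF$ is genuinely flip-filtered and of skew-degree $(0,0)$ after all the arrow slides, but this follows from the general fact, proved in Section \ref{sec:flip-maps-from-curves}, that any decorated curve in $\cylinder$ produces such a map, so no separate check is needed.
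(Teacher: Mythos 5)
Your overall architecture is correct and matches the paper's: decompose $\cylinder = \strip \cup \sF$, encode $C$ in $\strip$, encode $\widehat\Psi_*$ as arcs-with-arrows in $\sF$, then slide away the crossover arrows. But there is a concrete gap in the final step, and you have essentially flagged it yourself and then incorrectly waved it off as bookkeeping.

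The issue is not whether the colors eventually stabilize (they do, as you note), and it is not termination in the abstract. The issue is that Step~1 of the strip algorithm sorts an $n$-strand arrow configuration with respect to \emph{depth $m+1$} colors, and the correctness of the subsequent steps depends on that sorting surviving the crossing changes introduced by Lemma~\ref{lem:sort-arrows}. In the strip this is safe because a path leaving the middle region at Alexander grading $k$ must exit and re-enter the middle region at least twice before it can return to grading $k$, so the rewiring caused by sorting at depth $m$ cannot disturb any depth-$(m+1)$ color on a segment at grading $k$. In the cylinder this fails: a path can leave a bundle of arcs in $\sF$, cross the marked strip $\strip$ once, and return to the \emph{same} bundle in $\sF$ at the next opportunity. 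Consequently sorting only preserves depth-$m$ colors, so arrows you believed had $w_\ell \in \{m, m+1\}$ after sorting may in fact have something else once the crossings are in place. Running the three-step strip algorithm ``verbatim'' therefore does not work. The paper's proof handles this by sorting only with respect to depth-$m$ colorings and reorganizing the cleanup into four steps (first clearing outer complexity $\pm m$, then handling the $\pm(m+1)$ cases by re-sorting the left and right sub-configurations separately), which is ``the full generality of the algorithm as it was introduced in [HRW]'' rather than the simplified strip version.

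A secondary structural difference: you feed the curve-with-arrows train track of Lemma~\ref{lem:curves-with-arrows} into $\strip$, leaving crossover arrows present in both $\strip$ and $\sF$. The paper instead first runs Proposition~\ref{prop:simple-curves-existence} to completion inside $\strip$, so that the only remaining arrows are in $\sF$ and any arrows inside $\strip$ are at local-system intersection points on closed components contained in $\strip$, which can be ignored. This is not merely aesthetic: the four-step cylinder algorithm treats $\sF$ as the ``middle region'' and the crossings of $\strip$ as the ``departures'' that increase depth; interleaving $\strip$-arrows into that picture would require you to re-derive the color/complexity machinery for arrows living in two distinct regions, which is extra work the paper's ordering avoids.
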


The first step in proving Proposition \ref{prop:curves-from-flip-maps-UVzero} is to construct nearly simplified curves with crossover arrows in the marked cylinder representing the given data. This essentially follows from reversing the process for extracting a complex and flip maps from a decorated curve in a cylinder described in Section \ref{sec:flip-maps-from-curves} and uses the curves in $\strip$ representing the complex constructed in Section \ref{sec:simple-curves}. The main difficulty, as with constructing decorated curves in $\strip$, is to then remove crossover arrows to ensure the resulting bounding chain is of local system type.

\begin{proof}[Proof of Proposition \ref{prop:curves-from-flip-maps-UVzero}] 
Our first task is to construct a reasonably nice curve representative in $\cylinder$ for $C$ and $\widehat\Psi_*$. We do this by starting with the decorated curve in $\strip$ representing $C$ predicted by Proposition \ref{prop:simple-curves-existence} and gluing the opposite sides of the strip after first inserting a piece of curve matching up the endpoints according to $\widehat\Psi_*$. More precisely, we will cut the marked cylinder $\cylinder$ into two pieces, a marked strip $[-\tfrac 1 4, \tfrac 1 4]\times \R$ and an unmarked strip $[\tfrac 1 4, \tfrac 3 4]\times \R$. We view the marked strip as a copy of $\strip$ (though it is scaled to be half as wide) and we call the unmarked strip $\sF$. The cylinder is formed by gluing $\partial_R \strip$ to $\partial_L \sF$ and gluing $\partial_R \sF$ to $\partial_L \strip$. We construct a decorated curve in $\cylinder$ by constructing decorated curves $(\Gamma_\strip, \bchainhat_\strip)$ in $\strip$ and $(\Gamma_\sF, \bchainhat_\sF)$ in $\sF$ gluing these together.

The decorated curve $(\Gamma_\strip, \bchainhat_\strip)$ in $\strip$ is the decorated curve representing the complex $C$ as in Proposition \ref{prop:simple-curves-existence}. This means that $\Gamma_\strip$ is in simple position and $\bchainhat_\strip$ consists of only local system intersection points. The decorated curve of $(\Gamma_\sF, \bchainhat_\sF)$ in $\sF$ will consist of arcs from $\partial_L \sF$ to $\partial_R \sF$ connecting the endpoints of $\Gamma_\strip$, along with a collection of turning points. Recall that the endpoints of $\Gamma_\strip$ on $\partial_R \strip$ correspond to generators of the hat horizontal homology of $C$ while endpoints on $\partial_L \strip$ correspond to generators of the hat vertical homology of $C$. The flip isomorphism $\widehat\Psi_*$ is a graded isomorphism from $H_* \widehat C^h$ to $H_* \widehat C^v$ taking $\gr_z$ to $\gr_w$. In particular, for any integer $k$ the isomorphism $\widehat\Psi_*$ restricts to an isomorphism between the spans of generators with grading $k$. Suppose there are $n_k$ generators with grading $k$; we connect the corresponding collections of $n_k$ endpoints on $\partial_R \strip$ and $n_k$ endpoints on $\partial_L \strip$ by a bundle of $n_k$ strands with an $n_k$-strand arrow configuration realizing the flip isomorphism restricted to grading $k$. 

The decorated curve $(\Gamma_\strip \cup \Gamma_\sF, \bchainhat_\strip + \bchainhat_\sF)$ represents the pair $(C, \widehat\Psi_*)$ over $\sRhat$. To see this, observe that adding $\sF$ and gluing the sides of the strip does not affect the Floer homology with $\mu$ modulo $UV$. This is because, since $C$ is reduced, any bigon that is not contained in $\strip$ must enclose a pair of marked points. Similarly, if we consider Floer homology with $\mu_\Psi$ perturbed to lie in $\sF$ (as in Figure \ref{fig:flip-maps-from-curve-perturbed}) the obvious bigons in $\sF$ encode $\widehat\Psi_*$ by construction, and because $C$ is reduced any bigon that is not contained in $\sF$ must cover a marked point and does not contribute mod $UV$.

The curve constructed so far represents $(C, \widehat\Psi_*)$ but it does not have the desired form. We do have that $\bchainhat_\strip$ contains only local system intersection points, but the same may not be true for $\bchainhat_\sF$. To complete the proof, we need to remove any turning points in $\bchainhat_\sF$ that are not at local system intersection points. We will use the language of train tracks, interpreting each turning point as a left-turn crossover arrow, and we repeat the the arrow sliding algorithm from Section \ref{sec:removing-crossover-arrows}, with some modifications.

First, we will ignore components of $\Gamma_\strip \cup \Gamma_\sF$ that are contained in the strips $\strip$, since arrows sliding from $\sF$ will never interact with these. Note that this means we can ignore all crossover arrows coming from the $\bchainhat_\strip$, since these are by assumption at local system intersection points which can only be on closed components of $\Gamma_\strip$. The strip $\sF$ will play the role of the central strip $[-\tfrac 1 4, 0]$ in $\strip$ from Section \ref{sec:removing-crossover-arrows}. Just as the central strip in $\strip$ contained a bundle of horizontal segments at each Alexander grading with a collection of crossover arrows within each bundle, so $\sF$ contains a bundle of arcs for each value of $\gr_w$ with any crossover arrows contained in a bundle; note that now arcs from different bundles may cross each other, but this does not affect the argument.

We again label the arcs in $\sF$ by left and right colors indicating the path the curve takes leaving $\sF$ to the left or right from that arc before returning to $\sF$. While previously the depth 1 colors took values in $\{\ell_k\}_{k=1}^\infty \cup \{s\} \cup \{r_k\}_{k = 1}^\infty$, there are now more possible depth 1 colors with each color representing a homotopy class of path starting and ending on the boundary of a marked strip $\strip$. These colors can be expressed as an integer $n$ followed by a sequence of letters in $\{\ell_k\}_{k = 1}^\infty \cup \{r_k\}_{k=1}^\infty$ and then an $s$; the initial integer represents the Alexander grading of the first crossing of the path in $\Gamma_\strip$ with $\mu$, multiplied by $-1$ for right colors in which the relevant path starts on the left side of $\strip$, and the remaining letters represent the turns the path makes each time it returns to $\mu$ before finally leaving $\strip$. These colors are ordered lexicographically using the usual order on $\Z$ and the order on $\{\ell_k\}_{k=1}^\infty \cup \{s\} \cup \{r_k\}_{k = 1}^\infty$ defined in Section \ref{sec:removing-crossover-arrows}; equivalently, if $c$ and $c'$ are colors representing two paths then $c < c'$ if and only if the paths are not parallel and the path with color $c$ is to the left of the path with color $c'$ when they first diverge. As before, depth $n$ colors are length $n$ words whose letters are the colors described above that record how a path following $\Gamma$ behaves after leaving $\sF$ until the $n$th time it returns to $\sF$. A crossover arrow in $\sF$ can be given a complexity $(w_\ell, w_r)$ in $\Z_{>0}\times \Z_{>0} \cup \{(\infty, \infty)\}$ defined in terms of the colors as before. The absolute values $|w_\ell|$ and $|w_r|$ tell us how many times an arrow needs to be pushed out of the strip $\sF$ before the curves it connects diverge, and $w_\ell$ or $w_r$ is positive if the arrow will be left to right moving (and thus removable) when the curves first diverge.

We now proceed inductively as in Section \ref{sec:removing-crossover-arrows}, assuming that all arrows have some minimal complexity $m$ and removing all arrows with complexity $m$. We do this on one bundle of like-graded strands in $\sF$ at a time. For each bundle we follow a version of the numbered steps from the algorithm in Section \ref{sec:removing-crossover-arrows}, although the steps are slightly more complex. In Section \ref{sec:removing-crossover-arrows} we were able to make a simplifying assumption, but we now require the full generality of the algorithm as it was introduced in \cite{HRW}. The spirit of the algorithm is the same as the argument in Section \ref{sec:removing-crossover-arrows}, but more care is needed to control side effects of sliding arrows; we briefly explain the necessary modifications here.

The reason the algorithm simplifies for curves $\strip$ is that when a path leaves a bundle of segments at a given Alexander grading, it can not return to the same bundle of strands until it leaves and returns to the middle region at least twice. This ensures that when the crossover arrows are sorted in Step 1 the depth $m+1$ colors are not affected. However, it is now possible for a path to leave a bundle of arcs in $\sF$, cross to the other side of $\strip$, and return to the same bundle of arcs the next time it returns to $\sF$ (note that the path still can not return to the same side of the bundle it left from, since arcs with the same grading must be oriented the same way). When applying arrow moves to a collection of arrows with minimum depth $m$, we may now only assume that depth $m$ colors are preserved and not depth $m+1$ colors.

In the following steps, we have fixed a bundle of arcs in $\sF$ with the same grading $k$. Suppose there are $n$ arcs in this bundle, so that the arcs and any crossover arrows between them form an $n$-strand arrow configuration. We have assumed all arrows have complexity at least $m$.

\noindent \emph{Step 1: Sort crossover arrows to remove outer complexity $-m$.}  As before, we apply Lemma \ref{lem:sort-arrows} to the given $n$-strand arrow configuration. The difference is that we do this with respect to any ordering on the endpoints of the arcs consistent with the depth $m$ colorings (rather than the depth $m+1$ colorings as before, since this information will not be preserved). This replaces the $n$-strand arrow configuration by a new one with crossings in the middle, crossover arrows on the left of the bundle which have $w_\ell = m$ or $|w_\ell| \ge m+1$, and crossover arrows on the right of the bundle that have have $w_r = m$ or $|w_r| \ge m+1$. Depth $m$ colors are unchanged by this sorting, so all arrows still have complexity at least $m$.

\noindent \emph{Step 2: Remove arrows with outer complexity $m$.} Any arrows on the left with $w_\ell = m$ can be pushed leftward into the strip $\strip$. If $m=1$ then the curves the arrow connects will diverge at some point before leaving $\strip$ and the arrow will come up against a marked point on its left side, so the arrow can be removed. If $m > 1$, then the arrow will slide between parallel strands until it returns to $\sF$, at which point it will have $w_r = m-1$ or $w_\ell =  m-1$ and can be removed by Lemma \ref{lem:remove-m-1-arrow}. At this point all arrows on the left have $|w_\ell| \ge m+1$ and $|w_r| \ge m$, while all arrows on the right have $|w_r| \ge m+1$ and $|w_\ell| \ge m$.

\noindent \emph{Step 3: Remove $|w_r| = m$ arrows on the left.} After the previous steps it makes sense to split our bundle into two $n$-strand arrow configuration, one containing the arrows on the left side and one containing the arrows on the right side. We now apply Lemma \ref{lem:sort-arrows} again to the left of these configurations with respect to orderings of the endpoints that are consistent with depth $m+1$ coloring on the left and depth $m$ coloring on the right. As a result, the left configuration is replaced with a new configuration such that arrows on the left have $w_\ell \neq -(m+1)$ and arrows on the right have $w_r \neq -m$ (the conditions that $|w_\ell|\ge m+1$ and $|w_r|\ge m$ from Step 2 are also preserved). Any arrows on the right side of the new left configuration with $w_r = m$ can be slid rightward, starting with the rightmost of these arrows. Since the right $n$-strand arrow configuration has $|w_r| \ge m+1$, these arrows with $w_r$ can be slid rightward through that configuration without issue until they leave $\sF$. When they return to $\sF$ they will have either $w_\ell = m-1$ or $w_r = m-1$ and they can be removed by Lemma \ref{lem:remove-m-1-arrow}. We then slide all arrows from the left side of the new left configuration leftward out of $\sF$. When each of these arrows returns to $\sF$ one complexity will have magnitude $|w_r| + 1 \ge m+1$ and the other with maginitude $|w_\ell |-1$. If $|w_\ell | \ge m+2 $ then the resulting arrow has complexity at least $m+1$. Otherwise $w_\ell = m+1$ and the resulting arrow has weight $+m$ on one side when it returns to a bundle of arcs. If it returns to a different bundle from which complexity $m$ arrows have not yet been removed we can now ignore it. If it returns to a bundle from which complexity $m$ arrows have already been removed, it can be slid across this bundle and eventually removed. Finally, if it returns to the opposite side of the bundle currently being simplified, it will have $w_\ell = m$ and $|w_r| \ge m+1$ and can simply be included in right $n$-strand arrow configuration. After this step, all arrows in the left configuration have $|w_\ell| \ge m+1$ and $|w_r \ge m+1|$, while arrows in the right configuration still have $|w_\ell|\ge m$ and $|w_r|\ge m+1$.

\noindent \emph{Step 4: Remove $|w_\ell| = m$ arrows on the right.} This step is analogous to Step 3, but we apply Lemm \ref{lem:sort-arrows} to the right configuration with respect to a depth $m$ ordering on the left and a depth $m+1$ ordering on the right. Arrows on the left of the new right configuration that have $w_\ell = m$ can be slid leftward and removed, leaving only complexity $m+1$ arrows on the left of the right configuration. All arrows from the right of the new right configuration can be slid rightward, and when they return to $\sF$ they will have complexity $m+1$ unless the arrow had $w_r = m+1$. In this case the new arrow will have complexity $+m$ on one side and can be either removed or ignored until a later step depending on which bundle it returns to.

Following the above steps removes all complexity $m$ arrows from a given bundle of arcs in $\sF$ without introducing any new arrows of complexity less than $m$ or of complexity $m$ in a bundle from which these have already been removed. Repeating over all bundles of arcs, we can remove all arrows with complexity less than $m+1$. The rest of the proof is the same as before: by induction we can make the minimum complexity of crossover arrows arbitrarily large. Since there is an upper bound on the complexity of arrows that are not between parallel curves, eventually these are all that remain. Parallel curves with crossover arrows between them can be replaced by non-primitive curves with left-turn crossover arrows at local system intersection points. If necessary we can homotope the curves to put them in simple position, and we have a decorated curve of the desired form.
\end{proof}

We remark that when apply the arrow sliding algorithm, the restriction of the curves to $\strip$ never changes; this is because crossings are only ever resolved within the strip $\sF$. However, when we slide crossover arrows through $\strip$ in order to remove them, this generally involves changing the basis of the complex $C$ corresponding to the curve. Since the immersed curve in $\strip$ is unchanged the curve still represents the complex $C$, but we should understand it as representing the complex with respect to a different basis. This will be relevant later when we add minus information to the curves, since the flip maps should be expressed in terms of this new basis.

\subsection{Examples from knot Floer homology}

We are mainly interested in applying Proposition \ref{prop:curves-from-flip-maps-UVzero} to represent the data from knot Floer homology associated to a nullhomologous knot $K$ in a 3-manifold $Y$. Given such a knot, let $M$ denote the the knot complement $Y \setminus \nu(K)$. Recall that $T_M$ denotes the torus $\partial M$, which is naturally identified with $H_1(\partial M; \R)/H_1(\partial M;\Z)$, with a marked point at $\{0\}$. We consider the covering space $\widetilde{T}_M = H_1(\partial M; \R)$, with a set of marked points identified with $H_1(\partial M; \Z)$, as well as the intermediate covering space $\overline{T}_M = \widetilde{T}_M / \langle \lambda \rangle$, where the homological longitude $\lambda \in H_1(\partial M;\Z)$ generates the kernel of the inclusion $i_*:H_1(\partial M; \Z) \to H_1(M; \Z)$. Because $K$ is nullhomologous, we can take $\lambda$ to be the Seifert longitude; note that $\lambda$ is a primitive element of $H_1(\partial M;\Z)$ and is dual to the meridian $\mu$. In this case we can identify $\overline{T}_M$ with the infinite cylinder $\cylinder$, where $\lambda$ is identified with the horizontal direction and the meridian $\mu$ is identified with the vertical direction. The set of spin$^c$ structures $\Spinc(M)$ can be identified with $H^2(M) \cong H_1(M,\partial M)$. Since $K$ is nullhomologous, the same is true of $\Spinc(Y)$; we will abuse notation and not distinguish between a spin$^c$ structure in $\Spinc(M)$ and the corresponding spin$^c$ structure in $\Spinc(Y)$. For each spin$^c$ structure $\spin \in \Spinc(M)$ we define $\HFhat(Y,K;\spin)$ to be the decorated curve $\Gamma(\widehat C_{\spin}, \widehat\Psi_{\spin,*})$ in $\overline{T}_M \cong \cylinder$ representing the complex $\widehat C_\spin = CFK_{\sRhat}(Y,K;\spin)$ equipped with the flip ismorphism $\widehat\Psi_{\spin,*}$ as constructed in Proposition \ref{prop:curves-from-flip-maps-UVzero}.

The simplest case is that of knots in $S^3$. In this case $M$ has a single spin$^c$ structure $\spin$, and the horizontal and vertical homology of $\widehat C = CFK_{\sRhat}(S^3, K)$ are one dimensional so the flip isomorphism is simply multiplication by a nonzero constant $c$ in $\F$. To construct the curve $\HFhat(S^3,K;\spin)$ we construct the immersed curve in $\strip$ representing the complex $\widehat C$ by following Section \ref{sec:simple-curves} and then glue the sides of $\strip$ together, identifying the endpoints of the curve and inserting a basepoint with weight $c$. In particular, no crossover arrows are introduced when the flip map data is added, so the second application of the arrow sliding algorithm is never needed for knots in $S^3$.

\begin{example}
The immersed multicurves in $\cylinder$ representing the knot Floer complex of the left-handed trefoil and the figure eight knot in $S^3$ are shown in Figure \ref{fig:LHT-and-fig8-curves}.
\end{example}

\begin{figure}
\includegraphics[scale = 1]{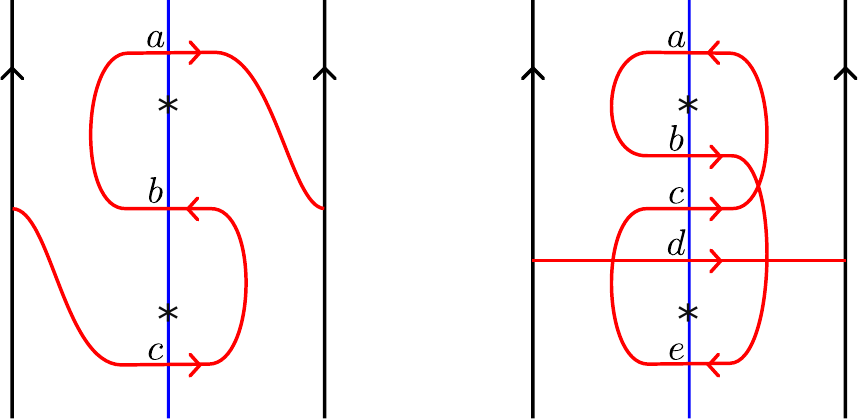}
\caption{The immersed curves associated with the left-handed-trefoil (left) and the figure eight knot (right).}
\label{fig:LHT-and-fig8-curves}
\end{figure}

If $Y$ is not an L-space, the horizontal and vertical homology of the knot Floer complexes are more complicated, and the flip maps can carry interesting information. To illustrate this, we construct decorated curves from the complexes and flip maps given in Examples \ref{ex:1-surgery-on-fig8} and \ref{ex:1-surgery-on-LHT}.

\begin{example}\label{ex:1-surgery-on-fig8-curves}
Consider the the knot Floer data associated with the dual knot in $+1$-surgery on on the figure eight knot given in Example \ref{ex:1-surgery-on-fig8}, viewed as a complex over $\sRhat$. Because the complexes have horizontally and vertically simplified bases, it is straightforward to construct immersed multicurve in the marked strip $\strip$ representing the complex $CFK_{\sRhat}(Y,K)$ (in particular, there are no crossover arrows to remove in this construction); this multicurve is shown on the left of Figure \ref{fig:1-surgery-on-fig8-curves}. Note that a basepoint of weight $-1$ is required to get the correct sign on the $d$ term of $\partial(e)$. The right endpoints of this multicurve correspond to the generators of hat horizontal homology, $\{a,b,c\}$, while the left endpoints correspond to the generators $\{c,d,e\}$ of vertical homology. Recall that up to equivalence the possible flip isomorphisms on this complex are indexed by a nonzero constant $c_4$ in $\F$, with the flip map taking $a$ to $e$, $b$ to $d$, and $c$ to $c_4 \cdot c$; though we know the flip isomorphism associated to $K \subset Y$ corresponds to $c_4 = 1$, we will describe the construction for any of these flip isomorphisms. We add an unmarked strip $\sF$ containing arcs that match up the endpoints in the appropriate way; that is, we add arcs connecting the $a$, $b$, and $c$ endpoints on left of $\sF$ to the $e$, $d$, and $c$ endpoints on the right of $\sF$, respectively, and we place a weighted basepoint with weight $c_4$ on the arc from $c$ to $c$. Gluing the strips $\strip$ and $\sF$ along with the curves they contain produces the decorated immersed multicurve in the marked cylinder $\cylinder$ representing $CFK_{\sRhat}(Y,K)$ with the given choice of flip map; see the right side of Figure \ref{fig:1-surgery-on-fig8-curves}. When $c_4 = 1$, this decorated curve is $\HFhat(Y,K;\spin)$; note that in this case the basepoint in $\sF$ can be omitted.
\end{example}

\begin{figure}
\includegraphics[scale = .85]{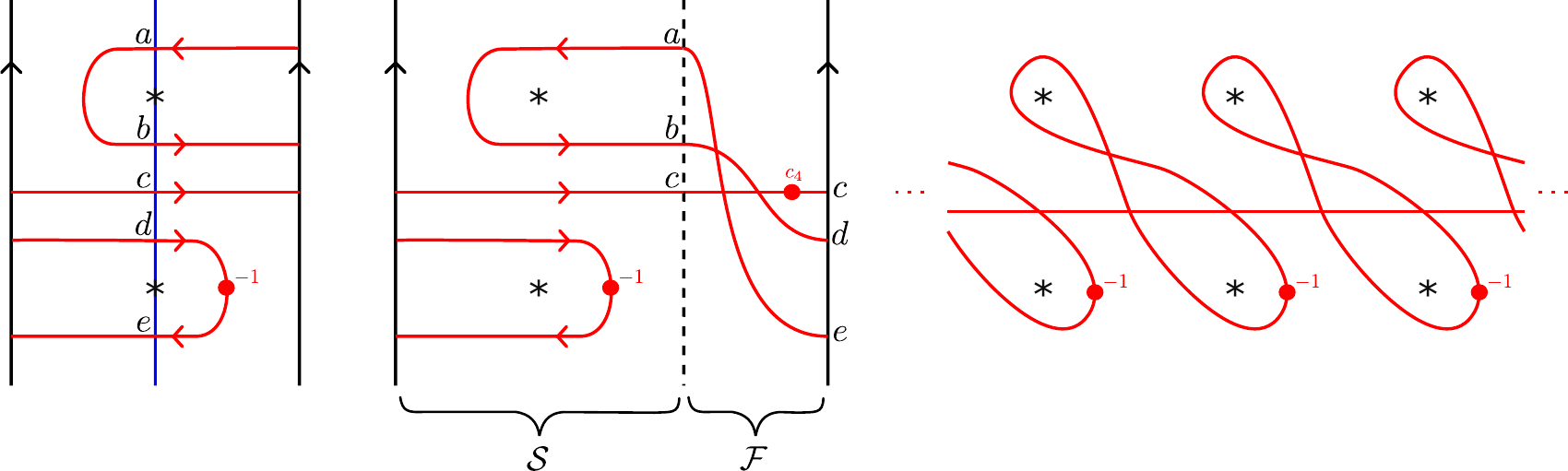}

(a) \hspace{35mm} (b) \hspace{45mm} (c) \hspace{20mm}
\caption{The immersed curves constructed in Example \ref{ex:1-surgery-on-fig8-curves}:(a) the immersed multicurve in $\strip$ representing the complex with respect to the given basis; (b) the immersed curve in $\cylinder$ representing the complex with a flip isomorphism---when $c_4=1$ this gives the curve associated to the dual knot of $+1$-surgery on the figure eight knot; (c) the curve (for $c_4 = 1$) lifted to the covering space $\widetilde{T}_M$.}
\label{fig:1-surgery-on-fig8-curves}
\end{figure}

\begin{example}\label{ex:1-surgery-on-LHT-curves}
Consider the knot Floer data associated with the dual knot in $+1$-surgery on on the left-handed trefoil given in Example \ref{ex:1-surgery-on-LHT}, viewed as a complex over $\sRhat$. As in Example \ref{ex:1-surgery-on-fig8-curves}, it is straightforward to compute the immersed curves in $\strip$ representing the complex over $\sRhat$; in fact, the complex is identical to the complex in Example \ref{ex:1-surgery-on-fig8-curves} so the immersed curves in $\strip$ are the same apart from orientations and weights; see Figure \ref{fig:1-surgery-on-LHT-curves}(a). Recall that for this complex there are two fundamentally different families of non-equivalent flip isomorphisms: in the first family $\widehat\Psi_\spin$ takes $a$ to $e$ while in the second family it $\widehat\Psi_\spin$ takes $a$ to $c + e$, and in both cases $\widehat\Psi_\spin$ takes $b$ to $d$ and $c$ to $c_4 \cdot c$ for some nonzero $c_4$ in $\F$. For the first family of flip isomorphisms, the resulting curves are the same as those in Example \ref{ex:1-surgery-on-fig8-curves} (apart from orientations); see Figure \ref{fig:1-surgery-on-LHT-curves}(b).

For the second family of isomorphisms, we add the same arcs in $\sF$ as before but also add a left-turn crossover arrow (with weight 1) from the segment from $a$ to $e$ to the segment from $c$ to $c$, as shown in Figure \ref{fig:1-surgery-on-LHT-curves}(c). Note that since $a$ and $c$ have the same value of $\gr_z$, the segments starting at $a$ and $c$ in $\sF$ form a bundle, and the $2$-strand arrow configuration consisting of the arcs starting at $a$ and $c$ along with the crossover arrow encodes the flip map restricted to the appropriate grading. We now need to perform the arrow sliding algorithm to remove the crossover arrow. The first step is to replace the 2-strand arrow configuration connecting $a$ and $c$ to $c$ and $e$ so that it is sorted with respect to depth 1 colors; this amounts to resolving the crossing at which the left-turn crossover arrow appears, using the local move in the last line of Figure \ref{fig:local-moves}, resulting in a new curve with two crossover arrows (both with weight 1) as shown on the right of Figure \ref{fig:1-surgery-on-LHT-curves}(d). These arrows can be removed since they both immediately encounter a marked point when slid toward their left side into $\strip$, though we note that removing these arrows involves a change of basis. After removing the crossover arrows, the two basepoints weighted by $-1$ can be slid together and canceled (this may also involve basis changes), leaving a single basepoint with weight $c_4$. When $c_4 = 1$, the resulting curve is $\HFhat(Y,K;\spin)$; in this case the remaining basepoint has weight 1 and can be ignored.
\end{example}

\begin{figure}
\includegraphics[scale = .9]{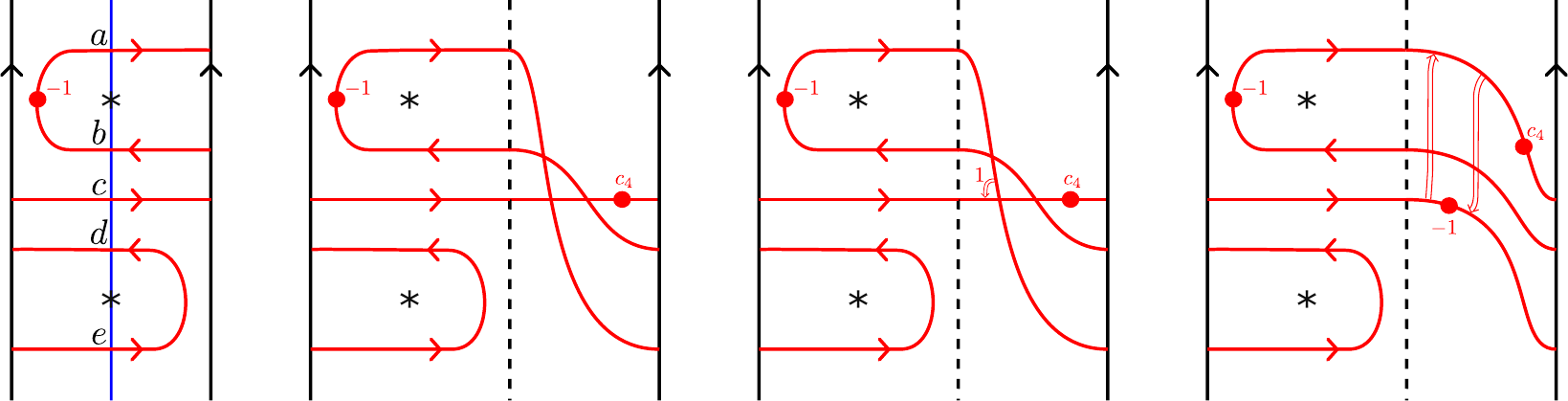}

\hspace{1 mm} (a) \hspace{32mm} (b) \hspace{36mm} (c) \hspace{37mm} (d) \hspace{5 mm}

\caption{The immersed curves constructed in Example \ref{ex:1-surgery-on-LHT-curves}. (a) the immersed multicurve in $\strip$ representing the complex; (b) a curve in $\cylinder$ representing the complex with the first family of flip isomorphisms; (c) a multicurve with crossover arrow in $\cylinder$ representing the complex with the second family of flip isomorphisms. (d) the first step of removing the crossover arrow.}
\label{fig:1-surgery-on-LHT-curves}
\end{figure}

The curve constructed in Example \ref{ex:1-surgery-on-LHT-curves} is the same as the curve considered in Example \ref{ex:flip-maps-from-curve}; it was observed there that the complex extracted from this curve is the complex from Example \ref{ex:1-surgery-on-LHT} and the flip isomorphism extracted from this curve is equivalent to the flip isomorphism from Example \ref{ex:1-surgery-on-LHT} after a change of basis. We remark that the change of basis needed is precisely the one arising from the arrow removal process.

\section{Enhancing the curves for complexes over $\sRminus$}\label{sec:enhanced-curves}

In Sections \ref{sec:simple-curves} and \ref{sec:flip-maps} we worked in the simpler $UV=0$ setting and showed that a bigraded complex over $\sRhat$ can be represented by a decorated curve in a marked strip $\strip$ and that a complex over $\sRhat$ equipped with a flip isomorphism can be represented by a decorated curve in a marked cylinder $\cylinder$. We now aim to extend these results to complexes over $\sRminus$, proving the existence part of Theorem \ref{thm:curve-invariant-for-complex}. We first show that for a bigraded complex over $\sRminus$, the decorated curve in $\strip$ representing the $UV=0$ complex can be enhanced to recover the diagonal arrows as well; in fact, the immersed curve will not change (up to homotopy) in this process, we only need to add to the bounding chain. Similarly, given a curve in $\cylinder$ representing complexes and flip maps over $\sRhat$ we can capture the extra information in the flip isomorphism over $\sRminus$ by adding self intersection points to the bounding chain.

\subsection{Enhanced curves in $\strip$: two examples}\label{sec:two-examples}
Before starting the proof, we discuss two illustrative examples; for simplicity we consider both examples with coefficients in $\Ztwo$. Consider the knot Floer complex for the $(2,-1)$-cable of the left-handed trefoil. The bigraded complex $C_1$ is shown in Figure \ref{fig:cable-of-trefoil}, along with a immersed curve $\Gamma_1$ in the strip $\strip$ which results from applying the algorithm in Section \ref{sec:simple-curves}; note that there are no self intersection points of $\Gamma_1$ so the bounding chain coming from the algorithm is necessarily trivial and we omit it from the notation. The construction in Section \ref{sec:simple-curves} guarantees that the complex $C(\Gamma_1)$ agrees with $C_1$ as complexes over $\sRhat$, with the bigons contained fully on the left (respectively right) side of $\mu$ corresponding to the vertical (respectively horizontal) arrows in $C_1$. In this case, it turns out that $C(\Gamma_1)$ is in fact a bigraded complex over $\sRminus$ and it agrees with the full complex $C_1$; in addition to the bigons contributing to the complex over $\sRhat$, the differential on $C(\Gamma_1)$ over $\sRminus$ counts the two bigons shaded in the figure, which exactly recover the two diagonal arrows in $C_1$.

The good fortune of the last example does not always hold. Figure \ref{fig:example-with-crossing} shows another bigraded complex $C_2$ over $\sRminus$. This complex is one summand of the knot Floer complex for $T_{2,9} \# -T_{2,3;2,5}$.  On the left is the immersed curve $\Gamma_2$ which represents the complex over $\sRhat$. Once again, the bounding chain determined by the construction in Section \ref{sec:simple-curves} is trivial. By construction, the complex $C(\Gamma_2)$ agrees with $C_2$ as a complex over $\sRhat$. However, if we work over $\sRminus$ then $C(\Gamma_2)$ does not agree with $C_2$; two of the four diagonal arrows in $C_2$ are missing (these two arrows are gray in the figure). In fact, this means that $C(\Gamma_2)$ is not even a complex over $\sRminus$, as $\partial^2$ is not zero. The situation can be salvaged if we decorate $\Gamma_2$ with a bounding chain $\bchain_2$. We take $\bchain_2$ to be the linear combination of the two self-intersection points of $\Gamma_2$, each with weight $W$, as shown on the right side of the figure. Working over $\sRminus$, the complex $C(\Gamma_2, \bchain_2)$ has the same generators as the precomplex $C(\Gamma_2)$ and all the same terms in the differential, in addition to two new terms coming from the shaded generalized bigons in the figure; these correspond precisely to the two missing arrows, so $(\Gamma_2, \bchain_2)$ represents $C_2$ over $\sRminus$.

\begin{figure}
 \begin{tikzpicture}[scale=1.8,>=stealth', thick] 
 
 \node (a) at (0,2) {$\bullet$};
 \node (b) at (0,1) {$\bullet$};
 \node (c) at (2,1) {$\bullet$};
 \node (d) at (2,2) {$\bullet$};
 \node (e) at (1,2) {$\bullet$};
 \node (f) at (1,0) {$\bullet$};
 \node (g) at (2,0) {$\bullet$};
 
 \draw[->] (a) to (b);
 \draw[->] (c) to (b);
 \draw[->] (d) to (c);
 \draw[->] (d) to (e);
 \draw[->] (e) to (f);
 \draw[->] (g) to (f);
 \draw[->] (e) to (b);
 \draw[->] (c) to (f);

\end{tikzpicture} \hspace{3 cm}
\includegraphics[scale=.6]{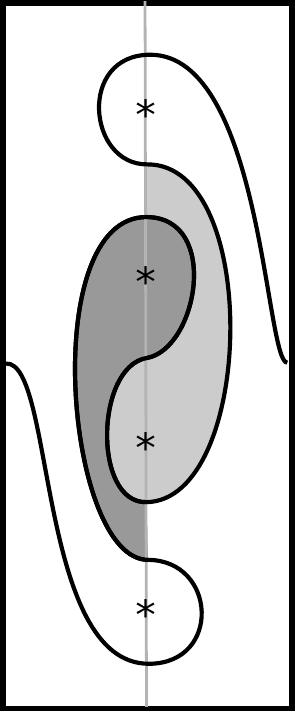} 

\caption{$\CFKinfty$ of the the $(2,-1)$ cable of the left-handed trefoil. On the left is a representation of the bifiltered complex; on the right is the corresponding immersed curve. The two shaded bigons, which each cover one puncture, correspond to the diagonal arrows in the complex.}\label{fig:cable-of-trefoil}
\end{figure}

\begin{figure}
\includegraphics[scale=.65]{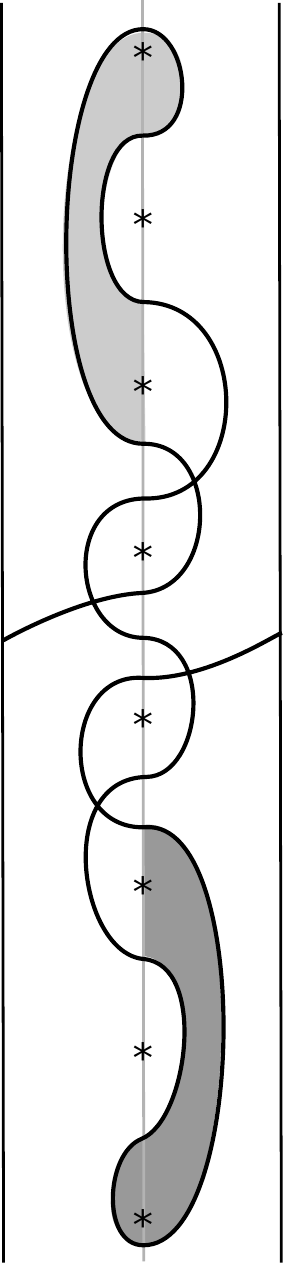} \hspace{1 cm}
 \begin{tikzpicture}[scale=1.8,>=stealth', thick] 
 
 \node (a) at (.9,1.1) {$\bullet$};
 \node (b) at (0,1.1) {$\bullet$};
 \node (c) at (0,4) {$\bullet$};
 \node (d) at (1,4) {$\bullet$};
 \node (e) at (1,3) {$\bullet$};
 \node (f) at (2,3) {$\bullet$};
 \node (g) at (2,2) {$\bullet$};
 \node (h) at (3,2) {$\bullet$};
 \node (i) at (3,1) {$\bullet$};
 \node (j) at (4,1) {$\bullet$};
 \node (k) at (4,0) {$\bullet$};
 \node (l) at (1.1,0) {$\bullet$};
 \node (m) at (1.1,.9) {$\bullet$};

 \draw[->] (a) to (b);
 \draw[->] (c) to (b);
 \draw[->] (d) to (c);
 \draw[->] (d) to (e);
 \draw[->] (f) to (e);
 \draw[->] (f) to (g);
 \draw[->] (h) to (g);
 \draw[->] (h) to (i);
 \draw[->] (j) to (i);
 \draw[->] (j) to (k);
 \draw[->] (k) to (l);
 \draw[->] (m) to (l);

 \draw[->] (e) to (b);
 \draw[->, color = gray] (f) to (a);
 \draw[->] (i) to (l);
 \draw[->, color = gray] (h) to (m);

\end{tikzpicture} \hspace{1 cm}
\labellist
\tiny
  \pinlabel {$\tiny {W}$} at 68 160
  \pinlabel {$\tiny {W}$} at 15 198
\endlabellist
\includegraphics[scale=.65]{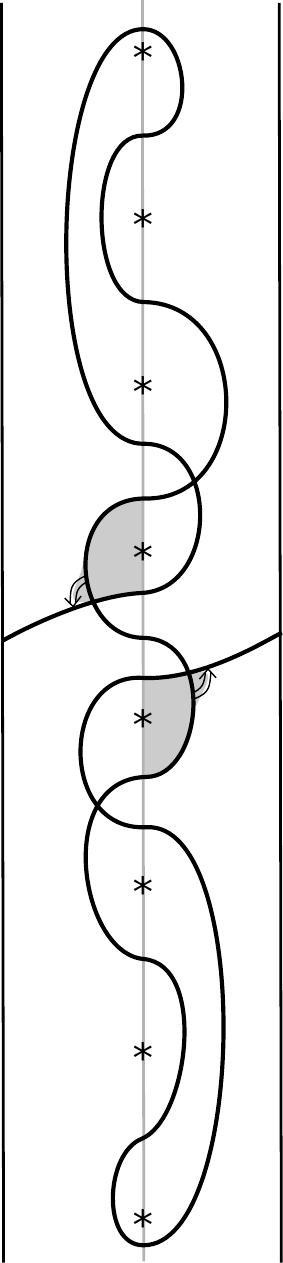}

\caption{For the complex shown, on the left is an immersed curve representing the $UV = 0$ quotient of the complex. Note that even if we consider bigons between which fully cover the puncture, the pre-complex determined by the immersed curve does not agree with the full complex; the two diagonal arrows in the middle of the complex correspond to the shaded bigons on the left, but the remaining two diagonal arrows are missing. On the right is the curve decorated with two crossover arrows, each of which is weighted by $UV$. The complex determined by this train track contains two additional arrows coming from the shaded bigons and determines the correct complex.}\label{fig:example-with-crossing}
\end{figure}

\subsection{Enhanced curves in $\strip$: General case}\label{sec:enhanced-general-case}

In general, for any bigraded complex $C$ over $\sRminus$ let ($\Gamma, \bchainhat)$ be an immersed multicurve in $\strip$ and bounding chain that represents $C$ over $\sRhat$, as constructed in Section \ref{sec:simple-curves}. Our strategy will be to incrementally improve $(\Gamma, \bchainhat)$ by adding more turning points to $\bchainhat$ until we arrive at a bounding chain $\bchain$ such that $(\Gamma, \bchain)$ represents $C$ over $\sRminus$. The immersed multicurve $\Gamma$ will not change during this process, though in order to have enough intersection points we must assume that $\Gamma$ begins in almost simple position rather than simple position. The construction in Section \ref{sec:simple-curves} gives curves in simple position, but these can easily be put in almost simple position by sliding endpoints along $\partial_L \strip$ and $\partial_R \strip$ to put them in the correct order and by applying finger moves to remove any immersed annuli; these changes do not change the fact that $(\Gamma, \bchainhat)$ represents $C$ over $\sRhat$.

A bigraded (pre)complex over $\sRminus$ is determined by a set of $n$ generators with associated Alexander and Maslov gradings and an $n\times n$ matrix with coefficients in $\F$. We will order the $n^2$ entries of this matrix and inductively construct collections of turning points $\bchain_N$ such that the precomplex $C(\Gamma, \bchain_N)$ agrees with the complex $C$ as a vector space and such that the differentials agree in the first $k$ entries of this matrix. When $N = n^2$ the matrices agree entirely, and so if we take $\bchain = \bchain_{N^2}$ then $(\Gamma, \bchain)$ represents $C$ over $\sRminus$. It will be convenient to work with train tracks so that we can slide crossover arrows in the construction, so let $\tracks_N$ denote the immersed train track corresponding to the pair $(\Gamma, \bchain_N)$ (recall that this consists of the multicurve $\Gamma$ along with left turn crossover arrows determined by $\bchain_N$). As usual we let $C(\tracks_N)$ denote the (pre)complex represented by this train track, which is equipped with the map $\partial^{\tracks_N}$. As the base case of the induction, we define $\bchain_0$ to be $\bchainhat$. By assumption, the train track $\tracks_0$ corresponding to $(\Gamma, \bchainhat)$ represents $C$ over $\sRhat$. Note that $\bchainhat$ is a bounding chain over $\sRhat$ but not necessarily over $\sRminus$, so the $C(\tracks_0)$ may be only a precomplex over $\sRminus$.

The intersections of $\Gamma$ with $\mu$ specify an ordered basis $\{x_1, \ldots, x_n\}$ for $C(\tracks_0)$, where the ordering is given by height. We will use $<$ to denote this ordering; that is $x_i < x_j$ if the point $x_i$ occurs below $x_j$. Note that this is a refinement of the partial ordering on generators given by the Alexander grading.  Over $\sRhat$ the complex $C(\tracks_0)$ is isomorphic to $C$, by assumption; by slight abuse of notation we will use $\{x_1, \ldots, x_n\}$ to refer to the corresponding basis of the complex $C|_{\sRhat}$, which can also be taken as a basis of $C$. Let $\{d_{i,j}\}_{1\le i,j \le n}$ be the matrix of coefficients (in $\F$) of the differential $\partial$ on $C$ with respect to this basis, so that
$$\partial(x_i) = \sum_{j = 1}^n d_{i,j} U^{a_{i,j}} V^{b_{i,j}} x_j$$
where $a_{i,j}$ and $b_{i,j}$ are defined by Equation \eqref{eq:UV-exponents}. Note that $d_{i,j} = 0$ if $M(x_i)$ and $M(x_j)$ have the same parity or if $a_{i,j}$ or $b_{i,j}$ are negative.
Similarly, let $\{d^{\tracks_k}_{i,j}\}_{1\le i,j \le n}$ be the coefficients of the map $\partial^{\tracks_k}$ for $C(\tracks_k)$.

Let $\mathcal{P}$ denote the set of pairs $(i,j)$ with $1\le i, j \le n$; we now define an ordering $\lessdot$ on $\mathcal{P}$ given the gradings on the ordered basis $\{x_1, \ldots, x_n\}$. We first order by the parity of $M(x_i) - M(x_j)$, with the pairs for which this parity is even coming first. The pairs for which $M(x_i) - M(x_j)$ is even can be given any arbitrary order. For pairs with $M(x_i)-M(x_j)$ odd, we next order by $\min(a_{i,j}, b_{i,j})$ and then by $\max(a_{i,j}, b_{i,j})$. That is, we pick the ordering $\lessdot$ such that $(i,j) \lessdot (i',j')$ if $\min(a_{i,j}, b_{i,j}) < \min(a_{i',j'}, b_{i',j'})$ or if $\min(a_{i,j}, b_{i,j}) = \min(a_{i',j'}, b_{i',j'})$ and $\max(a_{i,j}, b_{i,j}) < \max(a_{i',j'}, b_{i',j'})$. To refine the ordering further, we define a complexity on pairs $(i,j)$ based on how long the paths following $\Gamma$ starting from $x_i$ and $x_j$ take to cross or diverge, where the paths begin moving rightward if $i < j$ and they begin moving leftward if $i > j$. More precisely, the complexity is zero if either path does not return to $\mu$, if the two paths cross before returning to $\mu$, or if the two paths return to $\mu$ at different heights; otherwise, if the paths starting at $x_i$ and $x_j$ first return to $\mu$ at points $x_{i'}$ and $x_{j'}$, respectively, than the complexity of $(i,j)$ is one more than the complexity of $(i', j')$. Among pairs with the same $\min(a_{i,j}, b_{i,j})$ and $\max(a_{i',j'}, b_{i',j'})$, we choose $\lessdot$ so that pairs are ordered by increasing complexity. Finally, pairs with the same $\min(a_{i,j}, b_{i,j})$, $\max(a_{i',j'}, b_{i',j'})$, and complexity can be ordered arbitrarily, subject to the following constraint: If $x_i$ is part of a grouping of generators with indices $\{i_0, \ldots, i_{r-1}\}$ on a non-primitive curve component of order $r$ and $x_j$ is part of a grouping of generators with indices $\{j_0, \ldots, j_{s-1}\}$ on a non-primitive curve of order $s$, then the relative ordering on the $rs$ pairs coming from an index in the first grouping and an index in the second grouping satisfies the following:
$$(i_0, j_{\ell \neq 0}) \lessdot (i_0, j_0) \lessdot (i_{k \neq 0}, j_{\ell \neq 0}) \lessdot (i_{k \neq 0}, j_0) .$$

If a train track $\tracks$ represents the complex $C$ over $\sRhat$ with respect to some basis of $C$, we will say that it \emph{represents $C$ over $\sRminus$ to $N$ entries} if $d_{i,j} = d^\tracks_{i,j}$ for the first $N$ pairs $(i,j)$ in $\mathcal{P}$. We will inductively construct the collections of turning points $\bchain_N$ so that the train tracks $\tracks_N$ represent $C$ over $\sRminus$ to at least $N$ entries. Note that for any train track $\tracks$ representing $C$ over $\sRhat$, $d_{i,j}$ and $d^\tracks_{i,j}$ are both zero if $M(x_i) - M(x_j)$ is even or if $\min(a_{i,j}, b_{i,j}) < 0$. The entries with $\min(a_{i,j}, b_{i,j}) = 0$ record the horizontal and vertical arrows in the complex, so the fact that $\tracks$ represents $C$ over $\sRhat$ implies that $d_{i,j}$ and $d^\tracks_{i,j}$ also agree for these entries. Thus, if $N_0$ is the number of pairs with $M(x_i) - M(x_j)$ even or with  $\min(a_{i,j}, b_{i,j}) \le 0$ and we can define $\bchain_N$ to be $\bchainhat$ for all $N \le N_0$, and the induction really begins at $N = N_0$. To show that there is some $\bchain = \bchain_{n^2}$ so that $\tracks_{n^2}$ represents $C$ over $\sRminus$, we need the following inductive step.

\begin{proposition}[Main inductive step]\label{prop:inductive-step} Let $C$ be a bigraded complex over $\sRminus$. Suppose $\Gamma$ is a weighted and graded immersed multicurve in $\strip$ in almost simple position and $\bchain_N$ is a collection of turning points such that $(\Gamma, \bchain_N)$ represents $C$ over $\sRhat$ and represents $C$ over $\sRminus$ to $N$ entries (with respect to some given basis of $C$) and suppose that the restriction $\bchainhat_N$ of $\bchain_N$ to degree zero intersection points is of local system type. There exists a new collection of turning points $\bchain_{N+1}$, also with the property that $\bchainhat_{N+1}$ is of local system type, such that $(\Gamma, \bchain_{N+1})$ represents $C$ over $\sRhat$ and over $\sRminus$ to $N+1$ entries (possibly with respect to a different basis of $C$). 
\end{proposition}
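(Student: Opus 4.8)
The plan is to fix the $(N+1)$st pair $(i,j) = (i_0, j_0)$ in the ordering $\lessdot$ and to correct the corresponding entry of the differential matrix by adding one new turning point to $\bchain_N$, without disturbing the entries already matched. First I would observe that if $d_{i_0,j_0} = d^{\tracks_N}_{i_0,j_0}$ already, there is nothing to do: set $\bchain_{N+1} = \bchain_N$. So assume the two entries differ, and set $c = d_{i_0,j_0} - d^{\tracks_N}_{i_0,j_0} \in \F$. Since $(\Gamma,\bchain_N)$ already represents $C$ over $\sRhat$, the discrepancy can only occur for a pair with $\min(a_{i_0,j_0}, b_{i_0,j_0}) = m > 0$; let $\deg = -2m$ be the degree we will need. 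I would then locate a self-intersection point $p$ of $\Gamma$ of degree $-2m$ sitting between the segments $s_{i_0}$ and $s_{j_0}$ of $\Gamma$ — specifically the intersection point such that the two left-turn polygonal paths $x_{i_0} \to p \to x_{j_0}$ exist and bound a generalized triangle covering exactly $m$ copies of a $w$-marked point and $m$ copies of a $z$-marked point (so the new term it contributes to $\partial^{\tracks}$ is a multiple of $cU^m V^m x_{j_0}$, which after grading considerations is exactly $c U^{a_{i_0,j_0}} V^{b_{i_0,j_0}} x_{j_0}$). This is where the complexity refinement in the ordering $\lessdot$ is used: the ordering on pairs by $\min$, then $\max$, then the path complexity guarantees that such a $p$ exists and that adding it affects only entries that come strictly later in the ordering. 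Define $\bchain_{N+1} = \bchain_N + (\pm c)\, p$, with the sign chosen by whether the polygonal path agrees with or opposes the orientation of $\Gamma$, exactly as in the naive representative construction of Proposition \ref{prop:naive-curves}.

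Next I would verify the three things that must hold for $\bchain_{N+1}$. First, $\bchainhat_{N+1}$ is still of local system type: if $m \geq 1$ then $p$ has strictly negative degree, so it does not lie in $\sI_0$ at all, and $\bchainhat_{N+1} = \bchainhat_N$ is unchanged; hence this case is automatic. (If on the other hand $p$ were forced to have degree zero we would instead need to argue it is a local system intersection point, but the grading bookkeeping $\deg(p) = -2m < 0$ rules this out for all the entries the induction actually touches, since entries with $\min(a,b) \le 0$ are already handled in the base case.) Second, $(\Gamma, \bchain_{N+1})$ still represents $C$ over $\sRhat$: adding a turning point of strictly negative degree changes $m^\bchain_k$ only by terms carrying a positive power of $W = UV$, which vanish mod $UV$; so $C(\tracks_{N+1}) \UVzero = C(\tracks_N)\UVzero \cong C\UVzero$. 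Third, $(\Gamma, \bchain_{N+1})$ represents $C$ over $\sRminus$ to $N+1$ entries: here I would use Lemmas \ref{lem:some-d-squared-zero-horizontal} and \ref{lem:some-d-squared-zero-vertical} together with the ordering $\lessdot$ to argue that the new generalized polygons created by allowing a false corner at $p$ contribute only to $d^{\tracks}_{i',j'}$ for pairs $(i',j')$ with $(i_0,j_0) \lessdot (i',j')$, plus the one desired correction $c U^{a_{i_0,j_0}} V^{b_{i_0,j_0}} x_{j_0}$ to $\partial^{\tracks}(x_{i_0})$; this uses the fact that any polygonal path with a false corner at $p$ that contributes to an earlier pair would have to backtrack or enclose a configuration prohibited by the "no two segments meet twice" and "no clockwise monogon disjoint from $\mu$" lemmas.

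The main obstacle, and where most of the work goes, is the bookkeeping in the third step: showing that inserting the false corner at $p$ does not retroactively spoil any of the first $N$ matched entries. The subtlety is that a generalized bigon contributing to an early pair $(i',j')$ might in principle route through $p$, pick up the factor $\pm c W^m$, and cover fewer marked points elsewhere so that the total power of $U$ and $V$ lands on an earlier entry. I would rule this out by a careful analysis of the polygonal path: any path from $x_{i'}$ through a false corner at $p$ must, by the geometry of almost simple position and Lemma \ref{lem:bigons-have-no-monogons}/\ref{lem:clockwise-monogons}, stay inside the triangle region bounded by $s_{i_0}$, $s_{j_0}$ and $\mu$ until it exits near $x_{j_0}$; combined with the additivity of $\min(a,b)$ along concatenated paths and the fact that passing a false corner at $p$ strictly increases $\min(a,b)$ relative to a path not using $p$, this forces the output pair to satisfy $(i_0,j_0) \lessdot (i',j')$. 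The special "grouping" constraint in the definition of $\lessdot$ handles the one genuinely delicate case, where $x_{i_0}$ and $x_{j_0}$ live on non-primitive components and $p$ is one of a bundle of nearby intersection points; there the partial order $(i_0, j_{\ell\neq 0}) \lessdot (i_0,j_0) \lessdot (i_{k\neq 0}, j_{\ell \neq 0}) \lessdot (i_{k\neq 0}, j_0)$ is exactly what is needed so that correcting one entry in the bundle does not disturb the others that come before it. Once this ordering argument is in place, the proposition follows, and iterating from $N = N_0$ to $N = n^2$ produces the desired $\bchain$.
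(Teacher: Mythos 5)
Your proposal captures the overall flavor of the argument (fix one entry at a time, control side effects via the ordering $\lessdot$, use Lemmas \ref{lem:some-d-squared-zero-horizontal}--\ref{lem:bigons-have-no-monogons}), but it has a significant gap that breaks the inductive step in a large fraction of the cases.

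You assume, as a key step, that one can ``locate a self-intersection point $p$ of $\Gamma$ of degree $-2m$ sitting between the segments $s_{i_0}$ and $s_{j_0}$'' and that ``the complexity refinement in the ordering $\lessdot$ \ldots\ guarantees that such a $p$ exists.'' This is false. Whether the two segments leaving $x_{i_0}$ and $x_{j_0}$ cross at all is a genuine geometric dichotomy that has nothing to do with the complexity refinement, which only orders pairs that are already tied in $\min$ and $\max$ of the exponents. The paper's proof organizes the argument around a case analysis on the relative heights of $x_i, x_j$ and the far endpoints $x_{i'}, x_{j'}$ (nineteen configurations; see Figure \ref{fig:19cases}), and in many of these there is no intersection point between $s_i$ and $s_j$ at all. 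Those cases split into two very different sub-families, and your proposal covers neither:

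In cases such as (b), (h), (i), (k), (m), (o), (p), (r), the correction cannot be made by adding a turning point, because there is no available self-intersection. Instead one adds a crossover arrow to the train track that is \emph{not} at a self-intersection point, uses it to create the needed bigon, and then slides it --- applying Proposition \ref{prop:basis-change} each time it crosses $\mu$, which is a genuine change of basis of $C$ --- until it either reaches an actual self-intersection point of negative degree (where it becomes a left-turn arrow and can be absorbed into $\bchain_{N+1}$) or becomes removable. The fact that this sliding process terminates uses the almost-simple-position hypothesis (no two components bound an immersed annulus). None of this machinery appears in your proposal, and it cannot be dispensed with; these cases are not exceptional. This is also precisely where the ``possibly with respect to a different basis'' clause of the statement is needed, which you notice is there but never actually use.

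In cases such as (j), (l), (s), and the bad sub-cases of (a), (d), (n), (q), (h), (k), there is not only no usable intersection but also no way to add a crossover arrow without disturbing an earlier entry. Here the content is not ``there is nothing to do''; it is a \emph{claim that must be proved} that $d^{\tracks_N}_{i,j}$ already equals $d_{i,j}$. The proof uses that $C$ is a chain complex together with Lemmas \ref{lem:some-d-squared-zero-horizontal} and \ref{lem:some-d-squared-zero-vertical} (certain coefficients of $(\partial^{\tracks_N})^2$ vanish even though $\partial^{\tracks_N}$ need not square to zero), and the ordering $\lessdot$ (including the complexity refinement and the grouping constraint) is used to show that all other terms in $\sum_\ell d^{\tracks_N}_{i,\ell} d^{\tracks_N}_{\ell,j'} = \sum_\ell d_{i,\ell} d_{\ell,j'}$ already agree, leaving exactly the identity $d^{\tracks_N}_{i,j} = d_{i,j}$. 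Your proposal relegates this to a one-line ``if they already agree, done,'' missing that agreement has to be established, not assumed.

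In short, your plan works for (roughly) the cases the paper labels (c), (e), (f), (g), where $s_i$ and $s_j$ do cross at a point of the correct degree. For the remaining cases --- which are the majority of the nineteen --- you would need the crossover-arrow-sliding argument and the $\partial^2 = 0$ argument, and without them the induction stalls.
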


\begin{proof}[Proof of Proposition \ref{prop:inductive-step}]
Let $(i,j)$ be the $(N+1)$st pair of indices in $\mathcal{P}$ with respect to the ordering $\lessdot$ defined above. Our aim is to show that $d_{i,j}^{\tracks_N} = d_{i,j}$ already or that this can be made true by modifying $\tracks_N$ through the addition or removal of points in $\bchain_N$, where this modification does not affect the fact that $d_{i',j'}^{\tracks_N} = d_{i',j'}$ for all $(i',j') \lessdot (i,j)$. For simplicity we will assume that $i < j$, i.e. that the potential arrow from $x_i$ to $x_j$ is horizontal type. The opposite case, with $i > j$, is identical after rotating all pictures in the proof by 180 degrees.

Let $a = a_{i,j}$ and $b = b_{i,j}$ so that a hypothetical arrow from $x_i$ to $x_j$ has coefficient $c U^a V^b$ for $c \in \F$. Since $i < j$, we have that $a - b = A(x_j) - A(x_i) \ge 0$. Note that since the coefficients $d_{i',j'}$ and $d_{i',j'}^{\tracks_N}$ agree for pairs $(i', j')$ with $(i',j') \lessdot (i,j)$, the inductive hypothesis implies that $(\Gamma, \bchain_N)$ represents $C$ over $\sR_{b} = \sRminus / W^{b}$. On the other hand, since we do not care about pairs $(i', j')$ with $(i,j) \lessdot (i',j')$, to verify the conclusion of the proposition it is sufficient to do so over $\sR_{b+1} = \sRminus / W^{b+1}$.

Recall that the generators $x_i$ and $x_j$ correspond to intersection points of $\Gamma$ with the vertical line $\mu$. We can define an arc $s_i$ by following $\Gamma$, initially moving rightward from the point corresponding to $x_i$, until it either returns to $\mu$ or hits the right boundary of $\strip$; if the terminal endpoint of $s_i$ lies on $\mu$ let $x_{i'}$ be the corresponding generator of $C(\tracks_N)$ and otherwise we say $x_{i'} = \emptyset$. Define $s_j$ and $x_{j'}$ similarly. Note that $s_i$ and $s_j$ can not be the same segment---that is, $x_{i'}$ can not be $x_j$---since then there would be a horizontal arrow from $x_i$ to $x_j$. We consider cases based on the relative values of $x_i$, $x_j$, $x_{i'}$, and $x_{j'}$. There are nineteen cases, which are depicted in Figure \ref{fig:19cases}. In each case, we will do one of three things: (1) we show that there is an intersection point at which we can add a left turn crossover arrow to $\tracks_N$ which changes $d^{\tracks_N}_{i,j}$ without changing any earlier coefficients, (2) we show that $d^{\tracks_N}_{i,j}$ can be adjusted, without changing any earlier coefficients, by adding a crossover arrow to $\tracks_N$ which is not a left-turn crossover arrow, and we show that that using arrow slide moves we can remove this arrow or it can become a left-turn crossover arrow, or (3) we show that $d^{\tracks_N}_{i,j}$ must already agree with $d_{i,j}$. In cases (1) and (2) we define $\bchain_{N+1}$ from $\bchain_N$ by adding the intersection point corresponding to the left turn crossover arrow, and in case (3) we simply let $\bchain_{N+1} = \bchain_N$.

\begin{figure}
\labellist
  \pinlabel {$x_{j'}$} at -5 207
  \pinlabel {$x_{i'}$} at -5 194
  \pinlabel {$x_j$} at -5 181
  \pinlabel {$x_i$} at -5 168

  \pinlabel {$x_j$} at 43 207
  \pinlabel {$x_{i'}$} at 43 194
  \pinlabel {$x_{j'}$} at 43 181
  \pinlabel {$x_i$} at 43 168
  
  \pinlabel {$x_{i'}$} at 91 207
  \pinlabel {$x_j$} at 91 194
  \pinlabel {$x_i$} at 91 181
  \pinlabel {$x_{j'}$} at 91 168
  
  \pinlabel {$x_j$} at 139 207
  \pinlabel {$x_i$} at 139 194
  \pinlabel {$x_{j'}$} at 139 181
  \pinlabel {$x_{i'}$} at 139 168
  
  \pinlabel {$x_{i'}$} at 187 201
  \pinlabel {$x_j$} at 187 188
  \pinlabel {$x_i$} at 187 175
  \pinlabel {$x_{j'} = \emptyset$} at 212 167
  
  \pinlabel {$x_j$} at 235 201
  \pinlabel {$x_i$} at 235 188
  \pinlabel {$x_{j'}$} at 235 175
  \pinlabel {$x_{i'} = \emptyset$} at 260 167
  
  \pinlabel {$x_j$} at 283 194
  \pinlabel {$x_i$} at 283 181
  \pinlabel {$x_{i'} = \emptyset$} at 312 203
  \pinlabel {$x_{j'} = \emptyset$} at 312 173
 
  \pinlabel {$(a)$} at 10 152
  \pinlabel {$(b)$} at 58 152
  \pinlabel {$(c)$} at 106 152
  \pinlabel {$(d)$} at 154 152
  \pinlabel {$(e)$} at 202 152
  \pinlabel {$(f)$} at 250 152
  \pinlabel {$(g)$} at 298 152
  
  \pinlabel {$x_{j'}$} at -5 127
  \pinlabel {$x_j$} at -5 114
  \pinlabel {$x_{i'}$} at -5 101
  \pinlabel {$x_i$} at -5 88

  \pinlabel {$x_j$} at 52 127
  \pinlabel {$x_{j'}$} at 52 114
  \pinlabel {$x_{i'}$} at 52 101
  \pinlabel {$x_i$} at 52 88
  
  \pinlabel {$x_{j'}$} at 110 127
  \pinlabel {$x_j$} at 110 114
  \pinlabel {$x_i$} at 110 101
  \pinlabel {$x_{i'}$} at 110 88
  
  \pinlabel {$x_j$} at 167 127
  \pinlabel {$x_{j'}$} at 167 114
  \pinlabel {$x_i$} at 167 101
  \pinlabel {$x_{i'}$} at 167 88
  
  \pinlabel {$x_{j'}$} at 225 121
  \pinlabel {$x_j$} at 225 108
  \pinlabel {$x_i$} at 225 95
  \pinlabel {$x_{i'} = \emptyset$} at 252 87
  
  \pinlabel {$x_j$} at 283 121
  \pinlabel {$x_{j'}$} at 283 108
  \pinlabel {$x_i$} at 283 95
  \pinlabel {$x_{i'} = \emptyset$} at 310 87
  
  \pinlabel {$(h)$} at 10 72
  \pinlabel {$(i)$} at 68 72
  \pinlabel {$(j)$} at 126 72
  \pinlabel {$(k)$} at 184 72
  \pinlabel {$(l)$} at 242 72
  \pinlabel {$(m)$} at 300 72
  
  \pinlabel {$x_{i'}$} at -5 47
  \pinlabel {$x_{j'}$} at -5 34
  \pinlabel {$x_j$} at -5 21
  \pinlabel {$x_i$} at -5 8

  \pinlabel {$x_{i'}$} at 52 47
  \pinlabel {$x_j$} at 52 34
  \pinlabel {$x_{j'}$} at 52 21
  \pinlabel {$x_i$} at 52 8
  
  \pinlabel {$x_j$} at 110 47
  \pinlabel {$x_{i'}$} at 110 34
  \pinlabel {$x_i$} at 110 21
  \pinlabel {$x_{j'}$} at 110 8
  
  \pinlabel {$x_j$} at 167 47
  \pinlabel {$x_i$} at 167 34
  \pinlabel {$x_{i'}$} at 167 21
  \pinlabel {$x_{j'}$} at 167 8
  
  \pinlabel {$x_j$} at 225 41
  \pinlabel {$x_{i'}$} at 225 28
  \pinlabel {$x_i$} at 225 15
  \pinlabel {$x_{j'} = \emptyset$} at 252 7
  
  \pinlabel {$x_j$} at 283 41
  \pinlabel {$x_i$} at 283 28
  \pinlabel {$x_{i'}$} at 283 15
  \pinlabel {$x_{j'} = \emptyset$} at 310 7
  
  \pinlabel {$(n)$} at 10 -8
  \pinlabel {$(o)$} at 68 -8
  \pinlabel {$(p)$} at 126 -8
  \pinlabel {$(q)$} at 184 -8
  \pinlabel {$(r)$} at 242 -8
  \pinlabel {$(s)$} at 300 -8

\endlabellist
\includegraphics[scale=1.2]{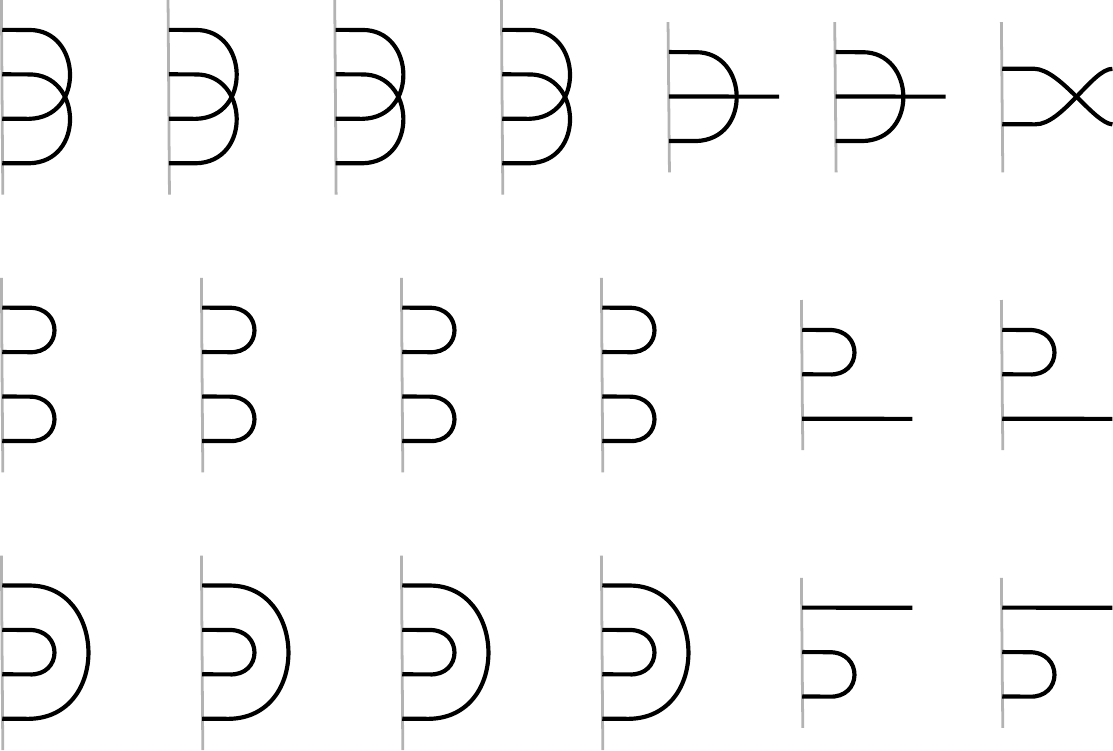}
\vspace{3 mm}
\caption{The nineteen cases for in the proof of  Proposition \ref{prop:inductive-step}.}
\label{fig:19cases}
\end{figure}

{\bf Cases (c), (e), (f), and (g):} In each of these cases the segments $s_i$ and $s_j$ intersect. We will call the intersection point $p$; it is straightforward to check that $p$ has degree $-2b$, so that the power of $W$ in the weight of any left turn arrow at $p$ must be $b$. If $d^{\tracks_N}_{i,j} = d_{i,j}$ we are done, otherwise let $c = d_{i,j} - d^{\tracks_N}_{i,j}$. We obtain $\tracks_{N+1}$ from $\tracks_N$ by adding a left turn crossover arrow weighted by $cW^b$ at the point $p$ as shown in Figure \ref{fig:case1-crossing}. The newly created bigon shaded in the figure contributes $c(UV)^b U^{a-b} x_j$ to $\partial^{\tracks_{N+1}}(x_i)$. This is precisely the change to $d^{\tracks_N}_{i,j}$ needed to make it agree with $d_{i,j}$; we only need to check that adding this crossover arrow does not have any unwanted side effects on other terms in the differential.

We can restrict our attention to bigons which lie entirely on the right side of $\mu$, since if a bigon crosses to the left side of $\mu$ and comes back, it must either enclose a puncture or make a left turn at an intersection point with negative degree, either of which would contribute at least one $W$ to the weight; if this bigon also involves the crossover arrow at point $p$, this contributes an additional $W^b$ and the bigon can be ignored modulo $W^{b+1}$. Similarly, we can ignore any bigons which involve another crossover arrow at a self intersection point of strictly negative degree. For simplicity, first assume that $\bchain_N$ contains no degree zero intersection points. In this case it is clear that there are no other relevant bigons formed when the new crossover arrow is added. Such a bigon would have to cover the opposite quadrant near $p$ and the $\tracks_{N+1}$ part of its boundary would be the path from $x_{i'}$ to $p$ to $x_{j'}$; since $x_{i'}$ is above $x_{j'}$ in case (c) and at least one of $x_{i'}$ or $x_{j'}$ is $\emptyset$ in cases (e), (f), and (g), this can not be the right side of a bigon formed with a segment of $\mu$.

To complete these cases, we need to allow for the possibility that $\bchain_N$ contains index zero points which lead to unwanted side effect bigons when the crossover arrow is added. Since $\bchainhat_N$ is local system type by assumption, any index zero points must be local system intersection points. Clearly an unwanted bigon can only occur if the index zero points in question occur along $s_i$ or $s_j$; that is, we must have that $x_i$ or $x_j$ lie on non-primitive curves and that $s_i$ or $s_j$ pass through the crossing region of the relevant curve between $x_i$ or $x_j$ and $p$. For example, suppose $x_i$ is $x_{i_0}$ in a multiplicity 3 grouping of generators, and that the intersection between $s_{i_0}$ and $s_{i_2}$ is in $\bchain$ and falls between $x_{i_0}$ and $p$, as on the left side of Figure \ref{fig:case1-local-systems}. Adding the crossover arrow at $p$ to introduce a bigon from $x_{i_0}$ to $x_j$ also introduces the bigon from $x_{i_2}$ to $x_j$ shaded in the figure. As another example, suppose $x_j$ is $x_{j_1}$ in a multiplicity 3 grouping of generators, and that $\bchain$ contains the intersection between $s_{j_1}$ and $s_{j_0}$ as on the right side of Figure \ref{fig:case1-local-systems}; adding the point $p$ to $\bchain$ introduces the shaded bigon from $x_i$ to $x_{j_0}$ in addition to the desired bigon from $x_i$ to $x_{j_1}$. From these examples it is clear that fixing the coefficient $d^{\tracks_N}_{i,j}$ only affects other terms of the differential if $i = i_0$ and/or $j = j_\ell$ with $\ell \neq 0$, and in these cases the indices of the other terms affected are obtained from $(i,j)$ by replacing $i$ with $i_k$ for $k \neq 0$ and/or replacing $j$ with $j_0$. Any pair of this form is greater than $(i,j)$ with respect to the ordering $\lessdot$, so we can ignore these side effects.

\begin{figure}
\labellist
 \pinlabel {$x_i$} at -6 8
 \pinlabel {$x_j$} at -6 45
 \pinlabel {\tiny $cW^b$} at 38 45
   
\endlabellist
\includegraphics[scale=1]{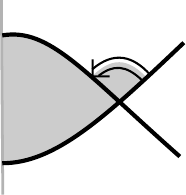}
\vspace{2 mm}
\caption{To correct $d^\tracks_{i,j}$ in cases (c), (e), (f), and (g) we add a multiple of the intersection point shown to $\bchain$; this modifies $\tracks$ by adding a crossover arrow in a neighborhood of this intersection point as pictured. This amounts to allowing left turns from $s_i$ to $s_j$ at this point.}
\label{fig:case1-crossing}
\end{figure}

\begin{figure}
\labellist
 \pinlabel {\tiny $i_0$} at -6 7
 \pinlabel {\tiny $i_1$} at -6 14
 \pinlabel {\tiny $i_2$} at -6 21
 \pinlabel {$j$} at -6 62
 \pinlabel {$p$} at 68 50
 
 \pinlabel {\tiny $j_0$} at 107 52
 \pinlabel {\tiny $j_1$} at 107 59
 \pinlabel {\tiny $j_2$} at 107 66
 \pinlabel {$i$} at 107 12
 \pinlabel {$p$} at 179 22
\endlabellist
\includegraphics[scale=1]{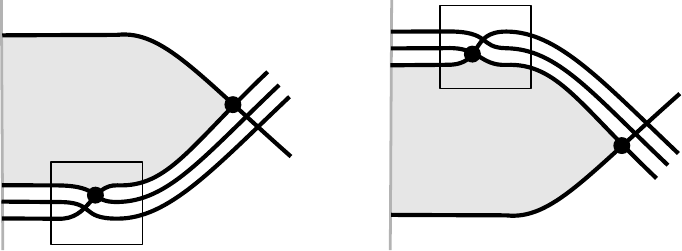}
\vspace{2 mm}
\caption{Possible side effect bigons in case (c), (e), (f), or (g)  involving weight zero points in $\bchain$.}
\label{fig:case1-local-systems}
\end{figure}

{\bf Cases (j) and (l):}  In these cases, we argue that $d^{\tracks_N}_{i,j}$ must already agree with $d_{i,j}$ by considering the coefficient of $x_{j'}$ in $(\partial^{\tracks_N})^2(x_i)$, which up to a power of $U$ and of $V$ is given by the left hand sum below. While $\partial^{\tracks_N}$ is not necessarily a differential, this term of $(\partial^{\tracks_N})^2$ is zero by Lemma \ref{lem:some-d-squared-zero-horizontal}(ii). The analogous $i,j'$-coefficient of $\partial^2$ is also zero, since $C$ is a chain complex, and so
\begin{equation}\label{eq:case-j}
\sum_{\ell = 1}^n d^{\tracks_N}_{i, \ell} d^{\tracks_N}_{\ell, j'} = 0 = \sum_{\ell = 1}^n d_{i, \ell} d_{\ell, j'}.
\end{equation}
We claim that every coefficient $d^{\tracks_N}_{i, \ell}$ or $d^{\tracks_N}_{\ell, j'}$ which appears in a nontrivial term of the left hand sum agrees with the corresponding coefficient $d_{i, \ell}$ or $d_{\ell, j'}$ except possibly $d^{\tracks_N}_{i,j}$. The sum can be taken only over indices $\ell$ for which $M(x_\ell)$ and $M(x_i)$ have opposite parity and for which $ 0 \le a_{i,\ell} \le a_{i,j'}$ and $0 \le b_{i,\ell} \le b_{i,j'} $. Note that because there is a horizontal arrow from $x_j$ to $x_j'$, $b_{i,j'} = b_{i,j} = b$. We have that $b_{i,\ell}$ and $b_{\ell, j'}$ are nonnegative and sum to $b_{i,j'}$, so both are less than $b$ unless one of them is zero. If both are less than $b$, then $d^{\tracks_N}_{i,\ell}$ agrees with $d_{i,\ell}$ and $d^{\tracks_N}_{\ell,j'}$ agrees with $d_{\ell,j'}$ since $C^{\tracks_N}$ agrees with $C$ mod $W^b$. If $b_{i,\ell} = 0$ then $d^{\tracks_N}_{i,\ell} = d_{i,\ell}$ must be zero, since there are no horizontal arrows starting at $x_i$. If $b_{\ell,j'} = 0$ and $d^{\tracks_N}_{\ell,j'} = d_{\ell,j'}$ is nonzero, then there is a horizontal arrow from $x_\ell$ to $x_{j'}$. This implies that $x_\ell$ is $x_j$ or possibly another generator in the same grouping as $x_j$, if $x_j$ lies on a non-primitive curve. In the later case, $x_{j'}$ must be the first generator in its grouping since only the first generator can have an incoming horizontal arrow, and thus $x_j$ must be the first generator in its grouping; it follows that $(i, \ell) \lessdot (i, j)$ so we can assume that $d^{\tracks_N}_{i,\ell} = d_{i,\ell}$. After deleting all terms which agree on both sides, Equation \eqref{eq:case-j} reduces to
$$ d^{\tracks_N}_{i, j} d^{\tracks_N}_{j, j'} = d_{i, j} d_{j', j'} $$
with $d^{\tracks_N}_{j, j'} = d_{j, j'} \neq 0$. Thus $d^{\tracks_N}_{i,j} = d_{i,j}$.

{\bf Case (s):} This is similar to cases (j) and (l) except that we consider the coefficient of the $x_{j}$ in $(\partial^{\tracks_N})^2(x_{i'})$, which is zero by Lemma \ref{lem:some-d-squared-zero-horizontal}(i). This leads to the equation 
\begin{equation}\label{eq:case-s}
\sum_{\ell = 1}^n d^{\tracks_N}_{i', \ell} d^{\tracks_N}_{\ell, j} = 0 = \sum_{\ell = 1}^n d_{i', \ell} d_{\ell, j}.
\end{equation}
As in the previous case it is clear that the $\ell$th term is equal on both sides unless either $b_{i',\ell}$ or $b_{\ell, j}$ are zero. If $b_{\ell, j} = 0$ then $d^{\tracks_N}_{\ell, j} = d_{\ell, j} = 0$, since $x_j$ has no incoming horizontal arrows. If $b_{i',\ell} = 0$ and $d^{\tracks_N}_{i',\ell} = d_{i',\ell} \neq 0$ then there is a horizontal arrow from $x_{i'}$ to $x_{\ell}$, which implies either $\ell = i$ or that $\ell = i_0$ where $x_i$ is part of a grouping of generators with indices $\{i_0, \ldots, i_r\}$. If $\ell = i_0 \neq i$, then $(\ell, j) \lessdot (i,j)$ so we can assume that $d^\tracks_{\ell,j} = d_{\ell, j}$. Equation \eqref{eq:case-s} then reduces to $ d^{\tracks_N}_{i', i} d^{\tracks_N}_{i, j} = d_{i', i} d_{i, j} $ and $d^{\tracks_N}_{i', i} = d_{i', i} \neq 0$, so $d^{\tracks_N}_{i,j} = d_{i,j}$. 

{\bf Cases (a) and (d):} As in the cases (c), (e), (f), and (g) above, the segments $s_i$ and $s_j$ cross and their intersection point $p$ has index $b$. As before we can modify the coefficient $d^{\tracks_N}_{i,j}$ as needed by adding a left turn crossover arrow at $p$. The difference is that now making this change also introduces another bigon from $x_{i'}$ to $x_{j'}$. If $(i,j) \lessdot (i',j')$ then we can ignore the effect of this bigon (as well as the effect of any other bigon from a generator in the grouping of $x_{i'}$ to a generator in the grouping of $x_{j'}$, when there are non-primitive curves involved) and the proof proceeds as before. If instead $(i',j') \lessdot (i,j)$ we argue that $d^{\tracks_N}_{i,j}$ must already agree with $d_{i,j}$.

In case (a), we consider the coefficient of $x_{j'}$ in $(\partial^{\tracks_N})^2(x_i)$, as in cases (j) and (l) above. The proof is identical to those cases, except that we must consider values of $\ell$ with $b_{i,\ell} = 0$ in addition to those with $b_{\ell, j'} = 0$, since $x_i$ has an outgoing horizontal arrow. However, the only horizontal arrow out of $x_i$ goes to $x_{i'}$, unless $x_i$ is in a grouping of generators with indices $\{i_0, \ldots, i_{r-1}\}$ and $i \neq i_0$, in which case there may also be a horizontal arrow from $x_i$ to $x_{i'_0}$. Thus if $b_{i,\ell} = 0$ and $d^{\tracks_N}_{i,\ell} = d_{i,\ell} \neq 0$ then $\ell$ is $i'$ or is $i'_0$, the first index in a grouping containing $i'$. We have assumed that $(i',j') \lessdot (i,j)$, and if $\ell = i'_0 \neq i'$ then $(\ell, j') \lessdot (i', j') \lessdot (i,j)$, so by the inductive assumption $d^{\tracks_N}_{\ell, j'} = d_{\ell, j'}$. As before, Equation \eqref{eq:case-j} implies $d^\tracks_{i,j} = d_{i,j}$. 

For case (d) we consider the coefficient of $x_j$ in $(\partial^{\tracks_N})^2(x_{i'})$ as in case (s) above. Since $x_j$ has an incoming horizontal arrow we must consider terms $d^{\tracks_N}_{i',\ell}d^{\tracks_N}_{\ell, j}$ where $b_{\ell,j} = 0$, but for such terms if $d^{\tracks_N}_{i',\ell} \neq 0$ then either $\ell = j'$ or $x_{j'}$ is the first in a grouping of generators and $\ell$ is the index of another generator in that grouping. In either case $(i', \ell) \lessdot (i',j')$, so we can assume $d^{\tracks_N}_{i',\ell} = d_{i',\ell}$. Then Equation \eqref{eq:case-s} implies $d^{\tracks_N}_{i,j} = d_{i,j}$. 

{\bf Cases (n) and (q):} These are similar to the previous few cases, with one additional consideration that we highlight here. For brevity we focus on case (n); the translation to case (q) is straightforward. Like in case (j), we consider the coefficient of $x_{j'}$ in $(d^{\tracks_N})^2(x_i)$; this is zero by Lemma \ref{lem:some-d-squared-zero-horizontal}(ii). As before the relevant terms in Equation \eqref{eq:case-j} are those for which either $\ell$ is $i'$ (or the first index in the grouping containing $i'$) or $\ell$ is $j$ (or another index in the grouping containing $j$, where $j$ is the first of that grouping). We again argue that for such $\ell$ the coefficients $d^{\tracks_N}_{i,\ell}$ and $d^{\tracks_N}_{\ell, j'}$ already agree with their counterparts on the other side of the equation except possibly $d^{\tracks_N}_{i,j}$. The key observation to make is that $(i',j') \lessdot (i,j)$, the other cases follow as before from the way pairs coming from groupings of indices are ordered. Note that since there are horizontal arrows from $i$ to $i'$ and from $j$ to $j'$ we have $b_{i',j'} = b_{i,j} = b$. We have that $a_{i,j} \ge b_{i,j}$ since $i < j$, while since $i' > j'$ we have $a_{i',j'} \le b_{i',j'}$. It follows that $\min(a_{i',j'}, b_{i',j'}) < \min(a_{i,j}, b_{i,j})$, and hence $(i',j') \lessdot (i,j)$, unless $A(x_i) = A(x_j)$ and $A(x_{i'}) = A(x_{j'})$. If this is the case, then the segments $s_i$ and $s_j$ are parallel, they neither cross nor diverge before returning to $\mu$, and they return to $\mu$ at the same height. It follows that the pair $(i,j)$ has complexity one higher than the pair $(i',j')$, and we have chosen the ordering $\lessdot$ so that this implies $(i',j') \lessdot (i,j)$.

{\bf Cases (b), (i), (m), (o), (p) and (r):} In each of these cases, $d^{\tracks_N}_{i,j}$ does not necessarily agree with $d_{i,j}$, so if it does not we must modify the train track $\tracks$ to adjust the coefficient $d^{\tracks_N}_{i,j}$. However, in all but case (b) there is no intersection point at which to add a crossover arrow as we did before, while in case (b) there is an intersection between the segments $s_i$ and $s_j$ but it has degree $2b > 0$ and thus a crossover arrow added there must be a right turn crossover arrow. Instead, we will modify $\tracks_N$ by adding a crossover arrow which is not at a self intersection point of $\Gamma$, as pictured in Figure \ref{fig:cases-with-sliding}. Each arrow lies just to the right of $\mu$ and connects $s_i$ to $s_j$ near an endpoint of each of those segments and is weighted by $\pm c W^b$, where $c = d_{i,j} - d^{\tracks_N}_{i,j}$. In cases (o) and (m) the arrow connects the $x_i$ end of $s_i$ to the $x_{j'}$ end of $s_j$ and is weighted by $c W^b$, in cases (p) and (r) the arrow connects the $x_{i'}$ end of $s_i$ to the $x_j$ end of $s_j$ and is weighted by $-c W^b$, and in cases (b) and (i) we can choose either of those two options. In each case there is an obvious bigon from $x_i$ to $x_j$ introduced by adding the crossover arrow. This bigon has weight $c (UV)^b U^{a-b}$, and so it contributes exactly the desired change to $d^{\tracks_N}_{i,j}$. We can check that there are no relevant side effect of this change exactly as in cases (c), (e), (f), and (g). First, we ignore bigons crossing $\mu$ or involving other crossover arrows at intersection points of strictly negative degree, since these will not contribute mod $W^{b+1}$. We then observe that the only other possible bigons involving the new crossover arrow would connect a generator in the grouping containing $x_i$ to a generator in the grouping containing $x_j$, and we can check that any affected pairs come after $(i,j)$ with respect to the ordering $\lessdot$. Thus we have modified $\tracks_N$ to obtain $\tracks_{N+1}$ such that the coefficients $d^{\tracks_{N+1}}$ and $d$ agree for all pairs up to and including $(i,j)$. That is, $\tracks_{N+1}$ represents $C$ correctly to $N+1$ entries. The only problem is that now the train track $\tracks_{N+1}$ does not have the form of a collection of immersed curves along with left-turn crossover arrows.

To resolve this problem we will slide the new crossover arrow, much as we slid arrows in the proof of Proposition \ref{prop:simple-curves-existence}, until either it reaches a negative degree intersection point, at which it becomes a left-turn arrow, or it can be removed. The argument is in fact much easier than the arrow sliding argument used to prove Proposition \ref{prop:simple-curves-existence} because we will not resolve any crossings, so the immersed curves do not change, and because most new composition arrows formed by sliding one arrow past another can be immediately ignored. 

Initially, the arrow is just to the right of $\mu$ and connects two horizontal segments; let $x$ denote the generator corresponding to the segment at the tail of the crossover arrow and $y$ denote the generator corresponding to the segment at the head (the pair $(x,y)$ is either $(x_i, x_{j'})$ or $(x_{i'},x_j)$, depending on the case). If $A(y)$ is strictly greater than $A(x)$ then the crossover arrow can be removed by performing a change of basis replacing $x$ with $x \pm U^{A(y)-A(x)} w y$ where $w$ is the weight on the crossover arrow and the sign depends on the orientation on $\Gamma$. Removing the arrow may modify some of the coefficients of $\partial^{\tracks_{N+1}}$ which we have already arranged to agree with the corresponding coefficients of $\partial$, but if we change our chosen basis for $C$ as above then the coefficients of $\partial$ will change as well, and Proposition \ref{prop:basis-change} ensures that the changes to $\partial^{\tracks_{N+1}}$ and $\partial$ are the same (modulo $W^{b+1}$). Thus, after removing the arrow and changing the basis, it is still the case that $d^{\tracks_{N+1}}$ and $d$ agree for all pairs up to and including $(i,j)$. If $A(y) = A(x)$, then we can slide the crossover arrow to the other side of $\mu$. Once again, this may change some coefficients of $\partial^{\tracks_{N+1}}$, but by Proposition \ref{prop:basis-change} the change is exactly matched in $\partial$ by replacing the basis element $x$ with $x \pm wy$ where $w$ is the weight on the crossover arrow and the sign depends on the orientation on $\Gamma$.

Once the arrow passes through $\mu$, one of four things can happen: (1) the segments connected by the arrow cross; (2) without crossing, the segments turn opposite directions (i.e., the segment from $y$ returns to $\mu$ above $y$ while the segment from $x$ returns to $\mu$ below $x$) or one or both segments do not return to $\mu$; (3) the segments turn the same direction but return to $\mu$ at different heights; or (4) the segments return to $\mu$ at the same height. Examples of these four cases are shown in Figure \ref{fig:arrow-sliding-cases}. In the first case, we can slide the crossover arrow until it reaches the crossing and observe that it is now a left-turn crossover arrow (in particular, this intersection point has negative degree $-2b$). In the second case, it is easy to see that the arrow can be removed without affecting the differential $\partial^{\tracks_{N+1}}$ modulo $W^{b+1}$; since there is a path from the left side of the crossover arrow to $\partial \strip$ disjoint from the two arcs connected by the arrow, any bigon involving the crossover arrow must either involve another crossover arrow at a negative degree intersection point or it must cross to the other side of $\mu$ and back, either of which would contribute up at least one additional factor of $W$. In the third case, we slide the crossover arrow to the other end of the two segments so that it points downward just to the left of $\mu$ connecting the other endpoints of the segments beginning at $x$ and $y$ respectively. Applying Proposition \ref{prop:basis-change} again, we can remove this arrow if we change the basis for $C$. Finally, in the fourth case we again slide the arrow so that it points downward from $x'$ to $y'$ but then we slide the arrow across $\mu$ while changing the basis as dictated by Proposition \ref{prop:basis-change}. We now have (a rotated version of) the same four cases to consider, and we repeat the argument. Clearly the arrow will eventually be removed or stop at a crossing unless the curves connected by the arrow are completely parallel. However, we have assumed that $\Gamma$ is in standard position and no two curves bound an immersed annulus, so this is not possible.

We have now corrected the coefficient $d_{i,j}^{\tracks_N}$, though to do so we were forced to add a crossover arrow which is not a left turn crossover arrow. We have also shown that, at the expense of choosing a new basis, we can always remove this arrow or replace it with a left turn crossover arrow. The only thing remaining to check is that we can also deal with any new arrows that might be introduced while sliding the arrow as described above. Recall from Section \ref{sec:curves-with-crossover-arrows} that if the head of this crossover arrow slides past the tail of another crossover arrow, or vice versa, we need to add a new crossover which is the composition of the two. However, the weight of the new arrow will be the product of the weights of the two arrows; since the arrow we are sliding has weight $cW^b$ and at this stage we only need to preserve $C(\tracks_{N+1})$ modulo $W^{b+1}$, we may immediately ignore any such compositions unless the arrow being passed is at a degree zero intersection point. By assumption, such arrows only occur in the crossing region of some non-primitive curve.  If the crossover arrow we wish to remove connects segments $s_1$ and $s_2$, then new arrows introduced will be the same except that they will connect a segment in the same grouping as $s_1$ to a segment in the same grouping as $s_2$. We can slide all of these arrows together as a group, and the segments they connect will always behave the same. Thus when we can remove the original arrow or move it to an intersection point, the same is true of all of the other arrows.

\begin{figure}
\labellist
  \pinlabel {$x_j$} at -5 47
  \pinlabel {$x_{i'}$} at -5 34
  \pinlabel {$x_{j'}$} at -5 21
  \pinlabel {$x_i$} at -5 8

  \pinlabel {$x_j$} at 52 47
  \pinlabel {$x_{j'}$} at 52 34
  \pinlabel {$x_{i'}$} at 52 21
  \pinlabel {$x_i$} at 52 8
  
  \pinlabel {$x_{i'}$} at 110 47
  \pinlabel {$x_j$} at 110 34
  \pinlabel {$x_{j'}$} at 110 21
  \pinlabel {$x_i$} at 110 8
  
  \pinlabel {$x_j$} at 167 47
  \pinlabel {$x_{i'}$} at 167 34
  \pinlabel {$x_i$} at 167 21
  \pinlabel {$x_{j'}$} at 167 8
  
  \pinlabel {$x_j$} at 225 41
  \pinlabel {$x_{j'}$} at 225 28
  \pinlabel {$x_i$} at 225 15
  
  \pinlabel {$x_j$} at 283 41
  \pinlabel {$x_{i'}$} at 283 28
  \pinlabel {$x_i$} at 283 15
  
  \pinlabel {$(b)$} at 10 -8
  \pinlabel {$(i)$} at 68 -8
  \pinlabel {$(o)$} at 126 -8
  \pinlabel {$(p)$} at 184 -8
  \pinlabel {$(m)$} at 242 -8
  \pinlabel {$(r)$} at 300 -8

\endlabellist
\includegraphics[scale=1.2]{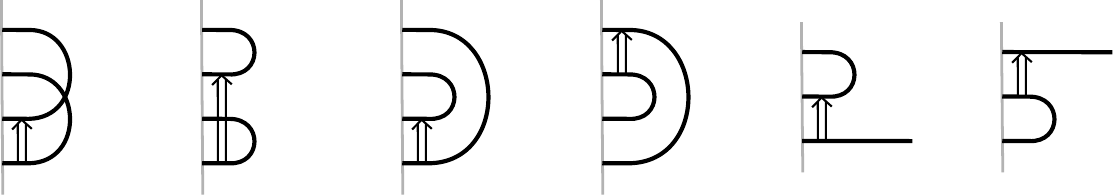}
\vspace{3 mm}
\caption{Crossover arrows added to modify $d^\tracks_{i,j}$; each crossover arrow is weighted by a multiple of $W^b$ and adding it introduces a new bigon from $x_i$ to $x_j$.}
\label{fig:cases-with-sliding}
\end{figure}

\begin{figure}
\labellist
\small
  \pinlabel {$x$} at 25 20
  \pinlabel {$y$} at 25 36
  \pinlabel {$x$} at 70 20
  \pinlabel {$y$} at 70 36

  \pinlabel {$x$} at 130 20
  \pinlabel {$y$} at 130 36
  \pinlabel {$x$} at 175 20
  \pinlabel {$y$} at 175 36
  
  \pinlabel {$x$} at 235 37
  \pinlabel {$y$} at 235 52
  \pinlabel {$x$} at 280 37
  \pinlabel {$y$} at 280 52
  \pinlabel {$x$} at 325 37
  \pinlabel {$y$} at 325 52
  
  \pinlabel {$x$} at 362 35
  \pinlabel {$y$} at 362 50
  \pinlabel {$x$} at 407 35
  \pinlabel {$y$} at 407 50
  
\large
  \pinlabel{$\to$} at 35 28
  \pinlabel{$\to$} at 140 28
  \pinlabel{$\to$} at 242 28
  \pinlabel{$\to$} at 287 28
  \pinlabel{$\to$} at 390 28

  \endlabellist
\includegraphics[scale=1]{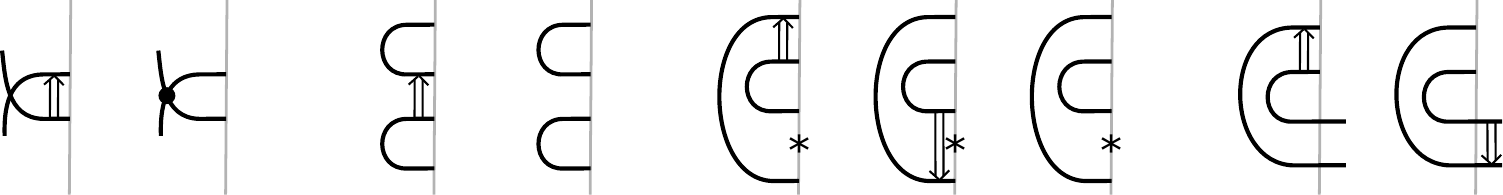}
\caption{Four possibilities after sliding a crossover arrow (to its left) across $\mu$. We can either slide the arrow to a crossing where it becomes a left turn arrow and can be incorporated into $\bchain$, remove the arrow (possibly after a change of basis) or we slide the arrow between parallel segments until it crosses $\mu$ again.}
\label{fig:arrow-sliding-cases}
\end{figure}

{\bf Cases (h) and (k):} These final two cases are similar to cases (b), (i), (o), (p), (m), and (r). Like in those cases, we add a crossover arrow with appropriate weight to the right of $\mu$ from $x_{i'}$ to $x_{j}$ in case (h) or from $x_{i}$ to $x_{j'}$ in case (k); this modifies the coefficient $d^{\tracks_N}_{i,j}$ to make it agree with $d_{i,j}$. The new crossover arrow is not at an intersection point, but we can slide it just as in the previous cases until it reaches an intersection point as a left-turn crossover arrow or it can be removed. The only difference is that in these cases, adding the crossover arrow has the additional effect of changing $d^{\tracks_N}_{i',j'}$, as it also creates a new bigon from $x_{i'}$ to $x_{j'}$. We proceed as in cases $(a)$ and $(d)$. If $(i,j) \lessdot (i',j')$, then we can ignore any side effect bigons connecting $x_{i'}$ to $x_{j'}$. If on the other hand $(i',j') \lessdot (i,j)$ then we argue that $d^{\tracks_N}_{i,j}$ must already agree with $d_{i,j}$ without adding the crossover arrow. By assumption, $d^{\tracks_N}_{i',j'}$ agrees with $d_{i',j'}$.  In case $(h)$, we consider the coefficient of $x_{j'}$ in $(\partial^{\tracks_N})^2(x_i)$ and proceed exactly as in case $(a)$, while in case $(k)$ we consider the coefficient of $x_j$ in $(\partial^{\tracks_N})^2(x_{i'})$ and proceed as in case $(d)$.
\end{proof}

We can now prove the existence part of Theorem \ref{thm:curve-invariant-for-complex} for complexes without flip maps.  By Proposition \ref{prop:simple-curves-existence}, there exists a decorated curve $(\Gamma, \bchainhat)$ in $\strip$ that represents $C$ over $\sRhat$ and for which $\bchainhat$ is of local system type. By perturbing $\Gamma$, we may assume that it is in almost simple position. Defining $\bchain_0$ to be $\bchainhat$ and inducting using Proposition \ref{prop:inductive-step}, we find $\bchain = \bchain_{n^2}$ such that $(\Gamma, \bchain)$ represents $C$ over $\sRminus$.

\subsection{Enhanced curves in $\cylinder$}
Let $C$ be a bigraded complex over $\sRminus$ and let $\Psi_*: H_* C^h \to H_* C^v$ be a of flip isomorphism. To represent this data with a decorated curve in the marked cylinder $\cylinder$, we first represent the corresponding $UV=0$ complex $\widehat C$ and flip isomorphism $\widehat\Psi_*$ as in Section \ref{sec:flip-maps}. Recall that we construct the decorated curve $(\Gamma, \bchainhat)$ in $\cylinder$ by gluing together a curve in $\strip$ representing the $C$ over $\sRhat$ and a curve in $\sF$ representing $\widehat\Psi_*$. If necessary we perform the arrow sliding algorithm to ensure that the decoration takes the form of a bounding chain $\bchainhat$ consisting of only local system intersection points. We take note of any basis changes required during the arrow sliding process, even if they do not affect the complex over $\sRhat$.

Once we have a nice representative for the $UV=0$ data in $\cylinder$, we apply a homotopy so that the curve restricted to the marked strip $\strip$ is in almost simple position; this entails pushing the curve up or down at each intersection with the boundaries of the strips to ensure these intersections are ordered correctly, as well as applying homotopies to remove any immersed annuli within $\strip$. We can now enhance the curves within $\strip$ so that the restriction to $\strip$ represents the complex $C$ over $\sRminus$, as in Section \ref{sec:enhanced-general-case}. Because of the ordering of the intersection points with the boundaries of $\strip$, the decorated curve in the whole cylinder $\cylinder$ also represents the complex $C$; that is, all bigons in $\cylinder$ contributing to the Floer homology of the curve with $\mu$ are contained in $\strip$.

Importantly, we build the enhanced curves in $\strip$ starting from the basis for $C$ obtained at the end of the arrow sliding algorithm in the construction of the $UV=0$ curves, which may be different from the original basis. We may perform further basis changes while enhancing the decorated curve in $\strip$ to represent $C$ over $\sRminus$, but these basis changes have no effect modulo $UV$. It follows that the decorated arcs in the $\sF$ portion of $\cylinder$ still correctly represent the simplified flip isomorphism $\widehat\Psi_*$. We now enhance these decorated arcs in $\sF$ to capture any missing information from $\Psi_*$. Let $\{x_1,\ldots, x_m\}$ denote the intersections of $\Gamma$ with $\partial_R \strip_i = \partial_L \sF_i$ and we identify these with a subset of the (new) basis for $C$ which also forms a basis for the horizontal homology. Similarly, we let $\{y_1, \ldots, y_m\}$ denote the intersections of $\Gamma$ with $\partial_L \strip_{i+1} = \partial_R \sF_i$ and also the corresponding basis of the vertical homology of $C$. With respect to this basis, the flip isomorphism takes the form
$$\Psi_*(x_i) = \sum_{k=1}^m c_{i,j} U^{\frac{\gr_w(y_j) - \gr_z(x_i)}{2}} V^{\frac{\gr_z(y_j)-\gr_w(x_i)}{2}} y_j,$$
where the coefficient $c_{i,j}$ is zero if $\gr_w(y_j) - \gr_z(x_i)$ is odd or negative. Restricting to terms for which the power of $U$ is zero gives the simplified flip map $\widehat\Psi_*$.

Recall that $\widehat\Psi_*$ restricts to an isomorphism at each grading level, using the grading $\gr_z$ on the source and $\gr_w$ on the target, and that the decorated curve in $\sF$ already contains a bundle of segments with a collection of turning points representing this isomorphism for each grading. Because the intersections of $\Gamma$ on $\partial_L \sF = \partial_R \strip$ are ordered so that $\gr_z$ is non-increasing moving upward and the intersections of $\Gamma$ with $\partial_R \sF = \partial_L \strip$ are ordered so that $\gr_w$ is non-decreasing moving upward, every bundle of arcs crosses every other bundle (see Figure \ref{fig:flip-map-strip}).  For any term of $\Psi_*$ not included in $\widehat\Psi_*$, the power of $U$ is positive and so $\gr_w(y_j) > \gr_z(x_i)$. In this case, we can add the intersection between the segment starting at $x_i$ and the segment ending at $y_j$ to the bounding chain, which introduces a polygonal path across $\sF$ from $x_i$ to $y_j$. This intersection point has degree $\gr_z(x_i)-\gr_w(y_j) < 0$ so adding it to $\bchain$ corresponds to adding a left turn crossover arrow weighted by an appropriate multiple of $W^a$, where $a$ is the power of $U$ in the relevant term of $\Psi_*$. It is straightforward to add intersection points to $\bchain$ in this way so that the decorated arcs across $\sF$ represent the map $\Psi_*$. When this is accomplished, the decorated curves in $\cylinder$ represent $C$ and $\Psi_*$ over $\sRminus$.

\begin{figure}
\includegraphics[scale = 1]{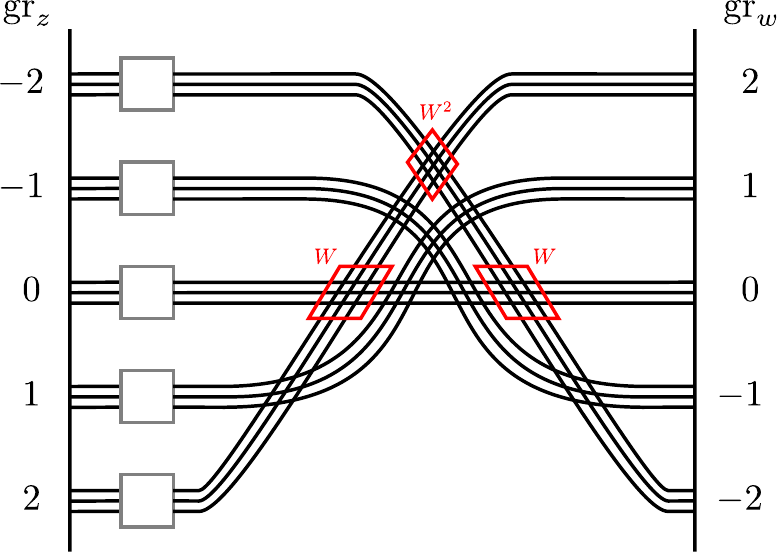}
\caption{The $\sF$ portion of an immersed curve in $\cylinder$ that has been perturbed so that the restriction to $\strip$ is in almost simple position. The bundles of strands with the same grading determine the map $\widehat\Psi_*$. We can represent $\Psi_*$ by adding turning points at the intersections between bundles oriented the same direction, weighted by a power of $W$ determined by the difference in grading between the bundles as indicated.}
\label{fig:flip-map-strip}
\end{figure}

The final step is to apply a homotopy to the curves so that they are in almost simple position as curves in $\cylinder$ (note that as of now the restriction of the curves to $\strip$ are in almost simple position, but there may be unnecessary intersection points when the arcs in $\strip$ are glued with arcs in $\sF$ to form closed curves). When performing this homotopy, we take care to apply the local moves in Figure \ref{fig:invariance-moves}; in particular, homotopies that add or remove intersection points require corresponding modifications to the bounding chain. As long as these rules are observed, the resulting decorated curves still represent the complex $C$ over $\sRminus$ and the flip isomrophism $\Psi_*$. Moreover, the restriction of the bounding chain to degree zero intersection points is of local system type, since this was true of the bounding chain from the $UV = 0$ construction and later steps only add intersection points with strictly negative degree.

\begin{remark}
When constructing enhanced curves in $\cylinder$ representing $C$ and $\Psi_i$ over $\sRminus$, we first construct curves in $\cylinder$ representing both the complex and the flip isomorphism over $\sRhat$ before adding minus information in $\strip$ and then in $\sF$ to get a representative over $\sRminus$. Another approach would be to first construct enhanced curves in $\strip$ representing $C$ over $\sRminus$, and only then pass to the cylinder by adding decorated arcs in $\sF$ representing the flip isomorphism $\Psi_*$. The problem with this approach is that representing $\Psi_i$ may then require adding crossover arrows at degree zero intersection points in $\sF$ that are not local system intersection points. To get a decorated curve of the desired form, we would need to slide these arrows to remove them. In practice it is usually clear how to do this, but there are significant difficulties in defining general arrow sliding rules in the minus setting. For this reason we want to do essentially all arrow sliding while working over $\sRhat$, and then observe that enhancing the curves does not introduce any crossover arrows at degree zero intersection points. We do use limited arrow sliding in the process of enhancing curves in $\strip$; this is mainly possible because the arrows being moved are weighted by $W^b$ for some $b$ and it is enough to work over $\sR_{b+1}$ at the time we slide the arrow.
\end{remark}

\begin{example}\label{ex:+1-surgery-LHT-enhanced-cylinder}
Consider the complex $C$ and and flip isomorphism $\Psi_*$ from Example \ref{ex:1-surgery-on-LHT} associated with $+1$-surgery on the left-handed trefoil. In Example \ref{ex:1-surgery-on-LHT-curves} we constructed a curve $\Gamma$ in $\cylinder$ (with trivial bounding chain) representing this data over $\sRhat$. Inspection reveals that the same curve also represents $C$ and $\Psi_*$ over $\sRminus$; nevertheless, it is instructive to step through the process of enhancing curves more carefully in this case.

We begin with the curve $\Gamma_\strip$ in $\strip$ representing $C$ over $\sRhat$, with respect to the basis $\{a,b,c,d,e\}$, which appears in Figure \ref{fig:1-surgery-on-LHT-curves}(a). Because $C$ has no diagonal arrows, it happens that this curve also represents $C$ over $\sRminus$; however, this is not the enhanced curve we wish to use in $\strip$. Instead, we should first add the flip map information and simplify the curve as a representative over $\sRhat$, as in Example \ref{ex:1-surgery-on-LHT-curves}. Recall that in this process we remove some arrows which corresponds to changing the basis of $C$, with the new basis given by 
$$a' = -a, \quad b' = b, \quad c' = -c + Ua - Ve, \quad d' = d, \quad \text{ and } \quad e' = e.$$
With respect to this basis, the differential on $C$ is
$$\partial(a') = -Vb', \quad \partial(b') = 0, \quad \partial(c') = UVb' - UVd', \quad \partial(d') = 0, \quad \text{ and } \quad \partial(e') = Ud'.$$
Since there are now diagonal arrows, enhancing the curves in $\strip$ to represent $C$ over $\sRminus$ with respect to this basis requires adding a nontrivial bounding chain. In particular, putting the curve from Figure \ref{fig:1-surgery-on-LHT-curves}(a) in almost simple position introduces two intersections, and we include both of these intersection points in $\bchain_\strip$ with appropriate weights to recover the two diagonal arrows in $C$, as shown in Figure \ref{fig:1-surgery-on-LHT-curves-minus}(a). We now add a collection of arcs $\Gamma_\sF$ in $\sF$ representing $\Psi_*$, which with respect to the new basis takes $a'$ to $V^{-1} c$, $b'$ to $d'$, and $c'$ to $Ue'$. Since $\Psi_*$ is simple with respect to this basis (it is a permutation of the generators) the bounding chain $\bchain_{\sF}$ in $\sF$ is trivial. Gluing the curves in $\strip$ and $\sF$ produces the decorated immersed curve shown in Figure \ref{fig:1-surgery-on-LHT-curves-minus}(b). Finally, we homotope the curve to have minimal self-intersection, noting that this is possible by move $(j)$ in Figure \ref{fig:invariance-moves}. The resulting curve in $\cylinder$ is shown in Figure \ref{fig:1-surgery-on-LHT-curves-minus}(c), which happens to be the same as the curve constructed to represent this data modulo $UV$.
\end{example}

\begin{figure}
\includegraphics[scale = .85]{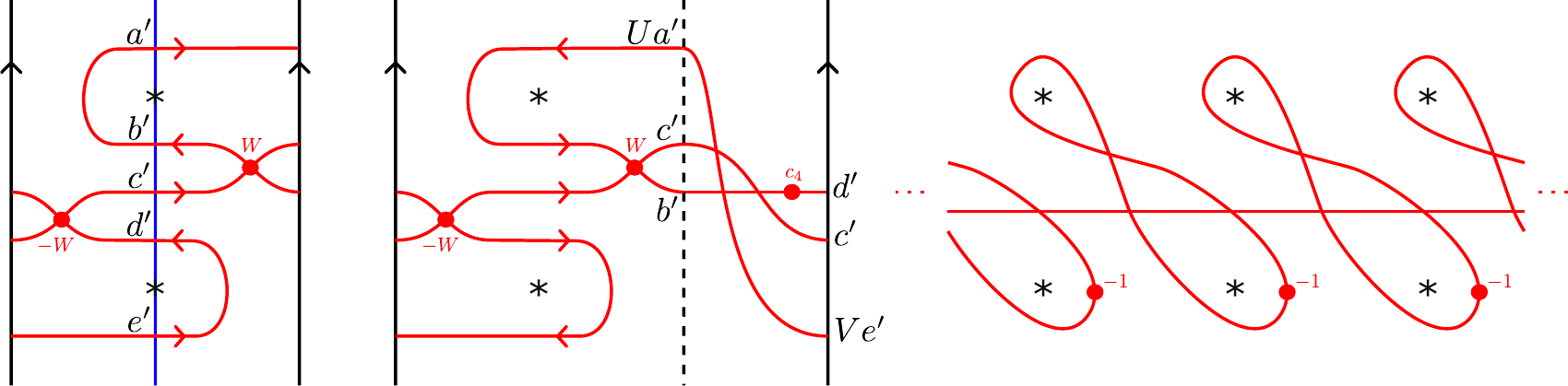}

(a) \hspace{35mm} (b) \hspace{50mm} (c) \hspace{15 mm}
\caption{(a) An immersed curve in $\strip$ representing the complex in Example \ref{ex:+1-surgery-LHT-enhanced-cylinder} with respect to the relevant basis; (b) The immersed curve in $\cylinder$ representing the complex and flip map in the example; (c) The curves pulled tight and lifted to the plane.}
\label{fig:1-surgery-on-LHT-curves-minus}
\end{figure}

\section{Morphisms and mapping cones}\label{sec:morphisms}

In the previous three sections we constructed several versions of decorated immersed curves associated to bigraded complexes and flip maps. We will now identify the Floer homology of two such curves with algebraic operations. In particular, we relate Floer homology of curves in the marked strip to morphism spaces between two complexes, and we relate Floer homology of curves in the marked cylinder to mapping cones of certain maps defined using the flip maps. Using these observations, we can address the question of whether the immersed curves we have constructed are unique.

\subsection{Floer homology in $\strip$ as morphism spaces}

Given two bigraded complexes $C_1$ and $C_2$ over $\sRminus$, the space of morphisms from $C_1$ to $C_2$ is also a complex over $\sRminus$, which we denote $\Mor_{\sRminus}(C_1, C_2)$. Fixing homogeneous bases $\{x_1, \ldots, x_n\}$ for $C_1$ and $\{y_1, \ldots, y_m\}$ for $C_2$, let $(x_i \!\to\! y_j)$ denote the morphism $f:C_1 \to C_2$ with $f(x_i) = y_j$ and $f(x_k) = 0$ for $k \neq i$. The ring $\sRminus$ acts on morphisms in the obvious way: for $c U^a V^b \in \sR$, the morphism $cU^a V^b (x_i \!\to\! y_j)$ takes $x_i$ to $c U^a V^b y_j$ and takes $x_k$ to 0 for $k \neq i$. As a module over $\sRminus$, $\Mor_{\sR}(C_1, C_2)$ is generated by the morphisms $(x_i \!\to\! y_j)$ for $1\le i \le n$ and $1\le j \le m$. The bigrading $(\gr_w, \gr_z)$ on $\Mor_{\sRminus}(C_1, C_2)$ records the change in bigrading under a given morphism; in particular, $\gr_w((x_i \!\to\! y_j) ) = \gr_w(y_j) - \gr_w(x_i)$ and $\gr_z( (x_i \!\to\! y_j) ) = \gr_z(y_j) - \gr_z(x_i)$. Similarly, the Alexander grading $A = \tfrac 1 2(\gr_w - \gr_z)$ on $\Mor_{\sRminus}(C_1, C_2)$ is given by the change in Alexander grading under a given morphism. As usual, multiplication by $U$ and $V$ shift the bigrading by $(-2, 0)$ and $(0,-2)$, respectively. The differential on $\Mor_{\sRminus}(C_1, C_2)$ is defined by
$$(\partial f) (x) = \partial_{2}( f(x) ) - (-1)^{\gr_w(f)} f(\partial_{1} x).$$
Note that since the differential preserves the Alexander grading on $C_1$ and $C_2$, the same is true on $\Mor_{\sRminus}(C_1, C_2)$; we let $\Mor_{\sRminus}(C_1, C_2)_s$ denote the direct summand of $\Mor_{\sR}(C_1, C_2)$ in Alexander grading $s$ for each $s$ in $\Z$. $\Mor_{\sRminus}(C_1,C_2)_s$ is a module over $\F[W]$ generated by $U^{A(y_j) - A(x_i) - s} (x_i \!\to\! y_j )$ for $1\le i \le n$ and $1\le j \le m$ with $A(y_j) - A(x_i) \ge s$ and $V^{s + A(x_i) - A(y_j)} (x_i \!\to\! y_j )$ for $1\le i \le n$ and $1\le j \le m$ with $A(y_j) - A(x_i) < s$. A similar construction holds in the $UV = 0$ setting: letting $\widehat C_1$ and $\widehat C_2$ denote the $UV = 0$ quotient of $C_1$ and $C_2$, respectively, $\Mor_{\sRhat}(\widehat C_1, \widehat C_2)$  is a bigraded complex over $\sRhat$. In particular, $\Mor_{\sRhat}(\widehat C_1, \widehat C_2)$ is the $UV = 0$ quotient of $\Mor_{\sR}(C_1, C_2)$. For each $s$, $\Mor_{\sRhat}(\widehat C_1, \widehat C_2)_s$ is a vector space over $\F$.

Our aim is to relate the homology of the morphism complexes described above to an appropriate version of Floer homology of immersed curves. For $i \in \{1,2\}$, let $(\Gamma_i, \bchain_i)$ be the curves with bounding chains in the marked strip $\strip$ representing $C_i$, and let $\tracks_i$ denote the immersed train track in $\strip$ corresponding to this decorated curve. For each integer $s$, let $\tracks_i[s]$ denote the result of shifting $\tracks_i$ upward by $s$ units. We will relate $\Mor_{\sRminus}(C_1, C_2)$ to the \emph{wrapped Floer homology} $HF(p(\tracks_2), p(\tracks_1))$, where $p$ is the projection map from $\strip$ to $\mathcal{T} = \strip/(x,y)\sim(x,y+1)$.  By wrapped Floer homology we mean the Floer homology after we modify $p(\tracks_1)$ in a neighborhood of $\partial \mathcal{T}$ so that it spirals around the boundary (following the boundary orientation) infinitely many times as it approaches the boundary. The wrapped Floer homology $HF(p(\tracks_2), p(\tracks_1))$ in $\mathcal{T}$ can be understood by considering the lifts in $\strip$, with each Alexander grading summand coming from a different lift of $p(\tracks_1)$. In particular, the Alexander grading $s$ piece of $HF(p(\tracks_2), p(\tracks_1))$ can be identified with the wrapped Floer homology $HF(\tracks_2, \tracks_1[s])$ in $\strip$, which is defined to be the Floer homology after the endpoints of $\tracks_1[s]$ have been pushed upward on $\partial_R \strip$ and downwards on $\partial_L \strip$ past all endpoints of $\tracks_2$.

\begin{proposition}\label{prop:pairing-in-strip}
 For $i \in \{0,1\}$, let $\tracks_i = (\Gamma_i, \bchain_i)$ be an immersed multicurve with bounding chain in $\strip$ representing a bigraded complex $C_i$. For every $s \in \Z$, there is an isomorphism of graded $\F[W]$-modules
$$H_*( \Mor_{\sRminus}(C_1, C_2)_s ) \cong HW(\tracks_2, \tracks_1[s] ),$$
where the right hand side is wrapped Floer homology in the marked strip $\strip$.
\end{proposition}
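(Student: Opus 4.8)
\textbf{Proof strategy for Proposition \ref{prop:pairing-in-strip}.} The plan is to set up an explicit, grading-respecting bijection between the generators of the two sides and then show it intertwines the differentials up to a homotopy that becomes an isomorphism on homology. First I would identify generators. A generator of $\Mor_{\sRminus}(C_1,C_2)_s$ (as an $\F[W]$-module) is, as spelled out just before the statement, a morphism of the form $(x_i \!\to\! y_j)$ multiplied by the appropriate power of $U$ or $V$ forced by $A(y_j)-A(x_i)$ relative to $s$. On the geometric side, $\tracks_1[s]$ is $\tracks_1$ shifted up by $s$, and after wrapping, the intersection points of $\tracks_2$ with $\tracks_1[s]$ in $\strip$ are, up to the usual pushoff perturbation, in bijection with pairs consisting of an intersection of $\Gamma_2$ with $\mu$ and an intersection of $\Gamma_1$ with $\mu$ — because the wrapping forces each arc of $\tracks_2$ to meet each arc of $\tracks_1[s]$ near $\mu$ exactly once for each ``winding level''. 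I would make this precise by recalling that $C(\Gamma_i,\bchain_i)=CF((\Gamma_i,\bchain_i),\mu)$, so $\Gamma_i\cap\mu$ is the preferred basis of $C_i$; the wrapped intersection $\tracks_2\cap\tracks_1[s]$ then naturally splits into pieces indexed by $(x_i,y_j)$ together with an integer recording how far $\tracks_1[s]$ has wrapped, and this integer is exactly the exponent of $U$ or $V$ appearing in the corresponding $\F[W]$-generator of $\Mor_{\sRminus}(C_1,C_2)_s$. Matching up the bigradings here is bookkeeping: the grading of the intersection point, computed from $\tilde\tau_w,\tilde\tau_z$ on $\tracks_2$ and on $\tracks_1[s]$ via Definition \ref{def:grading}, equals $\gr((x_i\!\to\! y_j))$ plus the shift from the power of $W$, because the shift by $s$ translates the $\tilde\tau$-values by $2s$ in the appropriate component.

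Next I would match differentials. The differential $m_1^\bchain$ on $CF(\tracks_2,\tracks_1[s])$ counts generalized immersed bigons with one side a polygonal path in $\tracks_2$ consistent with $\bchain_2$ and the other a polygonal path in $\tracks_1[s]$ consistent with $\bchain_1$. The key geometric observation is that, for curves in $\strip$ that meet $\mu$ perpendicularly and after wrapping, any such bigon must cross $\mu$ exactly once on the $\tracks_2$-side of its boundary path and exactly once on the $\tracks_1[s]$-side (one cannot have a bigon between $\tracks_2$ and $\tracks_1[s]$ disjoint from $\mu$ for intersection-number reasons once the curves are wrapped, and the ``no two segments intersect more than once'' / admissibility hypotheses prevent extra crossings). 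Cutting such a bigon along $\mu$ produces a pair: a generalized bigon from $(\Gamma_2,\bchain_2)$ to $\mu$ (a term of $\partial_{C_2}$, or the $\bchain_2$-data) and a generalized bigon from $\mu$ to $(\Gamma_1,\bchain_1)[s]$ (a term of $\partial_{C_1}$, read backwards). Running this correspondence in both directions — glue a term of $\partial_2$ applied after with a term of $x\mapsto\partial_1 x$ applied before, or the identity on one factor — I would show that the count of bigons contributing to the coefficient of one generator in $m_1^\bchain$ applied to another exactly reproduces the formula $(\partial f)(x)=\partial_2(f(x))-(-1)^{\gr_w(f)}f(\partial_1 x)$, including the $W$-powers (which add up along the cut because $n_w,n_z$ are additive) and including the sign, which matches because the corner contributions $(-1)^{s(u)}$ of the glued bigon factor as the product of the corner contributions of the two pieces together with the sign of the corner at the cut point on $\mu$, and that last sign is precisely the Koszul sign $(-1)^{\gr_w(f)}$.

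Then I would handle the two remaining points. One is that I must allow bigons whose boundary wraps, which is exactly why we pass to wrapped Floer homology $HW$ rather than ordinary $HF$: the $\F[W]$-module structure on the left (multiplication by $W=UV$) corresponds on the right to the deck-translation action pushing $\tracks_1[s]$ one more wind, i.e. to multiplication by $W$ coming from the extra marked point enclosed. The other is the Alexander-grading splitting: $HW(p(\tracks_2),p(\tracks_1))$ in $\mathcal{T}$ decomposes over winding levels, each level being $HW(\tracks_2,\tracks_1[s])$ in $\strip$, matching $\Mor_{\sRminus}(C_1,C_2)=\bigoplus_s\Mor_{\sRminus}(C_1,C_2)_s$; I would note the differential preserves this splitting on both sides (geometrically because a bigon does not change the winding level, algebraically because $\partial$ preserves Alexander grading). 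Assembling these, the generator bijection becomes a chain isomorphism $\Mor_{\sRminus}(C_1,C_2)_s\xrightarrow{\ \sim\ } CF(\tracks_2,\tracks_1[s])$ of graded $\F[W]$-complexes, so in particular an isomorphism on homology.

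\textbf{Expected main obstacle.} The genuinely delicate step is the bijection between generalized bigons between $\tracks_2$ and $\tracks_1[s]$ and pairs of generalized bigons obtained by cutting along $\mu$ — specifically, verifying that \emph{every} such bigon meets $\mu$ in exactly the expected way and that the cutting/gluing is a genuine bijection rather than merely a surjection or a count-preserving correspondence. This requires an argument in the spirit of Lemma \ref{lem:bigons-have-no-monogons} and the proof of Lemma \ref{lem:some-d-squared-zero-horizontal}: one must rule out bigons that cross $\mu$ extra times, using the perpendicularity at $\mu$, the almost-simple-position hypotheses (transversality, no triple points, two segments meeting at most once, no immersed annuli), and the wrapping. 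A secondary but still nontrivial point is tracking signs and $W$-powers through the gluing so that the Koszul sign $(-1)^{\gr_w(f)}$ in the morphism differential emerges on the nose; I expect this to be a careful but ultimately routine check once the geometric bijection is in hand. Finally I would remark that the $UV=0$ analogue — identifying $H_*(\Mor_{\sRhat}(\widehat C_1,\widehat C_2)_s)$ with the corresponding (non-wrapped, since $W=0$) Floer homology in $\strip^*$ — follows by the same argument after setting $W=0$, which kills all bigons enclosing a marked point.
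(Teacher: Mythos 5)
Your overall strategy — identify generators with morphism-space generators and match the Floer differential with the morphism differential through a geometric decomposition of bigons — is exactly the paper's. But the central geometric claim you lean on is not correct, and fixing it requires precisely the case analysis you elided.

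You assert that every bigon contributing to the Floer differential crosses $\mu$ exactly once on the $\tracks_2$-arc and exactly once on the $\tracks_1[s]$-arc, and that cutting along $\mu$ produces a bigon between $\tracks_2$ and $\mu$ together with a bigon between $\mu$ and $\tracks_1[s]$. Neither claim holds. With the curves perturbed as in the paper's proof, the intersection point $(x_i, y_j)$ sits to the left of $\mu$ when $A(x_i)+s \ge A(y_j)$ and to the right when $A(x_i)+s < A(y_j)$. So a bigon from $(x_i, y_j)$ to $(x_i, y_{j'})$ coming from an arrow $y_j\to y_{j'}$ in $C_2$ lies entirely to the left of $\mu$ whenever $A(x_i)+s \ge A(y_j), A(y_{j'})$ (and entirely to the right in the symmetric case): it never touches $\mu$, and there is nothing to cut. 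Moreover, in the case where the bigon does cross $\mu$ ($A(y_j) > A(x_i)+s \ge A(y_{j'})$, say), cutting along $\mu$ does not yield two bigons of the form you describe — the piece on each side has boundary on \emph{both} $\tracks_2$ and $\tracks_1[s]$ as well as $\mu$, so one of them is a triangle and the other a quadrilateral, not a $\Gamma_2$-$\mu$ bigon plus a $\mu$-$\Gamma_1[s]$ bigon. The paper instead builds each bigon from a bigon bounded by $\tracks_2$ and $\mu$ (or $\tracks_1[s]$ and $\mu$) by first removing a rectangle with one side on $x = \pm\tfrac14$ and then adding a different rectangle cut out by the relevant $\tracks_1[s]$ segment, running a three-way case analysis on where $A(x_i)+s$ sits relative to $A(y_j)$ and $A(y_{j'})$; this is what makes the $U$/$V$-power bookkeeping and the signs come out right, and it also provides the ``only these bigons occur'' step by simply tracing the $\tracks_2$ boundary out of the middle rectangle. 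You are right to flag this as the delicate part, but your proposed mechanism would not carry it out.

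Two smaller points. First, your description of the generator bijection conflates the power of $W$ (the $\F$-basis over the $\F[W]$-basis) with the fixed $U$- or $V$-exponent $\max(A(y_j)-A(x_i)-s,0)$ attached to each $\F[W]$-generator of $\Mor_{\sRminus}(C_1,C_2)_s$; the paper's wrapping in $\strip$ is finite (just slide the endpoints past), giving one intersection point per pair $(x_i,y_j)$, and the $W$-powers arise from bigons covering marked points rather than from ``winding levels.'' Second, your sign claim — that the corner at the cut point contributes the Koszul sign $(-1)^{\gr_w(f)}$ — is plausible but not established by cutting along $\mu$ (since that cutting doesn't exist in general); the paper checks signs directly against the rectangle constructions.
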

\begin{proof}
We will homotope the immersed curves in $\tracks_1[s]$ and $\tracks_2$ into a particular form so that their wrapped Floer chain complex $CF(\tracks_2, \tracks_1[s])$ is isomorphic to $\Mor_{\sRminus}(C_1, C_2)_s$ a complex over $\F[W]$. We first homotope both curves so that their intersection with the strip $[-\tfrac 1 4, \tfrac 1 4]\times \R$ is a collection of horizontal segments; there is one horizontal segment for each intersection with $\mu$, and these correspond to generators of the respective complexes. Note that the segment corresponding to $y_j \in C_2$ appears at height $A(y_j)$ and the segment corresponding to $x_i \in C_1$ appears at height $A(x_i) + s$. For each generator $x_i$ of $C_1$, let $x_i^L$ and $x_i^R$ denote the left and right endpoints, respectively, of the horizontal segment in $\tracks_1[s]$ corresponding to $x_i$; note that $x_i^L$ lies on the line $\mu_{-\tfrac 1 4} = \{-\tfrac 1 4\}\times \R$ and $x_i^R$ lies on the line $\mu_{\tfrac 1 4} = \{\tfrac 1 4\}\times \R$. Similarly, for each generator $y_j$ of $C_2$ let $y_j^L$ and $y_j^R$ denote the endpoints of the appropriate horizontal segment in $\tracks_2$. We now choose some heights $h_{min}$ and $h_{max}$ so that all horizontal segments in both curve sets fall between these two heights, and we perturb $\tracks_1[s]$ by sliding the portion in $[\tfrac 1 4, \tfrac 1 2]\times \R$ upward until it is entirely above height $h_{max}$; note that the endpoints $x_i^R$ slide upward along $\mu_{\tfrac 1 4}$ as  well and we perturb the horizontal segments in the strip $[-\tfrac 1 4, \tfrac 1 4] \times \R$ accordingly. Similarly, we slide the portion of $\tracks_1[s]$ in $[-\tfrac 1 2, -\tfrac 1 4]\times \R$ downward until it is entirely below the height $h_{min}$ and perturb the horizontal segments accordingly. See Figure \ref{fig:morphism-complex} for an example.

\begin{figure}
\labellist
\footnotesize
  \pinlabel {$x_1$} at 96 159
  \pinlabel {$x_2$} at 96 114
  \pinlabel {$x_3$} at 96 69
  
  \pinlabel {$y_1$} at 96 144
  \pinlabel {$y_2$} at 96 99
  \pinlabel {$y_3$} at 96 54

\footnotesize
  \pinlabel {$y_1^R$} at 141 146
  \pinlabel {$y_2^R$} at 141 91
  \pinlabel {$y_3^R$} at 141 56
  \pinlabel {$y_1^L$} at 41 146
  \pinlabel {$y_2^L$} at 41 101
  \pinlabel {$y_3^L$} at 41 45
  
 \tiny
  \pinlabel {$x_1^R$} at 141 207
  \pinlabel {$x_2^R$} at 141 194
  \pinlabel {$x_3^R$} at 141 174
  \pinlabel {$x_1^L$} at 42 33
  \pinlabel {$x_2^L$} at 42 11
  \pinlabel {$x_3^L$} at 42 -1
\endlabellist
\includegraphics[scale=1.2]{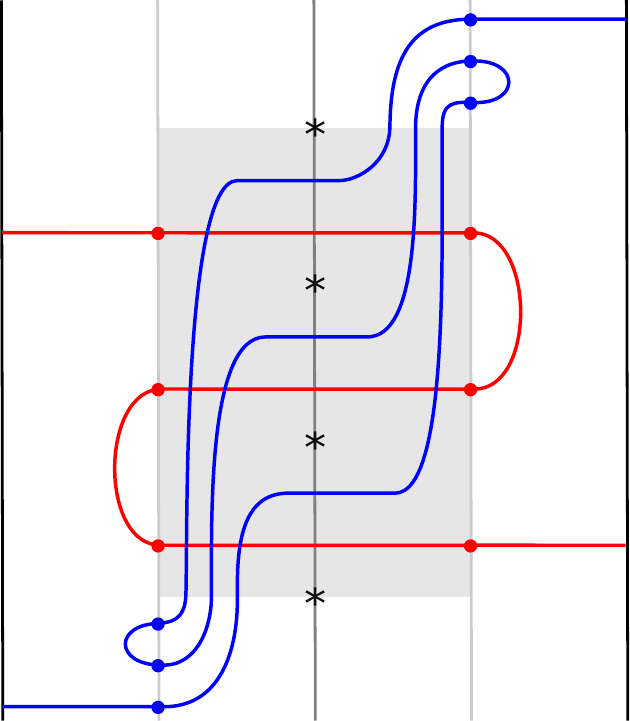}
\hspace{2 cm}
\includegraphics[scale=1.2]{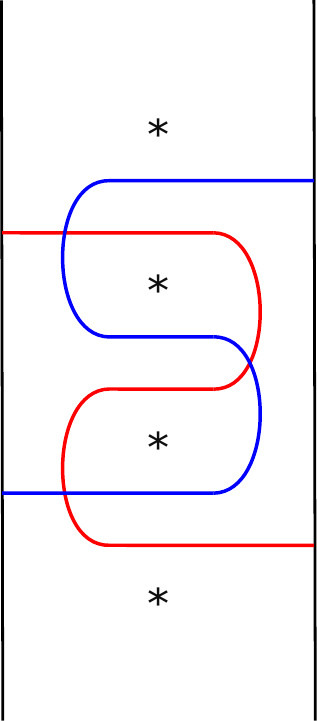}
\vspace{3 mm}
\caption{The curves $\tracks_1[0]$ (blue) and $\tracks_2$ (red), where the complexes $C_1$ and $C_2$ are the knot Floer homology of the left handed and right handed trefoil, respectively. The Floer homology of these curves computes the homology of $\Mor_{\sRminus}(C_1, C_2)_0$. On the left the curves are in the position described in the proof of Proposition \ref{prop:pairing-in-strip}, and on the right they are in minimal position.}
\label{fig:morphism-complex}
\end{figure}

With the curves in the position described above, all intersection points lie in the rectangle $[-\tfrac 1 4, \tfrac 1 4]\times [h_{min},h_{max}]$ (the shaded rectangle in Figure \ref{fig:morphism-complex}). Each perturbed horizontal segment in $\tracks_1[s]$ (which now run from the bottom edge of the rectangle to the top edge) intersects each horizontal segment of $\tracks_2$ once, so generators of $CF( \tracks_1[s], \tracks_2 )$ are in bijection with pairs $(x_i, y_j)$ for generators $x_i$ of $C_1$ and $y_j$ of $C_2$; we will refer to intersection points by these ordered pairs throughout the proof. These in turn are in bijection with the generators of $\Mor_{\sRminus}(C_1, C_2)_s$, where the pair $(x_i, y_j)$ corresponds to the morphism $(x_i \to y_j)$ multiplied by the appropriate power of either $U$ or $V$ to give a morphism of Alexander grading $s$. The multiplier needed is $U^{A(y_j) - A(x_i) - s}$ if $A(y_j) - A(x_i) \ge s$ or $V^{s + A(x_i) - A(y_j)}$ if $A(y_j) - A(x_i) < s$. There is a convenient graphical interpretation of this multiplier: for each pair $(x_i, y_j)$ there is a triangle formed by the perturbed horizontal segment corresponding to $x_i$, the horizontal segment corresponding to $y_j$, and the vertical line $\mu$, and if we place $z$ and $w$ basepoints just to the left and right of each marked point as usual then the multiplier has one $U$ for each $w$ basepoint covered by the triangle or one $V$ for each $z$ basepoint covered. This identifies the generators of $CF( \tracks_1[s], \tracks_2 )$ and $\Mor_{\sRminus}(C_1, C_2)_s$, and it is a straightforward exercise to check that the bigradings agree.

We now identify the differential on $\Mor_{\sRminus}(C_1, C_2)_s$ with the Floer differential. The differential on $\Mor_{\sRminus}(C_1, C_2)$ comes from combining two contributions: (1) for each generator $x_i$ of $C_1$ and each arrow from $y_j$ to $cU^aV^b y_{j'}$ in $C_2$, there is an arrow from $(x_i \!\to\! y_j)$ to $cU^aV^b(x_i\!\to\! y_{j'})$, and (2) for each generator $y_j$ of $C_2$ and each arrow from $x_i$ to $cU^aV^b x_{i'}$ in $C_1$, there is an arrow from $(x_{i'}\!\to\! y_j)$ to $(-1)^\star cU^a V^b (x_i\!\to\! y_j)$ where $\star = 1 + \gr(x_i \to y_j)$. 
Consider a contribution of the first type. The arrow from $y_j$ to $c U^a V^b y_{j'}$ in $C_2$ corresponds to a bigon bounded by $\tracks_2$ and $\mu$ from $y_j$ to $y_{j'}$ covering the $w$ and $z$ basepoints $a$ and $b$ times, respectively. We will first assume that $y_j$ is above $y_{j'}$, so that this bigon begins to the left of $\mu$; note that in this case $b = a + A(y_j) - A(y_{j'}) \ge a$. Removing the immersed rectangle with corners $y_j$, $y_{j'}$, $y_{j'}^L$, and $y_j^L$ clearly gives rise to a bigon bounded by $\tracks_2$ and the line $x = -\tfrac 1 4$ from $y_j^L$ to $y_{j'}^L$. This new bigon covers both the $w$ and $z$ basepoints $a$ times, since the rectangle contains $A(y_j) - A(y_{j'}) = b - a$ copies of the $z$ basepoint. For each generator $x_i$ of $C_1$, the perturbed horizontal segment corresponding to $x_i$ crosses both the horizontal segments corresponding to $y_j$ and $y_{j'}$, forming a rectangle with these segments and the line $x = -\tfrac 1 4$; this rectangle covers both the $z$ and $w$ basepoints $\max(A(y_j), A(x_i)+s) -  \max(A(y_{j'}, A(x_i) + s)$ times. Adding this rectangle to the bigon described above gives a bigon from the intersection point $(x_i, y_j)$ to the intersection point $(x_i, y_{j'})$. This bigon covers the marked point $a$ times if $A(x_i) + s \ge A(y_j)$, $a + A(y_j) - A(x_i) - s$ times if $A(y_j) > A(x_i) + s \ge A(y_{j'})$, and $b$ times if $A(y_{j'}) > A(x_i) + s$.

If $A(x_i) + s \ge A(y_j)$, the contribution to the differential on $\Mor_{\sR}(C_1,C_2)_s$ is the arrow
$$V^{A(x_i) + s - A(y_j)} (x_i \!\to\! y_j) \to V^{A(x_i) + s - A(y_j)} [c U^a V^b  (x_i \!\to\! y_{j'})] = c (UV)^a V^{A(x_i) + s - A(y_{j'})} (x_i \!\to\! y_{j'}).$$
Since in this case the intersection points $(x_i, y_j)$ and $(x_i, y_{j'})$ correspond to the morphisms $V^{A(x_i) + s - A(y_j)} (x_i \!\to\! y_j)$ and $V^{A(x_i) + s - A(y_{j'})} (x_i \!\to\! y_{j'})$, respectively, this corresponds to an arrow
$$ (x_i, y_j) \to c (UV)^a (x_i, y_{j'})$$
in the differential of $CF( \tracks_2, \tracks_1[s] )$, which is precisely the arrow given by the bigon constructed above. If instead $A(y_j) > A(x_i) + s \ge A(y_{j'})$, the contribution to the differential on $\Mor_{\sR}(C_1,C_2)_s$ is the arrow
\begin{align*}
U^{A(y_j) - A(x_i) - s} (x_i \!\to\! y_j) &\to U^{A(y_j) - A(x_i) - s} [c U^a V^b  (x_i \!\to\! y_{j'})] \\
& = c (UV)^{a + A(y_j) - A(x_i) - s} V^{A(x_i) + s - A(y_{j'})} (x_i \!\to\! y_{j'}).
\end{align*}
In this case the intersection points $(x_i, y_j)$ and $(x_i, y_{j'})$ correspond to the morphisms $U^{A(y_j) - A(x_i) - s} (x_i \!\to\! y_j)$ and $V^{A(x_i) + s - A(y_{j'})} (x_i \!\to\! y_{j'})$, respectively, and the bigon constructed above contributes the corresponding arrow
$$ (x_i, y_j) \to c (UV)^{a + A(y_j) - A(x_i) - s} (x_i, y_{j'}) $$
to the differential. Finally, if $A(y_{j'}) > A(x_i) + s$ then the contribution to the differential on $\Mor_{\sR}(C_1,C_2)_s$ is the arrow
$$U^{A(y_j) - A(x_i) - s} (x_i \!\to\! y_j) \to U^{A(y_j) - A(x_i) - s} [c U^a V^b  (x_i \!\to\! y_{j'})] = c (UV)^b U^{A(y_{j'}) - A(x_i) - s} (x_i \!\to\! y_{j'}).$$
In this case the intersection points correspond to the morphisms $U^{A(y_j) - A(x_i) - s} (x_i \!\to\! y_j)$ and $U^{A(y_{j'}) - A(x_i) - s} (x_i \!\to\! y_{j'})$, respectively, and the bigon constructed above contributes the corresponding arrow
$$ (x_i, y_j) \to c (UV)^b (x_i, y_{j'}) .$$

The case that $y_j$ is below $y_{j'}$ is similar, except that the bigon representing the arrow from $y_j$ to $cU^a V^b$ lies to the right of $\mu$ near its $\mu$ boundary. Removing an appropriate rectangle gives a bigon bounded by $\tracks_2$ and the line $x = \tfrac 1 4$ from $y_j^R$ to $y_{j'}^R$, which covers the marked point $b$ times; adding back on a different rectangle gives a bigon connecting $(x_i, y_j)$ to $(x_i, y_{j'})$. To count the multiplicity with which this bigon covers the marked point, we can again consider cases depending on whether $A(x_i) + s$ is above $A(y_{j'})$, below $A(y_j)$, or in between them. In each case it is straightforward to check that the contribution of the bigon to the differential of $CF( \tracks_2, \tracks_1[s] )$ exactly matches the contribution to the differential of $\Mor_{\sRminus}(C_1, C_2)_s$.

Contributions to the differential of $\Mor_{\sRminus}(C_1, C_2)_s$ of the second type, coming from a generator $y_j$ of $C_2$ and an arrow from $x_i$ to $c U^a V^b x_{i'}$ in $C_1$, can be dealt with similarly. We will only describe the case that $x_i$ is above $x_{i'}$ as generators of $C_1$ and that $A(y_i) \le A(x_{i'}) + s$, leaving the remaining (similar) cases to the reader. Note that $b = a + A(x_i) - A(x_{i'}) \ge a$. The arrow from $x_i$ to $c U^a V^b x_{i'}$ corresponds to a bigon between $\tracks_1[s]$ and $\mu$ which lies locally to the left of $\mu$ near its $\mu$ boundary. Removing an appropriate rectangle gives a bigon bounded by $\tracks_1[s]$ and the line $x = -\tfrac 1 4$ from $x_i^L$ to $x_{i'}^L$, which covers the marked point $a$ times. Given a generator $y_j$, we can add a rectangle to produce a bigon connecting $(x_i, y_j)$ and $(x_{i'}, y_j)$. Note that in $CF(\tracks_2, \tracks_1[s] )$ we count bigons with the right boundary on $\tracks_2$, so this bigon connects $(x_{i'}, y_j)$ to $(x_i, y_j)$. Since we have assumed $A(y_i) < A(x_{i'}) + s$, the rectangle we added contained no marked points and so this bigon contributes
$$(x_{i'}, y_j) \to (-1)^\star (UV)^a (x_i, y_j)$$
to the differential. The sign term comes from the fact that the sign of the bigon depends on the orientation of the $\tracks_2$ portion of the boundary while the sign of the arrow in $C_1$ depends on the orientation of the $\tracks_1[s]$ portion of boundary, and these agree precisely when the intersection point $(x_i, y_j)$ has odd grading. Since $(x_i, y_j)$ and $(x_{i'}, y_j)$ correspond to the morphisms $V^{A(x_i) + s - A(y_j)} (x_i \!\to\! y_j)$ and $V^{A(x_i) + s - A(y_{j'})} (x_i \!\to\! y_{j'})$, respectively, this matches the contribution of the arrow
$$V^{A(x_{i'}) + s - A(y_j)} (x_i \!\to\! y_j) \to V^{A(x_{i'}) + s - A(y_j)} [c (-1)^\star U^a V^b  (x_i \!\to\! y_j)] $$
to the differential on $\Mor_{\sRminus}(C_1, C_2)$.

We have shown that for each term in the differential on $\Mor_{\sRminus}(C_1, C_2)_s$, there is a bigon producing the analogous term in the differential of $CF(\tracks_2,  \tracks_1[s] )$; it only remains to show that there are no other bigons contributing to the differential of $CF( \tracks_2, \tracks_1[s] )$. Suppose there is a bigon $B$ whose initial corner is the intersection point $(x_i, y_j)$. Starting from this initial point, we will follow the $\tracks_2$ part of $\partial B$, which initially follows the horizontal segment corresponding to $y_j$. Suppose first that we are moving leftward along this horizontal segment. One of two things can happen: the $\tracks_2$ part of the $\partial B$ can leave the strip or it can reach the final corner of the bigon within the strip. If it leaves the strip then it will return to the strip again without interacting with $\tracks_1[s]$ and then cross the perturbed segment of $\tracks_1[s]$ corresponding to $y_j$, which necessarily completes a bigon of the first type described above, and if it reaches the terminal corner of $B$ without leaving the strip then $B$ is clearly of the second type described above.\end{proof}

\begin{remark}
A version of Proposition \ref{prop:pairing-in-strip} in the $UV=0$ setting follows from the general pairing theorems for immersed curves representing type D structures, in particular Theorem 1.5 of \cite{KWZ}, and the proof here is similar even when extending to the minus setting. In particular, this pairing is considered for type D structures over $\sRhat$ in Theorem 3 of \cite{KWZ:mnemonic}, where the knot Floer complex associated with a connected sum, which comes from the morphism space of two complexes, is identified with the wrapped Floer homology in the doubly marked disk of corresponding immersed curves. Recall that, as mentioned in Section \ref{sec:intro-bordered}, the doubly punctured disk is analogous to the punctured cylinder (by switching punctures with boundary components), and the punctured infinite strip $\strip^*$ is a covering space of this.
\end{remark}

\subsection{Floer homology in $\cylinder$ as a mapping cone; an unshifted pairing}\label{sec:unshifted-pairing}
Consider two bigraded complexes $C_1$ and $C_2$ equipped a flip isomorphisms $\Psi_{1,*}$ and $\Psi_{2,*}$. Each set of data can be represented by a decorated immersed multicurves in the cylinder $\cylinder$, and we can consider the Floer homology of these two curves. We will now define an algebraic pairing on the two sets of data and show that this agrees with the Floer homology of the corresponding curves.

To define the algebraic pairing, we consider several maps between morphism spaces. In particular, consider the $\F[W]$-complexes 
$$A_s = \Mor_{\sRminus}(C_1, C_2)|_{A=s},$$
$$B^v =  \Mor_{\F[W]}( H_*C_1^v, H_*C_2^v), \text{ and }$$
$$B^h =  \Mor_{\F[W]}( H_*C_1^h, H_*C_2^h). $$
Just as setting $V = 1$ and $U=W$ gives inclusion maps $C|_{A=s} \into C^v$, we also get inclusions $A_s \into \Mor_{\F[W]}(C^v_1, C_2^v)$. Recall that the generators of $A_s = \Mor_{\sRminus}(C_1, C_2)|_{A=s}$ as an $\F[W]$-module are $(x_i\to y_j)$ for generators $x_i$ of $C_1$ and $y_j$ of $C_2$ multiplied by either $U^{A(y_j)-A(x_i)-s}$ if $A(y_j)-A(x_i) \ge s$ or by $V^{s+A(x_i) - A(y_j)}$ if $A(y_j)-A(x_i) \le s$, while the generators of $\Mor_{\F[W]}(C^v_1, C_2^v)$ are simply $(x_i\to y_j)$ for generators $x_i$ of $C_1$ and $y_j$ of $C_2$. It follows that the inclusion map $A_s \into \Mor_{\F[W]}(C^v_1, C_2^v)$ is given by multiplying each generator by $W^a$ where $a = \max({A(y_j)-A(x_i)-s}, 0)$. The homology functor induces a map from $\Mor_{\F[W]}( C_1^v, C_2^v )$ to $B^v$. We define the composition of these two maps to be $v_s: A_s \to B^v$. Similarly, setting $U=1$ and $V=W$ gives an inclusion map $A_s \into \Mor_{\F[W]}(C^h_0, C_2^h)$, which after taking homology gives a map $h_s: A_s \to B^h$. The flip isomorphisms  $\Psi_{1,*}$ and $\Psi_{2,*}$ induce a map $F_{\Psi_{1,*}, \Psi_{2,*}}:  B^h \to  B^v$
taking a morphism $f$ to $\Psi_{2,*} \circ f \circ (\Psi_{1,*})^{-1}$. We will define $h^\Psi_s: A_s \to B^v$ to be the composition $F_{\Psi_{1,*}, \Psi_{2,*}} \circ h_s$.

We now consider the map $D = v_0 + h^\Psi_0$ from $A_0$ to $B^v$, and we define $\mathbb{X} = \mathbb{X}(C_1, \Psi_{1,*}, C_2, \Psi_{2,*})$ to be the mapping cone of $D$. The homology of $\mathbb{X}$ is a graded differential module over $\F[W]$; this is what we take to be the algebraic pairing of $(C_1, \Psi_{1_*})$ with $(C_2, \Psi_{2,*})$.

\begin{proposition}\label{prop:unshifted-pairing}
For $i \in \{1,2\}$, let $\tracks_i = (\Gamma_i, \bchain_i)$ be a decorated curve in the marked cylinder $\cylinder$ representing the complex $C_i$ and the flip isomorphism $\Psi_{i,*}$, and let $\mathbb{X}$ be the complex defined above. The Floer complex $CF( \tracks_2, \tracks_1)$ is quasi-isomorphic to $\mathbb{X}$ as graded complexes over $\F[W]$.
\end{proposition}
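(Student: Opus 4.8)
The plan is to reduce the statement to the already-proven pairing theorem in the strip (Proposition~\ref{prop:pairing-in-strip}) by cutting the cylinder $\cylinder$ into a marked strip and the unmarked gluing strip $\sF$, exactly mirroring the way $C(\tracks_i)$ and $\Psi_{i,*}$ were extracted from the curves in Section~\ref{sec:flip-maps-from-curves} and built back up in Sections~\ref{sec:flip-maps} and~\ref{sec:enhanced-curves}. First I would put both decorated curves $\tracks_1$ and $\tracks_2$ into the perturbed form of Figure~\ref{fig:flip-maps-from-curve-perturbed}: arrange that all bigons contributing to the Floer differential with $\mu$ are contained in a marked sub-strip $\strip$, while a thin unmarked strip $\sF = \text{nbhd}(\mu_{1/2})$ carries precisely the arcs and turning points recording the flip isomorphisms $\Psi_{1,*}$ and $\Psi_{2,*}$ (restricted to the bases of horizontal/vertical homology coming from $\Gamma_i \cap \partial_R\strip$ and $\Gamma_i \cap \partial_L\strip$). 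This is legitimate because any such perturbation is a homotopy of decorated curves and $CF(\tracks_2,\tracks_1)$ is invariant (Section~\ref{sec:invariance-of-Floer-homology}).

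\textbf{Key steps.} With the curves in this position, I would decompose the generators of $CF(\tracks_2,\tracks_1)$ according to which strip their underlying intersection point lies in: intersections inside $\strip$ versus intersections inside $\sF$. Since $\tracks_i|_\strip$ represents $C_i$ over $\sRminus$, Proposition~\ref{prop:pairing-in-strip} identifies the $\strip$-part of the complex (summed over Alexander gradings, i.e. over lifts) with $A_0 = \Mor_{\sRminus}(C_1,C_2)|_{A=0}$ — here the fact that $\lambda$ identifies height shifts by $1$ and that we take Floer homology in $\overline{T}_M \cong \cylinder$ rather than wrapped in $\mathcal{T}$ is what selects the single Alexander grading $s=0$ rather than all of them. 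Next, the intersections inside $\sF$ between the two bundles of arcs are in bijection with generators of $B^v = \Mor_{\F[W]}(H_*C_1^v, H_*C_2^v)$ (equivalently $B^h$, via the flip identification); the differential internal to these, counting polygonal paths across $\sF$, recovers the differential on $B^v$. Then I would compute the off-diagonal terms of the Floer differential: bigons that begin at a $\strip$-generator and end at an $\sF$-generator. A bigon of this type, read near its $\mu_{1/2}$-crossing, is exactly a morphism-type bigon whose $\tracks_i$-boundary exits $\strip$ into $\sF$; tracking the powers of $U,V$ (equivalently $W$, since the marked points straddle $\mu$) shows these bigons realize the inclusion-then-homology maps $v_0$ (for the boundary component using $\mu_{2\epsilon}$, i.e. the horizontal/right side becoming vertical) and $h^\Psi_0 = F_{\Psi_{1,*},\Psi_{2,*}}\circ h_0$ (for the boundary component using $-\mu_{-2\epsilon}$, where the cap of $\mu_\Psi$ — equivalently the arcs of $\Gamma_i$ in $\sF$ — insert the flip maps). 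There can be no bigons from an $\sF$-generator back to a $\strip$-generator by the ordering of endpoints on $\partial\strip$ assumed in almost simple position (this is the same argument as in the last paragraph of the proof of Proposition~\ref{prop:pairing-in-strip}). Hence $CF(\tracks_2,\tracks_1)$ is, after the identifications above, the mapping cone of $D = v_0 + h^\Psi_0 \colon A_0 \to B^v$, which is exactly $\mathbb{X}$; and the passage from $C(\tracks_i)$-bases to the reduced horizontal/vertical-homology bases only changes things by a quasi-isomorphism, giving the claimed quasi-isomorphism rather than an isomorphism.

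\textbf{Main obstacle.} The hard part will be the bookkeeping in the off-diagonal terms: verifying that the bigons straddling $\partial_R\strip$ really compute the composite ``inclusion $A_0 \hookrightarrow \Mor_{\F[W]}(C_1^v,C_2^v)$ followed by the homology functor'' (rather than just the chain-level map), and that the ones straddling $\partial_L\strip$ compute $F_{\Psi_{1,*},\Psi_{2,*}}\circ h_0$ with the correct sign and the correct power of $W$. The subtlety is that $v_0$ and $h^\Psi_0$ are defined on homology, so one must argue that the $\sF$-part of the complex is a model for $B^v$ with its differential, that the chain-level maps realized by the straddling bigons descend to $v_s,h_s^\Psi$ on taking homology of that model, and that the resulting two-step filtered complex is genuinely the mapping cone of the induced map — i.e. one needs a standard ``homological perturbation / cone of a chain map vs. cone of its homology-level representative'' argument to pass from the honest Floer complex to $\mathbb{X}$ up to quasi-isomorphism. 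Signs (the $(-1)^{\gr_w}$ twist in the definition of $\widetilde\Psi$ versus $\Psi$, and the morphism-complex sign $(-1)^{1+\gr}$) will need to be matched carefully, but these are routine once the bigon correspondence is set up. I would also need to double-check that the perturbation putting $\tracks_i|_\strip$ in almost simple position does not disturb the form of $\sF$ (arcs crossing arcs, no vertical tangencies), which is exactly the setup recorded in Figure~\ref{fig:flip-map-strip} and the surrounding discussion.
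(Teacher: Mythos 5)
Your proof is correct and is in fact the streamlined version of the argument that the paper explicitly acknowledges it could have used: the remark at the end of the proof of Proposition~\ref{prop:shifted-pairing} notes that one could have let the $\tracks_1$ and $\tracks_2$ arcs cross once in a single unmarked strip $\sF$ and identified the rightward straddling bigons with $h^\Psi_0$ directly, rather than inflating $\mathbb{X}$ to a four-term model. The paper's own proof of Proposition~\ref{prop:unshifted-pairing} instead cuts the cylinder into one marked strip $\strip_0$ and \emph{three} unmarked strips $\sF_{-1/4},\sF_{1/4},\sF_{1/2}$, producing a quasi-isomorphic model of $\mathbb{X}$ with summands $A_0$, $B^h$, $B^h[-1]$, $B^v[-1]$ and the four maps $\widetilde h_0$, $\widetilde v_0$, $-\widetilde{\Id}$, $-\widetilde F_{\Psi_{1,*},\Psi_{2,*}}$; this decouples $h_0$ from $F_{\Psi_{1,*},\Psi_{2,*}}$ at the cost of a finger move introducing extra intersection points. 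Your one-strip version identifies the $\sF$-generators directly with $B^v$ and the off-diagonal Floer differential directly with $D = v_0 + h^\Psi_0$, which is exactly what the paper does for the shifted pairing. Both arguments share the subtlety you flag about passing from the chain level (where the $\sF$-arcs give a free $\F[W]$-module) to the morphism space of homology modules $B^v$, which may have torsion; this is precisely why the proposition is stated as a quasi-isomorphism rather than an isomorphism, so your framing of that step as the main bookkeeping issue is appropriate.

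One small correction to the bookkeeping: you have the two off-diagonal maps attached to the wrong sides. Since $\mu_{2\epsilon}$ (the right side of $\mu$) computes the horizontal complex and $\mu_{-2\epsilon}$ (the left side) computes the vertical complex, the bigons exiting $\strip$ through $\partial_R\strip$ and traversing $\sF$ realize $h^\Psi_0 = F_{\Psi_{1,*},\Psi_{2,*}}\circ h_0$, while the bigons exiting through $\partial_L\strip$ realize $v_0$ --- not the other way around as you wrote. Since $D$ is the sum $v_0 + h^\Psi_0$, this does not change the final conclusion, but the identification of individual bigons with individual maps would not go through as you have it labeled. You should also verify the sign twist $\widetilde{f}(x) = (-1)^{\gr_w(x)}f(x)$ as the paper does, since the corners at $\sF$-generators contribute signs depending on whether the $\tracks_1$ or $\tracks_2$ arc is traversed against its orientation.
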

\begin{proof}
Recall that the complex $\mathbb{X} = \text{Cone}(D)$ can be realized as $A_0 \otimes B^v$ with differential
$$\left( \begin{array}{cc}
\partial_{A_0} & 0 \\
\widetilde{D} & \partial_{B^v} \end{array} \right),$$
where throughout this proof for a map $f$ we use $\widetilde f$ to denote the function defined by
$$\widetilde{f}(x) = (-1)^{\gr_w(x)} f(x).$$
It is clear that the complex $\mathbb{X}$ is also quasi-isomorphic to the complex
$$ \begin{array}{ccc}
A_0 & \overset{\widetilde{h}_0}{\longrightarrow} & B^h \\
\hspace{3mm} \downarrow {}_{ \widetilde{v}_0} & &  \hspace{3mm} \uparrow {}_{-\widetilde{\Id}}  \\
B^v & \overset{-\widetilde{F}_{\Psi_{1,*}, \Psi_{2,*}}}{\longleftarrow} & B^h 
\end{array}$$
where $\Id$ is the identity map on $B^h$. We will perturb the curves $\tracks_1$ and $\tracks_2$ so that the Floer complex $CF(\tracks_2, \tracks_1)$ is isomorphic to this complex. An example is shown in Figure \ref{fig:simple-pairing-cylinder}. We divide the marked cylinder $\cylinder$ (identified with $(\R/\Z)\times\R$) into a marked strip $\strip_0 = [-\tfrac 1 8, \tfrac 1 8]\times \R$ and three unmarked strips $\sF_a = [a-\tfrac{1}{8}, a+\tfrac{1}{8}]\times \R$ for $a \in \{-\tfrac 1 4, \tfrac 1 4, \tfrac 1 2\}$, and we let $\sF$ denote $\sF_{\tfrac 1 4}\cup\sF_{\tfrac 1 2}\cup\sF_{-\tfrac 1 4}$. We first homotope each curve $\tracks_i$ as in the proof of Proposition \ref{prop:pairing-in-strip} so that the curves restricted to $\strip_0$ are in almost simple position and represent the complex $C_i$ and the curves restricted to $\sF$ are a collection of arcs from one side of the strip to the other, possibly with left-turn crossover arrows between arcs oriented the same direction, representing the flip isomorphism $\Psi_{i,*}$. We perturb the curves in $\strip_0$ as in the proof of Proposition  \ref{prop:pairing-in-strip}, so that the Floer complex of the curves restricted to $\strip_0$ agrees with the complex $A_0$ exactly. Note that on the right the endpoints of $\tracks_1$ are above the endpoints of $\tracks_2$, while the opposite is true on the left. We perturb each curve in $\sF$ so that $\tracks_1$ is below $\tracks_2$ on $\partial_R \sF_{\tfrac 1 4} = \partial_L \sF_{\tfrac 1 2}$ and above $\tracks_2$ on $\partial_R \sF_{\tfrac 1 2} = \partial_L \sF_{-\tfrac 1 4}$, and so that all crossings and crossover arrows between arcs in $\tracks_1$ or between arcs in $\tracks_2$ occur in $\sF_{\tfrac 1 2}$ to the right of all crossings between $\tracks_1$ and $\tracks_2$.

The restriction of the Floer complex to generators from $\sF_{\tfrac 1 2}$ can be identified with $B^h$, where each arc of $\tracks_i$ in $\sF_{\tfrac 1 2}$ corresponds to a generator of $H_* C^h_i$, and the intersection of arcs corresponding to generators $x$ and $y$ can be identified with the morphism $(x \to y)$.  The only bigons connecting two intersection points in $\sF_{\tfrac 1 2}$ extend through $\sF_{\tfrac 1 4}$ into $S_0$ on either $\tracks_1$ or $\tracks_2$ and correspond to a term in the differential of either $H_*C^h_1$ or $H_* C^h_2$ along with a fixed generator in the other complex. These bigons precisely recover the differential on $B^h$, so that the restriction of the Floer complex $CF(\tracks_2, \tracks_1)$ to generators in $\sF_{\tfrac 1 2}$ is the complex $B^h$. The same is true for the restriction of the Floer complex to generators in $\sF_{\tfrac 1 2}$, but we need to introduce signs in the identification between intersection points and generators of $B^h$ and a grading shift. The bigons connecting intersection points in $\sF_{\tfrac 1 4}$ are clearly the same as those connecting intersection points in $\sF_{\tfrac 1 2}$ after removing a small rectangular strip running between $\sF_{\tfrac 1 4}$ and $\sF_{\tfrac 1 2}$, but the sign of the bigons that extend into $S_0$ on the $\tracks_1$ side is flipped. To correct for this, we identify the intersection point of segments corresponding to $x$ and $y$ with $(-1)^{\gr_w(x)}$ times the morphism $(x\to y)$ and observe that once again the bigons recover the differential on $B^h$. We also note that there is a grading difference of 1 (for either grading) between intersection points in $\sF_{\tfrac 1 2}$ and the corresponding intersection points in $\sF_{\tfrac 1 4}$, so the Floer complex restricted to generators in $\sF_{\tfrac 1 4}$ is the complex $B^h[-1]$. Similarly, the arcs in $\tracks_i$ through $\sF_{-\tfrac 1 4}$ correspond to generators of $H_* C^v_i$ and the intersections in $\sF_{-\tfrac 1 4}$ can be identified up to sign with the generators of $B^v$. If we identify the intersection point between segments corresponding to $x$ in  $H_* C^v_1$ and $y$ in $H_* C^v_2$ with $(-1)^{\gr_w(x)+1}$ times the morphism $(x\to y)$, we observe that the Floer complex restricted to generators from $\sF_{-\tfrac 1 4}$ agrees with $B^v[-1]$.

The only bigons connecting intersection points from different strips connect intersection points in adjacent strips; more specifically they connect points in $\strip_0$ or $\sF_{\tfrac 1 2}$ to points in $\sF_{\tfrac 1 4}$ or $\sF_{-\tfrac 1 4}$. Counting bigons from $\sF_{\tfrac 1 2}$ to $\sF_{\tfrac 1 4}$ realizes the degree $-1$ map $-\widetilde{\Id}[-1]$ from $B^h$ to $B^h[-1]$. It is clear that there is exactly one bigon for each generator of $B^h$. To check the signs, note that the bigon corresponding to the pair $(x,y)$ contributes to the Floer complex with a minus sign if and only if $\gr_w(y)$ is even, and the intersection point on the $\sF_{\tfrac 1 4}$ end of the boundary represents the opposite of the generator $(x \to y)$ of $B^h[-1]$ if and only if $\gr_w(x)$ is odd; it follows that the map takes a generator $B^h$ to the corresponding generator of $B^h[-1]$ with a minus sign if and only if $\gr_w( (x\to y) )$ is even. By shifting the degree up by one, the map $-\widetilde\Id[-1]: B^h \to B^h[-1]$ is equivalent to the degree zero map $-\widetilde\Id: B^h \to B^h$.

We next observe that counting the bigons from $\strip_0$ to $\sF_{\tfrac 1 4}$ defines the map $\widetilde{h}_0[-1]$ from $A_0$ to $B^h[-1]$. Note that generators $(x, y)$ map to zero if either $x$ or $y$ is the end of a horizontal arrow in the corresponding complex, and if $x$ and $y$ survive in horizontal homology the generator $(x, y)$ maps to itself multiplied by $W^a$ where $a = \max(A(x)-A(y), 0)$. The corresponding bigons are counted with a minus sign if and only if $y$ has odd grading. Combining this sign with the minus sign on generators of $B^h[-1]$ for which $\gr(x)$ is odd gives a minus sign precisely when $(x \to y)$ has odd grading. Similarly, counting the bigons from $\strip_0$ to $\sF_{-\tfrac 1 4}$ realize the map $\widetilde{v}_0[-1]$  form $A_0$ to $B^v[-1]$. Finally, we check that counting bigons from $\sF_{\tfrac 1 2}$ to $\sF_{-\tfrac 1 4}$ realizes the degree $-1$ map $\widetilde{F}_{\Psi_{1,*}, \Psi_{2,*}}[-1]$ from $B^h$ to $B^v[-1]$. The identification is obvious up to sign; to check the signs, note that a bigon starting at $(x \to y)$ contributes with sign $(-1)^{\gr_w(y)}$ and the identification between the terminal intersection point of the bigon and a genertor of $B^v[-1]$ contributes the sign $(-1)^{\gr_w(x)+1}$.

Putting all these observations together, and shifting the grading of $B^h[-1]$ and $B^v[-1]$, we see that the Floer complex can be identified with the complex given at the beginning of the proof, which is quasi-isomorphic to $\mathbb{X}$.
\end{proof}

\begin{figure}
\includegraphics[scale = 1]{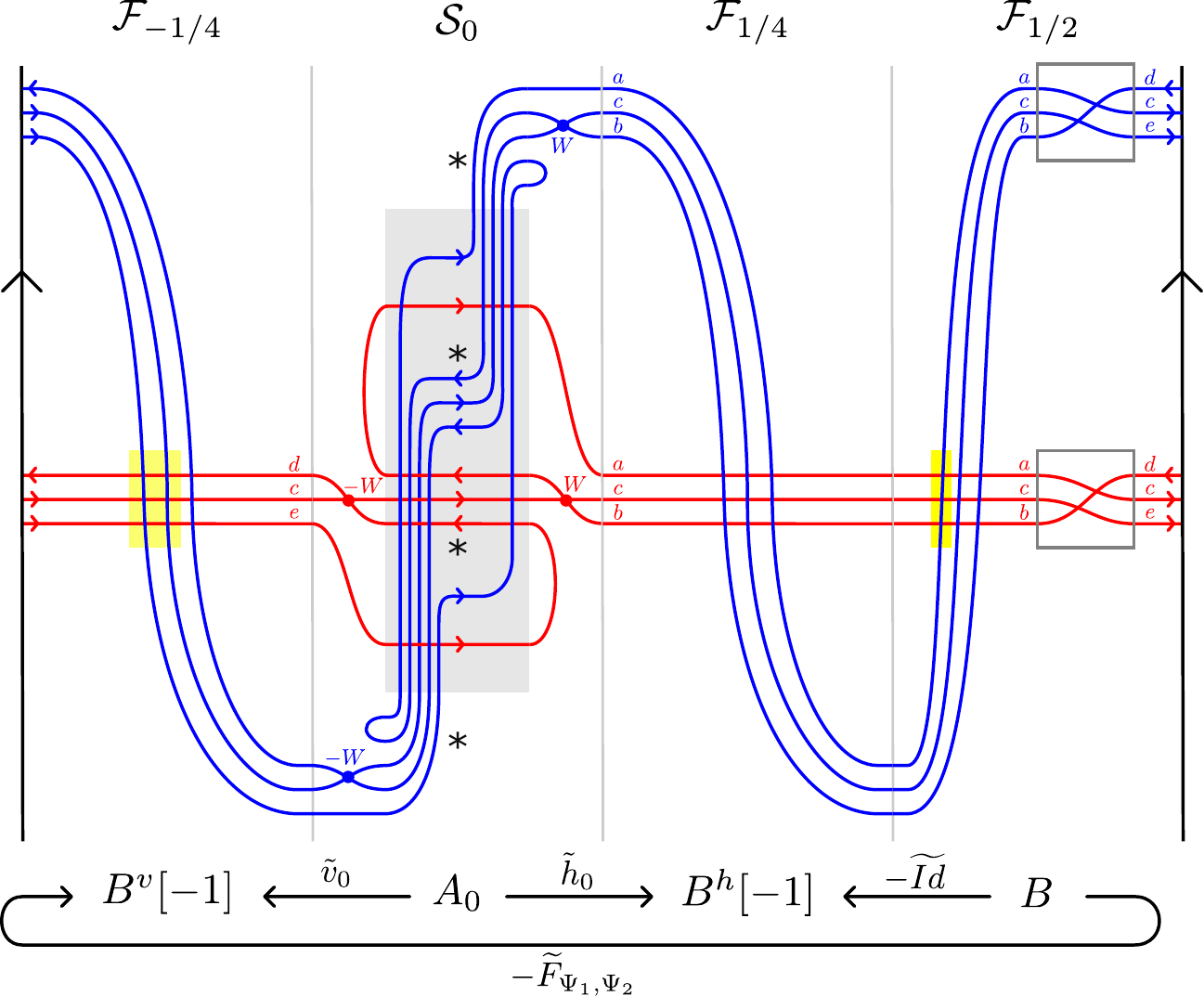}
\caption{The simple pairing of the curve invariant for $+1$-surgery on the left handed trefoil from Example \ref{ex:1-surgery-on-LHT} with itself, in the form described in he proof of Proposition \ref{prop:unshifted-pairing}. The highlighted intersection points have a minus sign when identified with a generator of the corresponding complex.}
\label{fig:simple-pairing-cylinder}
\end{figure}

An example of the unshifted pairing is shown in Figure \ref{fig:simple-pairing-cylinder}, where $(C_1, \Psi_{1,*})$ and $(C_2, \Psi_{2,*})$ are both the complex and flip isomorphism associated with the dual knot in $+1$-surgery on the left handed trefoil from Example \ref{ex:1-surgery-on-LHT}. The curves have been perturbed as in the proof of Proposition \ref{prop:unshifted-pairing} so that the Floer complex is identified with the complex quasi-isomorphic to $\mathbb{X}$ given in the proof.

We remark that in the proof of Proposition \ref{prop:unshifted-pairing}, it was not necessary to divide the unmarked strip $\sF$ into three strips and perturb $\tracks_1$ to intersect $\tracks_2$ in each of these. We could have instead assumed the collection of $\tracks_1$ arcs crossed the collection of $\tracks_2$ arcs once in $\sF$, corresponding to the intersection points in $\sF_{-\tfrac 1 4}$, and shown that counting the bigons moving rightward from $\strip$ to $\sF$ recovers the map $\widetilde{h}^\Psi_0$. Thus we chose to performed a finger move to add additional intersections and construct a larger chain complex. We feel this makes the argument more clear, since we can consider the maps $\widetilde{h}_0$ and $\widetilde{F}_{\Psi_{1,*},\Psi_{2,*}}$ separately, but we will adopt the simpler configuration in similar proofs moving forward.

\subsection{Uniqueness of curves}\label{sec:uniqueness}

The algebraicly defined pairing introduced above only depends on the chain complexes of $C_1$ and $C_2$ up to bigraded chain homotopy equivalence and the flip isomorphisms up to isomorphism. It follows from Proposition \ref{prop:unshifted-pairing} that the Floer homology of the corresponding decorated curves only depends on this data. In particular, any two decorated curves representing homotopy equivalent complexes equipped with equivalent flip isomorphism are indistinguishable in the context of Floer homology in $\cylinder$, and can thus be considered as equivalent objects of the Fukaya category of $\cylinder$.

\begin{definition}
Two objects in the Fukaya category of the marked cylinder $\cylinder$ are said to be \emph{equivalent} if they have the same Floer homology with any other object in the Fukaya category.
\end{definition}

We have thus shown the following:

\begin{proposition}\label{prop:dumb-uniqueness}
Any chain homotopy equivalence class of bigraded complexes over $\sRminus$ and any flip isomorphism from the horizontal homology of any of these complexes to the vertical homology of any of these complexes is represented by a \emph{unique} decorated curve $(\Gamma, \bchain)$ in $\cylinder$ (up to equivalence as objects in the Fukaya category of $\cylinder$).
\end{proposition}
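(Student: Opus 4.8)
The proof of Proposition \ref{prop:dumb-uniqueness} is essentially a formal consequence of the constructions and pairing results already established, and the plan is to assemble it from three ingredients: existence, the pairing theorem, and the invariance of the algebraic side. First I would invoke the existence statement. Given a chain homotopy equivalence class of bigraded complexes over $\sRminus$ together with a flip isomorphism between the horizontal homology of one representative and the vertical homology of another, pick an honest representative complex $C$ and an honest flip isomorphism $\Psi_*$ (equivalently, use Proposition \ref{prop:curves-from-flip-maps-UVzero} together with the enhancement of Section \ref{sec:enhanced-curves}, which is exactly the existence half of Theorem \ref{thm:curve-invariant-for-complex}) to produce a decorated curve $(\Gamma, \bchain)$ in $\cylinder$ representing $(C, \Psi_*)$. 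This gives a representative; the content of the proposition is uniqueness up to equivalence in the Fukaya category.

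Second, I would observe that equivalence in the Fukaya category of $\cylinder$, as just defined, means that two decorated curves are equivalent precisely when their Floer homology with every other object agrees. So suppose $(\Gamma, \bchain)$ and $(\Gamma', \bchain')$ are two decorated curves in $\cylinder$, the first representing $(C_1, \Psi_{1,*})$ and the second representing $(C_2, \Psi_{2,*})$, where $(C_1, \Psi_{1,*})$ and $(C_2, \Psi_{2,*})$ present the same chain homotopy equivalence class of complex and the same flip isomorphism (up to the obvious notions of equivalence). To show the two curves are equivalent objects, it suffices to show $HF(\tracks, (\Gamma,\bchain)) \cong HF(\tracks, (\Gamma',\bchain'))$ for every test object $\tracks$. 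Here I would use Proposition \ref{prop:unshifted-pairing}: for $\tracks$ itself a decorated curve representing some $(C_0, \Psi_{0,*})$, the Floer complex $CF(\tracks, (\Gamma_i, \bchain_i))$ is quasi-isomorphic to the mapping cone $\mathbb{X}(C_0, \Psi_{0,*}, C_i, \Psi_{i,*})$ as graded complexes over $\F[W]$, and hence their homologies agree. (One should also note that an arbitrary object of the Fukaya category of $\cylinder$ can be reduced to a decorated curve; the objects we consider are of this form, and Section \ref{sec:train-tracks} lets us pass between decorated curves and the train tracks used in the pairing arguments.)

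Third, the key point is that $\mathbb{X}(C_0, \Psi_{0,*}, C_i, \Psi_{i,*})$ depends only on the chain homotopy equivalence class of $C_i$ and the isomorphism class of $\Psi_{i,*}$, not on the particular representatives $(\Gamma_i, \bchain_i)$. This is already built into the algebraic definition of $\mathbb{X}$ in Section \ref{sec:unshifted-pairing}: $\mathbb{X}$ is constructed out of the morphism complex $\Mor_{\sRminus}(C_0, C_i)|_{A=0}$, the homology modules $H_*C_i^h$ and $H_*C_i^v$, the inclusion-then-homology maps $v_0$ and $h_0$, and the flip-induced map $F_{\Psi_{0,*}, \Psi_{i,*}}$, all of which are functorial under chain homotopy equivalence of $C_i$ and carry $\Psi_{i,*}$ only through its induced map on homology. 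Thus a chain homotopy equivalence $C_1 \simeq C_2$ intertwining (up to homotopy) with an isomorphism of flip data induces a quasi-isomorphism $\mathbb{X}(C_0, \Psi_{0,*}, C_1, \Psi_{1,*}) \simeq \mathbb{X}(C_0, \Psi_{0,*}, C_2, \Psi_{2,*})$, hence an isomorphism on homology. Chaining the two applications of Proposition \ref{prop:unshifted-pairing} with this observation gives $HF(\tracks, (\Gamma, \bchain)) \cong HF(\tracks, (\Gamma', \bchain'))$ for all $\tracks$, which is exactly equivalence in the Fukaya category.

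The main obstacle I anticipate is not in the logical skeleton, which is short, but in pinning down precisely what ``the same flip isomorphism'' should mean when the complexes $C_1$ and $C_2$ are only chain homotopy equivalent rather than equal: a flip isomorphism on $C_1$ is an isomorphism $H_*C_1^h \to H_*C_1^v$, and to compare it with a flip isomorphism on $C_2$ one must transport along the homotopy equivalence and check that the induced maps on horizontal and vertical homology make the relevant square commute up to the appropriate equivalence. Making this functoriality statement precise — essentially that $(C, \Psi_*) \mapsto \mathbb{X}(C_0, \Psi_{0,*}, C, \Psi_*)$ is a well-defined functor on the homotopy category with flip data — is the step that requires care, but it is a routine (if slightly tedious) verification once the definitions in Section \ref{sec:unshifted-pairing} are unwound, and nothing deeper than standard homological algebra is needed.
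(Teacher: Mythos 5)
Your proof is correct and takes essentially the same approach as the paper: existence from the constructions of Sections 7 and 8, uniqueness from Proposition \ref{prop:unshifted-pairing} together with the observation that $\mathbb{X}(C_0,\Psi_{0,*},C_i,\Psi_{i,*})$ is manifestly a homotopy invariant of $(C_i,\Psi_{i,*})$.
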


While uniqueness up to equivalence in the Fukaya category is the right notion abstractly, in practice it can be unsatisfying since it can be difficult to check if two decorated curves are equivalent objects in the Fukaya category. It has already been noted that a complex and flip map can have many naive immersed curve representatives that may bear little resemblance to the simplified constructed in the previous sections, even though they are equivalent by Proposition \ref{prop:dumb-uniqueness}. Fortunately, for representatives in simple position with bounding chains of local system type, we can give a stronger uniqueness statement about the curves $\Gamma$ and the decoration $\bchainhat$ obtained from $\bchain$ by restricting to degree zero intersection points.

\begin{proposition}\label{prop:uniqueness}
Suppose $(\Gamma_1,\bchain_1)$ and $(\Gamma_2, \bchain_2)$ are equivalent objects in the Fukaya category of the marked cylinder $\cylinder$, and suppose the restriction $\bchainhat_i$ of $\bchain_i$ to degree zero intersection points is of local system type for $i \in \{1,2\}$. Then $\Gamma_1$ and $\Gamma_2$ are homotopic and $\bchainhat_1$ and $\bchainhat_2$ agree (under the natural identification between local system intersection points of $\Gamma_1$ and $\Gamma_2$).
\end{proposition}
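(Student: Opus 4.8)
The plan is to extract $\Gamma_i$ and $\bchainhat_i$ from Floer-theoretic data that is manifestly an invariant of the equivalence class, following the strategy already used in \cite{HRW} for the $\Z/2$, hat setting, but now phrased in terms of the morphism and mapping-cone pairings of Sections on morphisms. The key point is that for a decorated curve $(\Gamma, \bchain)$ in almost simple position with $\bchainhat$ of local system type, the underlying curve $\Gamma$ (up to homotopy in $\cylinder$, i.e.\ a homotopy class of immersed multicurve with multiplicities and a grading/height profile) together with the local system data is recorded by how $(\Gamma, \bchain)$ pairs with all slopes. First I would reduce to the case where each $\Gamma_i$ is actually in \emph{simple} position: almost simple position differs only by finitely many extra pairs of intersection points introduced to break immersed annuli or to order endpoints, and since we are working in $\cylinder$ (no $\partial\strip$ conditions) and any two parallel closed components are incorporated into a single non-primitive component with its crossing region, the canonical simple-position representative is determined; by move $(j)$ and move $(i)$ of Figure~\ref{fig:invariance-moves} this does not change the equivalence class and identifies $\bchainhat$.

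Next I would recover the homotopy class of $\Gamma_i$. For each slope $p/q$, the curve $\ell_{p/q}$ of that slope is an object of the Fukaya category, and by Proposition~\ref{prop:unshifted-pairing} (and its $W=0$ specialization) the graded rank of $HF(\ell_{p/q}, (\Gamma_i,\bchain_i))$ depends only on the equivalence class. For a curve in simple position with no immersed annuli, the minimal geometric intersection number of $\Gamma_i$ with $\ell_{p/q}$ is computed by this Floer homology (the pairing is ``loose'' in the sense of \cite{HRW}: disks cannot be cancelled because the configuration is admissible, so $\dim HF$ equals the geometric intersection number when we puncture), and a homotopy class of essential immersed multicurve in the twice-punctured cylinder is determined by its intersection numbers with all slopes together with the winding/grading data — here the grading lift to $\overline T_M\cong\cylinder$ is recorded by the bigradings on the generators of the pairing, which pin down the heights at which $\Gamma_i$ crosses $\mu$ and hence the lift. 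This shows $\Gamma_1 \simeq \Gamma_2$ as graded immersed multicurves. The multiplicities of non-primitive components are seen the same way (a multiplicity-$k$ component contributes $k$ times the intersection number of its primitive).

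Finally, having fixed $\Gamma := \Gamma_1 \simeq \Gamma_2$, I would recover $\bchainhat$, i.e.\ the local systems. On each primitive homotopy class $\gamma$ underlying components of $\Gamma$, the direct-sum decomposition of the local system and the rational canonical form of each block are detected by pairing with the slope parallel to $\gamma$ (wrapped, so that $\ell$ spirals): the eigenvalue multiplicities of the monodromy matrix are read off from the $\F[W]$-module structure of $HF$, exactly as in \cite[Section 3.7]{HRW}, and since the pairing only sees the similarity class of the monodromy and $\bchainhat$ is by hypothesis in the normal (rational canonical) form of Section~\ref{sec:local-systems}, the coefficients $c_0,\dots,c_{k-1}$ are determined. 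Under the natural identification of local system intersection points between homotopic simple-position representatives (the identification noted after Theorem~\ref{thm:curve-invariant}), this gives $\bchainhat_1 = \bchainhat_2$.

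\textbf{Main obstacle.} The delicate step is the middle one: showing that the loose pairing $HF(\ell_{p/q}, (\Gamma,\bchain))$ genuinely computes geometric intersection number even in the presence of a nontrivial bounding chain $\bchain$ (not just $\bchainhat$), so that negative-degree turning points cannot conspire to cancel intersection points in the pairing. The resolution is that all the extra terms coming from false corners carry a strictly positive power of $W$, hence vanish in the $W=0$ (hat) specialization of the pairing, where Proposition~\ref{prop:unshifted-pairing} still applies; so one runs the intersection-number argument over $\sRhat$, recovering $\Gamma$ and $\bchainhat$ there, and then observes that $\bchainhat$ is by definition the $W=0$ part of $\bchain$. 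One must also be careful that different slopes see \emph{all} components of $\Gamma$ — a component disjoint from some $\ell_{p/q}$ contributes nothing to that pairing — but since every component of $\Gamma$ is essential in $\cylinder$ (it crosses $\mu$), it has nonzero intersection with all but one slope, and varying the slope detects every component; this is exactly the argument of \cite[Theorem 2]{HRW} adapted to the cylinder, and I would cite it rather than reprove it.
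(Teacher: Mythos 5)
Your overall strategy — distinguishing (or reconstructing) the pair $(\Gamma, \bchainhat)$ by test pairings — is the same as the paper's, and your handling of the $W = 0$ specialization to discard the negative-degree part of $\bchain$ is exactly right. But your choice of test objects is too narrow, and this creates a genuine gap that the paper's proof avoids.

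You pair exclusively against the non-compact lines $\ell_{p/q}$. These curves detect (essentially) only the winding of $\Gamma$ around the cylinder and, via the bigrading of the pairing, the heights at which $\Gamma$ crosses $\mu$. They do \emph{not} give you a handle on components of $\Gamma$ lying in more complicated primitive free-homotopy classes of the infinitely-punctured cylinder — a simple figure-eight component enclosing two adjacent marked points, for example, is not homotopic to any $\ell_{p/q}$, so there is no ``parallel slope'' to pair with in your step recovering the local system. Moreover, your step that reconstructs the homotopy class of $\Gamma$ from intersection numbers with all slopes is asserted but not proved, and in the infinite-type surface $\cylinder$ (with $\pi_1$ a free group of infinite rank) it is not at all automatic that slope pairings separate homotopy classes of compact multicurves. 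The paper sidesteps both problems by using \emph{compact} test objects: for each primitive homotopy class $\gamma$ relevant to $\Gamma_1$ or $\Gamma_2$, it pairs against decorated multiples of $\gamma$ itself with varying local systems $\bchainhat_3$. The key inputs are then (i) that for non-parallel connected curves the Floer rank is just the minimal intersection number, independent of decoration, and (ii) the explicit formula $2k_1k_2 - \dim\ker(A_1\otimes A_2^{-1} - \Id)$ in the parallel case, from which a linear-algebra argument shows non-isomorphic local systems are separated by some choice of $A_3$. This is a direct \emph{distinguishing} argument (if the invariants differ, exhibit a test curve that sees the difference), not a reconstruction argument, which is both shorter and sufficient for uniqueness. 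To repair your proposal you would need to enlarge the family of test curves to include compact decorated curves in arbitrary primitive classes — at which point you have recovered the paper's proof.
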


\begin{proof}
This property, which holds more generally for compact objects in the Fukaya category of any marked surface, is an essential part of the uniqueness proof for the structure theorems for type D structures appearing in \cite{HRW} and other generalizations of that work. For the sake of keeping the present paper self-contained we briefly summarize the argument. The approach described here, which is simpler than the uniqueness proof in \cite{HRW}, is modeled on the proof of \cite[Proposition 4.46]{Zibrowius:peculiar-modules}. 

We need to show that if $\Gamma_1$ and $\Gamma_2$ are not homotopic or if $\bchainhat_1$ and $\bchainhat_2$ do not agree, then $(\Gamma_1, \bchainhat_1)$ and $(\Gamma_2, \bchainhat_2)$ are not equivalent as elements of the Fukaya category. This means that there is some test curve $(\Gamma_3, \bchainhat_3)$ that pairs differently with $(\Gamma_1, \bchainhat_1)$ and $(\Gamma_2, \bchainhat_2)$. It is sufficient to consider the hat version of pairing, that is Floer homology in the punctured cylinder $\pcylinder$. The key observation is that the dimension of the Floer homology of two connected decorated curves $(\gamma_1, \bchainhat_{\gamma_1})$ and $(\gamma_2, \bchainhat_{\gamma_2})$ is given by the minimal intersection number of $\gamma_1$ and $\gamma_2$ (since there are no bigons when the curves are in minimal position), and thus does not depend on the decorations, unless the curves are parallel, where by parallel we mean homotopic to multiples of the same primitive curve.  If the curves are parallel then admissibility forces us to perturb the curves from minimal position. In this case, assuming without loss of generality that the orientations agree, we can check that the dimension of Floer homology differs from the minimal intersection number by
$$2 k_1 k_2 - \dim( \ker( A_1 \otimes A_2^{-1} - \Id))$$
where $k_i$ is the multiplicity of the possibly non-primitive curve $\gamma_i$, $A_i$ is the $k_i$-dimensional local system determined by the decoration $\bchainhat_{\gamma_i}$, and $\Id$ is the identity map on $\F^{k_1 k_2}$.

If $\Gamma_1$ contains a component $\gamma$ that is not parallel to any component of $\Gamma_2$, we consider test curves with $\Gamma_3$ a mulitple of $\gamma$ and consider different decorations $\bchainhat_3$. As $\bchainhat_3$ varies the pairing of $(\Gamma_3, \bchainhat_3)$ with $(\Gamma_1, \bchainhat_1)$ will change but the pairing with $(\Gamma_2, \bchainhat_2)$ will not, so for some choice of $\bchainhat_3$ the test pairing distinguishes $(\Gamma_1, \bchainhat_1)$ from $(\Gamma_2, \bchainhat_2)$. A similar argument applies if there is some primitive curve $\gamma$ for which the collection of components of $\Gamma_1$ parallel to $\gamma$ is not homotopic to the collection of components of $\Gamma_2$ parallel to $\gamma$, or the corresponding decorations do not agree. Let $k_i$ be the total multiplicity of all components of $\Gamma_i$ parallel to $\gamma$, and let $A_i$ be the dimension $k_i$ local system determined by the decorations on these components. Note that the components of $\Gamma_i$ parallel to $\gamma$, and their decorations, are uniquely determined by the isomorphism type of $A_i$ (we can construct the decorated curves from a matrix in rational canonical form representing $A_i$). Thus if these collections are not equivalent we must have that $A_1$ is not isomorphic to $A_2$. In this case we consider the test curve $\Gamma_3 = \gamma$ and once again vary the decoration $\bchainhat_3$. The pairing with $(\Gamma_i, \bchainhat_i)$ depends on
$$2 k_i k_3 - \dim( \ker( A_i \otimes A_3^{-1} - \Id)),$$
where here $A_3$ is the local system of dimension $k_3$ determined by $\bchainhat_3$. A linear algebra exercise shows that if $A_1$ is not isomorphic to $A_2$ then there is some $A_3$ for which this quantity differs between $i=1$ and $i=2$, so $(\Gamma_1, \bchainhat_1)$ and $(\Gamma_2, \bchainhat_2)$ are distinguished by pairing.
\end{proof}

The proof above relies on the immersed curves having only closed components, since immersed arcs have no local system intersection points. However, we can apply this result to get uniqueness of curves in $\strip$ representing complexes by constructing closed curves in $\cylinder$ from these. This is similar to the doubling argument used to show uniqueness of non-compact immersed curves in \cite[Theorem 5.27]{KWZ}

\begin{proposition}\label{prop:uniqueness-strip}
Suppose $(\Gamma_1,\bchain_1)$ and $(\Gamma_2, \bchain_2)$ are equivalent objects in the Fukaya category of the marked strip $\strip$, and suppose $\Gamma_i$ is in almost simple position and the restriction $\bchainhat_i$ of $\bchain_i$ to degree zero intersection points is of local system type for $i \in \{1,2\}$. Then $\Gamma_1$ and $\Gamma_2$ are homotopic and $\bchainhat_1$ and $\bchainhat_2$ agree (under the natural identification between local system intersection points of $\Gamma_1$ and $\Gamma_2$).
\end{proposition}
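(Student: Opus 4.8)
The plan is to reduce the uniqueness statement for curves in $\strip$ to the already-established uniqueness statement for curves in $\cylinder$ (Proposition \ref{prop:uniqueness}) by a doubling construction, analogous to the doubling argument for non-compact curves in \cite[Theorem 5.27]{KWZ}. The point of doubling is that immersed arcs in $\strip$ have no self-intersection points in the interior that serve as local-system points, so the argument via test pairings that varies a local-system decoration cannot be run directly; gluing two copies of $\strip$ along their boundaries produces a closed curve in a cylinder, to which the closed-curve machinery applies.

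First I would make the doubling construction precise. Given $(\Gamma, \bchain)$ in $\strip$ in almost simple position with $\bchainhat$ of local-system type, form the cylinder $\cylinder$ by gluing $\strip$ to a mirror copy $\overline{\strip}$ along $\partial_L \strip \sim \partial_L\overline{\strip}$ and $\partial_R\strip \sim \partial_R\overline\strip$; the arcs of $\Gamma$ glue to the arcs of the mirror copy $\overline\Gamma$ (using the ordering of endpoints on $\partial_L\strip$ and $\partial_R\strip$ guaranteed by almost simple position to match endpoints up correctly) to give a closed immersed multicurve $D(\Gamma)$ in $\cylinder$, and $\bchainhat$ together with its mirror gives a bounding chain $D(\bchainhat)$ whose degree-zero part is still of local-system type (closed components of $D(\Gamma)$ that arise from pairs of arcs inherit exactly the crossing-region local system data; no new degree-zero self-intersections of the wrong type are created). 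Algebraically, doubling corresponds to a well-understood operation on the represented complex: gluing $\strip$ to its mirror amounts to pairing $C$ with (the mirror of) itself in the sense that $D(\Gamma)$ represents a complex built from $C$ together with the flip-type identification coming from the two boundary-gluings, and by Proposition \ref{prop:unshifted-pairing} (and Proposition \ref{prop:pairing-in-strip}) this doubled data depends only on the homotopy-equivalence class of $C$. Hence if $(\Gamma_1,\bchain_1)$ and $(\Gamma_2,\bchain_2)$ are equivalent in the Fukaya category of $\strip$, their doubles $(D(\Gamma_1), D(\bchainhat_1))$ and $(D(\Gamma_2), D(\bchainhat_2))$ are equivalent objects in the Fukaya category of $\cylinder$: any test curve in $\cylinder$ can be lifted/cut to produce test data in $\strip$ against which $(\Gamma_1,\bchain_1)$ and $(\Gamma_2,\bchain_2)$ pair identically, and this pairing computes the Floer homology with the double.

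Next I would invoke Proposition \ref{prop:uniqueness} to conclude that $D(\Gamma_1)$ is homotopic to $D(\Gamma_2)$ in $\cylinder$ and $D(\bchainhat_1) = D(\bchainhat_2)$ under the natural identification of local-system intersection points. The final step is to descend this conclusion back to $\strip$: I must argue that a homotopy between the doubled curves, together with the $\mathbb{Z}/2$-symmetry of the doubling, can be taken to respect the reflection symmetry, so that it restricts to a homotopy of $\Gamma_1$ to $\Gamma_2$ in $\strip$ carrying $\bchainhat_1$ to $\bchainhat_2$. Here one uses that each component of $D(\Gamma_i)$ is either (a) a component already present in $\strip$ (a closed curve disjoint from $\partial\strip$, together with its mirror, which are homotopically independent and handled by restricting attention to one side), or (b) a component obtained by gluing an arc to its mirror image, which is preserved by the reflection; an equivariant minimal-position / equivariant Whitney-move argument then produces a symmetric homotopy. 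Matching of the boundary endpoints is forced by the almost-simple-position ordering conditions, which the doubling was set up to respect.

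The main obstacle I anticipate is precisely this last descent step — showing that the homotopy in $\cylinder$ can be made equivariant under the doubling reflection, and hence cut in half. One has to rule out the possibility that $D(\Gamma_1)$ and $D(\Gamma_2)$ are homotopic only via homotopies that mix the two halves, and to handle the behavior near $\partial\strip$, where the reflection locus lies and where the curves are not closed. I expect to deal with this by exploiting the fact that both doubled curves are already symmetric and in (nearly) minimal position, so a geodesic-representative or curve-shortening argument in a hyperbolic metric on $\cylinder$ invariant under the reflection produces a canonical, automatically symmetric representative of each homotopy class; uniqueness of this symmetric representative then gives the symmetric homotopy, and restriction to one half of $\cylinder$ completes the proof. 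A secondary subtlety is bookkeeping the local-system data through the gluing and un-gluing, but since the crossing regions for non-primitive closed components lie in the interior of one half and are unaffected by the reflection, the identification $\bchainhat_1 = \bchainhat_2$ transfers directly.
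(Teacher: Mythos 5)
Your overall strategy — closing up the arcs to obtain curves in $\cylinder$, invoking Proposition~\ref{prop:uniqueness} there, then descending — is the same as the paper's, but the specific doubling you propose creates two problems that the paper's construction avoids, and one of them is a genuine gap.

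First, gluing $\strip$ to a mirror copy $\overline{\strip}$ along both boundary components produces a cylinder carrying \emph{two} vertical families of marked points (one from each copy), which is not the marked cylinder $\cylinder$ for which Proposition~\ref{prop:uniqueness} is stated. The paper instead views $\cylinder = \strip \cup \sF$ with $\sF$ unmarked and places \emph{both} curve copies inside the single marked strip $\strip$, joined by arcs across $\sF$; this keeps the marked-point structure standard. Second, and more seriously, your descent step — that a homotopy in $\cylinder$ between the two doubled curves can be made equivariant under the reflection — is not actually established; you flag it yourself as the main obstacle and sketch a hyperbolic-geodesic argument, but this requires work (how the reflection interacts with the marked points, with the weighting basepoints, and with the non-primitive crossing regions is not addressed), and nothing in the paper supports it.

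The paper sidesteps the equivariance issue entirely by using a rotation-plus-translation rather than a reflection. It forms $(\Gamma^\dagger_i, \bchain^\dagger_i)$ by rotating $(\Gamma_i, \bchain_i)$ by $\pi$ and swapping $\gr_w \leftrightarrow \gr_z$, then translates the original copy up by $n$ and the rotated copy down by $n$ for $n$ large enough that the two live in disjoint height ranges (one entirely above height $0$, the other entirely below). Because the two halves are spatially separated, the descent is immediate: the components of the closed curve $\Gamma'_i$ lying above height zero are exactly those coming from $(\Gamma_i, \bchain_i)[n]$, so after applying Proposition~\ref{prop:uniqueness} one recovers the decomposition by inspection and just translates back down. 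No equivariant-homotopy argument is needed. If you want to repair your version, replace the reflection doubling with this translation-plus-rotation version; otherwise your proof is incomplete at the descent step.
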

\begin{proof}
For $i\in\{1,2\}$, let $(\Gamma^\dagger_i, \bchain^\dagger_i)$ denote the decorated immersed curve obtained by rotating $(\Gamma_i, \bchain_i)$ about the origin and interchanging the two gradings; note that the orientation on $\Gamma^\dagger_i$, which is determined by the parity of the gradings, is the opposite of the image of the orientation on $\Gamma_i$ under the rotation. For an integer $n$ let $(\Gamma_i, \bchain_i)[n]$ be the decorated immersed curve obtained from $(\Gamma_i, \bchain_i)$ by translating upward by $n$ and subtracting $2n$ from the grading function $\tilde\tau_w$, and let $(\Gamma^\dagger_i, \bchain^\dagger_i)[-n]$ denote the result of translating $(\Gamma^\dagger_i, \bchain^\dagger_i)$ down by $n$ and subtracting $2n$ from $\tilde\tau_z$. We now fix $n$ sufficiently large so that $(\Gamma_i, \bchain_i)[n]$ lies entirely above height zero and $(\Gamma^\dagger_i, \bchain^\dagger_i)[-n]$ lies entirely below height zero and we define $(\Gamma'_i, \bchain'_i)$ to be the decorated immersed curve in the cylinder $\cylinder$ (viewed as the union of a marked strip $\strip$ and an unmarked strip $\sF$) obtained from the union of the decorated curves $(\Gamma_i, \bchain_i)[n]$ and $(\Gamma^\dagger_i, \bchain^\dagger_i)[-n]$ in $\strip$ by adding arcs in $\sF$ connecting each right endpoint of $\Gamma_i[n]$ on $\partial_R \strip$ with the corresponding left endpoint of $\Gamma^\dagger_i[-n]$ on $\partial_L \strip$ and identifying each right endpoint of $\Gamma^\dagger_i[-n]$ on $\partial_R \strip$ with the corresponding left endpoint of $\Gamma_i[n]$ on $\partial_L \strip$.

 Let $C_i$, $C_i[n]$ and $C^\dagger_i[-n]$ denote the bigraded complexes represented by $(\Gamma_i, \bchain_i)$, $(\Gamma_i, \bchain_i)[n]$, and $(\Gamma^\dagger_i, \bchain^\dagger_i)[-n]$, respectively. Clearly $C_i[n]$ is obtained from $C_i$ by shifting the grading $\gr_w$ up by $2n$, and $C^\dagger_i[-n]$ is obtained from $C_i[n]$ by interchanging the two gradings, swapping the role of $U$ and $V$ and multiplying the differential $\partial$ by $-1$. Because the horizontal complex of $C^\dagger_i[-n]$ is isomorphic by construction to the vertical complex of $C_i[n]$ and vice versa, there is an obvious flip map $\Psi_i'$ on $C_i[n] \oplus C^\dagger_i[-n]$ that takes each each generator in $(C_i[n] \oplus C^\dagger_i[-n])^h$ to the corresponding generator of $(C_i[n] \oplus C^\dagger_i[-n])^v$. It is easy to see that $(\Gamma'_i, \bchain'_i)$ represents the pair $(C_i[n] \oplus C^\dagger_i[-n], \Psi_i')$.
 
Since $(\Gamma_1, \bchain_1)$ and $(\Gamma_2, \bchain_2)$ are equivalent objects, we have that $C_1$ is chain homotopic to $C_2$, and it is clear that the same is true for $C_1[n]$ and $C_2[n]$ and for $C_1^\dagger[-n]$ and $C_2^\dagger[-n]$, and that the pairs $(C_1[n] \oplus C^\dagger_1[-n], \Psi_1')$ and $(C_1[n] \oplus C^\dagger_1[-n], \Psi_1')$ are homotopy equivalent. It follows from Proposition \ref{prop:uniqueness} that $\Gamma'_1$ and $\Gamma'_2$ are homotopic and $\bchainhat'_1$ and $\bchainhat'_2$ agree. It follows that the union of decorated curves $(\Gamma_i, \bchain_i)[n] \cup (\Gamma^\dagger_i, \bchain^\dagger_i)[-n]$, which is the restriction to $\strip$ of $(\Gamma_i, \bchain_i)$ in $\cylinder = \strip \cup \sF$, is the same up to homotopy for $i = 1,2$. We can uniquely recover the decomposition since $(\Gamma_i, \bchain_i)[n]$ consists of precisely the components of $(\Gamma_i, \bchain_i)[n] \cup (\Gamma^\dagger_i, \bchain^\dagger_i)[-n]$ above height zero. Finally, by translating down by $n$ we see that $\Gamma_1$ is homotopic to $\Gamma_2$ and $\bchainhat_1$ agrees with $\bchainhat_2$.
\end{proof}

Propositions \ref{prop:uniqueness} and \ref{prop:uniqueness-strip}  lead to an obvious question: is a similar uniqueness statement encorporating the whole bounding chain possible? That is, is there some normal form for decorated curves $(\Gamma, \bchain)$ representing pairs $(C, \Psi)$ over $\sRminus$ such that every $(C, \Psi)$ is represented by a curve of this form and such that $\bchain$ for such a representative is unique as a subset of the self intersection points of $\Gamma$? We suspect that this is possible, but we do not undertake the task of proving it in the present paper. A possible strategy is to define an arrow sliding algorithm for the left turn crossover arrows appearing at negative degree self intersection points in $\bchain$ and slide arrows until they are either removed if possible or in some preferred position if they can not be removed. Unfortunately, as has been noted already, there are some technical difficulties defining general arrow sliding moves in the minus setting so that this approach requires more work; we hope to explore this in future work. In the meantime, we remark that in practice the uniqueness of the immersed curve is the most important part of the uniqueness result, since once an immersed curve $\Gamma$ is fixed there are finitely many possible collections of turning points $\bchain$. Usually a small number of these are valid bounding chains, and in practice it is not difficult to check when two different collections of turning points $\bchain$ on $\Gamma$ are equivalent and find a unique simplest representative (see for example Corollary \ref{cor:embedded-curve-with-fig8s}).

\subsection{A shifted pairing in $\cylinder$}\label{sec:shifted-pairing}

In the next section, we will need to consider more general ways of pairing complexes and their corresponding curves. Fixing complexes $C_1$ and $C_2$ equipped with flip isomorphisms $\Psi_{1,*}$ and $\Psi_{2,*}$, for each $p/q\in \Q$ and for each $i \in \Z/p\Z$ we will define an algebraic pairing by constructing a chain complex $\mathbb{X}_{i; p/q} = \mathbb{X}_{i; p/q}(C_1, \Psi_{1,*}, C_2, \Psi_{2,*})$ over $\F[W]$ and taking its homology. More precisely, we will define a finitely generated complex $\mathbb{X}^N_{i; p/q} = \mathbb{X}^N_{i; p/q}(C_1, \Psi_{1,*}, C_2, \Psi_{2,*})$ for each sufficiently large $N$, the homology of which does not depend on $N$; it is possible to define a single complex $\mathbb{X}_{i; p/q}$ without fixing an $N$, but this complex is infinitely generated and we prefer to avoid it for technical reasons.

The complex is the simplest when $p/q = 0$, and in this case the complex is independent of $N$. The complex $\mathbb{X}_{0; 0}$ is precisely the complex $\mathbb{X}$ defined in Section \ref{sec:unshifted-pairing}, and for any $s \in \Z$, the complex $\mathbb{X}_{s; 0}$ is defined the same way as $\mathbb{X}_{0;0}$ with $A_0$ replaced with $A_s$ and the maps $v_0$ and $h^\Psi_0$ replaced with $v_s$ and $h^\Psi_s$. For $p/q \neq 0$, we need to choose $N \ge g_1 + g_2$, where $g_i$ is any integer such that $|A(x)| \le g_i$ for all generators $x$ of $C_i$ (if $C_i$ is the knot Floer complex of a knot, we may take $g_i$ to be the genus of the knot). If $p/q >0$ we then define, for each $i$ in $\Z/p\Z$,
\begin{equation}\label{eq:AandB-truncated}
\mathbb{A}^N_{i;p/q} = \bigoplus_{n = n_{min}}^{n_{max}} A_{\left\lfloor \frac{i+np}{q} \right\rfloor} \qquad \text{ and } \qquad \mathbb{B}^N_{i;p/q} = \bigoplus_{n = n_{min}+1}^{n_{max}} B^v,
\end{equation} 
where $n_{min}$ is the smallest integer $n$ for which $\left\lfloor \frac{i+np}{q} \right\rfloor > -N$, and $n_{max}$ is the largest integer $n$ such that $\left\lfloor \frac{i+np}{q} \right\rfloor < N$. We define $D^N_{i;p/q}: \mathbb{A}^N_{i;p/q} \to \mathbb{B}^N_{i;p/q}$ to be the map
\begin{equation}\label{eq:D-truncated}
D^N_{i;p/q} = \left( \bigoplus_{n = n_{min}}^{n_{max}-1} h^\Psi_{\left\lfloor \frac{i+np}{q} \right\rfloor} \right) \oplus \left( \bigoplus_{n = n_{min}+1}^{n_{max}} v_{\left\lfloor \frac{i+np}{q} \right\rfloor} \right),
\end{equation} 
where we understand $v_{\left\lfloor \frac{i+np}{q} \right\rfloor}$ as taking the summand of $\mathbb{A}^N_{i;p/q}$ corresponding to the index $n$ to the summand of $\mathbb{B}^N_{i;p/q}$ corresponding to the index $n$ and $h_{\left\lfloor \frac{i+np}{q} \right\rfloor}$ as taking the summand of $\mathbb{A}^N_{i;p/q}$ corresponding to the index $n$ to the summand of $\mathbb{B}^N_{i;p/q}$ corresponding to $n+1$. We define $\mathbb{X}^N_{i;p/q}$ to be the mapping cone of $D^N_{i;p/q}$. When $p/q < 0$ the definition is similar, with slightly different ranges for the indices. In this case we define 
\begin{equation}\label{eq:AandB-truncated-negative-slope}
\mathbb{A}^N_{i;p/q} = \bigoplus_{n = n_{min}}^{n_{max}} A_{\left\lfloor \frac{i+np}{q} \right\rfloor} \qquad \text{ and } \qquad \mathbb{B}^N_{i;p/q} = \bigoplus_{n = n_{min}}^{n_{max}+1} B^v,
\end{equation} 
where $n_{min}$ is the smallest integer $n$ for which $\left\lfloor \frac{i+np}{q} \right\rfloor < N$, and $n_{max}$ is the largest integer $n$ such that $\left\lfloor \frac{i+np}{q} \right\rfloor > -N$, and we define $D^N_{i;p/q}$ to be the map
\begin{equation}\label{eq:D-truncated-negative-slope}
D^N_{i;p/q} = \left( \bigoplus_{n = n_{min}}^{n_{max}} h^\Psi_{\left\lfloor \frac{i+np}{q} \right\rfloor} \right) \oplus \left( \bigoplus_{n = n_{min}}^{n_{max}} v_{\left\lfloor \frac{i+np}{q} \right\rfloor} \right).
\end{equation}

Note that up to quasi-isomorphism the choice of $N$ does not matter (provided $N$ is at least $g_1 + g_2$). A larger choice of $N$ gives a bigger complex, with more copies of $A_s$ with $|s|\ge N$ and more corresponding copies of $B^v$, but the homology is the same. This follows from the fact that $h_s$ is an isomorphism for $s \le -N$ and $v_s$ is a in isomorphism for $s \ge N$. It is also possible to define an infinitely generated complex over $\F[W]$ by allowing $n$ to range over all integers in Equations \eqref{eq:AandB-truncated}-\eqref{eq:D-truncated-negative-slope}, but to make sense of these infinitely generated modules we need to work with completions with respect to the variable $W$ and replace direct sums with direct products. This infinitely generated complex will be denoted and $\mathbb{X}_{i;p/q}$, though we will generally work with the truncated complexes $\mathbb{X}^N_{i;p/q}$. We do not need to truncate $\mathbb{X}_{s;0}$, as it is already finitely generated; for any $N$, we will understand $\mathbb{X}^N_{s;0}$ to mean $\mathbb{X}_{s;0}$.

The homology of $\mathbb{X}^N_{i;p/q}$ (for any sufficiently large $N$) gives an algebraic pairing of $(C_1, \Psi^*_1)$ with $(C_2, \Psi^*_2)$ associated with $i$ and $p/q$. We will show that this agrees with the Floer homology of certain curves in the cylinder $\cylinder$. For $i$ in $\{0,1\}$, let $\tracks_i = \tracks(\Gamma_i, \bchain_i)$ be a train track in $\cylinder$ representing $(C_i, \Psi^*_i)$. We will define a shifted version $\tracks_1[i;p/q]$ of $\tracks_1$. If $p/q =0$, then for any $i \in \Z$ the shifted $\tracks_1[i;0]$ is simply the curve $\tracks_1$ shifted upward by $i$. For $p/q \neq 0$, we construct a non-compact curve by cutting $\tracks_1$ along the line $\{\tfrac 1 2\}\times \R$ and gluing together infinitely many shifted copies of this cut open curve. Let $\tracks_1^{cut}$ denote the decorated curve in $\strip$ obtained by cutting $\tracks_1$, so that $\tracks_1$ in $\cylinder$ is recovered by gluing the opposite sides of $\strip$ and identifying endpoints of $\tracks_1^{cut}$. For any integer $s$, let $\tracks_1^{cut}[s]$ denote the curve $\tracks_1^{cut}$ shifted upward by $s$ units. The curve $\tracks_1[i;p/q]$ in $\cylinder$ is constructed from
$$\bigcup_{n \in \Z} \tracks_1^{cut} \left[ \left\lfloor \frac{i+np}{q} \right\rfloor \right]$$
by identifying the right endpoints of the copy of $\tracks_1^{cut}$ corresponding to the index $n$ with the left endpoints of the copy of $\tracks_1^{cut}$ corresponding to the index $n+1$. 

\begin{proposition}\label{prop:shifted-pairing}
For any $p/q \in \Q$ and any $i \in \Z/p\Z$, the Floer complex $CF(\tracks_2, \tracks_1[i;p/q])$ is quasi-isomorphic to the complex $\mathbb{X}^N_{i;p/q}$ for any sufficiently large $N$.
\end{proposition}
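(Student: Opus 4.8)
The plan is to adapt the proof of Proposition \ref{prop:unshifted-pairing} to the shifted setting, with the main new ingredient being a careful bookkeeping of how the infinitely many lifts of $\tracks_1$ in the construction of $\tracks_1[i;p/q]$ correspond to the indexing set appearing in the definition of $\mathbb{X}^N_{i;p/q}$. First I would dispose of the case $p/q = 0$: there $\tracks_1[i;0]$ is just $\tracks_1$ shifted up by $i$, and the argument is identical to Proposition \ref{prop:unshifted-pairing} with $A_0$, $v_0$, $h^\Psi_0$ replaced by $A_i$, $v_i$, $h^\Psi_i$ (the shift changes which Alexander grading of $\Mor_{\sRminus}(C_1,C_2)$ is selected by the intersections in the marked strip $\strip_0$, exactly as in Proposition \ref{prop:pairing-in-strip}). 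So the substantive content is the case $p/q \neq 0$.

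For $p/q \neq 0$, I would set up a decomposition of $\cylinder$ analogous to the one in the proof of Proposition \ref{prop:unshifted-pairing}, but now with countably many ``blocks.'' Concretely: $\tracks_1[i;p/q]$ is built from copies $\tracks_1^{cut}[\lfloor (i+np)/q \rfloor]$ glued in a spiral. I would homotope $\tracks_2$ so that its restriction to a marked strip $\strip_0$ is in almost simple position representing $C_2$ and its restriction to an unmarked strip $\sF$ is a collection of arcs (with left-turn crossover arrows) representing $\Psi_{2,*}$, and then homotope each copy of $\tracks_1^{cut}$ into the corresponding block so that its curve-part in $\strip_0$ is horizontal, sitting at heights shifted by $\lfloor(i+np)/q\rfloor$. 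As in Proposition \ref{prop:pairing-in-strip}, the intersections of the $n$-th copy of $\tracks_1^{cut}$ with $\tracks_2$ inside $\strip_0$ are in bijection with generators of $A_{\lfloor(i+np)/q\rfloor}$ (the shift by $\lfloor(i+np)/q\rfloor$ is precisely what picks out this Alexander-grading summand), and the intersections inside the $\sF$ region yield copies of $B^v$ (or $B^h$, depending on which auxiliary strips one introduces, with grading shifts handled exactly as in Proposition \ref{prop:unshifted-pairing}). Bigons contained in a single block's $\strip_0$ recover the differentials on the $A_s$'s; bigons contained in $\sF$ recover the differential on the $B^v$'s; bigons straddling the $\strip_0$-$\sF$ boundary of the $n$-th block going ``rightward'' recover the map $h^\Psi_{\lfloor(i+np)/q\rfloor}$ from the $n$-th $A$-summand to the $(n{+}1)$-st $B^v$-summand (via the flip maps, as in the identification of $\widetilde F_{\Psi_{1,*},\Psi_{2,*}}\circ\widetilde h_0$ in Proposition \ref{prop:unshifted-pairing}), and bigons going ``leftward'' recover $v_{\lfloor(i+np)/q\rfloor}$ from the $n$-th $A$-summand to the $n$-th $B^v$-summand. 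Matching the index ranges in \eqref{eq:AandB-truncated}--\eqref{eq:D-truncated-negative-slope} is then a direct, if slightly fiddly, combinatorial check: for $p/q > 0$ consecutive blocks have non-decreasing shift and the maps go as in \eqref{eq:D-truncated}, while for $p/q < 0$ the shift pattern gives \eqref{eq:D-truncated-negative-slope}.

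The truncation step is the last piece. Since $|A(x)| \le g_i$ for generators of $C_i$, the maps $h^\Psi_s$ are quasi-isomorphisms for $s \le -N$ (where $N \ge g_1+g_2$) and $v_s$ are quasi-isomorphisms for $s \ge N$ — this is the same fact that makes $\mathbb{X}^N_{i;p/q}$ independent of $N$ up to quasi-isomorphism. Geometrically this corresponds to cancelling, via a homotopy of the curves pushing the relevant arcs off each other, the intersection points coming from blocks with index $n$ so extreme that $\lfloor(i+np)/q\rfloor$ lies outside $[-N,N]$; these contribute acyclic ``tails'' to $CF(\tracks_2, \tracks_1[i;p/q])$. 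After discarding these tails the Floer complex is finitely generated and is identified with $\mathbb{X}^N_{i;p/q}$ as above. (One should also remark that $\tracks_1[i;p/q]$ is a legitimate admissible non-compact curve with finitely many self-intersections in each compact region, so that the Floer complex is well-defined; this follows from the corresponding property of $\tracks_1$ since the construction only spirals copies of a compact piece.)

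\textbf{Main obstacle.} I expect the hardest part to be the sign and grading bookkeeping in the straddling bigons together with verifying that the index ranges match exactly — in particular getting the endpoints $n_{min}, n_{max}$ right in the two sign cases $p/q \gtrless 0$ and confirming that the ``extra'' copies of $A_s, B^v$ beyond $|s| = N$ really do split off as an acyclic summand rather than merely a quasi-isomorphic one. The underlying geometry is a routine extension of Propositions \ref{prop:pairing-in-strip} and \ref{prop:unshifted-pairing}, so the proof will likely be written by reducing to those two propositions block-by-block and then invoking the $h^\Psi_s$/$v_s$ isomorphism facts to truncate, rather than by redoing the bigon analysis from scratch.
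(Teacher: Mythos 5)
Your proposal is essentially the paper's proof: reduce $p/q=0$ to Proposition \ref{prop:unshifted-pairing} by a vertical shift of index, and for $p/q\neq 0$ decompose $\cylinder$ into a marked strip $\strip$ and an unmarked strip $\sF$, perturb each block $\tracks_1^{cut}[\lfloor(i+np)/q\rfloor]$ as in Proposition \ref{prop:pairing-in-strip} so that the $\strip$-intersections realize $A_{\lfloor(i+np)/q\rfloor}$, identify the $\sF$-intersections with copies of $B^v$, and check that the straddling bigons count the maps $\widetilde{v}_s$ and $\widetilde{h}^\Psi_s$. This is exactly what the paper does.

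The one place where your framing diverges, and is slightly muddled, is the truncation. You describe it as generating ``acyclic tails'' from blocks with $|\lfloor(i+np)/q\rfloor|\ge N$ and then discarding them, so as to arrive at a finitely generated complex. But the paper arranges things so there is nothing to discard: only the blocks with $n_{min}\le n\le n_{max}$ are perturbed, and the remaining (unperturbed) copies of $\tracks_1^{cut}$ lie entirely above height $N$ or below $-N$ (recall $N\ge g_1+g_2$), hence disjoint from the compact curve $\tracks_2$. The Floer complex is therefore already finite and equal as a vector space to $\mathbb{X}^N_{i;p/q}$ — no cancellation step is needed. Your parenthetical remark that $\tracks_1[i;p/q]\cap\tracks_2$ is finite (because $\tracks_2$ is compact) already says this, so your ``discard acyclic tails'' step is unnecessary and slightly in tension with your own observation. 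The algebraic fact you cite — that $h_s$ and $v_s$ are (quasi-)isomorphisms for $|s|\ge N$ — is indeed what makes the definition of $\mathbb{X}^N_{i;p/q}$ independent of $N$, but it is not invoked in the geometric identification: it is a sanity check, not a step of the proof.
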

\begin{proof}
When $p/q = 0$, the proof is exactly the same as Proposition \ref{prop:unshifted-pairing} except that $\tracks_1$ is shifted upward by $i$. For other values of $p/q$ the proof is similar. We view the cylinder $\cylinder$ as $\strip \cup \sF$, with $\sF$ small enough that $\tracks_1$ and $\tracks_2$ both consist of parallel arcs when restricted to $\sF$. For each $n$ from $n_{min}$ to $n_{max}$, letting $s = \left\lfloor \frac{i+np}{q} \right\rfloor$, we perturb the corresponding shifted train track $\tracks_1^{cut}[s]$ in $\strip$ as in the proof of Proposition \ref{prop:pairing-in-strip} so that the Floer chain complex of $\tracks_2$ with this train track in $\strip$ is precisely $A_s$. We may assume that the endpoints of $\tracks_1^{cut}[s]$ occur above height $N$ on $\partial_R \strip$ and below height $-N$ on $\partial_L \strip$. We do not perturb the copies of $\tracks_1^{cut}$ corresponding to indices $n > n_{max}$ or $n < n_{min}$; these train tracks lie entirely above height $N$ or below height $-N$ and are thus disjoint from $\tracks_2$. Note that for any adjacent indices $n$ and $n+1$, the arcs in $\sF$ connecting the endpoints of the two corresponding shifted copies of $\tracks_1$ intersect the arcs of $\tracks_2$ in $\sF$ if $n_{min} \le n < n_{max}$ or if $p/q < 0$ and $n = n_{min}-1$ or $n = n_{max}$; we identify these intersection points with a copy of $B^v$ indexed by $n+1$, so that the Floer complex is identified with $X^N_{i;p/q}$ as a vector space.

With the curves perturbed as above, counting bigons exactly recovers the map $D^N_{i;p/q}$. The proof is essentially the same as the proof of Proposition \ref{prop:unshifted-pairing} (though note that we have not introduced the additional intersection points in $\sF$ corresponding to the two copies of $B^h$ connected by $\widetilde{\Id}$). Bigons that do not contribute to the internal differnetial on one of the summands of $\mathbb{X}^N_{i;p/q}$ can only start at intersection points on a perturbed copy of $\tracks_i[s]$ corresponding to some index $n$, and all such bigons end at an intersection points in $\sF$ corresponding to the copies of $B^v$ with index either $n$ or $n+1$. Similar to the proof of Proposition \ref{prop:unshifted-pairing}, we can check that counting the bigons of these two types recovers the maps  $\widetilde{v}_s$ or $\widetilde{h}^\Psi_s$, respectively.
\end{proof}

An example of a shifted pairing is shown in Figure \ref{fig:shifted-pairing}, where $(C_1, \Psi_{1,*})$ is the knot Floer invariant of the right handed trefoil, $(C_2, \Psi_{2,*})$ is the knot Floer invariant of the dual knot in $+1$-surgery on the left handed trefoil, $p/q = -1$, $i = 0$. The Figure shows the curves $\tracks_2$ and $\tracks_1[i;p/q]$, lifted to the covering space $\widetilde T_M$ for clarity, with $\tracks_1[i;p/q]$ perturbed as in the proof of Proposition \ref{prop:shifted-pairing} so that the Floer chain complex agrees exactly with $\mathbb{X}^N_{i;p/q}$ with $N = 2$.

\begin{figure}
\includegraphics[scale = .8]{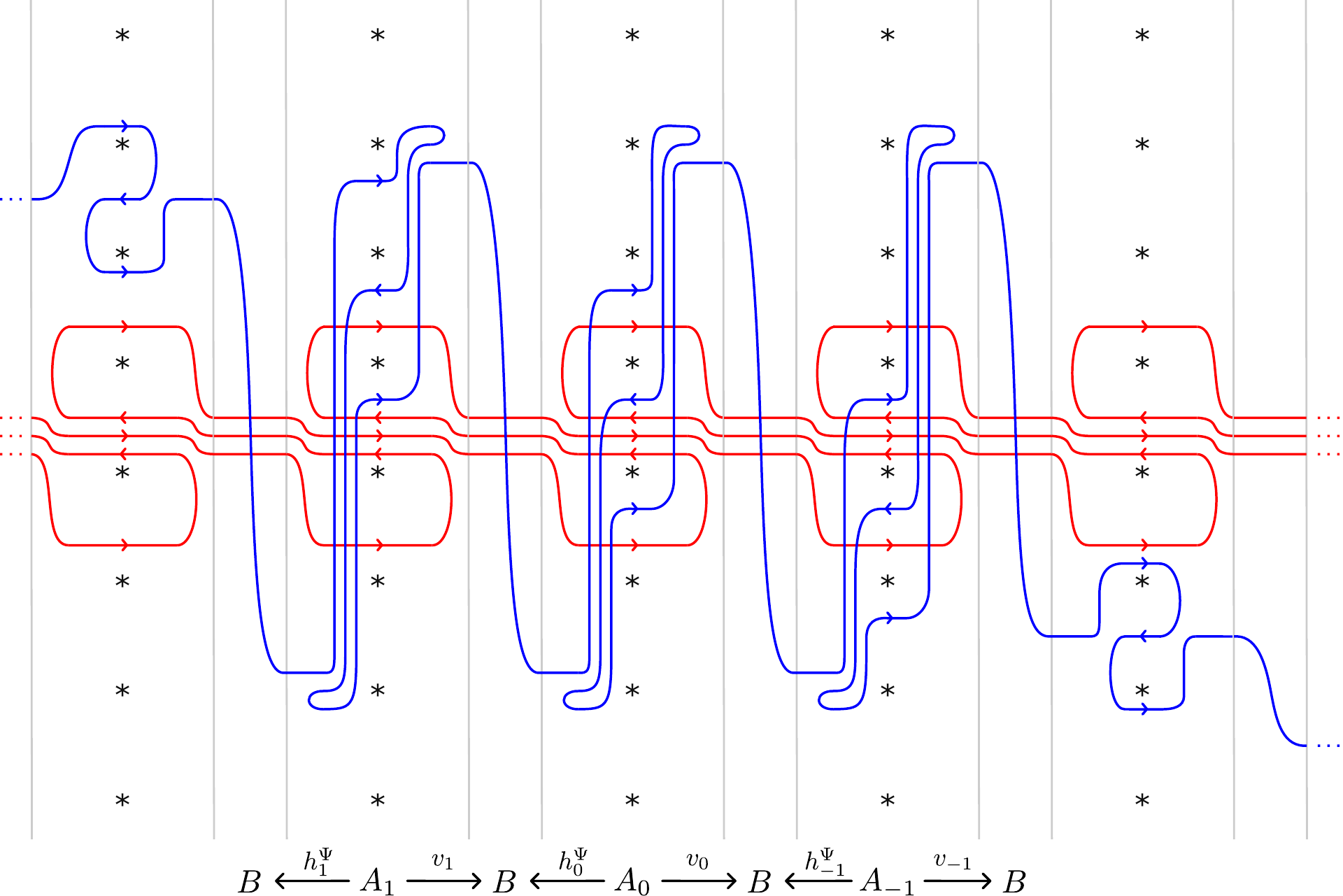}
\caption{A shifted pairing of the knot Floer invariants of the right handed trefoil and the dual knot in $+1$ surgery on the left handed trefoil.}
\label{fig:shifted-pairing}
\end{figure}

\section{Surgery formulas}\label{sec:surgery}

In the previous section we related algebraic pairings of complexes and flip maps to geometric pairings of the corresponding curves, but we did not ascribe topological significance to either of these pairings. In this section we will show that these pairings compute the Heegaard Floer homology of Dehn surgeries.

\subsection{Rational surgery formula}

Recall that Ozsv{\'a}th and Szab{\'o} define a rational surgery formula for Heegaard Floer homology in \cite{OzSz:rational-surgeries}. This surgery formula realizes the Heegaard Floer homology of rational surgery on a knot as the homology of a mapping cone complex constructed from certain subcomplexes of the knot Floer complex of $K$. In fact, this mapping cone complex is a special case of the complex $\mathbb{X}_{i;p/q}$ defined in the previous section. The surgery formula was originally stated for the plus version of Heegaard Floer homology, but an analogous formula holds for the minus version. In the minus version, for technical reasons we need to work with completions of the various modules involved and replace direct sums with direct products as described in \cite{ManolescuOzsvath}, but this subtlety can be avoided by working with truncated mapping cone complexes (which are finitely generated), as discussed in the previous section.

Fix a null-homologous knot $K$ in a 3-sphere $Y$, let $C$ be the complex $CFK_{\sRminus}(Y,K)$, and let $\Psi_*$ be the flip isomorphism associated with $K$. We also consider the complex $C_{triv}$ that has a single generator in bigrading $(0,0)$ equipped with the identity flip isomorphism $\Psi_{triv,*}$; note that $(C_{triv}, \Psi_{triv,*})$ is the knot Floer invariant associated to the unknot. The proof of the following proposition relies on observing that $\mathbb{X}^N_{i;p/q}(C_{triv}, \Psi_{triv}, C, \Psi)$ is quasi-isomorphic to the mapping cone complex $\mathbb{X}_{i;p/q}$ defined in \cite{OzSz:rational-surgeries}.

\begin{proposition}\label{prop:surgery-formula}
Let $\tracks$ be the decorated curve in $\cylinder$ associated with the knot $K$. For any nonzero $p/q \in \Q$ and any $i \in \Z/p\Z$, let $\ell_{i;p/q}$ be a line in $\cylinder$ of slope $p/q$ that passes intersects $\mu$ just above height $-\tfrac 1 2 + \tfrac i q$. There is a relatively graded isomorphism of $\F[W]$ modules
$$HF^-(Y_{p/q}(K), i) \cong HF(\tracks, \ell_{i;p/q}),$$
where the right side is Floer homology in the marked cylinder $\cylinder$.
\end{proposition}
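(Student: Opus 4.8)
The plan is to reduce the statement to the algebraic pairing computed in Proposition \ref{prop:shifted-pairing} and then to identify that algebraic pairing with the Ozsv\'ath--Szab\'o rational surgery mapping cone. First I would observe that the line $\ell_{i;p/q}$ of slope $p/q$ in $\cylinder$ passing just above height $-\tfrac 1 2 + \tfrac i q$ is precisely the decorated curve (with trivial bounding chain) representing the pair $(C_{triv}, \Psi_{triv,*})$ after the shift $[i;p/q]$; indeed, cutting the single horizontal segment representing $C_{triv}$ along $\{\tfrac 1 2\}\times\R$ and regluing infinitely many copies shifted by $\lfloor \tfrac{i+np}{q}\rfloor$ produces exactly a straight line of slope $p/q$, and the height condition pins down which residue class $i$ we land in. Hence $\ell_{i;p/q} = \tracks(C_{triv},\Psi_{triv,*})[i;p/q]$, and so by Proposition \ref{prop:shifted-pairing} we get
$$HF(\tracks, \ell_{i;p/q}) \cong H_*\big(\mathbb{X}^N_{i;p/q}(C_{triv}, \Psi_{triv,*}, C, \Psi)\big)$$
as relatively graded $\F[W]$-modules, for any sufficiently large $N$. (There is a small bookkeeping point about which curve plays the role of $\tracks_1$ versus $\tracks_2$ in Proposition \ref{prop:shifted-pairing}; since one of the two objects here is a single embedded line, the identification is symmetric up to the grading reversal $\gr \mapsto -1-\gr$ on generators, which only affects the relative grading by an overall shift and can be absorbed.)

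Next I would unwind $\mathbb{X}^N_{i;p/q}(C_{triv}, \Psi_{triv,*}, C, \Psi)$ and compare it term-by-term with the truncated mapping cone $\mathbb{X}_{i;p/q}$ of \cite{OzSz:rational-surgeries} (in its minus flavor). Since $C_{triv}$ has a single generator $e$ in bigrading $(0,0)$, the morphism complex $A_s = \Mor_{\sRminus}(C_{triv}, C)|_{A=s}$ is canonically identified with the Alexander grading $s$ summand $C|_{A=s}$ (as an $\F[W]$-module, after the usual rescaling), which in the notation of Section \ref{sec:old-way} is exactly the complex $A^-_s$ appearing in the Ozsv\'ath--Szab\'o surgery formula. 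Likewise $B^v \cong H_*C^v \cong HF^-(Y)$ (here $Y=S^3$, so this is $\F[W]$), and the maps $v_s$ and $h_s$ become the projection maps $v_s: A^-_s \to B^-$ and $h_s: A^-_s \to B^-$ of the mapping cone, once we incorporate the flip isomorphism $\Psi_*$: the composition $h^\Psi_s = F_{\Psi_{triv,*},\Psi_*}\circ h_s$ is precisely the ``horizontal then flip'' map that in the surgery formula is the composite of the horizontal truncation with the identification of horizontal and vertical homology. I would check that $D^N_{i;p/q} = \bigoplus h^\Psi_{\lfloor \cdot\rfloor} \oplus \bigoplus v_{\lfloor\cdot\rfloor}$ matches the differential of the OS mapping cone, including the index ranges for positive versus negative $p/q$ in Equations \eqref{eq:AandB-truncated}--\eqref{eq:D-truncated-negative-slope}, which are designed to mirror the OS truncation exactly. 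Finally, invoking the rational surgery theorem of \cite{OzSz:rational-surgeries} (minus version, via the truncation discussion referencing \cite{ManolescuOzsvath}), $H_*(\mathbb{X}_{i;p/q}) \cong HF^-(Y_{p/q}(K), i)$ as relatively graded $\F[W]$-modules, which chains together with the identifications above to give the claim.

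The main obstacle I expect is the precise matching of the maps $h^\Psi_s$ and $v_s$ with the OS surgery maps, together with the grading conventions. In \cite{OzSz:rational-surgeries} the maps $h_s$ are not literally horizontal projections but rather horizontal projections \emph{followed by a chain homotopy equivalence to the vertical complex} (this is exactly the flip data), and one must confirm that our abstract flip isomorphism $\Psi_*$ extracted from the curve $\tracks$ agrees, up to chain homotopy, with the flip-filtered equivalence built into the knot Floer package — but this is precisely what it means for $\tracks$ to represent $(C,\Psi)$, so the content is really in Proposition \ref{prop:shifted-pairing} plus the earlier identification (Section \ref{sec:flip-maps-from-curves}) of the flip map extracted from a curve. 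The secondary subtlety is the spin$^c$/Alexander-grading decomposition: the index $i \in \Z/p\Z$ on the topological side corresponds to a spin$^c$ structure on $Y_{p/q}(K)$, and I would need to verify that the shift $[i;p/q]$ of the line $\ell_{i;p/q}$ selects the correct $i$, which is a direct but careful computation with the floor functions $\lfloor \tfrac{i+np}{q}\rfloor$ and the height offset $-\tfrac12 + \tfrac iq$. I would also remark that for $p/q = 0$ the statement excludes this case (``nonzero $p/q$''), so no separate argument is needed there, though the $p/q=0$ version would follow from $\mathbb{X}_{s;0}$ and the $0$-surgery mapping cone in the same way.
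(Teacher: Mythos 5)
Your proposal is correct and follows essentially the same route as the paper's proof: identify $\ell_{i;p/q}$ with $\tracks_{triv}[i;p/q]$, apply Proposition~\ref{prop:shifted-pairing} to land on $\mathbb{X}^N_{i;p/q}(C_{triv}, \Psi_{triv,*}, C, \Psi)$, identify each $A_s$ with $A_s^-$ and $B^v$ with $H_*C^v \simeq B^-$ (noting the truncation and the passage to homology on the $B$ side are harmless up to quasi-isomorphism), and then invoke the minus version of Theorem~1.1 of \cite{OzSz:rational-surgeries}. One small slip worth flagging: the parenthetical claim that $Y = S^3$ (so that $B^v \cong \F[W]$) is not part of the hypotheses — the proposition applies to nullhomologous $K$ in an arbitrary $Y$, and $B^v \cong H_*C^v \cong HF^-(Y)$ need not be free of rank one; this plays no role in the argument, but the aside should be removed.
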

\begin{proof}
It is easy to check that $\ell_{i;p/q}$ is homotopic to $\tracks_{triv}[i;p/q]$, where $\tracks_{triv}$ is the decorated curve representing the pair $(C_{triv}, \Psi_{triv})$---$\tracks_{triv}$ is simply the horizontal simple closed curve in $\cylinder$ at height zero. Thus by Proposition \ref{prop:shifted-pairing} the Floer complex on the right side is quasi-isomorphic to $\mathbb{X}^N_{i;p/q}(C_{triv}, \Psi_{triv}, C, \Psi)$ for sufficiently large $N$. We just need to show that this latter complex computes $HF^-(Y_{p/q}(K), i)$; we do this by showing that $\mathbb{X}^N_{i;p/q}(C_{triv}, \Psi_{triv}, C, \Psi)$ is quasi-isomorphic to the (truncated) mapping cone complex defined by Ozsv{\'a}th and Szab{\'o}.

Observe that in the construction of $\mathbb{X}^N_{i;p/q}(C_{triv}, \Psi_{triv}, C, \Psi)$, $A_s = \Mor(C_{triv}, C) |_{A=s}$ is isomorphic to $C |_{A=s}$, since every morphism is determined by where it takes the generator of $C_{triv}$. Similarly, $B^v = \Mor(H_* C^v_{triv}, H_* C^v)$ is simply $H_* C^v$.  We next note that the complex $A_s = C|_{A=s}$ is isomorphic to the minus analog of the complex $A^+$ in \cite{OzSz:rational-surgeries}. Recall that in the notation of \cite{OzSz:rational-surgeries}, we view $\CFKinfty(Y,K)$ as being generated over $\F$ by triples $[x,i,j]$ with $j - i = A(x)$ and $A^+_s$ is defined to be quotient complex generated by triples with $\max(i, j-s) \ge 0$. The analogous $A^-_s$ is the subcomplex generated over $\F$ by triples with $\max(i, j-s) \le 0$. In our notation, this is isomorphic to the subcomplex of $CFK_{\sRminus}(Y,K)\otimes \F[V, V^{-1}]$ with Alexander grading zero generated by terms of the form $U^{A(x) - s} V^{-s} x$ if $A(x) \ge s$ or $V^{-A(x)} x$ if $A(x) < s$; multiplying by $V^s$ gives an isomorphism between this and $C |_{A=s}$. Similarly, the minus analog $B^-_s$ of the $B^+_s$ modules appearing in \cite{OzSz:rational-surgeries} can be identified with the Alexander grading zero summand of $CFK_{\sRminus}(Y,K)\otimes \F[V, V^{-1}]$, which by setting $V=1$ is equivalent to $C^v$ as a module over $\F[W]$.

The maps $v_s$ and $h_s$ in the construction of $\mathbb{X}^N_{i;p/q}(C_{triv}, \Psi_{triv}, C, \Psi)$ are simply the inclusion maps $C|_{A=s}\into C^v$ and $C|_{A=s}\into C^h$ obtained by setting $V=1$ and $U =1$, respectively, followed by taking homology of $C^v$ or $C^h$. The map $h^\Psi_s$ is the composition of $h_s$ with the map $F_{\Psi_{triv,*}, \Psi_*}$, which can be identified with $\Psi_*$ since $\Psi_{triv,*}$ is the identity map and the source $B^h$ and target $B^v$ are identified with $H_* C^h$ and $H_* C^v$, respectively. The map $v^-_s: A^-_s \to B^-$ in the minus analog of the mapping cone construction from \cite{OzSz:rational-surgeries} is also the inclusion map $C |_{A=s} \into C^v$, and the map $h^-_s: A^-_s \to B^-$ is the inclusion map $C|_{A=s}\into C^h$ followed by the flip map $\Psi$. 

With these observations in place, we see that $\mathbb{X}^N_{i;p/q}(C_{triv}, \Psi_{triv}, C, \Psi)$ is closely related to the complex $\mathbb{X}^-_{i;p/q}$ from the minus analog of the construction in \cite{OzSz:rational-surgeries}, with two differences. The first difference is that $\mathbb{X}^N_{i;p/q}(C_{triv}, \Psi_{triv}, C, \Psi)$ is truncated; that this does not affect the complex up to quasi-isomrphism follows easily from the fact that $v_s$ is an isomorphism for $s \ge N$ and $h_s$ is an isomorphism for $s \le -N$. The other difference is that in $\mathbb{X}^N_{i;p/q}(C_{triv}, \Psi_{triv}, C, \Psi)$ we have already taken the homology of each copy of $C^v$; this does not affect the homology of the complex. The result then follows from Theorem 1.1 of \cite{OzSz:rational-surgeries}.
\end{proof}

If we forget the spin$^c$ decomposition and project to the marked torus, we recover Theorem \ref{thm:surgery-formula} from the introduction.

\begin{proof}[Proof of Theorem \ref{thm:surgery-formula}]
Recall that the marked cylinder $\cylinder$ can be identified with $\overline{T}_M$, where $M$ is the knot complement of $K \subset Y$ and we identify the vertical direction with $\mu$ and the horizontal direction with $\lambda$. If we do not care about the spin$^c$ decomposition on $Y_{p/q}(K)$ we can project to the marked torus $T_M$. The curves $\ell_{i;p/q}$ project to single simple closed curve $\ell_{p/q}$ of slope $p/q$, and the Floer complex of $p(\tracks)$ with $\ell_{p/q}$ is the direct sum over $i$ of the Floer complexes of $\tracks$ with $\ell_{i;p/q}$ in $\cylinder$. The result then follows from Proposition \ref{prop:surgery-formula}.
\end{proof} 

\subsection{Surgery formula for dual knots}\label{sec:dual-surgery-formula}

When performing $p/q$ surgery on a knot $K$ in $Y$, the core of the filling torus defines a dual knot $K^* \subset Y_{p/q}(K)$. In \cite{HeddenLevine:surgery}, Hedden and Levine enhanced the surgery formula of Ozsv{\'a}th and Szab{\'o} for nonzero integer surgeries to give a surgery formula for the knot Floer complex of the dual knot in a surgery. This enhancement also has a nice description in terms of Floer homology of curves, which we now describe.

Recall that for an integral surgery $n$ on a nullhomologous knot the mapping cone complex $\mathbb{X}$ is the mapping cone of the map
$$\bigoplus_{s \in \Z} v^-_s + h^-_s : \bigoplus_{s\in\Z} A^-_s \to \bigoplus_{s\in\Z} B^-_s,$$
where $v^-_s$ maps $A^-_s$ to $B^-_s$ and $h^-_s$ maps $A^-_s$ to $B^-_{s+n}$. This complex splits into subcomplexes $\mathbb{X}_{i;n}$ containing the $A^-_s$ and $B^-_s$ with $s$ congruent to $i$ mod $n$. For sufficiently large $N$, each of these complexes can be truncated to include only $A_s$'s with $|s| < N$ and only $B_s$'s with $-N+n \le s \le N$ if $n > 0$ or only the $B_s$'s with $-N \le s \le N - n$ if $n<0$. We will view the mapping cone complex as a module over $\F[W]$.

The surgery formula in \cite{HeddenLevine:surgery} adds a new rational Alexander filtration $\mathcal{J}$ to the (truncated) mapping cone complex so that it is filtered chain homotopy equivalent to $\CFKminus(Y_n(K), K^*)$. Note that there is already an integer filtration $\mathcal{I}$ given by negative powers of $W$. To describe the $\mathcal{J}$ filtration, recall that we identify the complex $A^-_s$ with the complex $C_{A=s}$, which has generators of the form $V^{s-A(x)}x$ or $U^{A(x)-s}x$ for generators $x$ of $C$. Each generator $U^{A(x)-s}x$ of $A^-_s$ with $A(x) \ge s$ has $\mathcal{J}$ filtration level $\tfrac{2s + n - 1}{2n}$ while every generator $V^{s-A(x)}x$ of $A^-_s$ with $A(x) < s$ and each generator of $B^-_s$ has $\mathcal{J}$ filtration level $\tfrac{2s + n - 1}{2n} - 1$. Although $\mathcal{J}$ is a rational filtration, we are primarily interested in the relative integral filtration on each summand $\mathcal{X}_{i;n}$. For each $i$, we fix a rational shift $s_i$ so that $\mathcal{J}$ takes values in $\Z + s_i$ on $\mathcal{X}_{i;n}$ and $\mathcal{J} - s_i$ is an integer filtration. A key observation is that when moving from index $s$ to index $s+n$ the filtration levels of generators increases by 1, and that generators $U^{A(x)-s}x$ of $A^-_s$ with $A(x) \ge s$ are in the same filtration level as the generators of $B^-_{s+n}$ and the generators $V^{s+n-A(x)}x$ of $A^-_{s+n}$ with $A(x) <  s + n$. 

Though \cite{HeddenLevine:surgery} does not use the notation of bigraded complexes used in this paper, we can pass to a bigraded complex over $\sRminus$ by replacing the formal variable $W$ with the pair of variables $U$ and $V$ and defining a bigrading $(\gr^*_w, \gr^*_z)$ so that $\gr^*_w$ is the Maslov grading on the mapping cone complex and $\gr^*_z$ is defined so that $A^* = \tfrac{\gr^*_w - \gr^*_z}{2}$ gives the filtration level $\mathcal{J} - s_i$. To get the differential on the bigraded complex from the differential on the complex over $\F[W]$ we replace $W$ with the product $UV$ and then add additional factors of $V$ as needed to be consistent with the bigrading; note that forgetting the new filtration by setting $V=1$ recovers the original complex.

We will realize this bigraded complex as the Floer complex of curves in the doubly marked cylinder. Let $\tracks$ be the decorated curve in $\cylinder$ representing the knot Floer homology of $K$. For each $i \in \Z/n\Z$, let $\ell^*_{i;n}$ be a curve of slope $n$ in $\cylinder$ that passes through the marked points at height $i + kn - \tfrac 1 2$ for integers $k$ (note that in the doubly marked cylinder $\dmcylinder$ the curve $\ell^*_{i;n}$ passes between the pair of marked points at these heights). Recall that $Y_n(K)$ has $n$ spin$^c$ structures, which can be canonically identified with $\Z/n\Z$.

\begin{proposition}\label{prop:dual-surgery-formula}
The bigraded complex $CFK_{\sRminus}(Y_n(K), K^*; i)$ is given by the Floer complex of $\tracks$ with $\ell^*_{i;n}$ in the doubly marked cylinder $\dmcylinder$.
\end{proposition}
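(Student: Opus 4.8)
The plan is to identify the Floer complex $CF(\tracks, \ell^*_{i;n})$ in the doubly marked cylinder $\dmcylinder$ with the bigraded complex obtained from the Hedden--Levine mapping cone complex $\mathcal{X}_{i;n}$ equipped with the new Alexander filtration $\mathcal{J}$. The argument is parallel to the proof of Proposition \ref{prop:shifted-pairing} (and hence Proposition \ref{prop:surgery-formula}), but now we must keep track of the second family of marked points and verify that the $z$-marked points record precisely the filtration $\mathcal{J}$. First I would observe, exactly as in Proposition \ref{prop:surgery-formula}, that $\ell^*_{i;n}$ is homotopic in $\dmcylinder$ to $\tracks_{triv}[i;n]$, where $\tracks_{triv}$ is the horizontal simple closed curve at height zero representing the unknot's knot Floer invariant $(C_{triv}, \Psi_{triv})$, except that now the curve is drawn to cross each short arc connecting a $z$-marked point to its neighboring $w$-marked point exactly once at the relevant heights, rather than avoiding the marked points. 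Then I would run the proof of Proposition \ref{prop:shifted-pairing} verbatim, cutting $\dmcylinder$ into a doubly marked strip $\strip$ plus an unmarked strip $\sF$, perturbing the shifted copies $\tracks^{cut}[\lfloor (i+kn)/n \rfloor]$ as in Proposition \ref{prop:pairing-in-strip} so that the Floer complex with $\tracks_2 = \tracks$ in $\strip$ recovers $A_s$ and the bigons across $\sF$ recover the maps $\widetilde v_s$ and $\widetilde h^\Psi_s$. Since $n \neq 0$, the $\ell^*_{i;n}$-strands through $\strip$ are shifted and the bigons counted in $\strip$ are exactly those counted in the singly marked version, so the underlying $\F[W]$-complex is $\mathbb{X}^N_{i;n}(C_{triv}, \Psi_{triv}, C, \Psi)$, which by the proof of Proposition \ref{prop:surgery-formula} is quasi-isomorphic to the (truncated) Hedden--Levine mapping cone as an $\F[W]$-module.

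The new content is the bigrading. I would set up the doubly marked grading functions $(\tilde\tau_w, \tilde\tau_z)$ on $\tracks$ and on $\ell^*_{i;n}$ using the conventions of Sections \ref{sec:floer-theory} and \ref{sec:curves-in-strip}: the $w$-grading on $\tracks$ is just its ordinary grading (recording only $w$-marked points), and the $z$-grading differs from it by $2A$ on the generators of $C$, exactly as in the definition of $C(\Gamma, \bchain)$. On $\ell^*_{i;n}$ I would take the constant $w$-grading that realizes the Maslov grading in Proposition \ref{prop:surgery-formula} and then observe that since $\ell^*_{i;n}$ crosses the $z$-grading arcs in a controlled way as it moves through the marked points, its $z$-grading differs from its $w$-grading by an amount that increases by $2$ each time the curve passes a marked point, i.e. by $2$ between consecutive indices $s$ and $s+n$ in the mapping cone. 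This is precisely the jump in the $\mathcal{J}$-filtration level described just before the statement: generators $U^{A(x)-s}x$ of $A_s^-$ with $A(x)\ge s$ sit at the same $\mathcal{J}$-level as the generators of $B^-_{s+n}$ and as the generators $V^{s+n-A(x)}x$ of $A^-_{s+n}$ with $A(x)<s+n$, while moving $A_s^-$ to $A_{s+n}^-$ raises $\mathcal{J}$ by $1$. I would then compute, for each intersection point, the difference $\frac{1}{2}(\gr_z - \gr_w) = A^*$ of the Floer generator and check term by term against the explicit $\mathcal{J}$-levels listed in the excerpt (the $\tfrac{2s+n-1}{2n}$ and $\tfrac{2s+n-1}{2n}-1$ values), up to the overall rational shift $s_i$. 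This is the step that requires care: one must match the graphical ``count of $z$-marked points covered by the triangle'' interpretation of the powers of $V$, as in the proof of Proposition \ref{prop:pairing-in-strip}, with the combinatorial formulas for $\mathcal{J}$ in \cite{HeddenLevine:surgery}, and confirm the differential has bidegree $(-1,-1)$ and interacts with $U$ and $V$ correctly. Once the bigradings agree and the $V=1$ specialization returns the $\F[W]$-complex already identified with the mapping cone, the Floer complex is the bigraded complex whose underlying filtered $\F[W]$-complex is $\CFKminus(Y_n(K), K^*; i)$ by Hedden--Levine, and passing from a filtered $\F[W]$-complex to a bigraded $\sRminus$-complex is exactly the notational translation reviewed in Section \ref{sec:old-way}.

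The main obstacle I anticipate is bookkeeping of the $z$-grading across the infinitely many (or, after truncation, finitely many) shifted copies of $\tracks^{cut}$ glued to form $\tracks[i;n]$, and in particular getting the constant of integration right so that the $z$-grading on the $n$-th shifted piece exceeds that on the $0$-th piece by exactly $2$ rather than some other even number, and that this is compatible with the $w$-grading jumps already pinned down by Proposition \ref{prop:surgery-formula}. A secondary subtlety is that on the $B^v$ summands (the intersection points living in $\sF$) the $z$-grading must be checked separately, since these come from the vertical homology of $C$ rather than from $C$ itself; here I would use that the identification $B^v = H_* C^v$ is obtained by setting $V=1$, and argue that the $z$-grading of the $\sF$-intersection points is determined by that of the adjacent $A_s$ generators via the bigons connecting them, which forces the $\mathcal{J}$-level of $B^-_{s+n}$ to coincide with that of the $A(x)\ge s$ part of $A^-_s$ as required. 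Finally I would note that, as with the previous surgery formulas, forgetting the spin$^c$ decomposition and projecting $p:\overline{T}_M \to T_M$ gives Theorem \ref{thm:surgery-formula-dual-knots}: the curves $\ell^*_{i;n}$ project to the single curve $\ell^*_{p/q}$ crossing the short $zw$-arc once, and $\mathcal{HF}(\HFminus(Y,K), \ell^*_{p/q})$ is the direct sum over $i$ of the complexes $CF(\tracks, \ell^*_{i;n})$.
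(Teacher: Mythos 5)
Your overall plan is the same as the paper's: reduce to Proposition~\ref{prop:shifted-pairing} to identify the underlying $\F[W]$-complex with the truncated Hedden--Levine mapping cone, and then show the second marked-point family recovers the $\mathcal{J}$-filtration. The one step where you are hand-waving (``this is the step that requires care'') is exactly where the paper has a specific geometric device that you are missing. Rather than directly assigning bigradings to $\ell^*_{i;n}$ and doing a term-by-term comparison with the $\tfrac{2s+n-1}{2n}$ formulas, the paper first works in the singly marked picture with $\ell^*_{i;n}$ perturbed to cross $\mu$ at height $s-\tfrac12+\epsilon$, and then \emph{temporarily places a new $z$-marked point at height $2\epsilon$ above each existing $w$-marked point on $\mu$}. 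With this placement, every bigon covers equal numbers of $z$- and $w$-marked points except for one extra $z$ per rightward crossing of $\mu$ by the $\ell^*_{i;n}$-boundary and one extra $w$ per leftward crossing --- so the induced $A^*$ automatically increments exactly as $\mathcal{J}$ does when you step from $A_s$ to $A_{s+n}$, with no coefficient-chasing against the Hedden--Levine formulas at all. Only afterward does one shift the marked points down by $\epsilon$ and homotope $\ell^*_{i;n}$ to a straight line to land in the standard $\dmcylinder$. You should expect your direct computation to reproduce this, but your proposal currently substitutes a genuine (and fiddly) calculation for what the paper turns into a near-tautology.

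There is also a concrete omission: the sign of $n$. After the paper's homotopy to a straight line of slope $n$, the vertically-stacked $z$ and $w$ points can be moved to the standard left/right positions without crossing $\ell^*_{i;n}$ only when $n>0$; when $n<0$ the straight line separates them the ``wrong'' way, and the roles of $z$ and $w$ come out swapped. The paper resolves this by invoking the $180^\circ$-rotation symmetry of knot Floer homology, which interchanges $z$ and $w$ and hence $U$ and $V$. Your proposal, which works in the standard $\dmcylinder$ from the outset and never vertically stacks the marked points, would hit the analogous issue when matching $A^*$ to $\mathcal{J}$ for $n<0$ (you would find agreement only up to the $U\leftrightarrow V$ swap), and you do not flag or resolve this. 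Everything else --- the identification of the $B^v$ summands' grading via the connecting bigons, and the projection to $T_M$ recovering Theorem~\ref{thm:surgery-formula-dual-knots} --- matches the paper's argument.
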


\begin{proof}
We identify the truncated complex $\mathbb{X}^N_{i;n}$ with the Floer complex of $\tracks$ with $\ell^*_{i;n}$ by perturbing $\ell^*_{i;n} = \tracks_{triv}[i;n]$ as in the proof of Proposition \ref{prop:shifted-pairing}. Recall that, cutting the cylinder into marked and unmarked strips $\strip$ and $\sF$, each copy of $A_s$ in the mapping cone complex corresponds to the Floer complex of $\tracks$ with a connected component of $\tracks_{triv}[i;n]$ restricted to $\strip$. The piece of $\tracks_{triv}[i;n]$ in question crosses $\mu$ between the marked points at heights $s-\tfrac 1 2$ and $s+\tfrac 1 2$, lying to the left of $\mu$ below this point and to the right of $\mu$ after this point. If we assume that $\tracks_{triv}[i;n]$ crosses $\mu$ at height $s-\tfrac 1 2 + \epsilon$ for a sufficiently small $\epsilon$, then the generators of $A_s$ of the form $U^{A(x)-s} x$ for $A(x) \ge s$ are precisely those corresponding to intersection points on the right side of $\mu$. Combining this with the observation above we note that, when moving along $\tracks_{triv}[i;n]$, the $\mathcal{J}$ filtration level of generators of the Floer complex increases by one each each time $\mu$ is crossed moving rightward.

We claim that this behavior is reproduced in the Floer complex if we add a new $z$ marked point on $\mu$ at height $2\epsilon$ above each existing $w$ marked point. Now any bigon contributing to the Floer complex covers the same number of $z$ and $w$ marked points except that it covers one extra $z$ marked point for each time the $\ell^*_{i;n}$ part of the boundary crosses $\mu$ moving rightward, and one extra $w$ marked point for each time the $\ell^*_{i;n}$ part of the boundary crosses $\mu$ moving leftward. Since the differential preserves the new Alexander grading $A^*$, it follows that if there is a bigon from $x$ to $y$ with weight $U^a V^b$ then $A^*(y) - A^*(x) = a - b$. We can of course shift the marked points and the intersection of $\ell^*_{i;n}$ with $\mu$ down by $\epsilon$ without any effect on the complex, so that the $z$ and $w$ marked point occur a distance of $\epsilon$ above and below the original marked point, and $\ell^*_{i;n}$ passes through the original marked point. 

Having identified the knot Floer complex with the Floer homology of curves in their perturbed form, we know that homotopic curves will represent chain homotopic complexes. In particular, we can pull the curve $\ell^*_{i;n}$ to be a straight line of slope $n$ (that is, a curve in the cylinder that lifts to a straight line of slope $n$ in the universal cover). We remark that there is a slight subtley coming from the fact that in this proof we place the $z$ and $w$ marked points above and below the original marked point, whereas we usually think of $z$ and $w$ in $\dmcylinder$ lying to the left and right of the marked points in $\cylinder$. If $n > 0$ it is clear that when $\ell^*_{i;n}$ is pulled tight we can just as well place $z$ and $w$ to the left and right of the marked point. If $n < 0$ then pulling 
$\ell^*_{i;n}$ tight results in $z$ being to the right of $\ell^*_{i;n}$ and $w$ being to the left of $\ell^*_{i;n}$. This is seemingly a problem, but in fact it has no effect on the complex because of the symmetry of knot Floer homology: the curves are symmetric under 180 degree rotation, and this rotation interchanges the roles of $z$ and $w$.
\end{proof}

Theorem \ref{thm:surgery-formula-dual-knots} in the introduction follows immediately from Proposition \ref{prop:dual-surgery-formula} by projecting 
from the marked cylinder to the marked torus.

\begin{proof}[Proof of Theorem \ref{thm:surgery-formula-dual-knots}]
We identify the marked cylinder $\cylinder$ with $\overline{T}_M$ in the usual way and then project to the marked torus $T_M$. The lines $\ell^*_{i;n}$ all project to a single curve $\ell^*_n$ of slope $n$ through the marked point, and the Floer complex of the projection of $\tracks$ with $\ell^*_n$ in the doubly marked torus $T_M^{z,w}$ is the direct sum of the Floer complexes in $\cylinder$ of $\tracks$ with $\ell^*_{i;n}$, which by Proposition \ref{prop:dual-surgery-formula} are the spin$^c$ summands of $CFK_{\sRminus}(Y_n(K), K^*)$.
\end{proof}

\begin{example}
An example of the dual surgery formula is shown in Figure \ref{fig:dual-knot-surgery-formula}, where we consider $+1$ surgery on the left handed trefoil. On the left of the figure we show the curves perturbed as in the proof of Proposition \ref{prop:shifted-pairing} so that the Floer complex realizes the mapping cone complex, and the new filtration is encoded by adding marked points $z$ above each existing marked point. To more easily compute the knot Floer complex of the dual knot, we pull the curves tight as in the top right part of the figure. We see that there are five generators, $a$, $b$, $c$, $d$, and $e$, with Alexander gradings $1$, $0$, $0$, $0$, and $-1$. The differential is given by
$$\partial(a) = Ub, \quad \partial(b) = 0, \quad \partial(c) = -UVb + UVd, \quad \partial(d) = 0, \quad \text{ and } \quad \partial(e) = -Vd,$$
as stated in Section \ref{sec:intro-example}. We can find the immersed curve in the strip representing this new complex from the immersed curve for the left handed trefoil by applying the reparametrization that takes the line of slope $+1$ to the vertical direction while fixing the horizontal direction, as shown in the bottom right of the Figure \ref{fig:dual-knot-surgery-formula}. Pairing the resulting immersed curve paired with the vertical line $\mu$ gives the same complex as pairing trefoil curve with the line of slope 1, since applying an ambient diffeomorphism to the surface does not affect the Floer complex, so by definition this immersed curve represents the knot Floer complex of the dual knot. This is consistent with Conjecture \ref{conj:invariant-of-complement}, since the immersed curves for both the knot and the dual knot are the same when viewed as curves in the boundary of the knot complement and appear different in strip only because they are expressed in terms of different parametrizations arising from different choices of meridian.
\end{example}

\begin{figure}
\includegraphics[scale = .8]{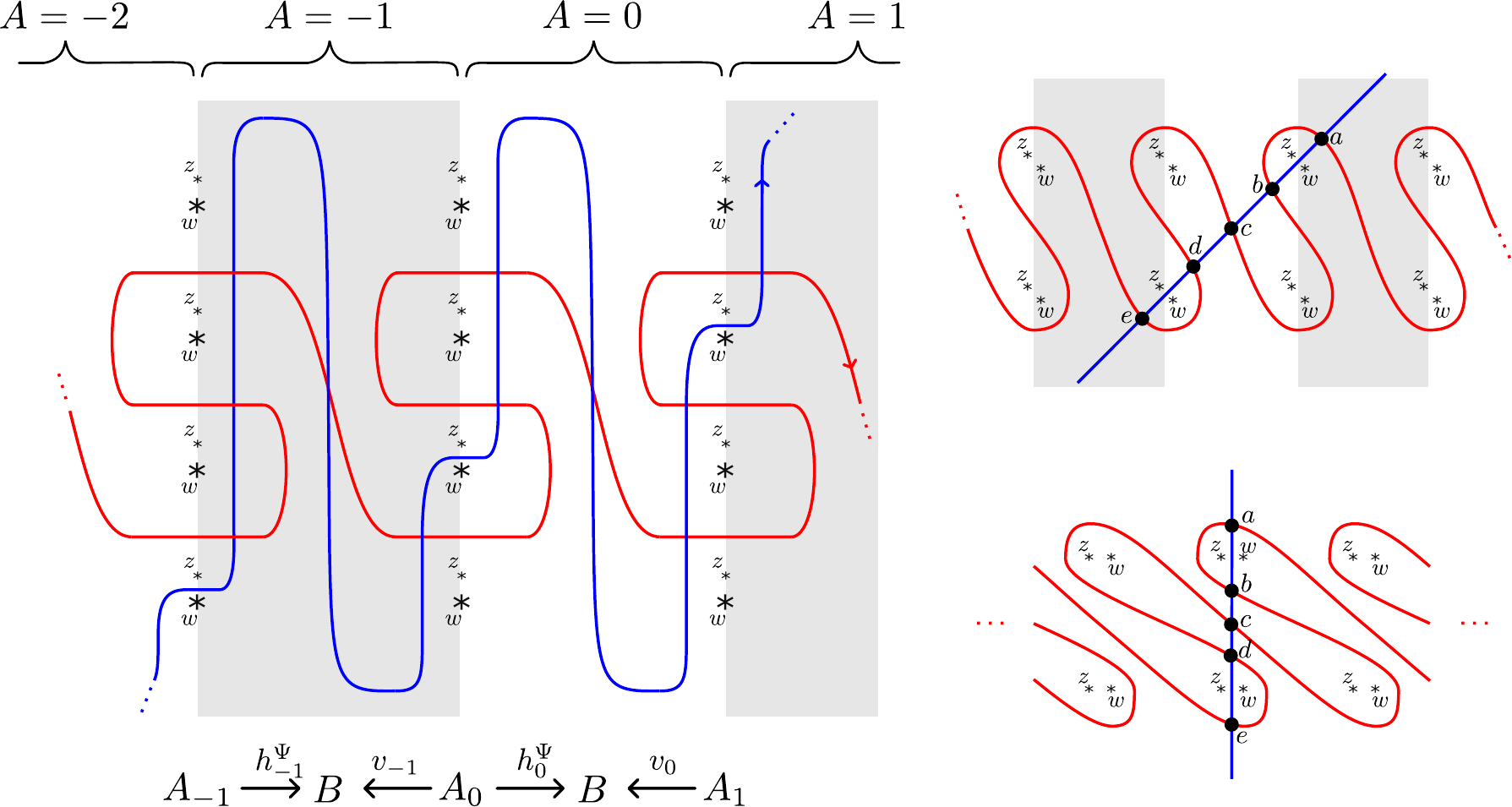}
\caption{Computing $\CFKminus$ of the dual knot in $+1$ surgery on the left handed trefoil.}
\label{fig:dual-knot-surgery-formula}
\end{figure}

\section{Examples and further considerations}\label{sec:examples}

In the final section we will explore some concrete examples and also discuss a current limitation of the invariants $(\Gamma, \bchain)$ we have described, namely that although $\Gamma$ is well defined up to homotopy and the restriction to $\bchainhat$ of $\bchain$ to degree zero self-intersection points is well-defined, the full bounding chain $\bchain$ is not uniquely determined as a subset of the self-intersection points. In other words, we do not have a satisfactory normal form for an equivalence class of bounding chains. That said, as we will see in practice there is often an obvious simplest choice of $\bchain$. In particular, for a large class of complexes arising frequently for knots in $S^3$, all choices of bounding chain on the immersed curves are equivalent and the trivial bounding chain is a valid choice of bounding chain, so for these complexes we can take $\bchain$ to be trivial. We discuss some of the challenges in defining a normal form more generally, and some potential solutions.

\subsection{Simplifying bounding chains}

The non-uniqueness of the bounding chain in our construction as described so far is evident in the following example. For simplicity, we will consider this example with coefficients in $\Z/2\Z$ and we will not specify the bigrading, which is not relevant to the present discussion.

\begin{example}\label{ex:fig8-example}
Let $C$ be the chain complex over $\sRminus$ pictured on the left of Figure \ref{fig:fig8-example}. It has generators $a, b, c, d$, and $e$, differential
$$\partial(a) = Vb, \quad \partial(b) = 0, \quad \partial(c) = Ua + Ve + UVd, \quad, \partial(d) = 0, \quad  \partial(e) = Ub,$$
and gradings as given in the table below:
$$\def\arraystretch{1.5} \begin{array}{c|ccccc}
  & a & b & c & d &e \\ \hline
(\gr_w, \gr_z) & (0,-2) & (-1,-1) & (-1,-1) & (0,0) & (-2,0) \\
A & 1 & 0 & 0 & 0 & -1
\end{array} $$
Both the horizontal and vertical homology are generated by $e$; let $\Psi$ be the flip isomorphism taking $e$ to itself. To compute the decorated immersed multicurve $(\Gamma, \bchain)$ representing this data we first ignore the diagonal arrow and find $(\Gamma, \bchainhat)$ representing the complex over $\sRhat$. Since the given basis is both horizontally and vertically simplified, we quickly arrive at the immersed curve in the middle of Figure \ref{fig:fig8-example} with no crossover arrows, so we do not need to remove arrows and $\bchainhat$ is trivial. We then enhance the decorated curve to capture the diagonal arrow that was ignored; following the algorithm in Section \ref{sec:enhanced-curves}, we see that this is accomplished by decorating $\Gamma$ with the bounding chain $\bchain$ consisting of one intersection point with coefficient $W$ as pictured on the right of Figure \ref{fig:fig8-example}.
\end{example}

\begin{figure}
\includegraphics[scale = 1]{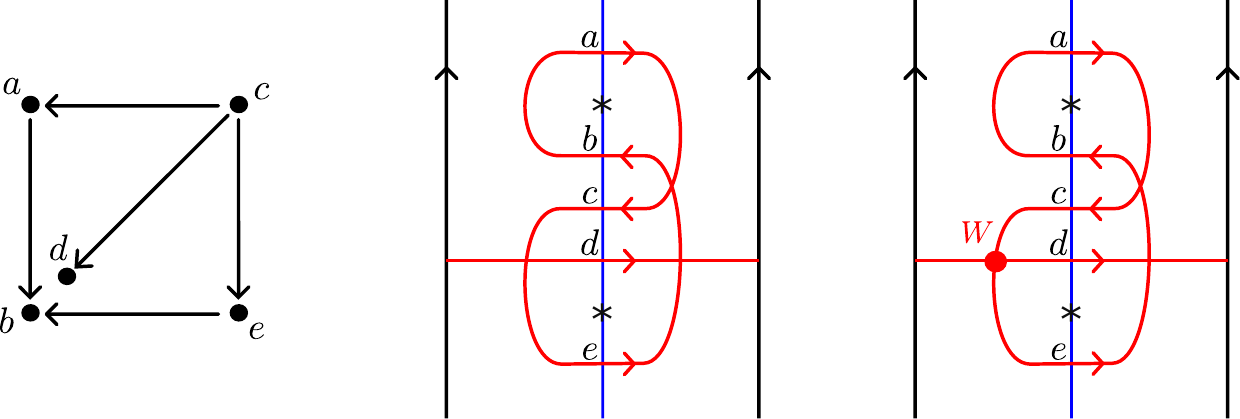}
\caption{An immersed curve representative for the knot Floer complex of the figure-eight knot. The curve with trivial bounding chain (middle) represents the complex over $\sRhat$, and adding the indicated intersection point to the bounding chain (right) encodes the complex over $\sRminus$.}
\label{fig:fig8-example}
\end{figure}

In this example, the procedure we have presented for computing the curve representative stops with the decorated curve $(\Gamma, \bchain)$ since $\Gamma$ is in almost simple position and $\bchainhat$ contains only local system intersection points (in fact, $\bchainhat$ is trivial). However, it is not hard to see that there is a more convenient representative for this chain homotopy equivalence class of curves. The change of basis replacing $e$ with $e + Ud$ has the effect of removing the diagonal arrow from the complex in Example \ref{ex:fig8-example}. Clearly $\Gamma$ represents this complex over $\sRminus$ with no need to decorate with a bounding cochain. In other words, there are two different bounding chains $\bchain$ (consisting of the one intersection point in Figure \ref{fig:fig8-example}) and $\bchain'$ (which is trivial) on $\Gamma$ such that $(\Gamma, \bchain)$ and $(\Gamma, \bchain')$ both represent the same homotopy equivalence class of chain complexes.

These decorated curves are $(\Gamma, \bchain)$ and $(\Gamma, \bchain')$ are equivalent as elements of the Fukaya category, so this disparity does not violate the uniqueness statement of Theorem \ref{thm:curve-invariant-for-complex}, but we would hope to have a preferred representative for any equivalence class of decorated curves. In the above example, there is an obvious choice for the preferred representative: it makes sense to remove the decoration entirely if possible and choose $(\Gamma, \bchain')$ as the representative of this equivalence class. In general we'd like to choose the simplest option for the bounding chain representing a given equivalence class, but it is not always clear what ``simplest" means when the trivial bounding chain is not an option. We ask the following:

\begin{question}
Is there normal form for bounding chains on immersed curves such that every complex over $\sRminus$ is represented by $(\Gamma, \bchain)$ and for any $\bchain$ and $\bchain'$ of this form if $(\Gamma, \bchain)$ and $(\Gamma, \bchain')$ are equivalent objects then $\bchain$ and $\bchain'$ agree as linear combinations of self-intersection points of $\Gamma$?
\end{question}

One way of describing such a normal form would be to extend the arrow sliding algorithm used to simplify $\bchainhat$. Recall that we systematically remove all arrows that can be removed, leaving only left-turn arrows at local system intersection points. In the same way, we could start with a decorated curve $(\Gamma, \bchain)$, consider the corresponding train track in which $\bchain$ is represented by a collection of left-turn crossover arrows, and then systematically slide crossover arrows to remove them when possible or if not put them in a preferred position. This strategy works for Example \ref{ex:fig8-example}, as shown in Figure \ref{fig:fig8-example2}. The bounding chain $\bchain$ can be interpreted as a single crossover arrow with weight $W$. We can slide this arrow rightward until it is parallel with $\mu$, pointing from the segment containing $e$ to the segment containing $e$. We can then slide it past $\mu$ (and over a marked point) at the expense of changing the weight from $W$ to $1$; an argument similar to the proof of Proposition \ref{prop:basis-change} shows this move has the effect of the change of basis replacing $e$ with $e + Ud$. Finally, we continue sliding the arrow until it is removable.

\begin{figure}
\includegraphics[scale = .8]{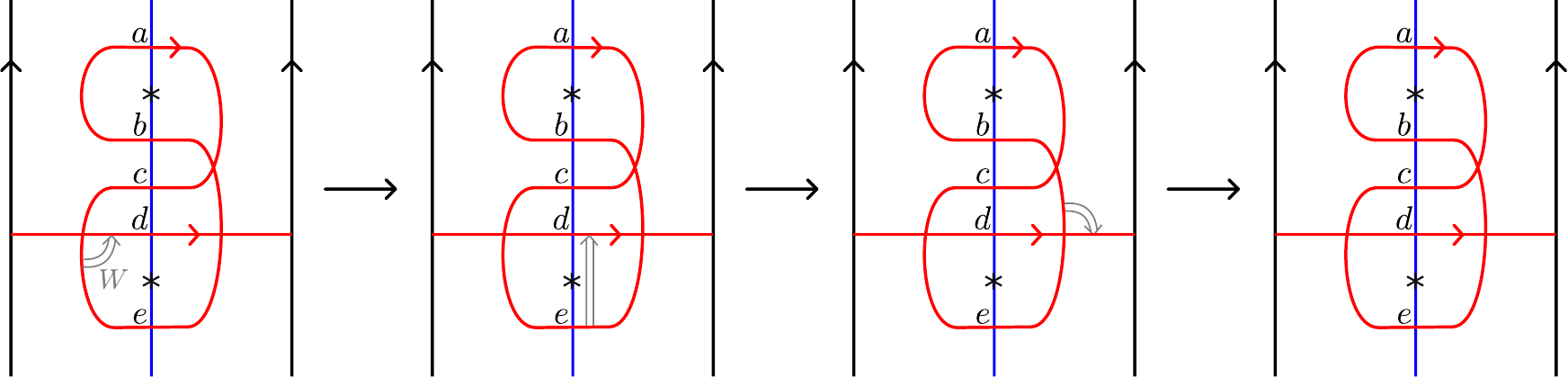}
\caption{Simplifying the bounding chain from Example \ref{ex:fig8-example} by arrow sliding.}
\label{fig:fig8-example2}
\end{figure}

We suspect that this strategy will work in general, but it turns out that sliding arrows is fairly subtle in the minus setting. For example, in Lemma \ref{prop:basis-change} we proved the fact that sliding an arrow across $\mu$ corresponds to a change of basis, but we did this in the $UV=0$ setting. The natural generalization of the statement to the minus setting does not always hold (if only holds if the arrow in question is unobstructed). Indeed, the main reason is that arrows sliding in the minus setting is more difficult is that monogons are plentiful and sliding arrows often gives rise to arrows that are not unobstructed. We avoided these subtleties in the constructions earlier in this paper by doing the majority of the arrow simplification in the $UV=0$ setting, and when constructing the enhanced curves over $\sRminus$ we really worked over a quotient $\sR_{k+1} = \sRminus/W^{k+1}$ at each step and only manipulated arrows weighted by the maximal power $W^{k}$.

Another requirement for defining a normal form by arrow sliding is giving a clear description of when the process should stop; that is, we need to understand which arrows are removable. The next example is a variation on the previous one but the different choices of bounding chain are not equivalent, and in particular these crossover arrows are not removable.

\begin{example}\label{ex:example-multiple-bounding-chains}
Consider the complex with five generators and differential
$$\partial(a) = V^2 b, \quad \partial(b) = 0, \quad \partial(c) = U^2 a + V^2 e + k_1 UVd, \quad \partial(d) = k_2 , \quad  \partial(e) = U^2b,$$
where $k_1$ and $k_2$ are either 0 or 1 and they are not both 1. The decorated curves representing the three complexes arising from the choice of $k_1$ and $k_2$ are shown in Figure \ref{fig:example-multiple-bounding-chains}. The complexes agree over $\sRhat$, so the underlying curve $\Gamma$ is the same in each case and we have three different choices of bounding chain on $\Gamma$. These three complexes are not homotopy equivalent to each other, so these three decorated curves are not equivalent objects in the Fukaya category of the marked cylinder. In particular, there is no way to simplify either of the nontrivial bounding chains to obtain the trivial bounding chain so each of the crossover arrows shown are not removable. We see that the arrow slide used in the previous example fails because it requires sliding the crossover arrow over two marked points, but the arrow is only weighted by $W$ and the power of $W$ must decrease by one for each marked point the arrow crosses. If there is a normal form for bounding chains on an immersed multicurve, all three of these bounding chains must satisfy the conditions of being in normal form.
\end{example}

\begin{figure}
\includegraphics[scale = .7]{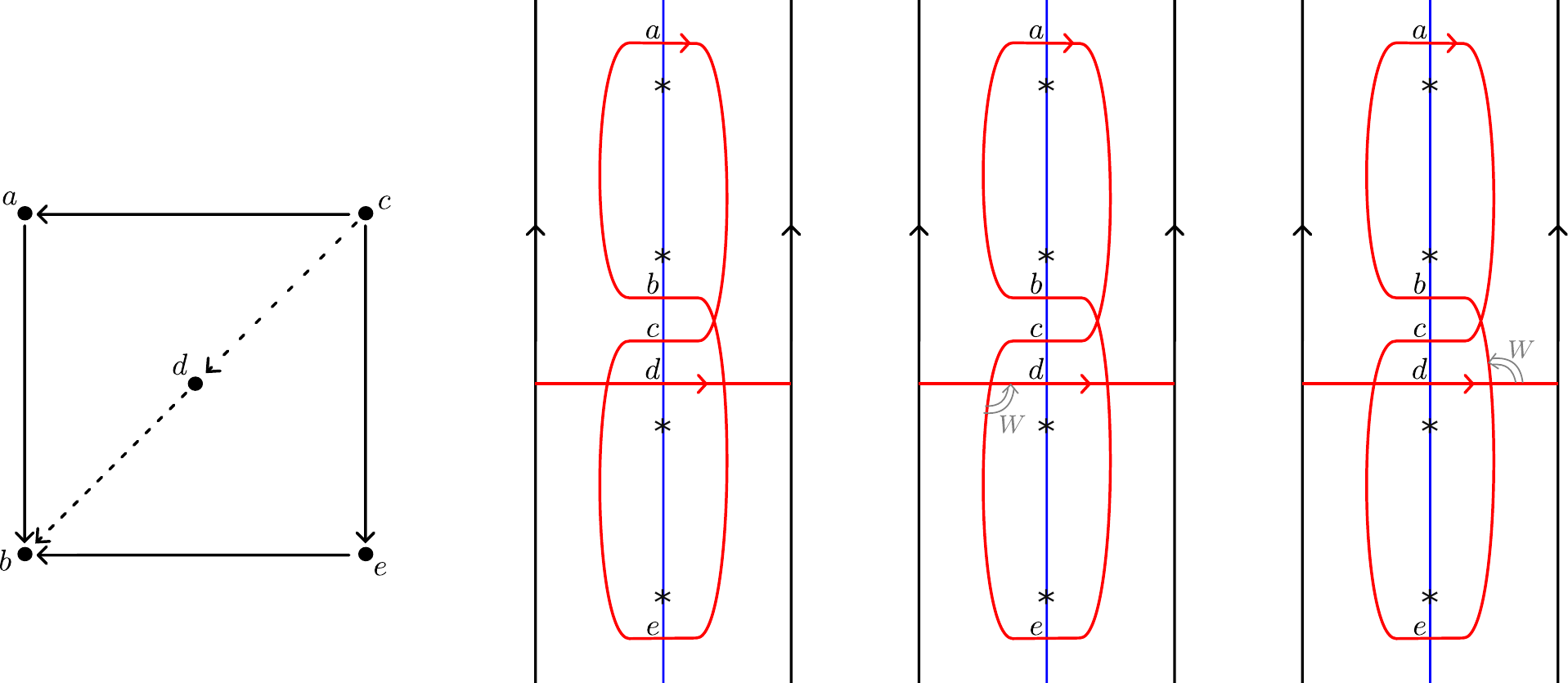}
\caption{Decorated immersed curves representing the complex shown at the left with no diagonal arrows or with either one of the dotted diagonal arrows included.}
\label{fig:example-multiple-bounding-chains}
\end{figure}

\subsection{Some curves with only trivial bounding chains}\label{sec:trivial-bounding-chains}
Having discussed some of the challenges with simplifying the bounding chain decoration to a normal form in general, we now point out that in practice it is often much easier to find a simplest representative. In fact, we will describe a family of immersed multi-curves for which every possible bounding chain is equivalent to the trivial one. For these curves we can always take the bounding chain to be trivial and a complex which is represented over $\sRhat$ by some curve $\Gamma$ of this form is also represented over $\sRminus$ by $\Gamma$.
 
The multicurves in question contain components which are figure eight shaped curves enclosing two adjacent marked points like the closed curves in Figure \ref{fig:fig8-example}; we call curves of this form \emph{simple figure eight curves}. A curve of this form represents a simple complex which we will denote $C_{box}$; this complex has generators $x_1$, $x_2$, $x_3$, and $x_4$ and differential
 $$\partial(x_1) = Ux_2 + Vx_3, \quad \partial(x_2) = V x_4, \quad \partial(x_3) = -Ux_4, \quad \partial(x_4) = 0.$$
 We will show that for a multicurves with simple figure eight components, any bounding chain can me simplified to avoid these components. To avoid the complications with sliding arrows in the minus setting described in the previous section, we will deduce this from an algebraic fact about bigraded chain complexes (though we remark that an arrow sliding proof would be desirable, since it would likely generalize to other families of curves for which proving the simplification algebraically would be difficult).
 
 \begin{lemma}\label{lem:splitt-off-box-summand}
 Suppose a reduced bigraded complex $C$ over $\sRhat$ has a basis containing four generators $x_1$, $x_2$, $x_3$, and $x_4$ such that there are length one horizontal arrows from $x_1$ to $x_2$ and from $x_3$ to $x_4$, length one vertical arrows from $x_1$ to $x_2$ and from $x_3$ to $x_4$, and no other horizontal or vertical arrows into or out of these generators. Then possibly after a change of basis $C$ splits as a direct sum $C' \oplus C_{box}$.

 \end{lemma}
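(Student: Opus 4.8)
The plan is to use the four distinguished generators $x_1,x_2,x_3,x_4$ to carve out a subcomplex isomorphic to $C_{box}$ after a change of basis, and then produce a complement. First I would record exactly what the hypotheses give us: working over $\sRhat$, and writing $\partial$ in the reduced basis, the only nonzero coefficients of $\partial$ involving the $x_i$ that are \emph{not} accompanied by a positive power of $U$ or $V$ are the four arrows named (the horizontal arrows $x_1\to x_2$, $x_3\to x_4$ and the vertical arrows $x_1\to x_3$, $x_2\to x_4$); any other term of $\partial$ that outputs or inputs an $x_i$ carries a nontrivial power of $U$ or $V$. Since we are over $\sRhat$, i.e. $UV=0$, this means that a diagonal term is killed, so the only remaining possibilities are ``purely horizontal of length $\ge 2$'' or ``purely vertical of length $\ge 2$'' terms. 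The first reduction step is to kill these by a change of basis: for each generator $y$ outside $\{x_1,\dots,x_4\}$ with a term $cU^a x_i$ ($a\ge 1$) in $\partial y$, we can try to absorb it, and symmetrically for incoming arrows to the $x_i$. The mechanism is the standard one: modifying $x_i$ or $y$ by adding a multiple of the other (times the appropriate power of $U$ or $V$) removes the term, and because $C$ is reduced and we only ever add terms with strictly positive $U$- or $V$-power, this process terminates (an induction on, say, total $U$-power plus $V$-power of the offending terms, exactly as in the proof of Proposition~\ref{prop:reduced-simplified-basis} / the arrow-sliding arguments).

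After this reduction we may assume that, in the new basis, the only terms of $\partial$ that touch $\{x_1,x_2,x_3,x_4\}$ at all are precisely the four named length-one arrows; equivalently $\partial x_1 = cUx_2 + c'Vx_3$, $\partial x_2 = c''Vx_4$, $\partial x_3 = c'''Ux_4$, $\partial x_4 = 0$ for units $c,c',c'',c'''\in\F$, and no $\partial y$ for $y\notin\{x_i\}$ has any $x_i$ term. Rescaling $x_2$, $x_3$, $x_4$ (and correspondingly absorbing the scalars, which does not reintroduce cross terms since nothing else maps to these generators) we can normalize $c=c'=c''=1$ and $c'''=-1$, matching the stated differential on $C_{box}$; here I should double-check the sign conventions and the fact that $\partial^2=0$ forces $cc''' = -c'c''$, which is exactly what makes the normalization consistent. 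At this point $\langle x_1,x_2,x_3,x_4\rangle$ is a subcomplex isomorphic to $C_{box}$, and since no generator outside maps into it and it maps to nothing outside, $C' := \langle\, \text{all other basis generators}\,\rangle$ is also a subcomplex, with $C = C' \oplus C_{box}$ as bigraded complexes over $\sRhat$.

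The main obstacle I anticipate is bookkeeping in the first reduction: I must verify that removing a ``long horizontal'' or ``long vertical'' term touching some $x_i$ via a change of basis does not create a new arrow that reintroduces a \emph{length-one} horizontal/vertical arrow into or out of $\{x_1,\dots,x_4\}$ (which would break the hypothesis), and does not create cross terms that prevent the eventual splitting. This is where the $UV=0$ relation does most of the work — new terms produced have strictly larger $U$- or $V$-exponent than the term removed, so they cannot be length one — but I should be careful that modifying $x_i$ itself (rather than an external $y$) could in principle alter $\partial x_i$; here the key point is that the four named arrows all have the \emph{minimal} exponent ($0$) in their respective directions, so adding higher-exponent multiples never interferes with them, and the induction is clean. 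A secondary, smaller point to get right is that the lemma is stated for complexes over $\sRhat$ but the conclusion $C'\oplus C_{box}$ should be phrased so that $C_{box}$ is understood as its $\sRhat$-reduction; I would state it that way to avoid ambiguity. Once the splitting is established the desired application — that a simple figure eight component of $\Gamma$ can always be taken with trivial bounding chain — follows by combining this with the earlier identification of $C_{box}$ with the simple figure eight curve and the uniqueness results of Section~\ref{sec:uniqueness}.
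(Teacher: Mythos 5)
You have misread the hypothesis, and as a result your proof addresses the wrong problem. In this paper a ``horizontal arrow'' means any arrow with $b_{i,j}=0$ (arbitrary length $a\ge 1$), and likewise for vertical; the ``length one'' qualifier in the lemma applies only to the four named arrows, not to the excluded ones. So the hypothesis already forbids long horizontal/vertical arrows into or out of the $x_i$; the only arrows that could remain are diagonal ones ($a,b\ge 1$). Your first reduction step, removing terms $cU^a x_i$ with $a\ge 1$, is therefore vacuous — nothing of that form is permitted by the hypothesis. Simultaneously you correctly observe that over $\sRhat$ (where $UV=0$) diagonal arrows cannot exist, which taken literally would make the lemma trivially true with no work at all. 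The two observations together signal that the ``over $\sRhat$'' in the lemma statement is almost certainly a typo for $\sRminus$: the lemma is applied in Proposition~\ref{prop:split-off-fig8s} to the full complex $C(\Gamma,\bchain)$ over $\sRminus$, and its whole content is removing the \emph{diagonal} arrows touching the $x_i$. Your proposal does not engage with that step at all.

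The paper's actual proof proceeds by eliminating diagonal arrows in a carefully chosen order. First one clears arrows from any $y$ into $x_4$, via $y\mapsto y - cU^aV^{b-1}x_2$ (using that the arrow is diagonal, so $a,b\ge 1$); this can create new arrows $z\to x_2$, but only of horizontal length $\ge 2$. Next one clears arrows into $x_2$ (necessarily diagonal or of horizontal length $\ge 2$ at this stage) by $y\mapsto y - cU^{a-1}V^b x_1$. Crucially, one then uses $\partial^2 = 0$ to \emph{deduce for free} that there are no remaining arrows into $x_1$, nor into $x_3$ other than the vertical arrow from $x_1$: e.g.\ the $x_2$-coefficient of $\partial^2 y$ is $U$ times the $x_1$-coefficient of $\partial y$, once the only arrow into $x_2$ is the one from $x_1$. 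The outgoing arrows out of $x_1$ and $x_2$ are handled by dual basis changes, with those out of $x_3$ and $x_4$ again forced to vanish by $\partial^2=0$. Your proposed generic induction on ``total $U$-power plus $V$-power'' does not capture this structure: it is aimed at horizontal/vertical arrows that are not there, it does not handle the diagonal case, and it does not exploit the $\partial^2=0$ observations that let the paper avoid a potentially non-terminating recursion (new incoming arrows into $x_1$ created when cleaning up $x_2$, etc.).

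Two smaller remarks. First, you should not attempt to normalize all four scalars to $\pm1$ just by rescaling the $x_i$, since rescaling $x_2,x_3,x_4$ changes the weights of \emph{both} arrows adjacent to each; the paper rescales and then uses $\partial^2=0$ to force the weight $-U$ on the arrow $x_3\to x_4$, which you do gesture at. Second, your final comment that the splitting plus uniqueness of curves gives Proposition~\ref{prop:split-off-fig8s} is right in spirit, but one must also check that the $C_{box}$ summand produced by the basis changes has the same bigradings as the original four generators (the paper notes this explicitly), so that the resulting simple figure-eight component can be matched to the one singled out.
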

 
 \begin{proof}
By scaling the generators $x_1$, $x_2$, $x_3$, and $x_4$ by constants we can arrange that the horizontal arrow from $x_1$ has weight $U$ and the vertical arrows from $x_1$ to $x_3$ and from $x_2$ to $x_4$ have weight $V$; it then follows from $\partial^2 = 0$ that the weight of the horizontal arrow from $x_3$ to $x_4$ is $-U$. These four generators and the arrows between them thus form a copy of $C_{box}$, so we just need to remove any other arrows in or out of these four generators. Any such arrows are diagonal by assumption.

We will first arrange that there are no other arrows into the generator $x_4$. Suppose there is another generator $y$ with an arrow of weight $cU^a V^b$ from $y$ to $d$; this is a diagonal arrow so $a$ and $b$ are both positive. We perform the change of basis replacing $y$ with $y' = y - w U^a V^{b-1} x_2$ and note that the coefficient of $x_4$ in $\partial(y')$ is zero. The only other changes involving the generators in the box complex is that for any arrow $z$ into $y$ there is a new arrow from $z$ to $x_2$. Note that if $b = 1$ and the arrow from $z$ to $y$ is a horizontal arrow, then the new arrow from $z$ to $x_2$ will be horizontal so the complex may no longer be horizontally simplified. However, we can say that if the new arrow from $z$ to $x_2$ is horizontal it must have length at least two. If we repeat this for all generators $y$ other than $x_2$ and $x_3$ with arrows into $x_4$, we arrive at a complex with no unwanted arrows into $x_4$.

We next eliminate any unwanted arrows into $x_2$ in a similar way. Suppose there is a generator $y$ other than $x_1$ with an arrow weighted by $w U^a V^b$ from $y$ to $x_2$. We must have that $a > 0$ since the basis is still vertically simplified, and either $b > 0$ or $a > 1$ by the observation in the previous paragraph. We perform the change of basis replacing $y$ with $y' = y - w U^{a-1} V^b x_1$, and note that the coefficient of $x_2$ in $\partial(y')$ is zero. The basis change may also introduce new arrows into $x_1$, but otherwise arrows in or out of $x_1$, $x_2$, $x_3$, and $x_4$ are unaffected. Repeating this for all generators $y$ other than $x_1$ with arrows into $x_2$ results in a complex with no unwanted arrows into $x_2$.

We can now deduce that there are no arrows into $x_1$ and no arrows into $x_3$ except the vertical arrow from $x_1$. For the first claim, note that for any $y$ the coefficient of $x_2$ of $\partial^2(y)$, which must be zero, is simply $U$ times the coefficient of $x_1$ in $\partial(y)$ since the only arrow into $x_2$ is from $x_1$. For the second claim, note that for any $y$ other than $x_1$ the coefficient of $x_4$ in $\partial^2(y)$ is $U$ times the coefficient of $x_3$ in $y$ since the only arrows into $x_4$ are from $x_2$ and $x_3$ and the only arrow into $x_2$ is from $x_1$. Thus there are now no extraneous arrows into the four generators $x_1$, $x_2$, $x_3$, and $x_4$.

Removing extra arrows out of these four generators is similar. We first remove unwanted arrows that start at $x_1$: if there is a generator $y$ with an arrow from $x_1$ to $y$ we perform a basis change that adds an appropriate multiple of $y$ to $x_2$. Each such basis change might add a new arrow out of $x_2$, which must be either diagonal or vertical with length at least two. We then remove arrows out of $x_2$ other than the vertical arrow to $x_4$ by adding an appropriate multiple of $y$ to $x_4$ for any $y \neq x_4$ with an arrow from $x_2$ to $y$. Finally, we use $\partial^2 = 0$ to deduce that there are no unwanted arrows out of $x_3$ or $x_4$.
 \end{proof}
 
If an immersed multicurve $\Gamma$ contains a simple figure eight curve then for any bounding chain $\bchain$ on $\Gamma$ the corresponding complex $C(\Gamma, \bchain)$ over $\sRhat$ contains a copy of $C_{box}$. By splitting off $C_{box}$ summands from the corresponding complex over $\sRminus$, we can show that a bounding chain can be chosen with no intersection points on the simple figure eight components.

\begin{proposition}\label{prop:split-off-fig8s}
Let $\Gamma$ be an immersed multicurve in the marked strip $\strip$ or $\cylinder$ in almost simple position. Any bounding chain on $\Gamma$ is equivalent to one that contains no self-intersection points on any simple figure eight component of $\Gamma$.
\end{proposition}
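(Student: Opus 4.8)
The plan is to reduce Proposition~\ref{prop:split-off-fig8s} to the purely algebraic statement of Lemma~\ref{lem:splitt-off-box-summand}. Given an immersed multicurve $\Gamma$ in almost simple position with a simple figure eight component $\gamma$, and a bounding chain $\bchain$ on $\Gamma$, consider the complex $C = C(\Gamma, \bchain)$ over $\sRminus$. First I would identify the four generators of $C(\Gamma, \bchain)$ coming from the two intersections of $\gamma$ with $\mu$: these are $x_1$, $x_2$ (on the side of $\mu$ where $\gamma$ encloses the upper marked point) and $x_3$, $x_4$ (on the other side), and one checks directly from the geometry of a simple figure eight (it encloses two adjacent marked points, has no self-intersection points other than the one at the waist, which has positive degree and so carries no turning point) that the only horizontal and vertical arrows into or out of these four generators are the length-one horizontal arrows $x_1\to x_2$, $x_3\to x_4$ and length-one vertical arrows $x_1\to x_3$, $x_2\to x_4$. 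Crucially, any other arrows into or out of these generators must be diagonal, since a horizontal or vertical bigon between $\gamma$ and $\mu$ that reached another component would contradict almost simple position (the waist intersection has positive degree and no turning point, so no polygonal path can make a left turn there).

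Next I would apply Lemma~\ref{lem:splitt-off-box-summand} to the $UV=0$ reduction $\widehat C = C(\Gamma, \bchainhat)$, which by construction is reduced (curves in almost simple position with $\bchainhat$ of local system type give reduced complexes over $\sRhat$; this can be arranged, or the $x_i$ are already on no length-zero arrow). This produces a change of basis of $\widehat C$, after which $\widehat C$ splits as $\widehat C' \oplus C_{box}$. I would then observe that the same change of basis can be realized geometrically on $\Gamma$ by arrow slides through $\strip$ or by the invariance moves of Figure~\ref{fig:invariance-moves}, and that these moves do not touch $\gamma$ itself (the basis changes in the proof of Lemma~\ref{lem:splitt-off-box-summand} only add multiples of the $x_i$ to other generators, or multiples of other generators to the $x_i$ --- in train track language this slides crossover arrows ending or beginning on the $\gamma$ strands, which can be done without modifying $\gamma$). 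So after this step we have a curve, still containing $\gamma$ as a simple figure eight component with the same intersection points, and a new bounding chain, such that the $UV=0$ complex splits off $C_{box}$.

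Then I would run the enhancement procedure of Section~\ref{sec:enhanced-curves} (Proposition~\ref{prop:inductive-step}) to produce a bounding chain $\bchain''$ on this curve representing $C$ over $\sRminus$, and argue that in this enhancement no turning point is ever added at a self-intersection point lying on $\gamma$. Indeed, $\gamma$ has a single self-intersection point (the waist), of positive degree, so it lies in none of the cases of Proposition~\ref{prop:inductive-step} that add a left-turn crossover arrow there; and the only new crossover arrows added off self-intersection points are slid away along $\Gamma$ until they are removed or reach a negative-degree self-intersection point, which (again because $\gamma$'s only self-intersection is positive-degree) cannot be on $\gamma$. The diagonal arrows into and out of the $x_i$ in $C$ are precisely those needed to make a genuine $\sRminus$ copy of $C_{box}$ (the differentials $\partial(x_1)=Ux_2+Vx_3$, etc.\ contain the weight-$-U$ arrow $x_3\to x_4$ forced by $\partial^2=0$, and no diagonal arrow touching the box is needed because Lemma~\ref{lem:splitt-off-box-summand} already split them off at the $\sRhat$ level and enhancement only adds terms with positive $W$-power, which on these gradings must vanish). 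Hence $\bchain''$ already has no turning points on $\gamma$, and since $(\Gamma,\bchain)$, $(\Gamma,\bchain')$, $(\Gamma,\bchain'')$ all represent $C$ with the appropriate flip map, Proposition~\ref{prop:dumb-uniqueness} gives that they are equivalent in the Fukaya category.

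The main obstacle I expect is making rigorous the claim that the algebraic change of basis from Lemma~\ref{lem:splitt-off-box-summand} lifts to a homotopy of the decorated curve that fixes the figure eight component and does not reintroduce turning points on it --- in other words, controlling side effects of arrow slides in the minus setting, which the paper itself flags as delicate. A clean way around this is to avoid arrow sliding entirely: argue directly that $C = C(\Gamma,\bchain)$ satisfies the hypotheses of (a minus analogue of) Lemma~\ref{lem:splitt-off-box-summand}, split $C \cong C' \oplus C_{box}$ over $\sRminus$ by an algebraic change of basis, observe $C_{box}$ is represented over $\sRminus$ by a simple figure eight with trivial bounding chain, represent $C'$ by the remaining components of $\Gamma$ with the restricted bounding chain (using that $\Gamma$ is in almost simple position so its components are independent), and conclude by uniqueness. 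This algebraic route sidesteps the arrow-sliding subtleties, at the cost of needing the minus version of Lemma~\ref{lem:splitt-off-box-summand}, whose proof is the same change-of-basis computation carried out over $\sRminus$ keeping track of powers of $W=UV$ --- routine but the place where care is required.
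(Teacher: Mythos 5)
Your second, ``clean way around'' sketch is essentially the paper's argument, and it is the right one. The paper does not run the arrow-sliding version at all; it applies Lemma~\ref{lem:splitt-off-box-summand} directly to $C = C(\Gamma,\bchain)$, splits $C \cong C' \oplus C_{box}$ by an algebraic change of basis, builds a new decorated curve for $C'$ by the existence theorem, and identifies its union with a simple figure eight (trivially decorated) against $\Gamma$ via uniqueness (Propositions~\ref{prop:uniqueness} and~\ref{prop:uniqueness-strip}). A few corrections to your sketch. First, you worry that you need a ``minus analogue'' of Lemma~\ref{lem:splitt-off-box-summand}; you do not. Although the lemma's statement says $\sRhat$, its proof is written to remove \emph{diagonal} arrows with $a,b>0$ --- terms that vanish identically over $\sRhat$ --- so the lemma is in fact the $\sRminus$ statement you want (the hypothesis on horizontal/vertical arrows is an $\sRhat$ hypothesis, the conclusion is an $\sRminus$ splitting). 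Second, your claim that the waist self-intersection of a simple figure eight ``has positive degree and so carries no turning point'' is wrong on both counts: the waist has degree $-2$ (this is exactly why the bounding chain in Example~\ref{ex:fig8-example} carries the waist with weight $W = W^{-(-2)/2}$), and it \emph{can} and often does carry a turning point in $\bchain$. What is true --- and what the paper actually uses --- is that the degree-zero restriction $\bchainhat$ cannot touch the figure eight, because $\bchainhat$ is of local-system type and a primitive closed curve has no local-system intersection points; hence the $UV=0$ complex near these four generators is independent of $\bchain$ and the lemma's hypothesis is satisfied. Third, ``represent $C'$ by the remaining components of $\Gamma$ with the restricted bounding chain'' is not quite what happens: the restriction of $\bchain$ to the complement of the figure eight need not satisfy the Maurer--Cartan equation. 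Instead one represents $C'$ by a fresh decorated curve $(\Gamma',\bchain')$ provided by the existence theorem and then invokes uniqueness of the underlying multicurve to conclude that $\Gamma'\sqcup\Gamma_{box}$ is homotopic to $\Gamma$, so $\bchain'$ can be transported to a bounding chain on $\Gamma$ supported away from the figure eight. Your first (arrow-sliding) approach is correctly flagged as problematic and is not needed.
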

\begin{proof}
We may assume that the initial bounding chain $\bchain$ has $\bchainhat$ of local system type, since we have shown that every bounding chain is equivalent to one of this form. We now consider the complex $C = C(\Gamma, \bchain)$ and, fixing a simple figure eight component of $\Gamma$, we consider the four generators of $C$ arising from this component. Because $\bchainhat$ has local system type and there are no local system self-intersection points on a simple figure eight curve, the connected component of the complex $\hat C = C|_{UV = 0}$ over $\sRhat$ does not depend on the bounding chain. It follows that if we ignore diagonal arrows, the four generators of $C$ coming from the specified simple figure eight component generate a copy of $C_{box}$ with no other horizontal or vertical arrows in or out of these generators. By Lemma \ref{lem:splitt-off-box-summand} we can change basis to remove any diagonal arrows in or out of these four generators, realizing $C$ as a direct sum $C' \oplus C_{box}$ for some smaller complex $C'$. It is clear from the proof of Lemma \ref{lem:splitt-off-box-summand} that the gradings of the generators on the $C_{box}$ summand are the same as the gradings of the four relevant generators of $C$. We now consider construct and immersed decorated multicurves $(\Gamma', \bchain')$ and $(\Gamma_{box}, \bchain_{box})$ representing $C'$ and $C_{box}$, respectively, and note that $(\Gamma'\sqcup\Gamma_{box}, \bchain' + \bchain_{box})$ represents $C = C' \oplus C_{box}$ (with respect to the new basis). It is clear that $\Gamma_{box}$ is a simple figure eight curve with the same bigradings as to the simple figure eight component of $\Gamma$ we singled out, and $\bchain_{box}$ is trivial. We have that $\Gamma' \sqcup \Gamma_{box}$ is homotopic to $\Gamma$, by the uniqueness of the immersed curve representing $C$, so $\bchain'$ may be viewed as a bounding chain on $\Gamma$ that does not include self-intersection points on the specified simple figure eight component. We can repeating this argument for all simple figure eight components, modifying $\bchain$ to avoid self-intersection points on any of them.
\end{proof} 

\begin{corollary}\label{cor:embedded-curve-with-fig8s}
If $\Gamma$ is an immersed curve in the strip $\strip$ or cylinder $\cylinder$ containing one embedded component and some number of simple figure eight components, then any bounding cochain on $\Gamma$ is equivalent to the trivial bounding cochain.
\end{corollary}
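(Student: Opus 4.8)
The plan is to apply Proposition~\ref{prop:split-off-fig8s} to reduce to the case of an embedded curve with no simple figure eight components, and then argue that an embedded curve has no self-intersection points at all on which a nontrivial bounding chain could be supported.

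First I would invoke Proposition~\ref{prop:split-off-fig8s}: given any bounding chain $\bchain$ on $\Gamma$, it is equivalent to a bounding chain $\bchain'$ that contains no self-intersection points lying on any simple figure eight component of $\Gamma$. Thus all self-intersection points appearing in $\bchain'$ with nonzero coefficient lie on the single embedded component, or are intersection points between two distinct components. Here I would note that a simple figure eight curve is disjoint (after putting $\Gamma$ in almost simple position) from any other component it does not link around a marked point with, and more to the point the ``crossing region'' description of the figure eight curves together with the assumption that $\Gamma$ is in almost simple position (so any two segments meet at most once, no triple points, no immersed annuli) forces the embedded component to be genuinely embedded --- it has no self-intersection points --- and forces the intersections between the embedded component and the figure eight components to have strictly positive degree, hence not to be eligible for inclusion in a bounding chain (recall bounding chains are linear combinations of points in $\sI_{\le 0}$). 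The one subtlety is intersections \emph{between} two distinct simple figure eight components, or between a figure eight component and itself through the crossing region, but a simple figure eight curve encloses exactly two adjacent marked points and in almost simple position distinct figure eight components either coincide (impossible, as $\Gamma$ has distinct components) or can be made disjoint, and the only self-intersection point of a simple figure eight curve is the central crossing, which has been excluded by $\bchain'$ already.

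So the conclusion would be that $\bchain'$ has no self-intersection points available with nonzero coefficient, i.e. $\bchain'$ is the trivial bounding chain, and hence $\bchain$ is equivalent to the trivial bounding chain. I would write this as: by Proposition~\ref{prop:split-off-fig8s} we may assume $\bchain$ avoids all simple figure eight components; the embedded component contributes no self-intersection points; the remaining candidate points are either the central crossings of figure eight components (excluded) or have positive degree (ineligible); therefore $\bchain$ is trivial.

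The main obstacle I anticipate is carefully justifying that, in almost simple position, the only self-intersection points of $\Gamma$ are the central crossings of the figure eight components --- in particular that the embedded component is embedded and stays disjoint from itself, and that there are no ``accidental'' negative-degree crossings between the embedded component and a figure eight component or between two figure eight components. This is really a statement about what almost simple position forces on this specific family of curves, and it uses the fact that a figure eight curve bounds (half of) an immersed annulus–like region around two marked points; one must check that any extra crossing of negative degree would either violate minimality of self-intersection number subject to the admissibility constraints, or would create an immersed annulus, both of which are prohibited. Once that local/combinatorial claim is in hand, the rest of the argument is immediate from Proposition~\ref{prop:split-off-fig8s} and the definition of a bounding chain as a combination of points in $\sI_{\le 0}$.
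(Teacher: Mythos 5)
Your overall structure is right and matches the paper's: invoke Proposition~\ref{prop:split-off-fig8s} to push the bounding chain off the figure eight components, then observe there is nothing left. But you have overcomplicated the second step, and in doing so have introduced a claim you cannot easily justify. Read the conclusion of Proposition~\ref{prop:split-off-fig8s} again: it produces a bounding chain containing \emph{no self-intersection point of $\Gamma$ that lies on a simple figure eight component}. A point of $\Gamma$ where the embedded component crosses a figure eight component, or where two figure eight components cross, lies on a figure eight component, so it is already excluded by the proposition --- there is nothing more to say about it, and in particular you do not need (and should not assert) that intersections between the embedded component and the figure eights ``have strictly positive degree.'' That claim is not obviously true, and the degree of such a crossing depends on the gradings in a way that would require a separate argument; fortunately you never need it. After applying the proposition, every remaining candidate self-intersection point would have to avoid all figure eight components, hence be a self-crossing of the lone embedded component --- and there are none, by embeddedness. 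That is the whole proof.

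A secondary issue: your stated ``main obstacle'' --- carefully cataloguing which crossings can appear in almost simple position for this family of curves --- is a red herring for the same reason. You do not need to control which crossings exist; you only need that whatever crossings exist involving a figure eight component have already been excised. The one-sentence argument above bypasses the combinatorial analysis entirely. So: keep your first paragraph, delete the degree claim and the case analysis on which components can touch which, and replace them with the direct observation that the remaining points must be self-crossings of the embedded component.
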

\begin{proof}
By Proposition \ref{prop:split-off-fig8s} any bounding cochain is equivalent to a linear combination of the self-intersection points of $\Gamma$ that are not on a simple figure eight component, but if the only component that is not a simple figure eight is embedded then there are no such self-intersection points.
\end{proof}

In particular, if $\Gamma$ contains one embedded component and some number of simple figure eight components then the minus invariant $(\Gamma, \bchain)$ is determined by the weaker hat version $(\Gamma, \bchainhat)$; equivalently, the full complex over $\sRminus$ is uniquely determined up to homotopy equivalance by the $UV = 0$ complex. This is relevant because computing the full knot Floer complex of a knot is hard but we have powerful computational tools for computing the $UV = 0$ complex. Knots whose knot Floer complexes satisfy the conditions of Corollary \ref{cor:embedded-curve-with-fig8s} are quite common in practice: using computer computations the author has found the immersed curves associated with all prime knots in $S^3$ up to 15 crossings, and of these 313,230 knots all but one have knot Floer complexes with this property. Thus, although the computational techniques used only compute the complex $CFK_{\sRhat}$, we can in fact say that we have computed $CFK_{\sRminus}$ of these knots.

Even when complexes do not take the form of an embedded curve along with simple figure eight components, it is common in for small examples that the the bounding chain $\bchain$ is determined up to equivalence by $\bchainhat$ (though the trivial bounding chain may not be an option). Another example is the complex in Figure \ref{fig:example-with-crossing} discussed in Section \ref{sec:two-examples}. There the bounding chain decoration is essential, since this curve without decoration is obstructed (in particular, the trivial linear combination of self-intersection points is not a valid bounding chain). However, there is only one linear combination of self-intersection points for which $(\Gamma, \bchain)$ satisfies the Maurer-Cartan equations, so in this case again $(\Gamma, \bchain)$ is determined by $\Gamma$. This is a geometric version of the statement that, up to homotopy, there is a unique way to add diagonal arrows to the $UV=0$ complex such that $\partial^2 = 0$. Although as Example \ref{ex:example-multiple-bounding-chains} demonstrates it is not hard to construct complexes over $\sRminus$ that are not determined by their quotients over $\sRhat$, these do not seem to arise often in practice. In fact, at that time of writing the author has not yet found an example of a knot for which $CFK_{\sRminus}(K)$ is not determined by $CFK_{\sRhat}(K)$.

\subsection{An example with nontrivial $\bchainhat$}

All of the examples presented so far have not required a bounding chain decoration to represent the $UV=0$ complex; in other words, $\bchainhat$ has been trivial. This is very common in practice, and in fact it is an open question whether the decoration $\bchainhat$ is needed to represent the complex $CFK_{\sRhat}(Y, K)$ for any knot $K \subset Y$. But it is not difficult to construct a complex for which the $\bchainhat$ is nontrivial, such as the example below.

\begin{example}\label{ex:nontrivial-local-system}
The bigraded complex on the left of Figure \ref{fig:nontrivial-local-system} is represented by the decorated immersed curve in the infinite strip $\strip$ shown on the right side of the figure. For simplicity we use $\Z/2\Z$ coefficients for this example. Note that the boundaing chain has nonzero coefficients for six self intersection points, five of these are weighted by $W$ and are ignored when representing the complex over $\sRhat$ but one intersection point is a local system intersection point of the non-primitive closed component and  included in $\bchainhat$ as well.
\end{example}

\begin{figure}
\includegraphics[scale = .9]{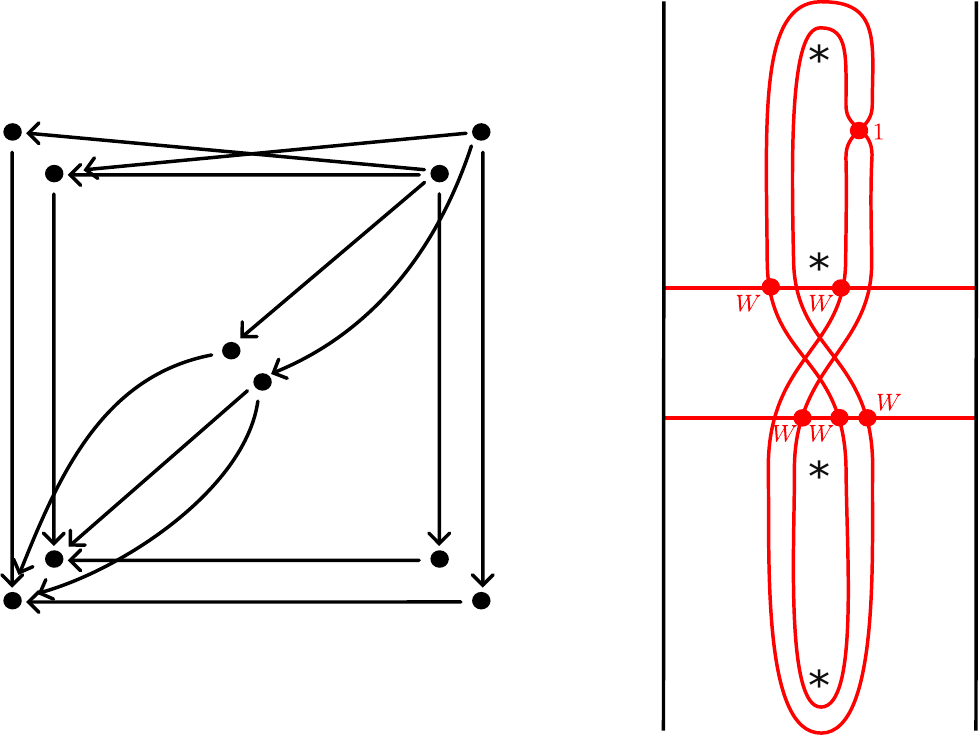}
\caption{The complex on the left is represented by the decorated curve on the right. Even as a complex over $\sRhat$ (ignoring diagonal arrows), a nontrivial bounding chain is required.}
\label{fig:nontrivial-local-system}
\end{figure}

This chain complex can not be $CFK_{\sRminus}$ for a knot in $S^3$, since $\infty$-filling has two dimensional $\HFhat$ but it satisfies all known constraints of being the knot Floer complex for a knot in some $Y$. In particular, the decorated immersed curve is symmetric under the action of the elliptic involution, although this is not obvious since the symmetry holds only up to homotopy and the non-uniqueness of the bounding chain is relevant here. Applying the elliptic involution gives the decorated immersed curve in the middle of Figure \ref{fig:nontrivial-local-system-symmetry} and homotoping the curves to their original position results in a different bounding chain as shown on the right of Figure \ref{fig:nontrivial-local-system-symmetry}. We leave it as an exercise to the motivated reader to check that this bounding chain is equivalent to the original one by adding a pair of crossover arrows from the lower horizontal arc to the higher horizontal arc and sliding them to opposite boundaries of the strip. This example speaks to the difficulty of defining a normal form for bounding chains as a subset of self-intersection points in the minus theory, as we have two valid choices of $\bchain$ for the same homotopy class of complexes and it is not obvious which should be considered the preferred representative.

 \begin{figure}
\includegraphics[scale = .9]{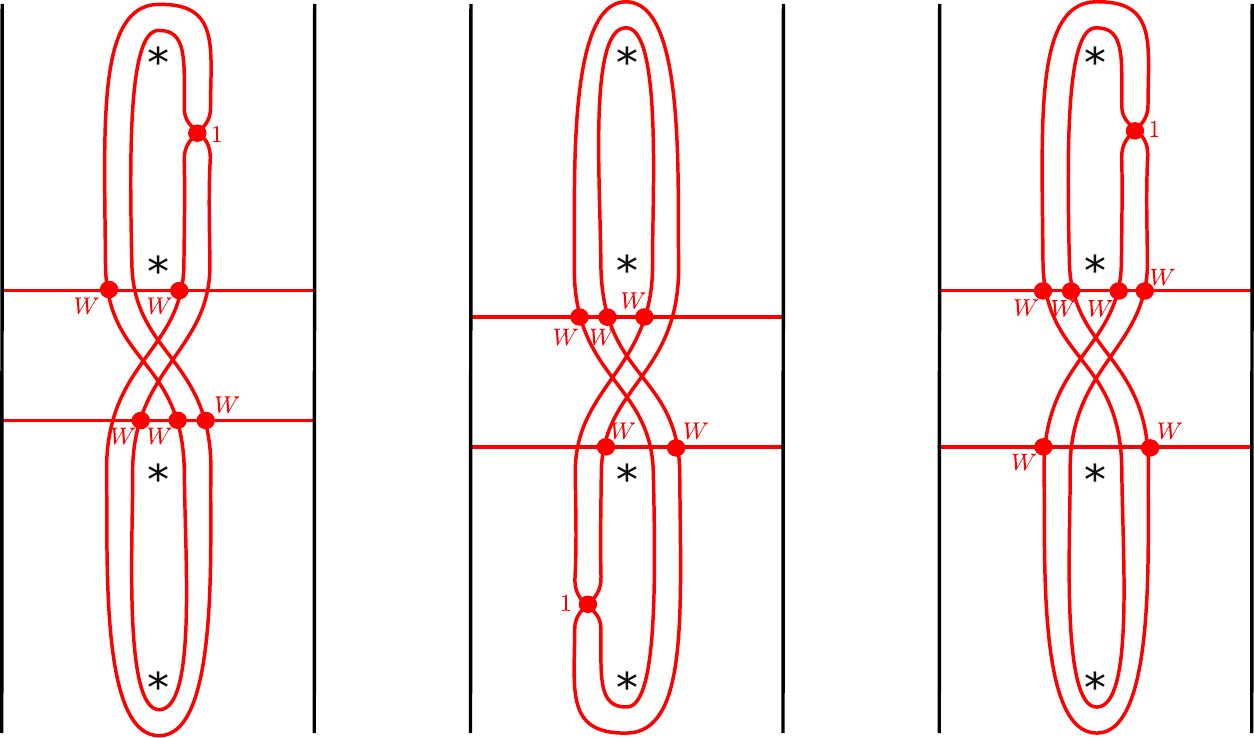}
\caption{Realizing the symmetry on the complex from Example \ref{ex:nontrivial-local-system}. On the left is the decorated curve from Figure \ref{ex:nontrivial-local-system}, in the middle is the result of rotating this decorated curve by half a rotation, and on the right is the result of homotoping the curve to agree with the curve on the left, keeping track of resulting changes to the bounding chains following the usual local moves. The bounding chains on the left and right are different as subsets of the self intersection points but the decorated curves are equivalent as elements of the Fukaya category of the marked strip.}
\label{fig:nontrivial-local-system-symmetry}
\end{figure}
 
 \subsection{Remarks on $\Z$-coefficients}\label{sec:Z-coefficients}
 
 Throughout this paper we have assumed field coefficients, and this is essential in some areas, but many arguments work with $\Z$-coefficients as well. We end with a few words about what works and what still needs to be done to have useful immersed curve invariants over $\Z$. We first observe that the definition of Lagrangian Floer homology of decorated immersed curves is the same using $\Z$ coefficients, except that to ensure invariance of Floer homology under homotopies we must require the weights associated with basepoints on curves to be $\pm 1$. This is because sliding one curve past a basepoint with weight $c$ on the other curves (as in moves (c) or (d) in Figure \ref{fig:invariance-moves} changes the Floer complex by replacing a generator $x$ with $cx$ and this is only a change of basis if $c$ is a unit. The rest of the proof of homotopy invariance is unaffected, and self intersection points can be decorated with arbitrary elements of $\Z[W]$ (or $\Z[U,V]$ in the doubly marked case). By taking Floer homology with $\mu$ in the doubly marked cylinder $\dmstrip$, any decorated curve in $\strip$ determines a bigraded complex over $\Z[U,V]$. Similarly, decorated curve in the infinite marked cylinder $\cylinder$ determines a a bigraded complex over $\Z[U,V]$ along with a flip isomorphism. Given two decorated immersed curve in $\cylinder$ representing  complexes over $\Z[U,V]$, the shifted pairing of curves still agrees up to homotopy with the mapping cone of morphism complexes as described in Section \ref{sec:morphisms}; in particular, if a decorated immersed curve with represents the knot Floer complex of a knot $K \subset Y$ with $\Z$ coefficients along with its flip isomorphism then taking Floer homology with a line of slope $\frac p q$ computes $\HFminus$ of $\frac p q$-surgery on $K$ with $\Z$ coefficients. 
 
Moreover, it is still true that any bigraded complex over $\Z[U,V]$ can be represented by some decorated immersed curve in $\strip$ (and similarly that any complex with a flip isomorphism can be represented by some decorated curve in $\cylinder$), namely the naive curve representative described in Section \ref{sec:naive-curves}. The main difference in the case of $\Z$ coefficients is that these representatives can not be simplified as fully. The crucial failing of the arrow sliding algorithm when working with $\Z$ coefficients is the local move at the bottom of Figure \ref{fig:local-moves}, in which a self intersection point next to crossover arrow is resolved and the crossing is replaced by a new crossover arrow. Because this move introduces an arrow and a basepoint whose weight is the inverse of the weight of the original arrow, this move is only possible when the original arrow is weighted by $\pm 1$. Having said this, we observe that all the other $n$-strand arrow configuration replacements in Figure \ref{fig:local-moves} are still valid with $\Z$ coefficients. 

Without the crossing resolving move, we can not run the arrow sliding algorithm in full. However, we can still follow a modified version of the algorithm, keeping the overall strategy of systematically removing each crossover arrow except for those that are not removable. Recall that the strategy for removing a single arrow with field coefficients is to slide it one direction until the curve segments it connects diverge, and then if the arrow points from the left segment to the right segment it can be removed. If it points from the right segment to the left segment we slide the arrow the other directions until the strands diverge and remove it if possible. If the arrow is not removable on either end, then the curve segments must cross; in this case we slide the arrow to the crossing, resolve the crossing, and then the two resulting arrows will be removable when pushed as far as possible in opposite directions. In this way we can remove all crossover arrows except those for which the curve segments they connect never diverge, and such arrows can be moved to be left-turn crossover arrows at local-system intersection points of non-primitive curve components. With $\Z$ coefficients we can follow the same strategy, except that when an arrow is not removable at both ends we can not resolve the crossing if the weight on the crossover arrow is not $\pm 1$. In this case, we still slide the crossover arrow to the crossing, where it will necessarily be a left-turn crossover arrow. This arrow is not removable, so we will simply include this self-intersection point with the appropriate coefficient in the bounding chain $\bchainhat$. In other words, a more complicated bounding chain, using more than just the local system intersection points, is required even setting $UV = 0$. Note that in the original curve sliding algorithm, some work was required to show that this strategy for removing a single arrow could be performed repeatedly to remove many arrows such that the total process would eventually terminate; this is slightly more difficult in the $\Z$ coefficient setting because there are more unremovable arrows present to interact with the arrow being removed, but the modifications to the algorithm are fairly routine.

\begin{example}\label{ex:z-coefficients-example}
Consider the chain complex over $\Z[U,V]$ shown in Figure \ref{fig:z-coefficients-example}. One can check that the first immersed curve in the figure represents this complex (with respect to the given basis). Note that even if we restrict to $UV=0$ we still require a non-trivial bounding chain in order to capture the arrows labeled by $3U$. We can pass to finite field coefficients in multiple ways by taking different quotients of $\Z$ and we observe that for this complex different finite field coefficients result in different immersed curves. For $\Z/3\Z$ coefficients we simply ignore the arrows labelled by $3U$ and remove the bounding chain decoration from the curve representing the complex with $\Z$ coefficients, but the underlying curve is unchanged. For $\Z/2\Z$ coefficients, on the other hand, the two marked intersection points remain in the bounding chain with coefficient 1. We interpret the bounding chain as giving left-turn crossover arrows at these intersection points and run the arrow sliding algorithm; we leave it as an exercise to see that this gives the rightmost curve in the figure. Note that if we keep track of the basis changes corresponding to the arrow slides, these introduce diagonal arrows. Following the algorithm for enhancing curves to capture diagonal arrows, we include the indicated intersection points with coefficient $W$ in $\bchain$; alternatively, once we find the curve representing the $UV=0$ complex we can observe that the bounding chain decoration $\bchain$ is forced and these intersection points must appear in $\bchain$ in order for the monogons enclosing two punctures to cancel with something.
\end{example}

 \begin{figure}
\includegraphics[scale = .65]{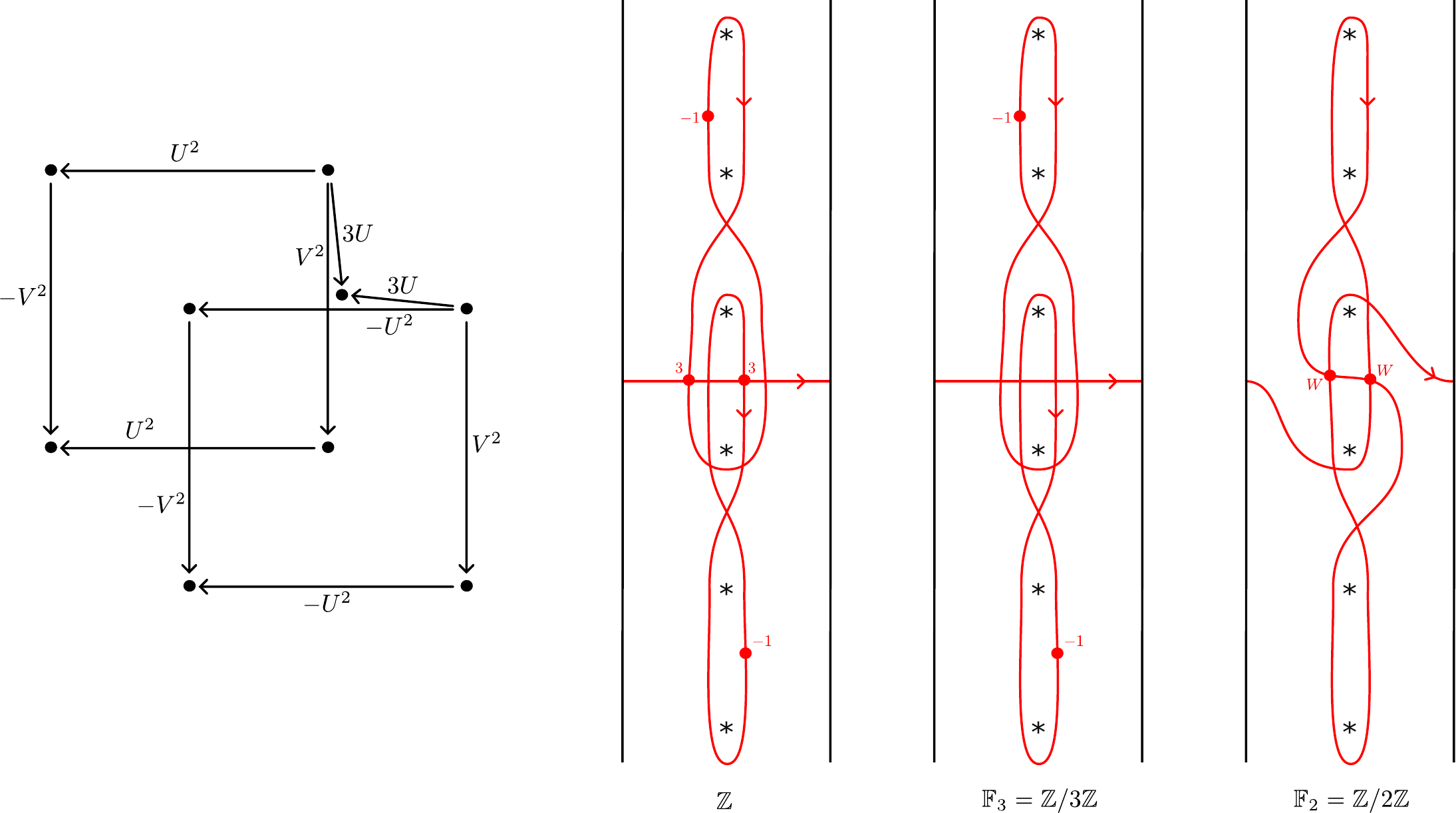}
\caption{A chain complex over $\Z[U,V]$ and a decorated immersed curve in $\strip$ representing it, along with decorated curves representing the same complex with coefficients taken modulo 3 and modulo 2, as indicated.}
\label{fig:z-coefficients-example}
\end{figure}

By following a modified arrow sliding algorithm, we can represent any bigraded complex over $\Z[U,V]/(UV=0)$ by a graded immersed curve $\Gamma$ decorated with a bounding chain $\bchainhat$ which is a linear combination of degree zero self intersection points of $\bchainhat$ such that the coefficient of any intersection point that is not a local system intersection point is not $\pm 1$. Moreover, by construction we have made some effort to remove the crossover arrows so it is reasonable to hope that this algorithm produces a representative that is as simple as possible in some sense. However, the notion of ``as simple as possible" is not clear in this case, since non-removable crossover arrows can be moved between different self-intersection points and there may be clever basis changes not suggested by the arrow sliding algorithm to replace a collection of crossover arrows with a simpler one. In the field coefficient case the fact that $\bchainhat$ can be taken to be trivial except on local system intersection points is powerful because for immersed curves decorated with such a bounding chain the dimension of Lagrangian Floer homology is simply the minimal intersection point of the curves except when the curves being paired have parallel components (in which case there is an additional term that is not hard to understand). This fact enabled us to prove that the decorated immersed curves are unique by observing that two different curves would have different pairing with some third curve. When $\bchainhat$ is more complicated the Lagrangian Floer homology no longer reduces to the minimal intersection number, so arguments of this form are more difficult.

To summarize, bigraded complexes over $\Z[U,V]$ can be represented by decorated immersed curves in $\strip$, and in practice we can choose a fairly nice representative, but even when considering the simpler $UV=0$ complex these objects are more complicated than immersed curves with local systems. Unlike the case of field coefficients, it is unclear to what extend the decorated curve representing the $UV=0$ complex is unique, and in fact the $UV=0$ case with $\Z$ coefficients exhibits the same subtleties concerning uniqueness of the bounding chain decoration that arise in the case of minus invariants with field coefficients. Despite this added complexity, we expect that immersed curves will be a valuable tool for studying knot Floer homology with $\Z$ coefficients and this is a topic for further exploration.

\bibliographystyle{alpha}
\bibliography{bibliography}

\end{document}